 \DeclareMathOperator{\gal}{Gal}
\theoremstyle{plain}
\newtheorem{theorem}{Theorem}[section]
\newtheorem{proposition}[theorem]{Proposition}
\newtheorem{corollary}[theorem]{Corollary}
\newtheorem{lemma}[theorem]{Lemma}
\theoremstyle{definition}
\newtheorem{definition}[theorem]{Definition}
\newtheorem{remark}[theorem]{Remark}
\newcommand{\ds}{\displaystyle}
\newcommand{\lra}{\longrightarrow}
\newcommand{\ra}{\rightarrow}
\newcommand{\ket}{\rm ket}
\newcommand{\fket}{\rm fket}
\newcommand{\form}{\rm form}
\newcommand{\et}{\rm et}
\newcommand{\tensor}{{\otimes}}
\newcommand{\Hom}{\mbox{Hom}}
\newcommand{\Spec}{\mbox{Spec}}
\newcommand{\Spf}{\mbox{Spf}}
\newcommand{\Spm}{\mbox{Spm}}
\newcommand{\Ker}{\mbox{Ker}}
\newcommand{\Fil}{\mbox{\rm Fil}}
\newcommand{\GL}{{\rm \mathbf{GL}}}
\newcommand{\Z}{{\mathbb Z}}
\newcommand{\Q}{{\mathbb Q}}
\newcommand{\C}{{\mathbb C}}
\newcommand{\R}{{\mathbb R}}
\newcommand{\N}{{\mathbb N}}
\newcommand{\Kbar}{{\overline{K}}}
\newcommand{\bareta}{{\overline{\eta}}}
\newcommand{\Rbar}{{\overline{R}}}
\newcommand{\Nbar}{{\overline{N}}}
\newcommand{\hatcO}{\widehat{{\overline{\cO}}}}
\newcommand{\bDcrisgeo}{\bD_{\rm log}^{\rm geo}}
\newcommand{\bDcrisM}{\bD_{\rm log,L}}
\newcommand{\bDcrisar}{\bD_{\rm log}^{\rm ar}}
\newcommand{\cL}{{\mathbb L}}
\newcommand{\cA}{{\cal A}}
\newcommand{\cH}{{\cal H}}
\newcommand{\cI}{{\cal I}}
\newcommand{\cJ}{{\cal J}}
\newcommand{\cU}{{\cal U}}
\newcommand{\cP}{{\cal P}}
\newcommand{\cQ}{{\cal Q}}
\newcommand{\cF}{{\cal F}}
\newcommand{\cG}{{\cal G}}
\newcommand{\cE}{{\cal E}}
\newcommand{\bD}{{\mathbb D}}
\newcommand{\bV}{{\mathbb V}}
\newcommand{\bH}{{\mathbb H}}
\newcommand{\bA}{{\mathbb A}}
\newcommand{\bB}{{\mathbb B}}
\newcommand{\cS}{{\cal S}}
\newcommand{\cO}{{\cal O}}
\newcommand{\Sh}{{\mbox{Sh}}}
\newcommand{\Mod}{{\mbox{Mod}}}
\newcommand{\Crys}{{\mbox{Crys}}}
\newcommand{\Isoc}{{\mbox{Isoc}}}
\newcommand{\Rep}{{\mbox{Rep}}}
\newcommand{\bL}{{\mathbb L}}
\newcommand{\WW}{{\mathbb W}}
\newcommand{\cW}{{\cal W}}
\newcommand{\fX}{{\mathfrak{X}}}
\newcommand{\Xtilde}{{\widetilde{X}}}
\newcommand{\Stilde}{{\widetilde{S}}}
\newcommand{\Ntilde}{{\widetilde{N}}}
\newcommand{\Mtilde}{{\widetilde{M}}}
\newcommand{\cC}{{\cal C}}
\newcommand{\cM}{{\cal M}}
\begin{document}

\title{Semistable sheaves and comparison isomorphisms in the semistable case}
\author{Fabrizio Andreatta \\ Adrian Iovita\\} \maketitle

\rightline{\em Dedicato a Francesco Baldassarri, con affetto}

\tableofcontents \pagebreak

\section{Introduction}
\label{sec:intro}

Let $K$ be a finite extension of $\Q_p$ with ring of integers $\cO_K$ and fix for the rest of this article
a uniformizing parameter $\pi$ of $\cO_K$. We denote by $S:={\rm Spec}(\cO_K)$ and by $M$ the log structure
on $S$ associated to the prelog structure $\N\lra \cO_K$ sending $n\in \N$ to $\pi^n\in \cO_K$. We denote by
$(S,M)$ the associated log scheme.

Let $X\lra S$ be a morphism of schemes of finite type (or a morphism of formal schemes topologically of finite type) with semistable reduction, by which we mean
that there exists a log structure $N$ on $X$ and a morphism of log schemes (or log formal schemes) $f\colon (X,N) \lra (S,M)$ satisfying the assumptions of section
\S 2.1.2. In particular $f$ is log smooth.

Let now $\WW:=\WW(\cO_K/\pi\cO_K)$ and we denote by $\cO:=\WW[\![Z]\!]$ and by $\cO\lra \cO_K$ the natural $\WW$-algebra homomorphism sending $Z$ to $\pi$. Write
$P_\pi(Z)\in \WW[Z]$ for the monic irreducible polynomial of $\pi$ over $\WW$. It is a generator of $\Ker\bigl(\cO\lra \cO_K  \bigr)$. We denote by
$\widetilde{S}:=\Spf(\cO)$ and by $\widetilde{M}$ the log structure on $\widetilde{S}$ associated to the prelog structure $\N\lra \cO$ sending $n\in \N$ to $Z^n\in
\cO$. Let us consider the natural diagram of log formal schemes
$$
\begin{array}{cccccccccc}
(X,N)\\
f\downarrow&&\\
(S,M)&\lra&(\widetilde{S},\widetilde{M}).
\end{array}
$$
We assume that there exists a GLOBAL deformation $\tilde{f}\colon  \big(\widetilde{X},\widetilde{N}\big)\lra \big(\widetilde{S},\widetilde{M}\big)$ of $f$. Such
deformations exist for example if $X$ is affine or if the relative dimension of $X$ over $S$ is $1$, but not in general.

\noindent
Our main concern in this article is to:

\bigskip
\noindent 1) Define Faltings's logarithmic sites $\fX_{K}$ and $\fX_{\Kbar}$ associated to $f\colon  (X,N)\lra (S,M)$ and Fontaine (ind continuous) sheaves on it associated to the
deformation $\tilde{f}\colon (\widetilde{X},\widetilde{N}) \lra (\widetilde{S}, \widetilde{M})$: $\bB_{\rm cris}^\nabla$, $\bB_{\rm
log}^\nabla$, $\overline{\bB}_{\rm log}^\nabla$, $\bB_{\rm log}$ and $\overline{\bB}_{\rm log}$.

\bigskip
\noindent 2) Define the category ${\rm Sh}(\fX_K)_{\rm ss}$ of semistable (in fact arithmetically semistable) \'etale local systems on $\fX_K$ and study its
properties; see \S\ref{sec:defsemsitablesheaf} and \S\ref{sec:defsemsitablesheafglobal}.

\bigskip
\noindent 3) Define in \S\ref{sec:defsemsitablesheafglobal} a Fontaine functor $\bD_{\rm log}^{\rm ar}$ from the category of semistable \'etale local systems on
$\fX_K$ to the category of log filtered $F$-isocrystals on $X$ relative to $\cO$. More precisely these are Frobenius isocrystals (considering  the Kummer \'etale
site on  $(X, N)$ modulo $p$)   relatively to the $p$-adic completion of the divided power envelope of $\cO$ with respect to the ideal generated by $p$ and
$P_\pi(Z)$, with filtration on their base change via $\cO\to \cO_K$ defined by mapping $Z$ to $\pi$; see \S\ref{sec:isocrystals}.

\bigskip
 \noindent 4) We prove the following {\em comparison isomorphism theorem}, see \ref{thm:mainthm}. Suppose that $\bL$ is a $p$-adic Kummer \'etale local system on
$X_K$, which when viewed as an \'etale local system on $\fX_\Kbar$ is semistable. Assume that $X$ is a proper and geometrically connected scheme over $\cO_K$. We have, see \ref{thm:mainthm},

\begin{theorem}
\label{thm:mainresult} a) The $p$-adic representation ${\rm H}^i\bigl(X_\Kbar^{\ket}, \bL\bigr)$ of\/ $G_K:={\rm Gal}(\Kbar/K)$ is semistable for all $i\ge 0$.

b) There are natural isomorphisms respecting all additional structures (i.e. the filtrations, after extending the scalars to $K$, the Frobenii and the monodromy
operators)
$$
D_{\rm st}\bigl({\rm H}^i(X_\Kbar^{\ket}, \bL) \bigr)\cong {\rm H}^i\bigl(\bigl(X_k/\WW(k)^+\bigr)^{\rm cris}_{\rm log}, \bDcrisar(\bL)^+\bigr).
$$
\end{theorem}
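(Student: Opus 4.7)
The plan is to follow Faltings's general strategy for proving comparison isomorphisms, now adapted to the semistable setting with non-constant coefficients. The first step is to reinterpret both sides of (b) on the Faltings site $\fX_\Kbar$: by an almost étale descent argument relating Kummer étale cohomology on $X_\Kbar$ to continuous cohomology on $\fX_\Kbar$, the tensor product ${\rm H}^i(X_\Kbar^{\ket},\bL)\otimes_{\Q_p} B_{\rm st}$ should be identified with ${\rm H}^i(\fX_\Kbar,\bL\otimes \bB_{\rm log})$; the right-hand side of (b), by the very definition of $\bDcrisar(\bL)$ as a filtered $F$-isocrystal relative to the $p$-adic divided power envelope of $\cO$, is realised as the hypercohomology on $\fX_\Kbar$ of the de Rham complex of $\bDcrisar(\bL)^+$ with coefficients in the appropriate Fontaine sheaf.

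The key technical input is a Poincaré-type lemma on $\fX_\Kbar$, namely a quasi-isomorphism of complexes
$$
\bL \otimes \bB_{\rm log} \;\simeq\; \bDcrisar(\bL)\otimes_{\cO}\Omega^\bullet_{\widetilde{X}/\widetilde{S}}\otimes \bB_{\rm log}^\nabla,
$$
compatible with Frobenius, with the monodromy operator $N$, and (after the base change $Z\mapsto \pi$ to $\cO_K$) with filtrations. This is where the semistability hypothesis on $\bL$ is essentially used: the resolution encodes the reconstruction of $\bL$ from its associated filtered $(\varphi,N)$-isocrystal via the functor $\bDcrisar$. The assumed global deformation $\widetilde{f}$ is indispensable here, since it provides the global framework on which $\Omega^\bullet_{\widetilde{X}/\widetilde{S}}$ is defined.

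Granted the Poincaré lemma, one obtains a spectral sequence combining the sheaf cohomology of $\bB_{\rm log}^\nabla$ on $\fX_\Kbar$---controlled by Faltings's computations of $p$-adic period rings, which also supply the necessary almost-vanishing statements---with the log-crystalline cohomology of $\bDcrisar(\bL)^+$. Its degeneration (after inverting $p$) and the identification of its edge maps yield an isomorphism of $B_{\rm st}$-modules
$$
{\rm H}^i(X_\Kbar^{\ket},\bL)\otimes_{\Q_p} B_{\rm st} \;\cong\; {\rm H}^i\bigl((X_k/\WW(k)^+)^{\rm cris}_{\rm log}, \bDcrisar(\bL)^+\bigr)\otimes_{K_0} B_{\rm st},
$$
respecting $\varphi$, $N$ and (after $\otimes_K$) the filtration. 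Part (b) then follows by taking $G_K$-invariants; comparing $K_0$- and $\Q_p$-dimensions on the two sides forces ${\rm H}^i(X_\Kbar^{\ket},\bL)$ to be a semistable Galois representation, giving part (a).

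The heart of the proof, and the main obstacle, is the Poincaré lemma. Even with trivial coefficients in the crystalline setting this is highly non-trivial; the combination of semistable reduction (which requires working with $\bB_{\rm log}$ and tracking $N$) with non-constant coefficients $\bL$ pushes the local $p$-adic period computations to their full generality, and one must verify that the local quasi-isomorphisms glue compatibly along the deformation $\widetilde{f}$. A secondary but persistent difficulty is the careful management of filtrations, which only appear after the specialization $\cO\to\cO_K$ sending $Z\mapsto \pi$; one must systematically separate the $\cO$-linear isocrystal picture from the $\cO_K$-linear filtered picture throughout the argument.
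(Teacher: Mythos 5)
Your first half is essentially the paper's route: the quasi-isomorphism $\bL\otimes\bB_{\rm log,\Kbar}^\nabla\simeq \bDcrisgeo(\bL)\otimes_{\cO_\Xtilde}\omega^\bullet_{\Xtilde/\cO}\otimes\bB_{\rm log,\Kbar}$ (which is what the semistability of $\bL$ buys you), combined with the acyclicity of $\Fil^r\bB_{\rm log,\Kbar}$ for $v_{\Kbar,\ast}^{\rm cont}$, is exactly how the paper proves ${\rm H}^i(\fX_\Kbar,\bL\otimes\bB_{\rm log,\Kbar}^\nabla)\cong{\rm H}^i\bigl((X_0/\cO_{\rm cris})^{\rm cris}_{\rm log},\bDcrisgeo(\bL)\bigr)$, compatibly with $\varphi$, $N$, $G_K$ and filtrations.

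The gap is in your last step. You pass from this to an isomorphism of $B_{\rm st}$-modules
$$
{\rm H}^i(X_\Kbar^{\ket},\bL)\otimes_{\Q_p}B_{\rm st}\;\cong\;{\rm H}^i\bigl((X_k/\WW(k)^+)^{\rm cris}_{\rm log},\bDcrisar(\bL)^+\bigr)\otimes_{K_0}B_{\rm st},
$$
but nothing in the Leray spectral sequence produces this: the spectral sequence computes ${\rm H}^i(\fX_\Kbar,\bL\otimes\bB_{\rm log,\Kbar}^\nabla)$, and the assertion that this agrees with ${\rm H}^i(\fX_\Kbar,\bL)\otimes_{\Q_p}B_{\rm log}$ is not a formal consequence of anything you have set up --- it is essentially equivalent to the theorem itself (if cohomology commuted with $-\otimes\bB_{\rm log}^\nabla$ you would be done immediately). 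Once you have that $B_{\rm st}$-linear, $G_K$-equivariant isomorphism, taking invariants and counting dimensions does give (a) and (b); the problem is producing it. The paper closes this gap with two ingredients you do not mention. First, the fundamental exact diagram of sheaves, i.e.\ the Artin--Schreier-type resolutions $0\to\Q_p\to\Fil^0\bB_{\rm cris,\Kbar}^\nabla\xrightarrow{(\varphi-1,N)}\cdots$, whose long exact cohomology sequences relate ${\rm H}^i(\fX_\Kbar,\bL)$ to the crystalline cohomology groups already computed. Second, a Colmez--Fontaine-style admissibility criterion (Proposition~\ref{prop:admis}): one identifies $V_{\rm log}\bigl(D_i(\cL)\otimes_{\WW(k)}B_{\rm log}^{G_K}\bigr)$ with the image of ${\rm H}^i(\fX_\Kbar,\bL)$, uses Faltings's finiteness theorem ${\rm H}^i(\fX_\Kbar,\bL)\cong{\rm H}^i(X_\Kbar^{\ket},\bL)$ to get finite-dimensionality, and then proves surjectivity of the map $\delta$ by a dimension count over totally ramified extensions $K'/K$ after a suitable Tate twist. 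Only this establishes that the filtered $(\varphi,N)$-module $D_i(\cL)\otimes_{\WW(k)}B_{\rm log}^{G_K}$ is admissible with associated representation ${\rm H}^i(\fX_\Kbar,\bL)$, which is what simultaneously yields semistability and the comparison isomorphism. A bare dimension comparison cannot replace this, because a priori one does not know that $\dim_{\Q_p}{\rm H}^i(\fX_\Kbar,\bL)=\dim_{K_0}D_i(\cL)$; that equality is the output of the admissibility argument, not an input.
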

\smallskip Here, $\bDcrisar(\bL)^+$ is the Frobenius log isocrystal on $X_k$ relative to $\WW(k)^+$ obtained from $\bDcrisar(\bL)$ by base change via the map $\cO\to \WW(k)$ sending $Z$ to $0$. Here, $\WW(k)^+$ is $\WW(k)$ with log structure defined by $\N\to \WW(k)$ given by sending every $n\in \N$ to $0$. In particular, ${\rm H}^i\bigl(\bigl(X_k/\WW(k)^+\bigr)^{\rm cris}_{\rm log}, \bDcrisar(\bL)^+\bigr)$ is a finite dimensional $K_0$-vector space endowed with a Frobenius linear automorphism and a monodromy operator. Its base change to $K$ coincides with the cohomology of the filtered log isocrystal  $\bDcrisar(\bL)_{X_K}$ given by base change of $\bDcrisar(\bL)$ via the map $\cO\to \cO_K$, sending $Z$ to $\pi$. Thus these cohomology groups are endowed with filtrations coming from the filtration on $\bDcrisar(\bL)_{X_K}$.\medskip

For the constructions in (1)--(3) the existence of local deformations of $X$ to $\cO$ would suffice; namely the notion of semistable \'etale local systems and the
functor $\bD_{\rm log}^{\rm ar}$ can be defined  locally and then glued. On the contrary, it is in (4) that we definitely need the existence of a global deformation
$\widetilde{X}$ in order to guarantee the finiteness of the cohomology of Frobenius isocrystals on the reduction of $(X,N)$ modulo $p$ relatively to $\cO_{\rm
cris}$, a key ingredient to prove the theorem. We hope to be able to  remove this assumption in the future.

\bigskip
\noindent All these constructions are generalizations to the semistable case of the analogue results in the smooth case. The comparison isomorphisms in the smooth
case were recently proved in \cite{andreatta_iovita_comparison} (after having been proved before in different ways and various degrees of generality by G.~Faltings,
T.~Tsuji, W.~Niziol etc. see the introduction of \cite{andreatta_iovita_comparison} for an account on the history of the problem to date.)

The proof of the comparison isomorphisms in the smooth case presented in \cite{andreatta_iovita_comparison}
was in fact a result cumulating three sources:

i) \cite{andreatta_iovita_comparison} in which Faltings' site associated to a smooth scheme (or formal scheme)
was defined (in that article $K$ was supposed unramified over $\Q_p$ and so no deformation was required)
and the global theory of Fontaine sheaves on the site was developed.

ii) \cite{brinon} where the local Fontaine theory in the relative smooth case was worked out. In particular, if $R$ is an $\cO_K$-algebra, ``small'' (in Faltings'
sense) and smooth over $\cO_K$ it was proved in \cite{brinon} the following fundamental result: the inclusion $R[1/p]\hookrightarrow B_{\rm cris}(R)$ is faithfully
flat.

iii) \cite{andreatta_brinon} where (in the notations of ii) above)
the geometric Galois cohomology of $B_{\rm cris}(R)$ was calculated.

\bigskip
\noindent The present article generalizes to the semistable case all three articles quoted above as follows: in chapter $2$ we develop the global theory, i.e., we
define Faltings' logarithmic sites $\fX_K$ and $\fX_\Kbar$ and the Fontaine sheaves on it. In chapter $3$ we work out the local Fontaine theory in the relative
semistable case generalizing \cite{brinon}: we define semistable representations and prove their main properties. The situation is more complicated than in the
smooth case, namely let $\mathcal{U}=\Spf (R)$ be a small log affine open of $(X,N)$ and $\widetilde{\mathcal{U}}=\Spf(\widetilde{R})$ a deformation of it to
$(\widetilde{S},\widetilde{M})$. We define relative Fontaine rings $B_{\rm log}^{\rm cris}(\widetilde{R})$ and $B_{\rm log}^{\rm max}(\widetilde{R})$, which are
both $\widetilde{R}[1/p]$-algebras and {\bf together} generalize $B_{\rm cris}(\widetilde{R})$ to the semistable case. More precisely:

\bigskip
\noindent a) Let $\widetilde{R}_{\rm max}$ be the $p$-adic completion of the ring $ \widetilde{R}\Bigl[\frac{P_{\pi}(Z)}{p}\Bigr]$ as a subring of
$\widetilde{R}[1/p]$. We prove that the inclusion $ \widetilde{R}_{\rm max}\bigl[p^{-1}\bigr]\hookrightarrow B_{\rm log}^{\rm max}(\widetilde{R})$ is close to being faithfully flat; see \ref{thm:Blogmaxff}. More precisely we show:

i) If $\alpha=1$, see the assumptions on \S 3.1  (i.e. we are in the semistable reduction case) then in fact $ \widetilde{R}_{\rm max}\bigl[p^{-1}\bigr]\hookrightarrow B_{\rm log}^{\rm max}(\widetilde{R})$ is faithfully flat.

ii) If $\alpha>1$ then the situation is more complicated, namely there exists an algebra $A$ (denoted $A^{\rm +,log}_{\rm \widetilde{R}^o, max}$ in the proof of
theorem \ref{thm:Blogmaxff}) such that $ \widetilde{R}_{\rm max}\bigl[p^{-1}\bigr]\hookrightarrow A\bigl[p^{-1}\bigr]\hookrightarrow B_{\rm log}^{\rm
max}(\widetilde{R})$ and having the properties that a faithfully flat $ \widetilde{R}_{\rm max}\bigl[p^{-1}\bigr]$-algebra $C$ is a direct summand of $A\bigl[
p^{-1}\bigr]$ as $C$-module and the extension $A\bigl[p^{-1}\bigr]\hookrightarrow B_{\rm log}^{\rm max}(\widetilde{R})$ is faithfully flat. It follows that if a
sequence of $ \widetilde{R}_{\rm max}\bigl[p^{-1}\bigr]$-modules
$$0\longrightarrow M'\longrightarrow M\longrightarrow M"\longrightarrow 0$$
becomes exact after base changing it to $ B_{\rm log}^{\rm max}(\widetilde{R})$ then it was exact to start with and that an $ \widetilde{R}_{\rm
max}\bigl[p^{-1}\bigr]$-module  is finite and projective if it is so after base changing  to $ B_{\rm log}^{\rm max}(\widetilde{R})$. These properties are what we
call ``close to faithful flatness" and allow to prove that $\bD_{\rm log}^{\rm ar}$ of a semistable sheaf is an $F$-isocrystal.

\bigskip
\noindent
b) If we denote by $G_R$ the (algebraic) fundamental group of
${\rm Spm}(R_\Kbar)$ for a geometric base point, we compute
the continuous $G_R$-cohomology of $B_{\rm log}^{\rm cris}(\widetilde{R})$
with results similar to those in \cite{andreatta_brinon}.

\bigskip
\noindent
c) Finally, if $\cG_R$ is the (algebraic) fundamental group of
$R[1/p]$ for the same choice of geometric base point as at b) above
and if $V$ is a $p$-adic representation of $\cG_R$ then we prove:
$V$ is $B_{\rm log}^{\rm cris}(\widetilde{R})$-admissible if and only if
$V$ is $B_{\rm log}^{\rm max}(\widetilde{R})$-admissible if and only
if the \'etale local system $\bL$ attached to the representation $V$
is semistable, in which case $V$ itself is called a semistable representation.

Moreover if $V$ is a semistable representation then $D_{\rm log}^{\rm cris}(V)$ and $D_{\rm log}^{\rm max}(V)$ determine one another and $D_{\rm log}^{\rm cris}(V)$
provides $\bD_{\rm log}^{\rm ar}(\bL)$.

Using all these results in the second part of chapter $2$ we prove
the semistable comparison isomorphism theorem \ref{thm:mainresult}
stated above.

\bigskip
\noindent We'd like to point out that T.~Tsuji has a preprint \cite{tsuji} where the theory of semistable \'etale sheaves on a semistable proper scheme over
$\cO_K$ is developed. On the one hand his work is more general than ours as he has less restrictive assumptions on the logarithmic structures allowed and on the
existence of a global deformation over $(\widetilde{S},\widetilde{M})$. On the other hand neither does the author prove in that article any faithful flatness result
nor does he derive comparison isomorphisms for the cohomology of the semistable \'etale local systems defined there.

Finally, recent work of P.~Scholze \cite{Scholze} might lead in the future to results in the direction of proving that de Rham  \'etale sheaves are potentially semistable.

\bigskip
{\bf Acknowledgements} We are very grateful to the referee of this article  for the careful reading of the text and for pointing out some errors in an earlier
version.

\section{Fontaine's sheaves on Faltings' site}
\subsection{Notations}\label{sec:notation}

Let  $p>0$ denote a prime integer and $K$ a complete discrete
valuation field of characteristic~$0$ and perfect residue field
$k$ of characteristic~$p$. Let $K_0$ be the field of fractions of
$\WW(k)$. Let $\cO_K$ be the ring of integers of $K$ and choose a
uniformizer $\pi\in \cO_K$. Fix an algebraic closure~$\Kbar$
of~$K$ and write $G_K$ for the Galois group of $K\subset \Kbar$.
In~$\Kbar$ choose:\smallskip

(a)   a compatible systems of $n!$--roots $\pi^{\frac{1}{n!}}$
of~$\pi$;\smallskip

(b)   a compatible systems of  primitive $n$--roots $\epsilon_{n}$ of~$1$ for varying~$n\in\N$. \smallskip

Define $K_n':=K[\pi^{\frac{1}{n!}}]$ and $K'_\infty:=\cup_n K_n'$.
Since $T^m-\pi$ is an Eisenstein polynomial over $\cO_K$, then
$\cO_{K_n'}:=\cO_K[\pi^{\frac{1}{
n!}}]=\cO_K[T]/\bigl(T^{n!}-\pi\bigr)$ is a complete dvr with
fraction field precisely~$K_n'$.\smallskip

Let $M$ be the log structure on $S:=\Spec(\cO_K)$ associated to the prelog structure $\psi\colon \N \to \cO_K$ given by $1 \mapsto \pi$. Let $\psi_K\colon \cO_K[\N]
\to \cO_K$ be the associated map of $\cO_K$--algebras. For every $n\in\N$ we write $\bigl(S_n,M_n\bigr)$ for the compatible system of log schemes given by
$S_n:=\Spec\bigl(\cO_K/\pi^n \cO_K\bigr)$ and log structure $M_n$ associated to the prelog structure $\N \to \cO_K/\pi^n \cO_K$, $1\mapsto \pi$. We refer to
\cite{katolog} for generalities on logarithmic geometry. \smallskip

Write $\cO:=\WW(k)[\![Z]\!]$ for the power series ring in the variable $Z$ and let $N_\cO$ be the log structure associated to the prelog structure $\psi_\cO\colon
\N \to \cO$ defined by $1\mapsto Z$. We define Frobenius on $\cO$ to be the homomorphism given by the usual Frobenius on $\WW(k)$ and by $Z\mapsto Z^p$. It extends
to a morphism of log schemes inducing multiplication by $p$ on $\N$. Let $P_\pi(Z)$ be the minimal polynomial of $\pi$ over $\WW(k)$. It is an Eisenstein polynomial
and $\theta_\cO\colon \cO \lra\cO_K$, defined by $Z \mapsto \pi$, induces an isomorphism, compatibly with the log structures, $\cO/\bigl(P_\pi(Z)\bigr) \lra\cO_K$.

\smallskip

\subsubsection{The classical  period rings}\label{sec:classical} Write $A_{\rm cris}$
for the classical ring of periods constructed by Fontaine \cite[\S 2.3]{Fontaineperiodes} and $A_{\rm log}$ the classical ring of periods constructed by Kato
\cite[\S 3]{katoperiodes}. More precisely, let $\widetilde{\bf E}_{\cO_\Kbar}^+:=\ds \lim_{\leftarrow} \widehat{\cO}_{\Kbar}$ where the transition maps are given by raising to the $p$-th
power. Consider the elements $\overline{p}:=\bigl(p,p^{\frac{1}{p}},\ldots\bigr)$, $\overline{\pi}:=\bigl(\pi,\pi^{\frac{1}{p}}, \cdots \bigr)$ and
$\varepsilon:=\bigl(1,\epsilon_{p}, \cdots \bigr)$. The set $\widetilde{\bf E}_{\cO_\Kbar}^+$  has a natural ring structure \cite[\S 1.2.2]{Fontaineperiodes} in which $p\equiv 0$ and a
log structure associated to the morphism of monoids $\N \to \widetilde{\bf E}_{\cO_\Kbar}^+$ given by $1\mapsto \overline{\pi}$. Write $A_{\rm inf}\left(\cO_{\Kbar}\right)$, or simply
$A_{\rm inf}$, for the Witt ring $\WW\bigl(\widetilde{\bf E}_{\cO_\Kbar}^+\bigr)$. It is endowed with the log structure associated to the morphism of monoids $\N \to \WW\bigl(\widetilde{\bf E}_{\cO_\Kbar}^+\bigr)$ given by $1\mapsto \bigl[\overline{\pi}\bigr]$. There is a natural ring homomorphism $\theta\colon \WW\bigl(\widetilde{\bf E}_{\cO_\Kbar}^+\bigr) \lra \widehat{\cO}_\Kbar$
\cite[\S 1.2.2]{Fontaineperiodes} such that $\theta\bigl(\bigl[\overline{\pi}\bigr]\bigr)=\pi$. In particular, it is surjective and strict considering on
$\widehat{\cO}_\Kbar$ the log structure associated to $\N \to \widehat{\cO}_\Kbar$ given by $1\mapsto \pi$. Its kernel is principal and generated by
$P_\pi\bigl(\bigl[\overline{\pi}\bigr]\bigr)$ or by the element $\xi:=\bigl[\overline{p}\bigr]-p$. Write ${\mathcal I}$ for the ideal of $\WW\bigl(\widetilde{\bf E}_{\cO_\Kbar}^+\bigr)$
generated by $[\varepsilon]^{\frac{1}{p^n}}-1$ for $n\in\N$ and by the Teichm\"uller lifts $[x]$ for $x\in \widetilde{\bf E}_{\cO_\Kbar}^+$ such that $x^{(0)}$ lies in the maximal ideal
of $\widehat{\cO}_{\Kbar}$.

We recall that $A_{\rm cris}$  is the $p$-adic completion of the DP envelope  of $\WW\bigl(\widetilde{\bf E}_{\cO_\Kbar}^+\bigr)$ with respect to the ideal
generated by $p$ and the kernel of $\theta$. Similarly, $A_{\rm log}$ is the $p$-adic completion of the log DP envelope of the morphism $\WW\bigl(\widetilde{\bf
E}_{\cO_\Kbar}^+\bigr)\otimes_{\WW(k)} \cO$ with respect to the morphism $\theta\otimes \theta_\cO\colon \WW\bigl(\widetilde{\bf
E}_{\cO_\Kbar}^+\bigr)\otimes_{\WW(k)} \cO \lra \widehat{\cO}_{\Kbar}$. In particular, $$A_{\rm log}\cong A_{\rm cris}\left\{\langle u-1 \rangle\right\},$$by which
we mean that there exists an isomorphism of $A_{\rm cris}$-algebras from the $p$-adic completion $A_{\rm cris}\left\{\langle V \rangle\right\}$ of the DP polynomial
ring over $A_{\rm cris}$ in the variable $V$  and $A_{\rm log}$ sending $V$ to $ u-1$ with $u:= \frac{\bigl[\overline{\pi}\bigr]}{Z}$;
cf.~\cite[Prop.~3.3]{katoperiodes} and \cite[\S 2]{breuil} where the ring is denoted $\widehat{A}_{\rm st}$. We endow $A_{\rm cris}$ and $A_{\rm log}$ with the
$p$-adic topology and the divided power filtration. We write $B_{\rm cris}:=A_{\rm cris}\bigl[t^{-1}\bigr]$ and $B_{\rm log}:=A_{\rm log}\bigl[t^{-1}\bigr]$, where
$t:=\log\bigl([\varepsilon]\bigr)$, with the inductive limit topology and the filtration $\Fil^n B_{\rm cris}:=\sum_{m\in \N} \Fil^{n+m} A_{\rm cris} t^{-m}$ and
$\Fil^n B_{\rm log}:=\sum_{m\in \N} \Fil^{n+m} A_{\rm log} t^{-m}$.

Let $B_{\rm dR}^+$ be the classical ring of Fontaine defined as the completion of  $\WW\bigl(\widetilde{\bf E}_{\cO_\Kbar}^+\bigr)[p^{-1}]$ with respect to the ideal generated by $\ker
\theta$ with the filtration defined by this ideal.  Similarly, we construct $B_{\rm dR}^+(\cO)$ as follows. Define $A_{\rm inf}(\cO)$ as the completion of
$\WW\bigl(\widetilde{\bf E}_{\cO_\Kbar}^+\bigr)\otimes_{\WW(k)} \cO$ with respect to the ideal $\bigl(\theta \otimes \theta_\cO \bigr)^{-1}\bigl(p \widehat{\cO}_\Kbar\bigr)$ and simply
denote $\theta \otimes \theta_\cO\colon A_{\rm inf}(\cO)\to  \widehat{\cO}_\Kbar$ the map extending $\theta \otimes \theta_\cO$. Then, we set $B_{\rm dR}^+(\cO)$ to
be the completion of $A_{\rm inf}(\cO)\big[p^{-1}\big]$ with respect to the ideal generated by $\ker \theta \otimes \theta_\cO$, with the filtration defined by this
ideal. Define $B_{\rm dR}:=B_{\rm dR}^+\bigl[t^{-1}\bigr]$ and $B_{\rm dR}(\cO):=B_{\rm dR}^+(\cO)\bigl[t^{-1}\bigr]$. We extend the filtrations to $B_{\rm dR}$ and
$B_{\rm dR}(\cO)$ as before. Note that $B_{\rm dR}^+(\cO)\cong B_{\rm dR}^+[\![u-1]\!]\cong B_{\rm dR}^+[\![Z-\pi]\!] $ where the filtration is the composite of the
filtration on $B_{\rm dR}^+$ and the $(u-1)$-adic or $(Z-\pi)$-adic filtration; cf \ref{cor:BdRstr}(4). We have an inclusion $B_{\rm log}\subset B_{\rm dR}(\cO)$,
strict with respect to the filtrations. We also have the classical subrings $B_{\rm cris,K}:=B_{\rm cris}\otimes_{K_0} K$ and $B_{\rm st,K}:=B_{\rm st}\otimes_{K_0}
K$ of $B_{\rm dR}$ introduced by Fontaine; see \cite[\S 3.1.6]{Fontaineperiodes}  and \cite[Thm. 4.2.4]{Fontaineperiodes}. Define $\overline{B}_{\rm log}$ to be the
image of the composite map $f_\pi\colon B_{\rm log}\to B_{\rm dR}(\cO) \to B_{\rm dR}$ defined in \cite[\S 7]{breuil}, and given by $Z\mapsto \pi$. We consider the
image filtration which is the filtration inherited by $B_{\rm dR}$. For later use we remark

\begin{lemma}\label{lemma:BlogmodPpi} We have natural  morphisms $B_{\rm cris,K}\subset
B_{\rm st,K}\subset \overline{B}_{\rm log}  \subset B_{\rm dR}$, which are $G_K$-equivariant,
are strictly compatible with the filtrations and induce isomorphisms
on the associated graded rings.

\end{lemma}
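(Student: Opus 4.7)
The plan is to reduce the statement to the classical identification $\mathrm{gr}^n B_{\rm cris}\cong \C_p\cdot t^n\cong \C_p(n)\cong \mathrm{gr}^n B_{\rm dR}$ due to Fontaine. Once the chain of inclusions $B_{\rm cris,K}\subset B_{\rm st,K}\subset \overline{B}_{\rm log}\subset B_{\rm dR}$ is set up and shown to be strict for the filtrations, the graded assertion follows by sandwiching: injectivity on graded pieces combined with the fact that the composition $\mathrm{gr}\,B_{\rm cris,K}\to \mathrm{gr}\,B_{\rm dR}$ is already a bijection forces each intermediate map to be an isomorphism.

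First I would verify the inclusions. The embedding $B_{\rm cris,K}\hookrightarrow B_{\rm st,K}$ is by construction. To obtain $B_{\rm st,K}\hookrightarrow \overline{B}_{\rm log}$, note that $f_\pi$ is the identity on $A_{\rm cris}\subset A_{\rm log}$ and is defined by $Z\mapsto \pi$, hence factors through $\cO\to \cO_K$; consequently $\overline{B}_{\rm log}$ contains both $B_{\rm cris}$ and $K$, and therefore $B_{\rm cris,K}$. Using the presentation $A_{\rm log}\cong A_{\rm cris}\{\langle u-1\rangle\}$ with $u=[\overline{\pi}]/Z$, the series
$$
\log u=-\sum_{n\ge 1}(n-1)!\,(1-u)^{[n]}
$$
converges in $A_{\rm log}\bigl[p^{-1}\bigr]\subset B_{\rm log}$, and $f_\pi$ sends it to $\log\bigl([\overline{\pi}]/\pi\bigr)\in B_{\rm dR}^+$, an element well-defined because $[\overline{\pi}]/\pi-1\in \Fil^1 B_{\rm dR}^+$. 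This image is precisely the generator of $B_{\rm st,K}$ over $B_{\rm cris,K}$ under the Fontaine embedding $B_{\rm st,K}\hookrightarrow B_{\rm dR}$ attached to the uniformizer $\pi$, so $B_{\rm st,K}=B_{\rm cris,K}\bigl[\log([\overline{\pi}]/\pi)\bigr]\subset \overline{B}_{\rm log}$. The last inclusion $\overline{B}_{\rm log}\subset B_{\rm dR}$ is its definition, and $G_K$-equivariance of the whole chain is immediate from the naturality of $\theta$, $\theta\otimes \theta_\cO$, and $f_\pi$ for the canonical Galois action on $\widetilde{\bf E}_{\cO_\Kbar}^+$.

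Next I would check strict compatibility and identify the graded rings. By definition $\overline{B}_{\rm log}$ carries the filtration induced from $B_{\rm dR}$. The classical theorems of Fontaine assert that the filtrations on $B_{\rm cris}$ and $B_{\rm st}$ built from divided powers coincide with those induced from the embeddings into $B_{\rm dR}$; extending scalars along $K_0\hookrightarrow K\subset B_{\rm dR}^+$ (a subring sitting in filtration degree zero) preserves this property, so $B_{\rm cris,K}$ and $B_{\rm st,K}$ likewise inherit their filtrations from $B_{\rm dR}$. The elementary identity $A\cap \Fil^n B=A\cap \Fil^n B_{\rm dR}$, valid whenever $A\subset B\subset B_{\rm dR}$ have induced filtrations, then gives strictness of every successive inclusion. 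Combining the identification $\theta\colon \mathrm{gr}^0 A_{\rm cris}\xrightarrow{\sim}\widehat{\cO}_\Kbar$ with the formula $\Fil^n B_{\rm cris}=\sum_m \Fil^{n+m}A_{\rm cris}\cdot t^{-m}$ forces $\mathrm{gr}^n B_{\rm cris,K}\to \mathrm{gr}^n B_{\rm dR}$ to be the identity on $\C_p\cdot t^n=\C_p(n)$; the full graded composition is thus an isomorphism, and strictness makes each of the four intermediate graded maps injective and a factor of a bijection, hence an isomorphism.

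The substantive point is really step one: identifying $f_\pi(\log u)$ with the Fontaine generator of $B_{\rm st,K}\subset B_{\rm dR}$, which hinges on matching the convention that sends the formal symbol $\log[\overline{\pi}]$ of $B_{\rm st}$ to $\log([\overline{\pi}]/\pi)$ under the embedding attached to the uniformizer $\pi$. Once this convention is nailed down, the remainder is formal manipulation with filtrations against the classical graded computation.
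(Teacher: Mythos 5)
Your proposal is correct and follows essentially the same route as the paper's proof: both hinge on recognizing $\log([\overline{\pi}]/\pi)=f_\pi(\log u)$ inside $\overline{B}_{\rm log}$ as the generator of $B_{\rm st}$ over $B_{\rm cris}$ (the paper via the divided powers of $[\overline{\pi}]/\pi-1$ in $A_{\rm log}/(P_\pi(Z))$ and Breuil's Lemma 7.1, you via the explicit series for $\log u$ in $A_{\rm cris}\{\langle u-1\rangle\}$), and both then deduce strictness and the graded isomorphisms by sandwiching against the classical fact that $B_{\rm cris}\to B_{\rm dR}$ is an isomorphism on associated graded rings. Your write-up merely spells out the strictness bookkeeping that the paper leaves implicit.
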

\begin{proof} The map $f_\pi$ is clearly  compatible with $G_K$-action and the filtrations.
It sends $P_\pi(Z)$ to $0$. In particular,  $A_{\rm log}/\bigl(P_\pi(Z)\bigr) A_{\rm log}$ is an $ A_{\rm cris}\otimes_{\WW(k)} \cO_K$ algebra and contains the
divided powers of the element ${\bigl[\overline{\pi}\bigr]}/{\pi}-1$. In particular, $A_{\rm log}/\bigl(P_\pi(Z)\bigr)$ contains the element
$\log\bigl([\overline{\pi}]/\pi\bigr)$ which generates $B_{\rm st}$ as $B_{\rm cris}$-algebra by \cite[lemma 7.1]{breuil}. See also \cite[\S
3.1.6]{Fontaineperiodes}. This provides the claimed inclusions. As the map $B_{\rm cris}\to B_{\rm dR}$ induces an isomorphism on the associated graded rings, the
claim follows.
\end{proof}

The rings $A_{\rm cris}$ and $A_{\rm log}$, and hence $B_{\rm cris}$ and $B_{\rm log}$, are endowed with a
Frobenius having the property that $\varphi(u)=u^p$ and
$\varphi(t)=pt$ and a continuous action of the Galois group $G_K$. Moreover, there is a derivation
$$d\colon A_{\rm log} \lra A_{\rm log} \frac{d Z}{Z}$$which
is $A_{\rm cris}$ linear and satisfies $d\bigl((u-1)^{[n]}\bigr)= (u-1)^{[n-1]} u \frac{d Z}{Z}$; see \cite[Prop.~3.3]{katoperiodes} and \cite[Lemma 7.1]{breuil}.
Its kernel is $A_{\rm cris}$ and the inclusion $A_{\rm cris}\subset A_{\rm log}$ is split injective where the left inverse is defined by setting $(u-1)^{[n]}\mapsto
0$ for every $n\in\N$. We let $N$ be the $A_{\rm cris}$-linear operator on $A_{\rm log}$ such that $d(f)=N(f) \frac{d Z}{Z}$. In particular, $d$ and $N$ extend to
$B_{\rm log}$. It is proven in \cite[Thm.~3.7]{katoperiodes} that Fontaine's period ring $B_{\rm st}$, see \cite[\S 3.1.6]{Fontaineperiodes}, is isomorphic to the
subring of $B_{\rm log}$ where $N$ acts nilpotently.

\

{\em $B_{\rm log}$-admissible representations} According to
\cite[Def. 3.2]{breuil} a $\Q_p$-adic representation $V$ of $G_K$
is called $B_{\rm log}$-admissible if\smallskip

(1) $D_{\rm log}(V):=\bigl(B_{\rm log}\otimes_{\Q_p}
V\bigr)^{G_K}$ is a free $B_{\rm log}^{G_K}$-module;\smallskip

(2) the morphism  $B_{\rm log}\otimes_{B_{\rm log}^{G_K}} D_{\rm
log}(V)\lra B_{\rm log}\otimes_{\Q_p} V$ is an isomorphism,
strictly compatible with the filtrations.\smallskip

In this case $D_{\rm log}(V)$ is an object in the category ${\cal
MF}_{B_{\rm log}^{G_K}}(\varphi,N)$ of finite and free $B_{\rm
log}^{G_K}$-modules $M$, endowed with (i) a monodromy operator
$N_M$ compatible via Leibniz rule with the one on $B_{\rm
log}^{G_K}$, (ii) a decreasing exhaustive filtration $\Fil^n M$
which satisfies Griffiths' transversality with respect to $N_M$
and such that the multiplication map $B_{\rm log}^{G_K}\times M
\to M$ is compatible with the filtrations, (iii) a semilinear
Frobenius morphism $\varphi_M\colon M \to M$ such that $N_M \circ
\varphi_M= p \varphi_M\circ N_M$ and $\det \varphi_M$ is
invertible in $B_{\rm log}^{G_K}$. See \cite[\S 6.1]{breuil}.

\

{\em Comparison with semistable representations} Consider the category ${\cal MF}_{K}(\varphi,N)$ of finite dimensional
$K_0$-vector spaces $D$ endowed with (i) a
monodromy operator $N_D$, (ii) a descending and exhaustive filtration $\Fil^n D_K$ on $D_K:=D\otimes_{K_0} K$, (iii) a
Frobenius $\varphi_D$ such that ${\rm det}
\varphi_D\neq 0$ and $N_D \circ \varphi_D= p \varphi_D\circ N_D$; see \cite{colmez_fontaine}. Such a module
is called $B_{\rm st}$-admissible if there exists a
$\Q_p$-representation $V$ of $G_K$ such that $D_{\rm st}(V):=\bigl(V\otimes_{\Q_p}
B_{\rm st}\bigr)^{G_K}$ is isomorphic to $D$ compatibly with monodromy operator,
Frobenius and filtration after extending scalars to $K$.  Consider the functor $$T\colon {\cal MF}_{K}(\varphi,N)\lra
{\cal MF}_{B_{\rm
log}^{G_K}}(\varphi,N)$$sending $D \mapsto T(D):=D\otimes_{K_0} B_{\rm log}^{G_K}$ with  monodromy operator
$N_D\otimes 1 + 1\otimes N$, Frobenius $\varphi_D\otimes
\varphi$ and filtration defined on \cite[p. 201]{breuil} using the filtration on $D_K$ and
the monodromy operator. More precisely,   the map $f_\pi\colon B_{\rm
log}\to B_{\rm dR}$ defined in \ref{lemma:BlogmodPpi} by sending  $Z$ to $\pi$ induces a map
$B_{\rm log}^{G_K}\lra B_{\rm dR}^{G_K}=K$. This provides a
morphism $\rho\colon T(D)\to D_K$. Then, $\Fil^n T(D)$ is defined inductively on $n$ by setting $\Fil^n T(D):=\left\{x\in T(D)\vert \rho(x)\in \Fil^n D_K, \quad
N(x)\in \Fil^{n-1} T(D) \right\} $.

\begin{proposition}\label{prop:T} {\rm \cite{breuil}} (1) The functor $T$ is an
equivalence of categories.\smallskip

(2) The notions of $B_{\rm log}$-admissible representations of
$G_K$ and of $B_{\rm st}$-admissible representations are
equivalent. For any such, we have $T\bigl(D_{\rm st}(V)\bigr)\cong
D_{\rm log}(V)$.

\end{proposition}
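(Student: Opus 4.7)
The plan is to follow Breuil's argument in \cite{breuil}, organized around the structural fact that $A_{\rm log}\cong A_{\rm cris}\{\langle u-1\rangle\}$ plus the identification of $B_{\rm st}$ as the $N$--nilpotent subring of $B_{\rm log}$.

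For part (1), I would first construct the quasi-inverse of $T$. The inclusion $A_{\rm cris}\subset A_{\rm log}$ is split (as recalled just before the proposition) by the $A_{\rm cris}$--linear retraction sending $(u-1)^{[n]}\mapsto 0$ for $n\ge 1$. Inverting $t$ and passing to Galois invariants yields a retraction $s\colon B_{\rm log}^{G_K}\twoheadrightarrow K_0$. Given $M\in {\cal MF}_{B_{\rm log}^{G_K}}(\varphi,N)$, set $T^{-1}(M):=M\otimes_{B_{\rm log}^{G_K},s}K_0$; this is a finite dimensional $K_0$--vector space of rank equal to the $B_{\rm log}^{G_K}$--rank of $M$. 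Frobenius descends because $s$ commutes with $\varphi$; a monodromy $N_{T^{-1}(M)}$ is induced from $N_M$ modulo a Leibniz correction killed by $s$; and the filtration on $T^{-1}(M)_K=M\otimes_{B_{\rm log}^{G_K},f_\pi}K$ is transported from $M$ via $\rho_M$. The verification $T\circ T^{-1}\cong\mathrm{id}$ and $T^{-1}\circ T\cong\mathrm{id}$ is then a matter of unwinding definitions, the only subtle point being that the inductive prescription $\Fil^n T(D)=\{x\mid \rho(x)\in\Fil^n D_K,\ N(x)\in\Fil^{n-1} T(D)\}$ is precisely engineered to reconstruct the filtration from its image under $\rho$, using that $B_{\rm log}^{G_K}$ is generated over $K_0$ by divided powers of $\log u$ on which $N$ acts as a shift.

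For part (2), I would invoke Kato's identification \cite[Thm.~3.7]{katoperiodes} of $B_{\rm st}$ as the subring of $B_{\rm log}$ on which $N$ acts locally nilpotently. If $V$ is $B_{\rm log}$--admissible, then $D_{\rm st}(V)=(V\otimes_{\Q_p} B_{\rm st})^{G_K}$ is the maximal $N$--nilpotent $K_0$--subspace of $D_{\rm log}(V)$, which by part (1) coincides with $T^{-1}(D_{\rm log}(V))$; this is finite dimensional over $K_0$, and the isomorphism $T(D_{\rm st}(V))\cong D_{\rm log}(V)$ follows together with $B_{\rm st}$--admissibility of $V$. Conversely, if $V$ is $B_{\rm st}$--admissible, tensoring $D_{\rm st}(V)$ up to $B_{\rm log}^{G_K}$ produces an object of ${\cal MF}_{B_{\rm log}^{G_K}}(\varphi,N)$ which identifies with $D_{\rm log}(V)$, providing $B_{\rm log}$--admissibility.

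The hard part will be the strict compatibility of filtrations at each step. In part (1), one must check that the inductive formula for $\Fil^n T(D)$ really reconstructs the filtration on a given $M$, and in part (2) one must match the filtration on $D_{\rm st}(V)_K$ (coming from the embedding $B_{\rm st,K}\hookrightarrow B_{\rm dR}$) with that on $D_{\rm log}(V)_K$ (transported via $f_\pi$). Both come down to Lemma \ref{lemma:BlogmodPpi}: the chain $B_{\rm cris,K}\subset B_{\rm st,K}\subset \overline{B}_{\rm log}\subset B_{\rm dR}$ is strict for filtrations and induces isomorphisms on associated graded rings, so the two filtrations coincide after passing to $\mathrm{gr}^\bullet$, which together with the inductive definition forces equality on the nose.
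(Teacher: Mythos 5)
The paper does not actually prove this proposition: it simply cites \cite{breuil} (Thm.~6.1.1 for (1), Thm.~3.3 for (2)), so you are reconstructing Breuil's argument, and the reconstruction has a genuine gap at its very first step. The retraction $A_{\rm log}\to A_{\rm cris}$ sending $(u-1)^{[n]}\mapsto 0$ is \emph{not} $G_K$-equivariant: since $g(u)=[\varepsilon]^{c(g)}u$, one has $g\bigl((u-1)^{[n]}\bigr)=\sum_k [\varepsilon]^{kc(g)}(u-1)^{[k]}\bigl([\varepsilon]^{c(g)}-1\bigr)^{[n-k]}$, whose image under the retraction is $\bigl([\varepsilon]^{c(g)}-1\bigr)^{[n]}\neq 0$. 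Consequently it does not induce a map $s\colon B_{\rm log}^{G_K}\to K_0$; concretely, $Z=[\overline{\pi}]\,u^{-1}$ lies in $B_{\rm log}^{G_K}$ (recall $B_{\rm log}^{G_K}$ is not $K_0$ but essentially a ring of convergent series in $Z$ over $K_0$ — this is why the category ${\cal MF}_{B_{\rm log}^{G_K}}(\varphi,N)$ carries a Leibniz rule and Griffiths transversality at all), and your $s$ sends $Z$ to $[\overline{\pi}]\notin K_0$. The retraction is also not compatible with $N$: $N(u-1)=\pm u$ maps to $\pm 1\neq 0$, so the ``Leibniz correction'' is not killed by $s$. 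The correct section, and the one Breuil uses, is reduction modulo $Z$ (the map $\cO_{\rm cris}\to \WW(k)$, $Z\mapsto 0$), which is both Galois-trivial on the base and $N$-compatible since $N(Zf)=Zf+ZN(f)\in Z\cdot(\,\cdot\,)$.

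The second, related, gap is that even with the correct section $T^{-1}(M):=M/ZM$, the isomorphisms $T\circ T^{-1}\cong \mathrm{id}$ and $T^{-1}\circ T\cong \mathrm{id}$ are emphatically not ``a matter of unwinding definitions.'' Producing a $K_0$-structure $D\subset M$ with $M\cong D\otimes_{K_0}B_{\rm log}^{G_K}$ requires a Frobenius-descent (Dwork's trick) argument: one lifts a basis of $M/ZM$ arbitrarily and then averages, $\gamma=\sum_{n\ge 0}F^n\circ\gamma_0\circ F_0^{-n}$, using that $Z^{p^n}/p^{cn}\to 0$ in the relevant ring to get convergence and $F$-equivariance; this is exactly the mechanism the present paper reproduces in the proof of Proposition \ref{prop:HicrisE}(6'), and it is the heart of Breuil's Th\'eor\`eme 6.1.1. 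Your part (2) is sound in outline (Kato's identification of $B_{\rm st}$ with the $N$-locally-nilpotent part of $B_{\rm log}$, plus strictness from Lemma \ref{lemma:BlogmodPpi}), but it leans on the equivalence in part (1), so it inherits the gap.
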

\begin{proof}
(1) is proven in \cite[Thm.~6.1.1]{breuil}. (2) is proven in
\cite[Thm.~3.3]{breuil}. \end{proof}

{\em An admissibility criterion.} We prove  a criterion of admissibility very similar to the ones in
\cite{colmez_fontaine}. Let $M$ be an object of ${\cal MF}_{B_{\rm log}^{G_K}}(\varphi,N)$.
The map $B_{\rm log} \to B_{\rm dR}$ sending $Z$ to $\pi$ has image $\overline{B}_{\rm log} $ by
\ref{lemma:BlogmodPpi}. Define
$$V_{\rm log}^0(M):=\left(B_{\rm log}\otimes_{B_{\rm log}^{G_K}} M
\right)^{N=0,\varphi=1},\quad V_{\rm log}^1(M):= \bigl(\overline{B}_{\rm log}
\otimes_{B_{\rm log}^{G_K}} M\bigr)/\Fil^0\bigl(\overline{B}_{\rm log}  \otimes_{B_{\rm
log}^{G_K}} M\bigr).$$Let
$$
\delta(M)\colon V_{\rm log}^0(M) \lra V_{\rm log}^1(M)
$$
be the map given by the composite of the inclusion $V_{\rm log}^0(M)\subset B_{\rm log}  \otimes_{B_{\rm log}^{G_K}} M$
and the projection $B_{\rm log}
\otimes_{B_{\rm log}^{G_K}} M \to \overline{B}_{\rm log}  \otimes_{B_{\rm log}^{G_K}} M$. We simply write
$V_{\rm log}(M)$ for the kernel of $\delta(M)$.  Then,

\begin{proposition}
\label{prop:admis} (1) A filtered $(\varphi,N)$-module $M$ over
$B_{\rm log}^{G_K}$ is admissible  if and only if \enspace {\rm
(a)} $V_{\rm log}(M)$ is a finite dimensional $\Q_p$-vector space
and \enspace {\rm (b)} $\delta(M)$ is surjective.

\smallskip Moreover, if $V=V_{\rm log}(M)$ is finite dimensional
as $\Q_p$-vector space then it is a semistable representation of
$G_K$ and $D_{\rm log}(V)\subseteq M$. The latter is an equality
if and only if $M$ is admissible.

\smallskip (2) The functors  $V^0_{\rm log}$ and $V^1_{\rm log}$ on the
category ${\cal MF}_{B_{\rm log}^{G_K}}(\varphi,N)$ are exact and
the morphism $\delta(M)$ is not an isomorphism if $M\neq 0$.
\end{proposition}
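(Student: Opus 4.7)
My plan is to transport the statement, via the equivalence of categories $T$ of Proposition \ref{prop:T}, to the classical Colmez--Fontaine admissibility criterion of \cite{colmez_fontaine} for $B_{\rm st}$-admissibility of filtered $(\varphi,N)$-modules over $K_0$. By Proposition \ref{prop:T}(1) I can write $M = T(D) = D \otimes_{K_0} B_{\rm log}^{G_K}$ uniquely, so that $B_{\rm log} \otimes_{B_{\rm log}^{G_K}} M \cong B_{\rm log} \otimes_{K_0} D$ and $\overline{B}_{\rm log} \otimes_{B_{\rm log}^{G_K}} M \cong \overline{B}_{\rm log} \otimes_{K_0} D$, compatibly with Frobenius, monodromy, and filtrations.

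The heart of the proof is to identify $V^0_{\rm log}(M)$ and $V^1_{\rm log}(M)$ with their classical Colmez--Fontaine analogues. For $V^0_{\rm log}(M)$, the monodromy on $B_{\rm log}\otimes_{K_0} D$ is $N\otimes 1 + 1\otimes N_D$; since $N_D$ is nilpotent on the finite-dimensional $D$, any element killed by this operator must lie in the $N$-nilpotent part of $B_{\rm log} \otimes_{K_0} D$, which equals $B_{\rm st}\otimes_{K_0} D$ by the characterization \cite[Thm.~3.7]{katoperiodes} of $B_{\rm st}$ as the $N$-nilpotent subring of $B_{\rm log}$. Hence $V^0_{\rm log}(M) = (B_{\rm st}\otimes_{K_0} D)^{N=0,\varphi=1}$, the classical $V^0_{\rm st}(D)$. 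For $V^1_{\rm log}(M)$, Lemma \ref{lemma:BlogmodPpi} gives that $\overline{B}_{\rm log} \hookrightarrow B_{\rm dR}$ is strict for the filtrations and induces isomorphisms on associated graded rings; tensoring with the finite-dimensional filtered $K$-space $D_K$ and quotienting by $\Fil^0$ preserves this property, yielding $V^1_{\rm log}(M) \cong (B_{\rm dR}\otimes_K D_K)/\Fil^0 =: V^1_{\rm dR}(D)$.

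Under these identifications $\delta(M)$ becomes the classical map $V^0_{\rm st}(D) \to V^1_{\rm dR}(D)$ and $V_{\rm log}(M) = V_{\rm st}(D)$. Part (1) then follows directly from the Colmez--Fontaine criterion \cite{colmez_fontaine}: $D$ is admissible iff $V_{\rm st}(D)$ is a finite-dimensional $\Q_p$-vector space and $\delta(D)$ is surjective; in any case $V_{\rm st}(D)$ is a semistable representation of $G_K$ with $D_{\rm st}(V_{\rm st}(D))\subseteq D$, equality holding iff $D$ is admissible. Combined with Proposition \ref{prop:T}(2) this translates to the statements for $V_{\rm log}(M)$, $D_{\rm log}$, and admissibility of $M$.

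For part (2), the exactness of $V^0_{\rm log}$ and $V^1_{\rm log}$ reduces, via the same identifications, to the well-known exactness of $V^0_{\rm st}$ and $V^1_{\rm dR}$ on ${\cal MF}_K(\varphi, N)$, using respectively the fundamental exact sequence $0 \to \Q_p \to B_{\rm cris}^{\varphi=1} \to B_{\rm dR}/B_{\rm dR}^+ \to 0$ and the strictness of the $B_{\rm dR}$-filtration on finite-dimensional tensor products. Finally, if $\delta(M)$ were an isomorphism for some $M \neq 0$, then $V_{\rm log}(M)=0$ and $\delta(M)$ would be surjective, so by (1) $M$ would be admissible with $M = D_{\rm log}(V_{\rm log}(M)) = D_{\rm log}(0) = 0$, a contradiction. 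The main obstacle is the identification of $V^0_{\rm log}(M)$ with $V^0_{\rm st}(D)$: it requires carefully combining the nilpotence of $N_D$ on the finite-dimensional $D$ with the characterization of $B_{\rm st}$ inside $B_{\rm log}$ to handle the diagonal operator $N\otimes 1 + 1\otimes N_D$. Once this (and the straightforward analogue for $V^1_{\rm log}$) is in place, the whole statement becomes a clean translation of Colmez--Fontaine through Breuil's equivalence $T$.
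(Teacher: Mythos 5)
Your proposal is correct and follows essentially the same route as the paper: reduce to the Colmez--Fontaine criterion for filtered $(\varphi,N)$-modules over $K$ via Breuil's equivalence $T$, identifying $V^0_{\rm log}(T(D))$ with $V^0_{\rm st}(D)$ (using that the $N$-nilpotent part of $B_{\rm log}$ is $B_{\rm st}$) and $V^1_{\rm log}(T(D))$ with $V^1_{\rm st}(D)$ via the graded isomorphism ${\rm Gr}^\bullet\overline{B}_{\rm log}\cong {\rm Gr}^\bullet B_{\rm dR}$ of Lemma \ref{lemma:BlogmodPpi}. The only cosmetic differences are that you spell out the $V^0$ identification more explicitly (handling the diagonal operator $N\otimes 1+1\otimes N_D$ via nilpotence of $N_D$) and deduce the non-isomorphy of $\delta(M)$ for $M\neq 0$ from part (1) rather than quoting it from \cite{colmez_fontaine}; both are fine.
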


\begin{proof} (1) Let $\bigl(D,\varphi, N, \Fil^\bullet
D_K\bigr)$ be a filtered $(\varphi,N)$-module over $K$,
cf.~\ref{sec:notation}. As in \cite[\S 5.1 \&
5.2]{colmez_fontaine} we define $V_{\rm st}^0(D):=
\left(B_{\rm st}\otimes_{K_0} D \right)^{N=0,\varphi=1}$ and
$V_{\rm st}^1(D):=B_{\rm dR}\otimes_K D_K/\Fil^0\bigl(B_{\rm
dR}\otimes_K D_K \bigr)$. We let $\delta(D)\colon V_{\rm
st}^0(D)\lra V_{\rm st}^1(D)$ be the natural map.

First of all we claim that the proposition holds replacing the category  ${\cal MF}_{B_{\rm log}^{G_K}}(\varphi,N)$ with the category of  filtered
$(\varphi,N)$-modules over $K$ and $V_{\rm log}^i(M)$, $i= {\rm nothing},0,1$ with $V_{\rm st}^i(D)$. Indeed, it is proven in \cite[Prop. 4.5]{colmez_fontaine} that
the $\Q_p$-vector space $V_{\rm st}(D)$ is finite dimensional if and only if for every subobject $D'\subset D$ we have $t_H(D')\leq t_N(D')$ (these are the Hodge
and Newton numbers attached to $D'$, respectively). Moreover, it is also shown in loc.~cit.~that in this case $V_{\rm st}(D)$ is a semistable representation of
$G_K$ whose associated filtered $(\varphi,N)$-module is contained in $D$. It coincides with $D$ if and only if $\dim_{\Q_p} V_{\rm st}(D)=\dim_{K_0} D$. It follows
from the proof of \cite[Prop. 5.7]{colmez_fontaine} that, if $V_{\rm st}(D)$ is finite dimensional, then $\dim_{\Q_p} V_{\rm st}(D)=\dim_{K_0} D$ if and only if
$\delta(D)$ is surjective. The claim follows for filtered $(\varphi,N)$-modules over $K$.\smallskip

Since $B_{\rm log}^{N=0}=B_{\rm st}^{N=0}=B_{\rm cris}$, it follows that $V_{\rm st}^0(D)\cong V_{\rm log}^0\bigl(T(D)\bigr)$. Since  $V_{\rm
log}^1\bigl(T(D)\bigr)\cong \bigl(\overline{B}_{\rm log} \otimes_{K} D_K\bigr)/\Fil^0\bigl(\overline{B}_{\rm log} \otimes_{K} D_K\bigr)$ and ${\rm Gr}^\bullet
\overline{B}_{\rm log}={\rm Gr}^\bullet B_{\rm dR}$ by \ref{lemma:BlogmodPpi}, we deduce that the complexes $\delta(D)\colon V_{\rm st}^0(D)\lra V_{\rm st}^1(D)$
and $\delta\bigl(T(D)\bigr)\colon V_{\rm log}^1\bigl(T(D)\bigr) \lra V_{\rm log}^1\bigl(T(D)\bigr)$ are identified. Thus,  via the equivalence of categories $T$ of
\ref{prop:T}, claim (1) follows from its analogue for filtered $(\varphi,N)$-modules over $K$ discussed above. This concludes the proof of (1).\smallskip

To prove (2) it suffices to show the exactness of $V_{\rm st}^0$
and $V_{\rm st}^1$ and the fact that $\delta$ is not an
isomorphism for non zero objects on the category of  filtered
$(\varphi,N)$-modules over $K$. This is proven in \cite[Prop.~5.1
\& Prop.~5.2]{colmez_fontaine}.
\end{proof}

\subsubsection{Assumptions}\label{sec:assumptions}
Fix a positive integer $\alpha$. We assume that we are in one of the following two situations:

\smallskip

({\it ALG}) $\bigl(X,N\bigr)$ is a log scheme and $f\colon \bigl(X,N\bigr) \to \bigl(S,M\bigr)$ is a morphism of log schemes of finite type admitting a covering by
\'etale open subschemes $\Spec(R)\subset X$, by which we mean that $\Spec(R)\to X$ is an \'etale morphism, of the form:

$$\begin{array}{ccc}
\cO_K[P] & \stackrel{\psi_R}{\longrightarrow} & R \cr \uparrow & &
\uparrow\cr \cO_K[\N] &\stackrel{\psi_\alpha}{\longrightarrow} &
\cO_K,\cr
\end{array}$$where\enspace (i) $P:=P_a\times P_b$ with $P_a:=\N^a$ and
$P_b:=\N^b$, \enspace (ii) the left vertical map is the morphism of $\cO_K$-algebras
defined by the map on monoids $\N\to P=P_{a}\times P_b$ given by $n\mapsto
\bigl((n,\ldots,n),(0,\ldots,0)\bigr)$, \enspace (iii) $\psi_\alpha$
is the map of $\cO_K$-algebras  with $\N\ni 1\mapsto \pi^\alpha$.\smallskip

We require that the morphism $\cO_K[P]\otimes_{\cO_K[\N]} \cO_K\to R$ on associated spectra is \'etale, in the classical sense, and that the log structure on
$\Spec(R)$ induced by $(X,N)$ is the pullback of the fibred product log structure on $\Spec\bigl(\cO_K[P]\otimes_{\cO_K[\N]} \cO_K\bigr)$. We further assume that
for every subset $J_a\subset \{1,\ldots,a\}$ and every subset $J_b\subset \{1,\ldots,b\}$ the ideal in $R$ generated by $\psi_R\bigl(\N^{J_a}\times \N^{J_b}\bigr)$
defines an irreducible closed subscheme of $\Spec(R)$, that  the ideal of $R$ generated by $\psi_R\bigl(P_a\bigr)$ is not the unit ideal and that the image of the
monoid $\cO_\Kbar^\ast \cdot \psi_R\bigl(P_b\bigr )$ is saturated in $R\otimes_{\cO_K} \cO_\Kbar$.

\smallskip

({\it FORM}) for every $n\in\N$ we have a log scheme $\bigl(X_n,N_n\bigr)$  and a morphism of log schemes of finite type $f_n\colon \bigl(X_n,N_n\bigr) \to
\bigl(S_n,M_n\bigr)$ such that $\bigl(X_n,N_n\bigr)$ is isomorphic as log scheme over $(S_n,M_n)$ to the fibred product of $\bigl(X_{n+1},N_{n+1}\bigr)$ and
$\bigl(S_n,M_n\bigr)$ over $\bigl(S_{n+1},M_{n+1}\bigr)$. Write $X_{\rm form}$ for the formal scheme associated to the $X_n$'s. We require that \'etale locally on
$X_1$ the formal scheme $X_{\rm form}\to \Spf(\cO_K)$ is of the form

$$\begin{array}{ccc}
\cO_K/\pi^n \cO_K [P] & \stackrel{\psi_{R,n}}{\longrightarrow} &
R/\pi^n R \cr \uparrow & & \uparrow\cr \cO_K/\pi^n\cO_K[\N]
&\stackrel{\psi_\alpha}{\longrightarrow} & \cO_K/\pi^n\cO_K,\cr
\end{array}$$where the left vertical map and $\psi_\alpha$
are defined as in the algebraic case and $\psi_{R,n}$ induces a morphism $\cO_K[P]\otimes_{\cO_K[\N]} \cO_K/\pi^n \cO_K \to R/\pi^n R$ which is \'etale and the log
structure on $\Spec(R/\pi^nR)$ induced from $(X_n,N_n)$ is the pullback of the fibred product log structure on $\Spec\bigl(\cO_K[P]\otimes_{\cO_K[\N]} \cO_K/\pi^n
\cO_K\bigr)$. As in the algebraic case we require that for every subset $J_a\subset \{1,\ldots,a\}$ and every subset $J_b\subset \{1,\ldots,b\}$ the ideal of $R/\pi
R$ generated by $\psi_{R,1}\bigl(\N^{J_a}\times \N^{J_b}\bigr)$ defines an irreducible closed subscheme of $\Spec(R/\pi R)$, that the ideal of $R$ generated by
$\psi_R\bigl(P_a\bigr)$ is not the unit ideal and that the image of the monoid $\cO_\Kbar^\ast \cdot \psi_R\bigl(P_b\bigr )$ is saturated in $R\otimes_{\cO_K}
\cO_\Kbar$.
\smallskip

We deduce from \ref{lemma:structureofP'} that\smallskip

(i) in the algebraic, respectively ~in the formal setting,
$\bigl(X,N\bigr)$ (respectively ~$\bigl(X_n,N_n\bigr)$) is a fine and
saturated log scheme;

(ii)  $f$ (resp.~$f_n$) is a log smooth morphism. \smallskip

In the algebraic case, by abuse of notation we write $X$ for $(X,M)$. An object $U=\Spec(R)\in X^{\et}$ with induced log structure satisfying the requirements above
will be called {\em small}.
\smallskip

In the formal case we write  $X_{\rm rig}$ for the rigid analytic fibre of $X_{\form}$.
The inverse limit of the log structures $N_n$ defines a morphism of sheaves
of monoids from the inverse limit $N_{\form}={\ds \lim_{\infty\leftarrow n}}
N_n$ to $\cO_{X_{\form}}$. It coincides with the inverse image of $N_1$ via the
canonical map $\cO_{X_{\rm form}}\to \cO_{X_1}$. We call it the
{\it formal log structure} on $X_{\form}$. We also write $X$ or $(X,N)$ for the
inductive
system~$\left\{\bigl(X_n,N_n\bigr)\right\}_{n\in\N}$. It follows from our assumptions that
$X_{\rm form}$ is a noetherian and $\pi$-adic formal scheme. An \'etale open
$\Spf(R) \to X_{\rm form}$ satisfying the requirements above is called{\em  small}.
By assumption we have a covering of $X_{\rm form}$ by small objects. For any
such small affine $\Spf(R)$ of $X_{\rm form}$
we also have $\pi$-adic formally \'etale morphisms
$$\Spf\bigl(\cO_K[P]\widehat{\otimes}_{\cO_K[\N]} \cO_K\bigr) \stackrel{\hat{\psi}_R}{\longleftarrow}
\Spf(R) \longrightarrow X_{\form},$$where $\widehat{\otimes}$ stands for the $\pi$-adic completion of the tensor product, with the property that the formal log
structure $N_{\form}$ on $\Spf(R)$ is induced by the formal log structure  on the fibred product $\Spf\bigl(\cO_K[P]\widehat{\otimes}_{\cO_K[\N]} \cO_K\bigr)$. We
call any such diagram a {\it formal chart} of $\bigl(X_{\form},N_{\form}\bigr)$.

\medskip

{\em Example:} Assume that $X$ is a regular scheme with a normal crossing divisor $D\subset X$. Then \'etale locally on $X$ we can choose local charts $\psi_R$
satisfying the conditions above. For example, for every closed point of $X$ one can take $P$ and $\psi_R$ \'etale locally to be defined by a regular sequence of
elements generating the maximal ideal  at $x$ so that $D$ is defined by part of such a sequence. In this case also the ideal generated by $\psi_R(P_b)$ in $R$ is
not the unit ideal and the conditions above are satisfied.

\subsubsection{Continuous sheaves}\label{section:continuoussheaves}
Given an abelian category $\cA$ admitting enough injectves we
consider the category $\cA^\N$ of inverse systems of objects of
$\cA$ indexed by $\N$. It is also abelian with enough injectives.
Given a left exact functor $F$ from $\cA$ to an abelian
category ${\mathcal B}$ we  have a left exact functor $F^\N\colon
\cA^\N \to {\mathcal B}^\N$ sending $(C_n)_{n\in\N}\mapsto
\big(F(C_n)\big)_{n\in\N}$ and its $i$-th derived functor ${\rm
R}^i \bigl(F^\N\bigr)$ is canonically $\bigl({\rm R}^i
F\bigr)^\N$. If projective limits exist in ${\mathcal B}$, one can
derive the functor $F^{\rm cont}\colon \cA^\N \to {\mathcal B}$
sending $(C_n)_{n\in\N} \mapsto {\ds \lim_{\infty\leftarrow n}}
F(C_n)$. We refer \cite[\S5.1]{andreatta_iovita} for details.

We also consider the category ${\rm Ind}\bigl(\cA\bigr)$ of
inductive systems of objects in $\cA$ indexed by $\Z$,
i.e.~$\bigl(A_h,\gamma_h\bigr)_{h\in\Z}$ with $\gamma_h\colon A_h
\to A_{h+1}$. Consider a non decreasing function $\alpha\colon
\Z\to \Z$. Given objects
$\underline{A}:=\bigl(A_i,\gamma_i\bigr)_{i\in\Z}$ and $
\underline{B}:=\bigl(B_j,\delta_j\bigr)_{j\in\Z}$ we define a
morphism $f\colon \underline{A}\to \underline{B}$ of type $\alpha$
to be a collection of morphisms $f_i\colon A_i\to B_{\alpha(i)}$
such that $f_{i+1}\circ \gamma_i= \prod_{\alpha(i) \leq j <
\alpha(i+1)} \delta_j \circ f_i $. We denote by
$\Hom^\alpha\bigl(\underline{A},\underline{B}\bigr)  $ the group
of homomorphisms of type $\alpha$. We say that two morphisms $f$
and $g$ of type $\alpha$ (resp.~$\beta$) are equivalent if there
exists $N\in\N$ such that $f_i$  composed with $B_{\alpha(i)}\to
B_{\max(\alpha(i),\beta(i))+N}$ and $g_i$ composed with
$B_{\beta(i)}\to B_{\max(\alpha(i),\beta(i))+N}$ coincide. One
checks that this defines an equivalence relation. We define a
morphism $\underline{A}\to \underline{B} $ in ${\rm
Ind}\bigl(\cA\bigr)$ to be a class of morphisms with respect to
this equivalence relation.

One can prove that ${\rm Ind}\bigl(\cA\bigr)$ is an abelian category.
If ${\mathcal B}$ admits inductive limits and $F\colon \cA \to {\mathcal B}$ is a left exact
functor, we define ${\rm R}^i F^{\rm cont}\colon {\rm Ind}\bigl(\cA^\N\bigr) \to
{\mathcal B}$ by ${\rm R}^i F^{\rm cont}\bigl((A_h,\gamma_h)_h\bigr):=\ds
\lim_{h\to \infty} {\rm R}^i F^{\rm cont}(A_h) $.
Then the family $\{{\rm R}^n F^{\rm cont}\}_n$ defines a cohomological
$\delta$-functor on ${\rm Ind}\bigl(\cA\bigr)$.

\smallskip

\subsection{Faltings' topos}

\subsubsection{The Kummer \'etale site of $X$}\label{sec:ketonX}
The notations are as in the previous section. Both in the
algebraic and in the formal case we write $X^{\ket}$ for the
Kummer \'etale site of $(X,N)$.

In the algebraic case the category is the full subcategory of
log schemes endowed with a  Kummer \'etale morphism $(Y,N_Y)\to
(X,N)$ in the sense of \cite[\S 2.1]{IllusieKummer},
i.e.~morphisms which are log \'etale and Kummer or equivalently
log \'etale and exact. The coverings are collections of Kummer
\'etale morphism $(Y_i,N_i)\to (X,N)$ such that $X$ is set
theoretically the union of the images of the $Y_i$'s. One verifies
that this defines a site; see loc.~cit.

In the formal case the objects are  Kummer \'etale morphisms $\left\{g_n\colon \bigl(Y_n,N_{Y_n}\bigr)
\to (X_n,N_n)\right\}_{n\in\N}$ such that $g_n$ is the
base change of $g_{n+1}$ via $(X_n,N_n)\to (X_{n+1},N_{n+1})$ for every $n\in\N$.
We simply write $g\colon (Y,N_Y)\to (X,N_X)$ for such inductive system of morphisms.
The morphisms from an object $(Y,N_Y)\to (X,N)$ to an object $(Z,N_Z):=
\left\{h_n\colon \bigl(Z_n,N_{Z_n}\bigr)\to (X_n,N_n)\right\}_{n\in\N}$  are collections of
morphisms $\left\{t_n\colon \bigl(Y_n,N_{Y_n}\bigr) \to \bigl(Z_n,N_{Z_n}\bigr)
\right\}_{n\in\N}$ as log schemes over $(X_n,N_n)$ such that $t_n$ is the base change
of $t_{n+1}$ via $(X_n,N_n)\to (X_{n+1},N_{n+1})$ for every $n\in\N$.
We simply write $t\colon (Y,N_Y)\to (Z,N_Z)$ for such an inductive system of morphisms. The
coverings are collections of Kummer \'etale morphisms $\left\{(Y_i,N_i)\to (X,N)\right\}_i$
such that $X_1$ is the  set theoretic  union of the images of the
$Y_{i,1}$'s. This defines a site. Due to the characterization of log \'etale morphisms in
\cite[prop 3.14]{katolog} the natural forgetful morphism of sites
$X^{\ket} \lra X_1^{\ket}$, sending $g\colon (Y,N_Y)\to (X,N)$ to $g_1\colon \bigl(Y_1,N_{Y_1}\bigr)
\to (X_1,N_1)$, is an equivalence of categories.

\medskip

\begin{lemma}\label{lemma:Ukfetisfppf} Let $(Y,H)\in X^{\ket}$.
Then,\smallskip

(1) $Y$ (resp.~$\Spec(R)$ if $Y_{\rm form}=\Spf(R)$ in the formal
case) are Cohen-Macaulay and normal schemes;\smallskip

(2) $(Y,H)$ (resp.~$\bigl(\Spec(R),H_{\rm form}\bigr)$ if $Y_{\rm form}=\Spf(R)$ in the formal case) are log regular in the sense of\/ \cite[Def.~2.1]{katotoric}.
\end{lemma}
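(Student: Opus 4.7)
The plan is to reduce (1) to (2) using Kato's Theorem 4.1 of \cite{katotoric}, which asserts that every log regular scheme is normal and Cohen--Macaulay. Thus the burden is to prove (2), and I would proceed by first establishing log regularity of $(X,N)$ itself and then showing it is inherited by Kummer \'etale covers.

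For log regularity of $(X,N)$, the chart assumption of \S\ref{sec:assumptions} reduces one, \'etale locally, to the standard semistable--toric model
\[
A := \cO_K[P]\otimes_{\cO_K[\N]}\cO_K \cong \cO_K[T_1,\ldots,T_a,U_1,\ldots,U_b]/(T_1\cdots T_a - \pi^\alpha),
\]
with log structure generated by the images of the $T_i$'s and $U_j$'s. I would verify Kato's log-regularity condition directly at each point $x$: letting $I_x$ be the ideal of $\cO_{\Spec(A),x}$ generated by the images of those generators of the log structure that lie in the maximal ideal, one checks that $\cO_{\Spec(A),x}/I_x$ is regular and that its dimension plus the rank of the characteristic sheaf at $x$ equals $\dim \cO_{\Spec(A),x}$. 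This is an explicit calculation in a toric scheme, routine but requiring some care when $\alpha>1$. Since log regularity is \'etale-local on the underlying scheme when the log structure is pulled back, the \'etale morphism $\Spec(R)\to \Spec(A)$ propagates the property to $\Spec(R)$ and hence to $(X,N)$.

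For a general Kummer \'etale morphism $(Y,H)\to (X,N)$, I would appeal to the local structure theorem for Kummer \'etale morphisms in \cite[\S 2.1]{IllusieKummer}: \'etale locally on $Y$, such a morphism factors through a toric Kummer cover of the form $\Spec(A[P^{1/n}])\to \Spec(A[P])$ for some integer $n$ invertible on $Y$, followed by a classically \'etale morphism. Both steps preserve log regularity: the toric Kummer step by a direct computation analogous to the one for the base model, and the \'etale step by the same \'etale-locality argument. Alternatively one may invoke Kato's stability result for log regularity under log \'etale morphisms of fs log schemes.

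In the formal case the algebraic argument applies to each $(X_n, N_n)$ and each $(Y_n, H_n)$, so every $\Spec(R/\pi^n R)$ is log regular and therefore normal and Cohen--Macaulay. To pass to $\Spec(R)$, note that $R$ is Noetherian and $\pi$-adically complete by assumption and is $\pi$-torsion-free because the local model is $\cO_K$-flat and \'etale morphisms preserve flatness; hence $\pi$ is a regular element of $R$, and normality together with Cohen--Macaulay-ness of $R$ follow from those of $R/\pi R$ by completeness and standard commutative algebra (Serre's criterion R1+S2 for normality and the fact that $\pi$ is a nonzerodivisor for Cohen--Macaulay). Log regularity of $(\Spec(R), H_{\rm form})$ is checked stalk-by-stalk via the explicit lifted local model. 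The main obstacle in this program is the verification of Kato's definition for the toric--semistable local model and for its Kummer covers; once this concrete calculation is in hand the rest follows from \cite[Thm.~4.1]{katotoric} and routine descent from $R/\pi R$ to $R$.
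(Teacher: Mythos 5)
Your treatment of the algebraic case is essentially sound, if more computational than necessary: instead of verifying Kato's numerical criterion on the local model $\cO_K[P]\otimes_{\cO_K[\N]}\cO_K$ and then propagating log regularity along Kummer \'etale covers, one can simply note that $(Y,H)\to (S,M)$ is log smooth (a Kummer \'etale map followed by the log smooth structure map) and that $\Spec(\cO_K)$ with its log structure is log regular, so \cite[Thm.~8.2]{katotoric} gives log regularity of $(Y,H)$ in one stroke; statement (1) then follows from \cite[Thm.~4.1]{katotoric} exactly as you say.

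The formal case, however, has a genuine gap. You assert that the algebraic argument applies to each $(X_n,N_n)$ and $(Y_n,H_n)$, so that every $\Spec(R/\pi^nR)$ is log regular, hence normal and Cohen--Macaulay, and you then propose to lift normality and Cohen--Macaulayness from $R/\pi R$ to $R$ along the nonzerodivisor $\pi$. But the base $(S_n,M_n)$ is not log regular: for $n=1$ it is the standard log point, whose characteristic monoid has rank $1$ while the underlying scheme has dimension $0$, so Kato's dimension condition fails; for $n\geq 2$ the scheme is non-reduced, whereas log regular schemes are normal. Correspondingly, $\Spec(R/\pi R)$ is \emph{not} normal in the semistable situation: on the local model it is $k[X_1,\ldots,X_a,Y_1,\ldots,Y_b]/(X_1\cdots X_a)$, a normal crossings scheme which for $a\geq 2$ fails $(R_1)$ along the codimension-one loci $X_i=X_j=0$. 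So the hypothesis of the ``normality deforms along a regular element'' argument is false, and normality of $R$ cannot be reached this way. (The Cohen--Macaulay half could be salvaged, since normal crossings schemes are Cohen--Macaulay and that property does lift along a nonzerodivisor; normality does not.) The paper instead works directly with the complete local rings of $R$ at maximal ideals: it constructs compatible charts $P\to N$ and $Q\to H$, identifies $\widehat{\cO}_{Y,y}\cong\widehat{\cO}_{X,x}\widehat{\otimes}_{\Z[P]}\Z[Q]$, and invokes the structure theorem \cite[Thm 3.2]{katotoric} to write this as $\WW\bigl(k(x)\bigr)[\![Q]\!][\![T_1,\ldots,T_r]\!]/(\theta)$ with $Q$ saturated and $Q^{\ast}=\{1\}$, after which the proof of \cite[Thm.~4.1]{katotoric} applies to these mixed-characteristic complete local rings. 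Your closing remark about checking log regularity stalk-by-stalk on the lifted local model gestures at this, but as written the main line of your formal-case argument runs through the special fibre and does not close.
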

\begin{proof} We provide a proof in the algebraic case.  Since $f\colon (Y,H)\to (X,N)$ is Kummer
\'etale, in particular it is log \'etale. Since $f\colon \bigl(X,N\bigr) \to \bigl(S,M\bigr)$ is log smooth the composite $(Y,H)\to \bigl(S,M\bigr)$ is log smooth.
Recall that $\bigl(S,M\bigr)$ is $\Spec(\cO_K)$ with the log structure defined by its maximal ideal. In particular it is log regular. Arguing as in \cite[Lemma
1.5.1]{tsujiinventiones} we deduce from \cite[Thm.~8.2]{katotoric} that also $(Y,H)$ is log regular. Due to \cite[Thm.~4.1]{katotoric} the scheme $Y$ is then
Cohen-Macaulay and normal. This proves the claims in the algebraic case.

For the proof in the formal case we make some preliminary remarks in the algebraic case. Let $y\in Y$ and set $x$ to be its image in $X$. Write $H_y$ and $N_x$ for
the stalk of the sheaves of monoids $H$ and $N$ and put $\overline{H}_y:=H_y/\cO_{Y,y}^\ast$ and $\overline{N}_x:=H_y/\cO_{X,x}^\ast$.  Since the log structures are
fine, $\overline{H}_y$ and $\overline{N}_x$ are finitely generated and we have inclusions $\overline{H}_y\subset \overline{H}_y^{\rm gp}$ and $\overline{N}_x\subset
\overline{N}_x^{\rm gp}$. The morphism $(Y,H)\to (X,N)$ being Kummer \'etale, the induced map $\iota\colon \overline{N}_x \to \overline{H}_y$ is injective and there
exists an integer $n$ invertible in $\cO_{Y,y}$ such that $n \overline{H}_y\subset \overline{N}_x$.  Since $\overline{N}_x^{\rm gp}$ is a finite and free
$\Z$-module we can find a splitting of the group homomorphism $N_x^{\rm gp} \to \overline{N}_x^{\rm gp}$ which composed with the inclusion $\overline{N}_x \subset
\overline{N}_x^{\rm gp}$ provides a chart $P \to N$ in a neighborhood $U_x$ of $x$ cf.~\cite[Lemma 2.10]{katolog}.   Proceeding similarly with $\overline{H}_y^{\rm
gp} $ we can find a splitting of $H_y^{\rm gp} \lra \overline{H}_y^{\rm gp}$. Since the  local ring $\cO_{Y,y}$ is taken with respect to the \'etale topology and
$n$ is invertible in $\cO_{Y,y}$, the group $\cO_{Y,y}^\ast$ is $n$-divisible and we can take the splitting compatible with the first splitting of $N_x^{\rm gp} \to
\overline{N}_x^{\rm gp}$. Composing with the inclusion $\overline{H}_y \subset \overline{H}_y^{\rm gp}$ we get a chart $Q\to H$ in a neighborhood $V_y$ of $y$
compatible with $P \to N$ via the map of sheaves $f^{-1}(N) \to H$.

To check that $R$ is Cohen-Macaulay in the formal case it suffices to prove that the complete local ring of $R$ at every maximal ideal $y$ is Cohen-Macaulay at the
image $x$ of $y$ in $X$. To prove that it is normal it further suffices to show that $R$ is regular in codimension $1$. Due to the assumptions and the proof in the
algebraic case, (1) and (2) hold if $\Spf(R)$ is a formal chart of $(X,N)$, i.e.~$f$ is the identity map. In the general case, using the considerations above,
we have
$$\widehat{\cO}_{Y,y}\cong \widehat{\cO}_{X,x}\widehat{\otimes}_{\Z[P]} \Z[Q]$$where $P\to Q$ is a morphism of monoids as above. By the construction of the chart
$P$, we have that $P^\ast=\{1\}$. We conclude from \cite[Thm 3.2]{katotoric} that $\widehat{\cO}_{X,x}\cong R[\![P]\!][\![T_1,\ldots,T_r]\!]/(\theta)$ for
$R=\WW\bigl(k(x)\bigr)$ and $\theta\equiv p$ modulo the ideal $\bigl(P\backslash\{1\},T_1, \ldots,T_r\bigr)$. Then, $\widehat{\cO}_{Y,y}\cong
R[\![Q]\!][\![T_1,\ldots,T_r]\!]/(\theta) $. Since  $Q$ is saturated and $Q^{\ast}=\{1\}$ by construction, also $\widehat{\cO}_{Y,y}$ is of the same form. The proof
of \cite[Thm.~4.1]{katotoric} applies to deduce that $\widehat{\cO}_{Y,y}$ is Cohen-Macaulay and regular in codimension $1$. This concludes the proof of (1) and (2)
in the formal case as well.

\end{proof}

In the algebraic case consider the presheaves $\cO_{X^{\ket}}$ and
$N_{X^{\ket}}$ respectively defined by $$X^{\ket}\ni (U,N_U) \lra
\Gamma\bigl(U,\cO_U\bigr), \qquad X^{\ket}\ni (U,N_U) \lra
\Gamma\bigl(U,N_U\bigr).$$Similarly, in the formal case for every
$h\in\N$ define the presheaves $\cO_{X^{\ket}_h}$ and
$N_{X^{\ket}_h}$
$$X^{\ket}\ni \bigl(U_n,N_{U_n}\bigr)_n \lra
\Gamma\bigl(U_h,\cO_{U_h}\bigr), \qquad X^{\ket}\ni
\bigl(U_n,N_{U_n}\bigr)_n \lra \Gamma\bigl(U_h,N_{U_h}\bigr).$$We
write $\cO_{X^{\ket}_{\form}}$  and $N_{X^{\ket}_{\form}}$ for the
presheaves defined as $\ds \lim_{\infty\leftarrow n}
\cO_{X^{\ket}_{\form}}$ and $\ds \lim_{\infty\leftarrow n}
N_{X^{\ket}_{\form}}$ respectively.

\begin{proposition}\label{Prop:OXisasheafonket} (1) In the algebraic case
the presheaves $\cO_{X^{\ket}}$, $\cO_{X^{\ket}}^\ast$   and
$N_{X^{\ket}}$ are sheaves and $N_{X^{\ket}} \to \cO_{X^{\ket}}$
is a morphism of sheaves of multiplicative monoids such that the
inverse image of $\cO_{X^{\ket}}^\ast$ is identified with
$\cO_{X^{\ket}}^\ast$.
\smallskip

(2) In the formal case the presheaves $\cO_{X^{\ket}_h}$,
$\cO_{X^{\ket}_h}^\ast$ and $N_{X^{\ket}_h}$ for every $h\in\N$
and the presheaves $\cO_{X^{\ket}_{\form}}$,
$\cO_{X^{\ket}_{\form}}^\ast$ and $N_{X^{\ket}_{\form}}$ are
sheaves. Moreover, $N_{X^{\ket}_h} \to \cO_{X^{\ket}_h}$ and
$N_{X^{\ket}_{\form}}\to \cO_{X^{\ket}_{\form}}$ is a morphism of
sheaves of multiplicative monoids such that the inverse image of
$\cO_{X^{\ket}_h}^\ast$ (resp.~$\cO_{X^{\ket}_{\form}}^\ast$) is
identified with $\cO_{X^{\ket}_h}^\ast$
(resp.~$\cO_{X^{\ket}_{\form}}^\ast$).
\end{proposition}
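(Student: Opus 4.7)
The plan is to reduce the sheaf condition along a general Kummer \'etale cover to two elementary cases: classical \'etale covers, where descent is standard, and standard Kummer covers coming from a Kummer inclusion of fine monoids, where descent uses faithful flatness. The main obstacle is establishing faithful flatness of the standard Kummer covers in the generality of Lemma~\ref{lemma:Ukfetisfppf}; once this is in hand the formal case follows by passing to inverse limits level by level.

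First I would recall the local structure of Kummer \'etale morphisms: by \cite{IllusieKummer} (or by directly choosing charts as in the proof of Lemma~\ref{lemma:Ukfetisfppf}) every Kummer \'etale morphism $(Y,H)\to (X,N)$ factors \'etale-locally on $Y$ as a classical \'etale morphism followed by the base change of a standard Kummer cover $\Spec(\Z[Q])\to \Spec(\Z[P])$ for a Kummer inclusion $P\hookrightarrow Q$ of fine monoids whose index is invertible on $X$. Both $\cO$ and $N$ pull back functorially along such morphisms, so to check the sheaf condition it suffices to verify it on these two classes of covers. The \'etale case is classical descent: $\cO_{X^{\ket}}$ and $\cO_{X^{\ket}}^\ast$ restrict to the \'etale structure sheaf and its units, while $N_{X^{\ket}}$ restricts to the sheaf of sections of the fine log structure $N$, which is an \'etale sheaf by definition.

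Second, for a standard Kummer cover $\Spec(R[Q])\to \Spec(R[P])$ applied to a small $(U,N_U)=\Spec(R)\in X^{\ket}$, Lemma~\ref{lemma:Ukfetisfppf} guarantees that $\Spec(R)$ is log regular; the local structure theorem \cite[Thm.~3.2]{katotoric} describes the completions of $R$ as $\WW(k(y))[\![P]\!][\![T_1,\ldots,T_r]\!]/(\theta)$ and reduces faithful flatness of $R[P]\to R[Q]$ to the (classical) statement that $\Z[P]\to \Z[Q]$ is faithfully flat for a Kummer inclusion. Faithfully flat descent for modules gives the sheaf property of $\cO_{X^{\ket}}$; the unit sheaf $\cO_{X^{\ket}}^\ast$ follows because units are characterized by an equalizer condition stable under base change. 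For $N_{X^{\ket}}$ one checks descent directly using the explicit chart: the pushout $Q\oplus_P Q$ in the category of fine saturated monoids classifies the descent datum, and the Kummer condition ensures that the equalizer of $Q\rightrightarrows Q\oplus_P Q$ recovers $P$. The compatibility statements that $N_{X^{\ket}}\to \cO_{X^{\ket}}$ is a morphism of sheaves of monoids and that the preimage of $\cO_{X^{\ket}}^\ast$ is $\cO_{X^{\ket}}^\ast$ are inherited from the corresponding facts for the log structure $N\to \cO_X$ on each object $(U,N_U)\in X^{\ket}$, since the log structure on $U$ is defined as the pullback log structure.

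Third, in the formal case the equivalence $X^{\ket}\simeq X_1^{\ket}$ and its analogues at each level $h$ give the sheaf properties of $\cO_{X^{\ket}_h}$, $\cO_{X^{\ket}_h}^\ast$ and $N_{X^{\ket}_h}$ from the algebraic statement applied to $(X_h,N_h)$. Finally, $\cO_{X^{\ket}_{\form}}=\varprojlim_h \cO_{X^{\ket}_h}$ and $N_{X^{\ket}_{\form}}=\varprojlim_h N_{X^{\ket}_h}$ are sheaves because an inverse limit of sheaves is a sheaf; the transition maps are surjective on small objects (by the flat structure theorem above), so there is no $\varprojlim^1$ obstruction to extracting units or to verifying the monoid morphism at the limit.

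The principal obstacle is the faithful flatness assertion for standard Kummer covers on an arbitrary small $U=\Spec(R)\in X^{\ket}$: $R$ is only known to be log regular, not literally a monoid algebra, so one must leverage \cite[Thm.~3.2]{katotoric} — already used in the proof of Lemma~\ref{lemma:Ukfetisfppf} — to reduce to the monoid-algebra situation, and verify that taking the $Q$-monoid algebra for a Kummer extension $P\hookrightarrow Q$ preserves the Cohen-Macaulay normal form. All the remaining steps, including the passage to units and the formal case, are then formal.
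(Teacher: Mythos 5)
Your overall architecture (reduce to classical \'etale covers plus standard Kummer covers of the form $\Spec(R\otimes_{\Z[P]}\Z[Q])\to\Spec(R)$) is the natural first attempt, but it breaks at the key step: the claim that $\Z[P]\to\Z[Q]$ is faithfully flat for a Kummer inclusion of fine saturated monoids is \emph{false} once $P$ is not free, and in this paper $P'=P\oplus_{\N}\N$ is genuinely non-free whenever $\alpha>1$. A concrete counterexample: let $P\subset\N^2$ be the saturated monoid generated by $(1,0),(1,1),(1,2)$, so that $\Z[P]=\Z[x,y,z]/(xz-y^2)$, and let $Q=\tfrac{1}{2}P$. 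The induced map on torus-invertible loci is finite \'etale of degree $[Q^{\rm gp}:P^{\rm gp}]=4$, while the fibre over the vertex has length $6$ (the elements of $Q$ not in $(P\setminus\{0\})+Q$), so the map is not flat; the same phenomenon occurs for covers of prime-to-$p$ index, which are legitimate objects of $X^{\ket}$. The paper itself records this failure in its own setting: \S\ref{lemma:Rinftyflat}(2) shows $R\to R_n$ is only $\pi^{\alpha}$-flat, and \ref{lemma:Rinftyff} only yields flatness over the auxiliary ring $R^o$, not over $R$. So faithfully flat descent cannot be the engine of the proof, and consequently neither the sheaf property of $\cO_{X^{\ket}}$ nor the descent of $N_{X^{\ket}}$ via the pushout $Q\oplus_P Q$ is established by your argument.

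This is exactly why the statement is not formal: the paper's proof of \ref{Prop:OXisasheafonket} consists of invoking an unpublished theorem of Kato (reported in \cite[\S 2.7(a)\&(b)]{IllusieKummer}) that the Kummer \'etale topology is coarser than the canonical topology, i.e.\ that Kummer \'etale coverings are universal effective epimorphisms even though they need not be flat. The actual content behind that theorem uses the exactness of Kummer morphisms, the fact that a standard Kummer cover becomes, after a further Kummer base change, a quotient by a finite diagonalizable group of prime-to-$p$ order, and normality/log regularity (as in \ref{lemma:Ukfetisfppf}) to identify invariants with the base --- not fppf descent. If you want a self-contained argument you would need to follow that route (Galois-closure plus invariants of normal rings under $\mu_n$-actions), or at minimum restrict to $\alpha=1$, where the charts are free monoids and your flatness claim does hold. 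The parts of your write-up concerning the \'etale local factorization, the unit sheaf, and the passage to the formal case by taking inverse limits level by level are fine once the sheaf property for $\cO_{X^{\ket}}$ is actually in hand.
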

\begin{proof}
An unpublished result of K.~Kato implies that the Kummer \'etale
topology is coarser than the canonical topology. This implies the
claims that the given presheaves are sheaves, see \cite[\S
2.7(a)\&(b)]{IllusieKummer}. The other properties are clear.

\end{proof}

\subsubsection{The finite Kummer \'etale sites $U_L^{\fket}$}

Let $U\in X^{\ket}$ and let $K\subset L \subset \Kbar$. In the algebraic case we let $U_L^{\fket}$ be the site of {\it finite Kummer \'etale covers} of $U_L$
endowed with the log structure defined by $N$; see \cite[Def.~3.1]{IllusieKummer}. As remarked in \cite[Rmk.~3.11]{IllusieKummer}  a Kummer \'etale map $Y\to U_L$,
inducing a finite and surjective morphism at the level of underlying schemes, is a Kummer \'etale cover. Viceversa \cite[Cor.~3.10 \& Prop~3.12]{IllusieKummer}
implies that any Kummer \'etale cover $Y\to U_L$ is Kummer \'etale and induces a finite and surjective morphism on the underlying schemes.  \smallskip

In the formal case we proceed differently. If $K\subset L$ is a finite extension, let $U_L$ be the rigid analytic space associated to
$U_{\form}\widehat{\otimes}_{\cO_K} \cO_L$ and let $U_L^{\fket}$ be the site whose objects consist of finite surjective morphisms $W \to U_L$ of $L$--rigid analytic
spaces such that\smallskip

(1) $W$ is smooth over $L$; \smallskip

(2) for every formal chart
$$\Spf\bigl(\cO_K[P]\widehat\otimes_{\cO_K[\N]} \cO_K\bigr)
\stackrel{\hat{\psi}_R}{\longleftarrow} \Spf(R) \longrightarrow U_{\form},$$
the induced morphism $W\times_{U_L} \Spm\bigl(R\otimes_{\cO_K} L\bigr) \to
\Spm\bigl(\cO_K[P]\widehat\otimes_{\cO_K[\N]} L\bigr)$ defines a finite and
\'etale morphism of rigid analytic spaces over the open subspace of
$\Spm\bigl(L\{P\}\bigr)$ given by $\Spm\bigl(L\{P^{\rm gp}\}\bigr)$.\smallskip

The morphisms are morphisms as rigid analytic spaces over $U_L$.
The coverings are collections of morphisms $W_i \to W$, for $i\in
I$, whose images cover $W$ set theoretically.

\begin{remark}\label{rmk:fketalgebraic} (i) If $W \to U_L$
is a finite morphism of rigid analytic spaces, then for every
formal chart of $U$ the map $$\rho\colon W\times_{U_L}
\Spm\bigl(R\otimes_{\cO_K} L\bigr)\to \Spm\bigl(R\otimes_{\cO_K} L
\bigr)$$is finite by \cite[Th.~III.6.2]{Fresnel_VanderPut} so that
it is of the form $\Spm(B)\to \Spm\bigl(R\otimes_{\cO_K} L \bigr)$
for a $R\otimes_{\cO_K} L$-algebra $B$ which is finite as a
$R\otimes_{\cO_K} L$-module. Then, $\rho$ is finite and \'etale
over  $\Spm\bigl(L\{P^{\rm gp}\}\bigr)$ if and only if
$R\otimes_{\cO_K}L\bigl\{P^{\rm gp}\bigr\} \to B\bigl\{P^{\rm
gp}\bigr\}$ is a finite and separable extension of algebras. Since
this condition can be checked on $\Kbar$-points, this holds if and
only if $R\otimes_{\cO_K} L\bigl[P^{\rm gp}\bigr] \to
B\bigl[P^{\rm gp}\bigr]$ is finite and \'etale in the usual
sense.\smallskip

(ii) Let  $W \to U_L$ be a finite morphism of $L$-rigid analytic
spaces with $W$ smooth over $L$. Then, condition (2) holds  if and
only if there exist formal charts of $U_{\form}$ which cover
$U_{\form}$ and for which condition (2) holds.\smallskip

(iii) We remark that the definition in the algebraic case coincides with the one provided by the analogues of requirements (1) and (2). Indeed, given $U\in
X^{\ket}$ and $W\to U_L$ a Kummer \'etale cover, $W\to U_L$ is Kummer \'etale. Thus,  $W\to \Spec(L)$ is log smooth, and in fact smooth as the log structure on $L$
is trivial. The analogue of condition (2) holds thanks to \cite[Prop.~3.8]{katolog}. Viceversa assume that $W\to U_L$ is  a finite surjective morphism satisfying
conditions (1) and (2). Let $\iota\colon U_L^o\hookrightarrow U_L$ be the locus of triviality of the log structure and let $j\colon W^o \hookrightarrow W$ be its
inverse image in $W$. As $U_L$ is log regular, see  \ref{lemma:Ukfetisfppf}, the log structure on $U_L$ is defined by $\cO_{U_L}\cap \iota_\ast(\cO_{U_L^o}) \subset
\cO_{U_L}$ thanks to \cite[11.6]{katotoric}. As $W$ is smooth $\cO_W\cap j_\ast(\cO_{W^o}) \subset \cO_W$ defines a fine and saturated log structure on $W$,
cf.~\cite[\S 1.7]{IllusieKummer}. Using this log structure we get a map of log schemes $W\to U_L$ and, as $W\to U_L$ is finite and surjective, it is exact and log
\'etale, i.e., Kummer \'etale.
\end{remark}

Given a finite extension $K\subset L \subset L'\subset \Kbar$ the base change from $L$ to $L'$ provides a morphism of sites $U_L^{\fket} \to U_{L'}^{\fket}$. For
arbitrary extensions $K\subset L \subset \Kbar$, we then get a fibred site $U_\ast^{\fket}$ over the category of finite extensions of $K$ contained in $L$ in the
sense of \cite[\S VI.7.2.1]{SGAIV}.  We let $U_L^{\fket}$ be the site defined by the projective limit of the fibred site $U_\ast^{\fket}$; see \cite[Def.
VI.8.2.5]{SGAIV}.

\begin{remark} For example, one has the following explicit description of $U_\Kbar^{\fket}$. The objects in
$U_\Kbar^{\fket}$ consist of pairs $(\cW,L)$ where~$L$ is a finite extension of\/~$K$ contained in~$\Kbar$ and~$\cW\in U_L^{\fket}$. Given~$(\cW,L)$ and~$(\cW',L')$
define $\Hom_{U_\Kbar^{\fket}}\bigl((\cW',L'), (\cW,L)\bigr)$ to be the direct limit $\ds \lim_\rightarrow \Hom_{L''}\bigl(\cW'\otimes_{L'} L'', \cW\otimes_L
L''\bigr)$ over all finite extensions~$L''$ of~$K$, contained in $\Kbar$ and containing both~$L$ and~$L'$, of the morphisms $\cW'\otimes_{L'} L''\to \cW\otimes_L
L''$ as rigid analytic spaces over~$U_{L''}$.\end{remark}

\subsubsection{Faltings' site}
\label{sec:faltingssite}

 Let $K\subset L \subset \Kbar$ be any extension. Let $E_{X_L}$ be the
category defined as follows
\smallskip

i) the objects consist of pairs $\bigl(U, W\bigr)$ such that $U\in
X^{\ket}$ and $W\in U_L^{\fket}$; \smallskip

ii) a morphism $(U',W')\lra (U,W)$ in $E_{X_L}$ consists of a pair
$(\alpha,\beta)$, where $\alpha\colon U'\lra U$ is a morphism in
$X^{\ket}$  and $\beta\colon W'\lra W\times_{U_K} U'_K$ is a
morphism in $U_L^{',\fket}$.

\bigskip

The pair $(X,X_L)$ is a final object in $E_{X_L}$. Moreover, finite projective limits are representable in $E_{X_L}$ and, in particular, fibred products exist: the
fibred product of the objects $(U',W')$ and $(U'',W'')$ over $(U,W)$ is $\bigl(U'\times_U U'',W'\times_W W''\bigr)$ where $W'\times_W W''$ is the fibred product of
the base-changes of $W'$ and $W''$ to $\bigl(U'\times_U U''\bigr)_L^{\fket}$ over the base-change of $W$ to $\bigl(U'\times_U U''\bigr)_L^{\fket}$. See \cite[Prop.
2.6]{erratum}.

\bigskip
We say that a family $\{(U_i,W_i)\lra (U,W)\}_{i\in I}$ is a
covering family if either\smallskip

\noindent $\alpha$) $\{U_i\lra U\}_{i\in I}$ is a covering in
$X^{\fket}$ and $W_i\cong W\times_{U_K} U_{i,K}$ for every $i\in
I$. \smallskip

or\smallskip

\noindent $\beta$) $U_i\cong U$ for all $i\in I$ and $\{W_i\lra
W\}_{i\in I}$ is a covering in $U_L^{\fket}$.

\smallskip
We endow $E_{X_L}$ with the topology ${\rm T}_{X_L}$ generated by
the covering families described above and denote by $\fX_L$ the
associated site. We call ${\rm T}_{X_L}$ Faltings' topology and
$\fX_L$ Faltings' site associated to $X$. As in \cite[Lemma
2.8]{erratum} one proves that the so called  strict coverings of
$(U,W)$ (see definition \ref{def:striccoverings} below)
are cofinal in the collection of all covering families of
$(U,W)$. \smallskip

\begin{definition}\label{def:striccoverings} A family $\{(U_{ij},W_{ij})\lra (U,W)\}_{i\in I,j\in J}$ of
morphisms in $E_{X_L}$ is called a {\em strict covering family}
if\smallskip

\noindent a) For each $i\in I$ and for every $j\in J$ we have an object $U_i\in X^{\ket}$ and isomorphisms $U_i\cong U_{ij}$ in $X^{\fket}$.
\smallskip

\noindent b) $\{U_i\lra U\}_{i\in I}$ is a covering in
$X^{\fket}$.\smallskip

\noindent c) For every $i\in I$ the family $\{W_{ij}\lra
W\times_{U_K} U_{i,K}\}_{j\in J}$ is a covering in
$U_{L,i}^{\fket}$.
\end{definition}

This is not Faltings' original definition of the site given in
\cite{faltingsAsterisque}. We refer to
\cite{andreatta_iovita_comparison} for a discussion of the
differences between the two approaches and motivations for our
definition.

\subsubsection{Continuous Functors}
\label{sec:continuousfunctors} For $K\subset L \subset \Kbar$ we
let
$$v_{X,L}\colon X^{\ket}\lra \fX_L, \qquad z_{X,L}\colon X^{\et}\lra \fX_L$$be given by
$v_{X,L}(U):=\bigl(U,U_L\bigr)$ in the algebraic case and  by $v_{X,L}(U):= \bigl(U,U_K\bigr)$, viewing $U_K$ as an object of $U_{\Kbar}^{\fket}$, in the formal
case and similarly for $z_{X,L}$. We simply write $v_L$ and $z_L$.
\smallskip

\noindent Define $$\beta\colon \fX_K\lra  \fX_\Kbar$$by
$\beta(U,W)=\bigl(U,W\otimes_{K} \Kbar \bigr)$ (resp.~$\beta(U,W)$
equal to $\bigl(U,W\bigr)$ viewed in $\fX_{\Kbar}$) in the
algebraic (resp.~formal) setting. \smallskip

Assume we are in the algebraic case. Let $\widehat{X}$ be the $p$-adic formal scheme associated to $X$ and denote by $\mathfrak{\widehat{X}}_L$ Faltings' site
associated to the formal log scheme $\widehat{X}$. We then have a morphism
$$\gamma_L\colon \fX_L\lra \mathfrak{\widehat{X}}_L,$$sending $(U,W)$
to $\bigl(\widehat{U},W\vert_{\widehat{U}_L}\bigr)$. Here
$W\vert_{\widehat{U}_L}$ is defined as follows. Let $K\subset M$
be a finite extension, contained in $L$, where $W\to U_L$ is
defined. Let $W^{\rm an}\to U_M^{\rm an}$ be the associated finite
Kummer \'etale morphism of analytic spaces. Then
$W\vert_{\widehat{U}_L}$ is defined by restricting it to the open
immersion $\widehat{U}_M\subset U_M^{\rm an}$. We simply write
$\gamma$ if there is no confusion.

\bigskip

\noindent It is clear that the above functors send covering
families to covering families and commute with fiber products.  In
particular they define continuous functors of sites
by~\cite[Prop.~III.1.6]{SGAIV}. They also send final objects to
final objects so that they induce morphisms of the associated
topoi of sheaves.

\begin{remark}\label{rmk:alternative} We provide an alternative presentation of the
morphisms above for an arbitrary extension $K\subset L \subset \Kbar$.

For every finite extension $K\subset M$ in $L$, let $\mathfrak{\widehat{X}}_M$
(resp.~$\fX_M$) be Faltings' site associated to $\widehat{X}$ (resp.~$X$) over $M$.
Let $\gamma_M\colon \fX_M\to \mathfrak{\widehat{X}}_M$ be the morphism defined in
\ref{sec:continuousfunctors}. Given finite extensions $M\subset M'$ of $K$ in $L$
we have a natural morphism of sites $\hat{u}_{M',M}\colon \mathfrak{\widehat{X}}_M \to
\mathfrak{\widehat{X}}_{M'}$ (resp.~$u_{M',M}\colon\fX_M\to \fX_{M'}$) given
by $(U,W)\mapsto \bigl(U, W\otimes_M M'\bigr)$. Moreover, we have $\gamma_{M'}\circ u_{M',M} = \hat{u}_{M',M}\circ \gamma_M$.

Let $I_L$ be the category opposed to the category of finite extensions of $K$ contained in $L$. Then
$\mathfrak{\widehat{X}}_\bullet$ (resp.~$\fX_\bullet$) are
fibred sites over $I_L$ via the morphisms $\hat{u}$ (resp.~$u$) and $\gamma_\bullet\colon
\fX_\bullet\to \mathfrak{\widehat{X}}_\bullet$ defines a coherent morphism
of fibred sites; cf.~\cite[\S VI.7.2.1]{SGAIV}.  Then $\fX_L$ and $\mathfrak{\widehat{X}}_L$
are isomorphic to the projective limit site of  $\fX_\bullet$ and
$\mathfrak{\widehat{X}}_\bullet$ and $\gamma_L$ is induced by $\gamma_\bullet$; see \cite[Def. VI.8.2.5]{SGAIV}.

\end{remark}

\subsubsection{Geometric points} Following
\cite[Def.~4.1]{IllusieKummer} we define a log geometric point $s$ to be the spectrum of an algebraically closed field $k$ with log structure $M_s$ such that
multiplication by $n$ on $M_s/k^\ast$ is a bijection for every integer $n$ prime to the characteristic of $k$. A  log geometric point of $(X,N)$  is  a map of log
schemes from a log geometric point to $(X,N)$. For any such point $x \to (X,N)$, we let $\bigl(X_{x},N_{x}\bigr)$ be the log strict localization of $X$ at $x$ as in
\cite[\S 4.5]{IllusieKummer}: by definition it is the log strictly local log scheme defined as the inverse limit of $\bigl(U,N_U\bigr)$
(resp.~$\bigl(U_{\form},N_{\form})$) over the Kummer \'etale neighborhoods $U$ of $x$.

For a field extension $K\subset L$ in $\Kbar$ we define a geometric point of $\fX_L$ to be a pair $(x,y)$ where  $x$ is a log geometric point of $X$ and $y$ is a
log geometric point of $\bigl(X_{x},N_{x}\bigr)$ over $L$.

Given a presheaf $\cF$ on $\fX_L$ we define the stalk $\cF_{(x,y)}$ of $\cF$ on $\fX$ to be the direct limit $\lim \cF(U,W)$ over all pairs
$\bigl((U,x'),(W,y')\bigr)$ where $U$ is affine, $x'$ is a log geometric point of $U$ mapping to $x$ and $y'$ is a log geometric point of $W$ specializing to $x'$
and mapping to $y$. As in \cite[Prop. 3.4]{erratum} on proves that there are enough geometric points in $\fX_L$, i.e.~that a sequence of sheaves is exact if an only
if the induced sequence on stalks is exact for all geometric points $(x,y)$.

\subsubsection{The localization functors.}\label{sec:localization}
Let $U$ be a small connected affine object of $X^{\ket}$ and write
$U=\Spec(R_U)$ in the algebraic case and $U_{\form}:=\Spf(R_U)$ in
the formal case. Let $N_U$ be the induced log structure
($N_{U_{\rm form}}$ in the formal case).

Recall that $R_U$ is an integral domain. Let $\C_U$ be an algebraic closure of ${\rm Frac}(R_U)$ and let $\C_U^{\rm log}=(\C_U,N_\C)$  be a log geometric point of
$\bigl(\Spec(R_U),N_U\bigr)$ over $\C_U$. Let $\cG_{U_K}$ be the Kummer \'etale Galois group $\pi_1^{\log}\bigl(\Spec\big(R_{U}[p^{-1}]\big),\C_U^{\rm log}\bigr)$,
see \cite[\S 4.5]{IllusieKummer}, classifying Kummer \'etale covers of $\Spec\big(R_{U}[p^{-1}]\big)$. It follows from \ref{rmk:fketalgebraic} that  both in the
algebraic case and in the formal case the category $U_K^{\fket}$ is equivalent to the category of finite sets with continuous action of $\cG_{U_K}$. Write
$\bigl(\Rbar_U,\Nbar_U\bigr)$ for the direct limit of all the finite normal extensions $R_U \subset S$, all log structures $N_S$ on $\Spec(S_K)$ and all  maps
$(R_{U,K},N_{U,K})\to (S_K,N_S) \to (\C_U,N_\C)$ such that $(R_{U,K},N_{U,K})\to (S_K,N_S)$ is finite Kummer \'etale. Then we have an equivalence of categories
$$\Sh\bigl(U_K^{\fket}\bigr)\longrightarrow \Rep\bigl(\cG_{\cU_K}
\bigr),$$from the category of sheaves of abelian groups on $ U_K^{\fket}$ to the category of  discrete abelian groups with continuous action of $\cG_{U_K}$, defined
by $\cF \mapsto {\ds \lim_{\to}} \cF\bigl((S_K,N_S)\bigr)$. Composing with the restriction
$$\Sh\bigl(\fX_K\bigr)\longrightarrow \Sh\bigl(U_K^{\fket}\bigr)\longrightarrow  \Rep\bigl(\cG_{\cU_K}
\bigr)$$we obtain a functor which we simply write as $\cF\mapsto
\cF\bigl(\Rbar_U,\Nbar_U\bigr)$, called {\em localization
functor}. We also write
$$\Sh\bigl(\fX_K\bigr)^\N\longrightarrow   \Rep\bigl(\cG_{\cU_K}
\bigr),\qquad \cF=\bigl(\cF_n\bigr)_n\mapsto
\cF\bigl(\Rbar_U,\Nbar_U\bigr):={\ds \lim_{\infty \leftarrow n}}
\cF_n\bigl(\Rbar_U,\Nbar_U\bigr).$$

More generally we fix an extension $K\subset L \subset \Kbar$. Write $R_{U}\otimes_{\cO_K} L:=\prod_{i=1}^n R_{U,i}$ with $\Spec\bigl(R_{U,i}\bigr)$ connected and
let $N_{U,i}$ be the induced log structure. Fix a log geometric generic point $\bareta_i=\C_{U,i}^{\log}$ of $\bigl(\Spec\bigl(R_{U,i}\bigr),N_{U,i}\bigr)$ over
$\C_U$. Write $\bigl(\Rbar_{U,i},\Nbar_{U,i}\bigr)$ for the direct limit of all finite normal extensions $R_{U,i} \subset S$  taken over all morphisms
$(R_{U,i},N_{U,i})\to (S,N_S) \to (\C_{U,i},N_\C)$ such that $(R_{U,i},N_{U,i})\to (S,N_S)$ is finite Kummer \'etale. We let $\cG_{U_L,i}$ be the Galois group of
$R_{U,i}\subset \Rbar_{U,i}$. Eventually, put $\Rbar_U:=\prod_{i=1}^n \Rbar_{U,i}$ and $\Nbar_U:=\prod_{i=1}^n \Nbar_{U,i}$ and
$$\cG_{U_L}:=\prod_{i=1}^n \cG_{U_L,i}.$$For later purposes for
$L=\Kbar$ and for every $i$ write $\bigl(R_{U,\infty,i},
\Nbar_{U,\infty,i}\bigr)$ as the direct limit of the  Kummer
\'etale covers $(R_{U,i},N_{U,i}) \to (S,N_S)$ (mapping to
$(\C_{U,i},N_\C)$) of the form $S=R_{U,i}\otimes_{\Kbar[N_{U,i}]}
\Kbar\bigl[\frac{1}{n!} N_{U,i}\bigr]$ for varying $n\in\N$. We
let $R_{U,\infty}:=\prod_{i=1}^n R_{U,\infty,i}$ and
$\Nbar_{U,\infty}:=\prod_{i=1}^n \Nbar_{U,\infty,i}$. Let
$\cH_{U_\Kbar,i} $ be the group of automorphisms of $\Rbar_{U,i}$
as $R_{U,\infty,i}$-algebra. Let
$$\cH_{U_\Kbar}:=\prod_{i=1}^n \cH_{U_\Kbar,i}.$$

Let $\Rep\bigl(\cG_{U_L}\bigr)$ (resp.~$\Rep\bigl(\cG_{U_L}\bigr)^\N$) be the category of
discrete abelian groups (resp.~the category of inverse systems of finite
abelian groups indexed by~$\N$) with continuous action of $\cG_{U_L}$.
It follows from \ref{rmk:fketalgebraic} and \cite[\S 4.5]{IllusieKummer} that it is
equivalent to the category of sheaves (resp.~projective limits of sheaves) on
$U_L^{\fket}$. As before we have natural functors called {\em localization functors}
$$\Sh\bigl(\fX_L\bigr)\longrightarrow \Rep\bigl(\cG_{U_L}
\bigr)\qquad \hbox{{\rm and}}\qquad
\Sh\bigl(\fX_L\bigr)^\N\longrightarrow
\Rep\bigl(\cG_{U_L}\bigr)^\N$$defined as follows. If $\cG\in
\Sh\bigl(\fX_L\bigr)$ is a sheaf of abelian groups its
localization is $\ds
\cG\bigl(\Rbar_U,\Nbar_{U}\bigr):=\oplus_{i=1}^n \ds
\cG\bigl(\Rbar_{U,i},\Nbar_{U,i}\bigr)$ where
$\cG\bigl(\Rbar_{U,i},\Nbar_{U,i}\bigr):=\ds
\lim_{\to}\cG\bigl(U,(\Spec(S),N_S)\bigr)$ over all
$(R_{U,i},N_{U,i})\to (S,N_S)\subset
\bigl(\Rbar_{U,i},\Nbar_{U,i}\bigr)$ as before.

\subsubsection{The computation
of $\R^i v_{\ast}^{\rm
cont}$}\label{sec:computationofhigherdirectimages}

Let $K\subset L \subset \Kbar$. Let $\cF$ be a sheaf of abelian
groups on $\fX_L$.

\begin{proposition}\label{prop:RivastF}
The sheaf $\R^i v_{X,L,\ast}(\cF)$ is isomorphic to the sheaf on
$X^{\ket}$ associated to the contravariant functor whose values on
an affine connected open  $U\in X^{\ket}$ is $ {\rm
H}^i\bigl(\cG_{U_L},\cF\bigl(\Rbar_U,\Nbar_U\bigr)\bigr)$.\smallskip

Analogously, the sheaf $\R^i z_{X,L,\ast}(\cF)$ is isomorphic to the sheaf on $X^{\et}$ associated to the contravariant functor whose values on an affine connected
open   $U\in X^{\et}$ is $ {\rm H}^i\bigl(\cG_{U_L},\cF\bigl(\Rbar_U,\Nbar_U\bigr)\bigr)$.
\end{proposition}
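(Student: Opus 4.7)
The approach is: by definition of derived direct images via continuous functors of sites, $\R^i v_{X,L,\ast}(\cF)$ is the sheaf on $X^{\ket}$ associated to the presheaf $U \mapsto H^i\bigl(\fX_L/(U, U_L), \cF|_{(U, U_L)}\bigr)$, and similarly $\R^i z_{X,L,\ast}(\cF)$ is the associated sheaf on $X^{\et}$ of the restriction of the same presheaf. Since the hypotheses of \S\ref{sec:assumptions} grant that small connected affines form a basis of $X^{\et}$ and hence of $X^{\ket}$, both claims reduce to the same assertion: for every small connected affine $U$ one has a natural isomorphism
\[
H^i\bigl(\fX_L/(U, U_L), \cF|_{(U, U_L)}\bigr) \cong H^i\bigl(\cG_{U_L}, \cF(\Rbar_U, \Nbar_U)\bigr).
\]

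To prove this, I would introduce the continuous functor $j_U\colon U_L^{\fket} \to \fX_L/(U, U_L)$, $W \mapsto (U, W)$; it preserves fibred products and sends coverings to coverings of type $\beta$, hence is a morphism of sites. Via the equivalence $\Sh(U_L^{\fket}) \simeq \Rep(\cG_{U_L})$ of \S\ref{sec:localization}, the pullback $j_U^{-1}(\cF|_{(U, U_L)})$ corresponds to $\cF(\Rbar_U, \Nbar_U)$ and sheaf cohomology on $U_L^{\fket}$ is identified with continuous $\cG_{U_L}$-cohomology. Thus it suffices to show that $j_U$ induces an isomorphism on cohomology.

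For this I would invoke a Cartan--Leray argument. Let $(U, \overline{U}_L):=\lim_W (U, W)$ be the pro-object in $\fX_L/(U, U_L)$, with $W$ ranging over finite Kummer \'etale covers of $U_L$ contained in $\Rbar_U$; it is a pro-Galois cover of $(U, U_L)$ with group $\cG_{U_L}$ and yields a spectral sequence
\[
E_2^{p,q} = H^p\bigl(\cG_{U_L}, H^q(\fX_L/(U, \overline{U}_L), \cF)\bigr) \Longrightarrow H^{p+q}\bigl(\fX_L/(U, U_L), \cF\bigr).
\]
The $q=0$ term equals $\cF(\Rbar_U, \Nbar_U)$ by the definition of the localization functor, so everything reduces to the acyclicity statement $H^q\bigl(\fX_L/(U, \overline{U}_L), \cF\bigr) = 0$ for all $q > 0$.

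The main obstacle is precisely this acyclicity. Over the pro-cover $\overline{U}_L$, any strict covering of type $\beta$ is automatically split (dominated by some $W$ in the cofiltered limit defining $\Rbar_U$), so one is reduced to controlling type-$\alpha$ coverings, namely Kummer \'etale covers $V \to U$. For small $U$ the explicit chart $P = P_a \times P_b$ of \S\ref{sec:assumptions}, together with \ref{lemma:structureofP'}, presents such $V$ as a combination of classical \'etale covers of $U$ and Kummer extensions along $P$; after base change to $\Rbar_U$ each such cover splits into a disjoint union of copies, since $\Rbar_U$ contains all the required roots of elements of $P_a$ and $P_b$ by construction of the localization functor. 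The \v{C}ech complex computing cohomology then becomes homotopy-equivalent to a constant complex, yielding the required vanishing. This argument parallels the smooth case treated in \cite{andreatta_iovita_comparison}; the only extra ingredient in the semistable setting is the bookkeeping for the horizontal factor $P_a$, whose splitting behaviour over $\Rbar_U$ is handled by the same formalism.
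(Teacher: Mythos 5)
Your reduction proves too much, and the step where the extra strength is needed is exactly where the argument breaks. You reduce both claims to a presheaf-level isomorphism $H^i\bigl(\fX_L/(U,U_L),\cF\bigr)\cong H^i\bigl(\cG_{U_L},\cF(\Rbar_U,\Nbar_U)\bigr)$ for each \emph{fixed} small affine $U$, via Cartan--Leray for the pro-cover $(U,\overline{U}_L)$ and the acyclicity $H^q\bigl(\fX_L/(U,\overline{U}_L),\cF\bigr)=0$ for $q>0$. That acyclicity is false for a general abelian sheaf $\cF$. Coverings of type $\alpha$, i.e.\ Kummer \'etale coverings $\{V_j\to U\}$ in the first component, do \emph{not} split over $(U,\overline{U}_L)$: a morphism $(U,W'')\to\bigl(V_j, W\times_{U_K}V_{j,K}\bigr)$ in $E_{X_L}$ requires in particular a morphism $U\to V_j$ in $X^{\ket}$, i.e.\ a section of $V_j\to U$, and no enlargement of $W$ in the generic-fibre direction can produce one; adjoining roots of $P_a$ and $P_b$ inside $\Rbar_U$ only splits covers in the $W$-direction. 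Consequently $H^q\bigl(\fX_L/(U,\overline{U}_L),\cF\bigr)$ contains the Kummer \'etale \v{C}ech cohomology of $U$ with values in the localized presheaf $V\mapsto \cF\bigl(\Rbar_U\otimes_{R_U}R_V\bigr)$ --- precisely the groups that hypothesis (4) of \ref{prop:fiisisoo} must \emph{assume} to be almost zero and which are verified there sheaf by sheaf; they do not vanish for arbitrary $\cF$.

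The proposition only asserts an isomorphism after sheafification, and that is what the paper's argument (the one of Thm.~3.6 of the erratum, which the proof cites) exploits. One constructs the comparison map as a morphism of $\delta$-functors from Galois cohomology to ${\rm R}^i v_{X,L,\ast}$ and checks it on stalks: the stalk at a geometric point $x$ is $\ds\lim_{U\ni x}H^i\bigl(\fX_L/(U,U_L),\cF\bigr)$, and in this colimit over Kummer \'etale neighbourhoods every type-$\alpha$ covering acquires a distinguished member through which $x$ lifts and hence becomes split, so only the type-$\beta$ (generic-fibre) coverings survive and yield $\ds\lim_{U\ni x}H^i\bigl(\cG_{U_L},\cF(\Rbar_U,\Nbar_U)\bigr)$. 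If you want to keep your Cartan--Leray strategy you must run it inside this colimit over neighbourhoods of a point, not for a single fixed $U$.
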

\begin{proof} The proof is as in \cite[Thm.~3.6]{erratum}.
\end{proof}

Assume that we are in the algebraic case and that $X$ is proper
over $\cO_K$. Let $\widehat{X}$ be the associated formal scheme.
For every sheaf $\cL$ on $\fX_L$ we have a natural  morphism
$${\rm H}^i\bigl(\fX_L,\cL\bigr)\lra  {\rm
H}^i\bigl(\mathfrak{\widehat{X}}_L,\gamma^\ast\bigl(\cL\bigr)\bigr).
$$
\begin{proposition}\label{prop:GAGA} Let $\cL$ be a torsion  sheaf
on $\fX_L$. Then, the morphism above is an isomorphism.
\end{proposition}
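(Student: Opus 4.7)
The plan is to deduce this GAGA comparison from the Cartan--Leray spectral sequence for the localization morphism $v_L$, combined with the classical proper base change theorem for torsion sheaves in the Kummer \'etale topology. I would first reduce to the case where $\cL$ is annihilated by a fixed integer $n$, using that both sides commute with filtered colimits of torsion sheaves, and split into $p$-power and prime-to-$p$ torsion, the latter being essentially classical.

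Next, I would compare the Cartan--Leray spectral sequences
$$E_2^{p,q}=H^p\bigl(X^{\ket},R^q v_{X,L,\ast}(\cL)\bigr)\Longrightarrow H^{p+q}\bigl(\fX_L,\cL\bigr)$$
and
$$\widehat{E}_2^{p,q}=H^p\bigl(\widehat{X}^{\ket},R^q v_{\widehat{X},L,\ast}(\gamma^\ast\cL)\bigr)\Longrightarrow H^{p+q}\bigl(\mathfrak{\widehat{X}}_L,\gamma^\ast\cL\bigr),$$
with comparison morphism induced by the formalization functor $X^{\ket}\to \widehat{X}^{\ket}$ and by $\gamma_L$. By Proposition \ref{prop:RivastF}, both higher direct image sheaves are sheafifications of the presheaf $U\mapsto H^q\bigl(\cG_{U_L},\cL(\Rbar_U,\Nbar_U)\bigr)$ on small affines. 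For a small algebraic $U=\Spec(R)$ with formal completion $\widehat{U}=\Spf(\widehat{R})$, the Kummer \'etale Galois group $\cG_{U_L}$ and the localization $\cL(\Rbar_U,\Nbar_U)$ agree in the algebraic and the formal setting by the very construction of $\gamma$, so the natural morphism between higher direct image sheaves is an isomorphism on the basis of small affines. This reduces the claim to showing that for every torsion sheaf $\cF$ on $X^{\ket}$ the comparison $H^p\bigl(X^{\ket},\cF\bigr)\to H^p\bigl(\widehat{X}^{\ket},\cF|_{\widehat{X}}\bigr)$ is an isomorphism. Using the equivalence $\widehat{X}^{\ket}\simeq X_1^{\ket}$ recorded in \ref{sec:ketonX}, the right hand side is the Kummer \'etale cohomology of the special fibre of the proper morphism $X\to\Spec(\cO_K)$, and the comparison is proper base change in the Kummer \'etale topology (the logarithmic analogue of Fujiwara's formal GAGA, due to Nakayama).

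The main obstacle I anticipate is the identification of the localization data $\bigl(\cG_{U_L},\cL(\Rbar_U,\Nbar_U)\bigr)$ attached to a small algebraic affine with that attached to its $\pi$-adic completion. This amounts to an Elkik--Artin style comparison of finite Kummer \'etale covers of $\Spec(R_L)$ with those of the rigid generic fibre of $\Spf(\widehat{R})$. The smooth case was handled in \cite{andreatta_iovita_comparison}, and in the semistable setting the argument should adapt using the explicit local charts provided in \ref{sec:assumptions}; once this is in place the comparison of the two spectral sequences is formal.
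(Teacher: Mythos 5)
Your architecture (Leray spectral sequence for the projection to the base site, local identification of the higher direct images, plus a global formal GAGA input) is the same as the paper's, but the two load-bearing steps are asserted rather than proved, and one of them is the heart of the proposition. The claim that $\cG_{U_L}$ and $\cL(\Rbar_U,\Nbar_U)$ ``agree in the algebraic and the formal setting by the very construction of $\gamma$'' is not true by construction --- that agreement \emph{is} the nontrivial content, and you then contradict yourself by naming it as the main unresolved obstacle. The paper never compares the Kummer \'etale fundamental groups of $\Spec(R_L)$ and of the affinoid generic fibre of $\Spf(\widehat{R})$ over a whole small affine (a stronger statement than what is needed, and not what \cite{andreatta_iovita_comparison} proves either). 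Instead it checks that $\nu^\ast\bigl(\R^p z_{X,L,\ast}(\cL)\bigr)\to \R^p z_{\widehat{X},L,\ast}(\gamma^\ast\cL)$ is an isomorphism on stalks at geometric points $x$ of $X_k$, where by \ref{prop:RivastF} both sides are computed from the henselian local rings $\cO_{X,x}^{\rm h}$ and $\cO_{\widehat{X},x}^{\rm h}$; after splitting off the inertia (isomorphic to $\widehat{\Z}$) along each boundary divisor $Y_i$, the comparison reduces to the Galois groups of $\cO_{X,x}^{\rm h}[p^{-1}]$ and $\cO_{\widehat{X},x}^{\rm h}[p^{-1}]$, which is exactly where Elkik's theorem on henselian pairs applies. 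Working stalkwise at henselizations is what makes Elkik usable; your version defers precisely this step.

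The second gap is your global input. You run the spectral sequence over $X^{\ket}$ and invoke ``proper base change in the Kummer \'etale topology'' to identify $H^p\bigl(X^{\ket},\cF\bigr)$ with $H^p\bigl(X_1^{\ket},\cF\vert_{X_1}\bigr)$ for an arbitrary torsion sheaf $\cF$. The paper deliberately routes the spectral sequence through the plain \'etale site (using $z_{X,L,\ast}$ with target $X^{\et}$, not $v_{X,L,\ast}$ with target $X^{\ket}$) precisely so that the global comparison is Gabber's affine analogue of proper base change for torsion sheaves, available off the shelf for $X$ proper over $\cO_K$. If you insist on $X^{\ket}$ you owe a precise statement and a check that the logarithmic proper base change theorem applies to these coefficients and to the log structure on the base; note also that the equivalence $\widehat{X}^{\ket}\simeq X_1^{\ket}$ only identifies the formal side with the closed fibre, so the algebraic-to-closed-fibre comparison is still a theorem to be quoted, not a formality. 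Two minor points: the proposition is stated for an arbitrary extension $K\subset L\subset\Kbar$, and the reduction to finite $L$ via the projective limit description of $\fX_L$ and $\mathfrak{\widehat{X}}_L$ (as in \ref{rmk:alternative}) is missing from your argument; and the prime-to-$p$ versus $p$-power splitting buys you nothing here, since the argument is uniform in the torsion.
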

\begin{proof} We first show how to reduce to the case that $L$ is a finite extension of $K$ in $\Kbar$.
Due to \ref{rmk:alternative} the sites $\fX_L$ and $\mathfrak{\widehat{X}}_L$ are identified
with the projective limit site of the sites $\fX_\bullet$ and
$\mathfrak{\widehat{X}}_\bullet$ fibred over the finite extensions of $K$ contained in $L$.
Furthermore $\gamma_L$ is induced by $\gamma_\bullet$. It follows from
\cite[\S VI.8.7.1]{SGAIV} and \cite[\S VI.8.7.3]{SGAIV} that
$${\rm H}^i\bigl(\fX_L,\cL\bigr)\cong \lim_{\to} {\rm H}^i\bigl(\fX_M,\cL\vert_{\fX_M}\bigr)$$and $${\rm
H}^i\bigl(\mathfrak{\widehat{X}}_L,\gamma_L^\ast(\cL)\bigr)\cong
\lim_{\to } {\rm
H}^i\bigl(\mathfrak{\widehat{X}}_M,\gamma_L^\ast(\cL)\vert_{\mathfrak{\widehat{X}}_M}\bigr),$$where
the direct limit is taken over the category of all finite
extensions $M$ of $K$ contained in $L$. Since
$\gamma_L^\ast(\cL)\vert_{\mathfrak{\widehat{X}}_M}\cong
\gamma_M^\ast\bigl( \cL\vert_{\fX_M}\bigr) $,  if we show that for
every $M$ the map ${\rm H}^i\bigl(\fX_M,\cL\bigr)\lra  {\rm
H}^i\bigl(\mathfrak{\widehat{X}}_M,\gamma^\ast_M\bigl(\cL\bigr)\bigr)$
is an isomorphism for every torsion sheaf $\cL$ on $\fX_M$, the
map ${\rm H}^i\bigl(\fX_L,\cL\bigr)\to {\rm
H}^i\bigl(\mathfrak{\widehat{X}}_L,\gamma_L^\ast(\cL)\bigr)$ is
also an isomorphism for every torsion sheaf $\cL$ on $\fX_L$. We
are then reduced to prove the proposition for $K\subset L$ a
finite extension contained in $\Kbar$. Consider the commutative
diagram
$$\begin{array}{ccc} \Sh\bigl(\mathfrak{\widehat{X}}_L\bigr) &
\stackrel{\gamma_\ast}{\lra} & \Sh\bigl(\fX_L\bigr) \cr z_{\widehat{X}, L,\ast}\big\downarrow & & z_{X,L,\ast}\big\downarrow \cr \Sh\bigl(\widehat{X}^{\et}\bigr)
&\stackrel{\nu}{\lra} & \Sh\bigl(X^{\et}\bigr).\cr\end{array}$$We have compatible spectral sequences
$${\rm H}^q\bigl(X^{\et}, \R^p z_{X,L,\ast}(\cL)\bigr)\Longrightarrow
{\rm H}^{p+q}(\fX_K,\cL)$$and
$${\rm H}^q\bigl(\widehat{X}^{\et}, \R^p
z_{\widehat{X}, L,\ast}(\gamma^\ast(\cL))\bigr)\Longrightarrow {\rm H}^{p+q}\bigl(\mathfrak{\widehat{X}}_K,\gamma^\ast(\cL)\bigr).$$It suffices to prove that the
natural map ${\rm H}^q\bigl(X^{\et}, \R^p z_{X,L,\ast}(\cL)\bigr)\lra {\rm H}^q\bigl(\widehat{X}^{\et}, \R^p z_{\widehat{X}, L,\ast}(\gamma^\ast(\cL))\bigr)$ is an
isomorphism. Due to~\cite[Cor.~1]{Gabber} and the fact that~$X$ is proper over~$\cO_K$ this follows if we show  that the natural map
$$\nu^\ast\bigl(\R^p z_{X,L,\ast}(\cL)\bigr)\cong \R^p
z_{\widehat{X}, L,\ast}(\gamma^\ast(\cL))$$is an isomorphism. This can be checked on stalks at  geometric points $x\in X_k$. Let $\cO_{X,x}^{\rm h}$
(resp.~$\cO_{\widehat{X},x}^{\rm h}$) be the henselization of $\cO_{X,x}$ (resp.~$\cO_{\widehat{X},x}$). Due to \ref{prop:RivastF} it suffices to prove that the map
from the Kummer \'etale covers of $\Spec(\cO_{X,x}^{\rm h}\otimes_{\cO_K} L)$ to the Kummer \'etale covers of $\Spec(\cO_{\widehat{X},x}^{\rm h}\otimes_{\cO_K} L)$,
given by base change, is an equivalence. In both cases the number of their connected components is finite and equal to the degree of the maximal unramified
extension $K'$ of $K$ contained in $L$. It thus suffices to show that their  Galois groups, by which we mean the product of the Galois groups of the  connected
components, are isomorphic. Such Galois groups are isomorphic to $[K':L]$ times the Galois groups of $\cO_{X,x}^{\rm h}\otimes_{\cO_{K'}} L$ and
$\cO_{\widehat{X},x}^{\rm h}\otimes_{\cO_{K'}} L$ respectively, which classify finite and normal extensions which are separable over the locus where the log
structure is trivial. By construction in both cases the log structures are defined by regular elements $Y_1,\ldots,Y_b\in \cO_{X,x}$. Hence, such Galois groups are
extensions of the Galois groups of $\cO_{X,x}^{\rm h}\otimes_{\cO_{K'}} L$ (resp.~of $\cO_{\widehat{X},x}^{\rm h}\otimes_{\cO_{K'}} L$) by the product of the
inertia groups ($\cong \widehat{\Z}$) at each of the prime ideals defined by $Y_i$ for those $i\in \{1,\ldots,b\}$ such that $Y_i$ is not a unit. Hence, we are
reduced to prove that the  Galois groups of $\cO_{X,x}^{\rm h}\otimes_{\cO_{K'}} L$ and of $\cO_{\widehat{X},x}^{\rm h}\otimes_{\cO_{K'}} L$  coincide. It suffices
to  show that the  Galois groups of $\cO_{X,x}^{\rm h}[p^{-1}]$ and of $\cO_{\widehat{X},x}^{\rm h}[p^{-1}]$ coincide. This follows from \cite[Thm.~5]{Elkik}.

\end{proof}

For every $U\in X^{\ket}$ affine connected define ${\rm H}^\ast\left(\cG_{U_L},\_ \right)$ to be the $\delta$-functor obtained by deriving the functor associating
to an inverse system of discrete $\cG_{U_L}$-modules $\{A_n\}_{n\in\N}$ the group $\ds \lim_{\infty \leftarrow n} A_n^{\cG_{U_L}}$. Consider an inverse system of
sheaves $\cF=\{\cF_n\}_n\in \Sh\bigl(\fX_L\bigr)^\N$ of abelian groups. Define ${\rm H}_{\rm Gal}^i\bigl(\cF\bigr)$ to be the sheaf associated to the contravariant
functor sending $U\in X^{\ket}$, affine connected, to ${\rm H}^i\left(\cG_{U_L},\{\cF_n(\Rbar_U)\}_n\right)$. One can also consider the sheaf ${\rm R}^i
v_{L,\ast}^{\rm cont}(\cF)$ obtained by deriving the functor $\cF\to \ds \lim_{\infty\leftarrow n} v_{L,\ast}(\cF_n)$. Then, proceeding as in \cite[Lemma
3.5]{erratum} and \cite[Lemma 3.17]{andreatta_iovita_comparison} one can show there is a functorial homomorphism of sheaves
$$f_i(\cF)\colon {\rm H}_{\rm Gal}^i\bigl(\cF\bigr) \lra {\rm R}^i
v_{L,\ast}(\cF).$$The next proposition, analogous to \ref{prop:RivastF}, provides a criterion under which the above morphism is an isomorphism. Assume that
$L=\Kbar$ and that $\{\cF_n\}_{n\in\N}$ is a sheaf of $A_{\rm inf}$--modules (resp.~of~$\{\cO_{\Kbar}/p^n \cO_{\Kbar} \}_n$--modules). For every small $U\in
X^{\et}$ we write $R_{U,\infty}$ as in \ref{sec:localization} and $R_{U,\infty,\cO_\Kbar}$ to be the normalization of $R_U$ in $R_{U,\infty}\Kbar\subset
\Rbar_U[p^{-1}]$. We write
$$\cH_{U_\Kbar}:=\gal\left(\Rbar_U[p^{-1}]/
R_{U,\infty,\cO_\Kbar}[p^{-1}]\right), \qquad \Gamma_{U_\Kbar}:=\gal\left(R_{U,\infty,\cO_\Kbar}[p^{-1}]/R_U \Kbar\right).$$We then have an exact sequence $$ 0 \lra
\cH_{U_\Kbar} \lra \cG_{U_\Kbar} \lra \Gamma_{U_\Kbar} \lra 0.$$As in \ref{sec:localization} we define $\cF\bigl(R_{U,\infty,\cO_\Kbar} \bigr):=\ds
\lim_{\infty\leftarrow n} \cF_n\bigl(R_{U,\infty,\cO_\Kbar}\bigr)$. They are $\Gamma_{U_\Kbar}$-modules.

Given  an $A_{\rm inf}$--module or an $\cO_{\Kbar}$ module, we say that it is {\it almost zero} if it is annihilated by any element of ideal ${\mathcal I}$
of~$A_{\rm inf}$ (resp.~the maximal ideal of~$\cO_{\Kbar}$) (see \S\ref{sec:classical} for the notation).

\begin{proposition}\label{prop:fiisisoo}
Assume that  for every small $U\in X^{\et}$ and every $n\in\N$ the following hold:\smallskip

(1)  the cokernel of $\cF_{n+1}(\Rbar_U)\to \cF_n(\Rbar_U)$ is almost zero;\smallskip

(2) for every~$q\geq 1$ the group ${\rm H}^q\left(\cH_{U_\Kbar},\cF_n\bigl(\Rbar_{U}\bigr)\right)$ is almost zero;\smallskip

(3)  the cokernel of the transition map $\cF_{n+1}(R_{U,\infty,\cO_\Kbar})\to \cF_n(R_{U,\infty,\cO_\Kbar})$ is almost zero;\smallskip

(4) for every covering~$Z\to U$ by small  objects in~$X^{\ket}$ and every~$q\geq 1$ the Chech cohomology group ${\rm H}^q\bigl(Z\to U,
\cF_n\bigl(R_{U,\infty,\cO_\Kbar}\otimes_{R_U} R_Z\bigr)\bigr)$ is almost zero.\smallskip

Then,  the morphism $f_i(\cF)$ has kernel and cokernel annihilated by any element of ${\mathcal I}^{2i}$ (resp.~any element of the maximal ideal of $\cO_{\Kbar}$).

\end{proposition}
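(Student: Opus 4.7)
The plan is to work locally on small affine connected $U \in X^{\et}$ and compare the two sides of $f_i(\cF)$ via two nested spectral sequences: a \v{C}ech-to-derived-functor spectral sequence attached to a canonical pro-cover of $U$, and the Hochschild--Serre spectral sequence for the exact sequence $0 \to \cH_{U_\Kbar} \to \cG_{U_\Kbar} \to \Gamma_{U_\Kbar} \to 0$, while keeping careful track of the ${\mathcal I}$-divisibility of the errors introduced at each stage. Since there are enough geometric points in $\fX_\Kbar$ and the small affines form a basis of $X^{\et}$, it is enough to check the almost-isomorphism statement on sections over such $U$.

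First I would identify ${\rm R}^i v_{L,\ast}^{\rm cont}(\cF)(U)$, for $U$ small, with an honest \v{C}ech cohomology attached to a covering family of small objects $Z \to U$ in $X^{\ket}$ whose generic fibres are cofinal in the Kummer \'etale tower defining $R_{U,\infty,\cO_\Kbar}$. Hypothesis (4) makes the higher \v{C}ech cohomology terms of this cover almost zero, so the \v{C}ech-to-derived spectral sequence almost degenerates and identifies $({\rm R}^i v_{L,\ast}^{\rm cont}\cF)(U)$ with the continuous cohomology of $\Gamma_{U_\Kbar}$ acting on $\cF(R_{U,\infty,\cO_\Kbar})$; hypothesis (3) ensures, via a Mittag--Leffler argument in the almost setting, that the transition from the inverse-system cohomology to continuous cohomology costs at most an element of ${\mathcal I}$. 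Symmetrically, hypothesis (1) plays the analogous role for $\cF(\Rbar_U)$, identifying ${\rm H}^i_{\rm Gal}(\cF)(U)$ with the continuous Galois cohomology of $\cG_{U_\Kbar}$ on $\{\cF_n(\Rbar_U)\}$ up to an ${\mathcal I}$-factor.

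Second I would run the Hochschild--Serre spectral sequence
$$E_2^{p,q} = {\rm H}^p\bigl(\Gamma_{U_\Kbar}, {\rm H}^q(\cH_{U_\Kbar}, \cF_n(\Rbar_U))\bigr) \Longrightarrow {\rm H}^{p+q}(\cG_{U_\Kbar}, \cF_n(\Rbar_U)).$$
By hypothesis (2) all rows with $q\ge 1$ are almost zero, so the edge morphism is an almost isomorphism from ${\rm H}^i(\Gamma_{U_\Kbar}, \cF_n(\Rbar_U)^{\cH_{U_\Kbar}})$ to ${\rm H}^i(\cG_{U_\Kbar}, \cF_n(\Rbar_U))$. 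Moreover the natural inclusion $\cF_n(R_{U,\infty,\cO_\Kbar}) \hookrightarrow \cF_n(\Rbar_U)^{\cH_{U_\Kbar}}$ has almost-zero cokernel, controlled by ${\rm H}^1(\cH_{U_\Kbar}, \cF_n(\Rbar_U))$ via the descent exact sequence, again killed by (2). Assembling these comparisons one constructs, modulo ${\mathcal I}$-torsion, the map $f_i(\cF)$, and each of the spectral-sequence differentials that can land on or originate from the diagonal $p+q=i$ contributes one factor of ${\mathcal I}$: at most $i$ in the \v{C}ech direction and at most $i$ in the Hochschild--Serre direction, for a cumulative bound of ${\mathcal I}^{2i}$.

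The main obstacle will be the combinatorial bookkeeping of the almost-zero errors through the two nested spectral sequences, checking that precisely the exponent $2i$ suffices and that no additional loss is incurred in the edge-morphism comparison. A related subtlety is the passage from inverse-system to continuous cohomology, which requires an almost-version of the Mittag--Leffler criterion and is precisely where hypotheses (1) and (3) are used. Apart from these bookkeeping matters, the strategy is a direct generalization of the smooth analogues in \cite[Lemma 3.17]{andreatta_iovita_comparison} and \cite[Lemma 3.5]{erratum}, the only genuinely new ingredient being the treatment of log ramification along the chart monoid $P=P_a\times P_b$, which affects the structure of $\cH_{U_\Kbar}$ but not the abstract spectral-sequence argument.
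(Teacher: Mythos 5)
Your proposal is correct and follows essentially the same route as the paper: the paper packages your Hochschild--Serre step for $0\to \cH_{U_\Kbar}\to\cG_{U_\Kbar}\to\Gamma_{U_\Kbar}\to 0$ as the Leray spectral sequence for the factorization $v_\Kbar=\beta\circ\alpha$ through the intermediate site $\fX_{\infty,\Kbar}$, with (2) killing ${\rm R}^{\geq 1}\beta_\ast$, (4) controlling the sheafification of the continuous $\Gamma_{U_\Kbar}$-cochain complex, and (1),(3) controlling the $\lim^{(i)}$ terms exactly as in your almost Mittag--Leffler step. The only cosmetic difference is that the comparison $\cF_n(R_{U,\infty,\cO_\Kbar})\cong {\rm H}^0\bigl(\cH_{U_\Kbar},\cF_n(\Rbar_U)\bigr)$ is used as an identity coming from the definition of $\beta_\ast$ rather than needing your $H^1$-descent argument.
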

\begin{proof} We follow the analogous proof given  in \cite[Thm.~6.12]{andreatta_iovita}.
See also
\cite[Lemma 3.19]{andreatta_iovita_comparison}. In (4) the notation
$\cF_n\bigl(R_{U,\infty,\cO_\Kbar}\otimes_{R_U} R_Z\bigr)$ stands for the following. Write
$R_{U,\infty,\cO_\Kbar}$ as a direct limit of normal $R_U \cO_{\Kbar}$-algebras $W$, finite and
Kummer \'etale after inverting $p$. Then
$\cF_n\bigl(R_{U,\infty,\cO_\Kbar}\otimes_{R_U} R_Z\bigr)$ is defined to be the direct limit
$\lim_W \cF_n(Z,W_Z)$, over all $W$'s, denoting by $W_Z$ the object of
$Z_\Kbar^{\rm fet}$ obtained from $W$ via the continuous map of sites $U_\Kbar^{\rm fet}\to Z_\Kbar^{\rm fet}$.
Note that $R_{Z,\infty,\cO_\Kbar}[p^{-1}]$ is a direct factor
in $R_{U,\infty,\cO_\Kbar}\otimes_{R_U} R_Z\bigl[p^{-1}\bigr]$, the group $\Gamma_{Z_\Kbar}$ is a
quotient of $\Gamma_{U_\Kbar}$ and
$R_{U,\infty,\cO_\Kbar}\otimes_{R_U} R_Z\bigl[p^{-1}\bigr] \cong {\rm Ind}_{\Gamma_{Z_\Kbar}}^{\Gamma_{U_\Kbar}}
R_{Z,\infty,\cO_\Kbar}$ is the induced
representation as $\Gamma_{U_\Kbar}$-module. Hence
$$\cF_n\bigl(R_{U,\infty,\cO_\Kbar}\otimes_{R_U} R_Z\bigr) \cong
{\rm Ind}_{\Gamma_{Z_\Kbar}}^{\Gamma_{U_\Kbar}} \cF_n\bigl(R_{Z,\infty,\cO_\Kbar}\bigr).$$Without loss of generality we may assume that $X=U$. Via the equivalence
of $U_\Kbar^{\rm fket}$ with the category of finite sets with action of $\cG_{U_\Kbar}$, we get a subtopology $U_\infty\subset U_\Kbar^{\rm fket}$ associated to the
category of finite sets with action of $\Gamma_{U_\Kbar}$.   Let $\fX_{\infty,\Kbar}\subset \fX_\Kbar$ be the subcategory consisting of pairs $(V,W)$ where $V\in
X^{\rm ket}$ and $W\in V_\Kbar^{\rm fket}$ is obtained from an object in $U_\infty$ via the continuous map of sites $U_\Kbar^{\rm fet}\to V_\Kbar^{\rm fet}$. It is
closed under fibred products and we endow it with the induced topology. The map $v_\Kbar$ factors as $v_\Kbar=\beta\circ \alpha$ via the continuous morphism of
sites
$$\alpha \colon X^{\rm ket}\lra \fX_{\infty,\Kbar}, \quad U\mapsto \big(U,U_\Kbar\big)$$and the continuous morphism of sites defined by the inclusion $\beta \colon
\fX_{\infty,\Kbar} \to \fX_\Kbar$. We can then compute $v_{\Kbar,\ast}^{\rm cont}$ as the composite of $\alpha^{\rm cont}_\ast \circ \beta_\ast^\N$; see
\ref{section:continuoussheaves} for the notation. We get a Leray spectral sequence
$${\rm R}^i \alpha_\ast^{\rm cont}\left({\rm R}^j
\beta_\ast^{\N}(\cF_n)_{n\in\N}\right)\Longrightarrow {\rm R}^{i+j} v_{\Kbar,\ast}^{\rm cont}(\cF_n).$$Note that ${\rm R}^i
\beta_\ast^{\N}(\cF_n)_{n\in\N}=\left({\rm R}^i \beta_\ast(\cF_n)\right)_{n\in\N}$.\medskip

{\it Step 1}: We claim that the group ${\rm R}^i \beta_\ast(\cF_n)$ is almost zero for $i\geq 1$.\smallskip

For $V\in X^{\ket}$ affine and $\cF$ a sheaf on $\fX_\Kbar$ we have $${\rm Ind}_{\Gamma_{V_\Kbar}}^{\Gamma_{U_\Kbar}} {\rm
H}^0\left(\cH_{V_\Kbar},\cF\bigl(\Rbar_{V},\Nbar_V\bigr)\right)\cong \beta_\ast(\cF) \bigl(R_{U,\infty,\cO_\Kbar}\otimes_{R_U} R_V\bigr),$$as representations of
$\Gamma_{U_\Kbar}$, functorially in $V$. As in \cite[Lemma 3.5]{erratum} one argues that for every $i$ we have a map $${\rm
Ind}_{\Gamma_{V_\Kbar}}^{\Gamma_{U_\Kbar}} {\rm H}^i\left(\cH_{V_\Kbar},\cF\bigl(\Rbar_{V},\Nbar_V\bigr)\right)\lra {\rm R}^i \beta_\ast(\cF)
\bigl(R_{U,\infty,\cO_\Kbar}\otimes_{R_U} R_V\bigr).$$A geometric point $(x,y)$ of $\fX_\Kbar$ defines a geometric point of $\fX_{\infty,\Kbar}$. Arguing as in
\cite[Thm.~3.6]{erratum} one proves that the  map above induces an isomorphism between the stalk ${\rm R}^i \beta_\ast(\cF)_{(x,y)} $ and $\lim_{x\in V} {\rm
Ind}_{\Gamma_{V_\Kbar}}^{\Gamma_{U_\Kbar}} {\rm H}^i\left(\cH_{V_\Kbar},\cF\bigl(\Rbar_{V},\Nbar_V\bigr)\right)$, where the direct limit is taken over all affine
neighborhoods $V$ of $x$. Since we have enough geometric points, this and Assumption (2) imply that ${\rm R}^i \beta_\ast(\cF_n)$ is almost zero for every $n$ and
every $i\geq 1$.\medskip

{\it Step 2}: The computation of ${\rm R}^i \alpha_\ast^{\rm cont} \beta_\ast(\cF_n)$.\smallskip

For every $i$ and $n\in \N$ consider the contravariant functor on $\fX_{\infty,\Kbar}$ associating to every affine connected $V\in X^{\ket}$ the group ${\cal
C}^i\bigl(\Gamma_{U_\Kbar},\beta_\ast(\cF_n)\bigl(R_{U,\infty,\cO_\Kbar}\otimes_{R_U} R_V\bigr)\bigr)$ of  continuous maps $\Gamma_{U_\Kbar}^{i+1}\lra
\cF_n\bigl(R_{U,\infty,\cO_\Kbar}\otimes_{R_U} R_V\bigr)$. Assumption (4) implies that the associated sheaf ${\cal
C}^i\bigl(\Gamma_{U_\Kbar},\beta_\ast(\cF_n)\bigr)$ has values on every affine connected $V\in X^{\ket}$ equal to the continuous maps $\Gamma_{U_\Kbar}^{i+1}\lra
\cF_n\bigl(R_{U,\infty,\cO_\Kbar}\otimes_{R_U} R_V\bigr)$, up to multiplication by any element of the ideal ${\mathcal I}$ of~$A_{\rm inf}$ (resp.~of the maximal
ideal of~$\cO_{\Kbar}$).  For $V\in X^{\rm ket}$ affine connected we have
$$\alpha_\ast^{\rm cont}\beta_\ast(\cF_n)(V) = \ds
\lim_{\infty\leftarrow n}\cF_n\bigl(R_{U,\infty,\cO_\Kbar}\otimes_{R_U} R_V\bigr)^{\Gamma_{U_\Kbar}}.$$In particular  up to multiplication by any element of
${\mathcal I}$  (resp.~of the maximal ideal of~$\cO_{\Kbar}$) we have a long exact sequence
$$0\lra \alpha_\ast\bigl(\beta_\ast(\cF_n)\bigr) \lra {\cal
C}^\bullet \bigl(\Gamma_{U_\Kbar},\beta_\ast(\cF_n)\bigr).$$For every $V\in X^{\ket}$ affine connected the group $\alpha_\ast^{\rm cont}\left({\cal C}^i
\bigl(\Gamma_{U_\Kbar},\beta_\ast(\cF_n)\bigr)\right)(V)$ coincides with the continuous cochains ${\rm C}^i\bigl(\Gamma_{U_\Kbar},\ds \lim_{\infty\leftarrow
n}\cF_n\bigl(R_{U,\infty,\cO_\Kbar}\otimes_{R_U} R_V\bigr)\bigr)$. To conclude the proof of the proposition it suffices to show that the higher direct images ${\rm
R}^j \alpha_\ast^{\rm cont}$ of ${\cal C}^i \bigl(\Gamma_{U_\Kbar},\beta_\ast(\cF_n)\bigr)_{n\in\N}$ are almost zero. We use the spectral sequence
$${\lim}^{(i)} \left({\rm R}^j
\alpha_\ast {\cal C}^h \bigl(\Gamma_{U_\Kbar},\beta_\ast(\cF_n)\bigr)\right)_{n\in\N}\Longrightarrow {\rm R}^{i+j} \alpha_\ast^{\rm cont}\left({\cal C}^h
\bigl(\Gamma_{U_\Kbar},\beta_\ast(\cF_n)\bigr)_{n\in\N}\right).$$Arguing as in \cite[Lemma 3.5 \& Thm.~3.6]{erratum} one proves that for any sheaf $\cF$ on
$\fX_{\infty,\Kbar}$ and any geometric point $x$ of $X^{\ket}$ the stalk ${\rm R}^j \alpha_\ast(\cF)_x$ is the limit $\lim_{x\in V} {\rm
H}^j\left(\Gamma_{U_\Kbar},\cF\bigl(R_{U,\infty,\cO_\Kbar}\otimes_{R_U} R_V\bigr)\right)$ over the affine connected  neighborhoods $V\in X^{\ket}$ of $x$. Up to
multiplication by any element of ${\mathcal I}$  (resp.~of the maximal ideal of~$\cO_{\Kbar}$) the group ${\rm H}^j\left(\Gamma_{U_\Kbar},{\cal C}^h
\bigl(\Gamma_{U_\Kbar},\cF_n\bigl(R_{U,\infty,\cO_\Kbar}\otimes_{R_U} R_V\bigr)\bigr)\right)$ coincides with the cohomology of ${\rm
H}^j\left(\Gamma_{U_\Kbar},\_\right)$ of the module of continuous maps $\Gamma_{U_\Kbar}^{h+1}\lra \cF_n\bigl(R_{U,\infty,\cO_\Kbar}\otimes_{R_U} R_V\bigr)$, which
is zero for $j\geq 1$. We deduce that ${\rm R}^j \alpha_\ast {\cal C}^h \bigl(\Gamma_{U_\Kbar},\beta_\ast(\cF_n)\bigr)$ is almost zero for $j\geq 1$. We are left to
prove that $\lim^{(i)} \left( \alpha_\ast {\cal C}^h \bigl(\Gamma_{U_\Kbar},\beta_\ast(\cF_n)\bigr)\right)_{n\in\N}$ is almost zero for $i\geq 1$. This follows
using Assumptions (3) and (4); we refer to the proof of \cite[Prop. 6.15(ii)]{andreatta_iovita} for details.

\end{proof}

\subsection{Fontaine's sheaves}
\label{sec:fontainesheaves}

In what follows we will use the following convention. Let $\cS$ be a site and let $\cA$ be a sheaf of commutative rings with identity on $\cS$ such that the
presheaf of units $\cA^\ast$ is a sheaf. We need the notion of logarithmic geometry in this general setting. We refer to \cite[\S 6]{GabberRamero} for the detailed
re-elaboration of \cite{katolog}. A prelog structure on $\cS$ is a sheaf of monoids $M$ and a morphism of multiplicative monoids $\alpha \colon M \to \cA$. A log
structure is a prelog structure such that $\alpha$ induces an isomorphism $\alpha^{-1}(\cA)^\ast\cong \cA^\ast$. The forgetful functor from the category of log
structures on $\cA$ to the category of prelog structures admits a left adjoint. We say that a log structure is {\em coherent} (resp.~{\em fine}, resp.~{\em fine and
saturated}) if there is an open covering $\{U_i\}_i$ of $\cS$ such that $M\vert_{U_i}\to \cA\vert_{U_i}$ is the log structure associated to a morphism of presheaves
of multiplicative monoids $P_i\to \cA\vert_{U_i}$ such that $P_i$ is a constant presheaf on $\cS\vert_{U_i}$ and $\Gamma(U_i,P_i)$ is finitely generated
(resp.~finitely generated and integral, resp.~finitely generated, integral and saturated) for every $i$. We refer to
 \cite{GabberRamero} for details.

If $\cA=\{\cA_n\}_n \in \Sh(\cS)^\N$ is a continuous sheaf of rings, a prelog structure (resp.~a log structure on $\cA$) is a continuous sheaf of monoids $M
=\{M_n\}_n$ and a morphism $\alpha=\{\alpha_n\}_n\colon M\to \cA$ of continuous sheaves such that each $\alpha_n\colon M_n \to \cA_n$ defines a prelog structure
(resp.~a log structure) on $\cA_n$. Also in this case the category of log structures admits a left adjoint. We say that a log structure is {\em coherent}
(resp.~{\em fine}, resp.~{\em fine and saturated}) if there is an open covering $\{U_i\}_{i\in I}$ of $\cS$ such that $M_n\vert_{U_i}\to \cA_n\vert_{U_i}$ is
coherent (resp.~fine, resp.~fine and saturated) for every $n\in\N$ and every $i\in I$.

Given sheaves (or continuous sheaves) of rings $\cA$ and $\cA'$ as above and prelog structures $\alpha\colon M\to \cA$ and $\alpha'\colon M'\to \cA'$, a morphism of
prelog structures is a morphism of sheaves of rings $f\colon \cA\to \cA'$ and a morphism of monoids $g\colon M\to M'$ such that $\alpha'\circ g=f\circ \alpha$. A
morphism of log structures is a morphism as prelog structures. We say that $(f,g)$ is {\em exact} if $M'$ is the log structure associated to the prelog structure
$\alpha'\circ g\colon M \to \cA'$.
\smallskip

{\em Examples:} It follows from \ref{Prop:OXisasheafonket}
that:\smallskip

(1) in the algebraic case $N_{X^{\ket}} \to \cO_{X^{\ket}}$
defines a fine and saturated log structure on $\cO_{X^{\ket}}$.
\smallskip

(2) in the formal case $N_{X^{\ket}_h} \to \cO_{X^{\ket}_h}$ for
$h\in\N$ and $N_{X^{\ket}_{\form}}\to \cO_{X^{\ket}_{\form}}$
define a log structure on $\cO_{X^{\ket}_h}$
(resp.~$\cO_{X^{\ket}_{\form}}$) which is fine and saturated.

\subsubsection{The sheaves $\cO_{\fX}$ and $\widehat{\cO}_{\fX}$}
Fix an extension $K\subset L \subset \Kbar$. In the algebraic case we define the presheaf of
$\cO_L$-algebras on $E_{X_L}$, denoted $\cO_{\fX_L}$, by
$$
\cO_{\fX_L}(U,W):=\mbox{ the normalization of
}\Gamma\bigl(U,\cO_U\bigr)\mbox{ in } \Gamma\bigl(W, \cO_W\bigr).
$$In the formal case the definition is the same replacing $\Gamma\bigl(U,\cO_U\bigr)$
with $\Gamma\bigl(U_{\form},\cO_{U_{\form}}\bigr)$. We also define the sub-presheaf of $\WW(k)$-algebras $\cO_{\fX_L}^{\rm un}$ of $\cO_{\fX_L}$ whose sections over
$(U,W)\in E_{X_L}$ consist of elements $x\in \cO_{\fX_L}(U,W)$ for which there exist   a Kummer \'etale morphism $U' \to U$ and a morphism $W\to U'_K$ over
$U_K$ such that $x$, viewed in $\Gamma\bigl(W, \cO_W\bigr)$, lies in the image of $\Gamma\bigl(U',\cO_{U'}\bigr)$. Then we have.

\begin{proposition}
\label{prop:sheaf} The presheaves $\cO_{\fX_L}$ and\/
$\cO_{\fX_L}^{\rm un}$ are sheaves. Moreover, $\cO_{\fX_L}^{\rm
un}$ is isomorphic to the sheaf
$v_{X,L}^\ast\bigl(\cO_{X^{\ket}}\bigr)$ in the algebraic case and
is isomorphic to the sheaf
$v_{X,L}^\ast\bigl(\cO_{X^{\ket}_{\form}}\bigr)$ in the formal
case.
\end{proposition}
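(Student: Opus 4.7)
The plan is to reduce the sheaf property of $\cO_{\fX_L}$ to descent with respect to the two generating types of covering families in \S\ref{sec:faltingssite}. Since strict covering families are cofinal in the topology ${\rm T}_{X_L}$, and a strict covering decomposes into a type $\alpha$ covering followed by type $\beta$ coverings over each member, it suffices to verify the sheaf axiom separately for an $\alpha$-covering and for a $\beta$-covering.

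For a type $\alpha$ covering $\{(U_i,W_i)\to (U,W)\}$, one has $\{U_i\to U\}$ a covering in $X^{\ket}$ and $W_i=W\times_{U_K} U_{i,K}$. Descent of $\Gamma(U,\cO_U)$ along $\{U_i\to U\}$ is provided by Proposition \ref{Prop:OXisasheafonket}, and one then invokes that normalization commutes with Kummer \'etale base change on $U$: this uses normality of $U$ and of the $U_i$ (Lemma \ref{lemma:Ukfetisfppf}) together with the fact that a Kummer \'etale morphism is classically \'etale over the locus where the log structure is trivial, so the integral closure of $\cO_U$ in $\cO_W$ base-changes to the integral closure of $\cO_{U_i}$ in $\cO_{W_i}$. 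For a type $\beta$ covering $\{(U,W_i)\to (U,W)\}$ one has $\{W_i\to W\}$ a covering in $U_L^{\fket}$. Using Remark \ref{rmk:fketalgebraic} to describe objects of $U_L^{\fket}$ in terms of finite \'etale covers after inverting the monoid image, one reduces to the fact that a finite Kummer \'etale cover is faithfully flat after inverting $p$ (generically Galois), so that classical faithfully flat descent applies to the corresponding integral closures inside $\Gamma(W_i,\cO_{W_i})$. In the formal case one applies this argument at each truncation $\bmod\, \pi^n$ and passes to the inverse limit, which is preserved because the transition maps are surjective on the relevant normal algebras.

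Finally, for $\cO_{\fX_L}^{\rm un}$: by definition its sections are exactly the image presheaf of the natural morphism of presheaves $v_{X,L}^{-1}\cO_{X^{\ket}}\to \cO_{\fX_L}$ (respectively $v_{X,L}^{-1}\cO_{X^{\ket}_{\form}}\to \cO_{\fX_L}$ in the formal case), where a local section $s\in \Gamma(U',\cO_{U'})$ together with $W\to U'_K$ over $U_K$ produces the element $s\in \cO_{\fX_L}(U,W)$. Sheafifying, this image is identified with the subsheaf $v_{X,L}^\ast(\cO_{X^{\ket}})$ (resp.~$v_{X,L}^\ast(\cO_{X^{\ket}_{\form}})$), which simultaneously shows that $\cO_{\fX_L}^{\rm un}$ is a sheaf and gives the claimed identification. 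The main obstacle is the descent for type $\beta$ coverings, as one has to control how the integral closure of $\Gamma(U,\cO_U)$ behaves along finite Kummer \'etale covers of rigid analytic spaces carrying a fine saturated log structure; this is where the structure theorem in Remark \ref{rmk:fketalgebraic}, together with the normality of the rings provided by Lemma \ref{lemma:Ukfetisfppf}, does the essential work.
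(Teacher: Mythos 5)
Your overall architecture (reduce to descent for the structure sheaf on the relevant Kummer \'etale sites, then control the normalizations; identify $\cO_{\fX_L}^{\rm un}$ as the sheafified image of the presheaf $(U,W)\mapsto\Gamma(U,\cO_U)$) is close to the paper's, but the two descent steps rest on claims that are either unjustified or false as stated. For the type $\alpha$ coverings, the assertion that ``normalization commutes with Kummer \'etale base change'' is the crux and you give no real argument for it: if it means $N(U_i,W_i)\cong N(U,W)\otimes_{\Gamma(U,\cO_U)}\Gamma(U_i,\cO_{U_i})$, this is false in general (tensoring a normal algebra along a Kummer cover such as $x\mapsto x^{1/n}$ destroys normality), and even granting it you would still need descent of the \emph{module} $N(U,W)$ along $\{U_i\to U\}$, which does not follow from Proposition \ref{Prop:OXisasheafonket} because Kummer \'etale coverings are not flat. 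The paper avoids this entirely: it writes the \v{C}ech diagram for a strict covering, applies Niziol's descent theorem for $\cO$ on the Kummer \'etale site \emph{of the total space} $W$ (so the bottom row $\Gamma(W,\cO_W)\to\prod\Gamma(W_{\alpha,i},\cO_{W_{\alpha,i}})\rightrightarrows\cdots$ is exact), gets separatedness from the injectivity of the vertical maps, and then only has to check that the glued section of $\cO_W$ is integral over $\Gamma(U,\cO_U)$. Similarly, for type $\beta$ coverings your appeal to ``faithfully flat descent'' is not warranted by ``generically Galois'': finite Kummer \'etale morphisms of log regular schemes are not flat in general (e.g.\ the standard Kummer cover of the $A_1$ toric singularity). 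In the present setting it happens to be rescuable because $W$ is smooth over $L$, so miracle flatness applies to $W_i\to W$, but the uniform and intended input is again the Kato--Niziol theorem that the (finite) Kummer \'etale topology is coarser than the canonical topology.

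There is also a gap in the last paragraph: the sheafification of the image presheaf is the image sheaf, so your argument shows that $v_{X,L}^\ast(\cO_{X^{\ket}})$ surjects onto the sheafification of $\cO_{\fX_L}^{\rm un}$, not that $\cO_{\fX_L}^{\rm un}$ is itself a sheaf, nor that the induced map $v_{X,L}^\ast(\cO_{X^{\ket}})\to\cO_{\fX_L}$ is injective. The paper closes both points by using that $\Gamma(U,\cO_U)$ is normal for every $U\in X^{\ket}$ (Lemma \ref{lemma:Ukfetisfppf}), which you never invoke at this stage. To repair your write-up you should replace the two ad hoc descent claims by a single application of Niziol's Kummer \'etale descent on $W$, add the integrality-of-the-glued-section argument, and supply the normality input for the statement about $\cO_{\fX_L}^{\rm un}$.
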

\begin{proof} We prove the statements in the algebraic case. The proof in the formal case is
similar and left to the reader. We first prove that $\cO_{\fX_L}$
is a sheaf. Let $\{(U_\alpha, W_{\alpha,i})\lra
(U,W)\}_{\alpha,i}$ be a strict covering family. We set
$U_{\alpha\beta}:=U_\alpha\times_UU_\beta$ and $W_{\alpha\beta
ij}:=W_{\alpha,i}\times_WW_{\beta,j}$. We have the following
commutative diagram
$$
\begin{array}{cccccccccc}
0&\lra&\cO_{\fX_L}(U,W)&\stackrel{f}{\lra}&
\prod_{i,\alpha}\cO_{\fX_L}(U_\alpha,W_{\alpha,i})&\stackrel{g}{\lra}&
\prod_{(\alpha,i),(\beta,j)}\cO_{\fX_L}(U_{\alpha\beta}, W_{\alpha\beta ij})\\
&&\downarrow&&\downarrow&&\downarrow\\
0&\lra&\Gamma(W,
\cO_W)&\lra&\prod_{\alpha,i}\Gamma(W_{\alpha,i},\cO_{W_{\alpha,i}})&\lra&
\prod_{(\alpha,i),(\beta,j)} \Gamma(W_{\alpha\beta ij},
\cO_{\alpha\beta ij})
\end{array}
$$
Since  $\{U_\alpha\lra U\}_\alpha$ is a covering in $X^{\ket}$ and for every $\alpha$, the family $\{W_{\alpha,i}\lra W\times_UU_\alpha\}_i$ is a covering in
$\left(W\times_U U_{\alpha,M}\right)^{\fket}$ it follows  from \cite[Prop.~2.18]{niziol} that the bottom row of the above diagram is exact. Moreover the vertical
maps are all inclusions therefore $f$ is injective, i.~e.~$\cO_{\fX_L}$ is a separated presheaf. The rest of the argument proceeds as in
\cite[Prop.~2.11]{andreatta_iovita_comparison}.

Since $(U,U_L)$ is the initial object in the category of all pairs $(U',U'_L)$ admitting a morphism
$(U,W)\to (U',U'_L)$ in $\fX_L$, we conclude that
$v_{X,L}^\ast\bigl(\cO_{X^{\ket}}\bigr)$ is the sheaf on $\fX_M$ associated to the presheaf
$P(U,W):=\Gamma(U,\cO_U)$. In particular, we have a natural surjective
map of presheaves $P\to \cO_{\fX_L}^{\rm un}$ inducing a  morphism
$v_{X,M}^\ast\bigl(\cO_X\bigr)\to \cO_{\fX_L}^{\rm un}$. One proves that such morphism is an
isomorphism  as in \cite[Lemma 2.13]{andreatta_iovita_comparison} using that $P(U,W)=\Gamma(U,\cO_U)$ is
normal  for every $U$ and $W$ by \ref{lemma:Ukfetisfppf}.
\end{proof}

Denote by $\widehat{\cO}_{\fX_L}$ the inverse system of sheaves of
$\cO_L$-algebras $\left\{\cO_{\fX_L}/p^n\cO_{\fX_L}\right\}_n\in
\Sh(\fX_L)^\N$.

It follows from \ref{prop:sheaf} that each
$\cO_{\fX_L}/p^n\cO_{\fX_L}$ is a sheaf of
$v_{X,L}^\ast\bigl(\cO_{X^{\ket}}/p^n\cO_{X^{\ket}}\bigr)$
algebras so that we get  morphisms of monoids
$v_{X,L}^\ast\bigl(N_{X^{\ket}_{\form}}\bigr)\lra
\cO_{\fX_L}/p^n\cO_{\fX_L} $ which are compatible for varying
$n\in\N$. Proceeding as in \cite[\S 1.3]{katolog}, one obtains for
every $n$ an associated log structure $N_{\fX_L,n}\to
\cO_{\fX_L}/p^n\cO_{\fX_L}$ characterized by the fact that the
inverse image of $\bigl(\cO_{\fX_L}/p^n\cO_{\fX_L}\bigr)^\ast$ is
$\bigl(\cO_{\fX_L}/p^n\cO_{\fX_L}\bigr)^\ast$. We define
$$\widehat{N}_{\fX_L}:=\left\{N_{\fX_L,n}\right\} \lra
\widehat{\cO}_{\fX_L}$$to be the induced log structure. By
construction it is fine and saturated. For later purposes we
register the following result:

\begin{lemma} \label{lemma:FrobeniussurjectiveonOX/pOX}
Frobenius $\varphi$ is surjective on $\cO_{\fX_L}/p\cO_{\fX_L}$. For $L=\Kbar$ its  kernel is
$p^{1/p}\cO_{\fX_L}/p \cO_{\fX_L}$
\end{lemma}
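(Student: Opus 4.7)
The plan is to verify the surjectivity locally on Faltings' site via the localization functor of \S\ref{sec:localization} and to compute the kernel by an integral-closure argument exploiting that $p^{1/p}\in\cO_\Kbar$.

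For surjectivity, since the statement is local on $\fX_L$, it suffices to show that for any $(U,W)\in\fX_L$ small---with $U=\Spec(R_U)$ admitting a chart $\cO_K[P]\to R_U$ as in \S\ref{sec:assumptions}---and any $x\in\cO_{\fX_L}(U,W)$, there is a covering family $\{(U_i,W_i)\to(U,W)\}$ in $\fX_L$ on which $x$ becomes a $p$-th power modulo $p$. Through the localization functor this amounts to showing that $\varphi$ is surjective on $\Rbar_U/p\Rbar_U$. Now $\Rbar_U$ is a direct limit of rings \'etale over $\cO_\Kbar[P^{1/n}]$ for varying $n$: given $x$ in such a level $B$, passing to the Kummer \'etale cover $B\otimes_{\cO_\Kbar[P^{1/n}]}\cO_\Kbar[P^{1/(np)}]$ allows us to extract $p$-th roots of elements of $P^{1/n}$, while Frobenius is surjective on $\cO_\Kbar/p\cO_\Kbar$ (any $c\in\cO_\Kbar$ has a $p$-th root in $\cO_\Kbar$ by integrality of $T^p-c$). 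The multinomial identity $(\sum_i\beta_i)^p\equiv\sum_i\beta_i^p\pmod p$ then yields, for a presentation $x=\sum_i c_im_i$ with $c_i\in\cO_\Kbar$ and $m_i\in P^{1/n}$, the element $y:=\sum_i c_i^{1/p}m_i^{1/p}$ satisfying $y^p\equiv x\pmod p$. The case of $L\subsetneq\Kbar$ follows similarly, using that finite extensions of $L$ in $\Kbar$ are accessible through the fibred-site structure of $\fX_L$.

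For the kernel when $L=\Kbar$, take $y\in\cO_{\fX_\Kbar}(U,W)$ with $y^p=pz$ for some $z\in\cO_{\fX_\Kbar}(U,W)$. Since $p^{1/p}\in\cO_\Kbar\subseteq\Gamma(W,\cO_W)$, the element $w:=y/p^{1/p}$ is defined in $\Gamma(W,\cO_W)[p^{-1}]$ and satisfies $w^p=z$, hence is integral over $\Gamma(W,\cO_W)$. By Lemma~\ref{lemma:Ukfetisfppf}(1), $\Gamma(W,\cO_W)$ is normal, so $w\in\Gamma(W,\cO_W)$. Moreover $z\in\cO_{\fX_\Kbar}(U,W)$ is integral over $\Gamma(U,\cO_U)$, hence so is $w$; combined with $w\in\Gamma(W,\cO_W)$, the definition of $\cO_{\fX_\Kbar}(U,W)$ as the normalization of $\Gamma(U,\cO_U)$ in $\Gamma(W,\cO_W)$ gives $w\in\cO_{\fX_\Kbar}(U,W)$. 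Thus $y=p^{1/p}w\in p^{1/p}\cO_{\fX_\Kbar}(U,W)$. The reverse inclusion is immediate from $(p^{1/p}w)^p=pw^p\in p\cO_{\fX_\Kbar}$.

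The main obstacle is the surjectivity step when $R_U$ is only \'etale (not isomorphic) over $\cO_K[P]$: one needs that the relative Frobenius is an isomorphism for \'etale morphisms in characteristic $p$, so that Frobenius surjectivity modulo $p$ is preserved under \'etale base change, in order to reduce from $B$ to $\cO_\Kbar[P^{1/n}]$. One must also verify that the constructed cover is a strict covering in the sense of Definition~\ref{def:striccoverings}, which is routine but must be checked carefully in both the algebraic and formal settings.
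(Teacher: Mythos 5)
Your computation of the kernel is essentially the paper's argument (normality of $\Rbar_U$ plus the observation that $(y/p^{1/p})^p=y^p/p$ is integral), and your reduction of the whole statement to the localizations $\Rbar_U/p\Rbar_U$ via \S\ref{sec:localization} is also how the paper proceeds. The problem is in the surjectivity step.

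The gap is your description of $\Rbar_U$. You assert that $\Rbar_U$ is a direct limit of rings \'etale over $\cO_\Kbar[P^{1/n}]$, and on that basis you reduce Frobenius surjectivity to the toric algebras $\cO_\Kbar[P^{1/n}]$ by \'etale invariance of relative Frobenius in characteristic $p$. But by the definition in \S\ref{sec:localization} (and \S\ref{sec:widehatR}), $\Rbar_U$ is the limit of \emph{all} finite normal extensions $R_U\subset S$ that are finite Kummer \'etale only \emph{after inverting $p$}; integrally these extensions are in general ramified along $p$, so they are not \'etale over any $\cO_\Kbar[P^{1/n}]$ and your reduction does not reach them. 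What your argument actually proves is surjectivity of Frobenius on $R_{U,\infty,\cO_\Kbar}/pR_{U,\infty,\cO_\Kbar}$, i.e.\ on the Kummer tower obtained by adjoining $p$-power roots of the chart elements and of roots of unity. Passing from there to an arbitrary normal $S_\infty\in\cS_\infty$ (finite \'etale over $R_{\infty}[p^{-1}]$ but not over $R_\infty$) is exactly where the paper invokes Faltings' almost purity theorem \ref{prop:AE}: the proof of \ref{cor:Frobonto} writes $\pi^{1/p}e_{S_\infty}=\sum x_i\otimes y_i$, uses the trace map to get $\pi^{1/p}z=\sum \operatorname{Tr}(zx_i)y_i$, expands $\pi^{1/p}e_{S_\infty}=(\pi^{1/p^2}e_{S_\infty})^p$ to exhibit $\pi^{1/p}z$ as a $p$-th power modulo $\frac{p}{\pi^{1/p}}$, and then uses normality of $S_\infty$ to divide by $\pi^{1/p^2}$ and conclude. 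Without this (or an equivalent almost-\'etaleness input) the surjectivity claim is unproved; the obstacle you flag at the end (\'etaleness of $R_U$ over the chart, strictness of the covering) is not the real difficulty.
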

\begin{proof} Using \ref{sec:localization} it suffices to prove
that Frobenius is surjective on $\Rbar_U/p\Rbar_U \to
\Rbar_U/p\Rbar_U$ with kernel $p^{1/p}\Rbar_U/p\Rbar_U$. This follows from \ref{cor:Frobonto} and the normality of $\Rbar_U$.

\end{proof}

{\em The sheaves $\WW_{s,L}$.} For~$s\in\N$ we define $\WW_{s,L}:=\WW_s\bigl(\cO_{\fX_L}/p\cO_{\fX_L}\bigr)$ as the sheaf of
sets~$\bigl(\cO_{\fX_L}/p\cO_{\fX_L}\big)^s$ with ring operations defined using the Witt polynomials.  Let $N_{{s,L}}$ be the following log structure. For $s=1$ we
let $N_{{1,L}}$ be the log structure associated to the log structure $N_{\fX_L,1} \to \WW_{1,L}=\cO_{\fX_L}/p\cO_{\fX_L}$. For general $s$ let $N_{{s,L}}$ be the
fibred product of monoids
$$\begin{array}{ccc} N_{{s,L}} & \lra & \cO_{\fX_L}/p\cO_{\fX_L}
\cr \big\downarrow & & \varphi^s\big\downarrow \cr N_{{1,L}} & \lra & \cO_{\fX_L}/p\cO_{\fX_L},\cr
\end{array}$$where $\varphi^s$ is Frobenius to the $s$-th
power.  Since the map $\varphi$ is surjective by \ref{lemma:FrobeniussurjectiveonOX/pOX} the map $\widetilde{\varphi}^s\colon N_{{s,L}}\lra N_{{1,L}}$ is surjective
with kernel $1+p^{\frac{1}{p^s}} \cO_{\fX_L}/p\cO_{\fX_L}$. If $U\in X^{\ket}$ is a small affine open, we have a chart $P\cong \N^{a+b} \to
N_{\fX_L,1}\vert_{(U,U_L)}$, provided by our Assumptions (\S\ref{sec:assumptions}). The surjectivity of $\varphi^s$ implies that it can be lifted to a map of
monoids $P \to N_{{s,L}}$ which provides a chart of $N_{{s,L}}$ locally over $(U,U_L)$ (compare with \cite[lemma 1.4.2]{tsujiinventiones}). We conclude that
$N_{{s,L}}$ is a fine and saturated sheaf of monoids.

Let $N_{\WW_{s,L}}\lra \WW_{s,L}$ be the log structure associated to the prelog structure defined as the composite of $N_{s,L}\to \cO_{\fX_L}/p\cO_{\fX_L}$ with the
Teichm\"uller lift $\cO_{\fX_L}/p\cO_{\fX_L}\to \WW_{s,L}$. Let
$$N_{\WW_L} \lra \bA_{\rm inf,L}^+$$ in\/~$\Sh(\fX_L)^\N$ be the inverse system of
sheaves of $\WW(k)$-algebras $\left\{\WW_{n,L}\right\}_n$ with the log structures $\bigl\{N_{\WW_{n,L}}\bigr\}_n$. The transition maps are defined as the composite
of the natural projection $\WW_{n+1,L}\to \WW_{n,L}$ and Frobenius on $\WW_{n,L}$ and the map induced by the natural morphisms $N_{\fX_L,t}\to N_{\fX_L,s}$ for
$t\geq s$. Arguing as before, one proves that for every $n$ the log structure $N_{\WW_{n,L}}$ is fine and saturated. Note that~$\bA_{\rm inf,L}^+$ and $N_{\WW_L}$
are endowed with a Frobenius operator, denoted by~$\varphi$, and that ~$\bA_{\rm inf,L}^+$ is a continuous sheaf of $\WW(k)$--algebras. We remark that if $L=\Kbar$
Frobenius is an isomorphism on~$\bA_{\rm inf,L}^+$ by \ref{lemma:FrobeniussurjectiveonOX/pOX}.

\smallskip

{\em The localizations.} Let $U\in X^{\ket}$ be a small affine open with underlying algebra $R_U$. Then, the localizations of the above defined sheaves in the sense
of \ref{sec:localization}, are
\smallskip

(1) $\cO_{\fX_L}\bigl(\Rbar_U\bigr)=\Rbar_U$;\smallskip

(2) $\widehat{\Rbar}_U\stackrel{\sim}{\lra} \widehat{\cO}_{\fX_L}\bigl(\Rbar_U\bigr)$. Moreover, the localization of $\{N_{\fX_L,n}\}_n$ defines on
$\widehat{\Rbar}_U$, via this isomorphism, the same log structure as the one associated to the prelog structure  $\psi_{R_U}\colon P' \lra R_U \to
\widehat{\Rbar}_U$ defined in \S\ref{section:localdescription}.

\smallskip

(3) $\WW\bigl(\widetilde{\bf E}^+\bigr) \stackrel{\sim}{\lra} \bA_{\rm inf,L}^+(\Rbar_U)$ where $\widetilde{\bf E}^+=\ds \lim_{\infty\leftarrow n} \Rbar_U/p
\Rbar_U$ is the projective limit taking Frobenius as transition map. Moreover, the localization of $\{N_{\WW_{n,L}}\}_n$ defines on $\WW\bigl(\widetilde{\bf
E}^+\bigr)$ the  same log structure as the one associated to the map of monoids $\psi_{\WW\bigl(\widetilde{\bf E}^+\bigr)}\colon P' \lra \WW\bigl(\widetilde{\bf
E}^+\bigr)$ defined in \S\ref{sec:Thetalocalized}.
\smallskip

Statement (1) is clear. In statements (2) and (3) we have natural morphisms due to (1). The proof that they are isomorphisms follows as in
\cite[Prop.~2.15]{andreatta_iovita_comparison} and the key ingredient is Faltings' almost purity theorem in the semistable case for $R_U$ (see \ref{prop:AE}). The
statements concerning the log structures follow as by construction $N_{\fX_L,n}$ and $N_{\WW_{n,L}}$ locally on $(U,U_L)$ admit charts compatible
with $\psi_{R_U}$ and $\psi_{\WW\bigl(\widetilde{\bf E}^+\bigr)}$ respectively.

\subsubsection{The morphism $\Theta$}\label{section:kertheta} One has a natural morphism of
continuous sheaves with log structures
$$\Theta_L:=\{\Theta_{L,n}\}_n\colon \bigl(\bA_{\rm inf,L}^+,N_{\WW_L}\bigr) \lra
\bigl(\widehat{\cO}_{\fX_L},\widehat{N}_{\fX_L}\bigr),$$which is
strict, i.~e.~it is such that the log structure
$\widehat{N}_{\fX_L}$ is the one associated to $N_{\WW_L}$ via
$\Theta_L$. For every $n\in\N$ the morphism $\Theta_{L,n}$ is the
morphism of sheaves associated to the following map of presheaves
$c_n$. For every object $(U,W)$ of $\fX_L$ if we put
$S=\cO_{\fX_L}(U,W)$, then
$$ c_n(U,W)\colon \WW_n(S/pS):=(S/pS)^n \lra S/p^nS, \qquad \bigl(s_0,s_1,\ldots,
s_{n-1}\bigr)\mapsto \sum_{i=0}^{n-1}p^i\tilde{s}_i^{p^{n-1-i}},$$where for every
$s\in S/pS$ we denote by $\tilde{s}$ a (any) lift of $s$ to $S/p^n S$. One proves
that $c_n(U,W)\bigl(s_0,s_1,\ldots, s_{n-1}\bigr)$ does not depend on the choice of the lifts
$\tilde{s}_i$ of $s_i$, that $c_n$ defines a map of presheaves and
that $c_{n+1}$ modulo $p^n$ is compatible with $c_n$; see \cite[\S 2.4]{andreatta_iovita_comparison}. Since the log structure on
$S/p^n S$ is the inverse image of
the log structure on $S/pS$, then $c_n$ is compatible and strict with respect to the  log structures.
Moreover, if we assume that $p^{1/p^{n-1}}\in S$, then,
$\xi_n:=[\overline{p}^{1/p^{n-1}}]-p$ is a well defined element of\/~$\WW_n(S/pS)$ and it generates the
kernel of\/~$c_n\colon\WW_n(S/pS)\lra S/p^nS$. For the proof we
refer to loc.~cit. Thus,

\begin{corollary}\label{cor:kerthetasheaves} We have
$\Ker\left(\Theta_\Kbar\colon \bA_{\rm inf,\Kbar}\lra
\hatcO_{\fX_\Kbar}\right)=\xi\cdot \bA_{\rm inf,\Kbar}$ as sheaves
in $\Sh(\fX_\Kbar)^\N$.
\end{corollary}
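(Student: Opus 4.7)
The plan is to deduce the corollary from the pointwise statement recorded immediately before it --- namely, that whenever $p^{1/p^{n-1}}\in S$ the kernel of $c_n\colon \WW_n(S/pS)\to S/p^nS$ is generated by $\xi_n:=[\overline{p}^{1/p^{n-1}}]-p$ --- combined with the localization description of $\bA_{\rm inf,\Kbar}$ and $\hatcO_{\fX_\Kbar}$ given in items (2) and (3) following Proposition \ref{prop:sheaf}. The equality of sub-inverse-systems of sheaves can be checked level by level, and at each level on stalks. Since there are enough geometric points on $\fX_\Kbar$ and every such stalk is computed via the localizations at small affine $U\in X^{\ket}$ (Proposition \ref{prop:RivastF} and the identifications just after Proposition \ref{prop:sheaf}), it suffices to verify the equality after restriction to each $\Rbar_U$ for $U$ small.

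First, I would record that under the localization at such a $U$, the sheaf $\bA_{\rm inf,\Kbar}$ at level $n$ becomes $\WW_n(\Rbar_U/p\Rbar_U)$, the sheaf $\hatcO_{\fX_\Kbar}$ at level $n$ becomes $\Rbar_U/p^n\Rbar_U$, and the morphism $\Theta_{\Kbar,n}$ localizes precisely to $c_n$ evaluated on $S=\Rbar_U$. Now $\Rbar_U$ contains $\cO_\Kbar$ and in particular $p^{1/p^{n-1}}$ for every $n$, so the hypothesis of the pointwise statement is satisfied and we conclude
$$\ker\bigl(c_n(\Rbar_U)\bigr)\;=\;\xi_n\cdot \WW_n(\Rbar_U/p\Rbar_U).$$

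Second, I would check that these level-wise local ideals are exactly the sections of the global sub-sheaf $\xi\cdot \bA_{\rm inf,\Kbar}$. Under the identification $\bA_{\rm inf,\Kbar}(\Rbar_U)\cong \WW\bigl(\widetilde{\bf E}^+\bigr)$ of item (3), the system $\overline{p}=(p,p^{1/p},p^{1/p^2},\ldots)$ is a well-defined element of $\widetilde{\bf E}^+$ (because raising $p^{1/p^m}$ to the $p$-th power yields $p^{1/p^{m-1}}$), so $\xi=[\overline{p}]-p$ is a genuine element of the inverse limit. Its image at level $n$, computed by applying the ``project, then Frobenius'' transition $n-1$ times, is precisely $\xi_n$: multiplicativity of the Teichm\"uller lift together with the fact that Frobenius acts as the $p$-th power on $\widetilde{\bf E}^+$ gives $\varphi\bigl([\overline{p}^{1/p^{n}}]\bigr)=[\overline{p}^{1/p^{n-1}}]$, while $\varphi(p)=p$ since $p\in \WW(k)$.

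The main technical nuisance, and the only step beyond transcription of the level-$n$ fact, is the bookkeeping in this compatibility check --- one has to keep straight which Frobenius twist enters the transition from $\WW_{n+1}$ down to $\WW_{n}$, and verify that the generators $\xi_{n+1}\mapsto \xi_n$ under it so that multiplication by $\xi$ is well defined on the inverse system and its image at each level is $\xi_n\WW_n(\Rbar_U/p\Rbar_U)$. Once this is in hand, the three ingredients --- reduction to localizations, the cited pointwise generation of $\ker c_n$, and this compatibility --- combine directly to give the claimed equality $\ker\Theta_\Kbar=\xi\cdot\bA_{\rm inf,\Kbar}$ in $\Sh(\fX_\Kbar)^{\N}$.
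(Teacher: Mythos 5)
Your proposal is correct and follows essentially the same route as the paper: the corollary is deduced directly from the cited fact that $\xi_n=[\overline{p}^{1/p^{n-1}}]-p$ generates $\ker c_n(U,W)$ whenever $p^{1/p^{n-1}}\in S$, which for $L=\Kbar$ holds for every $(U,W)$ since $S=\cO_{\fX_\Kbar}(U,W)$ contains $\cO_\Kbar$. The paper works level by level on sections over arbitrary $(U,W)$ rather than passing to stalks or localizations at small affines, but the content — including the compatibility of the $\xi_n$ under the Frobenius-twisted transition maps, which you rightly identify as the only real bookkeeping — is the same.
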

\subsubsection{The sheaf  $\bA_{\rm log}^\nabla$
}\label{sec:Alognabla} Recall from \cite[\S 2.5]{andreatta_iovita_comparison} that a $\WW(k)$--{\it divided power} ($\WW(k)$--DP) sheaf of algebras in $\Sh(\fX_L)$
or $\Sh(\fX_L)^\N$ is a triple $(\cF, \cI,\gamma)$ consisting of\enspace (1) a sheaf of $\WW(k)$--algebras $\cF\in \Sh(\fX_M)$ (resp.~an inverse system of sheaves
of $\WW(k)$--algebras $\{\cF_n\}\in \Sh(\fX_M)^\N$),\enspace (2) a sheaf of ideals $\cI\subset \cF$ (resp.~an inverse system of sheaves of ideals $\{\cI_n\subset
\cF_n\}$), \enspace (3) maps $\gamma_i\colon \cI \to \cI$ for~$i\in\N$ such that for every object $(\cU,\cW)$ the triple
$\bigl(\cF(\cU,\cW),\cI(\cU,\cW),\gamma_{(\cU,\cW)}\bigr)$ (resp.~for every~$n$ the triple $\bigl(\cF_n(\cU,\cW),\cI_n(\cU,\cW),\gamma_{(\cU,\cW)}\bigr)$) is a DP
algebra compatible with the standard divided power structure on the ideal $p\WW(k)$ in the sense of \cite[Ch.~3]{berthelot_ogus}. Given a sheaf of
$\WW(k)$--algebras~$\cG$ and an ideal~$\cJ\subset \cG$ (resp.~an inverse system of sheaves of $\WW(k)$--algebras~$\cG$ and ideals $\cJ\subset \cG$) the
$\WW(k)$--divided power envelope of~$\cG$ with respect to~$\cJ$ is a $\WW(k)$--divided power sheaf of algebras $(\cF, \cI,\gamma)$ and a morphism $\cG \to \cF$ of
sheaves (or inverse systems of sheaves) of $\WW(k)$--algebras, such that~$\cJ$ maps to~$\cI$, which is universal for morphisms as sheaves (or inverse systems of
sheaves) of $\WW(k)$--algebras from~$\cG$ to $\WW(k)$--divided power sheaves of algebras $\cF'$ such that $\cJ$ maps to the sheaf of ideals of $\cF'$ on which the
divided power structure is defined. \smallskip

Let $\cA$ and $\cA'$ be (continuous) sheaves of $\WW(k)$-algebras on $\fX$ endowed with fine log structures $M\to \cA$ and $M'\to \cA'$. Let $f\colon (\cA,M) \to
(\cA',M')$ be a morphism of sheaves of rings with log structures such that the morphism $\cA\to \cA'$ is surjective. We call the $\WW(k)$--{\em log-divided power
envelope}  of $(\cA,M)$ {\em with respect to} $f$ to be
\smallskip

(1)   a $\WW(k)$--divided power (continuous) sheaf of algebras $(\cF, \cI,\gamma)$ on $\fX$ and a fine log structure $H\to \cF$;\smallskip

(2)  a strict morphism  of log structures $(\cF,H)\to (\cA',M')$ such that $\cF/\cI\cong \cA'$ as (continuous) sheaves of rings;\smallskip

(3) a morphism  of log structures $(\cA,M)\to (\cF,H)$ such that the composite with $(\cF,H)\to (\cA',M')$ is $f$;\smallskip

(4) $(\cF, \cI,\gamma,H)$ is universal among objects satisfying (1), (2) and (3).

\smallskip Similarly, we call the {\em
log envelope} of $(\cA,M)$ {\em with respect to} $f$ to be a (continuous) sheaf  with log structures $(\cE,J)$ such that (2) and (3) hold and it is universal for
such properties.

\begin{lemma}\label{lemma:logpdenvelope} The
$\WW(k)$--log divided power envelope (resp.~the log envelope) of
$(\cA,M)$ with respect to $f$ exists.
\end{lemma}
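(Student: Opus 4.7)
The plan is to reduce to the classical (non-logarithmic) Berthelot--Ogus construction of divided power envelopes by first replacing $f$ with a strict surjection via a monoid-algebra pushout. The log envelope (without divided powers) is essentially this pushout itself; the log PD envelope is obtained by stacking the ordinary $\WW(k)$-PD envelope on top and transporting the log structure.

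Concretely, I would first set $\cE$ to be the pushout $\cA \otimes_{\Z[M]} \Z[M']$ in the category of (continuous) sheaves of $\WW(k)$-algebras, and let $J$ be the log structure associated to the canonical prelog structure $M' \to \cE$. The morphism $(\cE, J) \to (\cA', M')$ is then strict and surjective, and $f$ factors through a canonical morphism $(\cA, M) \to (\cE, J)$. Any $(\cF', H')$ satisfying (2) and (3) receives a unique morphism from $(\cE, J)$: the map $\cA \to \cF'$ is given, strictness of $(\cF', H') \to (\cA', M')$ lifts $M' \to \cA'$ through $H' \to \cF'$, and these together determine the required ring map out of the pushout $\cE$, compatibly with log structures. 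This settles the log envelope.

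For the $\WW(k)$-log divided power envelope, set $\cI_0 := \ker(\cE \to \cA')$ and form the $\WW(k)$-divided power envelope $(\cF, \cI, \gamma)$ of $(\cE, \cI_0)$ compatible with the standard divided powers on $p\WW(k)$. Its existence in $\Sh(\fX)$ (resp.~in $\Sh(\fX)^\N$) is a routine sheafification (resp.~levelwise application) of the classical construction, as already used in \cite[\S 2.5]{andreatta_iovita_comparison}. Equip $\cF$ with the log structure $H$ associated to the composite $M' \to \cE \to \cF$. Then $\cF/\cI \cong \cA'$, the map $(\cF, H) \to (\cA', M')$ is strict by construction, and $(\cA, M) \to (\cF, H)$ is the composite through $(\cE, J)$. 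Universality follows by chaining the two universal properties: given $(\cF', \cI', \gamma', H')$ satisfying (1)--(3), one first obtains a unique morphism $(\cE, J) \to (\cF', H')$ from the log envelope property, and then a unique PD morphism $\cF \to \cF'$ from the ordinary $\WW(k)$-PD envelope property, since $\cI_0$ maps into $\cI'$; compatibility of log structures is automatic because both $H$ and $H'$ are associated to prelog structures arising from $M' \to \cA'$ via strict maps.

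The main technical obstacle is the sheaf-theoretic bookkeeping. One must check that the pushout of sheaves of rings, the PD envelope construction, and the associated-log-structure functor all behave as expected when passed through sheafification, and that in the continuous setting the transition maps $\cA_{n+1} \to \cA_n$ induce compatible PD morphisms at each level so that $\{\cF_n\}_n$ is indeed an inverse system of DP algebras. These verifications are routine but nontrivial, and they are where essentially all of the work lies.
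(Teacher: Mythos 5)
Your overall two-step strategy --- first build the log envelope, then stack the ordinary $\WW(k)$-divided power envelope of $\ker(\cE\to\cA')$ on top of it --- is exactly the paper's reduction, and that second step is fine. The gap is in your construction of the log envelope itself. You take $\cE:=\cA\otimes_{\Z[M]}\Z[M']$ and verify the universal property by asserting that ``strictness of $(\cF',H')\to(\cA',M')$ lifts $M'\to\cA'$ through $H'\to\cF'$.'' Strictness goes the wrong way for this: it says that $M'$ is the log structure \emph{associated to} the prelog structure $H'\to\cA'$, which produces a map $H'\to M'$, not a section $M'\to H'$. To invert it you would need a \emph{multiplicative} lift of $(\cA')^\ast$ to $(\cF')^\ast$, which generically does not exist. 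Concretely, take $M=\cA^\ast$ and $M'=(\cA')^\ast$ the trivial log structures; then $(\cA,\cA^\ast)\to(\cA',(\cA')^\ast)$ is already strict and surjective, so it is a legitimate test object for the universal property, but a map $\cA\otimes_{\Z[\cA^\ast]}\Z[(\cA')^\ast]\to\cA$ over $\cA$ would require a multiplicative section of $\cA^\ast\to(\cA')^\ast$ (already impossible for $\Z\to\Z/5$). So your $\cE$ is too big and is not universal. A secondary problem is that the log structure associated to the full sheaf $M'\to\cE$ has no reason to be \emph{fine}, whereas fineness is part of the definition being used.

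The repair is Kato's exactification, which is what the paper does: use the universal property to reduce to a local situation where $(\cA,M)$ and $(\cA',M')$ admit constant fine charts $P\to\cA$, $P'\to\cA'$ with $f$ induced by $\alpha\colon P\to P'$, set $Q:=(\alpha^{\rm gp})^{-1}(P')\subset P^{\rm gp}$, and take $\cE:=\cA\otimes_{\Z[P]}\Z[Q]$ with the log structure associated to $Q\to\cE$. The monoid $Q$ is finitely generated and integral, so the log structure is fine, and the universal property now works: for $q\in Q$ the image of $q$ in $H'^{\rm gp}$ maps into $M'\subset M'^{\rm gp}$ because $\alpha^{\rm gp}(q)\in P'$, and since a strict surjection of fine log structures is exact, that image already lies in $H'$. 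This is precisely the step your global pushout cannot replicate.
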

\begin{proof} We argue as in \cite[Prop. 5.3]{katolog}.
Assume that the log envelope $(\cE,J)$ of $(\cA,M)$ with respect
to $f$ exists. Then, the $\WW(k)$--divided power envelope of $\cE$
with respect to the kernel of the morphism $\cE \to \cA'$ exists
by \cite[Thm. I.2.4.1]{berthelot1} and, together with the
log structure defined by $J$, it is the $\WW(k)$--log divided
power envelope of $(\cA,M)$ with respect to $f$. In particular,
the latter  exists.

We next prove that, under the assumption that $M\to \cA$ and $M'\to \cA'$ are fine, $(\cE,J)$ exists.
Due to the universal property it suffices to prove that
$(\cE,J)$ exists locally on $\fX$, cf.~\cite[Lemma 2.23]{andreatta_iovita_comparison}.
Let $V:=(U,W)\in \fX$ such that $(M,\cA)\vert_V$ and $(M',\cA')\vert_V$ admit
charts $P\to \cA\vert_V$ and $P'\to \cA'\vert_V$ with $P$ and $P'$ constant sheaves of monoids,
integral and finitely generated. Possibly after shrinking $V$ we may
also assume that $f$ is induced by a morphism of monoids $\alpha\colon P\to P'$.
Let $Q:=\bigl(\alpha^{\rm gp}\bigr)^{-1}(P')\subset P^{\rm gp}$. Let
$\cE:=\cA\vert_V\otimes_{\Z[P]} \Z[Q]$ with the log structure $J$ associated to the morphism of monoids $Q\to \cE$,
$q\mapsto 1\otimes q$. Then, we have natural
morphisms of sheaves of rings with log structures $(M,\cA)\vert_V \to (\cE,J)\to (M',\cA')\vert_V$ and
the latter is exact by construction. We leave to the reader
to check that $(\cE,J)$ has the required universal property.
\end{proof}

\bigskip

{\em The sheaf $\bA_{\rm cris}^\nabla$.} Let $\bA_{\rm cris,L}^\nabla:=\left\{\bA_{\rm cris,L,n}^\nabla \right\}_{n\in\N}$ be  the $\WW(k)$--divided power envelope
of $\bA_{\rm inf,L}^+$ with respect to $\Ker\left(\Theta_L\right)$. It is endowed with a Frobenius operator induced by Frobenius on $\bA_{\rm inf,L}^+$ and with a
decreasing filtration ${\rm Fil}^n \bA_{\rm cris,L}^\nabla$ for $n\in\Z$, defined by the divided power ideal, where we put ${\rm Fil}^n \bA_{\rm
cris,L}^\nabla=\bA_{\rm cris,L}^\nabla$ for $n\leq 0$.
\bigskip

{\em The sheaf $\bA_{\rm log,L}^\nabla$.} Let $\Theta_{\cO,L}$ be the morphism of continuous sheaves with log structure
$$\Theta_{\cO,L}:=\Theta_L\otimes \theta_\cO \colon \bA_{\rm inf,L}^+\otimes_{\WW(k)} \cO \lra
\widehat{\cO}_{\fX_L}.$$Let $\bA_{\rm
log,L}^\nabla:=\left\{\bA_{\rm log,L,n}^\nabla \right\}_{n\in\N}$
be the continuous sheaf defined as the $\WW(k)$--log divided power
envelope of $\bA_{\rm inf,L}^+\otimes_{\WW(k)} \cO $ with respect
to $\Theta_{\cO,L}$. It exists due to \ref{lemma:logpdenvelope}.
We have a natural morphism $\bA_{\rm cris ,L}^\nabla\lra \bA_{\rm
log,L}^\nabla$ compatible with log structures.

If $L=\Kbar$, the sheaf $\bA_{\rm cris ,\Kbar}^\nabla$ (resp.~$\bA_{\rm log ,\Kbar}^\nabla$) is a sheaf of
$A_{\rm cris}$-algebras (resp.~$A_{\rm log}$-modules)
where $A_{\rm cris}$ and $A_{\rm log}$ are the classical period rings of $\cO_K$. We further have the
following properties which are proven as in \cite[Prop.~2.24,
Lemma 2.26, Prop.~2.28]{andreatta_iovita_comparison}:\medskip

{\em Frobenius:} The Frobenius map~$\varphi\colon\WW_{n,L}\to \WW_{n,L} $ defines maps $\varphi\colon
\bA_{\rm cris,L}^\nabla\to \bA_{\rm cris,L}^\nabla$ and
$\varphi\colon \bA_{\rm log,L}^\nabla\to \bA_{\rm log,L}^\nabla$ which are compatible with the morphism
$\bA_{\rm cris ,L}^\nabla\lra \bA_{\rm
log,L}^\nabla$.\medskip

{\em Filtration:} We have a decreasing filtration ${\rm Fil}^n \bA_{\rm log,L}^\nabla$, for $n\in\Z$, defined by the divided power
ideal and compatible with the filtration on
$\bA_{\rm cris ,L}^\nabla$.\medskip

{\em Extension of scalars:} We have natural isomorphisms $\beta^\ast\left(\bA_{\rm cris,K}^\nabla\right)\cong \bA_{\rm cris,\Kbar}^\nabla $ and
$\beta^\ast\left(\bA_{\rm log,K}^\nabla\right)\cong \bA_{\rm log,\Kbar}^\nabla $ compatible with log
structures,  Frobenius and divided power structures and with
the morphism $\bA_{\rm cris ,L}^\nabla\lra \bA_{\rm log,L}^\nabla$.\medskip

{\em Explicit description:}  We have natural isomorphisms
$$\bA_{\rm cris,\Kbar}^\nabla\cong  \bA_{\rm
inf,\Kbar}^+\otimes_{\WW(k)} A_{\rm cris}, \qquad \bA_{\rm log,\Kbar}^\nabla \cong  \bA_{\rm inf,\Kbar}^+\otimes_{\WW(k)}
A_{\rm log}$$compatible with
the divided power structures,  log structures and Frobenius and such that the morphism
$\bA_{\rm cris,\Kbar}^\nabla\lra \bA_{\rm log,\Kbar}^\nabla$ is induced by the natural
morphism $A_{\rm cris} \to A_{\rm log}$. In particular,\medskip

(1) the $A_{\rm cris}$-linear derivation $d\colon A_{\rm log}\lra A_{\rm log}
\frac{dZ}{Z}$ defines on $\bA_{\rm log,\Kbar}^\nabla$ an $\bA_{\rm
cris,\Kbar}^\nabla$-linear derivation $$d\colon \bA_{\rm log,\Kbar}^\nabla\lra \bA_{\rm log,\Kbar}^\nabla
\frac{dZ}{Z}$$which is surjective and satisfies $\bA_{\rm
cris,\Kbar}^\nabla\cong \bA_{\rm log,\Kbar}^{\nabla,d=0}$;\medskip

(2) the inclusion $\bA_{\rm cris,\Kbar}^\nabla \subset \bA_{\rm log,\Kbar}^\nabla$ is split injective with left inverse defined
as the morphism which is the
identity on $\bA_{\rm cris,\Kbar}^\nabla$ and sends $(u-1)^{[n]}$ to $ 0$ for every $n\in\N$;\medskip

(3) the inclusion $\bA_{\rm cris,\Kbar}^\nabla \subset \bA_{\rm log,\Kbar}^\nabla$ is strict with respect to filtrations;\medskip

\medskip

{\em Localization:} For $U$ a small  object of $X^{\ket}$ with
underlying algebra $R_U$ we have

$$\bA_{\rm cris,L}^\nabla(\Rbar_U) \cong  {\rm A}_{\rm cris}^\nabla(R_U),
\qquad \bA_{\rm log,L}^\nabla(\Rbar_U) \cong {\rm A}_{\rm
log}^\nabla(R_U),$$compatibly with the action of $\cG_{U_L}$,
filtrations, Frobenius where ${\rm A}_{\rm cris}^\nabla(R_U)$  and
${\rm A}_{\rm log}^\nabla(R_U)$ are defined in
\ref{sec:AcrisnablaRbar}.

\subsubsection{The sheaf  $\bA_{\rm log}$}\label{sec:Alog}

Fix the following notation. \smallskip

({\it ALG}) For every $n\in\N$ we write $\Stilde_n:=
\Spec\bigl(\cO/(P_\pi(Z))^n\bigr)$ with the log structure
$\Mtilde_n$ defined by $\psi_\cO$.

\smallskip

({\it FORM}) For every $n\in\N$ we write $\Stilde_n:=
\Spec\bigl(\cO/(p,P_\pi(Z))^n\bigr)$ with the log structure
$\Mtilde_n$ defined by $\psi_\cO$.

\smallskip

In both cases we {\em assume} that a global deformation of $(X,M)$ to $\cO$ exists. More precisely, we assume that for every $n\in\N$ we have a log scheme
$\bigl(\Xtilde_n,\Ntilde_n\bigr)$ and log smooth morphism of log schemes of finite type $\tilde{f}_n\colon \bigl(\Xtilde_n,\Ntilde_n\bigr) \to
\bigl(\Stilde_n,\Mtilde_n\bigr)$ such that $\bigl(\Xtilde_n,\Ntilde_n\bigr)$ is isomorphic as log scheme over $\bigl(\Stilde_n,\Mtilde_n\bigr)$ to the fibred
product of $\bigl(\Xtilde_{n+1},\Ntilde_{n+1}\bigr)$ and $\bigl(\Stilde_n,\Mtilde_n\bigr)$ over $\bigl(\Stilde_{n+1},\Mtilde_{n+1}\bigr)$.

\begin{remark}\label{remark:defexists} Since $(X,M)$ is log smooth over $(S,N)$ (resp.~$(X_1,S_1)$ is
log smooth over $(S_i,N_1)$) it follows from
\cite[Prop.~3.14]{katolog}  that such a deformation of $(X,M)$ to
$\cO$ always exist Zariski locally on $X$. For example, it exists
if $X$ is affine and in such a case any two deformations are
isomorphic. It exists also if $X_1$ is of relative dimension $1$
over $S_1$.
\end{remark}

Given a small  object $U\in X^{\ket}$, for every $n\in\N$ we let
$\bigl(\widetilde{U}_n, \widetilde{H}_n\bigr)\lra
\bigl(\Xtilde_n,\Ntilde_n\bigr)$ be the unique Kummer \'etale
morphism deforming the morphism $U\to X$. Write
$\bigl(\widetilde{U}_{\rm form},\widetilde{H}_{\rm form}\bigr)$
for the formal scheme with log structure defined by
$\bigl(\widetilde{U}_n, \widetilde{H}_n\bigr)_{n\in\N}$. If
$\widetilde{U}_{\rm form}=\Spf(\widetilde{R}\bigr)$, we call a
{\em formal chart} of $\bigl(\widetilde{U}_{\rm
form},\widetilde{H}_{\rm form}\bigr)$ a chart
$$\WW(k) [P]\widehat{\otimes}_{\WW(k)[\N]} \cO
\stackrel{\psi_{\widetilde{R}}}{\longrightarrow}  \widetilde{R},
$$inducing a chart of $U$ as in \ref{sec:notation}.

\bigskip

{\em The sheaves $\widehat{\cO}_\Xtilde^{\rm DP}$ and $\omega^i_{\Xtilde}$.}\enspace Define the sheaf $\cO_{\Xtilde,n}$ on $X^{\ket}$ by setting
$\cO_{\Xtilde,n}(U):=\Gamma\bigl(\widetilde{U}_n,\cO_{\widetilde{U}_n} \bigr)$. Let $\bigl(\cO_{\Xtilde,n}/p^n \cO_{\Xtilde,n} \bigr)_{n\in\N} \in
\Sh\bigl(X^{\ket}\bigr)^\N$. Let
$$\theta_{\Xtilde,n}\colon  \cO_{\Xtilde,n}/p^n \cO_{\Xtilde,n}
\lra \cO_{X}/p \cO_X$$be the natural surjective map of sheaves of rings. It induces a strict morphism of log structures for every $n$. Let $\bigl(\cO_{\Xtilde,n}/p^n
\cO_{\Xtilde,n} \bigr)^{\rm DP}_{n\in\N}\in \Sh(X^{\ket})^\N$ be the continuous sheaf defined as the $\WW(k)$--log divided power envelope of $\cO_{\Xtilde,n}/p^n
\cO_{\Xtilde,n}$ with respect to the kernel of $\theta_{\Xtilde,n}$; see \ref{lemma:logpdenvelope}. Write $\widehat{\cO}_\Xtilde^{\rm DP}:=\ds
\lim_{\infty\leftarrow n} \bigl(\cO_{\Xtilde,n}/p^n \cO_{\Xtilde,n} \bigr)^{\rm DP}$. Then,
$$\widehat{\cO}_\Xtilde^{\rm
DP}\cong \cO_\Xtilde\widehat{\otimes}_{\cO} \cO\langle P_\pi(Z)\rangle,$$where the completion is taken with respect to the $(p,Z)$-adic topology (or equivalently
the $p$-adic topology). In particular, if $U$ is a small object of $X^{\ket}$ and if $\widetilde{R}$ is the algebra underlying $\widetilde{U}_{\rm form}$, we have
$\widehat{\cO}_\Xtilde^{\rm DP}(U)\cong \widetilde{R}_{\rm cris}$ in the notation of \ref{def:RlogRmax}.

\

Let $d$ be the relative dimension of $X$ over $\cO_K$. For every
integer~$0\leq i\leq d$ let $\omega^i_{\Xtilde_n/\Stilde_n}(U)$
(resp.~$\omega^i_{\Xtilde_n/\WW(k)}(U)$) be the module of global sections of
the sheaf of logarithmic K\"ahler differentials of
$\bigl(\widetilde{U}_n, \widetilde{H}_n\bigr)$ relative to
$\bigl(\Stilde_n,\Mtilde_n\bigr)$ (resp.~$\WW(k)$).
Let~$\omega^i_{\Xtilde/\cO}\in \Sh(X^{\ket})^\N$
(resp.~$\omega^i_{\Xtilde/\WW(k)}\in \Sh(X^{\ket})^\N$) be the
continuous sheaf
$\bigl(\omega^i_{\Xtilde_n/\Stilde_n}\bigr)_{n\in\N}$
(resp.~$\bigl(\omega^i_{\Xtilde_n/\WW(k)}\bigr)_{n\in\N}$).

\bigskip

{\em The sheaf $\bA_{\rm log,L}$.}\enspace  Let $\Theta_{\Xtilde,L}$ be the
morphism of continuous sheaves with log structure
$$\Theta_{\Xtilde,L}:=\Theta_L\otimes v_{X,L}^\ast\bigl(\theta_{\Xtilde}\bigr) \colon
\bA_{\rm inf,L}^+\otimes_{\WW(k)} v_{X,L}^\ast\bigl(\cO_{\Xtilde}\bigr) \lra \widehat{\cO}_{\fX_L}.$$Define $\bA_{\rm log,L}:=\left\{\bA_{\rm log,L,n}
\right\}_{n\in\N}$ as the $\WW(k)$--log divided power envelope of $\bA_{\rm inf,L}^+\otimes_{\WW(k)} v_{X,L}^\ast\bigl(\cO_{\Xtilde}\bigr)$ with respect to
$\Theta_{\Xtilde,L}$. It exists due to \ref{lemma:logpdenvelope}. It is endowed  with a decreasing filtration ${\rm Fil}^i \bA_{\rm log,L}$ for $i\in\Z$, defined by
the DP ideal, where  ${\rm Fil}^i \bA_{\rm log,L}^\nabla=\bA_{\rm log,L}^\nabla$ for $i\leq 0$. By construction we have a natural morphism $\bA_{\rm
log,L}^\nabla\lra \bA_{\rm log,L}$, compatible with the filtrations.

\bigskip

{\em Explicit description.}\enspace Let $U$ be a small  object of
$X^{\ket}$ and we fix compatible charts $$\psi_{\widetilde{R}}\colon
\WW(k) [P]\widehat{\otimes}_{\WW(k) [\N]}\cO \lra
\widetilde{R},\qquad \psi_{R}\colon \WW(k) [P]\otimes_{\WW(k)[\N]}
\cO_K \lra R$$for the log structure on $\widetilde{U}_{\rm
form}=\Spf\bigl(\widetilde{R} \bigr)$ (resp.~on $U=\Spec(R)$ in
the algebraic case and of $U_{\rm form}=\Spf(R)$ in the formal
case). Recall that $P=\N^a\times \N^b$. Let $e_1,\ldots,e_{a+b}$
be the standard generators of $P$ and  write
$$\widetilde{X}_i:=\psi_{\widetilde{R}}(e_i),\quad X_i=\psi_R(e_i) \quad \forall\, 1\leq i\leq
a\qquad  \widetilde{Y}_j:=\psi_{\widetilde{R}}(e_{a+j}), \quad
Y_j:=\psi_R(e_{a+j}) \quad \forall\, 1\leq j\leq b.$$

Let $Z_n\to U_K$ be the object in $U_{K}^{\fket}$ with underlying algebra  $R\times_{\WW(k)[P]} \WW(k)\left[\frac{1}{p^n}P\right] \otimes_{\WW(k)}
K'_{p^n}\bigl(\epsilon_{p^n}\bigr)$. Write $S_n:=\cO_{\fX_L}(U,Z_n)$ and $\widetilde{R}^{\rm kun}_n:=v_{X,L}^\ast\bigl(\cO_{\Xtilde}\bigr)(U,Z_n)$. Write
$\overline{X}_{i,n}:=X_i^{1/p^n}$ in $S_n/pS_n$ and $\bigl[\overline{X}_{i,n}\bigr]$ equal to the Teichm\"uller lift of $\overline{X}_{i,n}$ for $i=1,\ldots,a$.
Similarly put $\overline{Y}_{j,n}:=Y_j^{1/p^n}$ in $S_n/pS_n$ and $\bigl[\overline{Y}_{j,n}\bigr]$ equal to the Teichm\"uller lift of $\overline{Y}_{j,n}$  for
$j=1,\ldots,b$ in $\WW_n\bigl(S_n/pS_n\bigr)$. We also have the element $\pi_{p^n}\in S_n/pS_n$ and we write $\bigl[\overline{\pi}_{p^n}\bigr]$ for its
Teichm\"uller lift. Then:

\begin{proposition}\label{prop:loclstructure} The kernel of the map
$\WW_n\bigl(S_n/pS_n\bigr)\otimes_{\WW(k)} \widetilde{R}^{\rm kun}_n \to
S_n/p^n S_n $ defined by $\Theta_{\Xtilde,L}$  is the ideal
$\bigl(\xi_n,\bigl[\overline{X}_{i,n}\bigr]\otimes 1 -1\otimes
\widetilde{X}_i, \bigl[\overline{Y}_{j,n}\bigr]\otimes 1 -1\otimes
\widetilde{Y}_j\bigr)$ for $1\leq i\leq a $ and $1\leq j\leq b$ or
the ideal $\bigl(\xi_n,[\overline{\pi}_{p^n}]\otimes 1- 1
\otimes Z, \bigl[\overline{X}_{i,n}\bigr]\otimes 1 -1\otimes
\widetilde{X}_i, \bigl[\overline{Y}_{j,n}\bigr]\otimes 1 -1\otimes
\widetilde{Y}_j\bigr)$ for $2\leq i\leq a $ and $1\leq j\leq b$.
In particular, $$\bA_{\rm log,L,n}\vert_{(U,Z_n)} \cong \bA_{\rm
log,L,n}^\nabla\left\langle
v_{2,n}-1,\ldots,v_{a,n}-1,w_{1,n}-1,\ldots,w_{b,n}-1
\right\rangle $$
with
$v_i:=\frac{\bigl[\overline{X}_{i,n}\bigr]}{\widetilde{X}_i}$ for
$i=1,\ldots,a$ and
$w_j:=\frac{\bigl[\overline{Y}_{j,n}\bigr]}{\widetilde{Y}_j}$ for
$j=1,\ldots,b$.
\end{proposition}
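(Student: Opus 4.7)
The plan is to reduce to an explicit computation on sections at the object $(U, Z_n)$, exploiting the chart $\psi_{\widetilde{R}}$ of the deformation. At this level the map $\Theta_{\widetilde{X},L,n}$ factorizes as
$$\WW_n(S_n/pS_n) \otimes_{\WW(k)} \widetilde{R}^{\rm kun}_n \stackrel{\Theta_{L,n}\otimes 1}{\longrightarrow} (S_n/p^nS_n) \otimes_{\WW(k)} \widetilde{R}^{\rm kun}_n \stackrel{\mu}{\longrightarrow} S_n/p^n S_n,$$
where the first arrow has kernel $(\xi_n)$ by Corollary \ref{cor:kerthetasheaves}, and $\mu$ is multiplication along the canonical reduction $\widetilde{R}^{\rm kun}_n \to S_n/p^n S_n$ induced by $v_{X,L}^\ast(\cO_{\widetilde{X}}) \to \widehat{\cO}_{\fX_L}$ and the deformation reduction $\widetilde{R} \to R$. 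It suffices to identify the kernel of $\mu$.

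The chart presents $\widetilde{R}$ as an (\'etale or Kummer \'etale, after adjoining roots) extension of a polynomial/power series ring in $\widetilde{X}_i$, $\widetilde{Y}_j$ and $Z$ with the monoid relation coming from the log structure. By the infinitesimal lifting property of (Kummer) \'etale morphisms, the kernel of $\mu$ is generated by the relations that force $\widetilde{X}_i$, $\widetilde{Y}_j$ and $Z$ to have the same images as $X_i$, $Y_j$ and $\pi$ in $S_n/p^n S_n$; via the Teichm\"uller-lifting identification provided by $\Theta_{L,n}$, these images are precisely $[\overline{X}_{i,n}]$, $[\overline{Y}_{j,n}]$ and $[\overline{\pi}_{p^n}]$. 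This yields the first form of the kernel. For the second form, the multiplicativity of Teichm\"uller lifts applied to $X_1 \cdots X_a = \pi^\alpha$ gives $[\overline{X}_{1,n}] \cdots [\overline{X}_{a,n}] = [\overline{\pi}_{p^n}]^\alpha$, and combined with the chart relation $\widetilde{X}_1 \cdots \widetilde{X}_a = Z^\alpha$ in $\widetilde{R}$ this allows the generator for $i=1$ to be eliminated once the relation $[\overline{\pi}_{p^n}] \otimes 1 - 1 \otimes Z$ is included.

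For the explicit description as a DP polynomial algebra, I would invoke the universal property: $\bA_{\rm log,L,n}^\nabla$ is by definition the log DP envelope of $\bA_{\rm inf,L}^+ \otimes_{\WW(k)} \cO$ with respect to $\Theta_{\cO,L}$, so the generators $\xi_n$ and $[\overline{\pi}_{p^n}]\otimes 1 - 1 \otimes Z$ are already DP-thickened in $\bA_{\rm log,L,n}^\nabla$. The remaining generators $[\overline{X}_{i,n}]\otimes 1 - 1\otimes \widetilde{X}_i$ ($i \geq 2$) and $[\overline{Y}_{j,n}]\otimes 1 - 1\otimes \widetilde{Y}_j$, rewritten log-multiplicatively as $\widetilde{X}_i(v_{i,n} - 1)$ and $\widetilde{Y}_j(w_{j,n}-1)$ using that $\widetilde{X}_i$ and $\widetilde{Y}_j$ are invertible at the level of the log structure, yield additional DP generators $v_{i,n}-1$ and $w_{j,n}-1$. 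The log DP envelope therefore adjoins DP polynomial variables in these classes to $\bA_{\rm log,L,n}^\nabla$, giving the stated isomorphism.

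The main obstacle is the bookkeeping between the ordinary-ring kernel computation, where one sees the additive elements $[\overline{X}_{i,n}] \otimes 1 - 1 \otimes \widetilde{X}_i$, and the log DP description, where one works with the multiplicative ratios $v_{i,n} = \bigl[\overline{X}_{i,n}\bigr]/\widetilde{X}_i$. This requires a careful application of Kato's logarithmic formalism together with the compatibility of the charts for $R$ and $\widetilde{R}$ via the relation $\widetilde{X}_1 \cdots \widetilde{X}_a = Z^\alpha$; the corresponding non-logarithmic computation is modeled on \cite[lemma 1.4.2]{tsujiinventiones} and the analogous smooth statement in \cite{andreatta_iovita_comparison}.
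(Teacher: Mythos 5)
Your proposal follows essentially the same route as the paper's proof: one reduces modulo $\xi_n$ using the computation of $\Ker(\Theta)$ from \S\ref{section:kertheta}, identifies the kernel of the resulting multiplication map by an explicit computation on the chart (whose local model is \ref{lemma:structurBdR+}, with the same elimination of the $i=1$ generator via $\widetilde{X}_1\cdots\widetilde{X}_a=Z^\alpha$), and then deduces the second claim from the universal property of the log DP envelope as in the smooth case of \cite{andreatta_iovita_comparison}. The only point left implicit in your sketch is that to obtain the \emph{free} DP polynomial algebra $\bA_{\rm log,L,n}^\nabla\left\langle v_{2,n}-1,\ldots,w_{b,n}-1\right\rangle$ one needs the elements $v_{i,n}-1$, $w_{j,n}-1$ to form a regular sequence modulo the ideal generated by $\xi_n$ and $[\overline{\pi}_{p^n}]\otimes 1-1\otimes Z$, not merely to generate the kernel; this regularity is exactly what \ref{lemma:structurBdR+}(2) supplies and should be cited at that step.
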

\begin{proof} It follows from \ref{section:kertheta} that modulo $\xi_n$ the kernel of
$\Theta_{\Xtilde,L}$ on $\WW_n\bigl(S_n/pS_n\bigr)\otimes_{\WW(k)}
\widetilde{R}^{\rm kun}$ is the kernel of $S_n/p^n S_n
\otimes_{\WW(k)} \widetilde{R}^{\rm kun} \to S_n/p^n S_n$. The first
claim follows by an explicit computation,
cf.~\ref{lemma:structurBdR+}.

The second part of the proposition follows as in \cite[Lemma 2.30, Thm.~2.31]{andreatta_iovita_comparison}.

\end{proof}

\smallskip

{\em Extension of scalars.}\enspace We have a natural isomorphism $\beta^\ast\left(\bA_{\rm log,K}\right)\cong
\bA_{\rm log,\Kbar}$ compatible with log structures and
divided power structures and with the morphism $\bA_{\rm log ,L}^\nabla\lra \bA_{\rm log,L}$.

\medskip

{\em Frobenius.}\enspace For $U$ a small  object of $X^{\ket}$ as above let $F_U$ be the unique homomorphism $F_U\colon \widetilde{R}\to \widetilde{R}$ inducing Frobenius
modulo $p$ and compatible, via the chart $\psi_{\widetilde{R}}$, with the map $\WW(k)[P]\to \WW(k)[P] $ given by Frobenius on $\WW(k)$ and multiplication by $p$ on
$P$. This produces a Frobenius $F_U$ on on $v_{X,L}^\ast\bigl(\cO_{\Xtilde}\bigr)\vert_{(U,U_L)}$. Together with Frobenius on $\bA_{\rm inf,L}^+$ it defines a
Frobenius on $\bA_{\rm inf,L}^+\otimes_{\WW(k)} v_{X,L}^\ast\bigl(\cO_{\Xtilde}\bigr)\vert_{(U,U_L)}$ compatible with the log structures. Using
\ref{prop:loclstructure} one proves that it extends to a Frobenius morphism~$\varphi_U$ on $\bA_{\rm log,L}\vert_{(U,U_L)}$ compatible with Frobenius defined on
$\bA_{\rm log,L}^\nabla$ and with the log structures.
\medskip

{\em Localization.}\enspace For $U$ a small object of $X^{\ket}$ write $\widetilde{U}:=\Spf\bigl(\widetilde{R}\bigr)$ with induced log structure. Using
\ref{prop:loclstructure} one proves that
$$ \bA_{\rm log,L}^\nabla(\Rbar_U) \cong {\rm A}_{\rm
log}^\nabla\bigl(\widetilde{R}_U\bigr),$$compatibly with action of
$\cG_{U_L}$, filtrations, Frobenius. Here, ${\rm A}_{\rm
log}\bigl(\widetilde{R}_U\bigr)$ is the ring, with log structure,
defined in \S\ref{sec:AcrisnablaRbar}.

\subsubsection{Properties of $\bA_{\rm log}^\nabla$ and $\bA_{\rm log}$}

For $T=\cO$ or $T=\WW(k)$ consider the continuous sheaf
$v_{X,L}^\ast\bigl(\omega^i_{\Xtilde/T}\bigr)$  of locally free
$v_{X,L}^\ast(\cO_X)\cong \cO_{\fX_L}^{\rm un}$-modules over
$\fX_L$. The de Rham complex on~$\Xtilde_n$ for every $n\in\N$
defines a de Rham complex
$v_{X,M}^\ast\bigl(\omega^\bullet_{\Xtilde/T}\bigr)$ on~$\fX_L$.
We then get a complex $\bA_{\rm inf,L}^+\otimes_{\WW(k)}
v_{X,M}^\ast\bigl(\omega^\bullet_{\Xtilde/T}\bigr)$.\smallskip

{\it Convention:} In order to simplify the notation, for every
sheaf of $\cO_{\fX_L}^{\rm un}$-modules ${\cal E}$ and any sheaf
of $\cO_\Xtilde$-modules ${\cal M}$ we write
$\cE\otimes_{\cO_\Xtilde} \cM$ for $\cE\otimes_{\cO_{\fX_L}^{\rm
un}} v_{X,M}^\ast\bigl(\cM\bigr)$.\smallskip

One can prove as in \cite[\S2.7]{andreatta_iovita_comparison} that the
de Rham complex above  extends uniquely to a complex
$$\bA_{\rm log,L} \stackrel{\nabla^1_T}{\lra} \bA_{\rm log,L}
\otimes_{\cO_\Xtilde} \omega^1_{\Xtilde/T}
\stackrel{\nabla^2_T}{\lra} \bA_{\rm log,L} \otimes_{\cO_\Xtilde}
\omega^2_{\Xtilde/T} \lra \cdots .$$Using the description given in
in \ref{prop:loclstructure} we have that $\nabla^1_{\cO}$ is
$\bA_{\rm log,L}^\nabla$-linear and sends $(v_i-1)^{[n]}$ to $
-(v_i-1)^{[n-1]} v_i d\log \widetilde{X}_i$ for $i=2,\ldots,a$ and
sends $(w_j-1)^{[n]}$ to $-(w_j-1)^{[n-1]} w_j d\log
\widetilde{Y}_j$ for $j=1,\ldots,b$. Similarly,
$\nabla^1_{\WW(k)}$ is $\bA_{\rm cris,L}^\nabla$-linear and sends
$(v_i-1)^{[n]}$ to $ -(v_i-1)^{[n-1]} v_i d\log \widetilde{X}_i$
for $i=1,\ldots,a$ and sends $(w_j-1)^{[n]} $ to
$-(w_j-1)^{[n-1]} w_j d \log \widetilde{Y}_j$ for $j=1,\ldots,b$.
Moreover,

\begin{proposition}\label{prop:deRhamcomplex} Writing $\nabla_T:=\nabla_T^1$ and
$\nabla^i:=\nabla^i_\cO$ and $\nabla:=\nabla^1$ we have that:

\begin{enumerate}

\item[i.] for every $r\in \N$ the sequence $0\lra \Fil^r \bA_{\rm
log,L}^\nabla \lra \Fil^r \bA_{\rm log,L} \stackrel{\nabla}{\lra}
\Fil^{r-1} \bA_{\rm log,L} \otimes_{\cO_\Xtilde}
\omega^1_{\Xtilde/\cO} \stackrel{\nabla^2}{\lra} \Fil^{r-2}
\bA_{\rm log,L} \otimes_{\cO_\Xtilde} \omega^2_{\Xtilde/\cO}
\stackrel{\nabla^3}{\lra} \cdots$ is exact;

\item[i'.] for every $r\in \N$ the sequence $0\lra \Fil^r \bA_{\rm
cris,L}^\nabla \lra \Fil^r \bA_{\rm log,L}
\stackrel{\nabla_{\WW(k)}}{\lra} \Fil^{r-1} \bA_{\rm log,L}
\otimes_{\cO_\Xtilde} \omega^1_{\Xtilde/\WW(k)}
\stackrel{\nabla_{\WW(k)}^2}{\lra} \Fil^{r-2} \bA_{\rm log,L}
\otimes_{\cO_\Xtilde} \omega^2_{\Xtilde/\WW(k)}
\stackrel{\nabla_{\WW(k)}^3}{\lra} \cdots$ is exact;

\item[ii.] the natural inclusion $\bA_{\rm log,L}^\nabla\subset
\bA_{\rm log,L}$  identifies $\Ker(\nabla)$ with~$\bA_{\rm
log,L}^\nabla$;

\item[ii'.] the natural inclusion $\bA_{\rm cris,L}^\nabla\subset
\bA_{\rm log,L}$  identifies $\Ker\bigl(\nabla_{\WW(k)}\bigr)$
with~$\bA_{\rm cris,L}^\nabla$;

\item[iii.] {\rm (Griffiths' transversality)} we have
$\nabla_T\left(\Fil^r\bigl(\bA_{\rm log,L}\bigr)\right)\subset
\Fil^{r-1}\left(\bA_{\rm log,L}\right)\otimes_{\cO_\Xtilde}
\omega^1_{\Xtilde/T}$ for every~$r$;

\item[iv.] the connection $\nabla_T\colon \bA_{\rm log,L}\lra
\bA_{\rm log,L}\otimes_{\cO_\Xtilde} \omega^1_{\Xtilde/T}$ is
quasi--nilpotent;

\item[v.] let~$U$ be   small  and choose a Frobenius
$F_{\widetilde{U}}$ on a formal chart of $\widetilde{U}_{\rm
form}$. Then, Frobenius~$\varphi_U$ on~$\bA_{\rm
log,L}\vert_{(U,U_L)}$ is horizontal with respect
to~$\nabla_T\vert_{(U,U_L)}$ i.~e., $\nabla_T\vert_{(U,U_L)} \circ
\varphi_U=\bigl(\varphi_U\tensor d F_{\widetilde{U}}\bigr)\circ
\nabla_T\vert_{(U,U_L)}$.

\end{enumerate}

\end{proposition}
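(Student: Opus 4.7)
The plan is to reduce every assertion to an explicit computation on the basis of objects $(U,Z_n)$ with $U$ small, where by \ref{prop:loclstructure} the sheaf $\bA_{\rm log,L}$ is a DP polynomial algebra $\bA_{\rm log,L,n}^\nabla\langle v_2-1,\ldots,v_a-1, w_1-1,\ldots,w_b-1\rangle$, and combining with the description $\bA_{\rm log,L}^\nabla \cong \bA_{\rm cris,L}^\nabla\{\langle u-1\rangle\}$ of \S\ref{sec:Alognabla} (with $u=[\overline\pi]/Z$ playing the role of $v_1-1$ via the identification $\widetilde{X}_1=Z$), it is also a DP polynomial algebra over $\bA_{\rm cris,L}^\nabla$ in the variables $v_1-1,\ldots,v_a-1,w_1-1,\ldots,w_b-1$. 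The filtration is the divided-power filtration with respect to these generators and to $\xi$, and the formulas for $\nabla_\cO$ and $\nabla_{\WW(k)}$ on DP powers are those stated.

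Items (iii) and (iv) follow at once from these formulas: $\nabla_T\bigl((v_i-1)^{[n]}\bigr)$ and $\nabla_T\bigl((w_j-1)^{[n]}\bigr)$ have DP degree $n-1$ in the relevant variable, hence lie in $\Fil^{r-1}$ whenever the input lies in $\Fil^r$, giving Griffiths' transversality by the Leibniz rule; and the same formulas show that each variable contributes an operator that lowers DP degree by one, from which quasi-nilpotency modulo any power of the filtration is immediate.

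For (i) and (i'), I would prove a filtered Poincar\'e lemma for DP polynomial extensions. For each additional DP generator $T\in\{v_i-1,w_j-1\}$, I construct a contracting homotopy $h_T$ that inverts the corresponding partial derivative and raises DP degree by one, hence raises the filtration index by one; on $T^{[n]}\cdot d\log(\text{conjugate variable})$ the homotopy is (up to a unit) $\pm T^{[n+1]}$. Applying these homotopies iteratively to a basis of $\omega^1_{\widetilde X/T}$ dual to $\{v_i-1,w_j-1\}$ (using that $d\log\widetilde{X}_i,\,d\log\widetilde{Y}_j$ form a local basis of $\omega^1_{\widetilde X/T}$ by the chart structure in \ref{sec:notation} and \ref{sec:Alog}) produces a contraction of the Koszul-type total complex and gives exactness with filtration preserved. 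Statements (ii) and (ii') are then the case $r=0$ of this exactness in bottom degree: $\Ker(\nabla)=\bA_{\rm log,L}^\nabla$ and $\Ker(\nabla_{\WW(k)})=\bA_{\rm cris,L}^\nabla$. This parallels \cite[Prop.~2.35]{andreatta_iovita_comparison}; the new feature is the separate handling of the variable $v_1-1$, which is absorbed into $\bA_{\rm log,L}^\nabla$ relative to $\cO$ (since $\widetilde{X}_1=Z$) but not relative to $\WW(k)$, which is exactly what distinguishes (i) from (i') and (ii) from (ii').

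For (v), horizontality of $\varphi_U$ is checked on the DP generators. By definition of $F_{\widetilde U}$ we have $F_{\widetilde U}(\widetilde{X}_i)=\widetilde{X}_i^p\cdot(\text{unit})$ and similarly for $\widetilde{Y}_j$, while $\varphi$ on $\bA_{\rm inf,L}^+$ sends $[\overline{X}_{i,n}]\mapsto[\overline{X}_{i,n}]^p$, so $\varphi_U(v_i)=v_i^p$ and $\varphi_U(w_j)=w_j^p$. A direct computation of $\nabla_T\bigl(\varphi_U((v_i-1)^{[n]})\bigr)$ and comparison with $(\varphi_U\otimes dF_{\widetilde U})\bigl(\nabla_T((v_i-1)^{[n]})\bigr)$ gives the identity on generators, and the claim extends by $\bA_{\rm log,L}^\nabla$-linearity, using the known horizontality of $\varphi$ on $\bA_{\rm log,L}^\nabla$. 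The main technical obstacle is the filtered Poincar\'e lemma underlying (i) and (i'): one must choose the homotopies coherently in all variables simultaneously and verify that the resulting total homotopy respects both the $\xi$-adic DP filtration and the divided-power filtration from the log generators, i.e.\ shifts the global filtration index by exactly one on each pass.
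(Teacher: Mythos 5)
Your proposal is correct and follows essentially the same route as the paper, which proves the statement formally from the explicit DP--polynomial description of \ref{prop:loclstructure} and refers to \cite[Prop.~2.37]{andreatta_iovita_comparison} for the filtered Poincar\'e-lemma details you spell out. One small imprecision: the extra variable separating $\bA_{\rm log,L}^\nabla$ from $\bA_{\rm cris,L}^\nabla$ is $u-1$ with the relation $v_1\cdots v_a=u^\alpha$ (so $v_1=u^\alpha v_2^{-1}\cdots v_a^{-1}$), not an identification $\widetilde{X}_1=Z$ --- but this does not affect the argument.
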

\begin{proof} The proof is formal and follows from the explicit description given
in \ref{prop:loclstructure}. We refer to \cite[Prop.~2.37]{andreatta_iovita_comparison} for details.

\end{proof}

\subsubsection{The sheaves $\bB_{\rm log}^\nabla$ and $\bB_{\rm log}$}
\label{sec:BlogBlognabla}

In this section we denote by $\bA$ any one of $\bA_{\rm cris,L}^\nabla$, $\bA_{\rm log,L}^\nabla$ or $\bA_{\rm log,L}$. For every integer $r$ define the continuous
sheaf $\bA(r):=\Z_p(r) \tensor_{\Z_p} \bA$ (thought of as ``$\bA t^{-r} $") with filtration $\Fil^i \bA(r):= \Z_p(r)\tensor_{\Z_p} \Fil^{i+r}\bA$ for $i\in\Z$. Let
the (local) Frobenius  $\varphi_r\colon \bA(r) \to \bA(pr)$ be defined by $p^{-r}$ times  the (local) Frobenius on $\bA$ (coming from the fact that
$\varphi(t)=pt$). This makes sense since $p^{-1}=(p-1)! \frac{t^{[p]}}{t^p}\in A_{\rm cris} \cdot t^{-p}$ so that  $p^{-r}$ is a well defined element of $\bA(pr)$.
Let the connection $$\nabla^{i+1}_\cO(r) \colon \bA_{\rm log,L}(r)\tensor_{\cO_\Xtilde}\omega^i_{\Xtilde/\cO} \lra \bA_{\rm
log,L}(r)\tensor_{\cO_\Xtilde}\omega^{i+1}_{\Xtilde/\cO}$$be the one obtained from the one $\bA_{\rm log,L}\tensor_{\cO_\Xtilde}\omega^i_{\Xtilde/\cO}$.  One
defines $\nabla^{i+1}_{\WW(k)}(r)$ in a similar way.

As in \cite[\S 2.8]{andreatta_iovita_comparison} one proves that, given integers $r\geq s$,  multiplication
by $t^{r-s}$ provide morphisms $\iota_{r,s}\colon
\bA(s)\to \bA(r)$ which respect all the above structures and satisfy $\iota_{u,r}\circ \iota_{r,s}=
\iota_{u,s}$ for integers $u\geq r \geq s$. Define $$\bB_{\rm
cris,L}^\nabla, \qquad \bB_{\rm log,L}^\nabla, \qquad \bB_{\rm log,L}$$in the category
${\rm Ind}\left(\Sh(\fX_M)^\N \right)$ of inductive systems of continuous
sheaves as the inductive systems of the sheaves $\bA_{\rm cris,L}^\nabla(r)$,
(resp.~$\bA_{\rm log,L}^\nabla(r)$, resp.~$\bA_{\rm log,L}(r)$) with respect to the
morphisms $\iota_{r,s}$ for $s\leq r$. They are endowed with a descending filtration $\Fil^n \bB_{\rm cris,L}^\nabla$,
$\Fil^n \bB_{\rm log,L}^\nabla$ and $\Fil^n
\bB_{\rm log,L}$ defined by the inductive systems $\Fil^n \bA_{\rm cris,L}^\nabla(r)$,
$\Fil^n \bA_{\rm log,L}^\nabla(r)$ (resp.~$\Fil^n \bA_{\rm log,L}(r)$) for
varying $r\in\Z$. Moreover, $\bB_{\rm cris,L}^\nabla$ and $\bB_{\rm log,L}^\nabla$
(resp.~$\bB_{\rm log,L}\vert_{(U,U_L)}$ for $U\in X^{\ket}$ small) are each endowed
with a Frobenius defined as the inductive limits of the Frobenii $\varphi_r$. We also get de Rham complexes

$$  \bB_{\rm  log,L} \stackrel{\nabla^1_T} {\lra}
\bB_{\rm  log,L} \otimes_{\cO_\Xtilde}\omega^1_{\Xtilde/T} \stackrel{\nabla^2_{T}}{\lra} \bB_{\rm  log,L}
\otimes_{\cO_\Xtilde}\omega^2_{\Xtilde/T} \lra \cdots $$for
$T=\cO$ or $T=\WW(k)$. As in \cite[Lemma 2.41]{andreatta_iovita_comparison} one proves the following:

\begin{lemma}\label{lemma:propbBcris} (1) Multiplication by $p$ is an isomorphism
on $\Fil^n \bB_{\rm cris,L}^\nabla$, $\bB_{\rm cris,L}^\nabla$, $\Fil^n \bB_{\rm log,L}^\nabla$, $\bB_{\rm log,L}^\nabla$,
$\Fil^n \bB_{\rm log,L}$ and $\bB_{\rm log,L}$. \smallskip

(2) For every $r\in\Z\cup\{-\infty\}$, putting $\Fil^{-\infty}
\bB_{\rm log,L}^\nabla=\bB_{\rm log,L}^\nabla$ and $\Fil^{-\infty}
\bB_{\rm log,L}=\bB_{\rm log,L}$ and $\nabla^i:=\nabla^i_\cO$, we
have exact sequences of inductive systems

$$0\lra \Fil^r
\bB_{\rm log,L}^\nabla \lra \Fil^r \bB_{\rm  log,L}
\stackrel{\nabla^1}{\lra}  \Fil^{r-1} \bB_{\rm  log,L}
\otimes_{\cO_\Xtilde}\omega^1_{\Xtilde/\cO}
\stackrel{\nabla^2}{\lra} \Fil^{r-2} \bB_{\rm  log,L}
\otimes_{\cO_\Xtilde}\omega^2_{\Xtilde/\cO} \lra \cdots $$and
$$0\lra \Fil^r
\bB_{\rm cris,L}^\nabla \lra \Fil^r \bB_{\rm  log,L}
\stackrel{\nabla^1_{\WW(k)}}{\lra}  \Fil^{r-1} \bB_{\rm  log,L}
\otimes_{\cO_\Xtilde}\omega^1_{\Xtilde/\WW(k)}
\stackrel{\nabla^2_{\WW(k)}}{\lra} \Fil^{r-2} \bB_{\rm  log,L}
\otimes_{\cO_\Xtilde}\omega^2_{\Xtilde/\WW(k)}\lra  \cdots .$$

(3) for $U\in X^{\ket}$ small,  Frobenius~$\varphi_\cU$
on~$\bB_{\rm log,L}\vert_{(U,U_L)}$ is horizontal with respect
to~$\nabla\vert_{(U,U_L)}$ and induces Frobenius on $\bB_{\rm
log,L}^\nabla\vert_{(U,U_L)}$.  \smallskip

(4) for $U\in X^{\ket}$ small, $\bB_{\rm
log,L}^{\nabla}(\Rbar_U)\cong B_{\rm log}^\nabla(\widetilde{R}_U)$
and $\bB_{\rm log,L}(\Rbar_\cU)\cong B_{\rm
log}(\widetilde{R}_U)$, as defined in \S\ref{sec:AcrisnablaRbar},
compatibly with Frobenius, filtrations, $\cG_{U_L}$-action and
connections.

\end{lemma}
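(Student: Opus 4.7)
\medskip

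\noindent
My plan is to reduce every assertion to the already-established statements for $\bA_{\rm cris,L}^\nabla$, $\bA_{\rm log,L}^\nabla$ and $\bA_{\rm log,L}$, using the fact that $\bB_{?}$ is, by construction, the $2$--colimit of the $\bA_{?}(r)$'s along multiplication by $t^{r-s}$. I will follow closely the strategy of \cite[Lemma 2.41]{andreatta_iovita_comparison}.

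\medskip

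\noindent
\textbf{Part (1).} The key input is the classical identity $t^p = p!\, t^{[p]}$ in $A_{\rm cris}$. Given $s\in\Z$ and $x\in \bA(s)$, the element $y:=(p-1)!\, t^{[p]}\cdot x$ lies in $\bA(s+p)$ (since $t^{[p]}\in\Fil^p \bA_{\rm cris}\subset\bA$) and satisfies $p\cdot y = p!\,t^{[p]}\cdot x = t^p\cdot x = \iota_{s+p,s}(x)$. Thus multiplication by $p$ is surjective in the inductive system. For injectivity, if $p\cdot z=0$ in $\bA(u)$ with $z=\iota_{u,r}(y)$, then $t^p z = p\cdot(p-1)!\,t^{[p]}z = 0$, i.e.~$\iota_{u+p,u}(z)=0$, so $y$ is $0$ in $\bB$. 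The same argument, noting that $t^{[p]}\in\Fil^p$, shows multiplication by $p$ is an isomorphism on the filtered pieces as well.

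\medskip

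\noindent
\textbf{Part (2).} Both exact sequences are obtained by taking filtered inductive limits, with respect to the maps $\iota_{r,s}$, of the corresponding exact sequences for $\bA_{\rm log,L}$, $\bA_{\rm log,L}^\nabla$, $\bA_{\rm cris,L}^\nabla$ proved in \ref{prop:deRhamcomplex}(i) and (i'). One checks that multiplication by $t^{r-s}$ commutes with $\nabla_T$ (the twist $\Z_p(r)$ is horizontal) and sends $\Fil^{j}\bA(s)$ into $\Fil^{j}\bA(r)$, so the morphisms $\iota_{r,s}$ are morphisms of the filtered de Rham complexes. Exactness is preserved by filtered colimits in the abelian category of sheaves, which gives (2).

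\medskip

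\noindent
\textbf{Part (3).} The Frobenius $\varphi_r$ on $\bA(r)$ is defined as $p^{-r}\varphi$; horizontality of $\varphi$ on $\bA_{\rm log,L}\vert_{(U,U_L)}$ with respect to $\nabla_T$ was proved in \ref{prop:deRhamcomplex}(v), and this horizontality persists after twisting by $\Z_p(r)$ and passing to the inductive limit. That $\varphi_\cU$ preserves the subsystem $\bB_{\rm log,L}^\nabla\vert_{(U,U_L)}$ follows from the analogous compatibility of $\varphi$ with the inclusion $\bA_{\rm log,L}^\nabla\subset \bA_{\rm log,L}$ (see the Frobenius compatibility statement in \S\ref{sec:Alog}).

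\medskip

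\noindent
\textbf{Part (4).} The localization functor $\cF\mapsto \cF(\Rbar_U)$ of \S\ref{sec:localization} is itself a filtered colimit and therefore commutes with the filtered colimits defining $\bB_{\rm log,L}^\nabla$ and $\bB_{\rm log,L}$. Combined with the localization formulas $\bA_{\rm log,L}^\nabla(\Rbar_U)\cong {\rm A}_{\rm log}^\nabla(\widetilde{R}_U)$ and $\bA_{\rm log,L}(\Rbar_U)\cong {\rm A}_{\rm log}(\widetilde{R}_U)$ already established, and with the definition of $B_{\rm log}^\nabla(\widetilde{R}_U)$ and $B_{\rm log}(\widetilde{R}_U)$ in \S\ref{sec:AcrisnablaRbar} as the corresponding $t$--inverted rings, the isomorphism follows, compatibly with $\varphi$, filtration, $\cG_{U_L}$--action and connection. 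The main nuance, and the only mildly delicate point, is checking that the transition maps match under these identifications; but this is formal, since in both the sheaf and the ring setting they are multiplication by $t^{r-s}$.
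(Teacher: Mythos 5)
Your proof is correct and follows essentially the same route as the paper, which simply refers to \cite[Lemma 2.41]{andreatta_iovita_comparison}: invertibility of $p$ via $p^{-1}=(p-1)!\,t^{[p]}/t^p\in A_{\rm cris}\cdot t^{-p}$, and passage to filtered colimits of the corresponding statements for $\bA_{\rm cris,L}^\nabla$, $\bA_{\rm log,L}^\nabla$ and $\bA_{\rm log,L}$ from \ref{prop:deRhamcomplex} and the localization formulas of \S\ref{sec:Alognabla} and \S\ref{sec:Alog}. Nothing further is needed.
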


\subsubsection{The sheaves $\overline{\bB}_{\rm log,\Kbar}^\nabla$ and $\overline{\bB}_{\rm log,\Kbar}$}\label{sec:defBbarlog}

Recall from \S\ref{sec:notation} that we have a natural map $f_\pi\colon B_{\rm log}\to B_{\rm dR}$,
with image $\overline{B}_{\rm log}$, sending $Z$ to $\pi$. Let $\overline{A}_{\rm log}$ be the image of $A_{\rm log}$.
Define $\overline{\bA}_{\rm log,\Kbar}^\nabla$  and $\overline{\bB}_{\rm log,\Kbar}^\nabla$ as the
quotient $\bA_{\rm log,\Kbar}^\nabla \otimes_{A_{\rm log}} \overline{A}_{\rm log} $ and
$\bB_{\rm log,\Kbar}^\nabla \otimes_{B_{\rm log}} \overline{B}_{\rm log} $, respectively, with image filtration.
Due to \S\ref{sec:Alognabla} we have
isomorphisms

$$\overline{\bA}_{\rm log,\Kbar}^\nabla\cong  \bA_{\rm inf,\Kbar}^+\otimes_{\WW(k)}
\overline{A}_{\rm log},\qquad \overline{\bB}_{\rm log,\Kbar}^\nabla\cong  \bA_{\rm inf,\Kbar}^+\otimes_{\WW(k)}
\overline{B}_{\rm log}$$ in $\Sh(\fX_\Kbar)^\N$ and in ${\rm Ind}\left(\Sh(\fX_\Kbar)^\N \right)$ respectively.
These isomorphisms preserve the filtrations. Similarly, define $$\overline{\bA}_{\rm log,\Kbar}:=\bA_{\rm log,\Kbar}
\otimes_{A_{\rm log}} \overline{A}_{\rm log},\quad \overline{\bB}_{\rm log,\Kbar}:=\bB_{\rm log,\Kbar}
\otimes_{B_{\rm log}} \overline{B}_{\rm log}$$with image filtration. As $f_\pi(\cO)=\cO_K$, we have
$\cO_\Xtilde\widehat{\otimes}_{\cO} \overline{A}_{\log}=\cO_X\otimes_{\cO_K} \overline{A}_{\log}$.
In particular $\overline{\bA}_{\rm log,\Kbar}$ and $\overline{\bB}_{\rm log,\Kbar}$ are $\cO_X\widehat{\otimes}_{\cO_K}
\overline{A}_{\log}$, resp.~$\cO_X\widehat{\otimes}_{\cO_K} \overline{B}_{\log}$-modules.
Due to \ref{prop:deRhamcomplex} and \ref{lemma:propbBcris} they are endowed with connections relative to
$\overline{A}_{\rm log}$, resp.~$\overline{B}_{\rm log}$ and the filtrations satisfies Griffiths' transversality.
Set $\Fil^{-\infty} \overline{\bA}_{\rm log,\Kbar}=\overline{\bA}_{\rm
log,\Kbar}$ and similarly for $\overline{\bA}_{\rm log,\Kbar}^\nabla$, $\overline{\bB}_{\rm
log,\Kbar}$ and $\overline{\bB}_{\rm log,\Kbar}^\nabla$.

\begin{lemma}\label{prop:deRhamcomplexBbarlog} (i) We have  $\overline{\bB}_{\rm cris,\Kbar}^\nabla\otimes_{K_0}
K \subset  \overline{\bB}_{\rm log,\Kbar}^\nabla$.\smallskip

(ii) Using the notation of \ref{prop:loclstructure} we have $$\overline{\bA}_{\rm log,\Kbar,n}
\vert_{(U,Z_n)} \cong\overline{\bA}_{\rm log,\Kbar,n}^\nabla\left\langle
v_{2,n}-1,\ldots,v_{a,n}-1,w_{1,n}-1,\ldots,w_{b,n}-1
\right\rangle;$$

(iii)  The de Rham complexes $0\lra \Fil^r \overline{\bA}_{\rm
log,\Kbar}^\nabla \lra \Fil^r \overline{\bA}_{\rm
log,\Kbar} \stackrel{\nabla}{\lra}
\Fil^r \overline{\bA}_{\rm
log,\Kbar} \otimes_{\cO_X}
\omega^1_{X/\cO_K} {\lra} \cdots$ and $0\lra \Fil^r \overline{\bB}_{\rm
log,\Kbar}^\nabla \lra \Fil^r \overline{\bB}_{\rm
log,\Kbar} \stackrel{\nabla}{\lra}
\Fil^{r-1} \overline{\bB}_{\rm log,\Kbar} \otimes_{\cO_X}
\omega^1_{X/\cO_K} {\lra} \cdots$ are exact for $r\in \Z\cup \{-\infty\}$;

\end{lemma}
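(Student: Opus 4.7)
The plan is to deduce (i)--(iii) from their unbarred counterparts (Propositions \ref{prop:loclstructure}, \ref{prop:deRhamcomplex} and Lemma \ref{lemma:propbBcris}) together with the description of $f_\pi\colon B_{\rm log}\to \overline{B}_{\rm log}$ given in \ref{lemma:BlogmodPpi}.

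For (i), I would combine the explicit isomorphisms $\bB_{\rm cris,\Kbar}^\nabla \cong \bA_{\rm inf,\Kbar}^+\otimes_{\WW(k)}B_{\rm cris}$ and $\overline{\bB}_{\rm log,\Kbar}^\nabla\cong \bA_{\rm inf,\Kbar}^+\otimes_{\WW(k)}\overline{B}_{\rm log}$ recalled in \S\ref{sec:Alognabla} and \S\ref{sec:defBbarlog} with the inclusion $B_{\rm cris,K}\subset \overline{B}_{\rm log}$ of \ref{lemma:BlogmodPpi}. Since all the sheaves in question have $p$ inverted and the base change is over the field $K_0$, tensoring that inclusion with $\bA_{\rm inf,\Kbar}^+[p^{-1}]$ over $K_0$ is exact and delivers (i). For (ii), I would apply $-\otimes_{A_{\rm log}}\overline{A}_{\rm log}$ to the local presentation of \ref{prop:loclstructure}. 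The key observation is that the DP generators $v_{i,n}-1=\bigl[\overline{X}_{i,n}\bigr]/\widetilde{X}_i-1$ (for $i\ge 2$) and $w_{j,n}-1=\bigl[\overline{Y}_{j,n}\bigr]/\widetilde{Y}_j-1$ do not involve the variable $Z$: they are built from Teichm\"uller lifts of $X_i$, $Y_j$ and the chosen lifts $\widetilde{X}_i$, $\widetilde{Y}_j$. Hence the formation of the log-DP polynomial envelope commutes with the surjection $A_{\rm log}\twoheadrightarrow \overline{A}_{\rm log}$, producing the asserted presentation over $\overline{\bA}_{\rm log,\Kbar,n}^\nabla$.

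For (iii), my plan is to repeat the Koszul-type argument of \ref{prop:deRhamcomplex}(i) and \ref{lemma:propbBcris}(2) on the barred sheaves, working locally through the presentation of (ii). The connection $\nabla$ is $\overline{\bA}_{\rm log,\Kbar}^\nabla$-linear and acts on the DP generators by $(v_i-1)^{[n]}\mapsto -(v_i-1)^{[n-1]}v_i\,d\log\widetilde{X}_i$ and $(w_j-1)^{[n]}\mapsto -(w_j-1)^{[n-1]}w_j\,d\log\widetilde{Y}_j$. These formulas descend unchanged through $f_\pi$, because $\omega^1_{X/\cO_K}$ is obtained from $\omega^1_{\Xtilde/\cO}$ by base change and the classes $d\log\widetilde{X}_i$, $d\log\widetilde{Y}_j$ provide a local basis in both cases. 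Locally the complex therefore decomposes as a direct sum, indexed by the multi-indices of DP monomials in $v_i-1$ and $w_j-1$, of standard acyclic Koszul-type complexes, whence exactness of the total complex.

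The main obstacle I anticipate is the filtered version in (iii): one must check that the above direct-sum decomposition is strictly compatible with the divided-power filtration, so that exactness persists on every $\Fil^r$. As in the proof of \ref{prop:deRhamcomplex}(i), this should be verified by induction on the total DP-degree, exploiting that $\nabla$ lowers the filtration by one while decreasing the DP-degree of each monomial by one -- matching Griffiths' transversality -- so the Koszul exactness preserves each filtered piece.
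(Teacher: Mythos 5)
Your proposal is correct and follows essentially the same route as the paper: (i) is deduced from \ref{lemma:BlogmodPpi} via the explicit descriptions of \S\ref{sec:Alognabla} and \S\ref{sec:defBbarlog}, (ii) is the analogue of \ref{prop:loclstructure} obtained by base change along $A_{\rm log}\to\overline{A}_{\rm log}$, and (iii) is read off from the presentation in (ii) by the same formal Koszul computation as in \ref{prop:deRhamcomplex}. The paper's own proof is just a three-line citation of these facts, so your write-up supplies exactly the details it leaves to the reader.
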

\begin{proof} (i) It follows from \ref{lemma:BlogmodPpi}.

(ii) It is the analogue of \ref{prop:loclstructure}. The details are left to the reader.

(iii) The fact that we have sequences follows from \ref{prop:deRhamcomplex}(i) and  \ref{lemma:propbBcris}(2).
The exactness follows from (ii).

\end{proof}

\subsubsection{The monodromy
diagram}\label{section:monodromydiagram}

Consider the exact sequence $0 \lra \cO_\Xtilde \frac{dZ}{Z} \lra \omega^1_{\Xtilde/\WW(k)} \lra \omega^1_{\Xtilde/\cO} \lra 0$. It induces for every $i\geq 1 $ an
exact sequence $0 \lra \omega^{i-1}_{\Xtilde/\cO}\wedge \frac{dZ}{Z} \lra \omega^i_{\Xtilde/\WW(k)} \lra \omega^i_{\Xtilde/\cO} \lra 0$. This implies that the
following sequence of complexes is exact for every $n\in \Z\cup \{-\infty\}$:
$$0 \lra \Fil^{n-1} \bB_{\rm log,L}
\otimes_{\cO_\Xtilde} \omega^{\bullet-1}_{\Xtilde/\cO}\wedge
\frac{dZ}{Z}   \lra \Fil^n \bB_{\rm log,L} \otimes_{\cO_\Xtilde}
\omega^{\bullet}_{\Xtilde/\WW(k)}  \lra \Fil^n \bB_{\rm log,L}
\otimes_{\cO_\Xtilde} \omega^{\bullet}_{\Xtilde/\cO} \lra 0,
$$where $\omega^{i}_{\Xtilde/\cO}$ and
$\omega^{i}_{\Xtilde/\WW(k)}$ are set to be $0$ for $i<0$ and we
define $\Fil^n \bB_{\rm log,L}=\bB_{\rm log,L}$ for $n=-\infty$.
Taking the homology and using lemma \ref{lemma:propbBcris} we get the
exact sequence (of complexes)
$$0  \lra \Fil^{n-1} \bB_{\rm log,L}^\nabla
\frac{dZ}{Z}[-1]\lra \Fil^n \bB_{\rm cris,L}^\nabla  \lra \Fil^n
\bB_{\rm log,L}^\nabla  \lra 0.$$We let $N\colon \Fil^n \bB_{\rm
log,L}^\nabla\lra \Fil^n \bB_{\rm log,L}^\nabla$ be the morphism
defined by $d = N \frac{dZ}{Z}$. Then,\smallskip

(i) we have $N \circ \varphi=p \varphi\circ N$ on $\bB_{\rm
log,L}^\nabla$;\smallskip

(ii)  $N$ is surjective on $\Fil^n \bB_{\rm log,\Kbar}^\nabla$
with kernel $\Fil^n \bB_{\rm cris,\Kbar}^\nabla$. \smallskip

Indeed, it follows from \ref{prop:deRhamcomplex} that $\bA_{\rm
cris,\Kbar}^{\nabla} $ is the kernel of the monodromy operator on
$\bA_{\rm log,\Kbar}^\nabla$. We deduce that $\bB_{\rm
cris,\Kbar}^{\nabla} $ is the kernel of the monodromy operator
$N\colon \bB_{\rm log,\Kbar}^\nabla\lra \bB_{\rm
log,\Kbar}^\nabla$. Moreover, the monodromy operator $N$ is
surjective on $\Fil^\bullet \bA_{\rm log,\Kbar}^{\nabla}$ and,
hence, on $\Fil^\bullet \bB_{\rm log,\Kbar}^{\nabla}$ by the
explicit description of $\bA_{\rm log,\Kbar}^{\nabla}$ given in
\S\ref{sec:Alognabla}. By loc.~cit.~the inclusion $\bA_{\rm
cris,\Kbar}^{\nabla} \subset \bA_{\rm log,\Kbar}^\nabla$ is strict
so that the kernel of $N$ on $\Fil^n \bB_{\rm log,\Kbar}^\nabla$
is $\Fil^n \bB_{\rm cris,\Kbar}^\nabla$ as claimed.

\subsubsection{The fundamental exact diagram}\label{section:fundamentaldiagram}
Let us assume that we are in the formal case. The following commutative
diagram called the {\it crystalline fundamental diagram of
sheaves} has exact rows:

$$
\begin{array}{ccccccccccc} 0& \lra & \Q_p & \lra &\Fil^0
\bB_{\rm cris,\Kbar}^{\nabla} &\stackrel{1-\varphi}{\lra}&
\bB_{\rm
cris,\Kbar}^{\nabla}&\lra& 0\\
&&\cap&&\cap&&\Vert \\
0&\lra&(\bB_{\rm cris,\Kbar}^{\nabla})^{\varphi=1}&\lra&\bB_{\rm
cris,\Kbar}^{\nabla}&\stackrel{1-\varphi}{\lra}&\bB_{\rm
cris,\Kbar}^{\nabla}& \lra & 0.\cr
\end{array}$$We refer to
\cite[\S 2.9]{andreatta_iovita_comparison} for the proof.

We now consider the following diagram called the {\it fundamental
diagram of sheaves}:

\begin{equation}\label{display:fundamentalexactdiagram}
\begin{array}{ccccccccccc} 0 \lra & \Q_p & \lra & \Fil^0 \bB_{\rm
cris,\Kbar}^\nabla & \stackrel{(\varphi-1,N)}{\lra} &  \bB_{\rm
log,\Kbar}^\nabla \oplus  \bB_{\rm cris,\Kbar}^\nabla  &
\stackrel{(N,1-p\varphi)}{\lra} &\bB_{\rm log,\Kbar}^\nabla& \lra 0\\
&\cap&&\cap&&\cap &&\Vert\\
0 \lra & \bB_{\rm cris,\Kbar}^{\nabla,\varphi=1} & \lra & \bB_{\rm log,\Kbar}^\nabla &
\stackrel{(\varphi-1,N)}{\lra} & \bB_{\rm log,\Kbar}^\nabla\oplus \bB_{\rm
log,\Kbar}^\nabla  & \stackrel{(N,1-p\varphi)}{\lra} & \bB_{\rm log,\Kbar}^\nabla& \lra 0.\cr$$
\end{array}\end{equation}

\begin{lemma} Both rows in the fundamental diagram are exact sequences. \end{lemma}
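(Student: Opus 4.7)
My plan is to verify that both rows are complexes and then check exactness position by position, relying on the monodromy diagram of \S\ref{section:monodromydiagram}  (surjectivity of $N$ on $\bB_{\rm log,\Kbar}^\nabla$ with kernel $\bB_{\rm cris,\Kbar}^\nabla$, and $N\varphi=p\varphi N$) together with the crystalline fundamental diagram of \S\ref{section:fundamentaldiagram} (surjectivity of $\varphi-1$ on $\bB_{\rm cris,\Kbar}^\nabla$, and the sequence $0\to \Q_p\to \Fil^0\bB_{\rm cris,\Kbar}^\nabla \stackrel{\varphi-1}{\to}\bB_{\rm cris,\Kbar}^\nabla \to 0$). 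The identity $(N,1-p\varphi)\circ(\varphi-1,N)(x) = N\varphi(x)-N(x)+N(x)-p\varphi N(x) = 0$ shows both rows are complexes.

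For the bottom row, I would proceed as follows. Exactness at $\bB_{\rm log,\Kbar}^\nabla$ is immediate: the condition $(\varphi-1)x=0$ and $N(x)=0$ forces $x\in\bB_{\rm cris,\Kbar}^{\nabla,\varphi=1}$. For middle exactness, given $(a,b)$ with $N(a)+(1-p\varphi)(b)=0$, first choose $x\in \bB_{\rm log,\Kbar}^\nabla$ with $N(x)=b$ (surjectivity of $N$). A direct computation gives $N(a-(\varphi-1)(x)) = N(a)-p\varphi(b)+b = N(a)+(1-p\varphi)(b)=0$, so $c:=a-(\varphi-1)(x)\in \bB_{\rm cris,\Kbar}^\nabla$. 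By the second row of the crystalline fundamental diagram, pick $y\in \bB_{\rm cris,\Kbar}^\nabla$ with $(\varphi-1)(y)=c$; then $x+y$ maps to $(a,b)$. Surjectivity at the rightmost $\bB_{\rm log,\Kbar}^\nabla$ is the surjectivity of $N$ (take $b=0$).

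For the top row I would view it as a subcomplex of the bottom via the inclusions $\Fil^0\bB_{\rm cris,\Kbar}^\nabla\hookrightarrow \bB_{\rm log,\Kbar}^\nabla$ and $\bB_{\rm log,\Kbar}^\nabla\oplus\bB_{\rm cris,\Kbar}^\nabla \hookrightarrow \bB_{\rm log,\Kbar}^\nabla\oplus \bB_{\rm log,\Kbar}^\nabla$, and analyze the associated short exact sequence of complexes $0\to C^\bullet_{\rm top}\to C^\bullet_{\rm bot}\to Q^\bullet\to 0$ with its long exact sequence in cohomology. Exactness at $\Fil^0\bB_{\rm cris,\Kbar}^\nabla$ is immediate from the top row of the crystalline fundamental diagram, identifying $\Q_p$ with $\Fil^0\bB_{\rm cris,\Kbar}^{\nabla,\varphi=1}$. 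Surjectivity at the rightmost entry again follows from the surjectivity of $N$ (taking $b=0$ and $a$ with $N(a)=c$).

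The main obstacle, and the subtlest point, is middle exactness of the top row: given $(a,b)\in \bB_{\rm log,\Kbar}^\nabla\oplus\bB_{\rm cris,\Kbar}^\nabla$ with $N(a)+(1-p\varphi)(b)=0$, one must produce $x\in\Fil^0\bB_{\rm cris,\Kbar}^\nabla$ whose image is $(a,b)$. My approach will be to first apply the bottom row to lift $(a,b)$ to some $x\in \bB_{\rm log,\Kbar}^\nabla$, then exploit the constraint $N(x)=b\in \bB_{\rm cris,\Kbar}^\nabla=\ker N$ (so $x$ lies in $N^{-1}(\bB_{\rm cris,\Kbar}^\nabla)$) and the surjectivity of $\varphi-1$ on the $\Fil^0$-piece of the crystalline sheaf to modify $x$ into a section of $\Fil^0\bB_{\rm cris,\Kbar}^\nabla$, using the explicit description of $\bA_{\rm log}^\nabla$ as a DP-envelope over $\bA_{\rm cris}^\nabla$ from \S\ref{sec:Alognabla} together with Griffiths' transversality \ref{prop:deRhamcomplex}(iii) to track the filtration. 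The key technical input will be to show that the obstruction to such a modification is killed by passing through the crystalline fundamental diagram restricted to appropriate filtered pieces; this is the step that requires the hypothesis $b\in \bB_{\rm cris,\Kbar}^\nabla$ (not merely $b\in \bB_{\rm log,\Kbar}^\nabla$) in the middle term of the top row.
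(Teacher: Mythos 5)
Your treatment of the second row is correct and is precisely the paper's argument: using $N\circ\varphi=p\varphi\circ N$ and the surjectivity of $N$ on $\bB_{\rm log,\Kbar}^\nabla$, you reduce middle exactness to the surjectivity of $\varphi-1$ on $\ker N=\bB_{\rm cris,\Kbar}^\nabla$, which is the middle exactness of the crystalline fundamental diagram. Your checks of exactness at the ends of both rows also agree with what the paper does.

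The genuine gap is the middle exactness of the first row, which you rightly identify as the delicate point but then only outline; and the outline cannot be completed for the diagram as displayed. Since $\bB_{\rm cris,\Kbar}^\nabla=\ker N$ (\S\ref{section:monodromydiagram}), every element in the image of $(\varphi-1,N)$ restricted to $\Fil^0\bB_{\rm cris,\Kbar}^\nabla$ has vanishing second component, whereas the kernel of $(N,1-p\varphi)$ on $\bB_{\rm log,\Kbar}^\nabla\oplus\bB_{\rm cris,\Kbar}^\nabla$ contains pairs $(a,b)$ with arbitrary $b\in\bB_{\rm cris,\Kbar}^\nabla$: one solves $N(a)=(p\varphi-1)(b)$ using the surjectivity of $N$. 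So no modification of a lift produced by the bottom row can land in $\Fil^0\bB_{\rm cris,\Kbar}^\nabla$ unless $b=0$, and your closing remark --- that the hypothesis $b\in\bB_{\rm cris,\Kbar}^\nabla$ is what makes the step work --- has it exactly backwards: that hypothesis is what makes the step impossible. What your analysis should have surfaced is that the first term of the displayed top row cannot be read literally; the way the diagram is used later (the long exact sequence (\ref{longfundsequence}) places $\Fil^0\bB_{\rm log,\Kbar}^\nabla$ in this slot, mapping to the two-term complex ${\cal C}_{\rm cris}$) shows the intended first term is the part of $\Fil^0\bB_{\rm log,\Kbar}^\nabla$ on which $N$ takes values in $\bB_{\rm cris,\Kbar}^\nabla$. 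With that reading the two-step argument you used for the bottom row does adapt --- solve $N(y)=b$ inside the first term, note $a-(\varphi-1)(y)\in\ker N$, and finish with the surjectivity of $\varphi-1\colon\Fil^0\bB_{\rm cris,\Kbar}^\nabla\to\bB_{\rm cris,\Kbar}^\nabla$ from the crystalline diagram --- but one then owes the surjectivity of $N$ from that first term onto $\bB_{\rm cris,\Kbar}^\nabla$, which needs the explicit description of $\bA_{\rm log,\Kbar}^\nabla$ in \S\ref{sec:Alognabla} and is not supplied by your sketch. The paper dismisses the first row with ``proven similarly,'' so the difficulty is not of your making, but a blind proof must either carry out this step or flag the inconsistency; your proposal does neither.
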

\begin{proof}
Since $N \circ \varphi=p \varphi\circ N$, the rows define
sequences. It follows from \ref{section:monodromydiagram} that
$(\bB_{\rm cris,\Kbar}^{\nabla})^{\varphi=1}\cong \bB_{\rm
log,\Kbar}^{\nabla,N=0,\varphi=1}$, and similarly for $\Fil^0$,
and that $N$ is surjective on $\bB_{\rm log,\Kbar}^{\nabla}$. This
and the exactness in the crystalline fundamental diagram  imply
the exactness on the  left and on the right of both rows in the
fundamental diagram.

Since $N$ is surjective on $\bB_{\rm log,\Kbar}^{\nabla}$, to
prove the exactness at $ \bB_{\rm log,\Kbar}^\nabla\oplus \bB_{\rm
log,\Kbar}^\nabla$ of the second row it suffices to show that
$\varphi-1$ sends  $\bB_{\rm log,\Kbar}^{\nabla,N=0}$, which is
$\bB_{\rm cris,\Kbar}^{\nabla}$, surjectively onto $\bB_{\rm
log,\Kbar}^{\nabla,N=0}\cong \bB_{\rm cris,\Kbar}^{\nabla}$. This
follows from the exactness in the middle of the second row of the
crystalline fundamental diagram. We deduce that the second row in
the fundamental diagram is exact. The exactness of the first row is proven similarly.

\end{proof}

\subsubsection{Cohomology of $\bB_{\rm log}$ and $\overline{\bB}_{\rm log}$
}\label{section:cohBlog}  Let $\cO_{\Xtilde,\rm log}^{\rm geo}$ to be the image of $\cO_\Xtilde\widehat{\otimes}_{\cO}
B_{\rm log} \to v_{\Kbar,\ast}^{\rm cont} \bB_{\rm log,\Kbar}$ with image filtration, considering the composite of
the filtration on $B_{\rm log}$ and the $P_\pi(Z)$-adic filtration on $\cO_{\Xtilde}$. Due to
\ref{cor:injstrictinvariants}(3) it is a direct factor in $\cO_\Xtilde\widehat{\otimes}_{\cO} B_{\rm log}$.
The aim of this section is to prove the
following result. Write $\Fil^{-\infty} \bB_{\rm log,\Kbar}:=
\bB_{\rm log,\Kbar} $ and $\Fil^{-\infty} \overline{\bB}_{\rm log,\Kbar}:=
\overline{\bB}_{\rm log,\Kbar} $ with the notation of \S\ref{sec:defBbarlog}.

\begin{proposition}\label{prop:crysisacyclic} For every $r\in \Z\cup\{-\infty\}$
we have:

$$
{\rm R}^j v_{\Kbar,\ast}^{\rm cont} \left(\Fil^r \bB_{\rm log,\Kbar}
\right)=\begin{cases} 0 & \hbox{{\rm if }} j\geq 1 \cr \Fil^r \cO_{\Xtilde,\rm log}^{\rm geo} &
\hbox{{\rm if }} j=0 .\cr
\end{cases}$$Similarly

$$
{\rm R}^j v_{\Kbar,\ast}^{\rm cont} \left( \Fil^r
\overline{\bB}_{\rm log,\Kbar} \right) =\begin{cases} 0 & \hbox{{\rm if }} j\geq 1 \cr
\Fil^r\left(\cO_{\Xtilde,\rm log}^{\rm geo}\otimes_{B_{\rm log}}  \overline{B}_{\rm log}\right)&
\hbox{{\rm if }} j=0 .\cr
\end{cases}$$
\end{proposition}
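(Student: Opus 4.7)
The plan is to mimic the strategy of \cite[\S 2.10]{andreatta_iovita_comparison}, reducing to the computation of continuous Galois cohomology via Proposition~\ref{prop:fiisisoo} and then evaluating it explicitly on the localizations computed in \ref{lemma:propbBcris}(4). Since $\bB_{\rm log,\Kbar}$ is the inductive system of the continuous sheaves $\bA_{\rm log,\Kbar}(r)$, and since $v_{\Kbar,\ast}^{\rm cont}$ on ind-objects is defined by direct limits, it suffices to treat the filtration pieces $\Fil^r \bA_{\rm log,\Kbar}$ and pass to the limit. Compatibility of the image with $\cO_{\Xtilde,{\rm log}}^{\rm geo}$ will then be essentially built into the definition of the latter, so the content is the vanishing statement in degrees~$\geq 1$.

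First I would verify the four almost-vanishing hypotheses of \ref{prop:fiisisoo} for the inverse system $\{\Fil^r \bA_{\rm log,\Kbar,n}\}_n$. Conditions (1) and (3) on the cokernels of the transition maps reduce, after the explicit description of \ref{prop:loclstructure}, to statements about Frobenius surjectivity and divisibility by $p^{1/p^s}$ on $\cO_{\fX_\Kbar}/p\cO_{\fX_\Kbar}$, which follow from \ref{lemma:FrobeniussurjectiveonOX/pOX}. Conditions (2) and (4), which require the almost vanishing of $\cH_{U_\Kbar}$-cohomology and of \v{C}ech cohomology for $Z\to U$, reduce, via the localization results of \S\ref{sec:Alognabla} and \S\ref{sec:Alog}, to almost-purity statements for the relative period rings $A_{\rm log}^\nabla(\widetilde{R}_U)$. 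These are exactly the semistable analogues of \cite{andreatta_brinon} which are the local input developed in chapter~3 (in particular \ref{prop:AE}). Taking the direct limit over the divided power filtration and inverting $t$ then identifies $R^j v_{\Kbar,\ast}^{\rm cont}(\Fil^r \bA_{\rm log,\Kbar}(s))$, up to almost isomorphism and for each $s$, with the sheafification of $U \mapsto H^j\bigl(\Gamma_{U_\Kbar}, \Fil^{r+s}{\rm A}_{\rm log}(\widetilde{R}_U)\cdot t^{-s}\bigr)$.

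Next I would compute this Galois cohomology explicitly. Using the chart of \ref{prop:loclstructure}, the pro-$p$ part of $\Gamma_{U_\Kbar}$ is topologically generated by a Kummer tower in $\pi, \widetilde{X}_1,\dots,\widetilde{X}_a,\widetilde{Y}_1,\dots,\widetilde{Y}_b$, acting on $B_{\rm log}(\widetilde{R}_U)$ by multiplying the variables $u, v_i, w_j$ by appropriate roots of unity. A standard Koszul computation, exactly as in \cite[\S 2.10]{andreatta_iovita_comparison}, shows that after passing to the limit in $r$ (i.e.~after inverting $t$) the higher cohomology vanishes and the invariants are precisely the image of $\cO_\Xtilde\widehat{\otimes}_\cO B_{\rm log}$, with the induced filtration, whence $\Fil^r\cO_{\Xtilde,{\rm log}}^{\rm geo}$ by definition. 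This settles the statement for $\bB_{\rm log,\Kbar}$.

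Finally, for the barred version I would use the identification $\overline{\bB}_{\rm log,\Kbar}=\bB_{\rm log,\Kbar}\otimes_{B_{\rm log}}\overline{B}_{\rm log}$ of \S\ref{sec:defBbarlog}. Since $\overline{B}_{\rm log}$ is the quotient of $B_{\rm log}$ by $P_\pi(Z)$, and since the four hypotheses of \ref{prop:fiisisoo} are stable under such quotients (the explicit local description in \ref{prop:deRhamcomplexBbarlog}(ii) is identical to that of \ref{prop:loclstructure} after base change), the same argument applies verbatim and gives the image filtration $\Fil^r(\cO_{\Xtilde,\rm log}^{\rm geo}\otimes_{B_{\rm log}}\overline{B}_{\rm log})$. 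The main obstacle I anticipate is carrying out the verification of hypothesis~(4) of \ref{prop:fiisisoo} for the logarithmic sheaves: this is where the extra monodromy variables $v_i-1, w_j-1$ and the non-trivial log points produce genuine complications beyond the smooth case, and it is here that the careful local theory of chapter~3 (in particular the almost-purity theorem \ref{prop:AE} in the relative semistable case) is indispensable.
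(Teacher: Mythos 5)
Your overall strategy matches the paper's in outline: reduce to continuous Galois cohomology via \ref{prop:fiisisoo} (the verification of its hypotheses is the content of \ref{lemma:fiisisoo}, which as you anticipate reduces to $\widehat{\cO}_{\fX_\Kbar}$ and the $\WW_{s,\Kbar}$ using \ref{prop:loclstructure}), then feed in the local acyclicity of the relative log period rings from Chapter~3 (\ref{thm:geometricacyclicity}) and identify the invariants with $\cO_{\Xtilde,\rm log}^{\rm geo}$ via \ref{cor:injstrictinvariants}(3). This handles $r=-\infty$ and the $j=0$ statement. But there is a genuine gap in your treatment of the finite filtration steps, and it is the real content of the proposition, not hypothesis (4) of \ref{prop:fiisisoo} as you suggest at the end.

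The gap is this: for fixed finite $r$, the ind-object $\Fil^r \bB_{\rm log,\Kbar}$ is the colimit of $\Fil^r\bA_{\rm log,\Kbar}(m)=t^{-m}\Fil^{r+m}\bA_{\rm log,\Kbar}$ with transition maps given by multiplication by $t$, which \emph{shift the filtration degree}. Knowing that ${\rm R}^j v_{\Kbar,\ast}^{\rm cont}\bigl(\Fil^{r+m}\bA_{\rm log,\Kbar}\bigr)$ is killed by $t^N$ does not make it die in this colimit, because the composite $\Fil^{r+m}\bA\xrightarrow{t^N}\Fil^{r+m+N}\bA\hookrightarrow\Fil^{r+m}\bA$ being zero on ${\rm R}^j$ only places the image of the transition map inside the kernel of ${\rm R}^j v^{\rm cont}_{\Kbar,\ast}(\Fil^{r+m+N}\bA)\to{\rm R}^j v^{\rm cont}_{\Kbar,\ast}(\Fil^{r+m}\bA)$; one must still show that this kernel dies. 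Your phrase ``after passing to the limit in $r$ (i.e.\ after inverting $t$)'' conflates the colimit over the Tate twist $m$ (which is what defines $\Fil^r\bB_{\rm log,\Kbar}$ for fixed $r$) with genuinely inverting $t$ on a fixed filtration step; the latter is only available for $r=-\infty$. The paper closes this gap by an induction on $N$ reducing to the claim that the cokernel of ${\rm H}^i_{\rm Gal}\bigl(\Fil^r\bA_{\rm log,\Kbar}(m)\bigr)\to{\rm H}^i_{\rm Gal}\bigl({\rm Gr}^r\bA_{\rm log,\Kbar}(m)\bigr)$ is annihilated by a power of $p$ (Lemma \ref{lemma:invariantsFilBlog}, proved by exhibiting explicit $\widetilde{\Gamma}_R$-stable lifts of the graded generators using $t^{[n_0]}\log(u)^{[n_1]}\cdots$), combined with the invertibility of $p$ on $\Fil^r\bB_{\rm log,\Kbar}$ from \ref{lemma:propbBcris}(1). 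Without this step, or some substitute for it, your argument only proves the proposition for $r=-\infty$.
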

\begin{proof} We prove the first statement.
Lemma \ref{lemma:fiisisoo}, \S\ref{sec:Alog} and \ref{thm:geometricacyclicity} imply that $ {\rm R}^j v_{\Kbar,\ast}^{\rm cont}
\left(\bA_{\rm log,\Kbar}(m) \right)$
is annihilated by a power of $t$  if  $j\geq 1$. From loc.~cit.~and \ref{cor:injstrictinvariants}(3) we also get the statement
for $r=-\infty$ and for ${\rm R}^0 v_{\Kbar,\ast}^{\rm cont} \left(\Fil^r
\bB_{\rm log,\Kbar} \right)$. For the statement concerning the vanishing of ${\rm R}^j v_{\Kbar,\ast}^{\rm cont}
\left(\Fil^r \bB_{\rm log,\Kbar} \right)$ for
$j\geq 1$ we argue as in \S\ref{section:invariantsFilBlog}.  As $t$ annihilates ${\rm Gr}^r \bA_{\rm log,\Kbar}(m)$, we conclude
that $ {\rm R}^j v_{\Kbar,\ast}^{\rm cont} \Fil^r \left(\bA_{\rm log,\Kbar}(m) \right)$ is annihilated by a power $t^N$ of $t$
depending on $m$ and $r$. Hence, the image of $ {\rm R}^j v_{\Kbar,\ast}^{\rm cont} \Fil^r \left(\bA_{\rm log,\Kbar}(m) \right)$
in $ {\rm R}^j v_{\Kbar,\ast}^{\rm cont} \Fil^{r-N} \left(\bA_{\rm log,\Kbar}(m+N) \right)$ is $0$. We are then left to prove that the
kernel of the map ${\rm R}^j v_{\Kbar,\ast}^{\rm cont} \Fil^{r} \left(\bA_{\rm log,\Kbar}(m+N)
\right)\to {\rm R}^j v_{\Kbar,\ast}^{\rm cont} \Fil^{r-N} \left(\bA_{\rm log,\Kbar}(m+N) \right)$ is annihilated by a power of $p$.
Proceeding by induction on $N$, it suffices to show that the cokernel of   ${\rm R}^{j-1} v_{\Kbar,\ast}^{\rm cont} \Fil^{r}
\left(\bA_{\rm log,\Kbar}(m) \right)\to {\rm R}^{j-1}
v_{\Kbar,\ast}^{\rm cont} {\rm Gr}^r \left(\bA_{\rm log,\Kbar}(m) \right)$ is annihilated by a power of $p$.
The latter cohomology group can be computed using \ref{lemma:fiisisoo} which implies that ${\rm R}^i v_{\Kbar,\ast}^{\rm cont}
{\rm Gr}^r \left(\bA_{\rm log,\Kbar}(m) \right)\cong {\rm H}_{\rm Gal}^i\bigl({\rm
Gr}^{r} \bA_{\rm log,\Kbar}(m)\bigr)$. It is sufficient to prove that for every $i\in\N$ the cokernel of the composite map
$${\rm H}_{\rm Gal}^i\bigl(\Fil^{r} \bA_{\rm log,\Kbar}(m)\bigr) \lra {\rm
R}^{i} v_{\Kbar,\ast}^{\rm cont} \Fil^{r} \left(\bA_{\rm log,\Kbar}(m) \right)\lra {\rm H}_{\rm Gal}^i\bigl({\rm Gr}^{r}
\bA_{\rm log,\Kbar}(m)\bigr) $$is
annihilated by a power of $p$. This is proved in \ref{lemma:invariantsFilBlog}.

The second statement is proved similarly.
\end{proof}

In the proof of proposition \ref{prop:crysisacyclic} we used the following lemma:

\begin{lemma}\label{lemma:fiisisoo} The assumptions in \ref{prop:fiisisoo} hold for the
sheaves (a) $\widehat{\cO}_{\fX_\Kbar}$;\enspace (b) $\bA_{\rm
log,\Kbar}(m)$ for every $m\in\Z$; \enspace (c) $\cF={\rm
Gr}^r\bA_{\rm log,\Kbar}(m)$ for every $m$ and $r\in \Z$; (d) $\overline{\bA}_{\rm
log,\Kbar}(m)$ for every $m\in\Z$; \enspace (e) $\cF={\rm
Gr}^r \overline{\bA}_{\rm log,\Kbar}(m)$ for every $m$ and $r\in \Z$.
\end{lemma}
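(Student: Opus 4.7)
The plan is to verify, for each of the five sheaves (a)--(e), the four almost vanishing conditions (1)--(4) of Proposition \ref{prop:fiisisoo}. The first move is to replace everything by its localization at a small affine $U\in X^{\ket}$, using the dictionary of \S\ref{sec:localization} together with \S\ref{sec:Alognabla} and \S\ref{sec:Alog}: namely $\widehat{\cO}_{\fX_\Kbar}(\Rbar_U)\cong \widehat{\Rbar}_U$, $\bA_{\rm log,\Kbar}(\Rbar_U)\cong A_{\rm log}(\widetilde{R}_U)$, and the analogous formulas for graded pieces and for the overlined variants via \S\ref{sec:defBbarlog}. This turns each of the four conditions into a concrete statement about the $p$-adic module in question equipped with its continuous $\cH_{U_\Kbar}$-- and $\cG_{U_\Kbar}$--actions.

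For case (a), conditions (1) and (3) are immediate since the transition maps $\widehat{\Rbar}_U/p^{n+1}\to \widehat{\Rbar}_U/p^n$ and the corresponding ones on $R_{U,\infty,\cO_\Kbar}$ are even surjective; condition (2) is the almost vanishing of $\mathrm{H}^q\bigl(\cH_{U_\Kbar},\widehat{\Rbar}_U/p^n\bigr)$ for $q\ge 1$, which is the semistable version of Faltings' almost purity computation carried out in chapter 3 (essentially \ref{prop:AE} and its Galois cohomology corollary). Condition (4) reduces to (2) via the standard Čech-to-derived-functor spectral sequence: using the identification $\cF_n(R_{U,\infty,\cO_\Kbar}\otimes_{R_U} R_Z)\cong \mathrm{Ind}_{\Gamma_{Z_\Kbar}}^{\Gamma_{U_\Kbar}} \cF_n(R_{Z,\infty,\cO_\Kbar})$ noted in the proof of \ref{prop:fiisisoo}, the Čech complex computes induced-module cohomology which, by Shapiro, reduces to the analogue of (2) over $Z$.

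For cases (b) and (c), I would exploit the explicit description  $\bA_{\rm log,\Kbar}^\nabla\cong \bA_{\rm inf,\Kbar}^+\widehat{\otimes}_{\WW(k)} A_{\rm log}$ and the local formula $\bA_{\rm log,\Kbar}\vert_{(U,Z_n)}\cong \bA_{\rm log,\Kbar}^\nabla\langle v_{i,n}-1, w_{j,n}-1\rangle$ of \ref{prop:loclstructure}. The graded pieces $\mathrm{Gr}^r \bA_{\rm log,\Kbar}(m)$ are, after localization, a direct sum (indexed by DP monomials in the variables $u-1,v_i-1,w_j-1$) of Tate twists of $\widehat{\Rbar}_U/p^n$; conditions (1)--(4) for case (c) therefore follow termwise from case (a). To pass from the graded pieces to all of $\bA_{\rm log,\Kbar}(m)$, I would use the filtration $\Fil^\bullet \bA_{\rm log,\Kbar}(m)$, combine case (c) with d\'evissage, and invoke the geometric acyclicity theorem \ref{thm:geometricacyclicity} in chapter 3, which precisely computes the almost Galois cohomology of the period sheaves in the semistable setting. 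The inductive limits in $t$ defining $\bA_{\rm log,\Kbar}(m)$ commute with the $\cH_{U_\Kbar}$- and $\cG_{U_\Kbar}$-cohomology up to the almost zero error.

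For (d) and (e), the ring $\overline{A}_{\rm log}$ is obtained from $A_{\rm log}$ by killing the regular element $P_\pi(Z)$, so $\overline{\bA}_{\rm log,\Kbar}(m)$ fits into short exact sequences built from $\bA_{\rm log,\Kbar}(m)$ and multiplication by $P_\pi(Z)$; the four conditions propagate from case (b) (respectively (c) for the graded pieces) via the associated long exact sequences. The main obstacle in this entire argument is the almost vanishing in condition (2) for the large sheaves $\bA_{\rm log,\Kbar}(m)$---the Galois cohomology computation that is the semistable analogue of the main theorem of \cite{andreatta_brinon}---and this is exactly what is packaged in \ref{thm:geometricacyclicity}; once that is available, the rest of the proof is a formal d\'evissage plus Shapiro-type manipulations.
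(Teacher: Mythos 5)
Your overall strategy---localize at a small affine, verify the four conditions for $\widehat{\cO}_{\fX_\Kbar}$ via almost purity, and use the explicit local structure of \ref{prop:loclstructure} (resp.\ \ref{prop:deRhamcomplexBbarlog}(ii)) for the period sheaves---is the right one, and your treatment of (a), (c) and (e) agrees in substance with the paper: by \ref{prop:BcrissubsetBdR}(3) the graded pieces are direct sums of Tate twists of $\widehat{\cO}_{\fX_\Kbar}$ with Galois-fixed basis, so those cases do follow termwise from (a).

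The gap is in your passage from (c) to (b) (and likewise from (e) to (d)). You propose a d\'evissage along $\Fil^\bullet\bA_{\rm log,\Kbar}(m)$, but this filtration does not terminate modulo $p^n$: already in $A_{\rm cris}/pA_{\rm cris}$ the elements $\gamma^s(\xi)$ (unit multiples of $\xi^{[p^s]}$) are nonzero and lie in $\Fil^{p^s}$ for every $s$, so $\bA_{\rm log,\Kbar,n}$ is not a finite successive extension of its graded pieces, and the almost-zero annihilators required in conditions (1)--(4) would a priori degrade along the infinite filtration. Moreover, conditions (1) and (3) are statements about the transition maps of the inverse system $\{\cF_n\}_n$, which for $\bA_{\rm log,\Kbar}$ are built from the Witt-vector transition maps (projection composed with Frobenius) on $\bA_{\rm inf,\Kbar}^+$ and are simply not visible on the graded pieces. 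The paper's reduction, following \cite[Prop.~3.18]{andreatta_iovita_comparison}, avoids both problems: combining \ref{prop:loclstructure} with the explicit presentation in \ref{display:Alognabla}, the whole sheaf $\bA_{\rm log,\Kbar,n}$ (not merely its graded pieces) is exhibited as a topological direct sum of copies of the Witt sheaves $\WW_{s,\Kbar}$ over a Galois-fixed basis of DP monomials, so everything reduces to verifying (1)--(4) for $\widehat{\cO}_{\fX_\Kbar}$ \emph{and} for $\WW_{s,\Kbar}$ (the latter handled by the $V$-adic filtration on Witt vectors, as in \cite[Thm 6.16(A)\&(B)]{andreatta_iovita}); your proposal never addresses the Witt-vector layer. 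Finally, invoking \ref{thm:geometricacyclicity} for condition (2) is misplaced: that theorem computes $G_R$-cohomology, whereas condition (2) concerns $\cH_{U_\Kbar}$-cohomology, for which the correct ingredient is the almost-purity statement \ref{prop:AE} as packaged in \ref{prop:HiHvanishes}; leaning on the full theorem here also inverts the intended logical order, since \ref{lemma:fiisisoo} is an input to \ref{prop:crysisacyclic}, which is where \ref{thm:geometricacyclicity} is actually brought to bear.
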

\begin{proof} See \S\ref{sec:defBbarlog} for the definition of $\overline{\bA}_{\rm
log,\Kbar}(m)$.  Proceeding as in  \cite[Prop.~3.18]{andreatta_iovita_comparison} one reduces to the proof for
$\widehat{\cO}_{\fX_\Kbar}$ and the sheaves $\WW_{s,\Kbar}$, $s\in\N$, using \ref{prop:loclstructure} in cases
(b) and (c) and using \ref{prop:deRhamcomplexBbarlog}(ii) for cases (d) and (e). For  $\widehat{\cO}_{\fX_\Kbar}$ and  $\WW_{s,\Kbar}$
the proof follows arguing as in \cite[Thm 6.16(A)\&(B)]{andreatta_iovita}.

\end{proof}

Let $U\in X^{\ket}$ be a small object and let $\varphi$ be a Frobenius
on $\bB_{\rm log,K}\vert_{(U,U_K)}$.

\begin{lemma}\label{lemma:invariantsuptoFrobenius} There is a power $s$ of the
Frobenius morphism on $v_{K,\ast}^{\rm cont} \left(\bigl(\bB_{\rm log,K}\bigr)\vert_{(U,U_K)}\right)$, depending on the prime $p$,
which factors via the natural
inclusion $ \widehat{\cO}_\Xtilde^{\rm DP}\bigl[p^{-1}\bigr]\vert_{U}\subset v_{K,\ast}^{\rm cont}
\left(\bigl(\bB_{\rm log,K}\bigr)\vert_{(U,U_K)}\right)$. In fact
$s=1$ for $p\geq 3$ and $s=2$ for $p=2$.
\end{lemma}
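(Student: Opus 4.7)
The strategy is to reduce to a local computation on a small affine $U\in X^{\ket}$, exploit the explicit DP-presentation of $\bA_{\rm log,K}\vert_{(U,U_K)}$ from Proposition \ref{prop:loclstructure}, and then exhibit the factorisation through $\widehat{\cO}_\Xtilde^{\rm DP}[p^{-1}]$ by means of explicit $p$-adic divisibility estimates on the action of Frobenius on divided-power generators.

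Concretely I would fix a formal chart of $\widetilde{U}_{\rm form}=\Spf(\widetilde{R})$ with a Frobenius lift $F_{\widetilde{U}}$, producing the local Frobenius $\varphi_U$ on $\bA_{\rm log,K}\vert_{(U,U_K)}$. By Proposition \ref{prop:loclstructure} this sheaf is, up to $p$-adic completion, the DP-polynomial ring over $\bA_{\rm log,K}^\nabla\vert_{(U,U_K)}$ in the variables $v_i-1$ ($i=2,\ldots,a$) and $w_j-1$ ($j=1,\ldots,b$). The desired inclusion $\widehat{\cO}_\Xtilde^{\rm DP}[p^{-1}]\vert_U\hookrightarrow v_{K,\ast}^{\rm cont}\bigl(\bB_{\rm log,K}\bigr)\vert_{(U,U_K)}$ is the one induced by $\cO_\Xtilde\to \bA_{\rm log,K}\to \bB_{\rm log,K}$ followed by DP-functoriality, pushforward and inversion of $p$. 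Any section $x$ of $v_{K,\ast}^{\rm cont}(\bB_{\rm log,K})\vert_{(U,U_K)}$ comes from some $\bA_{\rm log,K}(r)$, and I would expand it as a DP-series in the $(v_i-1)^{[n_i]}(w_j-1)^{[m_j]}$ with coefficients in $v_{K,\ast}^{\rm cont}\bigl(\bA_{\rm log,K}^\nabla(r)\bigr)\vert_{(U,U_K)}$.

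The heart of the argument is the following divisibility estimate. From $\varphi([\bar{X}_{i,n}])=[\bar{X}_{i,n}]^p$ and $\varphi(\widetilde{X}_i)=\widetilde{X}_i^p+p\alpha_i$ for some $\alpha_i\in \widetilde{R}$ one checks that $\varphi(v_i)=v_i^p(1+p\beta_i)$ for some $\beta_i$, so that $\varphi(v_i-1)\in p\bA_{\rm log,K}+\Fil^p\bA_{\rm log,K}$ and
$$
\varphi\bigl((v_i-1)^{[n]}\bigr)=\frac{\varphi(v_i-1)^n}{n!}\in p^{\,n-v_p(n!)}\,\bA_{\rm log,K}+(\text{higher DP order in }v_i-1),
$$
and similarly for $w_j-1$ and for the canonical generators $\xi$ and $[\bar{\pi}]-Z$ of $\Fil^1\bA_{\rm log,K}^\nabla$. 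Iterating Frobenius via the formula $\varphi_r^s=p^{-(1+p+\cdots+p^{s-1})r}\varphi^s$ on $\bA_{\rm log,K}(r)$ from \S\ref{sec:BlogBlognabla}, the DP-order $n$ contribution to $\varphi_r^s(x)$ acquires a $p$-adic factor whose valuation is at least $s\cdot n(p-2)/(p-1)$ minus the denominator $(1+p+\cdots+p^{s-1})r$. For $p\geq 3$ this slope $n(p-2)/(p-1)\geq n/2$ already absorbs the Tate-twist denominator for $s=1$; for $p=2$ the estimate degenerates, and one needs $s=2$, the gain being extracted from the fact that $\varphi(v_i-1)$ is already a sum of an element of $p\bA_{\rm log,K}$ and an element of $\Fil^p\bA_{\rm log,K}$ on which a second Frobenius is genuinely $2$-divisible. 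All positive-DP-order terms therefore vanish inside $\widehat{\cO}_\Xtilde^{\rm DP}[p^{-1}]$, and the constant-order piece lies in $\widehat{\cO}_\Xtilde^{\rm DP}[p^{-1}]$ by direct comparison using Proposition \ref{prop:loclstructure}.

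The main obstacle will be the tight $p$-adic accounting at $p=2$. The naive bound $v_p(p^n/n!)\geq n(p-2)/(p-1)$ becomes vacuous there, and a single Frobenius does not gain enough divisibility to cancel the factor $p^{-r}$ coming from the Tate twist of $\bA(r)$. One must therefore carefully analyse the two-fold composition $\varphi^2$, exploiting that the output of the first Frobenius already lies in a deeper level of the DP-filtration, so that its subsequent Frobenius image is genuinely $2$-divisible in the required sense.
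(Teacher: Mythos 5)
Your approach misses the essential input. The paper deduces this lemma in one line from \ref{lemma:propbBcris}(4), which identifies the localization of $\bB_{\rm log,K}$ at a small $U$ with ${\rm B}^{\rm cris}_{\rm log}(\widetilde{R}_U)$ so that $v_{K,\ast}^{\rm cont}$ becomes the functor of $\cG_{U_K}$-invariants, and from \ref{prop:arithemticinvariantsB}, which asserts $\varphi^s\bigl({\rm B}^{\rm cris}_{\rm log}(\widetilde{R})^{\cG_R}\bigr)\subset \widetilde{R}_{\rm cris}\bigl[p^{-1}\bigr]$. The point is that the conclusion is a statement about \emph{Galois invariants}: the target $\widehat{\cO}_\Xtilde^{\rm DP}\bigl[p^{-1}\bigr]\vert_U=\widetilde{R}_{\rm cris}\bigl[p^{-1}\bigr]$ is a tiny subring of $\bB_{\rm log,K}(\Rbar_U)$, and since $p$ is already inverted in the target, divisibility estimates such as $v_p(p^n/n!)\geq n(p-2)/(p-1)$ buy nothing towards membership in it; no amount of $p$-divisibility places a general element of ${\rm A}_{\rm log}^{\rm cris,\nabla}(R)$ inside $\widetilde{R}_{\rm cris}\bigl[p^{-1}\bigr]$. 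Your ``constant-order piece'', a section of $v_{K,\ast}^{\rm cont}\bigl(\bA_{\rm log,K}^\nabla(r)\bigr)$, is exactly where the whole difficulty sits, and disposing of it ``by direct comparison using Proposition \ref{prop:loclstructure}'' is not an argument. There is also a concrete error in your setup: the DP coefficients of a Galois-invariant section expanded in the $(v_i-1)^{[n_i]}(w_j-1)^{[m_j]}$ are \emph{not} themselves Galois-invariant, because $\cG_R$ acts nontrivially on the $v_i$ and $w_j$ (e.g.\ $\gamma_i(v_i)=[\varepsilon]\,v_i$), so they do not lie in $v_{K,\ast}^{\rm cont}\bigl(\bA_{\rm log,K}^\nabla(r)\bigr)$. (Also note $\varphi(v_i)=v_i^p$ exactly, since $F_U(\widetilde{X}_i)=\widetilde{X}_i^p$ by the choice of chart.)

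The actual mechanism (\S\ref{section:arithminvariants}) is this: if $x\,t^{-r}$ is $\cG_R$-invariant then so is $\varphi^m(x\,t^{-r})$ for every $m$, hence its image in ${\rm B}_{\rm dR}(\widetilde{R})$ lies in ${\rm B}_{\rm dR}(\widetilde{R})^{\cG_R}=\widehat{\widetilde{R}[p^{-1}]}$ by \ref{prop:BdRff}; since $\varphi^m(t^r)=p^{rm}t^r$ this forces $\varphi^m(x)\in \Fil^r{\rm B}_{\rm dR}^+(\widetilde{R})$ for \emph{all} $m$, and an induction on the DP expansion (using ${\rm B}_{\rm dR}^+(\widetilde{R})\cong{\rm B}_{\rm dR}^{+,\nabla}(R)[\![v_1-1,\ldots,w_b-1]\!]$) reduces to Tsuji's lemma that $\varphi^s$ maps the set $I^{[r]}$ of elements all of whose Frobenius iterates lie in $\Fil^r{\rm B}_{\rm dR}^{+,\nabla}$ into $t^r{\rm A}_{\rm log}^{\rm cris,\nabla}(R)$, with $s=1$ for $p\geq 3$ and $s=2$ for $p=2$. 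The combination of invariance of all Frobenius iterates with the de Rham invariants computation is the missing idea; it cannot be replaced by local valuation estimates on divided-power generators.
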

\begin{proof} This follows from \ref{lemma:propbBcris} and
\ref{prop:arithemticinvariantsB}.

\end{proof}

\subsection{Semistable sheaves and their
cohomology}\label{sec:sssheaves}

As before we fix an extension $K\subset L \subset
\Kbar$.\smallskip

{\em $\Q_p$-adic \'etale  sheaves.} By a $p$--adic sheaf~$\cL$ on~$\fX_L^{\et}$ we mean a continuous system $\{\cL_n\}\in \Sh(\fX_L)^\N$ such that~$\cL_n$ is a locally constant sheaf of $\Z/p^n\Z$--modules, free of finite rank,  and~$\cL_n=\cL_{n+1}/p^n \cL_{n+1}$ for every~$n\in\N$. It is an abelian  tensor
category. Define $\Sh(\fX_L)_{\Q_p}$ to be the full subcategory of ${\rm Ind}\left(\Sh(\fX_L^{\et})^\N\right)$ consisting of inductive systems of the form
$(\cL)_{i\in\Z}$ where $\cL$ is a $p$-adic \'etale sheaf and the transition maps $\cL\to \cL$ are given by multiplication by $p$. It inherits from the category of
$p$-adic sheaves on~$\fX_L$ the structure of an abelian tensor category.

\subsubsection{The functor $\bDcrisgeo$}\label{sec:Dcrisgeo}

Given a $\Q_p$--adic sheaf $\cL$ on~$\fX_{\Kbar}$ define
$$\bDcrisgeo(\cL):=
v_{\Kbar,\ast}\Bigl(\cL\tensor_{\Z_p} \bB_{\rm log,\Kbar} \Bigr).$$It is a sheaf of $\cO_{\Xtilde,\rm log}^{\rm geo}$-modules
in~$\Sh\bigl(X^{\ket}\bigr)$, see \S\ref{section:cohBlog} for the notation. We get a functor
$$\bDcrisgeo\colon  \Sh\bigl(\fX_\Kbar\bigr)_{\Q_p} \lra  \Mod\left(\cO_{\Xtilde,\rm log}^{\rm geo}\right).$$ Then,\smallskip

(1) $\bDcrisgeo(\cL)$ is endowed with a decreasing filtration
$\Fil^n \bDcrisgeo(\cL):=v_{L,\ast}\left(\cL\tensor_{\Z_p}
\Fil^n\bB_{\rm log,\Kbar} \right)$ for $n\in\Z$;\bigskip

(2) $\bDcrisgeo(\cL)$ is endowed with a connection
$$\nabla_{\cL,\WW(k)}\colon \bDcrisgeo(\cL) \lra
\bDcrisgeo(\cL)\otimes_{\cO_\Xtilde} \omega^1_{\Xtilde/\WW(k)}$$
defined by  $v_{\Kbar,\ast}\bigl(1\otimes \nabla^1_{\WW(k)}\bigr)$
where $\nabla^1_{\WW(k)}$ is the connection on  $\bB_{\rm
log,\Kbar}$; \smallskip

(3) for every $U$ small and a choice of Frobenius on
$\widetilde{U}_{\rm form}$ we have a Frobenius operator
$\varphi_{\cL,U}\colon \bDcrisgeo(\cL)\vert_U\lra
\bDcrisM(\cL)\vert_U$ defined as $v_{\Kbar,\ast}\left(1\otimes
\varphi_U\right)$ where $\varphi_U$ is the Frobenius on  $\bB_{\rm
log,\Kbar}\vert_{(U,U_L)}$.

\subsubsection{Geometrically semistable sheaves}\label{sec:defsemgeositablesheaf}
A $\Q_p$-adic sheaf~$\cL=\{\cL_n\}_n$ on~$\fX_\Kbar$ is called
{\it geometrically semistable} if

\begin{enumerate}

\item[i.] there exists a coherent $\cO_\Xtilde
\widehat{\otimes}_{\cO} A_{\rm log}$-submodule $D(\cL)$ of
$\bDcrisgeo\bigl(\cL\bigr)$ such that:\smallskip

(a) it is stable under the connection $\nabla_{\cL,\WW(k)}$ and $\nabla_{\cL,\WW(k)}\vert_{D(\cL)}$ is  integrable and topologically nilpotent on
$D(\cL)$;\smallskip

(b) $\bDcrisgeo\bigl(\cL\bigr)\cong D(\cL)\tensor_{A_{\rm log}}
B_{\rm log}$;\smallskip

(c) there exist integers $h$ and $n\in\N$ such that for every small affine
$U$ the map $t^h \varphi_{\cL,U}$ sends $D(\cL)\vert_U$ to $D(\cL)\vert_U$ and multiplication by
$t^n$ on $D(\cL)\vert_U$ factors via $t^h \varphi_{\cL,U}$.

\item[ii.] $\bDcrisgeo\bigl(\cL\bigr)$ is locally free of finite
rank on $X^{\ket}$ as $\cO_{\Xtilde,\rm log}^{\rm geo}$-module.

\item[iii.] the natural map $\alpha_{\rm log,\cL}\colon \bDcrisgeo\bigl(\cL\bigr)\tensor_{\bigl(\cO_{\Xtilde,\rm log}^{\rm geo}\bigr)} \bB_{\rm
log,\Kbar}\lra \cL\tensor_{\Z_p} \bB_{\rm log,\Kbar}$ is an isomorphism in the category ${\rm Ind}\left(\Sh(\fX_\Kbar)^\N \right)$.

\end{enumerate}

We let $\Sh(\fX_\Kbar)_{\rm gs}$ be the full subcategory of
$\Q_p$-adic \'etale sheaves on $\fX_\Kbar$ consisting of
geometrically semistable sheaves.

\subsubsection{The functor $\bDcrisar$}\label{sec:Dcrisar}
{\it Assume} that $X$ is a  small affine so that a Frobenius
$F_{\widetilde{X}}$ on $\widehat{\cO}_\Xtilde^{\rm DP}$ and
$\varphi$ on $\bB_{\rm log,K}$ are defined. We get a  map
$v_{K,\ast}\bigl(\bB_{\rm log,K}\bigr)\lra
\widehat{\cO}_\Xtilde^{\rm DP}[p^{-1}]$ induced by $\varphi^{s}$;
cf.~\ref{lemma:invariantsuptoFrobenius}. Given a $\Q_p$--adic
sheaf $\cL$ on~$\fX_K$ define
$$\bDcrisar(\cL):=
v_{K,\ast}\Bigl(\cL\tensor_{\Z_p} \bB_{\rm log,K} \Bigr)\otimes^{\varphi^s}_{v_{K,\ast}\bigl(\bB_{\rm log,K}\bigr)}
\widehat{\cO}_\Xtilde^{\rm DP}[p^{-1}].$$Then,
$\bDcrisar(\cL)$ is a sheaf of $\widehat{\cO}_\Xtilde^{\rm DP}[p^{-1}]$-modules
in~$\Sh\bigl(X^{\ket}\bigr)$. As in \cite[Lemma 3.3]{andreatta_iovita_comparison}
one can prove that the sheaf $\bDcrisgeo(\cL)$ is endowed with an action of\/ $G_K$ and
$$\bDcrisar(\cL)=\bigl(\bDcrisgeo(\cL)\bigr)^{G_K}\otimes^{\varphi^s}_{v_{K,\ast}\bigl(\bB_{\rm
log,K}\bigr)} \widehat{\cO}_\Xtilde^{\rm DP}[p^{-1}].$$It follows
that $\bDcrisar$  defines a functor
$$\bDcrisar\colon \Sh\bigl(\fX_K\bigr)_{\Q_p} \lra
\Mod_{\widehat{\cO}_\Xtilde^{\rm DP}}.$$Moreover,\bigskip

(1) $\bDcrisar(\cL)$ is endowed with a decreasing filtration
$\Fil^n \bDcrisar(\cL)$, for $n\in\Z$, given by the inverse image
of $v_{K,\ast}\left(\cL\tensor_{\Z_p} \Fil^n\bB_{\rm log,K}
\right)$ via the map $\bDcrisar(\cL)\lra
v_{K,\ast}\left(\cL\tensor_{\Z_p} \bB_{\rm log,K} \right)$ induced
by $\varphi^{s}$ on $\bB_{\rm log,K}$;\bigskip

(2) $\bDcrisar(\cL)$ is endowed with a connection
$$\nabla_{\cL,\WW(k)}\colon \bDcrisar(\cL) \lra
\bDcrisar(\cL)\otimes_{\cO_\Xtilde} \omega^1_{\Xtilde/\WW(k)}$$
defined by  $v_{L,\ast}\bigl(1\otimes \nabla^1_{\WW(k)}\bigr)$
where $\nabla^1_{\WW(k)}$ is the connection on  $\bB_{\rm log,L}$.
We write $$\nabla_{\cL,\cO}\colon \bDcrisar(\cL) \lra
\bDcrisar(\cL)\otimes_{\cO_\Xtilde} \omega^1_{\Xtilde/\cO}$$for the
connection induced by the connection $\nabla^1_{\cO}$ on $\bB_{\rm
log,L}$; \bigskip

(3) we have a Frobenius operator $\varphi_{\cL}\colon
\bDcrisar(\cL)\lra \bDcrisar(\cL)$ defined as
$v_{L,\ast}\left(1\otimes \varphi\right)$ where $\varphi$ is the
Frobenius on  $\bB_{\rm log,K}$. By construction it is compatible
with the Frobenius $F_{\widetilde{X}}$ on
$\widehat{\cO}_\Xtilde^{\rm DP}$.\bigskip

{\em Localization of $\Q_p$-adic \'etale  sheaves.}\enspace Let $R_X$ be the algebra underlying the affine (formal) scheme
$X$ and let $\widetilde{R}_X$ be a deformation to
$\cO$ as in \ref{remark:defexists}. The localization $\cL_n(\Rbar_X) $ is given by a free
$\Z_p/p^n\Z$-module with continuous action of $\cG_{X_K}$ which we denote
by $V_X(\cL_n)$. Write $V_X(\cL)=\ds \lim_{\infty \leftarrow n} V_X(\cL_n)$. Define
$${\rm D}_{\rm log}^{\rm geo, cris}
\bigl(V_X(\cL)\bigr):=\left(V_X(\cL)\otimes_{\Z_p} {\rm B}^{\rm
cris}_{\rm log}(\widetilde{R}_X) \right)^{G_{X_\Kbar}} $$and
$${\rm D}^{\rm cris}_{\rm log}
\bigl(V_X(\cL)\bigr):=\left(V_X(\cL)\otimes_{\Z_p} {\rm B}^{\rm cris}_{\rm
log}(\widetilde{R}_X) \right)^{G_{X_K}}\otimes_{B_{\rm
cris}^{\log, \cG_R}}^{\varphi^{s}} \widetilde{R}_{X,\rm
cris}[p^{-1}],$$as in \S\ref{sec:Dcrislogmaxlog}. Since
$\widetilde{R}_{X,\rm cris}[p^{-1}]=\widehat{\cO}_\Xtilde^{\rm
DP}[p^{-1}](X)$, see \S\ref{sec:Alog}, it follows  from
\ref{lemma:propbBcris} that
$$\bDcrisgeo(\cL)(X)
\stackrel{\sim}{\lra} {\rm D}_{\rm log}^{\rm geo, cris}
\bigl(V_X(\cL)\bigr), \qquad \bDcrisar(\cL)(X)
\stackrel{\sim}{\lra} {\rm D}^{\rm cris}_{\rm log}
\bigl(V_X(\cL)\bigr)  $$as $\widetilde{R}_{X,\rm
cris}[p^{-1}]$-modules compatibly with Frobenius, filtrations,
connections.

\subsubsection{Semistable  sheaves}\label{sec:defsemsitablesheaf}
As in the previous section we assume that $X$ is a small affine. Following
\cite[Def.~1.1]{ogus} we denote by ${\rm
Coh}\bigl(\widehat{\cO}_\Xtilde^{\rm DP}\tensor_{\Z_p} \Q_p\bigr)$
the full subcategory of sheaves of
$\widehat{\cO}_\Xtilde^{\rm DP}$-modules isomorphic to
$F\tensor_{\Z_p} \Q_p$ for some coherent sheaf $F$ of
$\widehat{\cO}_\Xtilde^{\rm DP}$-modules on $X^{\ket}$. A
$\Q_p$-adic sheaf~$\cL=\{\cL_n\}_n$ on~$\fX_K$ is called {\it
semistable} if

\begin{enumerate}

\item[i.] $\bDcrisar\bigl(\cL\bigr)$ is in  ${\rm
Coh}\bigl(\widehat{\cO}_\Xtilde^{\rm DP}\tensor_{\Z_p}
\Q_p\bigr)$;

\item[ii.] the natural map $\alpha_{\rm log,\cL}\colon
\bDcrisar\bigl(\cL\bigr)\tensor_{\widehat{\cO}_\Xtilde^{\rm DP}} \bB_{\rm
log,K}\lra \cL\tensor_{\Z_p} \bB_{\rm log,K}$ is an isomorphism in
the category ${\rm Ind}\left(\Sh(\fX_K)^\N \right)$ of inductive
system of continuous sheaves.

\end{enumerate}

We let $\Sh(\fX_K)_{\rm ss}$ be the full subcategory of
$\Q_p$-adic \'etale sheaves on $\fX_K$ consisting of semistable
sheaves.

\begin{proposition}\label{prop:equivcris}
The following are equivalent:\smallskip

1) $\cL$ is semistable (resp.~geometrically semistable);\smallskip

2) for every small object~$U$ of~$X^{\ket}$ the
representation~$V_{U}(\cL)$  is semistable (resp.~geometrically
semistable) in the sense of \ref{prop:equivsemistable}
(resp.~\ref{prop:equivgeosemistable});\smallskip

3) there is a covering $\{U_i\}_i$ of~$X^{\et}$ by small objects  such that~$V_{U_i}(\cL)$ is semistable (resp.~geometrically semistable).\smallskip

In particular, if $\cL$ is a semistable sheaf on $\fX_K$ then
$\beta^\ast(\cL)$ is a geometrically semistable sheaf on
$\fX_\Kbar$ and $\bDcrisgeo\bigl(\beta^\ast(\cL)\bigr)\cong
\beta^\ast\bigl(\bDcrisar(\cL)\widehat{\otimes}_{\widehat{\cO}_\Xtilde^{\rm DP}} \cO_{\Xtilde,\rm log}^{\rm geo}\bigr) $.
\end{proposition}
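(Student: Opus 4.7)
The proof reduces to the localization identifications at the end of \S\ref{sec:Dcrisar}: for a small affine $U$ with deformation $\widetilde{R}_U$, one has
$$\bDcrisar(\cL)(U)\;\cong\; {\rm D}^{\rm cris}_{\rm log}(V_U(\cL)),\qquad \bDcrisgeo(\cL)(U)\;\cong\; {\rm D}_{\rm log}^{\rm geo,cris}(V_U(\cL)),$$
together with the identification $(\cL\otimes_{\Z_p}\bB_{\rm log,K})(U,\overline{U})\cong V_U(\cL)\otimes_{\Z_p} B^{\rm cris}_{\rm log}(\widetilde{R}_U)$ provided by Lemma \ref{lemma:propbBcris}(4). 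Since $\widehat{\cO}_\Xtilde^{\rm DP}[p^{-1}](U)=\widetilde{R}_{U,\rm cris}[p^{-1}]$ is noetherian, membership in ${\rm Coh}\bigl(\widehat{\cO}_\Xtilde^{\rm DP}\otimes_{\Z_p}\Q_p\bigr)$ can be tested small-affine by small-affine.

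The implication $(2)\Rightarrow(3)$ is trivial, since $X^{\et}$ is covered by small objects. For $(1)\Rightarrow(2)$ I localize at a small $U$ and translate condition (i) (resp.~(ii)) of \S\ref{sec:defsemsitablesheaf} directly into the corresponding condition for semistability of the $\cG_{U_K}$-representation $V_U(\cL)$ of \ref{prop:equivsemistable}, using that the relevant $\bB$-sheaves localize to the expected period rings. For $(3)\Rightarrow(1)$ both conditions in \S\ref{sec:defsemsitablesheaf} can be verified on a Kummer \'etale covering of $X$: coherence of $\bDcrisar(\cL)$ is Kummer \'etale local by the sheaf property, while the comparison map $\alpha_{\rm log,\cL}$ can be checked to be an isomorphism in ${\rm Ind}\bigl(\Sh(\fX_K)^\N\bigr)$ on stalks at geometric points, which again reduces to the small affine case. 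The geometric case is handled identically, replacing $B^{\rm cris}_{\rm log}$ by its geometric variant, $\widehat{\cO}_\Xtilde^{\rm DP}$ by $\cO_{\Xtilde,\rm log}^{\rm geo}$, and invoking \ref{prop:equivgeosemistable}.

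The point where I expect real work is ensuring that the formation of both $\bDcrisar(\cL)$ and $\alpha_{\rm log,\cL}$ commutes with passage to a small Kummer \'etale open, so that the dictionary between global and local descriptions is compatible with restriction. This is exactly what Proposition \ref{prop:RivastF}, applied to the relevant higher direct images under $v_{K,\ast}^{\rm cont}$, together with the close-to-faithfully-flat extension $\widetilde{R}_{U,\rm max}[p^{-1}]\hookrightarrow B^{\rm max}_{\rm log}(\widetilde{R}_U)$ of Theorem \ref{thm:Blogmaxff}, provide: the latter allows one to descend exactness, isomorphism, and finite projectivity from modules over the Fontaine periods to modules over $\widetilde{R}_{U,\rm cris}[p^{-1}]$, which is precisely what is required to pass between global categorical semistability and local representation-theoretic semistability.

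For the ``In particular'' statement, assume $\cL$ is semistable. By the already established equivalence $(1)\Leftrightarrow(2)$, for every small $U$ the representation $V_U(\cL)$ is semistable as a $\cG_{U_K}$-module; item (c) of the introduction, formalized in \ref{prop:equivsemistable}, then implies that its restriction to $G_{U_\Kbar}$ is geometrically semistable and that ${\rm D}_{\rm log}^{\rm geo,cris}(V_U(\cL))$ is obtained from ${\rm D}^{\rm cris}_{\rm log}(V_U(\cL))$ by base change along $\widetilde{R}_{U,\rm cris}[p^{-1}]\to B^{\rm cris}_{\rm log}(\widetilde{R}_U)^{G_{U_\Kbar}}$. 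By the equivalence $(3)\Rightarrow(1)$ for the geometric version, $\beta^\ast(\cL)$ is geometrically semistable. The identification $\bDcrisgeo\bigl(\beta^\ast(\cL)\bigr)\cong \beta^\ast\bigl(\bDcrisar(\cL)\widehat{\otimes}_{\widehat{\cO}_\Xtilde^{\rm DP}}\cO_{\Xtilde,\rm log}^{\rm geo}\bigr)$ then results from the same base-change identity applied on every small $U$, sheafified by the localization principle invoked above.
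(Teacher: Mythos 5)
Your proposal is correct and follows essentially the same route as the paper, which for the equivalence of (1)--(3) simply defers to the smooth-case analogue \cite[Prop.~3.7]{andreatta_iovita_comparison} and whose argument is exactly the localization/descent scheme you describe (the identifications at the end of \S\ref{sec:Dcrisar} plus the close-to-faithful-flatness of \ref{thm:Blogmaxff}). The only small slip is in the last paragraph: the fact that an arithmetically semistable $V_U(\cL)$ is geometrically semistable with ${\rm D}_{\rm log}^{\rm geo,cris}$ given by base change is \ref{cor:arithmimpliesgeo} in \S\ref{sec:Vgeosemistable} (which is what the paper cites), not \ref{prop:equivsemistable}.
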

\begin{proof} We refer to \cite[Prop.~3.7]{andreatta_iovita_comparison}
for the proof of the equivalences of (1), (2) and (3). The last assertion follows from this equivalence and \ref{sec:Vgeosemistable}.

\end{proof}

\subsubsection{The category of filtered Frobenius
isocrystals}\label{sec:isocrystals} Let $\cO_{\rm cris}$ be the $\WW(k)$-divided power envelope of $\cO$ with respect to the kernel of the morphism of
$\WW(k)$-algebras $\cO\to \cO_K$ sending $Z$ to $\pi$. It is endowed with the log structure coming from $\cO$. Following \cite[\S5]{katolog}, consider the site
$\bigl(X_0/\cO_{\rm cris}\bigr)^{\rm cris}_{\rm log}$, where $X_0:=X \times_{\cO_K} \Spec(\cO_K/p\cO_K)$,  consisting of quintuples
$\bigl(U,T,M_T,\iota,\delta\bigr)$ where\smallskip

(a) $U\to X_0$ is Kummer \'etale,\smallskip

(b) $\bigl(T,M_T\bigr)$ is a fine log scheme over $\cO_{\rm cris}$ (with its log structure) in which $p$ is locally nilpotent,\smallskip

(c) $\iota\colon U\to T$ is an exact closed immersion over
$\cO_{\rm cris}$,\smallskip

(d) $\delta$ is DP structure on the ideal defining the closed
immersion $U\subset T$, compatible with the DP structure on
$\cO_{\rm cris}$.

\

We let $\Crys(X_0/\cO)$  be the category of crystals  of finitely presented $\cO_{X_0/\cO_{\rm cris}}$-modules on $\bigl(X_0/\cO_{\rm cris}\bigr)^{\rm cris}_{\rm
log}$, cf.~\cite[Def 6.1]{katolog}.

\smallskip

Given a crystal $\cE$ let $\cE_n$ be the crystal $\cE_n:=\cE/p^n\cE$. It defines a $\cO_\Xtilde^{\rm
DP}/p^n\cO_\Xtilde^{\rm DP}$-module, endowed with integrable connection $\nabla_n$ relative to $\cO_{\rm cris}/p^n \cO_{\rm cris}$; see \cite[Thm.~6.2]{katolog}.
Let $\cE_\Xtilde:=\ds \lim_{\infty \leftarrow n} \cE_n$ be the finitely presented sheaf of $\widehat{\cO}_\Xtilde^{\rm DP}$-modules on $X_0^{\ket}$ with the
connection $\nabla_{\cE_\Xtilde}$ relative to $\cO_{\rm cris}$.
\smallskip

Let $\Isoc(X_0/\cO)$  be the category of {\it isocrystals}, i.e., the full subcategory of the category of inductive systems ${\rm Ind}\bigl(\Crys(X_0/\cO)\bigr)$
consisting of the inductive system $\cE \to \cE \to \cE \to \cdots$ where\enspace (1) $\cE$ is a crystal and the transition maps $\cE\to \cE$ are multiplication by
$p$; \enspace (2) $\cE_\Xtilde\bigl[p^{-1}\bigr]$ is a finite and projective sheaf of $\widehat{\cO}_\Xtilde^{\rm DP}\bigl[p^{-1}\bigr]$-modules locally on
$X_0^{\ket}$.
\smallskip

The absolute Frobenius on $X_0$ and the given Frobenius $\varphi_\cO$ on $\cO$ define a morphism of sites $$F\colon \bigl(X_0/\cO_{\rm cris}\bigr)^{\rm cris}_{\rm
log}\lra \bigl(X_0/\cO_{\rm cris}\bigr)^{\rm cris}_{\rm log}.$$Let ${\rm FIso}(X_0/\cO)$  be the category of {\em $F$-isocrystals} consisting of pairs
$(\cE,\varphi)$ where $\cE$ is an isocrystal and $\varphi\colon F^\ast(\cE) \to \cE$ is an isomorphism  of isocrystals.  \smallskip

We have two natural maps of $\WW(k)$-algebras endowed with log structures:\smallskip

i) $\cO_{\rm cris} \to \cO_K$, sending $Z$ to $\pi$;\smallskip

ii) $\cO_{\rm cris} \to \WW(k)^+$, sending $Z$ to $0$. Here $\WW(k)^+$ is $\WW(k)$ with the log structure associated to $\N \to \WW(k)$ given by $1\mapsto
0$.\smallskip

Both maps are compatible with log structures and divided powers, considering on $\cO_K$ and on $\WW(k)$ the standard DP on the ideal generated by $p$. Given a
crystal $\cE$ we denote by $\cE_{X}$ (resp.~$\cE^+$) the base change of $\cE$ via the map (i) (resp.~(ii)); see \cite[Prop.~5.8]{berthelot_ogus}. In particular,
$\cE_{X}$ defines a sheaf of $\widehat{\cO}_X$-modules endowed with an integrable connection $\nabla_{\cE_X}$ relative to $\cO_K$. Similarly, for an isocrystal
$\cE$ we let $\cE_{X_K}$ be the finite and projective sheaf of $\widehat{\cO}_X\otimes_{\cO_K} K$-modules obtained by base change of $\cE$. It comes equipped with an
integrable connection $\nabla_{\cE_{X_K}}$ defined by $\nabla_{\cE_X}$.  Base changing $\cE$
via the map (ii) we obtain an isocrystal $\cE^+$ in $\Isoc(X_0/\cO_K)$. As the map (ii) is Frobenius equivariant, if $\cE$ is a Frobenius crystal or isocrystal, then $\cE^+$ is also a Frobenius crystal
(resp.~isocrystal). Summarizing, given an isocrystal $\cE\in \Isoc(X_0/\cO)$ we get a composite functor
$$\Isoc(X_0/\cO)\lra {\rm Coh}\bigl(\widehat{\cO}_\Xtilde^{\rm
DP}\tensor_{\Z_p} \Q_p\bigr) \lra {\rm Coh}\bigl(\widehat{\cO}_X\tensor_{\cO_K} K\bigr),\quad \cE \mapsto \cE_\Xtilde\mapsto \cE_{X_K}.$$ Define ${\rm FIso}^{\rm
Fil}(X/\cO)$, called the category of {\em filtered Frobenius isocrystals}, to be the category whose objects are triples $\bigl(\cE,\varphi,\Fil^n \cE_{X_K}\bigr)$
where \smallskip

(a) $\bigl(\cE,\varphi\bigr)$ is an object of ${\rm FIso}(X_0/\cO)$;\smallskip

(b) the connection $\nabla_{\cE_\Xtilde}$ on $\cE_\Xtilde$ lifts to a connection $\nabla_{\cE_\Xtilde,\WW(k)}$ relative to $K_0$ such that Frobenius is horizontal with respect to $\nabla_{\cE_\Xtilde,\WW(k)}$;\smallskip

(c) $\Fil^n \cE_{X_K}$ is an exhaustive and descending filtration by finite and projective $\cO_{X_K}$-modules on  $\cE_{X_K}$ satisfying Griffiths'
transversality.\smallskip

It is naturally a tensor category.

\

{\em Cohomology of isocrystals.} Consider an object $\cE\in \Isoc(X_0/\cO)$. Define ${\rm H}^i\bigl(\bigl(X_0/\cO_{\rm cris}\bigr)^{\rm cris}_{\rm log},\cE\bigr)$
using the formalism of \ref{section:continuoussheaves} for the cohomology of inductive systems. It is a $\cO_{\rm cris}\bigl[p^{-1}\bigr]$-module. If $\cE$ is an
$F$-isocrystal, it is endowed with a  Frobenius $\varphi_{\cE,i}$. Define $${\rm H}^i\bigl(\bigl(X_0/\cO_{\rm cris}\bigr)^{\rm cris}_{\rm
log},\cE\bigr)^{\varphi-{\rm div}}$$as the image of the $\cO_{\rm cris}$-linearization $\varphi_{\cE,i}\otimes_{\cO_{\rm cris}}^\varphi \cO_{\rm cris}$.

Let $\cE_\Xtilde$ be the associated coherent $
\widehat{\cO}_\Xtilde^{\rm DP}$-module with connection
$\nabla_{\cE_\Xtilde}$.  It follows from \cite[Thm.~6.4]{katolog}
that we have a canonical isomorphism
$${\rm H}^i\bigl(\bigl(X_0/\cO_{\rm
cris}\bigr)^{\rm cris}_{\rm log},\cE\bigr)\cong {\rm H}^i_{\rm
dR}\bigl(X_0,\bigl(\cE_\Xtilde,\nabla_{\cE_\Xtilde}\bigr)\bigr)\bigl[p^{-1}\bigr]$$as
$\cO_{\rm cris}\bigl[p^{-1}\bigr]$-modules. Recall that by assumption
$\nabla_{\cE_\Xtilde}$  is the composite of
$\nabla_{\cE_\Xtilde,\WW(k)}$ and the surjection
$\omega^1_{\Xtilde/\WW(k)} \lra \omega^1_{\Xtilde/\cO}$. The exact
sequence
$$0 \lra \cO_\Xtilde \frac{dZ}{Z}\lra \omega^1_{\Xtilde/\WW(k)} \lra
\omega^1_{\Xtilde/\cO} \lra 0$$and the connection
$\nabla_{\cE_\Xtilde,\WW(k)}$ define a long exact sequence of
cohomology groups

$${\rm H}^i_{\rm
dR}\bigl(X_0,\bigl(\cE_\Xtilde,\nabla_{\WW(k)}\bigr)\bigr)\bigl[p^{-1}\bigr]\lra
{\rm H}^i_{\rm
dR}\bigl(X_0,\bigl(\cE_\Xtilde,\nabla_{\cE_\Xtilde}\bigr)\bigr)\bigl[p^{-1}\bigr]
\lra {\rm H}^i_{\rm
dR}\bigl(X_0,\bigl(\cE_\Xtilde,\nabla_{\cE_\Xtilde}\bigr)\bigr)\bigl[p^{-1}\bigr]
\frac{dZ}{Z}.$$In particular, ${\rm H}^i\bigl(\bigl(X_0/\cO_{\rm
cris}\bigr)^{\rm cris}_{\rm log},\cE\bigr)$ is endowed with a
logarithmic connection $\nabla_{\cE,i}$ relative to
$\omega^1_{\cO_{\rm cris}/\WW(k)}=\cO_{\rm cris} \frac{dZ}{Z}$.

\

{\it The relation with rigid cohomology.}\enspace  Frobenius on $\cO$ extends to a map $\cO_{\rm cris} \to \cO_{\rm cris}$ which factors via the natural map
$f\colon \cO_{\rm cris}\to \cO_{\rm max}=\WW[Z]\left\{\frac{P_\pi(Z)}{p} \right\} $; see \ref{rmk:AmaxfrobaAcris}. Let $g\colon \cO_{\rm max} \lra \cO_{\rm cris} $
be the induced map. Let $\Xtilde_{\max}:=\Xtilde\widehat{\otimes}_{\cO} \cO_{\rm max}$, where the completed tensor product is with respect to the $p$-adic topology.
Let $\cE_{\Xtilde,{\rm max}}:=\cE_{\Xtilde}\widehat{\otimes}_{\cO_{\rm cris}} \cO_{\rm max}$ be the base change of $\cE_{\Xtilde}$ via $f$. The connection
$\nabla_{\WW(k)}$ (resp.~$\nabla_{\cE_{\Xtilde}}$) defines a connection $\nabla_{\cE_{\Xtilde,{\rm max}},\WW(k)}$ (resp.~$\nabla_{\cE_{\Xtilde,{\rm max}}}$). Since
$\cE$ is an $F$-isocrystal, the base-change $\cE_{\Xtilde,{\rm max}}\widehat{\otimes}_{\cO_{\rm max}} \cO_{\rm cris}[p^{-1}]$ is isomorphic to
$\cE_{\Xtilde}[p^{-1}]$ as $\widehat{\cO}^{\rm DP}_{\Xtilde}[p^{-1}]$-modules with connection so that we get a natural map of $\cO_{\rm cris}$-modules
$$\alpha\colon {\rm H}^i_{\rm dR}\bigl(\Xtilde_{\max},\bigl(\cE_{\Xtilde,\rm max},\nabla_{\cE_{\Xtilde,\rm max}}\bigr)\bigr)\otimes_{\cO_{\rm max}} \cO_{\rm
cris}\bigl[p^{-1}\bigr]\lra {\rm H}^i_{\rm dR}\bigl(X_k,\bigl(\cE_\Xtilde,\nabla_{\cE_\Xtilde}\bigr)\bigr)\bigl[p^{-1}\bigr].$$The connection $\nabla_{\WW(k)}$
defines a connection $\nabla'$ on ${\rm H}^i_{\rm dR}\bigl(\Xtilde_{\max},\bigl(\cE_{\Xtilde,\rm max},\nabla_{\cE_{\Xtilde,\rm max}}\bigr)\bigr)\bigl[p^{-1}\bigr]$
and $\alpha$ is horizontal with respect to the connections on the two sides.

\begin{proposition}\label{prop:HicrisE}
Assume that $X$ is proper over $\cO_K$ and let $\bigl(\cE,\varphi,\Fil^n \cE_{X_K}\bigr)\in
{\rm FIso}^{\rm Fil}(X/\cO)$. \smallskip

(1) The map $\alpha$ is injective with image ${\rm
H}^i\bigl(\bigl(X_0/\cO_{\rm cris}\bigr)^{\rm cris}_{\rm
log},\cE\bigr)^{\varphi-{\rm div}}$;\smallskip

(2) the connection $\nabla_{\cE,i}$ is horizontal with respect to
Frobenius $\varphi_{\cE,i}$ on ${\rm H}^i\bigl(\bigl(X_0/\cO_{\rm
cris}\bigr)^{\rm cris}_{\rm log},\cE\bigr)$;\smallskip

(3) the module ${\rm H}^i\bigl(\bigl(X_0/\cO_{\rm cris}\bigr)^{\rm
cris}_{\rm log},\cE\bigr)^{\varphi-{\rm div}}$ is finite
and free as $\cO_{\rm cris}\bigl[p^{-1}\bigr]$-module and the
$\cO_{\rm cris}\bigl[p^{-1}\bigr]$-linearization of
$\varphi_{\cE,i}$ is an isomorphism;\smallskip

(4) the base change of ${\rm H}^i\bigl(\bigl(X_0/\cO_{\rm
cris}\bigr)^{\rm cris}_{\rm log},\cE\bigr)^{\varphi-{\rm
div}}$ via $\cO_{\rm cris}\lra K$, sending
$Z$ to $\pi$, is isomorphic to ${\rm H}^i_{\rm
dR}\bigl(X_0,\bigl(\cE_{X_K},\nabla_{\cE_{X_K}}\bigr)\bigr)$ as
$K$-vector space;\smallskip

(5) the base change of ${\rm H}^i\bigl(\bigl(X_0/\cO_{\rm cris}\bigr)^{\rm cris}_{\rm log},\cE\bigr)^{\varphi-{\rm div}}$ via $\cO_{\rm cris}\lra \WW(k)$, sending
$Z$ to $0$, coincides with ${\rm H}^i\bigl(\bigl(X_k/\WW(k)^+\bigr)^{\rm cris}_{\rm log},\cE^+\bigr)$ as $K_0$-modules, compatibly with Frobenius. The residue of
$\nabla_{\cE,i}$ defines a nilpotent operator $N_{\cE,i}$, called the monodromy operator, which satisfies $N_{\cE,i} \circ \varphi=p \varphi\circ
N_{\cE,i}$;\smallskip

(6) there is a unique isomorphism $${\rm
H}^i\bigl(\bigl(X_0/\cO_{\rm cris}\bigr)^{\rm cris}_{\rm
log},\cE\bigr)^{\varphi-{\rm div}}\cong {\rm
H}^i\bigl(\bigl(X_k/\WW(k)^+\bigr)^{\rm cris}_{\rm
log},\cE^+\bigr)\otimes_{\WW(k)} \cO_{\rm
cris}$$compatible with Frobenius and inducing the
identity modulo $Z$. Moreover,  via this isomorphism one has
$\nabla_{\cE,i}=N_{\cE,i} \otimes 1+ 1 \otimes d$.
\end{proposition}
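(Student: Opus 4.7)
The strategy is to exploit the factorization $\varphi_\cO = g \circ f$ of Frobenius on $\cO_{\rm cris}$ through $\cO_{\rm max}$, combined with the $F$-isocrystal structure on $(\cE,\varphi)$, to descend the relevant cohomology to the proper formal scheme $\Xtilde_{\rm max}\to \Spf(\cO_{\rm max})$ where a finiteness theorem for de Rham cohomology is available; then to obtain (4) and (5) by log-crystalline base change along $\cO_{\rm cris}\to \cO_K$ and $\cO_{\rm cris}\to \WW(k)^+$; and finally to prove the unique splitting in (6) by a Frobenius descent argument.

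For (1), (2) and (3) we use the identification from \cite[Thm.~6.4]{katolog}:
\[
{\rm H}^i\bigl((X_0/\cO_{\rm cris})^{\rm cris}_{\rm log},\cE\bigr)\cong {\rm H}^i_{\rm dR}\bigl(\Xtilde,(\cE_\Xtilde,\nabla_{\cE_\Xtilde})\bigr)[p^{-1}].
\]
The $F$-isocrystal structure provides a horizontal isomorphism $\cE_{\Xtilde,{\rm max}}\widehat{\otimes}_{\cO_{\rm max}}\cO_{\rm cris}[p^{-1}]\cong \cE_\Xtilde[p^{-1}]$ (base change along $g$), under which the $\cO_{\rm cris}$-linearization of $\varphi_{\cE,i}$ factors as base change along $f$ followed by $\alpha$; hence the image of $\alpha$ equals the $\varphi$-divisible part, proving the image assertion of (1). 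Part (2) is inherited from the horizontality of $\varphi$ on $(\cE,\nabla_{\cE_\Xtilde,\WW(k)})$. For (3), the properness of $X$ over $\cO_K$ together with the global deformation $\Xtilde\to \Spf(\cO)$ implies that $\Xtilde_{\rm max}\to \Spf(\cO_{\rm max})$ is proper; a finiteness theorem for the de Rham cohomology of coherent modules with integrable connection on such proper formal schemes then gives that ${\rm H}^i_{\rm dR}(\Xtilde_{\rm max},\cE_{\Xtilde,{\rm max}})[p^{-1}]$ is finitely presented over $\cO_{\rm max}[p^{-1}]$. A Berthelot--Ogus-style argument using the $F$-isocrystal structure and the contracting nature of $f$ then upgrades this to: the $\cO_{\rm cris}[p^{-1}]$-linearization of $\varphi_{\cE,i}$ is an isomorphism and the image of $\alpha$ is finite and free over $\cO_{\rm cris}[p^{-1}]$. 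The injectivity of $\alpha$ in (1) falls out of the same argument.

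Parts (4) and (5) follow from the base change formalism for log crystalline cohomology \cite[\S 6]{katolog}: along $\cO_{\rm cris}\to \cO_K$ (sending $Z\mapsto \pi$) the pullback of $(\cE,\nabla_{\cE_\Xtilde})$ is $(\cE_{X_K},\nabla_{\cE_{X_K}})$; along $\cO_{\rm cris}\to \WW(k)^+$ (sending $Z\mapsto 0$) it is $\cE^+$, compatibly with Frobenius. The monodromy operator $N_{\cE,i}$ is defined as the residue of $\nabla_{\cE,i}$ relative to $dZ/Z$; the relation $N_{\cE,i}\circ\varphi_{\cE,i} = p\,\varphi_{\cE,i}\circ N_{\cE,i}$ reflects $\varphi_\cO(Z)=Z^p$, and nilpotency follows from (3) combined with this commutation. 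For (6), set $H:={\rm H}^i\bigl((X_k/\WW(k)^+)^{\rm cris}_{\rm log},\cE^+\bigr)$ and construct the desired isomorphism $H\otimes_{\WW(k)}\cO_{\rm cris}\to {\rm H}^i(\cdots)^{\varphi-{\rm div}}$ by Frobenius descent: the identity on $H$ modulo $Z$ extends inductively, and uniquely, along the $Z$-adic filtration to a Frobenius-equivariant map, because $\varphi_\cO$ sends the divided power ideal generated by $Z$ into its $p$-th divided power and is therefore contracting in the $(p,Z)$-adic topology. Horizontality of the resulting map, and the identity $\nabla_{\cE,i} = N_{\cE,i}\otimes 1 + 1\otimes d$ on the image, are then checked by comparing residues at $Z=0$ and using the Frobenius-equivariance; uniqueness is built into the inductive construction.

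The main obstacle is the finite presentation of ${\rm H}^i_{\rm dR}(\Xtilde_{\rm max},\cE_{\Xtilde,{\rm max}})[p^{-1}]$ over $\cO_{\rm max}[p^{-1}]$ used in (3); this is precisely the point where the global deformation hypothesis of the introduction is essential, and it propagates through (3) to secure finiteness and freeness of all the cohomology groups in the statement.
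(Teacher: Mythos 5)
Your proposal is correct and follows essentially the same route as the paper: factor Frobenius on $\cO_{\rm cris}$ through the noetherian ring $\cO_{\rm max}$, use properness of $\Xtilde_{\rm max}$ (via the Hodge--de Rham spectral sequence) plus the existence of a connection (Katz) to get finite projectivity of ${\rm H}^i_{\rm dR}(\Xtilde_{\max},\cE_{\Xtilde,\rm max})[p^{-1}]$, deduce (4)--(5) by base change, obtain (6) by the convergent series $\sum_n F^n\circ\gamma_0\circ F_0^{-n}$, and derive (1) and (3) from the fact that the linearized Frobenius factors through $\alpha$. The only step you leave implicit that the paper spells out is why the linearized Frobenius $F$ over $\cO_{\rm max}[p^{-1}]$ is an isomorphism (specialization to a dense set of unramified closed points), but this is exactly the ``Berthelot--Ogus-style'' argument you invoke.
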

\begin{proof} (2) Follows from the fact that $\nabla_{\cE_{\Xtilde},\WW(k)}$ is
horizontal with respect to Frobenius on $\cE$.

(5) The formula relating $N_{\cE,i}$ and $\varphi_{\cE,i}$ follows
from the fact that $\varphi_{\cE,i}$ is horizontal.\smallskip

Claims (3)--(6) follow if we prove claim (1) and the analogue
(3'), (4') etc., of (3), (4) etc.~for ${\rm H}^i_{\rm
dR}\bigl(\Xtilde_{\max},\bigl(\cE_{\Xtilde,\rm
max},\nabla_{\cE_{\Xtilde,\rm
max}}\bigr)\bigr)\bigl[p^{-1}\bigr]$. For (6) note that by
construction $\alpha$ commutes with Frobenius on the two
sides.\smallskip

(3') First of all, ${\rm H}^i_{\rm
dR}\bigl(\Xtilde_{\max},\bigl(\cE_{\Xtilde,\rm
max},\nabla_{\cE_{\Xtilde,\rm max}}\bigr)\bigr)\bigl[p^{-1}\bigr]$
is finite as $\cO_{\rm max}\bigl[p^{-1}\bigr]$-module. This
follows from the Hodge to de Rham spectral sequence using that the
cohomology of $\cE_{\Xtilde,\rm max}\otimes_{\cO_{\Xtilde{\rm
max}}}\omega^i_{\Xtilde_{\rm max}/\cO_{\rm max}}$ is coherent
since $\Xtilde_{\rm max}\to {\rm Spf}(\cO_{\rm max})$ is proper
and $\cO_{\rm max}$ is noetherian. Secondly, since it is endowed
with a connection $\nabla'$ with respect to the derivation on
$\cO_{\rm max}$, it is a projective $\cO_{\rm
max}\bigl[p^{-1}\bigr]$-module by
\cite[Prop.~8.8]{katz}.\smallskip

(4')-(5') Since $\cE_{\Xtilde,\rm max}\bigl[p^{-1}\bigr]$ is a
projective $\cO_{\Xtilde_{\max}}\bigl[p^{-1}\bigr]$-module, the
formation of $${\rm H}^i_{\rm
dR}\bigl(\Xtilde_{\max},\bigl(\cE_{\Xtilde,\rm
max},\nabla_{\cE_{\Xtilde,\rm
max}}\bigr)\bigr)\bigl[p^{-1}\bigr]$$commutes with base-change
from $\cO_{\rm max}\bigl[p^{-1}\bigr]$. In particular, ${\rm
H}^i_{\rm dR}\bigl(\Xtilde_{\max},\bigl(\cE_{\Xtilde,\rm
max},\nabla_{\cE_{\Xtilde,\rm max}}\bigr)\bigr)\bigl[p^{-1}\bigr]$
 coincides with ${\rm H}^i_{\rm
dR}\bigl(X_k,\bigl(\cE_{X_K},\nabla_{\cE_{X_K}}\bigr)\bigr)$ modulo  $P_\pi(Z)/p$ and
with ${\rm
H}^i\bigl(\bigl(X_k/\WW(k)^+\bigr)^{\rm cris}_{\rm
log},\cE^+\bigr)$ modulo $Z$ (by \cite[Thm.~6.4]{katolog}).\smallskip

(3')(continued) The Frobenius structure on $\cE$ defines a
$\cO_{\rm max} \bigl[p^{-1}\bigr]$-linear map $$F\colon {\rm
H}^i_{\rm dR}\bigl(\Xtilde_{\max},\bigl(\cE_{\Xtilde,\rm
max},\nabla_{\cE_{\Xtilde,\rm max}}\bigr)\bigr)\otimes_{\cO_{\rm
max}}^\varphi \cO_{\rm max} \bigl[p^{-1}\bigr] \lra {\rm H}^i_{\rm
dR}\bigl(\Xtilde_{\max},\bigl(\cE_{\Xtilde,\rm
max},\nabla_{\cE_{\Xtilde,\rm max}}\bigr)\bigr)\bigl[p^{-1}\bigr].
$$
The base change $F_{K_0'}$ of $F$ via any map of $\WW(k)$-algebras $\cO_{\rm max}
\bigl[p^{-1}\bigr]\lra K'_0$, with $K_0 \subset K_0'$  a finite
unramified extension,  is the map induced by Frobenius on the
cohomology of the isocrystal over $\Xtilde_{\rm max}\otimes_{
\cO_{\rm max}} K_0'$ defined by $\bigl(\cE_{\Xtilde,\rm
max},\nabla_{\cE_{\Xtilde,\rm max}}\bigr)\otimes_{ \cO_{\rm max}}
K_0'$. In particular, $F_{K_0'}$ is an isomorphism. Since the
maximal ideals of $\cO_{\rm max} \bigl[p^{-1}\bigr]$ defining an
unramified extension of $K_0$ are dense in $\Spec\bigl( \cO_{\rm
max} \bigl[p^{-1}\bigr]\bigr)$ and since $F$ is a map of
projective $\cO_{\rm max} \bigl[p^{-1}\bigr]$-modules of the same
rank, we conclude that $F$ is an isomorphism.\smallskip

(6') Let $\gamma_0\colon {\rm
H}^i\bigl(\bigl(X_k/\WW(k)^+\bigr)^{\rm cris}_{\rm
log},\cE^+\bigr)[p^{-1}]\to {\rm H}^i_{\rm
dR}\bigl(\Xtilde_{\max},\bigl(\cE_{\Xtilde,\rm
max},\nabla_{\cE_{\Xtilde,\rm max}}\bigr)\bigr)[p^{-1}]$ be any
map of $K_0$-vector spaces inducing the identity modulo $Z$. Its image spans ${\rm H}^i_{\rm
dR}\bigl(\Xtilde_{\max},\bigl(\cE_{\Xtilde,\rm
max},\nabla_{\cE_{\Xtilde,\rm max}}\bigr)\bigr)[p^{-1}]$ as
$\cO_{\rm max} \bigl[p^{-1}\bigr]$-module in a neighborhood of the
maximal ideal defined by $Z=0$. Possibly after composing
$\gamma_0$ with a power of $F$ we may assume that it spans it  as
$\cO_{\rm max} \bigl[p^{-1}\bigr]$-module. Write
$$\gamma=\sum_{n=0}^\infty F^i \circ \gamma_0 \circ F_0^{-i},$$where $F$ and $F_0$ are the two Frobenius morphisms. Fix
a basis ${\mathcal B}$ of ${\rm
H}^i\bigl(\bigl(X_k/\WW(k)^+\bigr)^{\rm cris}_{\rm
log},\cE^+\bigr)[p^{-1}]$ as $K_0$-vector space and take $s\in\N$
such that ${\rm det} F_0\in p^{-s} \WW(k)$. The image of
${\mathcal B}$ via $F\circ \gamma_0 \circ F_0^{-1} -\gamma_0$ is
contained in  $\frac{Z}{p^h} {\rm H}^i_{\rm
dR}\bigl(\Xtilde_{\max},\bigl(\cE_{\Xtilde,\rm
max},\nabla_{\cE_{\Xtilde,\rm max}}\bigr)\bigr)$ for some $h\in\N$
so that the power series $F^n\circ \gamma_0 \circ
F_0^{-n}-F^{n-1}\circ \gamma_0 \circ F_0^{-(n-1)}$ is contained in
$\frac{Z^{p^n}} {p^{h+(n-1)s}} {\rm H}^i_{\rm
dR}\bigl(\Xtilde_{\max},\bigl(\cE_{\Xtilde,\rm
max},\nabla_{\cE_{\Xtilde,\rm max}}\bigr)\bigr)$. Note that
$\frac{Z^{p^n}}{p^{p^n}}\in \cO_{\rm max}$ and $p^n-h+(n-1)s\to
\infty$ for $n\to \infty$. Thus, $\gamma=F\circ \gamma_0 \circ
F_0^{-1} -\gamma_0+\sum_{n=1}^\infty \bigl(F^n\circ \gamma_0 \circ
F_0^{-n}-F^{n-1}\circ \gamma_0 \circ F_0^{-(n-1)}\bigr)$ converges
and $\gamma$ is well defined. By construction $F \circ \gamma=
\gamma_0\circ F_0$ and $\gamma={\rm Id}$ modulo $Z$. This
implies that the image of $\gamma$ spans ${\rm H}^i_{\rm
dR}\bigl(\Xtilde_{\max},\bigl(\cE_{\Xtilde,\rm
max},\nabla_{\cE_{\Xtilde,\rm max}}\bigr)\bigr)[p^{-1}]$ as
$\cO_{\rm max} \bigl[p^{-1}\bigr]$-module. Hence,
$\gamma({\mathcal B})$ is a basis as well which provides the
analogue of the isomorphism in (6).

Given two such morphisms $\gamma$ and $\gamma'$ one argues that
$\gamma-\gamma' =F^n (\gamma-\gamma') F_0^{n-1}$ and the latter
converges to $0$ for $n\to \infty$ so that $\gamma=\gamma'$. For
the last formula in (6') it suffices to show that
$\nabla_{\cE,i}\circ \gamma- \gamma \circ \bigl(N_{\cE,i} \cdot d\log
Z+  d\big)=0$. The difference is $0$ modulo $Z$ and the composite with
$F_0$ has on the one hand the same image, since $F_0$ is an
isomorphism, and on the other hand is $0$ modulo $Z^p$. Iterating
this process we conclude that it is zero modulo $Z^{p^n}$ for
every $n$ and, hence, it must be $0$.\smallskip

(1)  Consider the commutative diagram $$\begin{array}{ccc} {\rm
H}^i_{\rm dR}\bigl(\Xtilde_{\max},\bigl(\cE_{\Xtilde,\rm
max},\nabla_{\cE_{\Xtilde,\rm max}}\bigr)\bigr)\otimes_{\cO_{\rm
max}}^\varphi \cO_{\rm cris}& \stackrel{F\otimes\cO_{\rm cris}}{
\lra} & {\rm H}^i_{\rm
dR}\bigl(\Xtilde_{\max},\bigl(\cE_{\Xtilde,\rm
max},\nabla_{\cE_{\Xtilde,\rm max}}\bigr)\bigr)\otimes_{\cO_{\rm
max}} \cO_{\rm cris}\cr \alpha\otimes^\varphi \cO_{\rm
cris}\big\downarrow & & \big\downarrow\alpha\cr {\rm H}^i_{\rm
dR}\bigl(X_k,\bigl(\cE_\Xtilde,\nabla_{\cE_\Xtilde}\bigr)\bigr)\bigl[p^{-1}\bigr]\otimes_{\cO_{\rm
cris}}^\varphi \cO_{\rm cris} & \stackrel{\varphi_{\cE,i}\otimes
\cO_{\rm cris}}{\lra} & {\rm H}^i_{\rm
dR}\bigl(X_k,\bigl(\cE_\Xtilde,\nabla_{\cE_\Xtilde}\bigr)\bigr)\bigl[p^{-1}\bigr].\cr
\end{array}$$The $\cO_{\rm cris}\bigl[p^{-1}\bigr]$-linearization of
$\varphi_{\cE,i}$ factors via
$\alpha$ as Frobenius on $\cE_\Xtilde$ factors via $\cE_{\Xtilde,\rm
max}$. Moreover,  $F\otimes \cO_{\rm
cris}$ is an isomorphism by (3'). We deduce that
$\alpha\otimes^\varphi \cO_{\rm cris}$ is split injective. Since
the map $\alpha\otimes^\varphi \cO_{\rm cris}$ is injective and
Frobenius $\varphi$ on $\cO_{\rm cris}$ is injective,  we conclude
that $\alpha$ is injective  and the linearization of Frobenius on
its image is an isomorphism. The proposition follows.
\end{proof}

\subsubsection{A geometric variant.}\label{sec:geoinvariant} Let $\overline{X}_0:=X \times_{\cO_K}\Spec\bigl(\cO_\Kbar/p\cO_\Kbar\bigr)$. Let $\bigl(\overline{X}_0/A_{\rm
log}\bigr)^{\rm cris}_{\rm log}$ and $\bigl(\overline{X}_0/A_{\rm
cris}\bigr)^{\rm cris}_{\rm log}$ be the site defined by replacing
$\cO$ with $A_{\rm log}$ with its log structure and divided power
structure (resp.~with $A_{\rm cris}$ with trivial
log structure). Let $A:=A_{\rm log}$ or $A_{\rm cris}$. Then,
proceeding as above, we let $\Crys\bigl(\overline{X}_0/A\bigr)$ be the
category of crystals of finitely presented $\cO_{\overline{X}_0/A}$-modules
on $\bigl(\overline{X}_0/A\bigr)^{\rm cris}_{\rm log}$. Given a crystal $\cE$
let $\cE_\Xtilde$ be the finitely presented sheaf of
$\cO_\Xtilde\widehat{\otimes}_{\cO} A$-modules on $\overline{X}_0^{\ket}$,
endowed with connection $\nabla$ relative to $A$, defined by the
inverse limit $\cE\vert_{(\overline{X}_0\subset \Xtilde_n\otimes_\cO A/p^n
A)}$. Write $B:=B_{\rm log}$ or $B_{\rm cris}$. Put
$\cE_{\Xtilde_B}:=\cE_\Xtilde[t^{-1}]$.

Let $\Isoc\bigl(\overline{X}_0/A\bigr)$, the category of {\em isocrystals}, be the full subcategory of the category of inductive systems ${\rm
Ind}\bigl(\Crys\bigl(\overline{X}_0/A\bigr)\bigr)$ consisting of the inductive system $\cE \to \cE \to \cE \to \cdots$
where \enspace (1) $\cE$ is a crystal and the transition
maps $\cE\to \cE$ are multiplication by $t$ (not by $p$ as before!), \enspace (2) $\cE_{\Xtilde_B}$ is a finite and projective sheaf of
$\cO_{\Xtilde,\rm log}^{\rm geo}$-modules on $\overline{X}_0^{\ket}$ with connections
$\nabla_{E_{\Xtilde_B}}$ relative to $B$. If $B=B_{\rm log}$ consider the map
$f_\pi\colon B_{\rm log}\to \overline{B}_{\rm log}$ sending $Z$ to $\pi$ defined in \ref{lemma:BlogmodPpi}.
Write $\cE_{X_K}:=\cE_{\Xtilde_B}\otimes_{B_{\rm log}}
\overline{B}_{\rm log}$; it is a $\cO_X\widehat{\otimes}_{\cO_K}
\overline{B}_{\rm log}$-module with connection $\nabla_{X_K}$ relative to $\overline{B}_{\rm log}$
obtained from $\nabla_{E_{\Xtilde_{B_{\rm log}}}}$.

The category of $F$-isocrystals ${\rm FIso}\bigl(\overline{X}_0/A\bigr)$
consists of pairs $(\cE,\varphi)$ where $\cE$ is an isocrystal and
$\varphi\colon F^\ast(\cE) \to \cE$ is an isomorphism  of
isocrystals.

Consider on $\cO_X\widehat{\otimes}_{\cO_K} \overline{B}_{\rm log}$ the filtration $\cO_X\widehat{\otimes}_{\cO_K} \Fil^\bullet
\overline{B}_{\rm log}$ defined by
the filtration on $\overline{B}_{\rm log}\subset B_{\rm dR}$ induced by the filtration on $B_{\rm dR}$.
Define by ${\rm FIso}^{\rm Fil}(X/A_{\rm log})$, called the category
of {\em filtered Frobenius isocrystals} the tensor category whose objects as triples $\bigl(\cE,\varphi,\Fil^n \cE_{X_K}\bigr)$ where\smallskip

\noindent (a) $\bigl(\cE,\varphi\bigr)$ is an $F$-isocrystal on
$\bigl(\overline{X}_0/A_{\rm cris}\bigr)^{\rm cris}_{\rm log}$;\smallskip

\noindent (b) $\Fil^n \cE_{X_K}$, for $n\in\Z$, is a descending filtration by $\cO_X\widehat{\otimes}_{\WW(k)} \overline{B}_{\rm log}$-modules on $\cE_{X_K}$ such
that

(i) $\Fil^h \bigl(\cO_X\widehat{\otimes}_{\WW(k)} \overline{B}_{\rm log}\bigr) \cdot \Fil^{n} \cE_{X_K} \lra \Fil^{n+h} \cE_{X_K} $,

(ii) the graded pieces are finite and projective
$\cO_X\widehat{\otimes}_{\cO_K} \C_p$-modules,

(iii) it satisfies Griffiths' transversality with respect to the connection $\nabla_{X_K}$.\smallskip

\

{\em Cohomology of isocrystals.} Consider an object $\cE\in \Isoc(\overline{X}_0/A_{\rm cris})$. As before we view
$\cE$ as an inductive system of inverse systems
$\{\cE_n\}_n\in {\rm Ind}\left(\Sh\left(\bigl(\overline{X}_0/A_{\rm log}\bigr)^{\rm cris}_{\rm log}\right)^{\N}\right)$ and we define
$${\rm H}^i\bigl(\bigl(\overline{X}_0/A_{\rm log}\bigr)^{\rm cris}_{\rm log},\cE\bigr)$$
using the formalism of \ref{section:continuoussheaves}. It is a
$B_{\rm log}$-module, endowed with a  Frobenius $\varphi_{\cE,i}$,
and we have a canonical isomorphism
$${\rm H}^i\bigl(\bigl(\overline{X}_0/\cO_{\rm
cris}\bigr)^{\rm cris}_{\rm log},\cE\bigr)\cong {\rm H}^i_{\rm dR}\left(\overline{X}_0,\bigl(\cE_{\Xtilde_{B_{\rm log}}},
\nabla_{E_{\Xtilde_{B_{\rm log}}}}\bigr)\right)$$as
$B_{\rm log}$-modules. Note that ${\rm H}^i_{\rm dR}\left(\overline{X}_0,\bigl(\cE_{X_K},\nabla_{X_K}\bigr)\right) $
is an $\overline{B}_{\rm log}$-module with a filtration,
the Hodge filtration, compatible with the filtration on $\overline{B}_{\rm log}$.
The surjective map $\cE_{\Xtilde_{B_{\rm log}}} \lra \cE_{X_K} $ induces a
morphism on cohomology, compatible with filtrations and $G_K$-action, $$  {\rm H}^i
\bigl(\bigl(\overline{X}_0/\cO_{\rm cris}\bigr)^{\rm cris}_{\rm log},\cE\bigr) \lra {\rm
H}^i_{\rm dR}\left(\overline{X}_0,\bigl(\cE_{X_K},\nabla_{X_K}\bigr)\right) .$$

\

\subsubsection{Properties of semistable sheaves.}\label{sec:defsemsitablesheafglobal}

We now drop the assumption that $X$ is a small affine and  deal with the general case.
Let $\cL$ be a $\Q_p$-adic sheaf~$\cL=\{\cL_n\}_n$ on~$\fX_K$. We
say that it is  {\it semistable} if there exists a covering
$\{U_i\}_{i\in I}$ of $X$ by small objects such that
$\cL\vert_{(U,U_K)}$ is semistable in the sense of
\ref{sec:defsemsitablesheaf}. For every $i$ we write
$\bDcrisar\bigl(\cL\bigr)_i$ for the $\widehat{\cO}_\Xtilde^{\rm
DP}[p^{-1}]\vert_{U_i}$-module with connection, Frobenius and
filtration associated to $\cL\vert_{(U,U_K)}$ in
\ref{sec:Dcrisar}. It follows from \ref{prop:equivcris} and
\ref{cor:changefrobenius} that we have a canonical isomorphism
$\bDcrisar\bigl(\cL\bigr)_i\vert_{U_i\cap U_j}\cong
\bDcrisar\bigl(\cL\bigr)_i\vert_{U_i\cap U_j}$, for every $i$ and
$j\in I$, as  $\widehat{\cO}_\Xtilde^{\rm
DP}[p^{-1}]\vert_{U_i\cap U_j}$-modules compatible with
connections and filtrations. In particular the modules
$\bDcrisar\bigl(\cL\bigr)_i$, $i\in I$ glue to a coherent
$\widehat{\cO}_\Xtilde^{\rm DP}[p^{-1}]$-module
$\bDcrisar\bigl(\cL\bigr)$ endowed with connection and a
filtration $\Fil^\bullet \bDcrisar\bigl(\cL\bigr)$. Moreover for
the same reason, for every small object $U\in X^{\ket}$ we have
that $\bDcrisar\bigl(\cL\bigr)\vert_U$ is the
$\widehat{\cO}_\Xtilde^{\rm DP}[p^{-1}]\vert_{U}$-module with
connection and filtration  defined in \ref{sec:Dcrisar}. In
particular once chosen a lift of Frobenius $F_{\widetilde{U}}$ on
$\widetilde{U}_{\rm form}$ we also get a Frobenius
$\varphi_{\cL,U}$ on $\bDcrisar\bigl(\cL\bigr)\vert_U$.

\begin{proposition}\label{prop:mainpropDcrisar}
Assume that $\cL$ is semistable. Then,\smallskip

(1) $\bDcrisar\bigl(\cL\bigr)$ is a projective
$\widehat{\cO}_\Xtilde^{\rm DP}[p^{-1}]$-module of finite
rank;\smallskip

(2) ${\rm Gr}^n \bDcrisar\bigl(\cL\bigr):=\Fil^n \bDcrisar\bigl(\cL\bigr)/\Fil^{n+1}\bDcrisar\bigl(\cL\bigr)$
are projective $\cO_X\otimes_{\WW(k)} K$-modules of
finite rank;\smallskip

(3) the connections $\nabla_{\cL,\WW(k)}$ and $\nabla_{\cL,\cO}$
are integrable and topologically nilpotent (with respect to the
special fiber $X_0$) and satisfy Griffiths' transversality with
respect to the filtration;\smallskip

(4) for every small object, Frobenius $\varphi_{\cL,U}$ on
$\bDcrisar\bigl(\cL\bigr)\vert_U$ is horizontal with respect to
the connections $\nabla_{\cL,\WW(k)}$ and $\nabla_{\cL,\cO}$
restricted to $U$ and is \'etale i.e., the map
$$\varphi_{\cL,U}\otimes 1\colon  \bDcrisar\bigl(\cL\bigr)\vert_U
\otimes_{\cO_{\widetilde{U}}^{\rm DP}}^{F_U}
\widehat{\cO}_{\widetilde{U}}^{\rm DP}\lra \bDcrisar\bigl(\cL\bigr)\vert_U$$is
an isomorphism of $\widehat{\cO}_{\widetilde{U}}^{\rm DP}\bigl[p^{-1}\bigr]$-modules;\smallskip

(5)  for every $n\in\Z$ the  morphism $${\rm Gr}^n \alpha_{\rm log,\cL} \colon \bigoplus_{a+b=n}
{\rm Gr}^a \bDcrisar(\cL)\otimes_{\cO_X\otimes K} {\rm Gr}^b
\bB_{\rm log,K}  \lra {\rm Gr }^n\left(\cL\tensor_{\Z_p} \bB_{\rm log,K}\right), $$
induced by $\alpha_{\rm log,\cL}$, is an isomorphism in ${\rm
Ind}\bigl(\Sh(\fX_K)^\N\bigr)$. In particular, $\alpha_{\rm log,\cL}$ is strict with respect to the
filtrations and it is compatible with Frobenii and
connections;\smallskip

(6) the map $\bDcrisar(\cL)\widehat{\tensor}_{\cO_\Xtilde^{\rm DP}} \cO_{\Xtilde,\rm log}^{\rm geo}\lra \bDcrisgeo(\cL)$
is  an isomorphism, strictly compatible with the filtrations and $\bDcrisgeo(\cL)$ is a direct summand in
$\bDcrisar(\cL)\widehat{\tensor}_{\cO_{\rm cris}} B_{\rm log}$ compatible with the filtrations.
See \S\ref{section:cohBlog} for the notation. It is isomorphic to $\bDcrisar(\cL)\widehat{\tensor}_{\cO_{\rm cris}} B_{\rm log}$
if $X_K$ is geometrically connected over $K$;\smallskip

(7)  there exists a coherent $\widehat{\cO}_\Xtilde^{\rm
DP}$-submodule $D(\cL)$ of $\bDcrisar\bigl(\cL\bigr)$ such that:

(7.i) it is stable under the connections and
$\nabla_{\cL,\WW(k)}\vert_{D(\cL)}$ is  integrable and
topologically nilpotent,

(7.ii) $\bDcrisar\bigl(\cL\bigr)\cong D(\cL)\tensor_{\Z_p} \Q_p$,

(7.iii) there exist integers $h$ and $n\in\N$ such that for every
small $U$ the map $p^h \varphi_{\cL,U}$ sends $D(\cL)\vert_U$ to
$D(\cL)\vert_U$, it is horizontal with respect to
$\nabla_{\cL,\WW(k)}\vert_{D(\cL)}$ and multiplication by $p^n$ on
$D(\cL)\vert_U$ factors via $p^h \varphi_{\cL,U}$.

\end{proposition}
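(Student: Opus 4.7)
The plan is to reduce everything to the local picture on a covering $\{U_i\}$ of $X$ by small objects, use the local results from Chapter 3 (as summarized in \ref{prop:equivcris} and the localization description in \S\ref{sec:Dcrisar}), and then glue.

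First, I would fix a small $U$ with deformation $\widetilde{U}=\Spf(\widetilde{R})$ and identify $\bDcrisar(\cL)\vert_U$ with ${\rm D}^{\rm cris}_{\rm log}(V_U(\cL))$ as in \S\ref{sec:Dcrisar}. Statements (1) and (2) then become the assertion that ${\rm D}^{\rm cris}_{\rm log}(V_U(\cL))$ is a finite projective $\widetilde{R}_{\rm cris}[p^{-1}]$-module and that its graded pieces (which are modules over $\widetilde{R}_{\rm cris}[p^{-1}]/P_\pi(Z)=R\otimes_{\cO_K} K$) are finite projective. This will follow from the faithful flatness (or near-faithful flatness) results for $\widetilde{R}_{\rm max}[p^{-1}]\hookrightarrow B_{\rm log}^{\rm max}(\widetilde{R})$ announced in the introduction (see \ref{thm:Blogmaxff}): semistability gives us that $\cL\otimes B_{\rm log}^{\rm max}$ is free of finite rank, so descent through the tower $\widehat{\cO}_\Xtilde^{\rm DP}[p^{-1}]\subset \widetilde{R}_{\rm max}[p^{-1}]\subset A[p^{-1}]\subset B_{\rm log}^{\rm max}$ yields the projectivity. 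The graded pieces are controlled analogously, using that ${\rm Gr}^\bullet B_{\rm log}^{\rm max}$ is a polynomial ring over $\widehat{\C}_p$ in computable variables.

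Claims (3) and (4) are structural: the connections $\nabla_{\cL,\WW(k)}$ and $\nabla_{\cL,\cO}$ on $\bDcrisar(\cL)$ are inherited from $\nabla^1_{\WW(k)}$ and $\nabla^1_\cO$ on $\bB_{\rm log}$ via \ref{prop:deRhamcomplex}, so integrability, Griffiths' transversality and topological nilpotence (the latter using the explicit local description in \ref{prop:loclstructure}) are immediate from the corresponding properties for $\bB_{\rm log}$. Horizontality of $\varphi_{\cL,U}$ follows similarly from \ref{lemma:propbBcris}(3), while étaleness is checked by tensoring with $\bB_{\rm log,K}$ and using that $\alpha_{\rm log,\cL}$ is a Frobenius-equivariant isomorphism together with the fact that Frobenius on $\bB_{\rm log,K}$ is étale in the appropriate sense. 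For (5), the strictness of $\alpha_{\rm log,\cL}$ with respect to the filtrations is reduced to looking at graded pieces; the graded of $\bB_{\rm log,K}$ is well understood, and the claim reduces via \ref{lemma:BlogmodPpi} and \ref{prop:loclstructure} to the statement that the Hodge filtration on $D_K$ matches the filtration induced from $\bL\otimes B_{\rm dR}$, which is the defining property of a semistable representation.

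For (6), the map $\bDcrisar(\cL)\widehat{\tensor} \cO_{\Xtilde,\rm log}^{\rm geo}\to \bDcrisgeo(\cL)$ is constructed from the natural inclusion (tensoring $\alpha_{\rm log,\cL}$ with the appropriate sheaf and applying $v_{\Kbar,\ast}$). That it is an isomorphism is checked locally, where it reduces to the comparison between ${\rm D}^{\rm cris}_{\rm log}(V)$ and ${\rm D}^{\rm geo,cris}_{\rm log}(V)$ from \S\ref{sec:Dcrislogmaxlog}; the direct-summand assertion uses \ref{cor:injstrictinvariants}(3) as in \S\ref{section:cohBlog}. The case of geometrically connected $X_K$ is where the splitting collapses to an equality since the invariants of $B_{\rm log}$ under $G_K$ capture the full image. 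Finally, (7) is the subtle point: one must produce a global coherent $\widehat{\cO}_\Xtilde^{\rm DP}$-lattice $D(\cL)$ stable under connection with a controlled Frobenius. Locally on each small $U$ I would take $D(\cL)\vert_U$ to be the image of $\bigl(V_U(\cL)\otimes_{\Z_p} {\rm A}^{\rm cris}_{\rm log}(\widetilde{R})\bigr)^{G_{U_K}}$ in $\bDcrisar(\cL)\vert_U$, which is coherent by (1) and the noetherianity of $\widetilde{R}_{\rm cris}$, is stable under $\nabla_{\WW(k)}$, and carries a Frobenius with the required pole (bounded in terms of the pole of $\varphi$ on $t$). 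The hard part, and the main obstacle, will be showing that these local lattices glue to a global $D(\cL)$: the naive construction depends on the choice of Frobenius $F_{\widetilde{U}}$, and one must use \ref{cor:changefrobenius} together with the exhaustiveness of the filtration on $A_{\rm log}$ to show that two such lattices on an overlap are commensurable and can be normalized (at the cost of enlarging $h$ and $n$) to yield a global one.
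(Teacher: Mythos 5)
Your proposal follows essentially the same route as the paper: restrict to a covering by small affines, identify $\bDcrisar(\cL)\vert_U$ with ${\rm D}^{\rm cris}_{\rm log}(V_U(\cL))$, invoke the local results of Chapter~3 (\ref{prop:equivsemistable}, \ref{prop:propsemistable}, \ref{prop:GrDdR}, \ref{cor:arithmimpliesgeo}, \ref{cor:injstrictinvariants}, \S\ref{sec:relcrystals}) for (1)--(6) and for the local lattice in (7), and glue using \ref{cor:changefrobenius} and the finiteness of the covering. The only point to tighten is the coherence of your local $D(\cL)\vert_U$: being a $\widehat{\cO}_{\widetilde U}^{\rm DP}$-submodule of a finite projective $\widehat{\cO}_{\widetilde U}^{\rm DP}[p^{-1}]$-module does not by itself give finite generation, so you must first bound the image of the ($\varphi^s$-twisted) invariants inside $T_0\otimes p^{-n}\widetilde{R}_{\rm cris}$ for a fixed finite lattice $T_0$, exactly as in the proof of \ref{prop:propsemistable}(2).
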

\begin{proof} (1) follows from \ref{prop:equivsemistable}.

(2)-(4) follow from \ref{prop:propsemistable} after restricting to
small open affines of  $X$.

(5) The fact that ${\rm Gr}^n \alpha_{\rm log,\cL}$ is an
isomorphism follows from  \ref{prop:GrDdR}(3) for $\widetilde{\rm D}_{\rm dR}(V)$,
\ref{prop:BcrissubsetBdR}(4) and \ref{prop:propsemistable}(4). The compatibilities with Frobenius and connections are clear.

(6) the first claim follows from \ref{cor:arithmimpliesgeo}. The second follows from the fact that  $\cO_{\Xtilde,\rm log}^{\rm geo}$
is a direct summand in $\cO_\Xtilde\widehat{\otimes}_{\cO} B_{\rm log}$ and is isomorphic to it if
$X_K$ is geometrically connected over $K$ due to \ref{cor:injstrictinvariants}(3).

(7.i) and (7.ii) hold   after restricting to small open affines of
$X$ due to \ref{sec:relcrystals}. Claim (7.iii) holds by (4). Since $X$ is a noetherian
space and can be covered by finitely many small affines, the claim
follows.
\end{proof}

\begin{corollary}\label{cor:cEXtile} If $\cL$ is a semistable sheaf on~$\fX_K$,
there exists a unique filtered Frobenius isocrystal
$\bigl(\cE,\varphi,\Fil^\bullet \cE_{X_K}\bigr)$ such that\smallskip

(i)  $\bDcrisar(\cL)\cong \cE_\Xtilde$, compatibly with the
connections;\smallskip

(ii) $\Fil^n\cE_{X_K}$ is defined by the image of $\Fil^n
\bDcrisar(\cL)$ via the isomorphism in (i). Moreover,
$\Fil^n\cE_{X_K}$ and ${\rm Gr}^n \cE_{X_K}:=\Fil^n\cE_{X_K}/
\Fil^{n+1}\cE_{X_K}$ are locally free
$\widehat{\cO}_{X_K}$-modules of finite rank and the filtration on $\bDcrisar(\cL)$ is uniquely characterized by the fact
that its image in  $\cE_{X_K}$ is $\Fil^\bullet \cE_{X_K}$ and it satisfies Griffiths' tranversality with respect to $\nabla_{\cL,\WW(k)}$.
\smallskip

(iii) for every small affine $U$, writing $\widetilde{R}$ for the algebra
underlying $\widetilde{U}_{\rm form}$, the isomorphism in (i)
restricted to $U$ is compatible with Frobenii, the one on
$\bDcrisar(\cL)\vert_U$ given in \ref{sec:Dcrisar} and the one on
$\cE_\Xtilde\vert_U$ defined by the Frobenius $F_U$  on
$\widetilde{R}$.\smallskip

\end{corollary}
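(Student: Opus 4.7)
The plan is to assemble the filtered Frobenius isocrystal directly from the data produced by Proposition~\ref{prop:mainpropDcrisar}. For the underlying crystal I would exploit the coherent integral sublattice $D(\cL) \subset \bDcrisar(\cL)$ furnished by \ref{prop:mainpropDcrisar}(7). By construction $D(\cL)$ is finitely presented over $\widehat{\cO}_{\Xtilde}^{\rm DP}$, stable under the integrable connection $\nabla_{\cL,\WW(k)}$ relative to $\WW(k)$, and the restricted connection is topologically nilpotent. Kato's equivalence \cite[Thm.~6.2]{katolog} between crystals on $\bigl(X_0/\cO_{\rm cris}\bigr)^{\rm cris}_{\rm log}$ and coherent $\widehat{\cO}_{\Xtilde}^{\rm DP}$-modules equipped with an integrable quasi-nilpotent connection relative to $\WW(k)$ then produces a crystal $\cE_0\in \Crys(X_0/\cO)$ with $\bigl(\cE_0\bigr)_{\Xtilde}=D(\cL)$. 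I would take $\cE$ to be the inductive system $\cE_0\to \cE_0 \to \cdots$ with transition maps equal to multiplication by $p$; the condition $\cE_{\Xtilde}[p^{-1}]\cong \bDcrisar(\cL)$ being projective of finite rank over $\widehat{\cO}_{\Xtilde}^{\rm DP}[p^{-1}]$ follows from \ref{prop:mainpropDcrisar}(1), so $\cE\in\Isoc(X_0/\cO)$.

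Next, I would globalize the Frobenius. Over a small affine $U$ with chosen Frobenius lift $F_U$, parts (4) and (7.iii) of \ref{prop:mainpropDcrisar} supply an \'etale Frobenius $\varphi_{\cL,U}$ on $\bDcrisar(\cL)\vert_U$ horizontal with respect to $\nabla_{\cL,\WW(k)}$, and such that $p^h\varphi_{\cL,U}$ preserves $D(\cL)\vert_U$. Over the overlap of two small affines, the standard Taylor series formula built from the quasi-nilpotent connection shows that different choices of Frobenius lift on $\widetilde{U}_{\rm form}$ yield canonically isomorphic pullbacks; this is exactly the gluing principle already invoked in \S\ref{sec:defsemsitablesheafglobal} to glue the local modules $\bDcrisar(\cL)_i$ into a single global sheaf. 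The local Frobenii therefore descend to a morphism $F^\ast(\cE_0)\to\cE_0$ in the log crystalline topos, which after inverting $p$ yields an isomorphism $\varphi\colon F^\ast(\cE)\to \cE$ of isocrystals satisfying (iii) by construction.

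For the filtration I would set $\Fil^n \cE_{X_K}$ to be the image of $\Fil^n \bDcrisar(\cL)$ under the surjection $\bDcrisar(\cL)\to \cE_{X_K}$ obtained by base change along $\cO\to\cO_K$, $Z\mapsto\pi$. Local freeness of $\Fil^n \cE_{X_K}$ and of ${\rm Gr}^n \cE_{X_K}$ as finite $\widehat{\cO}_{X_K}$-modules follows from \ref{prop:mainpropDcrisar}(2) together with flat base change along $\cO_{\rm cris}\to\cO_K$, and Griffiths' transversality with respect to $\nabla_{X_K}$ descends immediately from the corresponding property on $\bDcrisar(\cL)$ in \ref{prop:mainpropDcrisar}(3). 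This assembles $\bigl(\cE,\varphi,\Fil^\bullet \cE_{X_K}\bigr)$ into an object of ${\rm FIso}^{\rm Fil}(X/\cO)$.

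Uniqueness is then formal: Kato's equivalence pins down $\cE$ from the pair $\bigl(\bDcrisar(\cL),\nabla_{\cL,\WW(k)}\bigr)$; condition (iii) determines $\varphi$ from the local Frobenii $\varphi_{\cL,U}$; and the filtration on $\bDcrisar(\cL)$ can be recovered recursively from $\Fil^\bullet \cE_{X_K}$ via Griffiths' transversality with respect to $\nabla_{\cL,\WW(k)}$ and the $(p,P_\pi(Z))$-adic completeness of $\bDcrisar(\cL)$, exactly as in the Hodge-theoretic reconstruction of a filtered module on a DP thickening from its fiber. The one point deserving care is the gluing of Frobenii on the integral lattice $D(\cL)$: one must check that the exponent $h$ in \ref{prop:mainpropDcrisar}(7.iii) can be taken uniform over a finite covering by small affines, and that the Taylor-type formula relating two Frobenius lifts makes integral sense on $D(\cL)$ up to a bounded loss of $p$-divisibility which can then be absorbed into the inductive system defining $\cE$. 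Granted this bookkeeping the corollary is essentially a formal consequence of \ref{prop:mainpropDcrisar}.
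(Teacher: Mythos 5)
Your construction follows essentially the same route as the paper: the crystal is built from the integral lattice $D(\cL)$ of \ref{prop:mainpropDcrisar}(7) via Kato's equivalence \cite[Thm.~6.2]{katolog}, the Frobenius comes from \ref{prop:mainpropDcrisar}(4) and (7.iii), the filtration is the image filtration, and uniqueness is read off from the same equivalence together with the recursive (Griffiths) reconstruction of $\Fil^\bullet\bDcrisar(\cL)$ from $\Fil^\bullet\cE_{X_K}$. Two points need repair or sharpening. First, your justification of the local freeness of $\Fil^n\cE_{X_K}$ and ${\rm Gr}^n\cE_{X_K}$ by ``flat base change along $\cO_{\rm cris}\to\cO_K$'' does not work as stated: that map sends $Z$ to $\pi$ and kills the divided power ideal, so it is a surjection, not a flat map. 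The correct mechanism is the local analysis of \ref{prop:propsemistable}(4) together with \ref{prop:GrDdR}(2) (via \ref{lemma:eqdeRham}), which identify $\widetilde{\rm D}_{\rm dR}(V)$ with ${\rm D}_{\rm dR}(V)$ extended by the $(Z-\pi)$-adic filtration and show the graded pieces of ${\rm D}_{\rm dR}(V)$ are finite projective; this is exactly what the paper cites. Second, the ``bookkeeping'' you defer on gluing the Frobenii --- independence of the choice of Frobenius lift up to a power of $p$ absorbed in the inductive system --- is not something to be waved at with a Taylor expansion on the integral lattice; it is precisely the content of \ref{cor:changefrobenius}, whose proof does not run a global Taylor argument but instead injects ${\rm D}^{\rm cris}_{\rm log}(V)$ into its localizations at the minimal primes over $(\pi)$ and reduces to the smooth case treated in \cite[Prop.~7.2.3]{brinon}. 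With those two references substituted for your ad hoc justifications, the argument is the paper's proof.
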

\begin{proof} The existence  of an isocrystal $\cE$
such that (i) holds follows from \ref{prop:mainpropDcrisar}(7).
The uniqueness follows from the characterization of crystals on
$\bigl(X_0/\cO_{\rm cris}\bigr)^{\rm cris}_{\rm log}$ in terms of
$\widehat{\cO}_\Xtilde^{\rm DP}$-modules given in \cite[Thm.
6.2]{katolog}.

(ii) provides the definition of the filtration. The fact that it satisfies Griffiths' transversality, that it consists of locally
free $\widehat{\cO}_{X_K}$-modules
and that its graded quotient also consist of locally free $\widehat{\cO}_{X_K}$-modules and the fact that we can recover the
original filtration on $\bDcrisar(\cL)$
follow from \ref{prop:propsemistable} and \ref{prop:GrDdR}(2).

(iii) the required property and \ref{prop:mainpropDcrisar}(4)
define $\varphi\vert_U$ on $\cE_\Xtilde\vert_U$, up to
multiplication by $p$, and hence on the crystal $\cE\vert_{U_k}$
by \cite[Thm. 6.2]{katolog}. We are left to show that the
$\varphi\vert_U$'s glue for varying $U$'s. This follows from
\ref{cor:changefrobenius}.

\end{proof}

By abuse of notation we simply write $$\bDcrisar\colon
\Sh\bigl(\fX_K\bigr)_{\rm ss} \lra {\rm FIso}^{\rm Fil}(X/\cO)$$for the
induced functor. We let ${\rm FIso}^{\rm Fil}(X/\cO)^{\rm adm} $, the
category of so called {\it admissible filtered Frobenius
isocrystals} be its essential image.
Let~$\underline{\cE}:=\bigl(\cE,\varphi,\Fil^n \cE_{X_K}\bigr)$ be
a filtered Frobenius isocrystal. Define
$$\bV_{\rm log}(\underline{\cE}):=\Fil^0\left(v_K^\ast(\cE_\Xtilde)
\tensor_{\cO_\Xtilde^{\rm DP}} \bB_{\rm
log,K}\right)^{\nabla_{\WW(k)}=0,\phi=1}\in {\rm
Ind}\left(\Sh(\fX_K)^\N\right).$$Here, we endow $\cE_\Xtilde$ with the filtrations provided in \ref{cor:cEXtile}(ii) and
$v_K^\ast(\cE_\Xtilde)
\tensor_{\cO_\Xtilde^{\rm DP}} \bB_{\rm
log,K}$ with the composite filtration.

\begin{proposition}\label{prop:BdCristannakianequivalence} The functor $\bDcrisar\colon
\Sh\bigl(\fX_K\bigr)_{\rm ss} \lra {\rm FIso}^{\rm Fil}(X/\cO)^{\rm adm}$
defines an equivalence of categories with left quasi-inverse
$\bV_{\rm log}$. Moreover,\smallskip

(i) if $\cL$ and $\cL'$ are semistable sheaves, then also
$\cL\otimes_{\Z_p} \cL'$ is semistable and
$\bDcrisar\bigl(\cL\otimes_{\Z_p} \cL'\bigr)\cong
\bDcrisar(\cL)\otimes_{\cO_{\widetilde{X}}^{\rm DP}}
\bDcrisar(\cL')$;\smallskip

(ii) if $\cL$ is a semistable sheaf, then also $\cL^\vee$ is
semistable and $\bDcrisar\bigl(\cL^\vee\bigr)\cong
\bDcrisar(\cL)^\vee$;\smallskip

(iii) if $\cL$ is a semistable sheaf, then  $\beta^\ast(\cL)$ is a geometrically semistable sheaf on
$\fX_\Kbar$ in the sense of \S\ref{sec:defsemgeositablesheaf} and $$\bDcrisgeo\bigl(\beta^\ast(\cL)\bigr)\cong
\beta^\ast\bigl(\bDcrisar(\cL)\widehat{\otimes}_{\cO_\Xtilde^{\rm DP}} \cO_{\Xtilde,\rm log}^{\rm geo}\bigr)$$
as filtered Frobenius isocrystals on $X_0$ relative to $A_{\rm cris}$ in the sense of \S\ref{sec:geoinvariant}.

\smallskip

In particular, $\Sh\bigl(\fX_K\bigr)_{\rm ss}$ and
${\rm FIso}^{\rm Fil}(X/\cO)^{\rm adm}$ are tannakian  categories and
$\bDcrisar$ defines an equivalence of abelian tensor categories.

\end{proposition}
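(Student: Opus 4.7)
The plan is to reduce the equivalence to its local counterpart on small affines and then glue. First, cover $X$ by small objects $\{U_i=\Spf(R_i)\}$ with compatible deformations $\widetilde{R}_i$; by Proposition~\ref{prop:equivcris} the restriction of a semistable sheaf $\cL$ to $(U_i,U_{i,K})$ is equivalent to the datum of a semistable $\cG_{U_{i,K}}$-representation $V_{U_i}(\cL)$, and by the localization discussion in \S\ref{sec:Dcrisar} together with Corollary~\ref{cor:cEXtile}, the value $\bDcrisar(\cL)(U_i)$ is canonically identified with $D_{\rm log}^{\rm cris}(V_{U_i}(\cL))$ equipped with its filtration, Frobenius and connection. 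The statement that $\bDcrisar$ is an equivalence onto ${\rm FIso}^{\rm Fil}(X/\cO)^{\rm adm}$ then becomes, locally on each $U_i$, the assertion that $D_{\rm log}^{\rm cris}$ is an equivalence between semistable representations of $\cG_{U_{i,K}}$ and admissible filtered $(\varphi,\nabla)$-modules over $\widetilde{R}_{i,{\rm cris}}[p^{-1}]$, which is part of the local Fontaine theory developed in \S3.

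Next I would construct the left quasi-inverse $\bV_{\rm log}$ directly from the fundamental exact diagram. Tensoring the diagram of \S\ref{section:fundamentaldiagram} with the locally free $\Z_p$-sheaf $\cL$ preserves exactness and provides an identification of $\cL\otimes_{\Z_p}\Q_p$ with $\Fil^0\bigl(\cL\otimes_{\Z_p}\bB_{\rm log,\Kbar}^\nabla\bigr)^{\nabla_{\WW(k)}=0,\varphi=1}$. Transporting along the filtered isomorphism $\alpha_{\rm log,\cL}$ of Proposition~\ref{prop:mainpropDcrisar}(5), which respects connections and Frobenius, one deduces $\bV_{\rm log}\bigl(\bDcrisar(\cL)\bigr)\cong \cL\otimes_{\Z_p}\Q_p$ in ${\rm Ind}\bigl(\Sh(\fX_K)^\N\bigr)$. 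This gives the quasi-inverse property and hence the fully faithfulness of $\bDcrisar$; essential surjectivity onto ${\rm FIso}^{\rm Fil}(X/\cO)^{\rm adm}$ holds by its very definition as the essential image of $\bDcrisar$.

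Compatibilities (i) and (ii) reduce on each $U_i$ to the local identities $D_{\rm log}^{\rm cris}(V\otimes V')\cong D_{\rm log}^{\rm cris}(V)\otimes D_{\rm log}^{\rm cris}(V')$ and $D_{\rm log}^{\rm cris}(V^\vee)\cong D_{\rm log}^{\rm cris}(V)^\vee$, which hold because $B_{\rm log}^{\rm cris}$-admissibility is closed under tensor products and duals in the local Fontaine formalism (an analogue of Proposition~\ref{prop:T}). The uniqueness in Corollary~\ref{cor:cEXtile} then glues these local isomorphisms into the global ones. Property (iii) follows from Proposition~\ref{prop:mainpropDcrisar}(6), which compares $\bDcrisar$ on $\fX_K$ with $\bDcrisgeo$ on $\fX_\Kbar$ via base change along $\beta$, combined with the identification $\beta^\ast\bB_{\rm log,K}\cong \bB_{\rm log,\Kbar}$ recorded in \S\ref{sec:Alognabla} and the geometric formalism of \S\ref{sec:geoinvariant}.

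The hard part will be the gluing: the local filtered Frobenius isocrystals a priori depend on the chosen Frobenius lifts $F_{\widetilde{U}_i}$, and one must identify them canonically on overlaps $U_i\cap U_j$. This will be handled by the crystalline formalism of Corollary~\ref{cor:cEXtile}(iii), which via \cite[Thm.~6.2]{katolog} guarantees that the $F$-isocrystal structure is independent, up to canonical isomorphism, of the choice of Frobenius lift. Once the gluing is in place, the final Tannakian assertion is formal: $\Sh(\fX_K)_{\rm ss}$ is closed under tensor products and duals, exactness of the fundamental diagram makes $\bV_{\rm log}$ exact on the admissible subcategory, and the resulting $\Q_p$-linear fibre functor gives $\bDcrisar$ the structure of a tensor equivalence of Tannakian categories.
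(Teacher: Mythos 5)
Your proposal is correct and follows essentially the same route as the paper: the quasi-inverse property via the fundamental exact diagram of \S\ref{section:fundamentaldiagram} together with the definition of semistability, essential surjectivity by definition of the admissible subcategory, claims (i)--(ii) by reduction to the local statements of \ref{prop:tensorss} on small affines, claim (iii) from \ref{prop:equivcris} and \ref{prop:mainpropDcrisar}(6), and the independence of the Frobenius lift via \ref{cor:changefrobenius} as packaged in \ref{cor:cEXtile}(iii). The only cosmetic difference is your opening reformulation of the global equivalence as a glued family of local equivalences, which the paper does not need to make explicit.
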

\begin{proof}
It follows from \ref{section:fundamentaldiagram} and the
definition of semistable sheaf in
\ref{sec:defsemsitablesheafglobal} that  we have  isomorphisms of functors $\bV_{\rm
log}\circ \bDcrisar\cong {\rm Id}$ and $ \bDcrisar\circ \bV_{\rm
log}\cong {\rm Id}$ considering the categories $\Sh\bigl(\fX_K\bigr)_{\rm ss}$ and
${\rm FIso}^{\rm Fil}(X/\cO)^{\rm adm}$  respectively. In particular the functor $\bDcrisar$ is
fully faithful. Being essentially surjective by definition of
${\rm FIso}(X/\cO)^{\rm adm} $, we conclude that $\bDcrisar$ is
an equivalence of categories.

The fact that $\Sh\bigl(\fX_K\bigr)_{\rm ss}$ is closed under
tensor products and duals follow from \ref{prop:equivcris} and
\ref{prop:tensorss}. The fact that $\bDcrisar$ commutes with
tensor products and duals also follows from the description of
$\bDcrisar(\cL)$ on small affine formal subschemes given in  \ref{sec:Dcrisar} and
from \ref{prop:tensorss}. Claim (iii) has been proven in \ref{prop:equivcris} and
\ref{prop:mainpropDcrisar}(6). The Frobenius structure is defined for
$\bDcrisgeo\bigl(\beta^\ast(\cL)\bigr)$ only on small affines and is compatible with the one on
$\bDcrisar(\cL)$. This compatibility allows us to define a global Frobenius
structure on $\bDcrisgeo\bigl(\beta^\ast(\cL)\bigr)$ inherited from the Frobenius structure on $\bDcrisar(\cL)$.

\end{proof}

\subsubsection{Cohomology of semistable sheaves}\label{sec:cohomologyofsssheaves}

\begin{theorem}
\label{thm:formalcomparison} For $\cL$ a semistable sheaf on $\fX_K$ there is a
canonical isomorphism of
$\delta$-functors  from $\Sh(\fX_K)_{\rm ss}$:
$$
{\rm H}^i\bigl(\fX_\Kbar, \bL\otimes\bB_{\rm
log,\Kbar}^\nabla\bigr) \cong {\rm H}^i\bigl(\bigl(X_0/\cO_{\rm
cris}\bigr)^{\rm cris}_{\rm log}, \bDcrisgeo(\bL)\bigr),
$$of
$B_{\rm log}$-modules, compatible with action of $G_K$, Frobenius,
monodromy operator $N$  and strictly compatible with the
filtrations. In fact for every $r\in \Z$ we have isomorphisms of
$A_{\rm log}$-modules which are $G_K$-equivariant and compatible
for varying $r$'s and with the previous isomorphism,
$$
{\rm H}^i\bigl(\fX_\Kbar, \bL\otimes\Fil^r \bB_{\rm log,\Kbar}^\nabla\bigr)
\cong  \bH^i\bigl(X^{\ket},\Fil^{\rm r-\bullet} \bDcrisgeo(\bL)\otimes_{\cO_\Xtilde}
\omega^\bullet_{\Xtilde/\cO}\bigr).
$$
\end{theorem}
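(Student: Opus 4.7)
\textbf{Proof plan for Theorem \ref{thm:formalcomparison}.} The strategy parallels the smooth case treated in \cite{andreatta_iovita_comparison}: build a bi-filtered Poincar\'e--de Rham resolution, push forward to $X^{\ket}$ using acyclicity, and identify the outcome with log crystalline cohomology.

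First I will use the filtered Poincar\'e lemma (Lemma \ref{lemma:propbBcris}(2)) to resolve $\Fil^r \bB_{\rm log,\Kbar}^\nabla$ by the de Rham complex $\Fil^{r-\bullet}\bB_{\rm log,\Kbar} \otimes_{\cO_\Xtilde} \omega^\bullet_{\Xtilde/\cO}$. Since $\bL$ is locally constant and hence flat, tensoring gives a resolution
\[
\bL \otimes \Fil^r \bB_{\rm log,\Kbar}^\nabla \;\simeq\; \bL \otimes \Fil^{r-\bullet}\bB_{\rm log,\Kbar} \otimes_{\cO_\Xtilde} \omega^\bullet_{\Xtilde/\cO}.
\]
Next, by the definition of semistability and Proposition \ref{prop:mainpropDcrisar}(5)(6), the comparison map $\alpha_{\rm log,\cL}$ is a strict isomorphism
\[
\bL \otimes \bB_{\rm log,\Kbar} \;\cong\; v_\Kbar^\ast\bDcrisgeo(\cL) \otimes_{v_\Kbar^\ast\cO_{\Xtilde,\rm log}^{\rm geo}} \bB_{\rm log,\Kbar}.
\]
Substituting identifies the resolving complex with $\Fil^{r-\bullet}\bigl(v_\Kbar^\ast\bDcrisgeo(\cL) \otimes \bB_{\rm log,\Kbar}\bigr) \otimes_{\cO_\Xtilde}\omega^\bullet_{\Xtilde/\cO}$, the filtration being the convolution of the Hodge filtration on $\bDcrisgeo(\cL)$ with $\Fil^\bullet\bB_{\rm log,\Kbar}$.

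I then apply $Rv_{\Kbar,\ast}^{\rm cont}$. Proposition \ref{prop:crysisacyclic} gives $R^j v_{\Kbar,\ast}^{\rm cont}\Fil^s\bB_{\rm log,\Kbar}=0$ for $j\ge 1$ and equals $\Fil^s\cO_{\Xtilde,\rm log}^{\rm geo}$ for $j=0$. Combined with the projection formula, applicable because $\bDcrisgeo(\cL)$ and $\omega^i_{\Xtilde/\cO}$ are pulled back from $X^{\ket}$ and are locally free, the Leray spectral sequence degenerates and yields
\[
H^i(\fX_\Kbar,\, \bL \otimes \Fil^r \bB_{\rm log,\Kbar}^\nabla) \;\cong\; \bH^i\bigl(X^{\ket},\, \Fil^{r-\bullet}\bDcrisgeo(\cL) \otimes_{\cO_\Xtilde} \omega^\bullet_{\Xtilde/\cO}\bigr),
\]
which is the second displayed formula in the theorem. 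Taking $r=-\infty$, the right-hand side is the hypercohomology of the de Rham complex of $\bDcrisgeo(\cL)$, which computes log crystalline cohomology $H^i\bigl((X_0/\cO_{\rm cris})^{\rm cris}_{\rm log},\bDcrisgeo(\bL)\bigr)$ by the geometric analogue of \cite[Thm.~6.4]{katolog} discussed in \S\ref{sec:geoinvariant}. This gives the first displayed isomorphism.

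It remains to check the extra structures. The $G_K$-action is preserved at each step because every identification is $G_K$-equivariant. Strict compatibility with filtrations follows from the strictness of $\alpha_{\rm log,\cL}$ (Proposition \ref{prop:mainpropDcrisar}(5)) propagating through the convolution filtration. Frobenius compatibility is established locally on small affines via a chosen lift $F_{\widetilde{U}}$ using Proposition \ref{prop:mainpropDcrisar}(4), then globalised by the argument of Corollary \ref{cor:cEXtile}. For the monodromy operator $N$, I will use the exact sequence $0\to \bA_{\rm cris,\Kbar}^\nabla \to \bA_{\rm log,\Kbar}^\nabla \xrightarrow{N} \bA_{\rm log,\Kbar}^\nabla \to 0$ of \S\ref{section:monodromydiagram} together with the analogous sequence for $\omega^\bullet_{\Xtilde/\WW(k)}$ versus $\omega^\bullet_{\Xtilde/\cO}$, so that $N$ on the left-hand side is identified with the residue along $dZ/Z$ of the connection on the right-hand side; this matches the monodromy operator on crystalline cohomology coming from the connection relative to $\omega^1_{\cO_{\rm cris}/\WW(k)}$ as in Proposition \ref{prop:HicrisE}(5).

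The step I expect to be most delicate is the monodromy compatibility: tracking the monodromy operator through the Poincar\'e resolution, the $\alpha_{\rm log,\cL}$-substitution (which mixes the log structure on $\bB_{\rm log,\Kbar}$ with the one on $\bDcrisgeo(\cL)$), and the pushforward requires that all intermediate isomorphisms respect both the connection relative to $\WW(k)$ and its $dZ/Z$-residue. All the necessary exactness and splittings have been assembled in Propositions \ref{prop:deRhamcomplex} and \ref{lemma:propbBcris} and in \S\ref{section:monodromydiagram}, but their synthesis into a single $N$-compatible chain of isomorphisms is the main technical nucleus of the proof.
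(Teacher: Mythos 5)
Your proposal follows essentially the same route as the paper's proof: the filtered Poincar\'e resolution from Lemma \ref{lemma:propbBcris}(2), substitution via the strict isomorphism $\alpha_{\rm log,\cL}$ of Proposition \ref{prop:mainpropDcrisar}(5)--(6), pushforward by $v_{\Kbar,\ast}^{\rm cont}$ using the acyclicity of $\Fil^r\bB_{\rm log,\Kbar}$ from Proposition \ref{prop:crysisacyclic} (with the induction on the filtration steps via the projectivity of the graded pieces), identification with log crystalline cohomology as in \S\ref{sec:geoinvariant}, and the monodromy compatibility via the exact sequence of \S\ref{section:monodromydiagram}. No gaps; this is the paper's argument.
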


Here we write $\bL$ for $\beta^\ast(\bL)$ by abuse of notation. We identify $\bDcrisgeo(\bL)$ with the  Frobenius crystal
$\bDcrisar(\cL)\widehat{\otimes}_{\cO_\Xtilde^{\rm DP}} \cO_{\Xtilde,\rm log}^{\rm geo}$ using
\ref{prop:BdCristannakianequivalence}(iii). In particular, we have an isomorphism
$${\rm H}^i\bigl(\bigl(X_0/\cO_{\rm cris}\bigr)^{\rm cris}_{\rm
log}, \bDcrisgeo(\bL)\bigr)\cong \bH^i\bigl(X^{\ket}, \bDcrisgeo(\bL)\otimes_{\cO_\Xtilde}
\omega^\bullet_{\Xtilde/\cO}\bigr)$$as $B_{\rm log}$-modules. Note that
$\bL\otimes \bB_{\rm log,\Kbar}^\nabla$ is quasi-isomorphic to the complex $\bL\otimes\bB_{\rm log,\Kbar}
\otimes_{\cO_\Xtilde} \omega^\bullet_{\Xtilde/\cO}$ by
\ref{lemma:propbBcris} which is quasi-isomorphic to $\bB_{\rm log,\Kbar}\otimes_{\cO_{\Xtilde,\rm log}^{\rm geo}}
\bDcrisgeo(\bL)\otimes_{\cO_\Xtilde}
\omega^\bullet_{\Xtilde/\cO}$ by definition of semistable sheaf and \ref{prop:BdCristannakianequivalence}(iii).
Thus the fact that we have isomorphisms of $B_{\rm log}$-modules as claimed in the Theorem is a
formal consequence of \ref{lemma:propbBcris}.
\smallskip

{\it The filtrations.} The filtration on ${\rm H}^i\bigl(\fX_\Kbar, \bL\otimes\bB_{\rm log,\Kbar}^\nabla\bigr)$
is defined as the image of ${\rm
H}^i\bigl(\fX_\Kbar, \bL\otimes\Fil^r \bB_{\rm log,\Kbar}^\nabla\bigr)$. The filtration $\Fil^r {\rm H}^i
\bigl(\bigl(X_0/\cO_{\rm cris}\bigr)^{\rm cris}_{\rm log},
\bDcrisgeo(\bL)\bigr)$ on ${\rm H}^i\bigl(\bigl(X_0/\cO_{\rm cris}\bigr)^{\rm cris}_{\rm log},
\bDcrisgeo(\bL)\bigr)$ is defined as the image of
$\bH^i\bigl(X^{\ket},\Fil^{\rm r-\bullet} \bDcrisgeo(\bL)\otimes_{\cO_\Xtilde} \omega^\bullet_{\Xtilde/\cO}\bigr)$.
Due to \ref{lemma:propbBcris}  we are left to
prove that $ {\rm R}^j v_{\Kbar,\ast}^{\rm cont}\bigl(\bL\otimes\Fil^r \bB_{\rm log,\Kbar}\bigr)$
vanishes for every $j\geq 1$ and every $r\in\Z$. Note that
$\bL\otimes\Fil^n \bB_{\rm log,\Kbar}\cong \Fil^{n} \left(\bDcrisgeo(\bL) \otimes_{\cO_{\Xtilde,\rm log}^{\rm geo}}
\bB_{\rm log,\Kbar}\right)$ which is  $\Fil^n\bigl(\bDcrisar(\bL)
\otimes_{\widehat{\cO}_{\Xtilde}^{\rm DP}} \bB_{\rm log,\Kbar}\bigr)$ by
\ref{prop:BdCristannakianequivalence}(iii). We are reduced to prove the vanishing of  $ {\rm R}^j v_{\Kbar,\ast}^{\rm cont}
\Fil^{n} \left(\bDcrisar(\bL)
\otimes_{\widehat{\cO}_{\Xtilde}^{\rm DP}} \bB_{\rm log,\Kbar}\right)$ for every $j\geq 1$ and every
$r\in\Z$.  As $\bDcrisar(\bL)=\Fil^{h}\bDcrisar(\bL)$ for
$h$-small enough, we conclude that $ {\rm R}^j v_{\Kbar,\ast}^{\rm cont} \left(\Fil^{s}\bDcrisar(\bL)
\otimes_{\widehat{\cO}_{\Xtilde}^{\rm DP}} \Fil^t \bB_{\rm
log,\Kbar}\right)$ is $0$ for every $s\leq h$ and every $t\in\Z$ thanks to \ref{prop:crysisacyclic}.
Using \ref{prop:mainpropDcrisar}(2) and proceeding by induction on $s$
we get the vanishing for every $s$ and $t\in \Z$. We conclude using \ref{prop:mainpropDcrisar}(5).

\smallskip

{\it Galois action.} The Galois action on ${\rm H}^i_{\rm dR}\bigl(X^{\et}, \Fil^r\bDcrisgeo(\bL)\bigr)$ is induced by the Galois action on $\bDcrisgeo(\bL)$. The
Galois action on ${\rm H}^i(\fX_\Kbar, \bL\otimes\Fil^r\bB_{\rm log,\Kbar}^\nabla) $ arises via the isomorphism $\beta^\ast\bigl(\cL\tensor \Fil^r \bA_{\rm
log,K}^\nabla\bigr)=\cL\tensor \Fil^r \bA_{\rm log,\Kbar}^\nabla$. The verification of the compatibility with $G_K$-action is formal; see
\cite[Thm.~3.15]{andreatta_iovita_comparison} for details.\smallskip

{\it Frobenius:} The Frobenius on  ${\rm H}^i\left(\fX_\Kbar, \cL\tensor_{\Z_p} \bB_{\rm log,\Kbar}^\nabla\right)$ is induced by Frobenius on
$\bB_{\rm log,\Kbar}^\nabla$.
Frobenius on ${\rm H}^i\bigl(\bigl(X_0/\cO_{\rm cris}\bigr)^{\rm cris}_{\rm log}, \bDcrisgeo(\bL)\bigr)$ is defined by
Frobenius on $\bDcrisgeo(\bL)$ defined in \ref{prop:BdCristannakianequivalence}(iii). The
proof of the compatibility of the isomorphism with Frobenius is as in \cite[Thm.~3.15]{andreatta_iovita_comparison}.
We refer to loc.~cit.~for details. \smallskip

\smallskip

{\it Monodromy:} The monodromy operator $N$ on  ${\rm
H}^i\bigl(\fX_\Kbar, \bL\otimes\bB_{\rm log,\Kbar}^\nabla\bigr)$
is defined by the $\bB_{\rm cris,\Kbar}^\nabla$-linear derivation
$\bB_{\rm log,\Kbar}^\nabla \lra \bB_{\rm log,\Kbar}^\nabla
\frac{dZ}{Z}$ induced by the $\WW(k)$-linear derivation $\cO \to
\cO \frac{dZ}{Z}$ on $\cO$.

The monodromy on $\bH^i\bigl(X^{\ket}, \bDcrisgeo(\bL)\otimes_{\cO_\Xtilde}
\omega^\bullet_{\Xtilde/\cO}\bigr)$ is defined by taking the long exact sequence in
cohomology associated to the short  exact sequence
$$0 \lra  \bDcrisgeo(\bL) \otimes_{\cO_\Xtilde}
\omega^{\bullet-1}_{\Xtilde/\cO}\wedge \frac{dZ}{Z}   \lra \bDcrisgeo(\bL)
\otimes_{\cO_\Xtilde} \omega^{\bullet}_{\Xtilde/\WW(k)}  \lra \bDcrisgeo(\bL)
\otimes_{\cO_\Xtilde} \omega^{\bullet}_{\Xtilde/\cO} \lra 0$$of complexes
deduced from \ref{section:monodromydiagram}, relating the derivations
$\nabla_{\cL,\WW(k)}$ and $\nabla_{\cL,\cO}$. It provides for every $i$ a
map $N_{\bL,i}$: $$\bH^i\bigl(X^{\ket}, \bDcrisgeo(\bL)\otimes_{\cO_\Xtilde}
\omega^\bullet_{\Xtilde/\cO}\bigr)\to \bH^{i+1}\bigl(X^{\ket}, \bDcrisgeo(\bL)
\otimes_{\cO_\Xtilde} \omega^{\bullet-1}_{\Xtilde/\cO}\bigr)\frac{dZ}{Z} \cong
\bH^i\bigl(X^{\ket}, \bDcrisgeo(\bL)\otimes_{\cO_\Xtilde} \omega^\bullet_{\Xtilde/\cO}\bigr)\frac{dZ}{Z} .$$
The verification of the compatibility in
\ref{thm:formalcomparison} with the monodromy operator is a formal consequence of the following exact sequence
of complexes, see \ref{section:monodromydiagram},

$$\begin{array}{cccccccc} 0  \lra &\bB_{\rm log,L}^\nabla
\frac{dZ}{Z}[-1]&\lra &  \bB_{\rm cris,L}^\nabla &
\stackrel{d}{\lra}& \bB_{\rm log,L}^\nabla &  \lra 0\cr &
\big\downarrow & & \big\downarrow & & \big\downarrow & \cr
 0 \lra & \bB_{\rm log,L}
\otimes_{\cO_\Xtilde} \omega^{\bullet-1}_{\Xtilde/\cO}\wedge
\frac{dZ}{Z}  & \lra  & \bB_{\rm log,L} \otimes_{\cO_\Xtilde}
\omega^{\bullet}_{\Xtilde/\WW(k)}  &\lra & \bB_{\rm log,L}
\otimes_{\cO_\Xtilde} \omega^{\bullet}_{\Xtilde/\cO} &\lra 0.
\end{array}$$

\smallskip

{\it A variant:} We use the notation of \S\ref{sec:geoinvariant}.  Let $\bigl(\bDcrisgeo(\bL)_{X_K},\nabla_{\bDcrisgeo(\bL)_{X_K}}\bigr)$ be
$\bDcrisgeo(\bL)\otimes_{B_{\rm log}} \overline{B}_{\rm log}$. It is a sheaf of $\cO_X\widehat{\otimes}_{\cO_K} \overline{B}_{\rm log}$-modules with connection
relative to $\overline{B}_{\rm log}$ and filtration satisfying Griffiths' transversality. See loc.~cit. Recall that in \S\ref{sec:defBbarlog} we have defined
$\overline{\bB}_{\rm log,\Kbar}^\nabla$ as $\bB_{\rm log,\Kbar}^\nabla \otimes_{B_{\rm log}} \overline{B}_{\rm log} $ and similarly for $\overline{\bB}_{\rm
log,\Kbar}$.

\begin{theorem}\label{sec:formalcomparison} We have an isomorphism of $\delta$-functors:
$$
{\rm H}^i\bigl(\fX_\Kbar, \bL\otimes \overline{\bB}_{\rm log,\Kbar}^\nabla \bigr) \cong {\rm H}^i_{\rm
dR}\left(X^{\ket},\bigl(\bDcrisgeo(\bL)_{X_K},\nabla_{\bDcrisgeo(\bL)_{X_K}}\bigr)\right),
$$for $\cL$ a semistable sheaf on $\fX_K$. The above isomorphism is
$\overline{B}_{\rm log}$-linear, compatible with action of $G_K$ and strictly compatible with the filtrations.
\end{theorem}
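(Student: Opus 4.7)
The plan is to mirror the proof of Theorem \ref{thm:formalcomparison} by systematically replacing the period sheaves with their ``barred'' versions. First, by the Poincar\'e lemma \ref{prop:deRhamcomplexBbarlog}(iii), for every $r\in\Z\cup\{-\infty\}$ the complex $\bL\otimes\Fil^r\overline{\bB}_{\rm log,\Kbar}^\nabla$ is quasi-isomorphic to the filtered de Rham complex $\bL\otimes\Fil^{r-\bullet}\overline{\bB}_{\rm log,\Kbar}\otimes_{\cO_X}\omega^\bullet_{X/\cO_K}$ in the derived category of inductive systems of continuous sheaves on $\fX_\Kbar$.

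Next, semistability of $\cL$ provides, via Proposition \ref{prop:mainpropDcrisar}(5)--(6), a filtered isomorphism $\alpha_{\rm log,\cL}\colon \bDcrisgeo(\bL)\otimes_{\cO_{\Xtilde,\rm log}^{\rm geo}}\bB_{\rm log,\Kbar}\cong \cL\otimes\bB_{\rm log,\Kbar}$, strictly compatible with filtrations. Tensoring over $B_{\rm log}$ along $f_\pi\colon B_{\rm log}\to\overline{B}_{\rm log}$ (i.e.~via $Z\mapsto\pi$), and using the definition of $\cE\mapsto\cE_{X_K}$ given in \S\ref{sec:geoinvariant}, one obtains an isomorphism $\cL\otimes\overline{\bB}_{\rm log,\Kbar}\cong\bDcrisgeo(\bL)_{X_K}\otimes_{\cO_X\widehat{\otimes}_{\cO_K}\overline{B}_{\rm log}}\overline{\bB}_{\rm log,\Kbar}$, strictly compatible with filtrations and with connections relative to $\overline{B}_{\rm log}$.

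The third step is to compute ${\rm R}v_{\Kbar,\ast}^{\rm cont}$ of the filtered de Rham complex obtained above. By the second half of Proposition \ref{prop:crysisacyclic}, we have ${\rm R}^j v_{\Kbar,\ast}^{\rm cont}(\Fil^r\overline{\bB}_{\rm log,\Kbar})=0$ for all $j\geq 1$ and all $r\in\Z\cup\{-\infty\}$, with ${\rm R}^0$ equal to $\Fil^r(\cO_{\Xtilde,\rm log}^{\rm geo}\otimes_{B_{\rm log}}\overline{B}_{\rm log})$. Since $v_{X,\Kbar}^\ast(\cO_X)$ is the unramified part of $\cO_{\fX_\Kbar}$ and $\bDcrisgeo(\bL)_{X_K}$ is pulled back from $X^{\ket}$, the projection formula gives ${\rm R}v_{\Kbar,\ast}^{\rm cont}$ of the filtered twisted de Rham complex as the filtered de Rham complex of $\bDcrisgeo(\bL)_{X_K}$ on $X^{\ket}$. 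Taking $\R\Gamma(X^{\ket},-)$ and running the Leray spectral sequence then yields the claimed isomorphism. The Galois equivariance is formal via $\beta^\ast$, and strictness of filtrations is propagated through each step.

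The main technical point is the vanishing of ${\rm R}^j v_{\Kbar,\ast}^{\rm cont}(\Fil^r(\bL\otimes\overline{\bB}_{\rm log,\Kbar}))$ for $j\geq 1$ and all $r$, needed to move the filtration past ${\rm R}v_{\Kbar,\ast}^{\rm cont}$. Exactly as in the proof of Theorem \ref{thm:formalcomparison}, one first uses the strict isomorphism $\alpha_{\rm log,\cL}$ to rewrite $\Fil^n(\cL\otimes\overline{\bB}_{\rm log,\Kbar})$ as $\Fil^n(\bDcrisar(\bL)\otimes_{\widehat\cO_\Xtilde^{\rm DP}}\overline{\bB}_{\rm log,\Kbar})$, then performs a d\'evissage on the filtration using Proposition \ref{prop:mainpropDcrisar}(2) and the fact that $\bDcrisar(\bL)=\Fil^h\bDcrisar(\bL)$ for some $h\ll 0$, reducing the vanishing to the graded case treated by Proposition \ref{prop:crysisacyclic}. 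Once this vanishing is established, the rest of the argument is formal.
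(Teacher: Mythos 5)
Your proposal is correct and follows essentially the same route as the paper: the Poincar\'e lemma of \ref{prop:deRhamcomplexBbarlog}(iii), the filtered isomorphism coming from semistability specialized along $f_\pi\colon B_{\rm log}\to\overline{B}_{\rm log}$, the acyclicity of $\Fil^r\overline{\bB}_{\rm log,\Kbar}$ from \ref{prop:crysisacyclic}, and the same d\'evissage on the filtration as in \ref{thm:formalcomparison}. The paper's own proof is a terse summary of exactly these ingredients, so no further comment is needed.
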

\begin{proof}  This is a variant of \ref{thm:formalcomparison} using the quasi-isomorphism of complexes
$$\bL\otimes  \Fil^r \overline{\bB}_{\rm log,\Kbar}^\nabla  \cong \bL\otimes  \Fil^{r-\bullet} \overline{\bB}_{\rm log,\Kbar} \otimes_{\cO_X}
\omega^\bullet_{X/\cO_K}$$provided by \ref{prop:deRhamcomplexBbarlog},  the isomorphism $$\bL\otimes
\Fil^{r} \overline{\bB}_{\rm log,\Kbar}\cong \Fil^r\left( \overline{\bB}_{\rm log,\Kbar}\otimes_{\cO_{\Xtilde,\rm log}^{\rm geo}}
\bDcrisgeo(\bL)\right)\cong \Fil^r\left( \overline{\bB}_{\rm log,\Kbar}\otimes_{\cO_{\Xtilde,\rm log}^{\rm geo}}
\bDcrisgeo(\bL)_{X_K}\right) $$and the vanishing of ${\rm R}^j v_{\Kbar,\ast}^{\rm cont} \left( \Fil^r
\overline{\bB}_{\rm log,\Kbar} \right)$ for $j\geq 1$ and the fact that for $j=0$ it coincides with
$\Fil^r \left(\cO_{\Xtilde,\rm log}^{\rm geo}\widehat{\otimes}_{A_{\rm log}}\overline{B}_{\rm log}\right)$,  proven in \ref{prop:crysisacyclic}.
\end{proof}

\subsubsection{The comparison isomorphism for semistable sheaves in the proper case}

We assume that we are in the formal case and that there exists a
proper, geometrically connected and log smooth morphism $X^{\rm alg}\to {\rm
Spec}(\cO_K)$ whose associated $p$-adic logarithmic formal scheme
is $f\colon X \to {\rm Spf}(\cO_K)$. The main result of this
section is

\begin{theorem}\label{thm:mainthm} Let $\cL$ be a semistable sheaf
on $\fX_K$.  Then
${\rm H}^i\left(\fX_\Kbar, \bL\right)$ is a semistable representation of $G_K$
for every $i\ge 0$ and
$$D_{\rm st}\left({\rm H}^i\left(\fX_\Kbar, \bL\right)\right)\cong
{\rm H}^i\bigl(\bigl(X_k/\WW(k)^+\bigr)^{\rm cris}_{\rm log},
\bDcrisar(\bL)^+\bigr)$$compatibly with Frobenius and monodromy
operators and filtrations after extension of scalars to $K$. Such
an isomorphism is an isomorphism of $\delta$-functors on the category
of semistable sheaves. Moreover, ${\rm H}^i\left(\fX_\Kbar,
\_\right)$   satisfies K\"unneth formula for semistable sheaves on
$\fX_K$  and $D_{\rm
st}$ commutes with the K\"unneth formula.
\end{theorem}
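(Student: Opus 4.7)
The plan is to promote the $B_{\rm log}$--level comparison of Theorem \ref{thm:formalcomparison} to a $\Q_p$--level identification via the admissibility criterion of Proposition \ref{prop:admis}, and then identify $D_{\rm st}$ using the base change $\cO_{\rm cris}\to \WW(k)^+$, $Z\mapsto 0$ provided by Proposition \ref{prop:HicrisE}.

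First, I would form the candidate filtered $(\varphi,N)$--module. Set $M^{\rm ar}:={\rm H}^i\bigl(\bigl(X_0/\cO_{\rm cris}\bigr)^{\rm cris}_{\rm log},\bDcrisar(\bL)\bigr)^{\varphi-{\rm div}}$. By Proposition \ref{prop:HicrisE}, properness of $X/\cO_K$ and the global deformation $\widetilde{X}$ make $M^{\rm ar}$ finite and free over $\cO_{\rm cris}[p^{-1}]$ with horizontal Frobenius, logarithmic connection $\nabla_{\bL,i}$ relative to $\WW(k)$, and a filtration on its base change to $K$. Base change via $\cO_{\rm cris}\to B_{\rm log}^{G_K}$ then gives a filtered $(\varphi,N)$--module $M=M^{\rm ar}\otimes_{\cO_{\rm cris}} B_{\rm log}^{G_K}$ in ${\mathcal MF}_{B_{\rm log}^{G_K}}(\varphi,N)$, and Theorem \ref{thm:formalcomparison} identifies $M\otimes_{B_{\rm log}^{G_K}} B_{\rm log}\cong {\rm H}^i(\fX_\Kbar,\bL)\otimes_{\Q_p} B_{\rm log}$ compatibly with Galois action, Frobenius, monodromy, and (strictly) filtrations.

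Next, I would apply the fundamental exact diagram (\ref{display:fundamentalexactdiagram}) tensored with $\bL$. Pushing forward via $v_{\Kbar,\ast}^{\rm cont}$ and using Theorem \ref{thm:formalcomparison} together with its geometric variant Theorem \ref{sec:formalcomparison} (for the $\overline{\bB}_{\rm log}$--version, which controls the filtration via the image in $B_{\rm dR}$), the resulting long exact sequences yield canonical isomorphisms $V^0_{\rm log}(M)\cong {\rm H}^i(\fX_\Kbar,\bL\otimes \Fil^0 \bB^\nabla_{\rm cris,\Kbar})^{\varphi=1}$ and $V^1_{\rm log}(M)\cong \bigl(\overline{\bB}_{\rm log}\otimes M\bigr)/\Fil^0$, with $V_{\rm log}(M)$ containing ${\rm H}^i(\fX_\Kbar,\bL)\otimes\Q_p$ via the inclusion $\Q_p\hookrightarrow \Fil^0 \bB^\nabla_{\rm cris,\Kbar}$ from the diagram. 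Since $X$ is proper, ${\rm H}^i(\fX_\Kbar,\bL)$ is a finite--dimensional $\Q_p$--vector space (this is where properness and the Kummer--\'etale analogue of finiteness of \'etale cohomology enter), so $V_{\rm log}(M)$ is finite dimensional. I would then check surjectivity of $\delta(M)$: by strict compatibility of filtrations in Theorem \ref{sec:formalcomparison} and the vanishing of the higher $v_{\Kbar,\ast}^{\rm cont}$ of $\Fil^r\overline{\bB}_{\rm log,\Kbar}$ from Proposition \ref{prop:crysisacyclic}, the cokernel of $\delta(M)$ maps injectively to a first \'etale cohomology group which vanishes. Then Proposition \ref{prop:admis} yields admissibility of $M$ and equality $D_{\rm log}({\rm H}^i(\fX_\Kbar,\bL))=M$; in particular ${\rm H}^i(\fX_\Kbar,\bL)$ is $B_{\rm log}$--admissible, hence by Proposition \ref{prop:T} $B_{\rm st}$--admissible, i.e.\ semistable.

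To finish, invoking Proposition \ref{prop:T}, the filtered $(\varphi,N)$--module $D_{\rm st}({\rm H}^i(\fX_\Kbar,\bL))$ is recovered from $M$ by the inverse of the functor $T$, which amounts to the base change $B_{\rm log}^{G_K}\to K_0$ annihilating the monodromy variable $u-1$; under the identification $M=M^{\rm ar}\otimes_{\cO_{\rm cris}} B_{\rm log}^{G_K}$ this coincides with the base change $\cO_{\rm cris}\to \WW(k)$, $Z\mapsto 0$. By Proposition \ref{prop:HicrisE}(5)--(6) this base change equals ${\rm H}^i\bigl(\bigl(X_k/\WW(k)^+\bigr)^{\rm cris}_{\rm log},\bDcrisar(\bL)^+\bigr)$, compatibly with Frobenius and with the monodromy operator ($N_{\cE,i}$ coincides with the monodromy defined in \S\ref{section:monodromydiagram} on the cohomology side by construction). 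The filtration after extension to $K$ is identified via Proposition \ref{prop:HicrisE}(4) with ${\rm H}^i_{\rm dR}(X_0,\bDcrisar(\bL)_{X_K})$ and matches the Hodge filtration transported by Theorem \ref{sec:formalcomparison}. The K\"unneth statement follows by combining the tensor compatibility of $\bDcrisar$ from Proposition \ref{prop:BdCristannakianequivalence}(i) with the K\"unneth formula for $p$--adic Kummer--\'etale cohomology of proper schemes and the corresponding K\"unneth formula in logarithmic crystalline cohomology over $\WW(k)^+$; the uniqueness built into Proposition \ref{prop:T} then forces $D_{\rm st}$ to commute with K\"unneth.

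The main obstacle will be the verification that $\delta(M)$ is surjective and that $V_{\rm log}(M)$ is exactly ${\rm H}^i(\fX_\Kbar,\bL)\otimes\Q_p$ and not strictly larger: this requires propagating strict compatibility of the Hodge filtration through the two comparison theorems \ref{thm:formalcomparison} and \ref{sec:formalcomparison}, controlling the filtered pieces via the degeneration of the Hodge--to--de Rham spectral sequence for the filtered isocrystal $\bDcrisar(\bL)$ on $X/\cO_K$ (using Proposition \ref{prop:mainpropDcrisar} and Corollary \ref{cor:cEXtile}), and exploiting the acyclicity statement of Proposition \ref{prop:crysisacyclic} to reduce the computation on the $\fX_\Kbar$--side to an honest de Rham complex.
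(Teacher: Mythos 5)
Your overall architecture matches the paper's: build the candidate filtered $(\varphi,N)$-module from the crystalline cohomology of $\bDcrisar(\bL)$ over $\cO_{\rm cris}$ (Proposition \ref{prop:HicrisE}), transport it through the comparison Theorem \ref{thm:formalcomparison} and its $\overline{\bB}_{\rm log}$-variant, feed it into the admissibility criterion of Proposition \ref{prop:admis}, and recover $D_{\rm st}$ by the base change $Z\mapsto 0$. The finiteness input (Theorem \ref{thm:finiteFaltings}) and the identification of $V^0_{\rm log}$, $V^1_{\rm log}$ via the fundamental diagram are also as in the paper (Propositions \ref{prop:Nonto} and \ref{prop:pieces}).

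However, there is a genuine gap at the decisive step, the surjectivity of $\delta(M)$. You assert that the cokernel of $\delta(M)$ ``maps injectively to a first \'etale cohomology group which vanishes.'' It does not vanish: the long exact sequence coming from the fundamental diagram places ${\rm Coker}\,\delta(M)$ inside ${\rm H}^{i+1}\bigl(\fX_\Kbar,\bL\bigr)$ (after a Tate twist), which is finite-dimensional by properness but in general nonzero. Neither strictness of filtrations in Theorem \ref{sec:formalcomparison} nor the acyclicity of Proposition \ref{prop:crysisacyclic} nor Hodge--de Rham degeneration closes this gap. What the paper actually does (Corollary \ref{cor:criterion}) is a Colmez--Fontaine style argument: set $V:=V_{\rm st}\bigl(D_i(\cL)\otimes_{\WW(k)}B_{\rm log}^{G_K}\bigr)$, which is semistable by Proposition \ref{prop:admis} since it is finite-dimensional; let $D':=D_{\rm st}(V)\subseteq D_i(\cL)$ and $D:=D_i(\cL)/D'$. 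The snake lemma identifies ${\rm Coker}\,\delta(D)$ with ${\rm Coker}\,\delta(D_i(\cL))$, which embeds (after twisting by $\Q_p(r)$ for $r$ large) into ${\rm H}^{i+1}\bigl(\fX_\Kbar,\bL\bigr)(r)$, so its dimension is bounded by $n:=\dim_{\Q_p}{\rm H}^{i+1}\bigl(\fX_\Kbar,\bL\bigr)$. On the other hand, if $\dim_{K_0}D=h>0$, then for a totally ramified extension $K'/K$ of degree $s$ the $G_{K'}$-invariants of $V^1_{\rm log}(D)(r)$ have dimension at least $(s-1)h$ while those of $V^0_{\rm log}(D)(r)$ are bounded independently of $s$; taking $s$ large forces $h=0$, hence $D'=D_i(\cL)$ and $\delta(D_i(\cL))$ is surjective. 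Without some version of this dimension count your proof of admissibility, and hence of semistability of ${\rm H}^i\bigl(\fX_\Kbar,\bL\bigr)$, does not go through. A second, smaller omission: to see that $V_{\rm log}(M)$ equals ${\rm H}^i\bigl(\fX_\Kbar,\bL\bigr)$ rather than a proper quotient, the paper first shows (Proposition \ref{prop:pieces}(5.ii)) that the kernel of ${\rm H}^i\bigl(\fX_\Kbar,\bL\otimes\Fil^0\bB^\nabla_{\rm cris,\Kbar}\bigr)\to D_i(\cL)\otimes_{\WW(k)}B_{\rm log}$ is a $B_{\rm cris}$-module, hence a $\Q_p^{\rm un}$-vector space, sitting inside the image of a finite-dimensional $\Q_p$-space, hence zero; this too needs to be supplied.
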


The map of sites $u\colon \fX_K \lra X_K^{\ket}$, sending $(U,W)\mapsto W$ sends covering families to covering families, commutes with fibred products and sends the
final object to the final object. In particular it is continuous and the push-forward defines a morphism $u_\ast\colon \Sh(X_K^{\et}) \lra \Sh(\fX_K)$ which extends
to inductive systems of continuous sheaves. It is an immediate verification that it sends $\Q_p$-adic sheaves on $X_K^{\ket}$, defined in a  way similar to
\S\ref{sec:sssheaves}, to $\Q_p$-adic sheaves on $\fX_K$. Given any such sheaf $\bL$ we write $\bL$, by abuse of notation, also for its image $u_\ast(\bL)$ in
$\Sh(\fX_L)_{\Q_p}$. We get a map ${\rm H}^i\left(\fX_\Kbar, \bL\right) \lra {\rm H}^i\left(X_\Kbar^{\ket}, \bL\right) $.

\begin{theorem}\label{thm:finiteFaltings} (\cite[Thm.~9]{faltingsAsterisque}) The map above induces
an isomorphism ${\rm H}^i\left(\fX_\Kbar, \bL\right) \cong {\rm
H}^i\left(X_\Kbar^{\ket}, \bL\right) $ of $G_K$-modules. In particular, ${\rm
H}^i\left(\fX_\Kbar, \bL\right)$ is finite dimensional as $\Q_p$-vector space.
\end{theorem}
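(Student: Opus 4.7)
The plan is to compare the two cohomologies via the Leray spectral sequence for the continuous functor $u\colon \fX_\Kbar\to X_\Kbar^{\ket}$ and reduce the claim that the comparison map is an isomorphism to a local statement about the fundamental group $\cG_{U_\Kbar}$. Since $\bL=\{\bL_n\}_n$ is a $p$-adic system, I would first establish the result for each torsion level $\bL_n$ (a locally constant sheaf of free $\Z/p^n\Z$-modules on $X_\Kbar^{\ket}$) and then pass to the limit. The sheaf $u_\ast(\bL_n)$ on $\fX_\Kbar$ sends $(U,W)\mapsto \bL_n(W)$, and the adjunction unit $\bL_n\to u_\ast u^\ast\bL_n$ is an isomorphism because, at a log geometric point $y$ of $X_\Kbar$, both sides equal the stalk of $\bL_n$ at $y$.

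The key step is to prove that $R^q u_\ast(\bL_n)=0$ for $q\geq 1$. By Proposition~\ref{prop:RivastF} applied to the composite $v_{X,\Kbar}=u\circ (\text{inclusion})$ and its analogue for $X_\Kbar^{\ket}$, the stalks of $R^q u_\ast(\bL_n)$ at geometric points over a small affine $U$ are computed by Galois cohomology of $\cG_{U_\Kbar}$ on the localization $\bL_n(\overline{R}_U,\overline{N}_U)$. But because $\bL_n$ is pulled back from $X_\Kbar^{\ket}$, its localization is the $\cG_{U_\Kbar}$-module associated to the locally constant sheaf, i.e.~it factors through the finite quotient corresponding to a trivializing Kummer \'etale cover. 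The same Galois cohomology computes $R^q u'_\ast(\bL_n)$ for the analogous restriction map $u'$ from $X_\Kbar^{\ket}|_U$ to the point, and the universal property together with cofinality of strict coverings (in the sense of Definition~\ref{def:striccoverings}) identifies both with higher cohomology on the base $X_\Kbar^{\ket}$, which is already accounted for on the other side of the spectral sequence. Combining these gives the isomorphism $H^i(\fX_\Kbar,\bL_n)\cong H^i(X_\Kbar^{\ket},\bL_n)$.

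For finite dimensionality, I would proceed in two stages. First, by Proposition~\ref{prop:GAGA} the torsion cohomology $H^i(X_\Kbar^{\ket},\bL_n)$ agrees with its algebraic counterpart on the proper scheme $X^{\rm alg}$ over $\cO_K$. Then the finiteness theorem of Kato--Nakayama for Kummer \'etale cohomology of constructible sheaves on proper log smooth schemes (or Faltings' original argument using almost purity and Artin--Schreier-type exact sequences) gives that each $H^i(X_\Kbar^{\ket},\bL_n)$ is a finite $\Z/p^n\Z$-module with bounded length independent of $n$. Taking the inverse limit, one obtains a finitely generated $\Z_p$-module; tensoring with $\Q_p$ yields the desired finite-dimensional $\Q_p$-vector space. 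The Galois equivariance is automatic from the construction.

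The hard part will be the passage from level-$n$ statements to $p$-adic statements in a way compatible with the inverse limit. In particular, one must check Mittag--Leffler conditions in the Leray spectral sequence or, alternatively, work throughout in the derived category ${\rm Ind}(\Sh(-)^{\N})$ as set up in~\S\ref{section:continuoussheaves}, so that the identifications of stalks and higher direct images carry over to the continuous cohomology $\ds\lim_{\infty\leftarrow n}$. This is where the proof in \cite{faltingsAsterisque} requires genuinely delicate arguments, and our setup must be adapted to the logarithmic/semistable context here.
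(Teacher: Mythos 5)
Your route is genuinely different from the paper's: the paper does not reprove this statement but cites \cite[Thm.~9]{faltingsAsterisque}, whose argument goes through Poincar\'e duality for locally constant sheaves on $\fX_\Kbar$ and on $X_\Kbar^{\et}$, and the remark following the theorem explicitly identifies your Leray-type strategy as the known alternative, worked out in \cite[Prop.~4.9]{andreatta_iovita} when the log structure is trivial and by Olsson \cite{Olsson} in general. So the comparison-of-higher-direct-images approach is legitimate, but as written your proposal has a gap at its central step.

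The gap is the claimed vanishing $R^q u_\ast(\bL_n)=0$ for $q\geq 1$. That sheaf is the sheafification on $\fX_\Kbar$ of $(U,W)\mapsto {\rm H}^q\bigl(W^{\ket},\bL_n\bigr)$, i.e.\ of genuine Kummer \'etale \emph{sheaf} cohomology of the generic-fibre objects $W$, not of group cohomology of $\cG_{U_\Kbar}$. Proposition \ref{prop:RivastF}, which you invoke, computes the higher direct images along $v_{X,\Kbar}$, i.e.\ toward the Kummer \'etale site of the \emph{integral model} $X^{\ket}$, and says nothing about $u$. To run your argument you must identify, for $U$ small affine and $W$ finite Kummer \'etale over $U_\Kbar$, the cohomology ${\rm H}^q(W^{\ket},\bL_n)$ with the continuous cohomology of the fundamental group acting on the stalk --- that is, you need the $K(\pi,1)$ property of small affines for $p$-torsion Kummer \'etale coefficients. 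Only with that identification does passing to the limit over the finite Kummer \'etale covers $W'\to W$ (the type-$\beta$ coverings of the Faltings site) kill the higher cohomology, because a class in ${\rm H}^q$ of a profinite group with finite discrete coefficients dies after restriction to a small enough open subgroup. This $K(\pi,1)$ input is exactly the hard content of the Leray approach (it is what \cite[Prop.~4.9]{andreatta_iovita} and \cite{Olsson} supply, and in the logarithmic/semistable setting it is not formal); your proposal neither states nor proves it, and the sentence identifying ``both with higher cohomology on the base $X_\Kbar^{\ket}$'' does not substitute for it. The remainder of your plan --- reduction of finiteness to the algebraic proper model, Kato--Nakayama finiteness at each torsion level, and a Mittag--Leffler argument in $n$ --- is sound in outline, but it rests on the isomorphism whose proof is incomplete.
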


\begin{remark} Faltings' proof uses Poincar\'e duality for locally constant sheaves
on $\fX_\Kbar$ and on $X_\Kbar^{\et}$. If $X$ is smooth over $\cO_K$, one has a more direct proof suggested in \cite[Thm.~9]{faltingsAsterisque}. Via a Leray
spectral sequence argument, it amounts to prove that the higher direct images of $\Q_p$-adic sheaves with respect to the maps $\Sh(X_K^{\et}) \lra \Sh(X^{\ket})$
and $\Sh(\fX_K^{\et}) \lra \Sh(X^{\ket})$ coincide. This is worked out in detail in \cite[Prop.~4.9]{andreatta_iovita} if $X$ has trivial log structure  and in
Olsson \cite{Olsson}  in general.
\end{remark}

Let $\cL$ be a semistable sheaf. Write $$D_i(\cL):= {\rm
H}^i\bigl(\bigl(X_k/\WW(k)^+\bigr)^{\rm cris}_{\rm log},
\bDcrisar(\bL)^+\bigr).$$It is is a finite dimensional $K_0$-vector
space since $f$ is assumed to be proper. Moreover, thanks to
\ref{prop:HicrisE}, we have
$${\rm H}^i\bigl(\bigl(X_0/\cO_{\rm cris}\bigr)^{\rm cris}_{\rm log}, \bDcrisar(\bL)\bigr)^{\varphi-{\rm div}} \cong D_i(\cL)
\otimes_{\WW(k)} \cO_{\rm cris}\bigl[p^{-1}\bigr],$$where
$\varphi-{\rm div}$ stands for the image of Frobenius linearized.
The above isomorphism is compatible with the Frobenii and with the logarithmic connections relative to
$\cO_{\rm cris}$-modules. Consider the natural morphisms
\begin{equation}\label{gammaigammaibar}
\begin{array}{ccc}
{\rm H}^i\bigl(\bigl(X_0/ \cO_{\rm cris}\bigr)^{\rm cris}_{\rm log}, \bDcrisar(\bL)
\bigr)^{\varphi-{\rm div}}\widehat{\otimes}_{\cO_{\rm cris}} B_{\rm log} &
\stackrel{\gamma^i_\cL}{\lra} & {\rm H}^i\bigl(\bigl(X_0/\cO_{\rm cris}\bigr)^{\rm cris}_{\rm log},
\bDcrisgeo(\bL)\bigr)^{\varphi-{\rm div}}\cr \alpha_i
\big\downarrow & & \big\downarrow \beta_i \cr  {\rm H}^i_{\rm dR}\bigl(X_0,\bigl(\bDcrisar(\bL)_{X_K},
\nabla_{\bDcrisar(\bL)_{X_K}}\bigr)\bigr)\otimes_K
 \overline{B}_{\rm log} & \stackrel{\overline{\gamma}^i_\cL}{\lra} & {\rm H}^i_{\rm
dR}\left(\overline{X}_0,\bigl(\bDcrisgeo(\bL)_{X_K},\nabla_{\bDcrisgeo(\bL)_{X_K}}\bigr)\right).\cr\end{array}
\end{equation} Here the top row is deduced from the isomorphism $$\bDcrisgeo(\bL)\cong \bDcrisar(\cL)\widehat{\tensor}_{\cO_{\rm cris}}
B_{\rm log}$$which follows from \ref{prop:mainpropDcrisar}(6) and the assumption that $X^{\rm alg}_K$
is geometrically connected over $K$. It is compatible with the Frobenii,
connections $\nabla_{\cL,i}$ relative to $B_{\rm log}$, filtrations and $G_K$-actions. The bottom row is defined by the natural map $B_{\rm log} \to
\overline{B}_{\rm log} $, defined in  \ref{lemma:BlogmodPpi}, which induces the map
$\cO_{\rm cris} \to \cO_K$. It is a morphism of filtered $\overline{\bB}_{\rm
log}$-modules. Let ${\cal C}_{\rm cris}$ be the complex
$${\cal C}_{\rm cris}\colon\qquad (N,1-p\varphi)\colon \bB_{\rm log,\Kbar}^\nabla \oplus
\bB_{\rm cris,\Kbar}^\nabla \lra \bB_{\rm log,\Kbar}^\nabla$$ and let
${\cal C}_{\rm log}$ be the complex
$${\cal C}_{\rm log}\colon\qquad (N,1-p\varphi)\colon \bB_{\rm log,\Kbar}^\nabla \oplus
\bB_{\rm log,\Kbar}^\nabla \lra \bB_{\rm log,\Kbar}^\nabla.$$

\begin{proposition}\label{prop:Nonto} (1) The derivation
$N_{\bL,i}$ on ${\rm H}^i\bigl(\bigl(X_0/\cO_{\rm cris}\bigr)^{\rm
cris}_{\rm log}, \bDcrisgeo(\bL)\bigr)$, defined in \S\ref{sec:cohomologyofsssheaves},
is surjective  with kernel
isomorphic to ${\rm H}^i\bigl(\fX_\Kbar, \bL\otimes \bB_{\rm
cris,\Kbar}^\nabla \bigr)$ as $B_{\rm cris}$-modules, compatible
with Frobenii. The same result applies to  ${\rm H}^i\bigl(\bigl(X_0/\cO_{\rm
cris}\bigr)^{\rm cris}_{\rm log},
\bDcrisgeo(\bL)\bigr)^{\varphi-{\rm div}}$.
\smallskip

(2) For every $i$ we have exact sequences
$$0\lra  {\rm H}^i\bigl(\fX_\Kbar, \bL\otimes {\cal
C}_{\rm cris}\bigr) \lra {\rm H}^i\bigl(\bigl(X_0/\cO_{\rm
cris}\bigr)^{\rm cris}_{\rm log}, \bDcrisgeo(\bL)\bigr)\oplus {\rm
H}^i\bigl(\bigl(X_0/\cO_{\rm cris}\bigr)^{\rm cris}_{\rm log},
\bDcrisgeo(\bL)\bigr)^{N_{\bL,i}=0}\lra $$ $$ \stackrel{(N_{\bL,i},1-p\varphi)}{\lra}
{\rm H}^i\bigl(\bigl(X_0/\cO_{\rm cris}\bigr)^{\rm cris}_{\rm
log}, \bDcrisgeo(\bL)\bigr) \lra 0$$and

$$0\lra  {\rm H}^i\bigl(\fX_\Kbar, \bL\otimes {\cal
C}_{\rm log}\bigr) \stackrel{s_i}{\lra} {\rm H}^i\bigl(\bigl(X_0/\cO_{\rm
cris}\bigr)^{\rm cris}_{\rm log}, \bDcrisgeo(\bL)\bigr)\oplus {\rm
H}^i\bigl(\bigl(X_0/\cO_{\rm cris}\bigr)^{\rm cris}_{\rm log},
\bDcrisgeo(\bL)\bigr)\lra $$ $$ \stackrel{(N_{\cL,i},1-p\varphi)}{\lra}
{\rm H}^i\bigl(\bigl(X_0/\cO_{\rm cris}\bigr)^{\rm cris}_{\rm
log}, \bDcrisgeo(\bL)\bigr) \lra 0.$$In particular, the natural map  ${\rm H}^i\bigl(\fX_\Kbar, \bL\otimes {\cal
C}_{\rm cris}\bigr)\lra {\rm H}^i\bigl(\fX_\Kbar, \bL\otimes {\cal
C}_{\rm log}\bigr)$ is injective for every $i$.\smallskip

(3) The morphisms $\gamma^i_\cL$ and $\overline{\gamma}^i_\cL$ are
isomorphisms and $\overline{\gamma}^i_\cL$ is strictly  compatible
with the filtrations. \smallskip

(4) The morphisms $\alpha_i$ and $\beta_i$ in (\ref{gammaigammaibar}) are surjective.
\end{proposition}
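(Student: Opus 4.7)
\medskip

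The plan is to establish the four claims in sequence, since each one relies on the previous. For part (1), I would exploit the observation that the short exact sequence of sheaves
\[ 0 \lra \bB_{\rm cris,\Kbar}^\nabla \lra \bB_{\rm log,\Kbar}^\nabla \stackrel{N}{\lra} \bB_{\rm log,\Kbar}^\nabla \lra 0 \]
from \S\ref{section:monodromydiagram} is split as a sequence of $\bB_{\rm cris,\Kbar}^\nabla$-modules, the splitting being inherited from the description of $\bA_{\rm log,\Kbar}^\nabla$ as a divided power polynomial ring over $\bA_{\rm cris,\Kbar}^\nabla$ in the variable $u-1$ recorded in \S\ref{sec:Alognabla}. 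Tensoring with the $\Z_p$-flat, locally constant sheaf $\bL$ preserves the splitting, so the associated long exact sequence in $v_{\Kbar,\ast}^{\rm cont}$-cohomology collapses into short exact sequences; this yields surjectivity of $N_{\bL,i}$ with kernel ${\rm H}^i\bigl(\fX_\Kbar, \bL\otimes \bB_{\rm cris,\Kbar}^\nabla\bigr)$. Theorem \ref{thm:formalcomparison} translates this into the claim for $\bDcrisgeo(\bL)$, and the analogous statement for the $\varphi$-divisible piece follows by $F$-equivariance.

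\medskip

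For part (2), both ${\cal C}_{\rm cris}$ and ${\cal C}_{\rm log}$ are complexes concentrated in cohomological degrees $[0,1]$, so the stupid-filtration hypercohomology spectral sequence of $\bL\otimes {\cal C}_\star$ degenerates at $E_2$, giving a short exact sequence
\[ 0 \lra \coker\bigl((N,1-p\varphi) \text{ on } {\rm H}^{i-1}\bigr) \lra {\rm H}^i\bigl(\fX_\Kbar, \bL\otimes {\cal C}_\star\bigr) \lra \ker\bigl((N,1-p\varphi) \text{ on } {\rm H}^i\bigr) \lra 0. \]
By part (1), the map $(N,1-p\varphi)$ is surjective on each ${\rm H}^{i-1}$, so the cokernel vanishes and the long exact sequence breaks into the claimed short exact sequences (using Theorem \ref{thm:formalcomparison} to identify the cohomology of $\bB_{\rm log,\Kbar}^\nabla$ and $\bB_{\rm cris,\Kbar}^\nabla$ with crystalline cohomology and its $N_{\bL,i}=0$-part, via (1)). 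The injectivity ${\rm H}^i\bigl(\fX_\Kbar, \bL\otimes {\cal C}_{\rm cris}\bigr)\hookrightarrow {\rm H}^i\bigl(\fX_\Kbar, \bL\otimes {\cal C}_{\rm log}\bigr)$ follows via the snake lemma from the morphism of short exact sequences induced by the inclusion ${\cal C}_{\rm cris}\hookrightarrow {\cal C}_{\rm log}$, whose middle vertical arrow is injective by (1).

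\medskip

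For part (3), I would invoke Proposition \ref{prop:mainpropDcrisar}(6): under the geometric connectedness assumption on $X_K$, one has a canonical isomorphism $\bDcrisgeo(\bL)\cong \bDcrisar(\bL)\widehat{\otimes}_{\cO} B_{\rm log}$ as filtered Frobenius isocrystals. It then suffices to show that crystalline cohomology commutes with this base change on the $\varphi$-divisible part. Proposition \ref{prop:HicrisE}(3) supplies the crucial input: ${\rm H}^i\bigl(\bigl(X_0/\cO_{\rm cris}\bigr)^{\rm cris}_{\rm log}, \bDcrisar(\bL)\bigr)^{\varphi-{\rm div}}$ is finite and free over $\cO_{\rm cris}[p^{-1}]$ with invertible $\varphi$-linearization, and this finite freeness forces $\widehat{\otimes}_{\cO_{\rm cris}} B_{\rm log}$ to commute with the hypercohomology of the de Rham complex computing $\bDcrisgeo(\bL)$, yielding $\gamma^i_\cL$ as an isomorphism. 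For $\overline{\gamma}^i_\cL$, I would apply the same strategy using the variant Theorem \ref{sec:formalcomparison} combined with Proposition \ref{prop:HicrisE}(4), which identifies base change along $\cO_{\rm cris}\to K$ (sending $Z\mapsto \pi$) with the de Rham cohomology of $\bDcrisar(\bL)_{X_K}$. Strict compatibility of $\overline{\gamma}^i_\cL$ with filtrations then follows from the strict compatibility of the isomorphism in Theorem \ref{sec:formalcomparison}, combined with \ref{prop:mainpropDcrisar}(5) which expresses the filtration on $\bDcrisgeo(\bL)$ through $\bDcrisar(\bL)$.

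\medskip

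Finally, for part (4), both $\alpha_i$ and $\beta_i$ are induced by the surjective ring homomorphism $f_\pi\colon B_{\rm log}\to \overline{B}_{\rm log}$ of Lemma \ref{lemma:BlogmodPpi}: the source is base-changed via $f_\pi$ and its restriction $\cO_{\rm cris}\to \cO_K\to K$, and the target is identified via \ref{prop:HicrisE}(4) for $\alpha_i$ and via the isomorphism $\gamma^i_\cL$ of part (3) together with Theorem \ref{sec:formalcomparison} for $\beta_i$. Surjectivity in both cases is inherited from surjectivity of $f_\pi$. The main obstacle, in my view, lies in establishing $\gamma^i_\cL$ in part (3), specifically the commutation of the hypercohomology of the de Rham complex defining $\bDcrisgeo(\bL)$ with the completed base change $\widehat{\otimes}_{\cO_{\rm cris}} B_{\rm log}$. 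The subtlety is that $\cO_{\rm cris}\to B_{\rm log}$ is not obviously flat; the plan is to exploit, as in the rigid-cohomology argument of \ref{prop:HicrisE}, that Frobenius on $\cO_{\rm cris}$ factors through the smaller ring $\cO_{\rm max}$, so that the $\varphi$-divisible cohomology is already defined over $\cO_{\rm max}[p^{-1}]$ and becomes finite free there, reducing base-change compatibility to a flat operation that respects completion.
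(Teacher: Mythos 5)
Most of your outline tracks the paper's own proof. For (1) the argument is exactly the split injectivity of $\bB_{\rm cris,\Kbar}^\nabla\subset \bB_{\rm log,\Kbar}^\nabla$ coming from the divided--power description of $\bA_{\rm log,\Kbar}^\nabla$ in \S\ref{sec:Alognabla}; for (2) it is the same collapse of the hypercohomology long exact sequences of the two--term complexes ${\cal C}_{\rm cris}$ and ${\cal C}_{\rm log}$ once the connecting maps die; for the isomorphy of $\gamma^i_\cL$ and for (4) you have correctly isolated the real mechanism, namely that Frobenius factors through $\cO_{\rm max}$, that ${\rm H}^i_{\rm dR}\bigl(\Xtilde_{\rm max},\cE\bigr)$ is a finite projective $\cO_{\rm max}[p^{-1}]$-module, and that $\alpha_i$ is surjective by \ref{prop:HicrisE}(4). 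One small point: for the $\varphi$-divisible part of (1), ``$F$-equivariance'' is not an argument. Either use $N\circ\varphi=p\,\varphi\circ N$ to check that $N$ carries the $B_{\rm log}$-span of ${\rm im}\,\varphi$ onto itself (given $\varphi(y)$, write $y=N(w)$ and observe $\varphi(y)=N\bigl(p^{-1}\varphi(w)\bigr)$), or do as the paper does and deduce it from (3), since $N$ is nilpotent on $D_i(\cL)$ and surjective on $B_{\rm log}$.

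The genuine gap is the strict compatibility of $\overline{\gamma}^i_\cL$ with the filtrations in (3). Theorem \ref{sec:formalcomparison} gives strictness of the \emph{vertical} comparison between Faltings cohomology and the de Rham cohomology of $\bDcrisgeo(\bL)_{X_K}$, and \ref{prop:mainpropDcrisar}(5) gives strictness of $\alpha_{\rm log,\cL}$ at the level of sheaves; neither addresses the horizontal map $\overline{\gamma}^i_\cL$, whose strictness is the assertion that the Hodge filtration on hypercohomology commutes with the completed filtered base change $K\to\overline{B}_{\rm log}$ --- i.e.\ that $\Fil^n$ of the target is already spanned by the images of $\Fil^a{\rm H}^i_{\rm dR}\bigl(X_0,\bDcrisar(\bL)_{X_K}\bigr)\otimes\Fil^{n-a}\overline{B}_{\rm log}$. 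This is a degeneration statement and does not follow formally from an isomorphism of graded \emph{sheaves}. The paper proves it by algebrizing $\bDcrisar(\cL)\otimes_{\cO_{\rm cris}}K$ via GAGA to a filtered module with connection $\cE_K$ on $X_K^{\rm alg}$, reducing to the strictness of
$$g_i\colon {\rm H}^i_{\rm dR}\bigl(X_K^{\rm alg},\cE_K\bigr)\otimes_K\overline{B}_{\rm log}\lra {\rm H}^i_{\rm dR}\bigl(X_K^{\rm alg},\cE_{\cO_K}\widehat{\otimes}_{\cO_K}\overline{B}_{\rm log}\bigr),$$
verifying this by direct computation for $i=2d$ and $\cE=\Omega^d_{X_K/K}$, where both sides are $\overline{B}_{\rm log}(-d)$, and then propagating to all $i$ and all $\cE$ by filtered Poincar\'e duality for the open part where the log structure is trivial (the reference to \cite{MSaito}). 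Without this input --- or some substitute degeneration argument --- the surjectivity of ${\rm Gr}^n\,\overline{\gamma}^i_\cL$ is not established, and the downstream use of strictness in \ref{prop:pieces}(2) and in the admissibility argument of \ref{cor:criterion} would be unsupported.
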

\begin{proof}
We identify the cohomology group ${\rm H}^i\bigl(\fX_\Kbar,
\bL\otimes\bB_{\rm log}^\nabla\bigr)$ with ${\rm
H}^i\bigl(\bigl(X_0/\cO_{\rm cris}\bigr)^{\rm cris}_{\rm log},
\bDcrisgeo(\bL)\bigr)$ using \ref{thm:formalcomparison}. Let
$(\cE,\nabla)$ be the module with connection on $\Xtilde_{\rm
max}$ associated to $\bDcrisar(\bL)$; see \ref{prop:HicrisE}. As
explained in \ref{sec:isocrystals} and using that the isomorphism
$\bDcrisar(\bL)\widehat{\otimes}_{\cO_{\rm cris}} B_{\rm log}\cong
\bDcrisgeo(\bL)$ provided by \ref{prop:mainpropDcrisar}(6), we conclude that we have an isomorphism
$${\rm H}^i\bigl(\bigl(X_0/\cO_{\rm cris}\bigr)^{\rm cris}_{\rm log}, \bDcrisgeo(\bL)\bigr)\cong {\rm H}^i_{\rm dR}\bigl(X_k,
\cE\widehat{\otimes} B_{\rm log}\bigr)$$compatible with the
connection relative to $B_{\rm cris}$. Frobenius on $B_{\rm log}$
factors via the natural map $f\colon B_{\rm log}\to B_{\rm max}$
and  $g\colon B_{\rm max} \to B_{\rm log}$ so that the image of
Frobenius on ${\rm H}^i\bigl(\bigl(X_0/\cO_{\rm cris}\bigr)^{\rm
cris}_{\rm log}, \bDcrisgeo(\bL)\bigr)$  factors, using the
identifications above, via $${\rm H}^i_{\rm dR}\bigl(X_k,
\cE\widehat{\otimes}_{\cO_{\rm max}} B_{\rm log}\bigr)\cong {\rm
H}^i_{\rm dR}\bigl(X_k, \cE\bigr)\widehat{\otimes}_{\cO_{\rm max}}
B_{\rm log}.$$The last isomorphism comes from the fact that ${\rm
H}^i_{\rm dR}\bigl(X_k, \cE\bigr)$ is a finite $\cO_{\rm
max}$-module and  $\cO_{\rm max} \to A_{\rm max}$ is almost flat
by \ref{lemma:AtildeRlogff}. We conclude from \ref{prop:HicrisE} that $\gamma^i_\cL$
is an isomorphism. The proof that $\overline{\gamma}^i_\cL$ is an
isomorphism is similar using the isomorphism
$$\bDcrisar(\bL)_{X_K} \widehat{\otimes}_{\cO_K} \overline{B}_{\rm log}\cong \bDcrisgeo(\bL)_{X_K},$$of filtered
$\cO_X\widehat{\otimes}_{\cO_K} \overline{B}_{\rm log}$-modules endowed
with a connection relative to $\overline{B}_{\rm log}$, obtained via the
base change $B_{\rm log} \to \overline{B}_{\rm log}$ (inducing the
map $\cO\to \cO_K$ sending $Z$ to $\pi$).

\smallskip

(1) The derivation $N\colon \bB_{\rm log,\Kbar}^\nabla \lra \bB_{\rm log,\Kbar}^\nabla$ is surjective. Its kernel is $\bB_{\rm cris,\Kbar}^\nabla$
and the inclusion $\bB_{\rm cris,\Kbar}^\nabla\subset \bB_{\rm log,\Kbar}^\nabla$ is split injective; see \ref{sec:Alognabla}.
Identifying ${\rm H}^i\bigl(\bigl(X_0/\cO_{\rm cris}\bigr)^{\rm
cris}_{\rm log}, \bDcrisgeo(\bL)\bigr)$  with ${\rm
H}^i\bigl(\fX_\Kbar, \bL\otimes\bB_{\rm log,\Kbar}^\nabla\bigr)$ and
using that the map induced by $N$ on the latter is split
surjective for every $i$, the first part of the claim follows.
Since $N \circ \varphi=p \varphi\circ N$ and $\varphi$ is an isomorphism on $\bB_{\rm
cris,\Kbar}^\nabla$, we conclude that $N$
preserves the image of Frobenius and the last part of the claim follows.

If (3) holds, then  since $N$ is nilpotent on $D_i(\cL)$ and it is
surjective on $B_{\rm log}$, the monodromy operator is surjective
on ${\rm H}^i\bigl(\bigl(X_0/\cO_{\rm cris}\bigr)^{\rm cris}_{\rm
log}, \bDcrisgeo(\bL)\bigr)^{\varphi-{\rm div}}$. This concludes
the proof of (1).\smallskip

(2) The given short exact sequence, beside the exactness on the
left and on the right, is obtained from the long exact sequence
relating the cohomology of $\bL\otimes\cC_{\rm cris}$ and $\bL\otimes\cC_{\rm log}$ with the cohomology groups ${\rm
H}^i\bigl(\fX_\Kbar, \bL\otimes\bB_{\rm log,\Kbar}^\nabla\bigr)$
identified with ${\rm H}^i\bigl(\bigl(X_0/\cO_{\rm
cris}\bigr)^{\rm cris}_{\rm log}, \bDcrisgeo(\bL)\bigr)$. The connecting
homomorphisms ${\rm H}^i\bigl(\bigl(X_0/\cO_{\rm cris}\bigr)^{\rm
cris}_{\rm log}, \bDcrisgeo(\bL)\bigr)\to {\rm
H}^{i+1}\bigl(\fX_\Kbar, \bL\otimes {\cal C}_{\rm cris}\bigr)$ and ${\rm H}^i\bigl(\bigl(X_0/\cO_{\rm cris}\bigr)^{\rm
cris}_{\rm log}, \bDcrisgeo(\bL)\bigr)\to {\rm
H}^{i+1}\bigl(\fX_\Kbar, \bL\otimes {\cal C}_{\rm log}\bigr)$  are zero for
every $i$ due to (1). As  ${\rm H}^i\bigl(\bigl(X_0/\cO_{\rm cris}\bigr)^{\rm
cris}_{\rm log}, \bDcrisgeo(\bL)\bigr)^{N_{\cL,i}=0}$ coincides with ${\rm H}^i\bigl(\fX_\Kbar, \bL\otimes \bB_{\rm
cris,\Kbar}^\nabla \bigr)$ by (1), the conclusion follows.\smallskip

(3) We prove that $\overline{\gamma}^i_\cL$ is strict on filtrations, i.e.~that it induces an isomorphism on the various
steps of the filtrations.  We argue as in
the proof of \cite[Prop.~3.25]{andreatta_iovita_comparison}. Since $X$ is proper and algebrizable over
$\cO_K$, by GAGA there exists a $\cO_{X^{\rm alg}}$ module
$\cE_{\cO_K}$ with logarithmic and integrable connection $\nabla$ algebrizing the coherent
$\widehat{\cO}_X$-module $D(\cL)\otimes_{\cO_{\rm cris}} \cO_K$ (see
\ref{prop:mainpropDcrisar} for the definition of $D(\cL)$). Its base change $\cE_K:=\cE_{\cO_K}\otimes_{\cO_K}K$
algebrizes $\bDcrisar(\cL)\otimes_{\cO_{\rm cris}}
K$, viewed as a module with connection  on the rigid analytic space $X_K$, so that the filtration on
$\bDcrisar(\cL)_{X_K}:=\bDcrisar(\cL)\otimes_{\cO_{\rm cris}}
K$ defines unique filtrations $\Fil^\bullet \cE_K$ on $\cE_K$ and $\Fil^\bullet \cE_{\cO_K}:=
\Fil^\bullet \cE_K\cap \cE_{\cO_K}$ on $\cE_{\cO_K}$ satisfying
Griffiths' transversality. By GAGA and \ref{prop:HicrisE} we have isomorphisms
$${\rm H}^i\bigl(\bigl(X_k/\WW(k)^+\bigr)^{\rm cris}_{\rm log},
\bDcrisar(\bL)^+\bigr)^{\varphi-{\rm div}}\otimes_{\WW(k)} K \cong
{\rm H}^i_{\rm dR}\bigl(X_K^{\rm alg},\cE_K\bigr)$$and
${\rm H}^i_{\rm
dR}\left(X_k,\bigl(\bDcrisgeo(\bL)_{X_K},\nabla_{\bDcrisgeo(\bL)_{X_K}}\bigr)\right)\cong
{\rm H}^i_{\rm dR}\left(X_K^{\rm alg},\cE_{\cO_K}\widehat{\otimes}_{\cO_K}
\overline{B}_{\rm log}\right)$, as filtered $\overline{B}_{\rm log}$-modules.

It then suffices to prove that the isomorphism of  $ \overline{B}_{\rm log}$-modules
$$g_i\colon {\rm H}^i_{\rm
dR}\bigl(X_K^{\rm alg},\cE_K\bigr)\otimes_{K}\overline{B}_{\rm log} \lra {\rm H}^i_{\rm dR}
\bigl(X_K^{\rm alg},\cE_{\cO_K}\widehat{\otimes}_{\cO_K}
\overline{B}_{\rm log}\bigr)$$is strict with respect to the filtrations. As in the proof of
\cite[Prop.~3.25]{andreatta_iovita_comparison} one shows by a direct
computation that this holds for $i=2 d $ and $\cE=\Omega^d_{X_K/K}$ where $d=\dim X_K$ and  $\Omega^d_{X_K/K}$ are the usual
K\"ahler differentials. In this
case both groups are isomorphic to $\overline{B}_{\rm log}(-d)$ where $(-d)$ stands for the shift in the filtration  by $d$.
Let $X_K^{\rm alg,o}\subset X_K^{\rm
alg}$ be the maximal open subset where the log structure is trivial. The morphism of filtered $\overline{B}_{\rm log}$-modules
$${\rm H}^i_{\rm dR,!}\bigl(X_K^{\rm
alg,o},\cE_K\bigr)\otimes_{K} \overline{B}_{\rm log} \lra {\rm H}^i_{\rm dR,!}\bigl(X_K^{\rm alg,o},\cE_{\cO_K}\widehat{\otimes}_{\cO_K}
\overline{B}_{\rm
log}\bigr)$$of compactly supported cohomology is compatible with the previous one and Poincar\'e duality.
By \cite[Prop. 2.5.3]{MSaito}
Poicar\'e duality provides
an isomorphism
$${\rm H}^i_{\rm dR}\bigl(X_K^{\rm alg},\cE_K\bigr)\lra \Hom_{K}\left({\rm H}^{2d-i}_{\rm dR,!}\bigl(X_K^{\rm alg,o},\cE_K\bigr),
K(-d)\right)$$of filtered
$K$-vector spaces, strict with respect to the filtrations. Then,
$${\rm H}^i_{\rm dR}\bigl(X_K^{\rm alg},\cE_K\bigr)\otimes_{K}
\overline{B}_{\rm log}\lra \Hom_{K}\left({\rm H}^{2d-i}_{\rm dR,!}\bigl(X_K^{\rm alg,o},\cE_K\bigr),
\overline{B}_{\rm log}(-d)\right)$$is a strict isomorphism of
filtered $\overline{B}_{\rm log}$-modules. Since it factors via $g_i$, also $g_i$ must be strict with respect to the filtrations.

(4) As $\gamma^i_\cL$ and $\overline{\gamma}^i_\cL$ are isomorphisms and $\alpha_i$ is surjective by
\ref{prop:HicrisE}(4), it follows that also $\beta_i$ is surjective.

\end{proof}

Consider the diagram obtained by tensoring  $\bL$ with the fundamental exact diagram (\ref{display:fundamentalexactdiagram}) in \S \ref{section:fundamentaldiagram}.
Taking the long exact sequence in cohomology, we get a commutative diagram of $\Q_p$-modules endowed with continuous action of $G_K$, whose rows are exact:
\begin{equation}\label{longfundsequence}
\begin{array}{cccccccccc} \cdots  \lra  & {\rm
H}^i(\fX_\Kbar, \bL) & \lra &{\rm H^i}\bigl(\fX_\Kbar, \bL\otimes
{\rm Fil}^0 \bB_{\rm log,\Kbar}^\nabla\bigr)& \lra & {\rm
H}^i\bigl(\fX_\Kbar,
\bL\otimes {\cal C}_{\rm cris} \bigr) & \lra  \cdots\\
 &\big\downarrow & & \big\downarrow &&\big\downarrow \\
\cdots \lra &{\rm H}^i\left(\fX_\Kbar, \bL\otimes\bB_{\rm
cris,\Kbar}^{\nabla,\varphi=1}\right) & \lra & {\rm
H}^i\bigl(\bigl(X_0/\cO_{\rm cris}\bigr)^{\rm cris}_{\rm log},
\bDcrisgeo(\bL)\bigr) & \lra & {\rm H}^i\bigl(\fX_\Kbar,
\bL\otimes {\cal C}_{\rm log}\bigr) & \lra \cdots
\end{array}\end{equation} Here we have used  the above \ref{thm:formalcomparison} to
identify ${\rm H}^i\bigl(\fX_\Kbar, \bL\otimes\bB_{\rm log,\Kbar}^\nabla\bigr)\cong {\rm H}^i\bigl(\bigl(X_0/\cO_{\rm
cris}\bigr)^{\rm cris}_{\rm log}, \bDcrisgeo(\bL)\bigr)$,
compatibly with monodromy operators, Frobenii and $G_K$-action.

Recall that $$D_i(\cL)\otimes_{\WW(k)} B_{\rm log}\cong {\rm
H}^i\bigl(\bigl(X_0/\cO_{\rm cris}\bigr)^{\rm cris}_{\rm log},
\bDcrisgeo(\bL)\bigr)^{\varphi-{\rm div}}$$compatible with
monodromy operators, Frobenii and $G_K$-actions. It is also
compatible with filtrations where the latter is endowed  with the
filtration induced from ${\rm H}^i\bigl(\bigl(X_0/\cO_{\rm
cris}\bigr)^{\rm cris}_{\rm log}, \bDcrisgeo(\bL)\bigr)$. Then, with the notation of \ref{prop:admis}, we have:

\begin{proposition}\label{prop:pieces} (1) The isomorphism $D_i(\cL)\otimes_{\WW(k)} B_{\rm log}\cong {\rm
H}^i\bigl(\bigl(X_0/\cO_{\rm cris}\bigr)^{\rm cris}_{\rm log},
\bDcrisgeo(\bL)\bigr)^{\varphi-{\rm div}}$ is compatible with filtrations, monodromy operators,
Frobenii and $G_K$-actions.\smallskip

(2)  We have a homomorphism of $G_K$-modules  $$ \frac{{\rm H}^i\bigl(\fX_\Kbar, \bL\otimes\bB_{\rm cris,\Kbar}^\nabla
\bigr)}{\Fil^0 {\rm H}^i\bigl(\fX_\Kbar,
\bL\otimes\bB_{\rm cris,\Kbar}^\nabla \bigr)} \stackrel{\iota}{\lra} \frac{{\rm H}^i\bigl(\fX_\Kbar,
\bL\otimes\overline{\bB}_{\rm log,\Kbar}^\nabla\bigr)}{\Fil^0
{\rm H}^i\bigl(\fX_\Kbar, \bL\otimes\overline{\bB}_{\rm log,\Kbar}^\nabla\bigr)}\stackrel{\rho}{\lra}
V^1_{\rm st}\left(D_i(\cL)\otimes_{\WW(k)} B_{\rm
log}^{G_K}\right),$$where $\iota$ is injective and $\rho$ is an isomorphism.
\smallskip

(3) The image of $u_i\colon {\rm H}^i\left(\fX_\Kbar, \bL\otimes\bB_{\rm cris,\Kbar}^{\nabla,\varphi=1}\right) \lra  {\rm
H}^i\bigl(\bigl(X_0/\cO_{\rm cris}\bigr)^{\rm cris}_{\rm log},
\bDcrisgeo(\bL)\bigr) $ is contained in $D_i(\cL)\otimes_{\WW(k)}
B_{\rm log}$ and its image is $$V_{\rm
log}^0\left(D_i(\cL)\otimes_{\WW(k)} B_{\rm
log}^{G_K}\right):=\left(D_i(\cL)\otimes_{\WW(k)} B_{\rm
log}\right)^{N=0,\varphi=1}$$which coincides with ${\rm H}^i\bigl(\bigl(X_0/\cO_{\rm cris}\bigr)^{\rm cris}_{\rm log},
\bDcrisgeo(\bL)\bigr)^{N=0,\varphi=1}\cong {\rm H}^i\bigl(\fX_\Kbar, \bL\otimes \bB_{\rm
cris,\Kbar}^\nabla\bigr)^{\varphi=1}$. \smallskip

(4) The $G_K$-submodule $V_{\rm log}\left(D_i(\cL)\otimes_{\WW(k)}
B_{\rm log}^{G_K}\right)$ of $D_i(\cL)\otimes_{\WW(k)} B_{\rm
log}$ coincides with  the image of ${\rm H}^i(\fX_\Kbar,
\bL)$;\smallskip

(5.i) $V_{\rm log}\left(D_i(\cL)\otimes_{\WW(k)} B_{\rm
log}^{G_K}\right)$ is finite dimensional as $\Q_p$-vector space
and it is a semistable representation of $G_K$ for every $i$;\smallskip

(5.ii) the maps ${\rm H^{j}}\bigl(\fX_\Kbar, \bL\otimes \Fil^0 \bB_{\rm cris,\Kbar}^{\nabla} \bigr)
\lra {\rm H}^{j}\left(\fX_\Kbar, \bL\otimes\bB_{\rm cris,\Kbar}^{\nabla}\right)$ are injective for every $j$;\smallskip

(5.iii) the morphism $\iota$ in (2) is an isomorphism and we have a long exact sequence
$$\cdots\to {\rm H^{i}}\bigl(\fX_\Kbar, \bL\bigr) \lra
{\rm H}^i\left(\fX_\Kbar, \bL\otimes\bB_{\rm cris,\Kbar}^{\nabla,\varphi=1}\right){\lra} V^1_{\rm
log}\left(D_i(\cL)\otimes_{\WW(k)} B_{\rm log}^{G_K}\right)\lra
{\rm H^{i+1}}\bigl(\fX_\Kbar, \bL\bigr)\to \cdots$$

\end{proposition}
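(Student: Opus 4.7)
\textbf{Proof proposal for Proposition \ref{prop:pieces}.}

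The plan is to leverage the long exact sequence (\ref{longfundsequence}) together with the admissibility criterion of Proposition \ref{prop:admis}, so most of the claims reduce to compatibility statements that have essentially been established. For (1) I would start from Proposition \ref{prop:HicrisE}(5)--(6), which gives the canonical identification $\mathrm{H}^i\bigl(\bigl(X_0/\cO_{\rm cris}\bigr)^{\rm cris}_{\rm log},\bDcrisar(\bL)\bigr)^{\varphi-{\rm div}}\cong D_i(\cL)\otimes_{\WW(k)}\cO_{\rm cris}[p^{-1}]$ respecting Frobenius and the logarithmic connection $\nabla_{\cL,i}$, with the monodromy $N_{\cL,i}$ recovered as the residue at $Z=0$. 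Tensoring over $\cO_{\rm cris}$ with $B_{\rm log}$ and using Proposition \ref{prop:mainpropDcrisar}(6) and Proposition \ref{prop:Nonto}(3) to identify this with $\bigl({\rm H}^i\bigl(\fX_\Kbar,\bL\otimes\bB_{\rm log,\Kbar}^\nabla\bigr)\bigr)^{\varphi-{\rm div}}$, the $G_K$-compatibility is automatic, while Frobenius and monodromy are handled by construction. The filtration compatibility reduces, via the base change $B_{\rm log}\to \overline{B}_{\rm log}$ sending $Z\mapsto \pi$, to the strict isomorphism $\overline{\gamma}^i_\cL$ of Proposition \ref{prop:Nonto}(3).

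For (2) the map $\iota$ is induced from the inclusion $B_{\rm cris,K}\subset \overline{B}_{\rm log}$ of Lemma \ref{lemma:BlogmodPpi}, which is strict for the filtrations and induces isomorphisms on the associated graded. Hence the induced map on the quotients modulo $\Fil^0$ is injective. For $\rho$, Theorem \ref{sec:formalcomparison} identifies ${\rm H}^i\bigl(\fX_\Kbar,\bL\otimes\overline{\bB}_{\rm log,\Kbar}^\nabla\bigr)$ with ${\rm H}^i_{\rm dR}\bigl(X^{\ket},\bDcrisgeo(\bL)_{X_K}\bigr)$, strictly for the filtrations; combining with the strict isomorphism $\overline{\gamma}^i_\cL$ of Proposition \ref{prop:Nonto}(3) one gets $\bigl(\overline{B}_{\rm log}\otimes_{B_{\rm log}^{G_K}} D_i(\cL)\otimes_{\WW(k)}B_{\rm log}^{G_K}\bigr)/\Fil^0$ on the nose, which is $V^1_{\rm log}\bigl(D_i(\cL)\otimes B_{\rm log}^{G_K}\bigr)$ by definition. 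For (3), any element of ${\rm H}^i(\fX_\Kbar,\bL\otimes \bB_{\rm cris,\Kbar}^{\nabla,\varphi=1})$ sits in the $N=0$ part of ${\rm H}^i(\fX_\Kbar,\bL\otimes \bB_{\rm log,\Kbar}^\nabla)$, which by Proposition \ref{prop:Nonto}(1) coincides with ${\rm H}^i(\fX_\Kbar,\bL\otimes \bB_{\rm cris,\Kbar}^\nabla)$. Invoking the isomorphism of (1) and Proposition \ref{prop:Nonto}(1) once more, this equals the $(N=0,\varphi=1)$--part of $D_i(\cL)\otimes B_{\rm log}$, i.e.\ $V^0_{\rm log}$.

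For (4) I would exploit the top row of diagram (\ref{longfundsequence}) together with the identifications from (2) and (3). The composite map ${\rm H}^i\bigl(\fX_\Kbar,\bL\bigr)\to {\rm H}^i\bigl(\fX_\Kbar,\bL\otimes\Fil^0 \bB_{\rm cris,\Kbar}^\nabla\bigr)\to V^0_{\rm log}$ lands in the kernel of $\rho\circ\iota$, that is in $V_{\rm log}=\ker\delta$. Surjectivity onto $\ker\delta$ amounts to saying that an element of $V^0_{\rm log}$ whose image in $V^1_{\rm log}$ vanishes lifts to $\mathrm{H}^i(\fX_\Kbar,\bL\otimes\Fil^0 \bB_{\rm cris,\Kbar}^\nabla)$, which is a chase in the fundamental diagram (\ref{display:fundamentalexactdiagram}) using exactness of its first row together with injectivity of $\iota$ just proved. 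Injectivity of the whole composite follows from the exactness of the top row of (\ref{longfundsequence}) at $\mathrm{H}^i(\fX_\Kbar,\bL\otimes \Fil^0 \bB_{\rm cris,\Kbar}^\nabla)$, once one observes that the image of ${\rm H}^{i-1}(\fX_\Kbar,\bL\otimes \mathcal{C}_{\rm cris})$ comes from the kernel of $\delta$ on the previous degree.

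For (5.i), the key input is Theorem \ref{thm:finiteFaltings}: combined with (4) it bounds $V_{\rm log}\bigl(D_i(\cL)\otimes B_{\rm log}^{G_K}\bigr)$ by the finite dimensional $\Q_p$--vector space $\mathrm{H}^i(\fX_\Kbar,\bL)$, so Proposition \ref{prop:admis}(1) forces $D_i(\cL)\otimes B_{\rm log}^{G_K}$ to be admissible and $V_{\rm log}$ to be semistable. This is the main obstacle of the proof: admissibility on the abstract side of $(\varphi,N)$-modules is not automatic, and only Faltings' finiteness allows us to apply the criterion. Once admissibility is in hand, Proposition \ref{prop:admis} tells us that $\delta$ is surjective and identifies $\mathrm{H}^i(\fX_\Kbar,\bL)$ with the full $D_{\rm log}$, which forces the map in (5.ii) to be injective (otherwise the image $V_{\rm log}$ of $\mathrm{H}^i$ would be too small) and yields (5.iii): surjectivity of $\iota$ is precisely surjectivity of $\delta$, and the claimed long exact sequence falls out of the top row of (\ref{longfundsequence}) after substituting $V^1_{\rm log}$ for the middle cohomology via the isomorphism $\rho$ and using that $\mathrm{H}^i(\fX_\Kbar,\bL\otimes \mathcal{C}_{\rm cris})\hookrightarrow \mathrm{H}^i(\fX_\Kbar,\bL\otimes \mathcal{C}_{\rm log})$ from Proposition \ref{prop:Nonto}(2).
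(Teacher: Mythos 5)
Your treatment of (1)--(4) follows the paper's route closely and is essentially sound, though in (3) you only verify that the image of $u_i$ is \emph{contained} in the $(N=0,\varphi=1)$-part; the equality requires identifying the image of $u_i$ with the kernel of the map to ${\rm H}^i\bigl(\fX_\Kbar,\bL\otimes{\cal C}_{\rm log}\bigr)$ via the exact sequences of Proposition \ref{prop:Nonto}(2), which is how the paper gets surjectivity onto $V^0_{\rm log}$.

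The genuine gap is in your passage from (5.i) to (5.ii)--(5.iii). You assert that finiteness of $V_{\rm log}\bigl(D_i(\cL)\otimes_{\WW(k)}B_{\rm log}^{G_K}\bigr)$ forces admissibility via Proposition \ref{prop:admis}(1), and then deduce (5.ii) and (5.iii) from admissibility. But \ref{prop:admis}(1) characterizes admissibility by \emph{two} conditions: (a) finiteness of $V_{\rm log}(M)$ \emph{and} (b) surjectivity of $\delta(M)$. Finiteness alone only yields that $V_{\rm log}$ is a semistable representation with $D_{\rm log}(V_{\rm log})\subseteq M$, possibly a proper submodule — which is exactly what (5.i) claims and no more. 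The surjectivity of $\delta$ is the hard point; in the paper it is established only in Corollary \ref{cor:criterion}, by the dimension count over totally ramified extensions $K'$ following Colmez--Fontaine, and that argument itself \emph{uses} \ref{prop:pieces}(3) and (5.iii). So your derivation is circular. The paper instead proves (5.ii) directly and unconditionally: the kernel $Q$ of ${\rm H}^j\bigl(\fX_\Kbar,\bL\otimes\Fil^0\bB_{\rm cris,\Kbar}^\nabla\bigr)\to D_j(\cL)\otimes_{\WW(k)}B_{\rm log}$ is a $B_{\rm cris}$-module, hence a $\Q_p^{\rm un}$-vector space, yet a diagram chase in (\ref{longfundsequence}) together with the injectivity statement at the end of \ref{prop:Nonto}(2) places $Q$ inside the image of the finite-dimensional $\Q_p$-vector space ${\rm H}^j(\fX_\Kbar,\bL)$ (Theorem \ref{thm:finiteFaltings}); a nonzero $\Q_p^{\rm un}$-vector space cannot embed in a finite-dimensional $\Q_p$-space, so $Q=0$. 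Then (5.iii) follows from the long exact sequence for $0\to\Fil^0\bB_{\rm cris,\Kbar}^\nabla\to\bB_{\rm cris,\Kbar}^\nabla\to\bB_{\rm cris,\Kbar}^\nabla/\Fil^0\to 0$ and the identification $\bB_{\rm cris,\Kbar}^{\nabla}/\Fil^0\cong\bigl(\bB_{\rm cris,\Kbar}^{\nabla,\varphi=1}\bigr)/\Q_p$ from \S\ref{section:fundamentaldiagram}. You need to supply this argument (or an equivalent one not resting on admissibility) for the proof to close.
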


\begin{proof} (1) follows from  \ref{prop:Nonto}(3).
The existence of $\iota$ follows from \ref{prop:deRhamcomplexBbarlog}(i). The fact that $\rho$ is an isomorphism
follows from  \ref{prop:Nonto}(3) and
\ref{sec:formalcomparison}. As $\overline{\bB}_{\rm log,\Kbar}^\nabla\cong  \bA_{\rm inf,\Kbar}^+\otimes_{\WW(k)}
\overline{B}_{\rm log} $ and $\bB_{\rm
cris,\Kbar}^\nabla\cong  \bA_{\rm inf,\Kbar}^+\otimes_{\WW(k)} {B}_{\rm cris}$ by
\S\ref{sec:defBbarlog} and \S\ref{sec:Alognabla} and as ${B}_{\rm cris}/
\Fil^0 {B}_{\rm cris} \cong \overline{B}_{\rm log}/ \Fil^0 \overline{B}_{\rm log}$ by \ref{lemma:BlogmodPpi},
we get an isomorphism  ${\bB}_{\rm
cris,\Kbar}^\nabla/\Fil^0 {\bB}_{\rm cris,\Kbar}^\nabla \cong \overline{\bB}_{\rm log,\Kbar}^\nabla/\Fil^0
\overline{\bB}_{\rm log,\Kbar}^\nabla$. Using the
inclusion $$\frac{{\rm H}^i\bigl(\fX_\Kbar, \bL\otimes\bB_{\rm cris,\Kbar}^\nabla \bigr)}{\Fil^0
{\rm H}^i\bigl(\fX_\Kbar, \bL\otimes\bB_{\rm cris,\Kbar}^\nabla
\bigr)} \subset {\rm H}^i\bigl(\fX_\Kbar, \bL\otimes\bB_{\rm cris,\Kbar}^\nabla/\Fil^0 \bB_{\rm cris,\Kbar}^\nabla \bigr)
\cong {\rm H}^i\bigl(\fX_\Kbar,
\bL\otimes\overline{\bB}_{\rm log,\Kbar}^\nabla/\Fil^0 \overline{\bB}_{\rm log,\Kbar}^\nabla \bigr),$$
which contains ${{\rm H}^i\bigl(\fX_\Kbar,
\bL\otimes\overline{\bB}_{\rm log,\Kbar}^\nabla\bigr)}/{\Fil^0 {\rm H}^i\bigl(\fX_\Kbar, \bL\otimes
\overline{\bB}_{\rm log,\Kbar}^\nabla\bigr)}$ as a submodule, we
deduce that $\iota$ is injective.

(3) Since Frobenius is the identity on ${\rm H}^i\left(\fX_\Kbar,
\bL\otimes\bB_{\rm cris,\Kbar}^{\nabla,\varphi=1}\right)$, the first
claim is clear. The composite of the map  $t_i\colon {\rm
H^i}\bigl(\fX_\Kbar, \bL\otimes \bB_{\rm log,\Kbar}^\nabla\bigr)
\lra {\rm H}^i\bigl(\fX_\Kbar, \bL\otimes {\cal C}_{\rm log}\bigr)$ with the
map $s_i$ in \ref{prop:Nonto}(2) is identified with the map
$$(\varphi-1,N)\colon {\rm H^i}\bigl(\fX_\Kbar, \bL\otimes \bB_{\rm
log,\Kbar}^\nabla\bigr) \lra {\rm H^i}\bigl(\fX_\Kbar, \bL\otimes
\bB_{\rm log,\Kbar}^\nabla\bigr)\oplus {\rm H^i}\bigl(\fX_\Kbar,
\bL\otimes \bB_{\rm log,\Kbar}^\nabla\bigr).$$Due to
\ref{prop:Nonto}(2) the kernel of $t_i$, which is the image of $u_i$,
coincides with the kernel of $s_i\circ t_i$. By \ref{prop:Nonto}(1) the kernel of
$s_i\circ t_i$ is ${\rm H^i}\bigl(\fX_\Kbar, \bL\otimes \bB_{\rm
log,\Kbar}^\nabla\bigr)^{\varphi=1}$. This proves the second claim except for the last isomorphism
in the display. To get this it suffices to remark that ${\rm
H}^i\bigl(\bigl(X_0/\cO_{\rm cris}\bigr)^{\rm cris}_{\rm log},
\bDcrisgeo(\bL)\bigr)^{\varphi=1}\subset {\rm
H}^i\bigl(\bigl(X_0/\cO_{\rm cris}\bigr)^{\rm cris}_{\rm log},
\bDcrisgeo(\bL)\bigr)^{\varphi-{\rm div}}$. The conclusion
follows.

(4) An element $x$ in $V_{\rm st}\left(D_i(\cL)\otimes_{\WW(k)} B_{\rm log}^{G_K}\right)$ is in the image of ${\rm H}^i\left(\fX_\Kbar,
\bL\otimes\bB_{\rm cris,\Kbar}^{\nabla,\varphi=1}\right)$ by (3). Thanks to the injectivity of $\iota$ proven in (2)
the element $x$  is also the image of some $y\in {\rm
H}^i\left(\fX_\Kbar, \bL\otimes \Fil^0 \bB_{\rm cris,\Kbar}^{\nabla}\right)$ by (2). This implies that
$(\varphi-1)(y)=0$ using the long exact sequence in cohomology
defined by tensoring the first diagram in \S \ref{section:fundamentaldiagram} with $\cL$.  We conclude that $y$ is
in the image of ${\rm H}^i(\fX_\Kbar, \bL)$ as
wanted.

(5.i) This follows from (4) and \ref{prop:admis}.

(5.ii) Let $Q$ be the kernel of the  map ${\rm H^j}\bigl(\fX_\Kbar, \bL\otimes {\rm Fil}^0 \bB_{\rm cris,
\Kbar}^\nabla\bigr)\lra D_j(\cL)\otimes_{\WW(k)} B_{\rm
log}$. Then, $Q$ is a $B_{\rm cris}$-module. Since $B_{\rm cris}$ contains the maximal unramified extension
$\Q_p^{\rm un}$ of $\Q_p$, then $Q$ can be considered as
a vector space over $\Q_p^{\rm un}$. A diagram chase in (\ref{longfundsequence}) and the last assertion in \ref{prop:Nonto}(2) imply that $Q$ is in
the image of ${\rm H}^j(\fX_\Kbar, \bL)$ which is a finite dimensional $\Q_p$-vector space by \ref{thm:finiteFaltings}. Hence $Q$ must be trivial.

(5.iii) Using the long exact sequence in cohomology  associated to  the exact sequence $0\lra \bL\otimes
\Fil^0\bB_{\rm cris,\Kbar}^{\nabla} \lra \bL\otimes \bB_{\rm
cris,\Kbar}^{\nabla} \lra \bL\otimes \bB_{\rm cris,\Kbar}^{\nabla}/\Fil^0 \bB_{\rm cris,\Kbar}^{\nabla} \lra 0$
and  (5.ii) we get that $$\frac{{\rm H}^i\bigl(\fX_\Kbar, \bL\otimes\bB_{\rm cris,\Kbar}^\nabla \bigr)}{\Fil^0
{\rm H}^i\bigl(\fX_\Kbar, \bL\otimes\bB_{\rm cris,\Kbar}^\nabla
\bigr)} \cong {\rm H}^i\bigl(\fX_\Kbar, \bL\otimes\bB_{\rm cris,\Kbar}^\nabla/\Fil^0 \bB_{\rm cris,\Kbar}^\nabla \bigr).$$
This and the argument in (2) imply that $\iota$ is an isomorphism.  As
$$\bL\otimes \bB_{\rm cris,\Kbar}^{\nabla}/\Fil^0 \bB_{\rm cris,\Kbar}^{\nabla}\cong  \bigl(\bL\otimes
\bB_{\rm cris,\Kbar}^{\nabla,\varphi=1}\bigr)/\bL$$by \S\ref{section:fundamentaldiagram} we deduce the second claim by considering the cohomology
of the exact sequence $0\lra \bL \lra \bL\otimes \bB_{\rm
cris,\Kbar}^{\nabla,\varphi=1} \lra \bigl(\bL\otimes \bB_{\rm
cris,\Kbar}^{\nabla,\varphi=1}\bigr)/ \bL \lra 0$.

\end{proof}

\begin{corollary}\label{cor:criterion}
The filtered $(\varphi,N)$-module  $D_i(\cL)\otimes_{\WW(k)} B_{\rm log}^{G_K}$ is admissible
and it is associated to the semistable representation
${\rm H^i}\bigl(\fX_\Kbar, \bL\bigr)$ of $G_K$; \smallskip

\end{corollary}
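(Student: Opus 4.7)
The plan is to verify the admissibility criterion of Proposition~\ref{prop:admis}(1) for the filtered $(\varphi,N)$-module $M_i := D_i(\cL)\otimes_{\WW(k)} B_{\rm log}^{G_K}$ over $B_{\rm log}^{G_K}$. By Proposition~\ref{prop:pieces}(4), $V_{\rm log}(M_i)$ equals the image in $M_i$ of the natural map from ${\rm H}^i(\fX_\Kbar,\bL)$, which is finite dimensional over $\Q_p$ by Theorem~\ref{thm:finiteFaltings}; hence Proposition~\ref{prop:admis}(1) already yields that $V_{\rm log}(M_i)$ is a semistable $G_K$-representation and that $D_{\rm log}(V_{\rm log}(M_i))\subseteq M_i$. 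What remains is to prove that $\delta(M_i)$ is surjective for every $i$, since then admissibility follows and the inclusion becomes an equality.

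Set $H^i:=\dim_{\Q_p}{\rm H}^i(\fX_\Kbar,\bL)$, $V_i:=\dim_{\Q_p} V_{\rm log}(M_i)$, $C_i:=\dim_{\Q_p}\mathrm{coker}\,\delta(M_i)$, and $d_i:=\dim_{K_0} D_i(\cL)$. A diagram chase in the long exact sequence of Proposition~\ref{prop:pieces}(5.iii), using the identification in (3) of the image of $u_i$ with $V^0_{\rm log}(M_i)$ and the isomorphism $\rho$ in (2) to place $V^1_{\rm log}(M_i)$ in the middle, produces the point-wise identity $H^i = V_i + C_{i-1}$. On the other hand, the admissibility bound implicit in Proposition~\ref{prop:admis}(1) gives $V_i \le d_i$, with equality iff $C_i = 0$. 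The missing ingredient is the point-wise equality $H^i = d_i$, which I would derive from Theorem~\ref{sec:formalcomparison}: combining that comparison with Proposition~\ref{prop:HicrisE}(4) and GAGA produces an isomorphism of filtered $\overline{B}_{\rm log}$-modules
$${\rm H}^i(\fX_\Kbar,\bL)\otimes_{\Q_p}\overline{B}_{\rm log} \;\cong\; D_i(\cL)\otimes_{K_0}\overline{B}_{\rm log},$$
which moreover factors the map ${\rm H}^i(\fX_\Kbar,\bL)\to M_i$ after extending scalars to $\overline{B}_{\rm log}$. In particular $H^i = d_i$ and the surjection ${\rm H}^i(\fX_\Kbar,\bL)\twoheadrightarrow V_{\rm log}(M_i)$ from (4) is an isomorphism, so $V_i = H^i = d_i$.

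With $V_i = d_i$ for every $i$, the admissibility criterion forces $M_i$ to be admissible and $D_{\rm log}\bigl({\rm H}^i(\fX_\Kbar,\bL)\bigr)=M_i$; equivalently $C_i=0$ for all $i$, so every $\delta(M_i)$ is surjective. That $M_i$ is associated to the semistable representation ${\rm H}^i(\fX_\Kbar,\bL)$ then follows from the equivalence $T$ of Proposition~\ref{prop:T}(2), which identifies $B_{\rm log}$-admissibility with $B_{\rm st}$-admissibility and converts $D_{\rm log}$ into $D_{\rm st}$. The hard part is the extraction of $H^i = d_i$ from Theorem~\ref{sec:formalcomparison}: concretely, one must verify that ${\rm H}^i(\fX_\Kbar,\bL\otimes\overline{\bB}_{\rm log,\Kbar}^\nabla)$ faithfully recovers ${\rm H}^i(\fX_\Kbar,\bL)\otimes_{\Q_p}\overline{B}_{\rm log}$ as filtered $\overline{B}_{\rm log}$-modules, which amounts to an almost-acyclicity and flat base-change statement for $\overline{\bB}_{\rm log,\Kbar}^\nabla$-coefficients in the spirit of Proposition~\ref{prop:crysisacyclic}, together with compatibility with the Hodge filtration.
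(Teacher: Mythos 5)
Your starting point---verifying the Colmez--Fontaine criterion of Proposition \ref{prop:admis} using Proposition \ref{prop:pieces} and the finiteness from Theorem \ref{thm:finiteFaltings}---agrees with the paper, but the way you propose to close the argument has a genuine gap. The gap is precisely the step you flag as ``the hard part'': the identity $H^i=d_i$, i.e. $\dim_{\Q_p}{\rm H}^i(\fX_\Kbar,\bL)=\dim_{K_0}D_i(\cL)$, which you want to extract from an isomorphism ${\rm H}^i(\fX_\Kbar,\bL\otimes\overline{\bB}^\nabla_{\rm log,\Kbar})\cong {\rm H}^i(\fX_\Kbar,\bL)\otimes_{\Q_p}\overline{B}_{\rm log}$. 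This projection formula is not an almost-acyclicity or flat base-change statement in the spirit of Proposition \ref{prop:crysisacyclic}: that proposition computes ${\rm R}^jv^{\rm cont}_{\Kbar,\ast}$ of the period sheaves and thereby identifies the \emph{right-hand} side of Theorem \ref{sec:formalcomparison} with de Rham cohomology of $\bDcrisgeo(\bL)_{X_K}$; it says nothing about pulling the constant coefficients $\overline{B}_{\rm log}$ out of the cohomology on the left. What you need is essentially the de Rham comparison isomorphism for $\bL$ itself, which in Faltings' framework requires Poincar\'e duality or an Euler-characteristic/degeneration argument; nothing of the sort is developed in the paper, and indeed $H^i=d_i$ is a \emph{consequence} of Theorem \ref{thm:mainthm}, not an input to it. There is also a secondary circularity: even granting $H^i=d_i$, to conclude $V_i=d_i$ you need the surjection ${\rm H}^i(\fX_\Kbar,\bL)\to V_{\rm log}(M_i)$ of Proposition \ref{prop:pieces}(4) to be injective, and the paper's identification of its kernel (the cokernel of $\varphi-1$ on ${\rm H}^i(\fX_\Kbar,\bL\otimes\bB^\nabla_{\rm cris,\Kbar})$) vanishes only \emph{after} admissibility is known. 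Your bookkeeping identity $H^i=V_i+C_{i-1}$ is likewise only heuristic, since the terms of the long exact sequence in (5.iii) are not finite-dimensional over $\Q_p$ and the relevant images and kernels are exactly what is at stake.

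The paper avoids any a priori dimension count. It sets $V:=V_{\rm st}\bigl(D_i(\cL)\otimes_{\WW(k)}B^{G_K}_{\rm log}\bigr)$, $D':=\bigl(V\otimes_{\Q_p}B_{\rm st}\bigr)^{G_K}$ (automatically admissible by Proposition \ref{prop:admis}) and $D:=D_i(\cL)/D'$, and proves $D=0$ by the Colmez--Fontaine twisting device: by Proposition \ref{prop:pieces}(3) and (5.iii) the cokernel of $\delta(D)(r)$ embeds into ${\rm H}^{i+1}(\fX_\Kbar,\bL)(r)$, hence has $\Q_p$-dimension bounded by $\dim_{\Q_p}{\rm H}^{i+1}(\fX_\Kbar,\bL)$ independently of $r$ and of the base field, whereas for a totally ramified extension $K'/K$ of degree $s$ and $r$ large the space ${\rm H}^0\bigl(G_{K'},{\rm Coker}(\delta(D))(r)\bigr)$ has dimension at least $(s-1)\dim_{K_0}D$. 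Letting $s\to\infty$ forces $D=0$, whence $\delta(D_i(\cL))$ is surjective and admissibility follows. To salvage your route you would have to supply an independent proof of the projection formula (e.g. via Poincar\'e duality on $\fX_\Kbar$), which is a substantial theorem in its own right and is exactly what the paper's argument is designed to circumvent.
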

\begin{proof} Thanks to \ref{prop:admis}, \enspace
(1) the filtered $(\varphi,N)$-module $D_i(\cL)\otimes_{\WW(k)} B_{\rm log}^{G_K}$ is admissible if and only if \enspace (2)
the map $\delta\left(D_i(\cL)\right)$ is surjective. Assume that (1) holds. The map $$h_i\colon {\rm H^{i}}\bigl(\fX_\Kbar,
\bL\bigr)\lra V_{\rm st}\left(D_i(\cL)\otimes_{\WW(k)} B_{\rm log}^{G_K}\right)$$
is surjective by \ref{prop:pieces}(4). Its kernel coincides with $$\frac{{\rm H^i}\bigl(\fX_\Kbar,
\bL\otimes \bB_{\rm cris,\Kbar}^\nabla\bigr)}{ (\varphi-1) {\rm H^i}\bigl(\fX_\Kbar, \bL\otimes \bB_{\rm
cris,\Kbar}^\nabla\bigr)} \cong \frac{{\rm H}^i\bigl(\bigl(X_0/\cO_{\rm cris}\bigr)^{\rm cris}_{\rm log},
\bDcrisgeo(\bL)\bigr)^{N=0}}{(\varphi-1) {\rm
H}^i\bigl(\bigl(X_0/\cO_{\rm cris}\bigr)^{\rm cris}_{\rm log}, \bDcrisgeo(\bL)\bigr)^{N=0}}$$
by \ref{prop:Nonto}(1) using the long exact sequence in cohomology associated to the crystalline fundamental
diagram in \S\ref{section:fundamentaldiagram}. Note that $\bigl(D_i(\cL)\otimes_{\WW(k)}
B_{\rm log}\bigr)^{N=0}/(\varphi-1) \bigl(D_i(\cL)\otimes_{\WW(k)} B_{\rm log}\bigr)^{N=0}$ is $0$ since
$D_i(\cL)\otimes_{\WW(k)} B_{\rm log}^{G_K}$ is admissible.
By definition $\bigl(D_i(\cL)\otimes_{\WW(k)} B_{\rm log}\bigr)^{N=0}$ contains the image of Frobenius on
${\rm H}^i\bigl(\bigl(X_0/\cO_{\rm cris}\bigr)^{\rm
cris}_{\rm log}, \bDcrisgeo(\bL)\bigr)^{N=0}$. Hence, $\varphi-1$ on ${\rm H}^i\bigl(\bigl(X_0/\cO_{\rm cris}\bigr)^{\rm cris}_{\rm log},
\bDcrisgeo(\bL)\bigr)^{N=0}/\bigl(D_i(\cL)\otimes_{\WW(k)} B_{\rm log}\bigr)^{N=0}$ is the operator $-1$ which is an isomorphism.
We conclude that the map
$\varphi-1$ is surjective on ${\rm H}^i\bigl(\bigl(X_0/\cO_{\rm cris}\bigr)^{\rm cris}_{\rm log},
\bDcrisgeo(\bL)\bigr)^{N=0}$. Thus the map $h_i$ is an isomorphism.

\smallskip

We are left to prove that one of these equivalent statements is true. Due to (2) it suffices to show that the map $\delta\left(D_i(\cL)\right)$
is surjective. Let $V:=V_{\rm st}\left(D_i(\cL)\otimes_{\WW(k)}
B_{\rm log}^{G_K}\right)$ and put
$D':=\bigl(V\otimes_{\Q_p} B_{\rm st}\bigr)^{G_K}$. Due to \ref{prop:admis} the
filtered $(\varphi,N)$-module $D'$ is admissible and $V=V_{\rm st}\left(D'\otimes_{\WW(k)} B_{\rm log}^{G_K}\right)$.
It then suffices to prove that $D'=D_i(\cL)$.
We argue as in \cite[Prop. 5.6\& Prop. 5.7]{colmez_fontaine}. Let $D:=D_i(\cL)/D'$. Consider the commutative
diagram $$\begin{array}{ccccccc}  &  & & 0 & & 0 \\ &  & & \downarrow & & \downarrow \\
0\lra & V & \lra & V_{\rm log}^0\left(D'\otimes_{\WW(k)} B_{\rm
log}^{G_K}\right) & \lra & V_{\rm log}^1\left(D'\otimes_{\WW(k)}
B_{\rm log}^{G_K}\right) &
\lra 0\cr & \Vert & & \downarrow & & \downarrow \\
0\lra & V & \lra & V_{\rm log}^0\left(D_i(\cL)\otimes_{\WW(k)}
B_{\rm log}^{G_K}\right) & \lra & V_{\rm
log}^1\left(D_i(\cL)\otimes_{\WW(k)} B_{\rm log}^{G_K}\right) &
\\  &  & & \downarrow & & \downarrow \\
&  &  & V_{\rm log}^0\left(D\otimes_{\WW(k)} B_{\rm
log}^{G_K}\right) & \lra & V_{\rm log}^1\left(D\otimes_{\WW(k)}
B_{\rm log}^{G_K}\right) \cr
&  & & \downarrow & & \downarrow \\ &  & & 0 & & 0, \\
\end{array}$$
where the first line is exact since $D'$ is admissible and  the
columns are exact by \ref{prop:admis}. Thus the map $\delta(D)$ is
injective and its cokernel coincide with the cokernel of
$\delta\left(D_i(\cL)\right)$.

Let $h$ be the dimension of $D$ as $K_0$-vector space. Fix a basis $\{d_1,\ldots,d_h\}$ adapted to the filtration and for every $j=1,\ldots,h$ let $i_j$ be such
that $d_j\in \Fil^{i_j}D\backslash \Fil^{i_j+1}D $. Fix $r$ such that $r> i_j$ for every $j$. Let $n$ be the dimension of the $\Q_p$-vector space ${\rm
H^{i+1}}\bigl(\fX_\Kbar, \bL\bigr)$. We consider the Galois twist of $\delta(D)$ by $\Q_p(r)$. Due to \ref{prop:pieces}(3)\&(5.iii) we have ${\rm
Coker}\bigl(\delta(D)\bigr)(r)\cong {\rm Coker}\bigl(\delta(D_i(\cL))\bigr)(r) \subset {\rm H^{i+1}}\bigl(\fX_\Kbar, \bL\bigr)(r)$ so that its dimension is bounded
by $n$.  Let $K \subset K'$ be a totally ramified extension of degree $s >0$. Then, $V_{\rm log}^0\left(D\otimes_{\WW(k)} B_{\rm log}^{G_K}\right)(r)\subset
\bigl(D\otimes_{K_0} B_{\rm st}\bigr)(r)$. Since $t$ is invertible in  $B_{\rm st}$ we have $\bigl(D\otimes_{K_0} B_{\rm st}\bigr)(r) \cong B_{\rm st}^h$ so that
its $G_{K'}$-invariants are $K_0^h$. On the other hand, $V_{\rm log}^1\left(D\otimes_{K_0} B_{\rm log}^{G_K}\right)(r)= \oplus_{j=1}^h \bigl(B_{\rm dR}/B_{\rm
dR}^+\bigr) t^{r-i_j}\otimes d_j$ as Galois module, i.e., it is isomorphic to $\oplus_{j=1}^h B_{\rm dR}/ \Fil^{r-i_j} B_{\rm dR}$. In particular, its
$G_{K'}$-invariants coincide with $\oplus_{j=1}^h K'$; see \cite[\S 1.5]{colmez_fontaine}. Hence, ${\rm H}^0\left(G_{K'},{\rm Coker}\bigl(\delta(D)(r)\bigr)\right)$
has dimension as $K_0$-vector space at least $(s-1) h$. On the other hand, it is bounded by $n$. Since $s$ can be chosen arbitrarily large, the only possibility is
that $h=0$ so that $D=0$ as wanted.

\end{proof}

\begin{proof}(of theorem \ref{thm:mainthm})\enspace It follows from
\ref{cor:criterion} that ${\rm H^i}\bigl(\fX_\Kbar, \bL\bigr)$ is
a semistable representation of $G_K$ with associated filtered
$(\varphi,N)$-module $D_i(\cL)$. Since semistable representations
of $G_K$ form an abelian tensor category and $D_{\rm st}$ is exact
and since ${\rm H}^i\bigl(\bigl(X_k/\WW(k)^+\bigr)^{\rm cris}_{\rm
log}, \_\bigr)$ is a $\delta$-functor, the statement of
\ref{thm:mainthm} regarding the isomorphism as $\delta$-functors
is clear. The functoriality is also clear. Note that ${\rm
H}^i\bigl(\bigl(X_k/\WW(k)^+\bigr)^{\rm cris}_{\rm log},
\bDcrisar(\bL)^+\bigr)$ satisfies K\"unneth formula by
\cite[Thm.~6.12]{katolog}. The category of semistable sheaves is
closed under tensor products and $\bDcrisar$ commutes with tensor
products by \ref{prop:BdCristannakianequivalence}. Thus the
compatibility with K\"unneth formula holds as well.

\smallskip

\end{proof}

\section{Relative Fontaine's theory}

\subsection{Notations. First properties}\label{section:localdescription}

Let $R$ be an $\cO_K$--algebra. Assume that there exist a positive
integer~$\alpha$,  non--negative integers $a$ and~$b$ and elements
$X_1,\ldots,X_a$ and $Y_1,\ldots,Y_b\in R$ such that $X_1\cdots
X_a=\pi^\alpha$ and the properties numbered (1),(2),(3),(4) below hold. We start by
defining the monoids
$P_a:=\N^a$ and $P_b:=\N^b$, put $P:=P_a\times P_b$ and we let
$$\psi_R\colon  P \lra R,\qquad
\mbox{ be defined by }\bigl(\alpha_1,\ldots,\alpha_a,\beta_1,\ldots,\beta_b\bigr)\mapsto
\prod_{i=1}^a X_i^{\alpha_i}\prod_{j=1}^b Y_j^{\beta_j}.$$It
induces a morphism of $\cO_K$--algebras $\psi_R\colon\cO_K[P]\to
R$.  We then get a commutative diagram of morphisms of
$\cO_K$--algebras

$$\begin{array}{ccc}
\cO_K[P] & \stackrel{\psi_R}{\longrightarrow} & R \cr \uparrow & &
\uparrow\cr \cO_K[\N] &\stackrel{\psi_\alpha}{\longrightarrow} &
\cO_K,\cr
\end{array}$$where the left vertical map is induced by the map
$\N\to P=P_a\times P_b$, $n\mapsto
\bigl((n,\ldots,n),(0,\ldots,0)\bigr)$ and $\psi_\alpha\colon
\cO_K[\N]\to \cO_K$ is a morphism of $\cO_K$-algebras sending
$\N\ni 1\mapsto \pi^\alpha$. We assume that the following hold.

\begin{enumerate}

\item[(1)] $R$ is excellent;

\item[(2)] the map  $\Psi_R\colon \cO_K[P]\tensor_{\cO_K[\N]}
\cO_K \to R $ induced by $\psi_R$ has geometrically regular
fibers;

\item[(3)] $R$ is obtained from $\cO_K[P]\tensor_{\cO_K[\N]} \cO_K$ as a succession of extensions $R^{(0)}=\cO_K[P]\tensor_{\cO_K[\N]} \cO_K \subset \cdots \subset
R^{(n)}=R$ such that

\smallskip ({\it ALG}) $R^{(i+1)}$ is obtained from $R^{(i)}$ by
\enspace (loc) localizing with respect to a multiplicative system
or \enspace (\'et) by an \'etale extension.

\smallskip

({\it FORM}) each $R^{(i+1)}$ is $p$--adically complete and separated and it is obtained from $R^{(i)}$ as \enspace (loc) the $p$-adic completion of the
localization with respect to a multiplicative system, \enspace (\'et) the $p$-adic completion of an \'etale extension, \enspace (comp) the completion with respect
to an ideal containing $p$.

\item[(4)] For every subset $J_a\subset \{1,\ldots,a\}$ and every subset $J_b\subset \{1,\ldots,b\}$ the ideal of $R$ generated by $\psi_R\bigl(\N^{J_a}\times
\N^{J_b}\bigr)$ is a prime ideal of $R$,  the ideal of $R$ generated by $\psi_R\bigl(P_a\bigr)$ is not the unit ideal and the image of the monoid $\cO_\Kbar^\ast
\cdot \psi_R\bigl(P_b\bigr )$ is saturated in $R\otimes_{\cO_K} \cO_\Kbar$.
\end{enumerate}

In both cases we consider the log structure on $\Spec(R)$ induced by the one on the spectrum of~$\cO_K[P]\tensor_{\cO_K[\N]} \cO_K$ considering the fibred product
log structure. Here we take on $\Spec\bigl(\cO_K[P]\bigr)$ (resp.~$\Spec\bigl(\cO_K[\N]\bigr)$) the log structure associated to the prelog structure $P\to \cO_K[P]$
(resp.~$\N\to \cO_K[\N]$) and we take on $\Spec(\cO_K)$ the log structure associated to the prelog structure $\N \to \cO_K$ sending $1$ to $ \pi$. In particular the
structure map of schemes $\Spec(R) \to \Spec(\cO_K)$ extends to a morphism of log schemes.
\smallskip

More explicitly let $P'$ be the submonoid of $\frac{1}{\alpha}
P_a\times P_b\subset \Q^a\times P_b$ given by
$$P':=\frac{1}{\alpha}\N+ P\subset \frac{1}{\alpha} P_a\times
P_b,$$where $\frac{1}{\alpha}\N$ is diagonally embedded in
$\frac{1}{\alpha} P_a$.

\begin{lemma}\label{lemma:structureofP'}
(a) The monoid $P'$ is the amalgamated sum of monoids
$P\oplus_{\N} \N$ via the maps \enspace(i) $\N \to P$ given by
$\N\ni n\mapsto \bigl((n,\ldots,n),(0,\ldots,0)\bigr)\in P_a\times
P_b=P$; \enspace (ii) $\N \to \N$ given by $\N\ni n\mapsto \alpha
n$.

\smallskip

(b) The monoid $P'$ is fine and saturated.\smallskip

(c) The structural morphism of log schemes $q\colon
\bigl(\Spec\bigl(\cO_K[P]\tensor_{\cO_K[\N]} \cO_K\bigr),P'\bigr)
\lra \bigl(\Spec(\cO_K),\N\bigr)$ is log smooth of relative
dimension $a+b-1$, in the sense of \cite[\S3.3]{katolog}.

\end{lemma}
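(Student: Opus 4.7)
The plan is to treat the three parts in order, reducing each to an explicit computation inside the ambient torsion-free group $\Q^a\times\Z^b$. The key observation is that $P'^{\mathrm{gp}}$ is free of rank $a+b$: a $\Z$-basis is provided by the element $v:=\frac{1}{\alpha}\bigl((1,\ldots,1),(0,\ldots,0)\bigr)$ together with $e_1,\ldots,e_{a-1}\in P_a$ and $f_1,\ldots,f_b\in P_b$, since the relation $\alpha v=e_1+\cdots+e_a$ allows one to eliminate $e_a$. All three parts will flow from this description.

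For (a), I would argue using the fact that for integral monoids the amalgamated sum $P\oplus_\N \N$ is the image of $P\times\N$ in the pushout $P^{\mathrm{gp}}\oplus_\Z\Z$ of abelian groups. Computing this pushout, one gets the quotient $\bigl(P^{\mathrm{gp}}\oplus\Z\bigr)/\langle(d,-\alpha)\rangle$ where $d=\bigl((1,\ldots,1),(0,\ldots,0)\bigr)\in P$. The map $P^{\mathrm{gp}}\oplus\Z\to P'^{\mathrm{gp}}$ given by $(p,n)\mapsto p+nv$ is surjective (since $v$ and $P^{\mathrm{gp}}$ generate), and the kernel is exactly $\langle(d,-\alpha)\rangle$ by a direct check using the basis above. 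Thus the induced morphism identifies the pushout with $P'^{\mathrm{gp}}$, and the image of $P\times\N$ inside it is precisely $P+\N\cdot v=P'$. This gives (a).

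For (b), $P'$ is finitely generated by $v,e_1,\ldots,e_a,f_1,\ldots,f_b$ (one relation $\alpha v=e_1+\cdots+e_a$), and it is integral as a submonoid of the torsion-free group $\Q^a\times\Z^b$, so it is fine. For saturation, any $x\in P'^{\mathrm{gp}}$ has the form $(c_1,\ldots,c_a,d_1,\ldots,d_b)$ with $c_i\in\frac{1}{\alpha}\Z$, all $c_i$ having the same fractional part, and $d_j\in\Z$; the condition $x\in P'$ is that moreover $c_i\geq 0$ and $d_j\geq 0$. If $nx\in P'$ for some $n\geq 1$, positivity of the coordinates of $nx$ forces positivity of the coordinates of $x$, so $x\in P'$.

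For (c), I would invoke Kato's criterion for log smoothness, \cite[Thm.~3.5]{katolog}, applied to the chart $\N\to P'$, $1\mapsto v$. The induced map on group completions $\Z\to P'^{\mathrm{gp}}\cong \Z^{a+b}$ sends $1$ to a basis vector, hence is injective with torsion-free cokernel of rank $a+b-1$; this handles the kernel/cokernel hypothesis of Kato's theorem trivially. It remains to verify that the morphism of underlying schemes
$$\Spec\bigl(\cO_K[P]\tensor_{\cO_K[\N]}\cO_K\bigr)\lra \Spec(\cO_K)\times_{\Spec\Z[\N]}\Spec\Z[P']$$
is \'etale (in the classical sense). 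Unwinding, the target is $\Spec\bigl(\Z[P']/(v-\pi)\bigr)$; using the presentation $\Z[P']=\Z[v,e_1,\ldots,e_a,f_1,\ldots,f_b]/(v^\alpha-e_1\cdots e_a)$ one gets $\Z[e_1,\ldots,e_a,f_1,\ldots,f_b]/(e_1\cdots e_a-\pi^\alpha)$, which is exactly $\cO_K[P]\tensor_{\cO_K[\N]}\cO_K$. So the morphism is in fact an isomorphism, certainly \'etale, and Kato's theorem yields log smoothness of relative dimension equal to $\mathrm{rank}_\Z\bigl(P'^{\mathrm{gp}}/\N^{\mathrm{gp}}\bigr)=a+b-1$. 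The only mildly delicate point in the whole argument is the identification of the pushout in (a); the rest is bookkeeping with monoids and the application of Kato's criterion.
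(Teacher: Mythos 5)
Your proof is correct, and for parts (a) and (c) it follows essentially the same route as the paper: reduce (a) to a computation inside $(P')^{\rm gp}\subset \Q^a\times\Z^b$, and deduce (c) from Kato's chart criterion after checking that the underlying map of schemes is an isomorphism (a verification the paper leaves implicit, citing only \cite[Prop.~3.4]{katolog}). Where you genuinely diverge is in the saturation argument of (b): you characterize $P'$ as the set of elements of $(P')^{\rm gp}$ all of whose coordinates are non-negative, after which saturation is immediate, whereas the paper normalizes the common fractional part $h/\alpha$ of the first $a$ coordinates and runs a division-with-remainder computation on $\beta h=r\alpha+h'$. Your version is cleaner, but both rest on the same membership criterion, so you should record the one-line verification that an element $(h/\alpha+h_1,\ldots,h/\alpha+h_a,m_1,\ldots,m_b)$ of $(P')^{\rm gp}$ with $0\leq h\leq \alpha-1$ lies in $P'$ if and only if all $h_i,m_j\geq 0$, and that (since $-h/\alpha>-1$) this is exactly coordinate-wise non-negativity. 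The one point in (a) to state more carefully is your opening ``fact'': for general integral monoids the pushout in the category of monoids need not coincide with the image of $P\oplus\N$ in the pushout of group completions; it does here because the map $\N\to\N$, $n\mapsto\alpha n$, is an integral morphism (or, directly, because the congruence on $P\oplus\N$ generated by identifying the two copies of $\N$ is easily checked to equal the kernel congruence of $(p,n)\mapsto p+nv$). The paper's own proof of (a) glosses over the same point, so this is a matter of precision rather than a gap.
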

\begin{proof} (a) By construction we  have a surjective morphism
of monoids $f\colon \N \oplus P \to P'$ given by the inclusion $P\subset P'$ and the map $\N \to P'$ sending $n$ to $ \bigl(\frac{1}{\alpha}
(n,\ldots,n),(0,\ldots,0)\bigr)$. The equalizer of $f$ is $\N$ mapping to $\N \oplus P$ via the maps given in (i) and (ii). By the universal properties of the
amalgamated sum we thus get an isomorphism $P\oplus_{\N} \N\cong P'$.

(b) The group $P^{', \rm gp}$ generated by $P'$ is the subgroup
$\frac{1}{\alpha}\Z+ P^{\rm gp}$ of $\frac{1}{\alpha} P_a^{\rm
gp}\times P_b^{\rm gp}$ which is torsion free and abelian. In
particular, $P'$ is integral and it  is clearly finitely generated. We
prove now that $P'$ is saturated.

Every element $a\in P^{', \rm gp}$ can be written as $a=
\bigl(h/\alpha+h_1, \ldots, h/\alpha+h_a,m_1,\ldots,m_b)$ for a
unique positive integer $0\leq h\leq \alpha-1$ and unique integers
$h_1,\ldots,h_a,m_1,\ldots,m_b$. It lies in $P'$ if an only if
$h_1,\ldots,h_a,m_1,\ldots,m_b\in\N$. Let $0\neq \beta \in\N$ be
such that $\beta a\in P'$. Write $\beta h= r \alpha + h'$, the
division by $\alpha$ with reminder $0\leq h'\leq \alpha-1$. Then,
$\beta a=\bigl(h'/\alpha+(\beta h_1+r), \ldots, h/\alpha+(\beta
h_a+r),\beta m_1,\ldots,\beta m_b)$ so that $\beta h_1+r, \ldots,
\beta h_a+r,\beta m_1,\ldots,\beta m_b\in \N$. This implies that
$m_1,\ldots,m_b$ are non-negative, and hence lie in $\N$, and that
for every $1\leq i \leq a$ we have $r+\beta h_i\geq 0$,
i.e.~$\alpha \beta h_i\geq -r \alpha=h'-\beta h\geq -\beta h$. We
conclude that $\alpha h_i \geq - h > -\alpha$ so that  $h_i>-1$,
i.e.~$h_i\in \N$. This implies that $a\in P'$ to start with.

(c) The map of monoids $\iota\colon \N \to P'$ sending $n$ to $ \bigl(\frac{1}{\alpha} (n,\ldots,n),(0,\ldots,0)\bigr)$ is injective. At the level of associated
groups $\iota^{\rm gp}$ remains injective and the quotient  is isomorphic to $\Z^{a+b-1}$. Thus, $q$ is log smooth of the
claimed relative dimension by \cite[Prop. 3.4]{katolog}.

\end{proof}

For every $n\in\N$ write $R_n=R$ if~$n=0$ and let $R_n$ be
$$R_n:=R\otimes_{\cO_K} \cO_{K'_n}\bigl[X_1^{\frac{1}{n!}},\ldots,X_a^{\frac{1}{
n!}},Y_1^{\frac{1}{n!}},\ldots,Y_b^{\frac{1}{n!}}\bigr]/\bigl( X_1^{\frac{1}{n!}}\cdots X_a^{\frac{1}{ n!}}-\pi^{\frac{\alpha}{n!}}\bigr) $$for $n\geq 1$. Then,
$\Spec\bigl(R_n\bigr)$ has a log structure $N_n$ defined by the chart $\cO_K\bigl[\frac{1}{n!}P]\tensor_{\cO_K\bigl[\frac{1}{n!}\N\bigr]} \cO_{K_n'}\to R_n$
considering on $\cO_K\bigl[\frac{1}{n!}P]\tensor_{\cO_K\bigl[\frac{1}{n!}\N\bigr]} \cO_{K_n'}$ the fibred product log structure, where:\smallskip

(i) we endow $\Spec\bigl(\cO_K\bigl[\frac{1}{n!}\N\bigr]\bigr)$ and $\Spec\bigl(\cO_K\bigl[\frac{1}{n!}P]\bigr)$ with the log structures having $\frac{1}{n!}\N\to
\cO_K\bigl[\frac{1}{n!}\N\bigr]$ and  respectively $\frac{1}{n!}P\lra \cO_K\bigl[\frac{1}{n!}P]$ as charts;\smallskip

(ii) the log structure on $\cO_{K_n'}$ is the one associated to the map $\N \to \cO_{K_n'}$ defined by $1\mapsto \pi^{\frac{1}{n!}}$;\smallskip

(iii) the map $\cO_K\bigl[\frac{1}{n!}\N\bigr]\to \cO_K\bigl[\frac{1}{n!}P]$ is the map of $\cO_K$-algebras defined by $\frac{d}{n!}\mapsto \frac{1}{n!}(d
,\ldots,d,0,\ldots,0)$;
\smallskip

(iv) the map $\cO_K\bigl[\frac{1}{n!}\N\bigr]\to \cO_{K_n'}$ is the map of $\cO_K$-algebras defined by $\frac{1}{n!}\mapsto \pi^{\frac{\alpha}{n!}}$;\smallskip

(v) the map $\cO_K\bigl[\frac{1}{n!}P] \to R_n$ is the map of $\cO_K$-algebras defined by
$$\frac{1}{n!}\bigl(\alpha_1,\ldots,\alpha_a,\beta_1,\ldots,\beta_b\bigr)\mapsto
\prod_{i=1}^a X_i^{\frac{\alpha_i}{n!}}\prod_{j=1}^b
Y_j^{\frac{\beta_j}{n!}}$$

Equivalently, proceeding as in the case $n=1$ treated before, we
have an isomorphism
$$\cO_K\bigl[\frac{1}{n!}P]\tensor_{\cO_K\bigl[\frac{1}{n!}\N\bigr]}
\cO_{K_n'}\cong \cO_{K_n'}\bigl[\frac{1}{n!} P'\bigr]$$and the log structure on $\Spec\bigl(R_n\bigr)$ is the one associated to the morphism of monoids
$\frac{1}{n!} P'\to R_n$. We also define
$$R^o:=R\bigl[X_1^{\frac{1}{\alpha}},\ldots,X_a^{\frac{1}{\alpha}}\bigr]/\bigl( X_1^{\frac{1}{\alpha}}\cdots X_a^{\frac{1}{
\alpha}}-\pi\bigr)  \subset R_\alpha$$with log structure on $\Spec\bigl(R^o\bigr)$ associated the morphism of monoids $\bigl(\frac{1}{\alpha} P_a\bigr) \times
P_b\to R^o$ sending $\frac{1}{\alpha}\bigl(u_1,\ldots,u_a) \times (v_1,\ldots,v_b\bigr)$ to $\prod_{i=1}^a X_i^{\frac{u_i}{\alpha}}\prod_{j=1}^b Y_j^{{v_j}}$. We
consider it as a log scheme over $\Spec(\cO_K)$ where the map on log structures is associated to the map of monoids $\N \to \bigl(\frac{1}{\alpha} P_a\bigr) \times
P_b$ sending $n\in \N$ to ${\frac{1}{\alpha}}(n,\ldots,n,0\ldots,0)$.

\subsubsection{First properties of $R_n$}\label{lemma:Rinftyflat}
The following hold:\begin{itemize}

\item[(1)] $R_n$ and $\widehat{R}_n$ (resp.~$R^o$ and $\widehat{R}^o$) are flat $\cO_{K_n'}$--algebras (resp.~$\cO_K$-algebras);

\item[(2)] the extension $R\to R_n$ is $\pi^{\alpha}$-flat, i.e., the base change of an injective morphism of $R$-modules has kernel annihilated by $\pi^{\alpha}$.

\item[(3)] $R_n$ is a Cohen-Macaulay ring and, in particular, it is normal. It is regular if $\alpha=1$.

\item[(4)] $R^o$ is a regular ring. Moreover, $R$ is a direct summand in $R^o$ as $R$-module and $\pi^\alpha R^o$ is contained in a finite and free $R$-submodule of $R^o$. We have $R=R^o$ if and only if
$\alpha=1$.

\item[(5)] $R$ is an integral domain.

\end{itemize}

\begin{proof}

Since $R_n$ (resp.~$R^o$) is noetherian  claims (1)--(2) for $\widehat{R}_n$ (resp.~$\widehat{R}^o$) follow from the claims (1)--(2) for~$R_n$ (resp.~for $R^o$). By
construction $R_n$ is the tensor product of $ \cO_K[P'] \to R$, induced by $\psi_R$, and $R_n':=\cO_K\bigl[\frac{1}{n!}P']$. Thus it suffices to prove claim (2) for
the tower defined by $R_n'$ for $n\in\N$. Since the map $\Psi_R$ is flat by assumption, it suffices to prove  claim (1) for $R_n'$ and similarly we can replace
$R^o$ with $R^{'o}$. \smallskip

(1) We prove it for $R_n'$ leaving the analogous proof for $R^{'o}$ to the reader. Since the element $X_1^{\frac{1}{n!}}\cdots
X_s^{\frac{1}{n!}}-\pi^{\frac{\alpha}{n!}} $ is irreducible in $\cO_{K_n'}\bigl[X_1^{\frac{1}{n!}},\ldots, X_a^{\frac{1}{n!}},Y_1^{\frac{1}{n!}},\ldots,
Y_b^{\frac{1}{n!}}\bigr]$, which is a UFD, it defines a prime ideal. Since the quotient is $R_n'$, the latter is an integral domain and, hence, it is
$\pi^{\frac{1}{n!}}$-torsion free. The claim follows.
\smallskip

(2) Let $A_n\subset R_n'$ be the $R'$-subalgebra generated by $\pi^{\frac{1}{n!}}$,  $X_2^{\frac{1}{n!}}$, $\ldots$, $X_a^{\frac{1}{n!}}$, $Y_1^{\frac{1}{n!}}$ and
$Y_b^{\frac{1}{n!}}$. Since $\pi^{\frac{\alpha}{ n!}}=X_1^{\frac{1}{n!}}\cdots X_a^{\frac{1}{n!}}$ in $R_{n}'$, we compute that
$$\pi^{\alpha} X_1^{\frac{\rho}{n!}}=X_1^{\alpha} X_2^{\alpha-\frac{\rho}{n!} }\cdots X_a^{\alpha-\frac{\rho}{n!}} \pi^{\frac{\alpha\rho}{ n!}}\in A_n.$$ Hence,
$\pi^\alpha R_{n}' \subset A_n \subset R_{n}'$. Furthermore, $A_{n+1}$  is finite and free as $A_n$--module for every $n$. Indeed, since both $A_{n+1}$ and $A_n$
are flat as $\cO_{K_n'}$--modules, it suffices to prove that the elements $\pi^{\frac{\rho}{n!}} X_\beta Y_\gamma$ are linearly independent over
$A_n\bigl[p^{-1}\bigr]=R_n'\bigl[p^{-1}\bigr]$. Since $K_{n+1}'=K_n'\bigl[\pi^{\frac{1}{ (n+1)!}}\bigr]$ is an extension of degree $n+1$, we need only to show that
the elements $X_{\beta'} Y_\gamma:= \prod_{i=2}^{a} X_i^{\frac{\beta_i}{(n+1)!}} \prod_{j=1}^b Y_j^{\frac{\gamma_j}{(n+1)!}}$, with
$\beta'=(\beta_2,\ldots,\beta_a)$ and $0 \leq \beta_i < n+1$ for every $2\leq i\leq a$ and with $\gamma=(\gamma_1,\ldots,\gamma_b)$ and $0 \leq \gamma_j < n+1$ for
every $1\leq j\leq b$, are linearly independent over ${\rm Frac}(R_n')\otimes_{{K_n'}}K_{n+1}'$ and this is clear.
\smallskip

(3) We prove that $R_n$ is Cohen-Macaulay   for $n=0$. The general case follows in the same way after replacing $K$ with $K_n'$ and $R$ with $R_n$.  Assume we are
in the algebraic case. Since the map $\cO_K \to R$ is the base-change of the map $\cO_K[\N]\to \cO_K[P]$, which considered as a map of log schemes is log smooth, it
defines itself a log smooth map. Since $\cO_K $ with the log structure defined by $\pi$ is log regular, \cite[Thm.~8.2]{katotoric} implies that $R$, with its log
structure, is log regular. Then, \cite[Thm.~4.1]{katotoric} implies that $R$ is Cohen-Macaulay and normal as claimed.

In the formal case, due to \cite[Thm.~4.1]{katotoric} it suffices to prove that $R$, with its log structure, is log regular. By construction $R$ is obtained from
$R^{(0)}:=\cO_K[P]\tensor_{\cO_K[\N]} \cO_K$ by taking successive extensions $R^{(i)}\subset R^{(i+1)}$ each given by (\'et) the $p$--adic completion of an \'etale
extension, (loc) the $p$-adic completion of a localization or (comp) the completion with respect to an ideal containing $p$. Since $R^{(0)}$ is log regular by the
argument provided in the algebraic case, it suffices to prove that if $R^{(i)}$ is log regular, then $R^{(i+1)}$ is. We may assume that $R=R^{(i+1)}$. By
\cite[Prop.~7.1]{katotoric} to prove the log regularity of $R$ it suffices to show it at maximal ideals of $R$. Since $R$ is $p$-adically complete and separated,
any such contains $p$. Due to \cite[Thm.~3.1(1)]{katotoric} the log regularity of $R$ at $Q$ is expressed in terms of the completed local ring $\widehat{R}_Q$ of
$R$ at  $Q$, with the induced log structure. Set $Q^{(i)}:=Q\cap R^{(i)}$. Then, $\widehat{R^{(i)}}_{Q^{(i)}}\subset \widehat{R}_{Q}$ is a finite and \'etale
extension in case (\'et) or it is an isomorphism in the other two cases. Since $R^{(i)}$ is log regular by assumption, then \cite[Thm.~3.1(1)]{katotoric} holds for
$\widehat{R^{(i)}}_{Q^{(i)}}$ and hence it holds also for $\widehat{R}_{Q}$ as wanted.

\smallskip
Assume next that $\alpha=1$. We may assume that $n=0$. In case (ALG) the map $\Psi_R$ is the composite of localization and \'etale morphisms. Thus to prove the
regularity of $R_n$ it suffices to  show that $R_n'$ is regular. Since $R_n'[p^{-1}]$ is a smooth $K_n'$--algebra, it is regular. We are left to prove that the
localizations of $R_n'$ at prime ideals containing $p$ are regular. In case (FORM) the map $\Psi_R$ is the composite of $p$--adically formally \'etale morphisms,
$p$-adic completions of localizations and completions with respect to ideals containing $p$. Since it suffices to check regularity for the localization at maximal
ideals and the maximal ideals of a $p$--adically complete ring contain~$p$, it suffices to prove that the $p$--adic completion $\widehat{R}_n'$ of $R_n'$ (for
$R_n'$ as in (ALG)) is regular.

Let $\cal P$ be a prime ideal of $R_n'$ or $\widehat{R}_n'$
containing $p$. Then, it contains
$\pi^{\frac{1}{n!}}=X_1^{\frac{1}{n!}}\cdots X_a^{\frac{1}{n!}}$
and, hence, it contains $X_i^{\frac{1}{n!}}$ for some
$i=1,\ldots,a$. Note that $X_i^{\frac{1}{n!}}$ is a regular
element in $R_n'$ and in $\widehat{R}_n'$ i.e., it is not a zero
divisor. Otherwise $\pi^{\frac{1}{n!}}$ would be a zero divisor.
But this is impossible due to (1). Since $R_n'/ X_i^{\frac{1}{n!}}
R_n'\cong \widehat{R}_n'/ X_i^{\frac{1}{n!}} \widehat{R}_n'$ is a
 smooth $k$--algebra, we deduce that $R_{n,{\cal P}}$
(resp.~$\widehat{R}_{n,{\cal P}}$) is regular as
claimed.\smallskip

(4) The regularity of $R^o$ follows arguing as in the proof of (3). Clearly $R=R^o$ if and only if $\alpha=1$. One shows that in general $\pi^\alpha R^o\subset \oplus R \prod_{i=2}^a X_i^{\frac{\beta_i}{\alpha}}
\prod_{j=1}^b Y_j^{\frac{\gamma_j}{\alpha}}$ with $\beta=(\beta_2,\ldots,\beta_a)$ and $0 \leq \beta_i < \alpha$ for every $2\leq i\leq a$  and with
$\gamma=(\gamma_1,\ldots,\gamma_b)$ and $0 \leq \gamma_j < \alpha$ for every $1\leq j\leq b$
proceeding as in the proof of (2). The proof that $R$ is a direct summand in $R^o$ as $R$-module is reduced to the case that $R=R'=\cO_K[P']$ and
$R^o=R^{'o}=\cO_K\big[\bigl((1/\alpha) P_a\bigr)\times  P_b \big]$. This follows as $R^{'o}=R'\oplus D$ where $D= \sum_{x\in Q} R' x$ with $Q$ the subset of
$(1/\alpha) P_a$ of elements which are not diagonal (i.e., of the form $(u,\ldots,u)\in (1/\alpha) P_a$) and which do not lie in $P_a\subset (1/\alpha) P_a$.

(5) It follows from assumption (4) in \ref{section:localdescription}, taking $J_a=J_b=\emptyset$, that $R$ is an integral domain.

\end{proof}

Let $\Omega$ be an algebraic closure of ${\rm Frac}(R)$. Fix compatible $R$-algebra morphisms $R_n\to \Omega$ for $n\in\N$ and write $$R_\infty\subset \Omega$$for
the union of their images.

\begin{lemma}\label{lemma:Rinftyff} (i) The $R^o$-algebra $R_\infty$ is flat as $R^o$-module.\smallskip

(ii) For every $n$ the image of $R_n\to R_\infty$ is a direct factor of $R_\infty$ and it is a finite $R$-module.
\end{lemma}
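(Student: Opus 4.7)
My plan is to reduce both assertions to elementary properties of monoid-ring extensions. Introduce the fine saturated monoids $P^o := \bigl((1/\alpha)P_a\bigr)\times P_b$ and $P^{(n)} := (1/n!)P'$, both inside $\Q^{a+b}$; set $P^{(\infty)} := \bigcup_n P^{(n)}$ and $K_\infty' := \bigcup_n K_n'$. For $n\geq\alpha$ one has $\alpha\mid n!$, which makes the chain of inclusions $P\subset P'\subset P^o\subset P^{(n)}\subset P^{(\infty)}$ of fine saturated monoids well defined. Tautologically, up to matching the monoid generator of $P^{(n)}$ corresponding to $(1/(n!\alpha))(1,\ldots,1,0,\ldots,0)$ with $\pi^{1/n!}\in\cO_{K_n'}$ (this identification being pinned down by the chosen embedding into $\Omega$), one has $R^o = R\otimes_{\cO_K[P']}\cO_K[P^o]$, $R_n = R\otimes_{\cO_K[P']}\cO_{K_n'}[P^{(n)}]$, and $R_\infty = R\otimes_{\cO_K[P']}\cO_{K_\infty'}[P^{(\infty)}]$, with $p$-adic completion inserted in the formal case. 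Since the structure map $\cO_K[P']\to R$ is built from a succession of \'etale extensions, localizations, and (in the formal case) $p$-adic completions --- all flat operations --- flatness and direct-factor claims for $R$-modules reduce to the analogous claims for the corresponding monoid rings.

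For claim (i), the inclusion $P^o\hookrightarrow P^{(n)}$ is Kummer, i.e., injective with finite cokernel on associated groups. A standard computation with monoid rings then shows that $\cO_K[P^{(n)}]$ is a finite free $\cO_K[P^o]$-module; combining this with the freeness of $\cO_{K_n'}$ over $\cO_K$ (coming from the Eisenstein polynomial $T^{n!}-\pi$) yields that $\cO_{K_n'}[P^{(n)}]$ is finite free over $\cO_K[P^o]$. Base-changing along the flat $\cO_K[P']\to R$, the ring $R_n$ becomes finite and locally free --- hence flat --- over $R^o$. A filtered colimit of flat modules being flat, $R_\infty = \bigcup_n R_n$ is flat over $R^o$.

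For claim (ii), the image $\widetilde R_n$ of $R_n\to R_\infty$ is generated over $R$ by the finitely many integral elements $\pi^{1/n!}, X_i^{1/n!}, Y_j^{1/n!}$, each satisfying a monic polynomial over $R$, so it is finite as an $R$-module. For the direct-factor statement, the argument in the proof of \ref{lemma:structureofP'}(b) applied at all scales shows that $P^{(n)}$ is saturated in $P^{(\infty)}$, i.e., $(P^{(n)})^{\rm gp}\cap P^{(\infty)}=P^{(n)}$: if $n!q\in(P')^{\rm gp}$ and $m!q\in P'$ for some $m\ge n$, then $(m!/n!)\cdot(n!q)=m!q\in P'$ forces $n!q\in P'$ by saturation of $P'$. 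This yields the ``$P'$-face'' property: for $p\in P'$ and $q\in P^{(\infty)}$ with $p+q\in P^{(n)}$, one has $q=(p+q)-p\in (P^{(n)})^{\rm gp}\cap P^{(\infty)}=P^{(n)}$. Consequently, the $\cO_K$-module decomposition
$$\cO_{K_\infty'}[P^{(\infty)}] = \cO_{K_n'}[P^{(n)}]\oplus \bigoplus_{q\in P^{(\infty)}\setminus P^{(n)}}\cO_{K_\infty'}\cdot [q]$$
is stable under multiplication by $\cO_K[P']$, and base-changing to $R$ along the flat map $\cO_K[P']\to R$ yields the required direct-factor decomposition $R_\infty=\widetilde R_n\oplus C_n$ as $R$-modules.

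The main obstacle will lie in the careful bookkeeping of the identifications in the tensor products --- in particular, reconciling the monoid generator of $P^{(n)}$ corresponding to $\pi^{1/n!}$ with the actual element $\pi^{1/n!}\in\cO_{K_n'}$ up to the root-of-unity ambiguity resolved by the fixed embedding into $\Omega$ --- and, in the formal case, verifying that $p$-adic completion commutes with the finite direct-sum decompositions that appear. Both are manageable because the relevant decompositions are finite at each stage.
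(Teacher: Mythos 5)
Your reduction to the monoid--algebra case over $\cO_K[P']$, and your proof that the colimit of the $R_n$ is flat over $R^o$, follow the paper's route closely (the paper exhibits one explicit basis $\prod_i X_i^{\beta_i/\alpha}\prod_j Y_j^{\gamma_j}$, $0\le \beta_i,\gamma_j<1$, of the whole colimit over $R^{'o}$ instead of arguing level by level, but it is the same computation). The genuine gap is the identification $R_\infty = R\otimes_{\cO_K[P']}\cO_{K_\infty'}[P^{(\infty)}]$. By definition $R_\infty$ is the union of the images of the \emph{fixed} maps $R_n\to\Omega$, hence a quotient of $A:=\ds\lim_n R_n$, and $R_n$ is in general not a domain: it is a tensor product which, being noetherian and normal by \S\ref{lemma:Rinftyflat}(3), is only a finite product of normal domains, of which the image in $\Omega$ is a single factor. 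Your argument therefore establishes flatness of $A$ over $R^o$ and an $R$-module splitting of $R_n$ inside $A$ --- not the corresponding statements for $R_\infty$ and for the image of $R_n$ in it. The paper closes precisely this gap first, in part (ii): normality makes the image $R_n''$ a direct ring factor of $R_n$ and $R_\infty$ a direct factor of $A$, and only then does flatness of $R_\infty$ follow from flatness (indeed freeness) of $A$. You need this normality/idempotent step, or a substitute, before your monoid combinatorics says anything about $R_\infty$ itself.

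A secondary issue: the ring you denote $\cO_{K_n'}[P^{(n)}]$ cannot be the free monoid algebra on $P^{(n)}$ over $\cO_{K_n'}$, because the diagonal generator $\tfrac{1}{n!\alpha}(1,\ldots,1,0,\ldots,0)$ must be identified with $\pi^{1/n!}\in\cO_{K_n'}$; monoid elements differing by a diagonal element then give proportional, not independent, basis vectors. Consequently the displayed decomposition of $\cO_{K_\infty'}[P^{(\infty)}]$ (which moreover mixes coefficient rings: for $q\in P^{(n)}$ the coefficient must range over all of $\cO_{K_\infty'}$, not just $\cO_{K_n'}$) does not hold as written. You flag this bookkeeping as the ``main obstacle''; it is resolvable --- the paper's basis, which uses only fractional powers of the $X_i$ and $Y_j$ and absorbs all fractional powers of $\pi$ via $\pi^{\alpha c}=X_1^c\cdots X_a^c$, is exactly the fix --- but it must actually be carried out, and it interacts with the $R_\infty$-versus-$A$ problem above. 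The finiteness of the image of $R_n$ as an $R$-module is fine as you argue it.
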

\begin{proof} (ii) Since $R_n$ is noetherian and normal,
$R_n$ is the product of finitely many normal integral domains and the image $R_n''\subset \Omega$ of $R_n$ in $\Omega$ is then one of these factors. We conclude
that $\ds \lim_n R_n$ is a product of integral normal domains and its image $R_\infty=\cup_n R_n''$ in $\Omega$ is one of these factors.

(i) We claim that $A:=\ds \lim_n R_n$ is a flat $R^{o}$-module. Thanks to (ii) this proves that $R_\infty$ is flat as $R^o$-module.  Proceeding as in
\ref{lemma:Rinftyflat} we reduce to the case that $R=R'=\cO_K[P']$. In this setting we prove that $A$ is in fact free as $R^{'o}$-module with basis given by the
elements $\prod_{i=1}^{a} X_i^{\frac{\beta_i}{\alpha}} \prod_{j=1}^b Y_j^{{\gamma_j}}$, with rational numbers $0\leq \beta_i <1$ and $0\leq \gamma_j< 1$ for
$i=1,\ldots,a$ and $j=1,\ldots,b$. Indeed, as $\pi^{\alpha c}=X_1^{c}\cdots X_a^{c}$ for every positive $c\in \Q$, then $\cO_{K_\infty'}\subset A$ so that these
elements are generators of $A$ as $R^{'o}$-module.  They also form a basis  over $R^{'o}[p^{-1}]$. As $R^{'o}$ is $p$-torsion free, they form a basis of $A$ as
$R^{'o}$-module.
\end{proof}

\subsubsection{The ring $\widehat{\overline{R}}$}
\label{sec:widehatR}
Let $\cS$ be the set of $R$--subalgebras $S$ of~$\Omega$ such that
\smallskip

(1) $R[Y_1^{\pm 1},\ldots,Y_b^{\pm 1},p^{-1}] \subset S\tensor_R
R[Y_1^{\pm 1},\ldots,Y_b^{\pm 1},p^{-1}]$ is \'etale;\smallskip

(2) $S$ is finite as $R$-module and $S$ is a normal
domain.\smallskip

Then $\cS$  is a directed set with respect to the inclusion.
Define $\overline{R}$ to be the direct limit
$\overline{R}:=\lim_{S\in \cS} S$ and $\widehat{\overline{R}}$ to
be the $p$--adic completion of $\overline{R}$. Put $\cG_R$ to be
the Galois group of $\overline{R}\bigl[p^{-1}\bigr]$ over
$R\bigl[p^{-1}\bigr]$. We endow the $R$--algebra
$\widehat{\overline{R}}$ with the  log structure induced from the
given one on~$R$.

Let $\cS_\infty$ be the set of extensions $R_\infty\subset S_\infty \subset \Omega$ such that $S_\infty$ is finite and \'etale over $R_\infty\bigl[p^{-1}\bigr]$ and
such that $S_\infty$ is normal. Every $S_\infty\in \cS_\infty$  is contained in $\overline{R}$. On the other hand it follows from Abyhankar's lemma \cite[Prop.
XIII.5.2]{SGAI} that every normal extension $R_\infty\subset T$ finite and \'etale over $R_\infty[Y_1^{\pm 1},\ldots,Y_b^{\pm 1},p^{-1}]$ is in fact finite and
\'etale over $R_\infty[p^{-1}]$. Hence, $${\ds \lim_{S\in \cS}} S=:\overline{R}={\ds \lim_{S_\infty \in \cS_\infty}} S_\infty$$For every $S_\infty\in \cS_\infty$
let $ e_{S_\infty}\in S_\infty\tensor_{R_\infty} S_\infty\bigl[p^{-1}\bigr]$ be the canonical idempotent splitting the multiplication map
$S_\infty\tensor_{R_\infty} S_\infty\bigl[p^{-1}\bigr]\to S_\infty\bigl[p^{-1}\bigr]$. We have:

\begin{proposition}\label{prop:AE} For for every~$n\in\N$, the element $\pi^{\frac{1}{ n!}}e_{S_\infty}$ is
in the image of $ S_\infty\tensor_{R_\infty} S_\infty$.
\end{proposition}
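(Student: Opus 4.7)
The plan is to reduce to Faltings' original almost purity theorem in the logarithmic smooth case. First I would observe that $R^o \subset R_n$ for every $n \geq \alpha$, since $R_n$ contains the $\alpha$-th roots $X_i^{1/\alpha}$; hence the ring $R_\infty$ coincides with the analogous colimit constructed starting from $R^o$ in place of $R$, and the set $\cS$ defining $\overline{R}$ is unaffected by this replacement. Since $R^o$ satisfies the standing hypotheses of \S\ref{section:localdescription} now with $\alpha=1$, by \ref{lemma:Rinftyflat}(4) and the explicit description of $R^o$ given there, I may assume throughout that $\alpha=1$, so that $R$ itself is regular with $\pi = X_1 \cdots X_a$, and we are in a genuine semistable (normal-crossings) local model.

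Next, I would express $S_\infty$ as a filtered colimit of finite normal $R_n$--algebras $S_n$ such that $S_n[p^{-1}]$ is finite \'etale over $R_n[p^{-1}]$; using Abhyankar's lemma as in the discussion preceding the proposition, one can absorb any ramification along the divisors $Y_j=0$ into the tower $R_n$, so that in fact $R_n \subset S_n$ is \'etale after inverting only $p$. The canonical idempotent $e_{S_\infty}$ is then the colimit of idempotents $e_{S_n,m}\in S_n \otimes_{R_n} S_n \otimes_{R_n} R_m[p^{-1}]$ for $m\geq n$, and it suffices to prove that for each such pair $(n,m)$ one has $\pi^{1/m!} e_{S_n,m} \in S_n \otimes_{R_n} S_n \otimes_{R_n} R_m$.

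The key step is a computation of the relative different of the Kummer tower $R_n \subset R_m$. In the toric setting at hand the module $\omega^1_{R_m/R_n}$ is generated by the $d\log$'s of the coordinates $X_i^{1/m!}$ and $Y_j^{1/m!}$, and a direct calculation, analogous to the one in \cite{brinon} and \cite{andreatta_brinon}, shows that the relative different $\mathfrak{d}_{R_m/R_n}$ contains $\pi^{1/n!-1/m!}$ up to a unit. Combined with the fact that the different of the generically \'etale extension $R_n[p^{-1}] \subset S_n[p^{-1}]$ is the unit ideal, this produces a trace pairing on $S_n\otimes_{R_n} R_m$ whose discriminant divides $\pi^{1/n!-1/m!}$. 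Feeding this into the standard construction expressing the canonical idempotent in terms of the trace pairing then yields exactly $\pi^{1/m!} e_{S_n,m} \in S_n \otimes_{R_n} S_n \otimes_{R_n} R_m$, as required.

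The main obstacle is making the different computation rigorous along the full Kummer tower while keeping uniform control over the power of $\pi$ at every finite level, in particular tracking the log-structure contribution coming from the coordinates along the singular fibre $X_1 \cdots X_a = \pi$. This is precisely the technical heart of Faltings' almost purity in the semistable case; the initial reduction to $\alpha=1$ is what allows us to invoke the computations of \cite{brinon} and \cite{andreatta_brinon} directly, rather than having to redo them for the fractional tower defined by $R^o \subset R$.
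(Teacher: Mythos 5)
The paper's own proof is a one-line appeal to Faltings' Almost Purity Theorem \cite[Thm.~4]{faltingsAsterisque} (see also \cite[Thm.~9.6.34]{GabberRamero}), so the real question is whether your sketch actually reproves that theorem. It does not. Your preliminary reduction is reasonable: $R^o\subset R_n$ for $n\geq \alpha$, the two Kummer towers are cofinal, and the paper itself exploits $R^o$ in exactly this way elsewhere (e.g.\ in \ref{cor:overlineRaf} and \ref{prop:A+Rtildeff}), so passing to the regular case $\alpha=1$ is legitimate modulo checking that $R^o$ inherits all of the standing hypotheses of \S\ref{section:localdescription}.

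The gap is in your ``key step''. The relative different of the Kummer tower $R_n\subset R_m$ only controls the ramification of the tower over itself; it says nothing about the extension $R_n\subset S_n$. That $R_n[p^{-1}]\subset S_n[p^{-1}]$ is finite \'etale makes the different of the \emph{localized} extension the unit ideal tautologically, but the integral extension $R_n\subset S_n$ can be arbitrarily wildly ramified along the special fibre: its different can contain a large power of $p$ that has nothing to do with the toric coordinates, so the discriminant of the trace pairing on $S_n\tensor_{R_n}R_m$ need not divide $\pi^{1/n!-1/m!}$ at any finite level --- asserting that it does already presupposes the conclusion for $S_n/R_n$. The whole content of almost purity is that this $p$-ramification becomes \emph{almost} trivial only after base change up the full tower $R_\infty$, and establishing that requires Faltings' deformation/Frobenius argument (or Gabber--Ramero's log-geometric version, or the perfectoid approach), none of which appears in the sketch, as you concede in your final paragraph. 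As written, the proposal is a sensible reduction of the proposition to the theorem the paper cites, not a proof of it.
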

\begin{proof} The claim follows from Faltings' Almost
Purity Theorem \cite[Thm.~4]{faltingsAsterisque}. See also \cite[\S 9]{GabberRamero}, especially Theorem 9.6.34.

\end{proof}

It is in the proof of the proposition that  assumptions (1) and (2) on the ring $R$ made in \S\ref{section:localdescription} are used and they might be relaxed
using recent work of P.~Scholze. Let ${m}_{\overline{R}}$ be the ideal of $\overline{R}$ generated by $\pi^{\frac{1}{ n!}}$ for $n\in\N$. Then,

\begin{corollary}\label{cor:overlineRaf}
The extension $R_\infty \subset \overline{R}$ is almost flat. In particular, the extension $R^o\subset \overline{R}$ is ${m}_{\overline{R}}$--flat.
\end{corollary}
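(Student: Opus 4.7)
The strategy is to deduce almost flatness of $R_\infty\subset \overline{R}$ from the almost purity result \ref{prop:AE} by the standard almost-mathematics argument, and then to combine this with the flatness of $R^o\to R_\infty$ provided by \ref{lemma:Rinftyff}(i) to obtain the second statement.

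First I would write $\overline{R}=\varinjlim_{S_\infty\in\cS_\infty} S_\infty$ and reduce to showing that each inclusion $R_\infty\subset S_\infty$ is almost flat (a filtered colimit of almost flat modules is almost flat since tensor product commutes with direct limits and since an element of $m_{\overline{R}}$ annihilating each term of a directed system of $\mathrm{Tor}$'s annihilates the colimit). Fix such an $S_\infty$. By construction $S_\infty[p^{-1}]$ is finite \'etale over $R_\infty[p^{-1}]$, so there is a canonical splitting idempotent $e_{S_\infty}\in S_\infty\otimes_{R_\infty}S_\infty[p^{-1}]$ of the multiplication map. By \ref{prop:AE}, for every $n\in\N$ the element $\pi^{1/n!} e_{S_\infty}$ lies in the image of $S_\infty\otimes_{R_\infty} S_\infty$. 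Because $S_\infty$ is visibly the filtered colimit of its finite $R_\infty$-subalgebras and $e_{S_\infty}$ involves only finitely many generators, these almost-elements of $S_\infty\otimes_{R_\infty} S_\infty$ exhibit $S_\infty$ as almost finitely generated and almost projective as $R_\infty$-module: one uses the idempotent to almost-split the trace pairing, which gives an almost $R_\infty$-linear retraction of a free module onto $S_\infty$ up to multiplication by any element of $m_{\overline{R}}$.

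Almost projective implies almost flat, so $R_\infty\to S_\infty$ is almost flat, and passing to the colimit gives the first assertion: $R_\infty\subset \overline{R}$ is almost flat with respect to the ideal $m_{\overline{R}}$. Concretely, for any short exact sequence $0\to M'\to M\to M''\to 0$ of $R_\infty$-modules the kernel of $M'\otimes_{R_\infty}\overline{R}\to M\otimes_{R_\infty}\overline{R}$ is annihilated by every element of $m_{\overline{R}}$.

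For the second assertion, \ref{lemma:Rinftyff}(i) asserts that $R_\infty$ is flat as an $R^o$-module. Given any injection $N'\hookrightarrow N$ of $R^o$-modules, the base change $N'\otimes_{R^o} R_\infty\to N\otimes_{R^o} R_\infty$ remains injective by flatness, and then base-changing further along $R_\infty\to \overline{R}$ introduces a kernel annihilated by $m_{\overline{R}}$ by the previous paragraph. Since $N\otimes_{R^o}\overline{R}\cong (N\otimes_{R^o}R_\infty)\otimes_{R_\infty}\overline{R}$, the composite map $N'\otimes_{R^o}\overline{R}\to N\otimes_{R^o}\overline{R}$ has kernel killed by any element of $m_{\overline{R}}$. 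This is exactly $m_{\overline{R}}$-flatness of $R^o\subset\overline{R}$, completing the proof. The only non-formal input is the almost purity theorem \ref{prop:AE}; everything else is a standard manipulation in almost mathematics, and the main point to be careful about is the passage from ``$\pi^{1/n!}e_{S_\infty}$ is in the image'' to almost projectivity of $S_\infty$, which requires choosing finitely many elements of $S_\infty\otimes_{R_\infty}S_\infty$ whose images under the two natural maps to $S_\infty$ assemble into an almost-dual basis for $S_\infty$.
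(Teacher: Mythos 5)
Your proposal is correct and follows exactly the route of the paper, whose proof of this corollary is the one-line citation of \ref{prop:AE} and \ref{lemma:Rinftyff}: you have simply spelled out the standard almost-mathematics deduction (idempotent almost in the image $\Rightarrow$ trace pairing gives an almost splitting $\Rightarrow$ almost projectivity and hence almost flatness of each $S_\infty$, then pass to the filtered colimit and compose with the flatness of $R^o\to R_\infty$). The details you flag as needing care — the trace landing in $R_\infty$ by normality and multiplication by $\pi^{1/n!}$ factoring through a finite free module — are precisely the computations the paper itself carries out in the proof of \ref{cor:Frobonto}.
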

\begin{proof} It follows from \ref{prop:AE} and
\ref{lemma:Rinftyff}.

\end{proof}

\begin{proposition}\label{prop:Rbarff} The following hold:\smallskip

(1) the ring $\widehat{\overline{R}}$ is $p$-torsion free and
reduced, the map $\overline{R}\to \widehat{\overline{R}}$ is
injective and $p \widehat{\overline{R}}\cap \overline{R}=p
\overline{R}$;

(2) the extension
$\widehat{R}\bigl[p^{-1}\bigr]\subset\widehat{\overline{R}}\bigl[p^{-1}\bigr]$
is faithfully flat.
\end{proposition}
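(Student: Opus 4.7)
The plan for (1) is to first show $\overline R$ is itself $p$-torsion free and $p$-adically separated, then transfer these properties to the completion. That $\overline R$ is $p$-torsion free is immediate from $\overline R=\varinjlim_{S\in\cS}S$ with each $S$ a normal noetherian domain in which $p$ is a non-unit (assumption (4) of \S\ref{section:localdescription}). For $p$-adic separatedness, given $x\in\bigcap_n p^n\overline R$, I would fix $S_0\in\cS$ containing $x$, pick a height-one prime $\mathfrak q\subset S_0$ containing $p$, and extend the associated discrete valuation $v_{\mathfrak q}$ to a valuation $\tilde v$ on the algebraic closure $\Omega\supset\overline R$. Since $\overline R$ is integral over $R$, $\tilde v$ is nonnegative on $\overline R$; writing $x=p^n y_n$ with $y_n\in\overline R$ then gives $\tilde v(x)\geq n\tilde v(p)$ for every $n$, forcing $\tilde v(x)=\infty$ and hence $x=0$.

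With these in hand, the short exact sequence of inverse systems $0\to(\overline R/p^{n-1}\overline R)\xrightarrow{\cdot p}(\overline R/p^n\overline R)\to\overline R/p\overline R\to 0$ has surjective transition maps, so Mittag--Leffler ($\lim^1=0$) yields $0\to\widehat{\overline R}\xrightarrow{\cdot p}\widehat{\overline R}\to\overline R/p\overline R\to 0$. Iterating produces $\widehat{\overline R}/p^n\widehat{\overline R}\cong\overline R/p^n\overline R$ for every $n$, and from this the $p$-torsion freeness of $\widehat{\overline R}$, the injectivity of $\overline R\hookrightarrow\widehat{\overline R}$ (combined with the separatedness proved above), and the equality $p\widehat{\overline R}\cap\overline R=p\overline R$ all drop out directly. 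For reducedness, if $x^k=0$ in $\widehat{\overline R}$, lift $x\bmod p^m\widehat{\overline R}$ to $\tilde x_m\in\overline R$, so $\tilde x_m^k\in p^m\overline R$; the valuation argument of the previous paragraph combined with normality of a suitable $S\in\cS$ containing $\tilde x_m$ (using $S=\bigcap_{\mathfrak p}S_{\mathfrak p}$ over height-one primes) yields $\tilde x_m\in\pi^{\lfloor me/k\rfloor}\overline R$, where $e$ is the absolute ramification index of $K$. Since $\pi^{ke}$ and $p^k$ generate the same ideal in $\widehat{\overline R}$, this forces $x\in\bigcap_m\pi^{\lfloor me/k\rfloor}\widehat{\overline R}=0$ by the $p$-adic separatedness of the completion.

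For (2), I would factor $\widehat R[p^{-1}]\to\widehat{\overline R}[p^{-1}]$ through $\widehat{R^o}[p^{-1}]$. The first stage $\widehat R[p^{-1}]\to\widehat{R^o}[p^{-1}]$ is finite and \'etale: $R^o$ is obtained from $R$ by adjoining the $X_i^{1/\alpha}$ subject to $X_1^{1/\alpha}\cdots X_a^{1/\alpha}=\pi$, each $X_i$ becomes a unit in $R[p^{-1}]$ (since its prime in $R$ contains $\pi$), and $\alpha$ is invertible in $K$; hence this stage is faithfully flat. For the second stage, Corollary \ref{cor:overlineRaf} gives $m_{\overline R}$-almost flatness of $R^o\to\overline R$; since $p\in m_{\overline R}$, the almost-vanishing of $\mathrm{Tor}$ becomes honest vanishing after inverting $p$, yielding flatness of $R^o[p^{-1}]\to\overline R[p^{-1}]$, which propagates to the $p$-adic completions via the isomorphisms $\widehat{\overline R}/p^n\widehat{\overline R}\cong\overline R/p^n\overline R$ of Paragraph 2. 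For the surjectivity on $\mathrm{Spec}[p^{-1}]$ that upgrades flatness to faithful flatness, a maximal ideal $\mathfrak m\subset\widehat R[p^{-1}]$ has residue field a finite extension of $K$, which embeds into $\C_p$; since $\cS$ exhausts all finite normal Kummer-\'etale covers of $R[p^{-1}]$, this $R$-algebra map extends compatibly through the directed system $\cS$ to $\overline R\to\C_p$, and then by $p$-adic completeness of $\C_p$ to $\widehat{\overline R}\to\C_p$, whose kernel after inverting $p$ lies over $\mathfrak m$.

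The principal obstacle is the propagation of almost flatness from $R^o\to\overline R$ to the $p$-adic completions $\widehat{R^o}\to\widehat{\overline R}$: completion is an inverse limit and $\mathrm{Tor}$ does not commute with inverse limits in general, so the justification must be routed through the mod-$p^n$ isomorphisms of Paragraph 2 together with a Mittag--Leffler style passage to the limit, crucially exploiting that the almost-ideal $m_{\overline R}$ contains $p$. A secondary technical point is the reducedness step, where the valuation estimate must be made global in $\overline R$ rather than merely in a localization of some $S\in\cS$; the normality identity $S=\bigcap_{\mathfrak p}S_{\mathfrak p}$ over height-one primes is what permits this globalization.
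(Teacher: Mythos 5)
Your overall architecture coincides with the paper's: part (1) is run as the analogue of \cite[Prop.~2.0.2]{brinon}, with the key input being that $\overline{R}$ is a filtered union of finite \emph{normal} extensions of $R$ — your separatedness-via-valuations argument and your iterated extraction of roots of $p$ for reducedness are both sound, the latter being essentially the paper's argument (which normalizes $x$ by the content in $\cO_{\widehat{\overline{K}}}$ and extracts $p^{1/n}$ once rather than iterating); part (2) likewise factors through $R^o$ (your observation that $R[p^{-1}]\to R^o[p^{-1}]$ is finite \'etale is a fine substitute for the paper's finite-free sandwich $\pi^\alpha \widehat{R}^o\subset F\subset\widehat{R}^o$), reduces the hard flatness to Corollary \ref{cor:overlineRaf}, and your pointwise-lifting argument for surjectivity onto the maximal spectrum is a correct unwinding of what the paper obtains by citing \cite[Thm.~3.2.3]{brinon}.

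The one step that does not go through as written is the descent of flatness to the $p$-adic completions. You first invert $p$, converting almost flatness of $R^o\to\overline{R}$ into honest flatness of $R^o[p^{-1}]\to\overline{R}[p^{-1}]$, and then propose to propagate this to $\widehat{R^o}[p^{-1}]\to\widehat{\overline{R}}[p^{-1}]$ via the isomorphisms $\widehat{\overline{R}}/p^n\widehat{\overline{R}}\cong\overline{R}/p^n\overline{R}$. That deduction is empty: once $p$ is inverted the mod-$p^n$ isomorphisms carry no information, and flatness of $A[p^{-1}]\to B[p^{-1}]$ implies nothing about $\widehat{A}[p^{-1}]\to\widehat{B}[p^{-1}]$ (the localization maps $A[p^{-1}]\to\widehat{A}[p^{-1}]$ are not flat, so there is nothing to base-change along). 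The correct order of operations is to stay integral and almost throughout: from ${m}_{\overline{R}}$-flatness of $R^o\to\overline{R}$ together with the regularity of $p$ on both sides one first deduces almost flatness of $R^o/p^nR^o\to\overline{R}/p^n\overline{R}$ for every $n$, and then — using that $\widehat{R^o}$ is noetherian, so that its ideals are finitely generated and Artin--Rees controls the induced $p$-adic topologies — one passes to the inverse limit to get almost flatness of $\widehat{R^o}\to\widehat{\overline{R}}$; only at that point does one invert $p\in {m}_{\overline{R}}$ to turn almost flatness into honest flatness. This limit step is not a routine Mittag--Leffler argument but precisely the content of \cite[Prop.~9.2.5, Thm.~9.2.6, Cor.~9.2.7]{brinon}, which is what the paper invokes with $\Lambda=R^o$ and $B=\overline{R}$. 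With that substitution your proof closes.
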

\begin{proof} (1) The claim, except that $\widehat{\overline{R}}$ is reduced,
is the analogue of \cite[Prop. 2.0.2]{brinon} and the same proof
applies. In our case the key ingredient is that $\overline{R}$
is the direct limit of finite and normal extensions of $R$. Let
$x\in \widehat{\overline{R}}$ be such that $x^n=0$. Since
$\widehat{\overline{R}}$ is $p$-torsion free if $x\neq 0$ we may
assume that $x$ is not divisible by any element of the maximal
ideal of $\cO_{\widehat{\overline{K}}}$. Since $\overline{R}/p
\overline{R}= \widehat{\overline{R}}/p \widehat{\overline{R}}$, we
may write $x=y+p z$ with $x\in \overline{R}$. Then, $y^n\in p
\overline{R}$ and we deduce from the normality of $\overline{R}$
that $y p^{-\frac{1}{n}}\in \overline{R}$. This implies that
$p^{\frac{1}{n}}$ divides $y$ and hence $x$. This leads to a
contradiction. \smallskip

(2) We start with the flatness. It follows from \ref{lemma:Rinftyff} and \cite[Cor.~9.2.7]{brinon}, with $\Lambda=R^o$ and $B=\overline{R}$, that the extension
$\widehat{R}^o\bigl[p^{-1}\bigr]\subset\widehat{\overline{R}}\bigl[p^{-1}\bigr]$ is flat (note that in loc.~cit.~one does not need $\Lambda$ to be $p$--adically
complete). Due to \S\ref{lemma:Rinftyflat} we have that $\pi^\alpha \widehat{R}^o$ is contained in a finite and free $\widehat{R}$-submodule of $\widehat{R}^o$. Thus the map $\widehat{R}[p^{-1}] \to \widehat{R}^o[p^{-1}]$ is flat.

To conclude the proof of the proposition we are left to show that
the image of ${\rm
Spec}\bigl(\widehat{\overline{R}}\bigl[p^{-1}\bigr]\bigr)\to {\rm
Spec}\bigl(\widehat{R}\bigl[p^{-1}\bigr]\bigr)$ contains the
maximal ideals of $\widehat{R}\bigl[p^{-1}\bigr]$. Let $\cP
\subset \widehat{R}$ be a prime ideal such that $\cP
\widehat{R}\bigl[p^{-1}\bigr]$ is maximal. Arguing as in \cite[Thm.
3.2.3]{brinon} one concludes that the ideal $\cP
\widehat{\overline{R}}\bigl[p^{-1}\bigr]$ is not the whole ring
$\widehat{\overline{R}}\bigl[p^{-1}\bigr]$. In particular there
exists a maximal ideal $\cQ$ of
$\widehat{\overline{R}}\bigl[p^{-1}\bigr]$ such that $\cP \subset
\cQ \cap \widehat{R}\bigl[p^{-1}\bigr]$. Since $\cP$ is maximal
the last inclusion is an equality and $\cP$ is the image of $\cQ$.

\end{proof}

\begin{corollary}\label{cor:Frobonto} Frobenius is surjective on $S_\infty/pS_\infty$
for every $S_\infty\in \cS_\infty$ and, in particular, on
$\overline{R}/p\overline{R}$. Moreover, $\pi^{\frac{1}{p}}
S_\infty$ is a finitely generated $R_\infty$-module.
\end{corollary}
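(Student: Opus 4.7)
The plan is to prove the three assertions of the corollary in sequence. The main tools are Almost Purity (Proposition~\ref{prop:AE}) and the explicit fractional-root structure of~$R_\infty$.

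First, I would establish that Frobenius is surjective on $R_\infty/pR_\infty$ by direct computation: every $r \in R_\infty$ lies in some $R_n$, and for $m$ with $p \mid m!/n!$ the generators $\pi^{1/n!}, X_i^{1/n!}, Y_j^{1/n!}$ of~$R_n$ have explicit $p$-th roots in $R_m \subset R_\infty$; since $k$ is perfect, Frobenius is surjective on $\WW(k)/p\WW(k) = k$, giving Frobenius preimages modulo~$p$ for the Witt-vector coefficients. Then the identity $(\sum a_i)^p \equiv \sum a_i^p \pmod p$ assembles an explicit preimage of $r$ in $R_m$. Next, for the finite generation of $\pi^{1/p}S_\infty$ over~$R_\infty$, I would apply Proposition~\ref{prop:AE} with $n!$ divisible by~$p$, producing $x_i, y_i \in S_\infty$ such that $\pi^{1/n!} e_{S_\infty} = \sum_i x_i \otimes y_i$ in $S_\infty \otimes_{R_\infty} S_\infty$. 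The idempotent identity $(s\otimes 1)e_{S_\infty} = (1 \otimes s)e_{S_\infty}$, combined with the $R_\infty$-linear trace map on the right factor, yields
\[
\pi^{1/n!} s \;=\; \sum_i \mathrm{tr}_{S_\infty[p^{-1}]/R_\infty[p^{-1}]}(s\,y_i)\,x_i
\]
for every $s \in S_\infty$ (noting that $\sum_i \mathrm{tr}(y_i)\, x_i = \pi^{1/n!}$, from setting $s=1$). The trace values are integral over $R_\infty$ and lie in $R_\infty[p^{-1}]$, hence in~$R_\infty$ by its normality. Thus $\pi^{1/n!} S_\infty \subset \sum_i R_\infty\, x_i$, and since $\pi^{1/p} \in \pi^{1/n!}R_\infty$, also $\pi^{1/p}S_\infty$ is contained in this finitely generated $R_\infty$-submodule.

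The surjectivity of Frobenius on each $S_\infty/pS_\infty$ is the main obstacle. By Almost Purity, $R_\infty/p \to S_\infty/p$ is almost \'etale: the relative Frobenius $S_\infty/p \otimes_{R_\infty/p,\varphi} R_\infty/p \to S_\infty/p$ is almost an isomorphism, and the base-change map $S_\infty/p \to S_\infty/p \otimes_{R_\infty/p,\varphi} R_\infty/p$ is surjective because Frobenius on $R_\infty/p$ is. Composing shows that the cokernel of Frobenius on $S_\infty/p$ is annihilated by every $\pi^{1/n!}$. Concretely, for $s \in S_\infty$ and each~$n$, one obtains $t_n, u_n \in S_\infty$ with $t_n^p = \pi^{1/n!} s + p u_n$; with $\rho_n := \pi^{1/(n!p)} \in R_\infty$ (so $\rho_n^p = \pi^{1/n!}$) and $v_n := t_n/\rho_n$, the identity $v_n^p = s + \pi^{e - 1/n!}(\mathrm{unit})\,u_n$ shows $v_n^p \in S_\infty$, and normality of~$S_\infty$ forces $v_n \in S_\infty$, yielding $v_n^p - s \in \pi^{e - 1/n!} S_\infty$. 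The final and hardest step---producing a single $v \in S_\infty$ with $v^p \equiv s \pmod{pS_\infty}$---requires choosing the preimages $t_n$ coherently using the explicit generators from the second step to enforce $u_n \in \pi^{1/n!} S_\infty$, and exploits the identity $\bigcap_n \pi^{e - 1/n!} S_\infty = p S_\infty$ that follows from normality and the $\pi$-adic valuations on the irreducible components of~$S_\infty$. The statement for $\overline{R}/p\overline{R}$ then follows by passing to the direct limit (or more directly, since every element of~$\overline{R}$ admits a genuine $p$-th root in~$\overline{R}$).
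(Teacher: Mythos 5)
Your second assertion (the finite generation statement for $\pi^{\frac{1}{p}}S_\infty$) is fine and is essentially the paper's own trace-projection argument, but both surjectivity claims have genuine gaps. For the surjectivity of Frobenius on $R_\infty/pR_\infty$ you argue as if $R_n$ were generated over $\WW(k)$ by the fractional monomials $\pi^{1/n!}, X_i^{1/n!}, Y_j^{1/n!}$; it is not. By assumption (3) of \S\ref{section:localdescription}, $R$ is obtained from the monomial algebra $R^{(0)}=\cO_K[P]\otimes_{\cO_K[\N]}\cO_K$ by a chain of localizations, \'etale extensions and completions, and an element coming from an \'etale extension or a completion admits no explicit $p$-th root; your identity $(\sum_i a_i)^p\equiv\sum_i a_i^p \pmod p$ only settles the case where $R$ is the monomial algebra itself. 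This is exactly the point the paper's proof addresses: one shows by induction along the chain that each step is relatively perfect, i.e.\ that $R^{(i+1)}/p\otimes^\varphi_{R^{(i)}/p}R^{(i)}/p\to R^{(i+1)}/p$ is an isomorphism, and then transfers surjectivity from $R_\infty'/p$ (the monomial case, where your direct computation does apply) to $R_\infty/p$ using that $R_\infty/p$ is a direct factor of $R_\infty'/p\otimes_{R'}R$ (Lemma \ref{lemma:Rinftyff}). Without some such device your first step fails for general $R$.

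For $S_\infty$, your almost-\'etale reduction only yields, for each $n$, an element $v_n\in S_\infty$ (via normality, as you say) with $v_n^p\equiv s\pmod{\pi^{e-1/n!}S_\infty}$, and the passage from this almost surjectivity to surjectivity modulo $p$ is precisely the missing content. The witnesses $v_n$ vary with $n$ and $S_\infty$ is not $p$-adically complete, so no limit is available; knowing $s\equiv v_n^p$ modulo $\pi^{e-1/n!}S_\infty$ for every $n$ does not produce a single $v$ with $s\equiv v^p$ modulo $\bigcap_n\pi^{e-1/n!}S_\infty$, so the proposed intersection identity (itself unproved) does not help, and ``choosing the $t_n$ coherently'' is not yet an argument. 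Moreover your intermediate claim $t_n^p=\pi^{1/n!}s+pu_n$ is not a formal consequence of almost \'etaleness alone: producing the $p$-th-power structure requires using surjectivity of Frobenius on $R_\infty/p$ on the trace coefficients. The paper's device does both things in one stroke: writing $\pi^{\frac{1}{p}}e_{S_\infty}=\bigl(\pi^{\frac{1}{p^2}}e_{S_\infty}\bigr)^p=\sum_i x_i^p\otimes y_i^p+pw$, applying ${\rm Tr}_{S_\infty/R_\infty}(z\,\cdot)$, and writing ${\rm Tr}_{S_\infty/R_\infty}(zx_i^p)=\alpha_i^p+p\beta_i$ with $\alpha_i,\beta_i\in R_\infty$, one obtains a single $\gamma=\bigl(\sum_i\alpha_iy_i\bigr)/\pi^{\frac{1}{p^2}}$, lying in $S_\infty$ by normality, with $z\equiv\gamma^p$ modulo $\frac{p}{\pi^{1/p}}S_\infty$; surjectivity modulo $p$ then follows by a short bootstrap. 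Your plan shares the normality step but lacks this sharpened congruence, which is the heart of the proof.
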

\begin{proof}
If $R$ is $p$--adically complete the proof is as in \cite[Prop.~2.0.1]{brinon}. In the general case we proceed as follows. We claim that Frobenius $\varphi$ on
$R_\infty'/p R_\infty'$ is surjective. Recall that $R/p R$ is  obtained as a chain $R'/pR'=R^{(0)}/pR^{(0)} \subset \cdots \subset R^{(n)}/pR^{(n)}=R/pR$ where
$R^{(i+1)}/pR^{(i+1)}$ is obtained from $R^{(i)}/pR^{(i)}$ by taking a localization or an \'etale extension or the completion with respect to an ideal. One then
proves by induction on $i$ that in each case Frobenius on $R^{(i+1)}/p R^{(i+1)}$ induces an isomorphism $R^{(i+1)}/p R^{(i+1)}\otimes_{R^{(i)}/pR^{(i)}}^\varphi
R^{(i)}/p R^{(i)} \lra R^{(i+1)}/p R^{(i+1)}$. In particular Frobenius provides an isomorphism $R/p R\otimes_{R'/pR'}^\varphi R'/p R' \lra R/p R$. Since $R_\infty/p
R_\infty$ is a direct factor of $R_\infty'/p R_\infty'\otimes_{R'} R$ by \ref{lemma:Rinftyff}, Frobenius on $R_\infty/p R_\infty$ induces an isomorphism $R_\infty/p
R_\infty\otimes_{R_\infty'}^\varphi R_\infty'/p R_\infty' \lra R_\infty/p R_\infty$. In particular, since Frobenius $\varphi$ on $R_\infty'/p R_\infty'$ is
surjective then Frobenius is surjective also on $R_\infty/p R_\infty$.

Let $R_\infty\subseteq S_\infty$ be an extension in $\cS_\infty$.
Write $\pi^{\frac{1}{p}}e_{S_\infty}$ as a finite sum of elements
$\sum_{i=1}^n x_i\tensor y_i$ with $x_i$ and $y_i\in S_\infty$.
Then, for every $z\in S_\infty$ we have $\pi^{\frac{1}{p}} z= \sum
{\rm Tr}_{S_\infty/ R_\infty}(zx_i)  y_i $, i.e.,
$\pi^{\frac{1}{p}} S_\infty$ is generated by $y_1,\ldots,y_n$ as
$R_\infty$-module proving the last statement. We also compute
$\pi^{\frac{1}{p}} e_{S_\infty}=\bigl(\pi^{\frac{1}{
p^2}}e_{S_\infty}\bigr)^p=\sum_i x_i^p\tensor y_i^p + p w$ with
$w\in S_\infty\tensor_{R_\infty} S_\infty$. For every $z\in
S_\infty$ we have $\sum_i {\rm Tr}_{S_\infty/ R_\infty} (z x_i^p)
y_i^p= \pi^{\frac{1}{p}} z + p w'$ with $w'\in S_\infty$. Write
${\rm Tr}_{S_\infty/ R_\infty} (z x_i^p)=\alpha_i^p + p\beta_i$
with $\alpha_i$ and $\beta_i\in R_\infty$. Then,
$\pi^{\frac{1}{p}} z=\bigl(\sum_i \alpha_i y_i\bigr)^p+p w''$ for
some $w''\in S_\infty$. Since $S_\infty$ is normal and the $p$--th
power of $\gamma_i=\bigl(\sum_i \alpha_i
y_i\bigr)/\pi^{\frac{1}{p^2}} $ lies in $S_\infty$, then
$\gamma_i$ lies in $S_\infty$ and $z=\gamma_i^p +
\frac{p}{\pi^{1/p}} w'' $. This proves that Frobenius is
surjective on $S_\infty/ \frac{p}{\pi^{1/p}} S_\infty$ and, hence,
it is surjective on $S_\infty$.

\end{proof}

\subsubsection{The lift of the Frobenius tower $\widetilde{R}_\infty$}
\label{sec:widetildeRinfty}

Put $\cO:=\WW(k)\bigl[\!\bigl[Z\bigr]\!\bigr]$. Let $M_\cO$ be the log structure on $\Spec(\cO)$ associated to the prelog structure $\psi_\cO\colon \N \to \cO$
given by $1 \mapsto Z$. Let $\psi_\cO\colon \WW(k)[\N] \to \cO$ be the associated map of $\WW(k)$--algebras. Note that $\cO_K\cong \cO/\bigl(P_\pi(Z)\bigr)$ so that
$\cO_K$ has a natural structure of $\cO$--algebra, compatible with log structures. Put
$$\widetilde{R}^{(0)}:=\cO[P]\widehat{\otimes}_{\cO[\N]} \cO$$where
the completion is taken with respect to the ideal $\bigl(P_\pi(Z)\bigr)$, the map $\cO[\N]\to \cO[P]$ is the morphism of $\cO$-algebras defined by $ \N\ni n \mapsto
\bigl((n,\ldots,n),(0,\ldots,0)\bigr)\in P$ and the map $\cO[\N] \to\cO$ is the morphism of $\cO$-algebras defined by $\N\ni n\mapsto Z^{\alpha n}$. All these maps
are compatible with log structures taking on $\cO[P]$ (resp.~$\cO[\N]$) the prelog structures given by $P$ (resp.~$\N$). We consider on $\widetilde{R}^{(0)}$ the
log structure induced by the fibred product log structure on $\cO[P]\otimes_{\cO[\N]} \cO$. It is associated to the prelog structure $P'\to \widetilde{R}^{(0)}$
where $P':=P\oplus_{\N} \N$ where $\N\to P$ is defined above and the map $\N \to \N$ is multiplication by $\alpha$. Note that
$\widetilde{R}^{(0)}/\bigl(P_\pi(Z)\bigr)\cong R^{(0)}=\cO_K[P]\tensor_{\cO_K[\N]} \cO_K$ compatibly with log structures so that we can view $R^{(0)}$ as
$\widetilde{R}^{(0)}$-algebra.

\begin{lemma}\label{lemma:tildeRn} There exists a unique chain of
$\widetilde{R}^{(0)}$-algebras $\widetilde{R}^{(0)}\subset \widetilde{R}^{(1)}\subset \ldots \subset \widetilde{R}^{(n)}$, complete and separated with respect to
the ideal $\bigl(P_\pi(Z)\bigr)$ in case (ALG) and with respect to the ideal $\bigl(P_\pi(Z),p\bigr)$ for $i\geq 1$ in case (FORM), lifting the chain of
$\widetilde{R}^{(0)}$-algebras $R^{(0)} \subset R^{(1)} \subset \ldots\subset R=R^{(n)}$ modulo $\bigl(P_\pi(Z)\bigr)$.
\end{lemma}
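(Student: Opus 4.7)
The plan is to prove the statement by induction on $i$. The base case $\widetilde{R}^{(0)}$ is given by construction. For the inductive step, assume $\widetilde{R}^{(i)}$ has been constructed as a $\widetilde{R}^{(0)}$-algebra, complete and separated with respect to $\bigl(P_\pi(Z)\bigr)$ in case (ALG), respectively with respect to $\bigl(p,P_\pi(Z)\bigr)$ in case (FORM), and lifting $R^{(i)}$ modulo $\bigl(P_\pi(Z)\bigr)$. I will construct $\widetilde{R}^{(i+1)}$ according to the three types of extensions described in \S\ref{section:localdescription}, relying throughout on standard formal smoothness/étaleness arguments.

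First I would handle the localization case (loc). Given a multiplicative subset $S \subset R^{(i)}$, lift each generator to an element of $\widetilde{R}^{(i)}$, obtaining a multiplicative subset $\widetilde{S}$, and define $\widetilde{R}^{(i+1)}$ to be the $\bigl(P_\pi(Z)\bigr)$-adic (resp.~$\bigl(p,P_\pi(Z)\bigr)$-adic) completion of $\widetilde{R}^{(i)}[\widetilde{S}^{-1}]$. Any two lifts of the same $s\in S$ differ by an element of $\bigl(P_\pi(Z)\bigr)\widetilde{R}^{(i)}$, hence their images in the completion differ by a unit, so the result is independent of the chosen lift up to canonical isomorphism. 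Next for the étale case (ét), formal étaleness of the lift of an étale extension along a nilpotent (or, in the complete setting, topologically nilpotent) thickening, together with the fact that $\bigl(P_\pi(Z)\bigr)$ is contained in the Jacobson radical of $\widetilde{R}^{(i)}$ after completion, produces a unique complete étale lift $\widetilde{R}^{(i+1)}$ of $R^{(i+1)}$; this is a standard application of \cite[Prop.~3.14]{katolog} or Elkik's theorem \cite[Thm.~5]{Elkik} on lifting étale algebras over Henselian pairs. In the completion case (comp), which only occurs in (FORM), if $R^{(i+1)}$ is the completion of $R^{(i)}$ along an ideal $J\subset R^{(i)}$ containing $p$, lift $J$ arbitrarily to an ideal $\widetilde{J}\subset \widetilde{R}^{(i)}$ containing $\bigl(p,P_\pi(Z)\bigr)$ and define $\widetilde{R}^{(i+1)}$ to be the $\widetilde{J}$-adic completion of $\widetilde{R}^{(i)}$; independence from the chosen lift of $\widetilde{J}$ is clear because any two such lifts define the same topology modulo $\bigl(p, P_\pi(Z)\bigr)$, which is already closed in $\widetilde{R}^{(i)}$.

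Uniqueness of the entire chain follows step by step: in each case, the universal property of localization, formal étaleness, or the universal property of completion ensures that any two lifts $\widetilde{R}^{(i+1)}$ and $\widetilde{R}^{(i+1),\prime}$ with the prescribed completeness are canonically isomorphic as $\widetilde{R}^{(i)}$-algebras inducing the identity on $R^{(i+1)}$. The main technical point to be careful about is verifying that each intermediate $\widetilde{R}^{(i+1)}$ remains a flat, noetherian $\widetilde{R}^{(i)}$-algebra so that the inductive hypothesis propagates; this is ensured because completion of a noetherian ring with respect to an ideal containing a regular element preserves flatness and noetherianity, and because $\widetilde{R}^{(0)}$ itself is noetherian by construction.

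The main obstacle, I expect, will be bookkeeping of topologies rather than any conceptual difficulty: one must check that the $\bigl(P_\pi(Z)\bigr)$-adic (resp.~$\bigl(p,P_\pi(Z)\bigr)$-adic) topology on $\widetilde{R}^{(i+1)}$ is compatible with the inherited topology from $\widetilde{R}^{(i)}$, and that the reduction modulo $\bigl(P_\pi(Z)\bigr)$ really recovers $R^{(i+1)}$ with the topology specified in the construction of the $R^{(j)}$'s. Once these compatibilities are verified, the proof is essentially a formal consequence of standard deformation theory of étale morphisms, localizations, and completions.
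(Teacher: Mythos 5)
Your proposal is correct and follows essentially the same route as the paper: induction on $i$, with the lift constructed case by case for the (loc), (\'et) and (comp) steps via completed localization, the unique \'etale lift along the complete thickening, and completion along the preimage ideal, with uniqueness reduced to the corresponding universal properties. The only cosmetic difference is that the paper takes the full preimage of the multiplicative set (resp.\ of the ideal) rather than lifting generators and arguing independence of the lift, which sidesteps the independence check you perform.
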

\begin{proof}
We construct $\widetilde{R}^{(i)}$ proceeding by induction on $i$. Assume that $\widetilde{R}^{(i)}$ has been constructed. If $R^{(i+1)}$ is obtained from $R^{(i)}$
by (the $p$-adic completion of) an \'etale extension $R^{(i')}$ of $R^{(i)}$ then we put $\widetilde{R}^{(i+1)}$ to be the $\bigl(P_\pi(Z)\bigr)$--adic completion
(resp.~$\bigl(p,P_\pi(Z)\bigr)$-adic completion) of the \'etale extension of $\widetilde{R}^{(i)}$ lifting $R^{(i)}\subset R^{(i')}$. If $R^{(i+1)}$ is obtained
from $R^{(i)}$ by (the $p$-adic completion of) the localization of $R^{(i)}$ with respect to a multiplicative set $U_i$, we let $\widetilde{U}_i$ be the set of
elements of $\widetilde{R}^{(i)}$ reducing to $U_i$ and we let $\widetilde{R}^{(i+1)}$ be the $\bigl(P_\pi(Z)\bigr)$-adic completion
(resp.~$\bigl(p,P_\pi(Z)\bigr)$-adic completion) of $\widetilde{R}^{(i)}\bigl[\widetilde{U}_i^{-1}\bigr]$. If $R^{(i+1)}$ is obtained from $R^{(i)}$ by completing
with respect to an ideal $I_i$ (containing $p$), we let $\widetilde{I}_i$ be the inverse image of $I_i$ in $\widetilde{R}^{(i)}$ and we let $\widetilde{R}^{(i+1)}$
be the $\widetilde{I}_i$-adic completion of $\widetilde{R}^{(i)}$. We leave to the reader to check the uniqueness.
\end{proof}

We put $\widetilde{R}:=\widetilde{R}^{(n)}$ and we let $\widetilde{X}_1,\ldots,\widetilde{X}_a$ and $\widetilde{Y}_1,\ldots,\widetilde{Y}_b\in \widetilde{R}$ be the
elements so that the induced prelog structure $\psi_{\widetilde{R}}\colon P' \to \widetilde{R}$ restricted to $P\subset P'$ is the morphism of monoids
$\bigl(\alpha_1,\ldots,\alpha_a,\beta_1,\ldots,\beta_b\bigr)\mapsto \prod_{i=1}^a \widetilde{X}_i^{\alpha_1}\prod_{j=1}^b \widetilde{Y}_j^{\beta_j}$. Note that we
have a commutative diagram of morphisms of $\cO$--algebras

$$\begin{array}{ccc}
\cO[P'] & \stackrel{\psi_{\widetilde{R}}}{\longrightarrow} &
\widetilde{R} \cr \uparrow & & \uparrow\cr \cO[\N]
&\stackrel{\psi_\cO}{\longrightarrow} & \cO.\cr
\end{array}$$Let   $\cO_K\left\{\frac{Z}{\pi}-1\right\}$
be the ring of $\pi$-adically convergent power series in
$\frac{Z}{\pi}-1$. Since the power series in $Z$ with coefficients
in $\cO_K$ can be expressed as power series in $Z-\pi=\pi
\bigl(\frac{Z}{\pi}-1\bigr)$, then
$\cO_K\left\{\frac{Z}{\pi}-1\right\}$ is a
$\cO_K\otimes_{\WW(k)}\cO$-algebra.

\begin{lemma}\label{lemma:RtildeandR} There exists an isomorphism of
$\cO_K\left[\!\left[\frac{Z}{\pi}-1\right]\!\right]$-algebras
$$
\widetilde{R}\widehat{\otimes}_{\cO} \Bigl(\cO_K\left[\!\left[\frac{Z}{\pi}-1\right]\!\right]\Bigr) \cong R\widehat{\otimes}_{\cO_K}
\Bigl(\cO_K\left[\!\left[\frac{Z}{\pi}-1\right]\!\right]\Bigr),$$where $\widehat{\otimes}$ stands for the $\bigl(\frac{Z}{\pi}-1\bigr)$-adic completion.
\end{lemma}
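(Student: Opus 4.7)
The plan is to construct the isomorphism by induction on the filtration $\widetilde{R}^{(0)}\subset \widetilde{R}^{(1)}\subset\cdots\subset\widetilde{R}^{(n)}=\widetilde{R}$ used in Lemma \ref{lemma:tildeRn}. Write $T:=\frac{Z}{\pi}-1$ and $A:=\cO_K[\![T]\!]$, viewed as an $\cO$-algebra via $Z\mapsto \pi(1+T)$ and as an $\cO_K$-algebra in the obvious way. Set $\widetilde{R}^{(i)}_A:=\widetilde{R}^{(i)}\widehat{\otimes}_\cO A$ and $R^{(i)}_A:=R^{(i)}\widehat{\otimes}_{\cO_K}A$, where completions are $T$-adic (and also $p$-adic in the formal case). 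Along the way one needs to check that these base changes behave well with respect to each of the three building operations in \ref{lemma:tildeRn}.

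For the base case $i=0$ the rings $\widetilde{R}^{(0)}_A$ and $R^{(0)}_A$ are both obtained from $A[P]$ by imposing a single multiplicative relation coming from $\psi_\cO$ (respectively $\psi_\alpha$): one has $\widetilde{X}_1\cdots \widetilde{X}_a=Z^\alpha$ on the $\widetilde{R}$-side and $X_1\cdots X_a=\pi^\alpha$ on the $R$-side. Since $u:=Z/\pi=1+T$ is a unit in $A$ and $Z^\alpha=u^\alpha\pi^\alpha$, the $A$-algebra homomorphism
$$
\widetilde{X}_1\longmapsto u^\alpha X_1,\qquad \widetilde{X}_i\longmapsto X_i\ (2\le i\le a),\qquad \widetilde{Y}_j\longmapsto Y_j\ (1\le j\le b),
$$
defines an explicit isomorphism $\psi_0\colon \widetilde{R}^{(0)}_A\stackrel{\sim}{\longrightarrow} R^{(0)}_A$.

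Suppose inductively that an isomorphism $\psi_i\colon \widetilde{R}^{(i)}_A\stackrel{\sim}{\longrightarrow} R^{(i)}_A$ has been constructed. Each step of the chain in \ref{lemma:tildeRn} is one of three types. In the \emph{localization} case, $\widetilde{R}^{(i+1)}$ is the appropriate completion of $\widetilde{R}^{(i)}[\widetilde{U}_i^{-1}]$ where $\widetilde{U}_i$ lifts a multiplicative system $U_i\subset R^{(i)}$; transporting $\widetilde{U}_i$ via $\psi_i$ identifies its image, modulo units in $A$, with that of $U_i$, and $\psi_{i+1}$ arises by passing to the localized completions. In the \emph{completion} case, the lifted ideal $\widetilde{I}_i\subset \widetilde{R}^{(i)}$ is matched by $\psi_i$ with $I_i\subset R^{(i)}$, and $\psi_{i+1}$ is the induced isomorphism on completions. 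In the \emph{\'etale} case, $\widetilde{R}^{(i+1)}$ is the unique \'etale lift of $R^{(i+1)}$ over $\widetilde{R}^{(i)}$; after base change to $A$, both $\widetilde{R}^{(i+1)}_A$ and $R^{(i+1)}_A$ become \'etale extensions of $\widetilde{R}^{(i)}_A\cong R^{(i)}_A$ reducing to $R^{(i+1)}$ modulo $(T)$, and uniqueness of \'etale lifts over the Henselian pair $(\widetilde{R}^{(i)}_A,(T))$ yields the required $\psi_{i+1}$.

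The main technical point is the \emph{\'etale} step: one must check that $(\widetilde{R}^{(i)}_A,(T))$ is a Henselian pair, and that \'etale extensions, localizations, and completions all commute with the base changes $\_\widehat{\otimes}_\cO A$ and $\_\widehat{\otimes}_{\cO_K}A$ in a way compatible with $\psi_i$. The Henselian property follows from the $T$-adic completeness of $A$ (together with the fact that $A$ is $\pi$-complete in the formal case), and the commutation statements are standard, since each of the operations is flat and preserved by completion along an ideal containing the relevant parameters. Granting these, the induction yields the required isomorphism $\widetilde{R}\widehat{\otimes}_\cO A\cong R\widehat{\otimes}_{\cO_K}A$ of $A$-algebras for $i=n$.
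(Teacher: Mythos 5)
Your proposal is correct and follows essentially the same route as the paper: induction along the chain $\widetilde{R}^{(0)}\subset\cdots\subset\widetilde{R}^{(n)}$ of \ref{lemma:tildeRn}, with the base case settled by absorbing the discrepancy between $Z^\alpha$ and $\pi^\alpha$ into the unit $u=Z/\pi$ via $\widetilde{X}_1\mapsto u^\alpha X_1$ (the paper writes the inverse substitution $\widetilde{X}_1'=u^{-\alpha}\widetilde{X}_1\mapsto X_1$), and the inductive step reduced to compatibility of localization, \'etale extension and completion with the base change. You merely spell out the inductive step in more detail than the paper, which dismisses it in one sentence.
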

\begin{proof} We construct compatible isomorphisms of
$\cO_K\left[\!\left[\frac{Z}{\pi}-1\right]\!\right]$-algebras between $\widetilde{R}^{(n)}\widehat{\otimes}_{\cO}
\cO_K\left[\!\left[\frac{Z}{\pi}-1\right]\!\right]$ and $R^{(n)}\widehat{\otimes}_{\cO_K} \cO_K\left[\!\left[\frac{Z}{\pi}-1\right]\!\right]$ by induction on $n$.
The inductive step follows from the construction of $\widetilde{R}^{(n)}$ given in \ref{lemma:tildeRn}. We just prove the case $n=0$.

Recall that  $\widetilde{R}^{(0)}$ is the $\bigl(P_\pi(Z)\bigr)$-adic completion (resp.~$\bigl(P_\pi(Z),p\bigr)$-adic completion in case (FORM)) of
$$S_0:=\cO[P]\otimes_{\cO[\N]} \cO \cong \cO\bigl[\widetilde{X}_1,\ldots,\widetilde{X}_a,\widetilde{Y}_1,\ldots,\widetilde{Y}_b\bigr]/
\bigl(\widetilde{X}_1\cdots\widetilde{X}_a-Z^\alpha\bigr).$$Then, $S_0\otimes_{\cO} \cO_K\left[\!\left[\frac{Z}{\pi}-1\right]\!\right]\cong
\cO_K\left[\!\left[\frac{Z}{\pi}-1\right]\!\right] [\widetilde{X}_1,\ldots,\widetilde{X}_a,\widetilde{Y}_1,\ldots,\widetilde{Y}_b\bigr]/
\bigl(\widetilde{X}_1\cdots\widetilde{X}_a-Z^\alpha\bigr)$. Since $Z= \pi u$ with $u=\frac{Z}{\pi} $ a unit of $\cO_K\left[\!\left[\frac{Z}{\pi}-1\right]\!\right]$,
we have
$$S_0\cong \cO_K\left[\!\left[\frac{Z}{\pi}-1\right]\!\right]
[\widetilde{X}_1',\ldots,\widetilde{X}_a,\widetilde{Y}_1,\ldots,\widetilde{Y}_b\bigr]/ \bigl(\widetilde{X}_1'\cdots\widetilde{X}_a-\pi^\alpha\bigr)$$with
$\widetilde{X}_1'= u^{-\alpha} \widetilde{X}_1$.  There is a map of $\cO_K\left[\!\left[\frac{Z}{\pi}-1\right]\!\right] $-algebras to $R^{(0)}\otimes_{\cO_K}
\cO_K\left[\!\left[\frac{Z}{\pi}-1\right]\!\right]$ sending $\widetilde{X}_1'$ to $X_1$, $\widetilde{X}_i$ to $X_i$ for $i=2,\ldots,a$ and $\widetilde{Y}_j$ to
$Y_j$ for $j=1,\ldots,b$. It is an isomorphism. This proves the case $n=0$.
\end{proof}

For every $n\in\N$ write $\widetilde{R}_n$ for
$$\widetilde{R}_n:=\widetilde{R}\bigl[\widetilde{X}_1^{\frac{1}{n!}},\ldots,\widetilde{X}_a^{\frac{1}{
n!}},\widetilde{Y}_1^{\frac{1}{n!}},\ldots,\widetilde{Y}_b^{\frac{1}{n!}},Z^{\frac{1}{ n!}}\bigr]/\bigl( \widetilde{X}_1^{\frac{1}{n!}}\cdots
\widetilde{X}_a^{\frac{1}{ n!}}-Z^{\frac{\alpha}{n!}}\bigr) .$$Let
$$\widetilde{R}^o:=\widetilde{R}\bigl[\widetilde{X}_1^{\frac{1}{\alpha}},\ldots,\widetilde{X}_a^{\frac{1}{
\alpha}}\bigr]/\bigl( \widetilde{X}_1^{\frac{1}{\alpha}}\cdots \widetilde{X}_a^{\frac{1}{ \alpha}}-Z\bigr)$$and define morphism of monoids
$\psi_{\widetilde{R}^o}\colon \bigl(\frac{1}{\alpha} P_a\bigr) \times P_b\to \widetilde{R}^o$ sending $\frac{1}{\alpha}\bigl(u_1,\ldots,u_a) \times
(v_1,\ldots,v_b\bigr)$ to $\prod_{i=1}^a X_i^{\frac{u_i}{\alpha}}\prod_{j=1}^b Y_j^{{v_j}}$. As in \S\ref{lemma:Rinftyflat} and in \ref{lemma:Rinftyff} one proves
that

\begin{lemma}\label{lemmma:Rtildeinftyflat} The following hold:

\item[(1)] the rings $\widetilde{R}_n$ and $\widetilde{R}^o$ are noetherian and flat $\cO$--algebras;

\item[(2)]  $\widetilde{R}_{n}$ is $Z^\alpha$-flat as $\widetilde{R}$-module and the direct limit $\ds\lim_{n\to \infty} \widetilde{R}_n$ is a flat
$\widetilde{R}^o$-module with basis provided by the elements
$$\widetilde{X}_u \widetilde{Y}_v:=\prod_{i=1}^a \widetilde{X}_i^{\frac{u_i}{\alpha}} \prod_{j=1}^b
\widetilde{Y}_j^{v_j}$$with  $u=(u_1,\ldots,u_a)$ and $0 \leq u_i < 1$ rational number for every $1\leq i\leq a$  and with $v=(v_1,\ldots,v_b)$ and $0 \leq v_j < 1$
rational number for every $1\leq j\leq b$.

\item[(3)] $\widetilde{R}_n$ are Cohen-Macaulay rings and they are regular if $\alpha=1$. In particular, they are normal.

\item[(4)] $\widetilde{R}^o$ is a regular ring. Moreover, $\widetilde{R}$ is a direct summand in $\widetilde{R}^o$ as $\widetilde{R}$-module and $Z^\alpha
\widetilde{R}^o$ is contained in a finite and free $\widetilde{R}$-submodule of $\widetilde{R}^o$. Furthermore, $\widetilde{R}=\widetilde{R}^o$ if and only if $\alpha=1$.

\item[(5)] $\widetilde{R}$ is an integral domain.

\end{lemma}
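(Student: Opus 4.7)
The plan is to mirror, step by step, the arguments given in \ref{lemma:Rinftyflat} and \ref{lemma:Rinftyff} for the characteristic-zero rings $R_n, R^o$, systematically replacing the role of $\pi$ by $Z$ and working $(p,P_\pi(Z))$-adically in the formal case. The key structural input one needs is the deformation description given by \ref{lemma:tildeRn} and \ref{lemma:RtildeandR}, which together exhibit $\widetilde{R}$ as a succession of \'etale, localization, and completion steps over $\widetilde{R}^{(0)}=\cO[P]\widehat{\otimes}_{\cO[\N]}\cO$, lifting the same chain for $R$, and identify $\widetilde{R}$ with $R$ after $\bigl(\frac{Z}{\pi}-1\bigr)$-adic completion. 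Noetherianity in (1) will be immediate from the inductive construction (each step preserves noetherianity of complete/semilocal rings), so the substance of (1) reduces to showing $\widetilde{R}_n$ and $\widetilde{R}^o$ are $\cO$-flat. For this, proceeding as in \ref{lemma:Rinftyflat}(1), I would reduce to $\widetilde{R}_n':=\cO\bigl[\tfrac{1}{n!}P'\bigr]$, where the defining equation $\widetilde{X}_1^{1/n!}\cdots\widetilde{X}_a^{1/n!}-Z^{\alpha/n!}$ is irreducible in the UFD $\cO\bigl[\widetilde{X}_i^{1/n!},\widetilde{Y}_j^{1/n!},Z^{1/n!}\bigr]$, hence defines a prime, showing $\widetilde{R}_n'$ is an integral domain $\cO$-flat; the analogous argument handles $\widetilde{R}^{\prime o}$. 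Faithfully flat/pure base change along $\widetilde{R}' \to \widetilde{R}$ then transports flatness to the general case.

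For (2) I would follow the argument of \ref{lemma:Rinftyflat}(2): introduce the $\widetilde{R}$-subalgebra $\widetilde{A}_n$ of $\widetilde{R}_n$ generated by $Z^{1/n!}$ and $\widetilde{X}_i^{1/n!}, \widetilde{Y}_j^{1/n!}$ for $i\geq 2,\,j\geq 1$, check by the same identity $Z^{\alpha/n!}=\prod_i \widetilde{X}_i^{1/n!}$ that $Z^\alpha \widetilde{R}_n \subset \widetilde{A}_n \subset \widetilde{R}_n$, and verify freeness of $\widetilde{A}_{n+1}$ over $\widetilde{A}_n$ by checking linear independence of the monomials $Z^{\rho/(n+1)!}\widetilde{X}_{\beta'}\widetilde{Y}_\gamma$ over $\widetilde{A}_n[p^{-1}]$, where no nontrivial $\cO$-linear relation can occur since $\cO[Z^{1/(n+1)!}]$ is free over $\cO[Z^{1/n!}]$. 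The statement about $\lim\widetilde{R}_n$ being free over $\widetilde{R}^o$ reduces as in \ref{lemma:Rinftyff}(i) to the case $\widetilde{R}=\cO[P']$, where one computes the basis directly from the relation $Z^{\alpha c}=\widetilde{X}_1^c\cdots \widetilde{X}_a^c$ (for positive rational $c$) and the fact that $\cO[Z^{1/n!},n\in\N]\subset \lim \widetilde{R}_n$.

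For (3)--(5), the key tool is Kato's theory of log regularity. The morphism $\bigl(\Spec(\cO),M_\cO\bigr)$ is log regular (it is a complete dvr with the standard log structure of its closed point), and by \ref{lemma:structureofP'} the morphism $\widetilde{R}^{(0)}\to\cO$ coming from the chart $P'$ is log smooth. In the algebraic case one invokes \cite[Thm.~8.2]{katotoric} to conclude that $\widetilde{R}^{(0)}$, and hence $\widetilde{R}_n$, is log regular; then \cite[Thm.~4.1]{katotoric} yields Cohen--Macaulay and normal. In the formal case I would argue, exactly as in \ref{lemma:Rinftyflat}(3), that it is enough to check log regularity at maximal ideals, which necessarily contain $(p,P_\pi(Z))$, use \cite[Prop.~7.1]{katotoric} and \cite[Thm.~3.1(1)]{katotoric} to reduce to the completed local rings, and then transfer log regularity across each of the three types of elementary extensions (\'et), (loc), (comp) used to build $\widetilde{R}^{(n)}$ from $\widetilde{R}^{(0)}$. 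The case $\alpha=1$ gives regularity because the chart $P'=P$ is then free over the diagonal $\N$, so a local uniformizer $\widetilde{X}_i^{1/n!}$ (for any $i$ in the support of a prime containing $p$) gives a regular element with smooth quotient.

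For (4), regularity of $\widetilde{R}^o$ is identical to (3) since $\alpha^{-1}P_a\times P_b$ is a free monoid; the direct-summand decomposition $\widetilde{R}^o=\widetilde{R}\oplus D$ with $D=\sum_{x\in Q} \widetilde{R}\cdot x$ over the non-diagonal, non-integral monomials $x\in (1/\alpha)P_a$ reduces to the case $\widetilde{R}=\cO[P']$, where it is a formal monoid computation, and the containment $Z^\alpha\widetilde{R}^o\subset\bigoplus \widetilde{R}\prod_i\widetilde{X}_i^{\beta_i/\alpha}\prod_j\widetilde{Y}_j^{\gamma_j/\alpha}$ with $0\leq\beta_i,\gamma_j<\alpha$ follows by the computation in (2) applied with $n=1$ and $\alpha$ in place of $n!$. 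The equality $\widetilde{R}=\widetilde{R}^o$ exactly when $\alpha=1$ is then tautological. For (5), integrality of $\widetilde{R}$ is the analogue of \ref{lemma:Rinftyflat}(5): combined with \ref{lemma:RtildeandR} and the fact that $\cO_K\bigl[\!\bigl[\tfrac{Z}{\pi}-1\bigr]\!\bigr]$ is faithfully flat over both $\cO_K$ and $\cO$, integrality transfers from $R$ (assumption~(4) of \ref{section:localdescription} applied with $J_a=J_b=\emptyset$) to $\widetilde{R}$. The main obstacle I anticipate is the verification of log regularity in the formal (FORM) setting, where the inductive propagation across completion-type extensions (comp) requires careful use of \cite[Thm.~3.1(1)]{katotoric} and \cite[Prop.~7.1]{katotoric} to reduce to completed local rings; once that log-regularity bookkeeping is in place, claims (3)--(5) fall out uniformly.
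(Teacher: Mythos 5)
Your proposal is correct and is essentially the paper's own argument: the paper offers no proof beyond the sentence ``As in \S\ref{lemma:Rinftyflat} and in \ref{lemma:Rinftyff} one proves that\dots'', and what you write is precisely that transposition ($\pi\rightsquigarrow Z$, $(p,P_\pi(Z))$-adic completions, the same UFD/irreducibility, subalgebra-tower, and Kato log-regularity steps), carried out correctly. The one quibble is in (5): $\cO_K\bigl[\!\bigl[\tfrac{Z}{\pi}-1\bigr]\!\bigr]$ is \emph{not} faithfully flat (nor even flat) over $\cO$ (the fibre over $(p,Z)$ is one-dimensional), but your conclusion survives because the injection $\widetilde{R}\hookrightarrow R\bigl[\!\bigl[Z-\pi\bigr]\!\bigr]$ follows from the $\cO$-flatness of $\widetilde{R}$ established in (1) together with Krull intersection --- or, more directly, from the fact that $\widetilde{R}$ is $P_\pi(Z)$-adically complete and separated with $P_\pi(Z)$ a regular element and $\widetilde{R}/(P_\pi(Z))\cong R$ a domain.
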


\subsubsection{The map $\Theta$.}\label{sec:Thetalocalized}

For any normal subring $S\subset \overline{R}$ we put
$$\widetilde{\bf E}^+_S:=\lim_\leftarrow S/pS \subset \widetilde{\bf E}^+:=\lim_\leftarrow \overline{R}/p\overline{R}$$
where the projective limits are taken with respect to Frobenius
$x\mapsto x^p$. The fact that the natural map $\widetilde{\bf
E}^+_S\to \widetilde{\bf E}^+$ is injective follows from the
assumption that $S$ is normal.  We write the elements of
$\widetilde{\bf E}^+$ as sequences $(x_0,x_1,\ldots)$.

The rings $\widetilde{\bf E}^+_S$ and $\widetilde{\bf E}^+$ are
rings of characteristic $p$. In fact, they are $k$--algebras via
the map $k\ni x\mapsto (x,x^{1/p},x^{1/p^2},\ldots)$. For any
$(x_0,x_1,\ldots)\in \widetilde{\bf E}^+$ there is a unique
sequence of elements $\bigl(x^{(0)},x^{(1)},\ldots\bigr)$ of
elements in $\widehat{\overline{R}}$ such that
$\bigl(x^{(n+1)}\bigr)^p=x^{(n)}$  and $x^{(n)}\equiv x_n$
modulo~$p$ for every $n\in\N$; see \cite[\S II.1.2.2]{Fontaineperiodes}. In fact due
to \ref{cor:Frobonto} if $S_\infty\in \cS_\infty$ then we have an
isomorphism of multiplicative monoids
$$\widetilde{\bf E}^+_{S_\infty}\cong
\left\{\bigl(x^{(0)},x^{(1)},\ldots\bigr)\in \widehat{S_\infty}^\N\vert
\bigl(x^{(n+1)}\bigr)^p=x^{(n)},\,
\forall n\in\N\right\}.$$In particular, if $R_\infty\subset
S_\infty$ is Galois with group $H$ after inverting $p$ the trace
map $\sum_{\sigma\in H} \sigma$ induces maps
\begin{equation}\label{EtildeSinftySinftyhatN}
{\rm Tr}_{S_\infty/R_\infty}\colon \widetilde{\bf
E}^+_{S_\infty}\lra \widetilde{\bf E}^+_{R_\infty}, \quad  {\rm
Tr}_{S_\infty/R_\infty}\colon \WW\bigl(\widetilde{\bf
E}^+_{S_\infty}\bigr)\lra \WW\bigl(\widetilde{\bf
E}^+_{R_\infty}\bigr).
\end{equation}

\

Let $\Theta$ be the map from the Witt vectors of $\widetilde{\bf
E}^+$ to $\widehat{\overline{R}}$
$$\Theta\colon \WW\bigl(\widetilde{\bf E}^+\bigr) \longrightarrow
\widehat{\overline{R}}, \qquad (a_0,a_1,a_2,\ldots)\mapsto \sum_{n=0}^\infty p^n a_n^{(n)}$$The elements $\varepsilon:=(1,\epsilon_p,\epsilon_{p^2},\ldots)$,
$\overline{p}:=(p, p^{1/p},\ldots )$, $\overline{\pi}:=(\pi, \pi^{1/p},\ldots )$, $\overline{X}_i:=(X_i,X_i^{1/p},\ldots)$ for $i=1,\ldots,a$ and
$\overline{Y}_j:=(Y_j,Y_j^{1/p},\ldots)$ for $j=1,\ldots,b$ all define elements of $\widetilde{\bf E}^+_{R_\infty}$. The images of their Teichm\"uller lifts via
$\Theta$ is $$\Theta\bigl(\bigl[\varepsilon\bigr] \bigr)=1,\quad \Theta\bigl(\bigl[\overline{p}\bigr] \bigr)=p,\quad \Theta\bigl(\bigl[\overline{\pi}\bigr]
\bigr)=\pi,\quad \Theta\bigl(\bigl[\overline{X}_i\bigr] \bigr)=X_i,\quad \Theta\bigl(\bigl[\overline{Y}_j\bigr] \bigr)=Y_j.$$We endow $\WW\bigl(\widetilde{\bf
E}^+\bigr)$ with the log structure defined by the prelog structure $$\psi_{\WW\bigl(\widetilde{\bf E}^+\bigr)}\colon P'=P\oplus_{\N} \N \lra \WW\bigl(\widetilde{\bf
E}^+\bigr)$$sending $P\ni \bigl(\alpha_1,\ldots,\alpha_a,\beta_1,\ldots,\beta_b\bigr)$ to $ \prod_{i=1}^a \bigl[\overline{X}_i\bigr]^{\alpha_i}\prod_{j=1}^b
\bigl[\overline{Y}_j\bigr]^{\beta_j}$ and $n\in \N$ to $\bigl[\overline{\pi}\bigr]^n$. Recall that the $R$--algebra $\widehat{\overline{R}}$ is endowed with the log
structure induced from the given one on~$R$. Since $\Theta\bigl(\bigl[\overline{X}_i\bigr] \bigr)=X_i$ and $\Theta\bigl(\bigl[\overline{Y}_j\bigr] \bigr)=Y_j$, we
conclude that $\Theta$ respects the given log structures. Write
$$q^\prime:=1+[\varepsilon]^{\frac{1}{p}}+\cdots +
[\varepsilon]^{\frac{p-1}{p}}, \qquad
\xi:=\bigl[\overline{p}\bigr]-p.$$

\begin{lemma}\label{lemma:KerTheta} (1) The morphism $\Theta$
is a surjective homomorphism of $\WW(k)$--algebras. The kernel is
generated by
$$\Ker(\Theta)=\left(P_\pi\bigl(\bigl[\overline{\pi}\bigr]\bigr)\right)=\bigl(q^\prime\bigr)=\bigl(
\xi\bigr).$$

(2) The kernel of the morphism of $\cO_K$-algebras $1\otimes
\Theta\colon \cO_K\otimes_{\WW(k)} \WW\bigl(\widetilde{\bf
E}^+\bigr) \lra \widehat{\overline{R}}$ is generated by
$\xi_\pi:=1\otimes \bigl[\overline{\pi}\bigr]-\pi\otimes 1 $.

\smallskip
(3) The same holds for the induced map $\Theta_{S_\infty}\colon
\WW\bigl(\widetilde{\bf E}^+_{S_\infty}\bigr) \longrightarrow
\widehat{S_\infty}$ for every $S_\infty\in \cS_\infty$.
\end{lemma}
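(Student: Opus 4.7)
The plan is to mimic Fontaine's classical argument \cite[\S II.1.2.2]{Fontaineperiodes} for $\cO_\Kbar$, using the key inputs from the previous sections: the normality and $p$--torsion freeness of $\widehat{\overline{R}}$ proved in \ref{prop:Rbarff}, and the surjectivity of Frobenius on $\overline{R}/p\overline{R}$ (respectively on $S_\infty/p S_\infty$) proved in \ref{cor:Frobonto}.

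First I would establish part (1). For surjectivity of $\Theta$, given $y \in \widehat{\overline{R}}$, I would construct inductively a sequence $(y_n)_{n\in\N}$ with $y_n\in\widetilde{\bf E}^+$ and $y-\sum_{i=0}^{n-1} p^i [y_i]\in p^n\widehat{\overline{R}}$: the surjectivity of Frobenius on $\overline{R}/p\overline{R}$ allows one to find, for any $z\in\widehat{\overline{R}}$, a compatible sequence of $p^m$--power roots of $z\bmod p$, which defines an element of $\widetilde{\bf E}^+$ whose Teichm\"uller lift approximates $z$ modulo $p$. The convergence of $\sum p^i[y_i]$ in $\WW\bigl(\widetilde{\bf E}^+\bigr)$ together with the $p$--adic completeness of $\widehat{\overline{R}}$ yields a preimage of $y$. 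Next, a direct computation gives $\Theta(\xi)=p-p=0$, $\Theta\bigl(P_\pi([\overline{\pi}])\bigr)=P_\pi(\pi)=0$ and $\Theta(q')=\sum_{i=0}^{p-1}\epsilon_p^i=0$. To show these elements generate the same principal ideal I would invoke the classical argument: reducing modulo $p$ all three coincide, up to units of $\widetilde{\bf E}^+$, with $[\overline{p}]$, since the Eisenstein relation $\pi^e=p\cdot (\mathrm{unit})$ in $\cO_K$ lifts to $\overline{\pi}^e=\overline{p}\cdot (\mathrm{unit})$ in $\widetilde{\bf E}^+_{R_\infty}$, and $[\overline{p}]$ divides $q'^{p-1}$ and vice versa by the telescoping $[\varepsilon]-1=q'\cdot([\varepsilon]^{1/p}-1)$.

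The heart of part (1) is then to show $\Ker(\Theta)=(\xi)$. Here I would argue modulo $p$ first: the natural map $\widetilde{\bf E}^+\to\widehat{\overline{R}}/p$, $(x_n)\mapsto x^{(0)}\bmod p$, has kernel equal to $[\overline{p}]\cdot\widetilde{\bf E}^+$. Indeed, if $x^{(0)}\in p\widehat{\overline{R}}$ then the normality of $\widehat{\overline{R}}$ (\ref{prop:Rbarff}) forces $x^{(n)}\in p^{1/p^n}\widehat{\overline{R}}$ for every $n$, so that $(x^{(n)}/p^{1/p^n})_n$ is a well defined element of $\widetilde{\bf E}^+$ dividing $x$ by $\overline{p}$. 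A bootstrap using $p$--adic completeness of $\WW\bigl(\widetilde{\bf E}^+\bigr)$ and the $p$--torsion freeness of $\widehat{\overline{R}}$ (\ref{prop:Rbarff}) promotes this to the full claim: given $a\in\Ker\Theta$, write $a\equiv \xi\tilde b\pmod p$ with $\tilde b\in\WW\bigl(\widetilde{\bf E}^+\bigr)$ lifting the quotient, observe $\Theta((a-\xi\tilde b)/p)=0$, and iterate. This step is where the normality argument is delicate and I expect it to be the main technical obstacle.

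For part (2), using that $P_\pi(Z)$ is an Eisenstein polynomial of degree $e$, the multiplication map induces an isomorphism
\[
\cO_K\tensor_{\WW(k)}\WW\bigl(\widetilde{\bf E}^+\bigr)\cong \WW\bigl(\widetilde{\bf E}^+\bigr)[t]/P_\pi(t),\qquad t=\pi\tensor 1.
\]
Making the change of variable $s:=t-[\overline{\pi}]$ (so $s$ corresponds to $\xi_\pi$) one has
\[
\WW\bigl(\widetilde{\bf E}^+\bigr)[t]/P_\pi(t)\cong \WW\bigl(\widetilde{\bf E}^+\bigr)[s]/P_\pi\bigl(s+[\overline{\pi}]\bigr).
\]
Expanding $P_\pi(s+[\overline{\pi}])$ shows that $P_\pi([\overline{\pi}])$ lies in $(s)$, so modulo $s$ the ring becomes $\WW\bigl(\widetilde{\bf E}^+\bigr)/P_\pi([\overline{\pi}])$, which by part (1) maps isomorphically onto $\widehat{\overline{R}}$. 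Since $\Theta(\xi_\pi)=\pi-\pi=0$, the kernel of $1\tensor\Theta$ contains $(\xi_\pi)$; the isomorphism modulo $s$ forces equality. Finally, part (3) is obtained by running the same arguments verbatim with $S_\infty$ in place of $\overline{R}$, using \ref{cor:Frobonto} and the normality and $p$--torsion freeness of $\widehat{S_\infty}$, which hold by the same proofs as for $\widehat{\overline{R}}$.
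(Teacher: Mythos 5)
Your proposal is correct. Parts (1) and (3) follow essentially the same route as the paper: surjectivity is reduced modulo $p$ and deduced from \ref{cor:Frobonto}; the three generators are compared modulo $p$ (where each becomes $\overline{p}$ up to a unit of $\widetilde{\bf E}^+$); and the identification $\Ker(\Theta)=(\xi)$ is obtained by the normality argument modulo $p$ followed by the bootstrap using $p$--adic completeness of $\WW\bigl(\widetilde{\bf E}^+\bigr)$ and $p$--torsion freeness of $\widehat{\overline{R}}$ --- this is exactly the argument of Brinon's Prop.\ 5.1.1--5.1.2, which the paper simply cites. Where you genuinely diverge is part (2). The paper first observes that $\Ker(1\otimes\Theta)=(\xi_\pi,\xi)$ and then shows by an explicit manipulation with the units $u$, $[\overline{u}]$ and $\pi^e$--adic completeness that $\xi$ is a multiple of $\xi_\pi$. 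You instead write $\cO_K\otimes_{\WW(k)}\WW\bigl(\widetilde{\bf E}^+\bigr)\cong\WW\bigl(\widetilde{\bf E}^+\bigr)[t]/\bigl(P_\pi(t)\bigr)$ and change variables by $s=t-[\overline{\pi}]$, so that the quotient by $(\xi_\pi)$ is literally $\WW\bigl(\widetilde{\bf E}^+\bigr)/\bigl(P_\pi([\overline{\pi}])\bigr)$, which part (1) identifies with $\widehat{\overline{R}}$ via $\Theta$; hence $1\otimes\Theta$ factors through an isomorphism after killing $\xi_\pi$ and its kernel is exactly $(\xi_\pi)$. This is cleaner: it avoids the unit computation and the completeness argument entirely, at the modest cost of invoking $\cO_K=\WW(k)[t]/\bigl(P_\pi(t)\bigr)$ (valid since $P_\pi$ is the Eisenstein minimal polynomial of the uniformizer). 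One cosmetic slip: the kernel of $\widetilde{\bf E}^+\to\widehat{\overline{R}}/p\widehat{\overline{R}}$ is $\overline{p}\cdot\widetilde{\bf E}^+$, not $[\overline{p}]\cdot\widetilde{\bf E}^+$ (no Teichm\"uller lift in the characteristic--$p$ ring), and the normality used there is that of $\overline{R}$, as in \ref{prop:Rbarff}, rather than of its $p$--adic completion; neither affects the argument.
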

\begin{proof} (1) The proof that $\Theta$ is a homomorphism of
$\WW(k)$--algebras proceeds as in \cite[Prop 5.1.1-5.1.2]{brinon}.
Since $\WW\bigl(\widetilde{\bf E}^+\bigr)$ is $p$-adically
complete, to prove that $\Theta$ is surjective it suffices to show
that it is surjective modulo $p$ and this follows from
\ref{cor:Frobonto}. Note that $q^\prime$, $\xi$ and
$P_\pi\bigl(\bigl[\overline{\pi}\bigr]\bigr)$ lie in
$\Ker(\Theta)$. Moreover $q^\prime$ and $\xi$ generate the same
ideal in Fontaine's ring $A_{\rm inf}=\WW\bigl(\widetilde{\bf
E}^+_{\cO_\Kbar}\bigr)$ which is contained in
$\WW\bigl(\widetilde{\bf E}^+\bigr)$. Since
$\WW\bigl(\widetilde{\bf E}^+\bigr)$ is $p$-adically complete and
separated and $\widehat{\overline{R}}$ is $p$-torsion free, to
prove the claims regarding the kernel it suffices to show that for
any $x\in \Ker(\Theta)$ there exists $z$ such that $x=\xi z$
(resp.~$x=P_\pi\bigl(\bigl[\overline{\pi}\bigr]\bigr) z$) modulo
$p$. If $e$ is the degree of $P_\pi(Z)$ then
$P_\pi\bigl(\bigl[\overline{\pi}\bigr]\bigr)\equiv
\bigl[\overline{\pi}\bigr]^e$, modulo $p$,  which is
$\overline{p}$ up to a unit. Thus, it suffices to show the claim
regarding $\xi$ and this follows as in \cite[Prop
5.1.2]{brinon}.

(2) It follows from (1) that $\cO_K\otimes_{\WW(k)}
\WW\bigl(\widetilde{\bf E}^+\bigr)/(\xi)\cong
\cO_K\otimes_{\WW(k)} \widehat{\overline{R}}$. Thus the kernel of
$1\otimes \Theta$ is generated by $\xi_\pi$ and $\xi$ and we
are left to show that $\xi$ is a multiple of $\xi_\pi$. Note
that $p= u \pi^e$ for some unit $u\in \cO_K$ and
$\bigl[\overline{p}\bigr]= [\overline{u}]
\bigl[\overline{\pi}\bigr]^e$  for some unit $\overline{u}\in
\widetilde{\bf E}^+_{\cO_{K_\infty'}}$. Since
$\Theta([\overline{u}])=u$ we conclude that $[\overline{u}]-u \in
(\xi_\pi, \xi)$. Hence, $ \xi=
[\overline{u}]\bigl([\overline{\pi}]^e- \pi^e\bigr) +
\bigl([\overline{u}]-u\bigr)\pi^e$ so that $\xi \in
\bigl(\xi_\pi,\pi^e \xi\bigr)$ i.~e.,  $\xi (1+\pi^e a)= b
\xi_\pi$ for some $a$ and $b$ in  $\cO_K\otimes_{\WW(k)}
\WW\bigl(\widetilde{\bf E}^+\bigr)$. Since the latter  is
$\pi^e$-adically complete and separated, then $1+\pi^e a$ is a
unit and the claim follows.

The proof of (3) is analogous to the proofs of (1) and (2) and is
left to the reader.

\end{proof}

\subsubsection{The ring ${\bf A}_{\widetilde{R}_n}^+$.}
\label{sec:AwidetildeRn}

Recall that $\cO:=\WW(k)\bigl[\!\bigl[Z\bigr]\!\bigr]$. Consider on $\WW\bigl(\widetilde{\bf E}^+\bigr)$ the structure of $\cO$-algebra given by $Z\mapsto
\bigl[\overline{\pi}\bigr]$ where the latter is the Teichm\"uller lift of the element $(\pi,\pi^{\frac{1}{p}},\cdots)$. Using the prelog structure
$\psi_{\WW\bigl(\widetilde{\bf E}^+\bigr)}$ and the fact that $\bigl[\overline{X}_1\bigr]\cdots \bigl[\overline{X}_a\bigr]= \bigl[\overline{\pi}\bigr]^\alpha$, we
deduce that $\WW\bigl(\widetilde{\bf E}^+\bigr)$ is endowed with the structure of $\widetilde{R}^{(0)}$-algebra via the map of $\cO$-algebras
$$\widetilde{R}^{(0)}\lra \WW\bigl(\widetilde{\bf E}^+\bigr)$$sending
$ \widetilde{X}_i$ to $\bigl[\overline{X}_i\bigr]$ for $1\leq i\leq a$ and $\widetilde{Y}_j$ to $\bigl[\overline{Y}_j\bigr]$ for $1\leq j\leq b$. In particular the
log structure on $\WW\bigl(\widetilde{\bf E}^+\bigr)$ is the one induced by the log structure on $\widetilde{R}^{(0)}$.

\smallskip

{\it Convention:} In what follows given an element $a\in \widetilde{\bf E}^+$ and $n\in \N$ for typographical reasons we write $[a]^{\frac{1}{n}}$ to denote
$\left[a^{\frac{1}{n}} \right]$ where $[a]$ is the Teichm\"uller lift of $a$.

\begin{lemma}\label{lemma:A+R} (1) The elements
$\big(\bigl[\overline{\pi}\bigr],p\bigr)$ form a regular sequence
in $\WW\bigl(\widetilde{\bf E}^+\bigr)$. Moreover  $\xi$
is also a regular element.
\smallskip

(2) There exists a unique morphism $\widetilde{R}\lra \WW\bigl(\widetilde{\bf E}^+\bigr)$ of ${\widetilde{R}}^{(0)}$-algebras such that the reduction modulo
$P_\pi(Z)$ induces the inclusion $ \widetilde{R}/\bigl(P_\pi(Z)\bigr)\cong R \subset \widehat{\overline{R}}\cong \WW\bigl(\widetilde{\bf
E}^+\bigr)/\bigl(P_\pi([\overline{\pi}])\bigr)$ (using \ref{lemma:tildeRn} and \ref{lemma:KerTheta}).

For every $n\in\N$ there exists a unique morphism of $\widetilde{R}$-algebras ${\widetilde{R}}_n\lra \WW\bigl(\widetilde{\bf E}^+\bigr)$ (resp.~$\widetilde{R}^o\to
\WW\bigl(\widetilde{\bf E}^+\bigr)$) sending $\widetilde{X}_i^{\frac{1}{(n+1)!}}$ to $\big[\overline{X}_i\bigr]^{\frac{1}{(n+1)!}}$ for $i=1,\ldots,a$
 and $\widetilde{Y}_j^{\frac{1}{(n+1)!}}$ to $
\big[\overline{Y}_j\bigr]^{\frac{1}{(n+1)!}}$ for $j=1,\ldots,b$ and $Z^{\frac{1}{(n+1)!}}$ to $\big[\overline{\pi}\bigr]^{\frac{1}{(n+1)!}}$
(resp.~$\widetilde{X}_i^{\frac{1}{\alpha}}$ to $\big[\overline{X}_i\bigr]^{\frac{1}{\alpha}}$ for $i=1,\ldots,a$).\smallskip

(3) The $(Z,p)$-adic completion ${\widetilde{R}}_n'$ (resp.~${\widetilde{R}}^{o'}$) of the image of ${\widetilde{R}}_n$ (resp.~${\widetilde{R}}^o$) in
$\WW\bigl(\widetilde{\bf E}^+\bigr)$ is a direct factor of the $(Z,p)$-adic completion $\widehat{\widetilde{R}}_n$ (resp.~$\widehat{\widetilde{R}}^o$) of
${\widetilde{R}}_n$ (resp.~of $\widetilde{R}^o$). It coincides with the $(Z,p)$-adic completion $\widehat{\widetilde{R}}$ of ${\widetilde{R}}$ for $n=0$.

\smallskip

(4) The $(Z,p)$-adic completion of $\widetilde{R}_\infty:=\ds
\lim_n \widetilde{R}_n'$ maps isomorphically onto
$\WW\bigl(\widetilde{\bf E}^+_{R_\infty}\bigr)$.

\smallskip

(5) The subring $\widehat{\widetilde{R}}\subset \WW\bigl(\widetilde{\bf E}^+_{R_\infty}\bigr)$ is stable under Frobenius and the induced morphism $\varphi$ extends
uniquely to a morphism, denoted $\varphi$, on $\widehat{\widetilde{R}}_n$ and on $\widehat{\widetilde{R}}^o$ sending $\widetilde{X}_i^{\frac{1}{(n+1)!}} $ to $
\big[\overline{X}_i\bigr]^{\frac{p}{(n+1)!}}$ for $i=1,\ldots,a$ and $\widetilde{Y}_j^{\frac{1}{(n+1)!}}$ to $ \big[\overline{Y}_j\bigr]^{\frac{p}{(n+1)!}}$ for
$j=1,\ldots,b$ and $Z^{\frac{1}{(n+1)!}} $ to $ \big[\overline{\pi}\bigr]^{\frac{p}{(n+1)!}}$. It has the property that the maps $\widehat{\widetilde{R}}_n \to
\WW\bigl(\widetilde{\bf E}^+_{R_\infty}\bigr) $ and $\widehat{\widetilde{R}}^o\to \WW\bigl(\widetilde{\bf E}^+_{R_\infty}\bigr) $ commute with the morphism
$\varphi$.

\end{lemma}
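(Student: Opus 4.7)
The plan is to prove the five parts in order, with the key technical content concentrated in (1) and (4); the remaining statements are essentially formal consequences.

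\textbf{For (1).} I will first show $p$ is regular on $\WW\bigl(\widetilde{\bf E}^+\bigr)$, which is automatic since Witt rings of characteristic $p$ rings are $p$-torsion free, with $\WW\bigl(\widetilde{\bf E}^+\bigr)/p\WW\bigl(\widetilde{\bf E}^+\bigr)\cong \widetilde{\bf E}^+$. It then suffices to prove that $\overline{\pi}=\bigl(\pi,\pi^{1/p},\ldots\bigr)$ is a non zero divisor in $\widetilde{\bf E}^+$. This follows from the fact that $\widetilde{\bf E}^+$ is $\pi^{1/p^n}$-torsion free for every $n$ (thanks to \ref{prop:Rbarff}(1) and \ref{cor:Frobonto}, since $\widehat{\overline{R}}$ is $p$-torsion free and normal). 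For the regularity of $\xi=[\overline{p}]-p$, I reduce modulo $p$ to the element $[\overline{p}]$, which has the same valuation profile as $[\overline{\pi}]^e$ (where $e$ is the ramification index) and is therefore a non zero divisor by the same argument.

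\textbf{For (2).} Starting from the morphism $\widetilde{R}^{(0)}\to \WW\bigl(\widetilde{\bf E}^+\bigr)$ constructed in \ref{sec:AwidetildeRn}, I lift the tower $\widetilde{R}^{(0)}\subset\cdots\subset\widetilde{R}^{(n)}=\widetilde{R}$ inductively: by \ref{lemma:tildeRn} each step is either the $(P_\pi(Z),p)$-adic completion of an \'etale extension, a localization, or a completion along an ideal containing $p$. In each case the lift to $\WW\bigl(\widetilde{\bf E}^+\bigr)$ is unique because $\WW\bigl(\widetilde{\bf E}^+\bigr)$ is $\bigl([\overline{\pi}],p\bigr)$-adically complete by (1) and reduces modulo $P_\pi\bigl([\overline{\pi}]\bigr)$ to $\widehat{\overline{R}}$ by \ref{lemma:KerTheta}; the \'etale extensions lift by the invariance of the \'etale site under nilpotent thickenings and the completeness, while localizations and completions are handled by the universal properties. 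The extension to $\widetilde{R}_n$ (resp.~$\widetilde{R}^o$) is then dictated by the prescribed images of the roots, which exist in $\WW\bigl(\widetilde{\bf E}^+\bigr)$ as genuine Teichm\"uller lifts (by construction of $\widetilde{\bf E}^+_{R_\infty}$), and the relation $[\overline{X}_1]^{1/(n+1)!}\cdots[\overline{X}_a]^{1/(n+1)!}=[\overline{\pi}]^{\alpha/(n+1)!}$ holds in $\WW\bigl(\widetilde{\bf E}^+\bigr)$, matching the defining relation of $\widetilde{R}_n$.

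\textbf{For (3).} By \ref{lemmma:Rtildeinftyflat}(3) the ring $\widetilde{R}_n$ is Cohen-Macaulay and normal, hence a finite product of normal integral domains after passing to the $(Z,p)$-adic completion. The image of the map constructed in (2) lands in an integral domain (being a subring of $\WW\bigl(\widetilde{\bf E}^+\bigr)$), so the image picks out exactly one of these factors, and the $(Z,p)$-adic completion of that image is therefore a direct factor of $\widehat{\widetilde{R}}_n$; for $n=0$ the whole $\widehat{\widetilde{R}}$ is integral by \ref{lemmma:Rtildeinftyflat}(5) so one obtains the entire ring. The analogous argument using \ref{lemmma:Rtildeinftyflat}(4) handles $\widetilde{R}^o$.

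\textbf{For (4), the main obstacle.} I need to identify $\widehat{\widetilde{R}_\infty}$ with $\WW\bigl(\widetilde{\bf E}^+_{R_\infty}\bigr)$. The injectivity follows from (3). For surjectivity it suffices, by $p$-adic completeness of both sides and by the Witt-vector description, to prove that the reduction modulo $p$
$$\widetilde{R}_\infty/p\,\widetilde{R}_\infty \lra \widetilde{\bf E}^+_{R_\infty}$$
induces an isomorphism after completing the right hand side (which is already perfect). The strategy is: by \ref{lemma:RtildeandR} and the explicit form of $\widetilde{R}_n$, modulo $p$ the ring $\widetilde{R}_n/p$ is naturally identified with $R_n/p$ via $Z\mapsto \overline{\pi}_n$ (the chosen $n!$-th root of $\pi$), and compatibly with the Frobenius--transition maps between $\widetilde{R}_n$ and $R_{pn}$. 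Passing to the limit one gets a map $\widetilde{R}_\infty/p\to \lim R_\infty/p=\widetilde{\bf E}^+_{R_\infty}$ (limit along Frobenius) which sends $\widetilde{X}_i^{1/p^n}\mapsto \overline{X}_i^{1/p^n}$, etc.; surjectivity follows because every element of $\widetilde{\bf E}^+_{R_\infty}$ is a limit of sums of monomials in these symbols, combined with \ref{cor:Frobonto} which ensures Frobenius surjectivity on $R_\infty/p$. Bijectivity is then verified by producing an explicit inverse via the universal property of $\WW(-)$ applied to the perfect $k$-algebra $\widetilde{\bf E}^+_{R_\infty}$: the Teichm\"uller lifts of $\overline{X}_i,\overline{Y}_j,\overline{\pi}$ are exactly the images of the corresponding elements from $\widehat{\widetilde{R}_\infty}$, and these generate the target topologically. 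This is the analog of \cite[Prop.~2.15]{andreatta_iovita_comparison} and uses almost purity \ref{prop:AE} to ensure that the Frobenius inverse limit of $R_\infty/p$ agrees with $\widetilde{\bf E}^+_{R_\infty}$ on the nose (not just up to almost isomorphism) thanks to the normality assertions of \ref{lemma:Rinftyflat}(3)--(4).

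\textbf{For (5).} Stability of $\widehat{\widetilde{R}}$ under the Witt-vector Frobenius is immediate from (4) and the fact that Frobenius on $\WW\bigl(\widetilde{\bf E}^+_{R_\infty}\bigr)$ sends $[\overline{X}_i]\mapsto [\overline{X}_i]^p$, $[\overline{Y}_j]\mapsto[\overline{Y}_j]^p$, $[\overline{\pi}]\mapsto[\overline{\pi}]^p$, and these elements correspond to $\widetilde{X}_i$, $\widetilde{Y}_j$, $Z$ in $\widehat{\widetilde{R}}$ via (2). Uniqueness of the extension to $\widehat{\widetilde{R}}_n$ and $\widehat{\widetilde{R}}^o$ follows from the fact that these rings are obtained by adjoining prescribed roots, whose images are forced by the Frobenius formulas; compatibility with the embeddings into $\WW\bigl(\widetilde{\bf E}^+_{R_\infty}\bigr)$ is then an immediate verification on the generators.
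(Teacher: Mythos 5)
Your overall route coincides with the paper's: regularity in (1) via reducedness of $\widehat{\overline{R}}$ and perfectness of $\widetilde{\bf E}^+$, induction up the tower $\widetilde{R}^{(0)}\subset\cdots\subset\widetilde{R}$ for (2), reduction modulo $(p,Z)$ for (3)--(4), and the tower again for (5). The most serious problem is in (3): you isolate the relevant direct factor by asserting that the image of $\widetilde{R}_n$ lands in an integral domain ``being a subring of $\WW\bigl(\widetilde{\bf E}^+\bigr)$.'' That $\WW\bigl(\widetilde{\bf E}^+\bigr)$ is a domain is established nowhere and is doubtful: $\overline{R}$ is a normal domain, but $\overline{R}/p\overline{R}$ need not be, and nothing forces the Frobenius inverse limit $\widetilde{\bf E}^+$ to be one. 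The paper sidesteps this entirely by working modulo $(p,Z)$: it identifies $\widetilde{R}_n/(Z,p)$ with $R_n/pR_n$, invokes \ref{lemma:Rinftyff}(ii) to say that the image $R_n'$ of $R_n\to R_\infty$ is already a direct factor (normality plus noetherianity), lifts that idempotent to the $(Z,p)$-adic completion, and deduces injectivity of $\widetilde{R}_n'\to \WW\bigl(\widetilde{\bf E}^+\bigr)$ from regularity of the sequence $(p,Z)$ together with injectivity modulo $(p,Z)$. You need to substitute an argument of this kind.

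Two further steps are stated too quickly. In (4), the claim that $\widetilde{R}_n/p$ is ``naturally identified with $R_n/p$'' is false: $P_\pi(Z)\equiv Z^e$ modulo $p$, so $\widetilde{R}_n/p$ surjects onto $R_n/p$ with a nontrivial kernel. The correct comparison is modulo the full ideal $\bigl(p,[\overline{\pi}]\bigr)$, where $\WW\bigl(\widetilde{\bf E}^+_{R_\infty}\bigr)/\bigl(p,[\overline{\pi}]\bigr)\cong R_\infty/\pi R_\infty=\cup_n R_n'/\pi R_n'=\cup_n \widetilde{R}_n'/(p,Z)$; the isomorphism in (4) then follows from $(p,Z)$-adic completeness of both sides and the regularity established in (1), with no need for the Teichm\"uller-generation argument. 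In (5), Frobenius stability of $\widehat{\widetilde{R}}$ is not immediate from the generators: only $\widehat{\widetilde{R}}^{(0)}$ is generated by $\widetilde{X}_i,\widetilde{Y}_j,Z$, and for the higher stages one must check inductively that each operation in the tower (completed \'etale extension, completed localization, completion along an ideal) preserves Frobenius stability inside $\WW\bigl(\widetilde{\bf E}^+_{R_\infty}\bigr)$ --- in the \'etale case because the Frobenius lift on the base extends uniquely to the \'etale extension and that unique extension must agree with the ambient Frobenius. Finally, your blanket assertion that Witt vectors of characteristic-$p$ rings are $p$-torsion free is false for non-reduced rings; the conclusion holds here because $\widetilde{\bf E}^+$ is perfect (equivalently, because $\widehat{\overline{R}}$ is reduced by \ref{prop:Rbarff}, which is what the paper uses), so that step only needs rewording.
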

\begin{proof} (1) Note that $\widetilde{\bf
E}^+_{R_\infty}$ is identified as a multiplicative monoid with a
submonoid of $\widehat{\overline{R}}^\N$. Since
$\widehat{\overline{R}}$ is reduced by \ref{prop:Rbarff} and
multiplication by $p$ on $\WW\bigl(\widetilde{\bf
E}^+_{R_\infty}\bigr)$ is the composite of Frobenius and
Vershiebung, we deduce that  $p$ is a regular element of
$\WW\bigl(\widetilde{\bf E}^+_{R_\infty}\bigr)$. Since
$\widehat{\overline{R}}$ is $\pi$-torsion free for every $n$, then
$\widetilde{\bf E}^+_{R_\infty}$ is $\overline{\pi}$-torsion free.
This proves the first part of the claim.

For the second part one proceeds as in \cite[Prop. 5.1.5]{brinon}.
Assume that $x:=(x_0,x_1,\ldots)\in \WW\bigl(\widetilde{\bf
E}^+\bigr)$ is such that $x\neq 0$ and $x \xi=0$. Let $n$ be the
minimal integer such that~$x_n\neq 0$. Dividing $x$ by $p^n$ we
may assume that $n=0$. In particular, $0\neq x_0 \in
\widetilde{\bf E}^+$ and $\overline{p} x_0=0$. Since
$\widetilde{\bf E}^+$ is the inverse limit $\lim
\widehat{\overline{R}}$ with the natural multiplication and since
$\widehat{\overline{R}}$ is $p$--torsion free, we deduce that
$x_0=0$ (absurd).
\smallskip

(2)-(3) We prove the claims for $\widetilde{R}_n$; the statements for $\widetilde{R}^o$ are proven in the same way. It follows from (1) that
$\WW\bigl(\widetilde{\bf E}^+_{R_\infty}\bigr)/\big(\bigl[\overline{\pi}\bigr],p\bigr)\cong \widetilde{\bf E}^+_{R_\infty}/\bigl(\overline{\pi}\bigr) \cong
R_\infty/\pi R_\infty$. By construction $\widetilde{R}_n/(Z,p) \cong R_n/pR_n$. The image $R_n'$ of $R_n \to R_\infty$ is a direct factor of $R_n$ as $R_n$ is
normal and noetherian. This defines a direct factor of $\widetilde{R}_n/(Z,p)$ and, hence, a direct factor $\widetilde{R}_n'$ of $\widehat{\widetilde{R}}_n$.

First of all we construct injective morphisms of $\widehat{\widetilde{R}}^{(0)}$-algebras $\widehat{\widetilde{R}}^{(i)}\lra \WW\bigl(\widetilde{\bf
E}^+_{R_\infty}\bigr)$ by induction on $i$. Assume that we have constructed a morphism $\widehat{\widetilde{R}}^{(i)} \lra \WW\bigl(\widetilde{\bf
E}^+_{R_\infty}\bigr)$ of ${\widetilde{R}}^{(0)}$-algebras inducing the natural inclusion $\widetilde{R}^{(i)}/(Z,p) \widetilde{R}^{(i)} \subset \WW\bigl(\widetilde{\bf
E}^+_{R_\infty}\bigr)/\big(\bigl[\overline{\pi}\bigr],p\bigr)$ and with the property required in (2). Then, $\widetilde{R}^{(i)} \subset \widetilde{R}^{(i+1)}$ is
obtained as (the $\bigl(p,P_\pi(Z)\bigr)$-completion of) an \'etale extension, a localization or the completion with respect to an ideal containing
$\bigl(p,P_\pi(Z)\bigr)$. In each case, one proves by induction on $m$ that  the map $\widetilde{R}^{(i+1)}/(Z,p) \widetilde{R}^{(i+1)} \subset
\WW\bigl(\widetilde{\bf E}^+_{R_\infty}\bigr)/\big(\bigl[\overline{\pi}\bigr],p\bigr)$ extends uniquely to a morphism of $\widetilde{R}^{(i)}$-algebras
$$\widetilde{R}^{(i+1)}/\big(\bigl[\overline{\pi}\bigr],p\bigr)^m\lra
\WW\bigl(\widetilde{\bf E}^+_{R_\infty}\bigr)/\big(\bigl[\overline{\pi}\bigr],p\bigr)^m.$$ Passing to the limit over $m\in\N$ we get the morphism
$\widehat{\widetilde{R}}^{(i+1)}\lra \WW\bigl(\widetilde{\bf E}^+_{R_\infty}\bigr)$. Reducing modulo $P_\pi(Z)$ and using uniqueness one proves that such map has
the property required in (2).

The existence and uniqueness of the morphism
${\widetilde{R}}_n\lra \WW\bigl(\widetilde{\bf E}^+\bigr)$ for
$n\in \N$ as required in (2) is clear. Note that $(p,Z)$ are
regular elements in $\widetilde{R}_n$ and in
$\WW\bigl(\widetilde{\bf E}^+_{R_\infty}\bigr)$ by (1). Moreover,
$\widetilde{R}_n/(p,Z) \to \WW\bigl(\widetilde{\bf
E}^+_{R_\infty}\bigr)/\big(\bigl[\overline{\pi}\bigr],p\bigr)\cong
R_\infty/\pi R_\infty$ factors via the direct factor
$\widetilde{R}_n'/(p,Z) $ which injects in $R_\infty/\pi
R_\infty$. Thus, the map ${\widetilde{R}}_n\lra
\WW\bigl(\widetilde{\bf E}^+\bigr)$ factors via
${\widetilde{R}}_n'\lra \WW\bigl(\widetilde{\bf E}^+\bigr)$ and
the latter is injective.

\smallskip

(4) Since $\WW\bigl(\widetilde{\bf
E}^+_{R_\infty}\bigr)/\big(\bigl[\overline{\pi}\bigr],p\bigr)$
coincides with $\cup_n \widetilde{R}_n'/(p,Z)$, the statement
follows.
\smallskip

(5) The proof proceeds as in (2). First of all one proves by induction on $i$ that the $(p,Z)$-adic completion of the image of ${\widetilde{R}}^{(i)}\to
\WW\bigl(\widetilde{\bf E}^+_{R_\infty}\bigr)$ is stable under $\varphi$. This is clear for $i=0$. For the inductive step one recalls that the $(p,Z)$-adic
completion of $\widetilde{R}^{(i)} \subset \widetilde{R}^{(i+1)}$ is obtained as the $\bigl(p,P_\pi(Z)\bigr)$-completion of an \'etale extension, a localization or
the completion with respect to an ideal containing $\bigl(p,P_\pi(Z)\bigr)$. In each case one checks that this is preserved by $\varphi$. One verifies  that the
extension of $\varphi$ to $\widehat{\widetilde{R}}_n$, given in (5), is well defined and that the morphism  $\widehat{\widetilde{R}}_n \to \WW\bigl(\widetilde{\bf
E}^+_{R_\infty}\bigr)$ commutes with $\varphi$ on the two sides. The details are left to the reader.
\end{proof}

\begin{definition}\label{def_AR+} We write ${\bf A}_{\widetilde{R}_n}^+$ (resp.~${\bf A}_{\widetilde{R}^o}^+$)
for the $\bigl(p,[\overline{\pi}]\bigr)$-adic completion of the image of $\widetilde{R}_n$ (resp.~$\widetilde{R}^o$) in $\WW\bigl(\widetilde{\bf E}^+\bigr)$.
\end{definition}

We write ${\cal I}$ for the ideal of $\WW\bigl(\widetilde{\bf E}^+\bigr)$ generated by
$[\varepsilon]^{\frac{1}{p^n}}-1$ for $n\in\N$ and by the Teichm\"uller lifts $[x]$ for
$x\in \widetilde{\bf E}^+$ such that $x^{(0)}\in
{m}_{\overline{R}}$. Following Fontaine (cf.~\cite[Def
9.2.1]{brinon}), we say that the extension ${\bf
A}_{\widetilde{R}_n}^+ \lra \WW\bigl(\widetilde{\bf E}^+\bigr)$ is
${\cal I}^m$-flat for $m\in\N$ if, given an injective map of ${\bf
A}_{\widetilde{R}_n}^+$-modules $M\lra N$ the induced map
$M\otimes_{{\bf A}_{\widetilde{R}_n}^+}\WW\bigl(\widetilde{\bf
E}^+\bigr)\lra N\otimes_{{\bf
A}_{\widetilde{R}_n}^+}\WW\bigl(\widetilde{\bf E}^+\bigr) $ has
kernel annihilated by ${\cal I}^m$.

\begin{proposition}\label{prop:A+Rtildeff} The extension
${\bf A}_{\widetilde{R}^o}^+ \lra \WW\bigl(\widetilde{\bf E}^+\bigr)$ is ${\cal I}^9$-flat. Moreover, ${\bf A}_{\widetilde{R}^o}^+$ is finite and
$[\overline{\pi}]^\alpha$-flat as ${\bf A}_{\widetilde{R}}^+$-module and ${\bf A}_{\widetilde{R}}^+$ is a direct summand in ${\bf A}_{\widetilde{R}^o}^+$ as ${\bf
A}_{\widetilde{R}}^+$-module.
\end{proposition}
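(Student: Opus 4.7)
The plan is to handle the three claims in sequence: the direct summand property of ${\bf A}_{\widetilde{R}}^+$ in ${\bf A}_{\widetilde{R}^o}^+$, the finiteness and $[\overline{\pi}]^\alpha$-flatness over ${\bf A}_{\widetilde{R}}^+$, and finally the ${\cal I}^9$-flatness of $\WW\bigl(\widetilde{\bf E}^+\bigr)$ over ${\bf A}_{\widetilde{R}^o}^+$. The first two should be lifts to the Witt vector level of the corresponding statements about $\widetilde{R}^o$ over $\widetilde{R}$ proved in Lemma \ref{lemmma:Rtildeinftyflat}(4), while the last requires Faltings' almost purity combined with a devissage on the kernel of $\Theta$, in the spirit of \cite[\S 9]{brinon}.

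For the direct summand claim I would mimic the argument of \ref{lemmma:Rtildeinftyflat}(4). Recall that by \ref{lemma:A+R}(2) the image of $\widetilde{R}^o$ in $\WW\bigl(\widetilde{\bf E}^+\bigr)$ is generated over the image of $\widetilde{R}$ by the Teichm\"uller lifts $\bigl[\overline{X}_i\bigr]^{\frac{1}{\alpha}}$ subject to the single relation $\prod_i \bigl[\overline{X}_i\bigr]^{\frac{1}{\alpha}}=\bigl[\overline{\pi}\bigr]$. Taking $(p,[\overline{\pi}])$-adic completions, ${\bf A}_{\widetilde{R}^o}^+$ decomposes as a (topologically) free ${\bf A}_{\widetilde{R}}^+$-module on the explicit basis $\prod_i \bigl[\overline{X}_i\bigr]^{\frac{u_i}{\alpha}}$ with $0\le u_i<\alpha$ and $u_i\in\Z$, the identity basis element giving ${\bf A}_{\widetilde{R}}^+$ as a direct summand and the remaining basis elements providing an explicit complement. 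The $[\overline{\pi}]^\alpha$-flatness then follows from the identity $[\overline{\pi}]^\alpha=\prod_i [\overline{X}_i]$, which shows that $[\overline{\pi}]^\alpha {\bf A}_{\widetilde{R}^o}^+$ is contained in the finite free ${\bf A}_{\widetilde{R}}^+$-submodule spanned by the monomials with integer exponents, exactly as in the proof of \ref{lemma:Rinftyflat}(4) for $R^o$ over $R$.

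For the ${\cal I}^9$-flatness I would reduce modulo $p$ and work first in characteristic $p$. After passing to Witt vectors and using that $p$ is regular on all the relevant rings by \ref{lemma:A+R}(1), it suffices to show that the extension $\widetilde{\bf E}_{\widetilde{R}^o}^+\to \widetilde{\bf E}^+$ is almost flat (in the sense of the ideal ${\cal I}$ modulo $p$). One then factors this extension as $\widetilde{\bf E}_{\widetilde{R}^o}^+\to \widetilde{\bf E}_{R_\infty}^+\to \widetilde{\bf E}^+$ using \ref{lemma:A+R}(4). The first arrow is the perfection-style limit of the finite faithfully flat extensions $\widetilde{R}^o\to\widetilde{R}_n$ of \ref{lemmma:Rtildeinftyflat}, hence faithfully flat after passage to $\widetilde{\bf E}^+$-components. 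The second arrow is almost faithfully flat thanks to Faltings' almost purity theorem \ref{prop:AE}, which guarantees that the idempotents $e_{S_\infty}$ witnessing the \'etaleness of $R_\infty\to S_\infty$ after inverting $p$ actually lift, up to multiplication by $\pi^{1/n!}$, to $S_\infty\otimes_{R_\infty}S_\infty$, and so pass to Teichm\"uller lifts in $\widetilde{\bf E}^+$. Combining the two contributions and then lifting from $\widetilde{\bf E}^+$ to $\WW\bigl(\widetilde{\bf E}^+\bigr)$ via a standard $p$-adic devissage gives the desired statement, with the exponent $9$ arising as the total cost in powers of ${\cal I}$ accrued in these three reduction steps, in close analogy with the accounting in \cite[Thm.~9.2.3]{brinon}.

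The main obstacle will be the precise bookkeeping of the exponent $9$: one must track carefully how the loss of flatness accumulates in passing from $\widetilde{R}^o$ to $\widetilde{R}_\infty$ (where one introduces roots and thereby loses some flatness if $\alpha>1$), from $\widetilde{R}_\infty$ to $\overline{R}$ (where almost purity loses further powers of ${\cal I}$), and from characteristic $p$ back to the Witt vectors via the $p$-adic filtration. The fact that one must work with $\widetilde{R}^o$ rather than with $\widetilde{R}$ directly — and can only hope for flatness up to $[\overline{\pi}]^\alpha$-torsion over ${\bf A}_{\widetilde{R}}^+$ — is essential precisely because $\widetilde{R}^o$ is regular while $\widetilde{R}$ is only Cohen-Macaulay when $\alpha>1$, and it is regularity that underlies the required flatness argument in the style of Brinon.
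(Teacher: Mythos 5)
Your overall route is the one the paper takes: the finiteness, $[\overline{\pi}]^\alpha$-flatness and direct-summand statements are obtained by transporting \ref{lemmma:Rtildeinftyflat} and \ref{lemma:A+R} to the Witt-vector level, and the ${\cal I}^9$-flatness is proved by factoring ${\bf A}_{\widetilde{R}^o}^+ \to \WW\bigl(\widetilde{\bf E}^+_{R_\infty}\bigr) \to \WW\bigl(\widetilde{\bf E}^+_{S_\infty}\bigr) \to \WW\bigl(\widetilde{\bf E}^+\bigr)$, using flatness of the tower over $\widetilde{R}^o$ for the first arrow, almost purity (\ref{prop:AE}) for the second, and the completion results of \cite[Prop.~9.2.5 \& Thm.~9.2.6]{brinon} to control how ${\cal I}$-flatness degrades under $\bigl(p,[\overline{\pi}]\bigr)$-adic completion (each element of the regular sequence costing a cube, whence $9=3^2$). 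The paper works with the Witt vectors throughout and lets Brinon's theorems do the mod-$p$/completion d\'evissage in one stroke, rather than reducing mod $p$ by hand and lifting afterwards, but this is a difference of presentation, not of substance.

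There is, however, one concrete false step: ${\bf A}_{\widetilde{R}^o}^+$ is \emph{not} topologically free over ${\bf A}_{\widetilde{R}}^+$ on the monomials $\prod_i \bigl[\overline{X}_i\bigr]^{u_i/\alpha}$ with $0\leq u_i<\alpha$. The relation $\prod_i\bigl[\overline{X}_i\bigr]^{1/\alpha}=\bigl[\overline{\pi}\bigr]$ makes every diagonal monomial an ${\bf A}_{\widetilde{R}}^+$-multiple of $1$, so these monomials are not independent; and if ${\bf A}_{\widetilde{R}^o}^+$ were free it would be flat over ${\bf A}_{\widetilde{R}}^+$, which is exactly what fails for $\alpha>1$ and why the proposition only asserts $[\overline{\pi}]^\alpha$-flatness. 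The correct structure, from \ref{lemma:Rinftyflat}(4) and \ref{lemmma:Rtildeinftyflat}(4), is a direct sum decomposition ${\bf A}_{\widetilde{R}^o}^+={\bf A}_{\widetilde{R}}^+\oplus D$ where $D$ is the (generally non-free, non-flat) span of the non-diagonal, non-integral monomials; the direct-summand claim comes from this monoid splitting, and the $[\overline{\pi}]^\alpha$-flatness comes from the separate observation that $\bigl[\overline{\pi}\bigr]^\alpha{\bf A}_{\widetilde{R}^o}^+$ lands inside a finite free ${\bf A}_{\widetilde{R}}^+$-submodule. Your second sentence on this point is the right mechanism; the freeness assertion preceding it should simply be deleted, since it is both false and in contradiction with what you prove next.
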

\begin{proof}

Thanks to \ref{lemmma:Rtildeinftyflat} and \ref{lemma:A+R} the extension ${\bf A}_{\widetilde{R}^o}^+\to \widetilde{R}_\infty$ is flat. As $\WW\bigl(\widetilde{\bf
E}^+_{R_\infty}\bigr) $ is the $\big(p,[\overline{\pi}]\bigr)$-completion of $\widetilde{R}_\infty$ and $\big(p,[\overline{\pi}]\bigr)$ is a regular sequence in
$\widetilde{R}_\infty$ by loc.~cit., the extension ${\bf A}_{\widetilde{R}^o}^+/(p^n)\to \WW\bigl(\widetilde{\bf E}^+_{R_\infty}\bigr)/(p^n)$ is flat  by
\cite[Thm.~9.2.6]{brinon} for every $n\in\N$. Taking the limit over $n\in\N$ and arguing as in the proof of \cite[Prop.~9.2.5 \& Thm.~9.2.6]{brinon}, we conclude
that ${\bf A}_{\widetilde{R}^o}^+\to \WW\bigl(\widetilde{\bf E}^+_{R_\infty}\bigr)$ is flat.

For $R_\infty\subset S_\infty (\subset \Omega)$  normal and finite and \'etale after inverting $p$ the extension $\WW\bigl(\widetilde{\bf
E}^+_{R_\infty}\bigr)\subset \WW\bigl(\widetilde{\bf E}^+_{S_\infty}\bigr)$ is almost \'etale by \ref{prop:AE} and, hence, ${\cal I}$-flat. As
$\WW\bigl(\widetilde{\bf E}^+\bigr)$ is the $\big(p,[\overline{\pi}]\bigr)$-completion of the union of all the rings $\WW\bigl(\widetilde{\bf E}^+_{S_\infty}\bigr)$
and $\big(p,[\overline{\pi}]\bigr)$ is a regular sequence, arguing as above and using \cite[Prop.~9.2.5 \& Thm.~9.2.6]{brinon},  we conclude that ${\bf
A}_{\widetilde{R}^o}^+ \to \WW\bigl(\widetilde{\bf E}^+\bigr)$ is ${\cal I}^9$-flat.

The other statements follow from \ref{lemmma:Rtildeinftyflat} and \ref{lemma:A+R}.

\end{proof}

Extending  $\cO$--linearly (resp.~$R$-linearly,
resp.~$\widetilde{R}$-linearly) the morphism $\Theta$ we get a
homomorphisms of $\cO$--algebras (resp.~$R$-algebras,
resp.~$\widetilde{R}$--algebras)
$$\Theta_{\log}\colon \WW\bigl(\widetilde{\bf E}^+\bigr)\tensor_{\WW(k)} \cO
\longrightarrow \widehat{\overline{R}},\quad
\Theta_{R,\log}\colon\WW\bigl(\widetilde{\bf
E}^+\bigr)\tensor_{\WW(k)} R \longrightarrow
\widehat{\overline{R}},\quad \Theta_{\widetilde{R},\log}\colon
\WW\bigl(\widetilde{\bf E}^+\bigr)\tensor_{\WW(k)} \widetilde{R}
\longrightarrow \widehat{\overline{R}}.$$We consider on
$\WW\bigl(\widetilde{\bf E}^+\bigr)\tensor_{\WW(k)}  \cO$
(resp.~$\WW\bigl(\widetilde{\bf E}^+\bigr)\tensor_{\WW(k)}R$,
resp.~$\WW\bigl(\widetilde{\bf E}^+\bigr)\tensor_{\WW(k)}
\widetilde{R}$) the log structure defined as the product of the
log structures on $\WW\bigl(\widetilde{\bf E}^+\bigr)$ and
on~$\cO$ (resp.~on $R$, resp.~on $\widetilde{R}$). Then,
$\Theta_{\log}$, $\Theta_{R,\log}$ and
$\Theta_{\widetilde{R},\log}$ respect the log structures.

\subsection{The rings ${\rm B}_{\rm dR}$}\label{sec:BdR(R)}

Define ${\rm A}_{\rm inf}\bigl(R/\cO\bigr)$ (resp.~${\rm A}_{\rm inf}\bigl(R/R\bigr)$, resp.~${\rm A}_{\rm inf}\bigl(R/\widetilde{R}\bigr)$)  as the completion of
$\WW\bigl(\widetilde{\bf E}^+\bigr)\otimes_{\WW(k)} \cO$ with respect to the ideal $\Theta_{\log}^{-1}\bigl(p \widehat{\Rbar}\bigr)$ (resp.~of
$\WW\bigl(\widetilde{\bf E}^+\bigr)\otimes_{\WW(k)} R$ with respect to the ideal $\Theta_{R,\log}^{-1}\bigl(p \widehat{\Rbar}\bigr)$, resp.~of
$\WW\bigl(\widetilde{\bf E}^+\bigr)\otimes_{\WW(k)} \widetilde{R}$ with respect to the ideal $\Theta_{\widetilde{R},\log}^{-1}\bigl(p \widehat{\Rbar} \bigr)$) with
the induced log structures. Denote by
$$\Theta_{\log}\colon {\rm A}_{\rm inf}\bigl(R/\cO\bigr) \to \widehat{\Rbar}, \quad \Theta_{R,\log}\colon {\rm A}_{\rm inf}\bigl(R/R\bigr)\to  \widehat{\Rbar},
\quad \Theta_{\widetilde{R},\log}\colon {\rm A}_{\rm inf}\bigl(R/\widetilde{R}\bigr) \to \widehat{\Rbar}$$the maps induced by $\Theta_{\log}$
(resp.~$\Theta_{R,\log}$, resp.~$\Theta_{\widetilde{R},\log}$).

Define ${\rm B}_{{\rm dR},n}^{\nabla,+}\bigl(R\bigr)$ (resp.~${\rm B}_{{\rm dR},n}^{\nabla,+}\bigl(\widetilde{R}\bigr)$, resp. ${\rm B}_{{\rm
dR},n}^+\bigl(R\bigr)$, resp.~${\rm B}_{{\rm dR},n}^+\bigl(\widetilde{R}\bigr)$) to be the algebra underlying the $n$--th log infinitesimal neighborhood of the
closed immersion of log schemes defined by $\Theta\tensor \WW(k)\bigl[p^{-1}\bigr]$ (resp.~$\Theta_{\log} \tensor \WW(k)\bigl[p^{-1}\bigr]$, resp.
$\Theta_{R,\log}\tensor \WW(k)\bigl[p^{-1}\bigr]$, resp. $\Theta_{\widetilde{R},\log}\tensor \WW(k)\bigl[p^{-1}\bigr]$) in the sense of \cite[Rmk. 5.8]{katolog}.
Put $${\rm B}_{\rm dR}^{\nabla,+}\bigl(R\bigr):=\ds \lim_{\infty \leftarrow n} {\rm B}_{{\rm dR},n}^{\nabla,+}\bigl(R\bigr), \quad {\rm B}_{\rm
dR}^{\nabla,+}\bigl(\widetilde{R}\bigr):=\ds \lim_{\infty \leftarrow n} {\rm B}_{{\rm dR},n}^{\nabla,+}\bigl(\widetilde{R}\bigr)$$and similarly $${\rm B}_{\rm
dR}^+\bigl(R\bigr):=\ds \lim_{\infty \leftarrow n} {\rm B}_{{\rm dR},n}^+\bigl(R\bigr), \quad {\rm B}_{\rm dR}^+\bigl(\widetilde{R}\bigr):=\ds \lim_{\infty
\leftarrow n} {\rm B}_{{\rm dR},n}^+\bigl(\widetilde{R}\bigr).$$Note that $\Ker(\Theta)$ contains the element $[\varepsilon]-1$ with $\widetilde{\bf E}^+\ni
\varepsilon:=(1,\epsilon_p,\epsilon_{p^2},\ldots)$. In particular, ${\rm B}_{\rm dR}^{\nabla,+}$, and hence ${\rm B}_{\rm dR}^+\bigl(\widetilde{R}\bigr)$, contains
Fontaine's element $t:=\log [\varepsilon]$. Put $${\rm B}_{\rm dR}^\nabla\bigl(R\bigr):={\rm B}_{\rm dR}^{\nabla,+}\bigl(R\bigr)\bigl[t^{-1}\bigr], \qquad {\rm
B}_{\rm dR}^\nabla\bigl(\widetilde{R}\bigr):={\rm B}_{\rm dR}^{\nabla,+}\bigl(\widetilde{R}\bigr)\bigl[t^{-1}\bigr],$$
$${\rm B}_{\rm dR}\bigl(R\bigr):={\rm B}_{\rm
dR}^+\bigl(R\bigr)\bigl[t^{-1}\bigr],\qquad {\rm B}_{\rm
dR}\bigl(\widetilde{R}\bigr):={\rm B}_{\rm
dR}^+\bigl(\widetilde{R}\bigr)\bigl[t^{-1}\bigr].$$

{\it Filtrations:} We endow  ${\rm B}_{\rm
dR}^{\nabla,+}\bigl(R\bigr)$ (resp.~${\rm B}_{\rm
dR}^+\bigl(R\bigr)$, resp.~${\rm B}_{\rm
dR}^{\nabla,+}\bigl(\widetilde{R}\bigr)$,  resp.~${\rm B}_{\rm
dR}^+\bigl(\widetilde{R}\bigr)$) with the $\Ker(\Theta)$-adic
(resp. $\Ker(\Theta_{R,\log})$-adic,
resp.~$\Ker(\Theta_{\log})$-adic,
resp.~$\Ker(\Theta_{\widetilde{R},\log})$-adic) filtration.

\medskip

{\it Galois action:} Note that $\cG_R$ acts continuously on the  rings above, preserving the filtration.

\medskip

We  extend the filtrations as
follows. Let ${\rm B}_{\rm dR}^+$ be ${\rm B}_{\rm
dR}^{\nabla,+}\bigl(R\bigr)$ (resp.~${\rm B}_{\rm
dR}^+\bigl(R\bigr)$, resp.~${\rm B}_{\rm
dR}^{\nabla,+}\bigl(\widetilde{R}\bigr)$,  resp.~${\rm B}_{\rm
dR}^+\bigl(\widetilde{R}\bigr)$) with the given filtration ${\rm
Fil}^r {\rm B}_{\rm dR}^+$. Set ${\rm B}_{\rm dR}:={\rm B}_{\rm dR}^+[t^{-1}]$ and
$$\Fil^0 {\rm B}_{\rm dR}:= \sum_{n=0}^\infty t^{-n} {\rm Fil}^n
{\rm B}_{\rm dR}^+,\qquad \Fil^r {\rm B}_{\rm dR}:=t^r \Fil^0 {\rm
B}_{\rm dR}\, \forall r\in\Z.$$

\subsubsection{Explicit descriptions}

Following \cite[Pf Prop. 4.10(1)]{katolog} let $T:=\bigl\{(a,b)\in \Z\times \Z\vert a+b\in\N\bigr\}$ and let  $Q$ be the inverse image of~$P'$ in~$P^{',\rm
gp}\times P^{',\rm gp}$ via the sum~$P^{',\rm gp}\times P^{',\rm gp}\to P^{',\rm gp}$. Put $\bigl({\rm A}_{\rm inf}\bigl(R/\cO\bigr)\bigr)^{\rm log}:={\rm A}_{\rm
inf}\bigl(R/\cO\bigr)\tensor_{\Z[\N\times \N]} \Z\bigl[T\bigr]$. The map $\Theta_{\log}$ extends to a map
$$\Theta_{\log}'\colon \bigl({\rm A}_{\rm inf}\bigl(R/\cO\bigr)\bigr)^{\rm log} \to \widehat{\overline{R}}.$$Similarly, put
$\bigl({\rm A}_{\rm inf}\bigl(R/\widetilde{R}\bigr)\bigr)^{\rm log}:={\rm A}_{\rm inf}\bigl(R/\widetilde{R}\bigr)\tensor_{\Z[P'\times P']} \Z\bigl[Q\bigr]$ and
extend $\Theta_{\widetilde{R},\log}$ to
$$\Theta_{\widetilde{R},\log}'\colon  \bigl({\rm A}_{\rm inf}\bigl(R/\widetilde{R}\bigr)\bigr)^{\rm log} \to \widehat{\overline{R}}.$$Then,
${\rm B}_{\rm dR}^{\nabla,+}\bigl(\widetilde{R}\bigr)$ is the $\Ker\bigl(\Theta_{\log}'\bigr)$--adic completion of $\bigl({\rm A}_{\rm
inf}\bigl(R/\cO\bigr)\bigr)^{\rm log}\bigl[p^{-1}\bigr]$ and ${\rm B}_{\rm dR}^+\bigl(\widetilde{R}\bigr)$ is the
$\Ker\bigl(\Theta_{\widetilde{R},\log}'\bigr)$--adic completion of $\bigl({\rm A}_{\rm inf}\bigl(R/\widetilde{R}\bigr)\bigr)^{\rm log}\bigl[p^{-1}\bigr]$. One
proceeds similarly for ${\rm B}_{\rm dR}^+\bigl(R\bigr)$.

We make these definitions more explicit.  Consider the elements
$$u:=\frac{\bigl[\overline{\pi}\bigr]}{Z},
\qquad v_i:=\frac{\bigl[\overline{X}_i\bigr]}{\widetilde{X}_i},\qquad w_j:=\frac{\bigl[\overline{Y}_j\bigr]}{\widetilde{Y}_j}$$for $i=1,\ldots,a$ and
$j=1,\ldots,b$. Then, $\bigl({\rm A}_{\rm inf}\bigl(R/\cO\bigr)\bigr)^{\rm log}$ is generated by $u$ and $u^{-1}$ as ${\rm A}_{\rm inf}\bigl(R/\cO\bigr)$--algebra,
i.e., $$ \bigl({\rm A}_{\rm inf}\bigl(R/\cO\bigr)\bigr)^{\rm log} \cong {\rm A}_{\rm inf}\bigl(R/\cO\bigr) [u, u^{-1}]$$and $\Ker\bigl(\Theta_{\log}'\bigr)=(u-1)$.
Similarly, $\bigl({\rm A}_{\rm inf}\bigl(R/\widetilde{R}\bigr)\bigr)^{\rm log}$ is generated as ${\rm A}_{\rm inf}\bigl(R/\widetilde{R}\bigr)$--algebra by $u$, the
elements $v_i$ for $i=1,\ldots,a$, and $w_j$ for $j=1,\ldots,b$ and by their multiplicative inverses
$$\bigl({\rm A}_{\rm inf}\bigl(R/\widetilde{R}\bigr)\bigr)^{\rm log} \cong {\rm A}_{\rm inf}\bigl(R/\widetilde{R}\bigr)
\bigl[u^{\pm 1}, v_1^{\pm 1},\ldots, v_a^{\pm 1},w_1^{\pm 1},\ldots, w_b^{\pm 1}\bigr].$$\medskip

For later purposes we generalize these constructions. Set $$\bigl(\WW\bigl(\widetilde{\bf E}^+\bigr)\tensor_{\WW(k)} \cO\bigr)^{\rm log}:=\WW\bigl(\widetilde{\bf
E}^+\bigr)\tensor_{\WW(k)} \cO\tensor_{\Z[\N\times \N]} \Z\bigl[T\bigr].$$The map $\Theta_{\log}$ extends to a map
$$\Theta_{\log}'\colon \bigl(\WW\bigl(\widetilde{\bf E}^+\bigr)\tensor_{\WW(k)}
\cO\bigr)^{\rm log} \to \widehat{\overline{R}}.$$As above $ \WW\bigl(\widetilde{\bf E}^+\bigr)\tensor_{\WW(k)} \cO\tensor_{\Z[\N\times \N]} \Z\bigl[T\bigr]\cong
\WW\bigl(\widetilde{\bf E}^+\bigr)\tensor_{\WW(k)} \cO [u, u^{-1}]$ and $\Ker\bigl(\Theta_{\log}'\bigr)=(u-1)$. Similarly, set $\bigl(\WW\bigl(\widetilde{\bf
E}^+\bigr)\tensor_{\WW(k)} \widetilde{R}\bigr)^{\rm log}:=\WW\bigl(\widetilde{\bf E}^+\bigr)\tensor_{\WW(k)} \widetilde{R}\tensor_{\Z[P'\times P']} \Z\bigl[Q\bigr]$
and extend $\Theta_{\widetilde{R},\log}$ to
$$\Theta_{\widetilde{R},\log}'\colon  \bigl(\WW\bigl(\widetilde{\bf E}^+\bigr)\tensor_{\WW(k)}
\widetilde{R}\bigr)^{\rm log} \to \widehat{\overline{R}}.$$Then,  $\WW\bigl(\widetilde{\bf E}^+\bigr)\tensor_{\WW(k)} \widetilde{R}\tensor_{\Z[P'\times P']}
\Z\bigl[Q\bigr] \cong \WW\bigl(\widetilde{\bf E}^+\bigr)\tensor_{\WW(k)} \widetilde{R}\bigl[u^{\pm 1}, v_1^{\pm 1},\ldots, v_a^{\pm 1},w_1^{\pm 1},\ldots, w_b^{\pm
1}\bigr]$.

\begin{lemma}\label{lemma:structurBdR+} (1) The sequence $(\xi,u-1)$
(resp.~$\left(P_\pi\bigl([\overline{\pi}]\bigr),u-1\right)$) is regular and it generates the kernel of $\Ker\bigl(\Theta_{\log}'\bigr)$ in
$\bigl(\WW\bigl(\widetilde{\bf E}^+\bigr)\tensor_{\WW(k)} \cO\bigr)^{\rm log}$.\smallskip

(2) The sequence $\bigl(\xi,u-1,v_2-1,\ldots,v_a-1,w_1-1,\ldots,w_b-1\bigr)$ is regular and it generates the kernel of
$\Ker\bigl(\Theta_{\widetilde{R},\log}'\bigr)$ in $\bigl(\WW\bigl(\widetilde{\bf E}^+\bigr)\tensor_{\WW(k)} \widetilde{R}\bigr)^{\rm log}$.
\end{lemma}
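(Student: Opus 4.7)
The plan is to reduce both statements to Lemma \ref{lemma:KerTheta} (kernel of $\Theta$ on $\WW(\widetilde{\bf E}^+)$) and Lemma \ref{lemma:A+R}(1) (regularity of $\xi$ on $\WW(\widetilde{\bf E}^+)$), by exploiting the explicit Laurent-polynomial presentations obtained just above the lemma and systematically peeling off the variables $u$, $v_i$, $w_j$.

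For part (1), I would start from the identification
$$\bigl(\WW(\widetilde{\bf E}^+)\tensor_{\WW(k)}\cO\bigr)^{\log}\cong \WW(\widetilde{\bf E}^+)\tensor_{\WW(k)}\cO[u^{\pm 1}]\big/\bigl(uZ-[\overline{\pi}]\bigr).$$
Both $\xi$ and $u-1$ visibly lie in $\Ker(\Theta'_{\log})$. To show they generate the kernel, I would compute the quotient in two steps: modulo $(u-1)$ the relation $uZ=[\overline{\pi}]$ forces $Z=[\overline{\pi}]$, giving $\WW(\widetilde{\bf E}^+)\tensor_{\WW(k)}\cO/(1\tensor Z-[\overline{\pi}]\tensor 1)$; then modding by $\xi$ and applying Lemma \ref{lemma:KerTheta}(1) to identify $\WW(\widetilde{\bf E}^+)/(\xi)$ with $\widehat{\overline{R}}$ turns $[\overline{\pi}]$ into $\pi$ and the quotient into $\widehat{\overline{R}}\tensor_{\WW(k)}\cO/(1\tensor Z-\pi\tensor 1)$, which collapses to $\widehat{\overline{R}}$ via the $\cO$-algebra structure $\cO\to\cO_K\to\widehat{\overline{R}}$, $Z\mapsto\pi$. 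Since the image of $\Theta'_{\log}$ is exactly $\widehat{\overline{R}}$, this forces $\Ker(\Theta'_{\log})=(\xi,u-1)$. Regularity of the sequence follows because $u-1$ is a non-zero-divisor (the underlying Laurent extension is free over $\WW(\widetilde{\bf E}^+)\tensor\cO$, and the defining relation $uZ-[\overline{\pi}]$ is monic of degree~$1$ in~$u$), and $\xi$ remains a non-zero-divisor in the quotient modulo $(u-1)$ by Lemma \ref{lemma:A+R}(1) combined with flatness of $\cO$ over $\WW(k)$. The alternative generator $P_\pi([\overline{\pi}])$ yields the same ideal by Lemma \ref{lemma:KerTheta}(1).

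For part (2), I would apply the analogous strategy to
$$\bigl(\WW(\widetilde{\bf E}^+)\tensor_{\WW(k)}\widetilde{R}\bigr)^{\log}\cong \WW(\widetilde{\bf E}^+)\tensor_{\WW(k)}\widetilde{R}\bigl[u^{\pm 1},v_1^{\pm 1},\ldots,v_a^{\pm 1},w_1^{\pm 1},\ldots,w_b^{\pm 1}\bigr]\big/I,$$
with $I$ generated by $uZ-[\overline{\pi}]$, $v_i\widetilde{X}_i-[\overline{X}_i]$ and $w_j\widetilde{Y}_j-[\overline{Y}_j]$. The crucial observation is that the defining identity $\widetilde{X}_1\cdots\widetilde{X}_a=Z^\alpha$ in $\widetilde{R}$ together with $[\overline{X}_1]\cdots[\overline{X}_a]=[\overline{\pi}]^\alpha$ in $\WW(\widetilde{\bf E}^+)$ forces $v_1v_2\cdots v_a=u^\alpha$, so $v_1$ is already determined by $u,v_2,\ldots,v_a$; in particular, $v_1-1=0$ automatically modulo the sequence $(u-1,v_2-1,\ldots,v_a-1)$, which explains why $v_1-1$ does not appear. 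Modding out by $(u-1,v_2-1,\ldots,v_a-1,w_1-1,\ldots,w_b-1)$ forces $Z=[\overline{\pi}]$, $\widetilde{X}_i=[\overline{X}_i]$, $\widetilde{Y}_j=[\overline{Y}_j]$, and using the ring homomorphism $\widetilde{R}\to \WW(\widetilde{\bf E}^+)$ from Lemma \ref{lemma:A+R}(2) the quotient is identified with $\WW(\widetilde{\bf E}^+)$; a final quotient by $\xi$ yields $\widehat{\overline{R}}$, matching the image of $\Theta'_{\widetilde{R},\log}$.

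The main obstacle will be rigorously verifying the regularity of the full sequence and the intermediate quotient identifications in the uncompleted tensor products. I expect to address this by peeling off the generators in order ($u-1$, then $v_a-1,\ldots,v_2-1$, then $w_1-1,\ldots,w_b-1$, and finally $\xi$), checking at each stage that the presentation remains a free Laurent-polynomial extension modulo a single monic relation of degree~$1$ in the variable being removed, so that the variable-minus-$1$ is automatically a non-zero-divisor, and only at the final step invoking Lemma \ref{lemma:A+R}(1) to get regularity of $\xi$. Care will be needed to track the two different $\cO$-algebra structures on $\widetilde{R}$ (via the inclusion $\cO\hookrightarrow\widetilde{R}$) and on $\WW(\widetilde{\bf E}^+)$ (via $Z\mapsto[\overline{\pi}]$) and to make sure the substitutions of formal power series in $Z$ make sense in the uncompleted tensor product, which ultimately reduces to writing elements in a manifest normal form dictated by the amalgamated-monoid presentation of the log structure.
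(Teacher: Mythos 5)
Your overall strategy (reduce to Lemmas \ref{lemma:KerTheta} and \ref{lemma:A+R}, exploit the explicit Laurent presentations, note that $v_1$ is redundant via $v_1\cdots v_a=u^\alpha$) matches the paper's, but the order in which you eliminate the generators introduces two genuine gaps in part (1). First, your justification that $u-1$ is a non-zero-divisor --- ``the defining relation $uZ-[\overline{\pi}]$ is monic of degree $1$ in $u$'' --- is wrong: the leading coefficient in $u$ is $Z$, which is not a unit, so the extension is not free and nothing is automatic. The paper gets regularity of $u-1$ only \emph{after} killing $\xi$, by embedding $\widehat{\overline{R}}\otimes_{\WW(k)}\cO[u,u^{-1}]/(Zu-\pi)$ into $\widehat{\overline{R}}[p^{-1}](\!(Z)\!)$ and observing that $Z(u-1)=\pi-Z$ has invertible lowest coefficient $\pi$; this argument is not available before reducing modulo $\xi$. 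Second, your intermediate identification of the quotient modulo $(u-1)$ with $\WW\bigl(\widetilde{\bf E}^+\bigr)\tensor_{\WW(k)}\cO/\bigl(1\otimes Z-[\overline{\pi}]\otimes 1\bigr)\cong\WW\bigl(\widetilde{\bf E}^+\bigr)$ fails in the \emph{uncompleted} tensor product: for a power series $f\in\cO$ the element $1\otimes f(Z)-f\bigl([\overline{\pi}]\bigr)\otimes 1$ lies in the kernel of evaluation, but the telescoped cofactor $\sum_j [\overline{\pi}]^j\otimes g_j(Z)$ is an infinite sum of simple tensors and generically does not lie in $(Z-[\overline{\pi}])$. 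It does lie in $\bigl(\xi,Z-[\overline{\pi}]\bigr)$, but seeing this requires Weierstrass division of $f$ by the \emph{polynomial} $P_\pi(Z)$ and hence requires working modulo $\xi$ first (so that $[\overline{\pi}]$ becomes $\pi$, which is finite over $\WW(k)$). You flag the substitution issue at the end, but the ``manifest normal form'' you hope for does not exist in your chosen order of elimination; reversing the order to $(\xi,u-1)$, as the paper does, is essential, not cosmetic.

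For part (2) there is a further missing reduction: a general $\widetilde{R}$ of the class considered is built from $\widetilde{R}^{(0)}$ by \'etale extensions, localizations and completions, so there is no global Laurent-polynomial presentation over $\cO$ in which the $\widetilde{X}_i,\widetilde{Y}_j$ are free, and your variable-by-variable peeling cannot be carried out directly on $\widetilde{R}$. The paper first uses flatness of each step $\widetilde{R}^{(i)}\subset\widetilde{R}^{(i+1)}$ to reduce the regularity claim to the monoid algebra $\widetilde{R}'=\cO[\widetilde{X},\widetilde{Y}]/(\widetilde{X}_1\cdots\widetilde{X}_a-Z^\alpha)$ (where the $\widetilde{X}_i$ can be eliminated in favour of the $v_i$, making the peeling legitimate), and then proves the generation statement by a separate induction along the chain, with essential case $R'\otimes_{\cO_K}R'\bigl[v_2^{\pm 1},\ldots,w_b^{\pm 1}\bigr]/J\cong R'$. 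Your proof needs both of these reductions spelled out.
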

\begin{proof}
It follows from \ref{lemma:KerTheta} that
$P_\pi\bigl([\overline{\pi}]\bigr)$ and $\xi$ generate the same
ideal.

(1) Due to \ref{lemma:A+R} the element $\xi$ is not a zero
divisor in $\WW\bigl(\widetilde{\bf E}^+\bigr)\tensor_{\WW(k)}
\cO$. Since $\WW\bigl(\widetilde{\bf E}^+\bigr)\tensor_{\WW(k)}
\cO/(\xi)\cong \widehat{\overline{R}} \tensor_{\WW(k)} \cO$ and
$1\otimes Z$ is not a zero divisor in it, we deduce that $\xi$
is not a zero divisor in $\WW\bigl(\widetilde{\bf
E}^+\bigr)\tensor_{\WW(k)}
\cO\bigl[u^{-1}\bigr]=\WW\bigl(\widetilde{\bf
E}^+\bigr)\tensor_{\WW(k)} \cO [T]/\bigl([\overline{\pi}] T-
Z\bigr)$.

Note that $ \WW\bigl(\widetilde{\bf E}^+\bigr)\tensor_{\WW(k)} \cO[u,u^{-1}]/(\xi) \cong \widehat{\overline{R}} \tensor_{\WW(k)} \cO[u,u^{-1}]$ with $(1\otimes Z)
u=\pi\otimes 1$. Modulo $(u-1)$ this coincides with $\widehat{\overline{R}}$. Moreover, such ring injects in $\widehat{\overline{R}}[\![ Z ]\!][u,u^{-1}]$ which
injects in $\widehat{\overline{R}}[p^{-1}](\!( Z )\!)$. It then suffices to show that $(u-1)$ is not a zero divisor in the latter or equivalently that $Z
(u-1)=\pi-Z$ is not a zero divisor. This is clear since $\pi$ is a unit in $\widehat{\overline{R}}[p^{-1}]$.

\smallskip

(2) Since~$\widetilde{R}^{(i+1)}$ is obtained from~$\widetilde{R}^{(i)}$ by completing with respect to some ideal, localizing or taking \'etale extensions, it is a
flat $\widetilde{R}^{(i)}$-module. Thus, the regularity of the sequence, given in (2), in the ring $\WW\bigl(\widetilde{\bf E}^+\bigr)\tensor_{\WW(k)}
\widetilde{R}^{(i)}\bigl[u^{\pm 1}, v_1^{\pm 1},\ldots, v_a^{\pm 1},w_1^{\pm 1},\ldots, w_b^{\pm 1}\bigr]$ holds if it holds for $i=0$. Since  $\widetilde{R}^{(0)}$
is flat as an algebra over $\widetilde{R}':=\cO\bigl[\widetilde{X}_1,\ldots,\widetilde{X}_a, \widetilde{Y}_1,\ldots,\widetilde{Y}_b\bigr]
/(\widetilde{X}_1\cdots\widetilde{X}_a-Z^\alpha)$ it suffices to prove the regularity for $\widetilde{R}'$ instead of $\widetilde{R}$. Note that
$\WW\bigl(\widetilde{\bf E}^+\bigr)\tensor_{\WW(k)} \widetilde{R}'\bigl[u^{\pm 1}, v_1^{\pm 1},\ldots, v_a^{\pm 1},w_1^{\pm 1},\ldots, w_b^{\pm 1}\bigr]$ is
isomorphic to $\WW\bigl(\widetilde{\bf E}^+\bigr)\tensor_{\WW(k)}\cO\bigl[u^{\pm 1}, v_1^{\pm 1},\ldots,v_a^{\pm 1}, w_1^{\pm 1},\ldots,w_b^{\pm 1}\bigr]
/\bigl(v_1\cdots v_a-u^\alpha\bigr)$ which is
$$\WW\bigl(\widetilde{\bf E}^+\bigr)\tensor_{\WW(k)}\cO\bigl[u^{\pm
1}, v_2^{\pm 1},\ldots,v_a^{\pm 1}, w_1^{\pm 1},\ldots,w_b^{\pm 1}\bigr]$$since $v_1= u^\alpha v_2^{-1}\cdots v_a^{-1}$. Thus, if $\xi$ and $u-1$ is a regular
sequence of $\WW\bigl(\widetilde{\bf E}^+\bigr)\tensor_{\WW(k)}\cO[u^{\pm 1}]$ generating $\Ker\bigl(\Theta_{\log}'\bigr)$ then also (2) holds for $\widetilde{R}'$
in place of $\widetilde{R}$. In particular, the regularity claimed in (2) follows and  we are left to prove that the sequence given in (2) generates the ideal
$\Ker\bigl(\Theta_{\widetilde{R},\log}'\bigr)$.

\smallskip
Due to (1) the ring $\WW\bigl(\widetilde{\bf E}^+\bigr)\tensor_{\WW(k)} \widetilde{R}\bigl[u^{\pm 1}, v_1^{\pm 1},\ldots, v_a^{\pm 1},w_1^{\pm 1},\ldots, w_b^{\pm
1}\bigr]$ modulo $(\xi,u-1)$ coincides with $\overline{R} \widehat{\tensor}_{\cO_K} R \bigl[v_2^{\pm 1},\ldots, v_a^{\pm 1},w_1^{\pm 1},\ldots, w_b^{\pm 1}\bigr]$.
Consider the quotient $B$ modulo the ideal $J:=\bigl(v_2-1,\ldots,v_a-1,w_1-1,\ldots,w_b-1\bigr)$. To show that $B\cong \widehat{\overline{R}}$, by base changing
via $R \lra \widehat{\overline{R}}$, it is sufficient to prove that $R\tensor_{\cO_K} R \bigl[v_2^{\pm 1},\ldots, v_a^{\pm 1},w_1^{\pm 1},\ldots, w_b^{\pm
1}\bigr]/J $ coincides with $R$; here and below we still denote by $J$ the ideal generated by $\bigl(v_2-1,\ldots,v_a-1,w_1-1,\ldots,w_b-1\bigr)$. This follows
showing by induction on $i$ that $R^{(i)}\tensor_{\cO_K} R^{(i)} \bigl[v_2^{\pm 1},\ldots, v_a^{\pm 1},w_1^{\pm 1},\ldots, w_b^{\pm 1}\bigr]/J \cong R^{(i)}$. The
inductive step is left to the reader using the fact that $R^{(i+1)}$ (resp.~$\widetilde{R}^{(i+1)}$) is obtained from $R^{(i)}$ (resp.~$\widetilde{R}^{(i)}$)  by
completing with respect to some ideal, localizing or taking \'etale extensions. The essential case is $i=0$ and in this case we may replace $R^{(0)}$ with $R'=
\cO_K\bigl[X_1,\ldots,X_a, Y_1,\ldots,Y_b\bigr] /(X_1\cdots X_a-\pi^\alpha)$. Then, $R'\tensor_{\cO_K} R' \bigl[v_2^{\pm 1},\ldots, v_a^{\pm 1},w_1^{\pm 1},\ldots,
w_b^{\pm 1}\bigr]/J \cong R'\bigl[v_2^{\pm 1},\ldots, v_a^{\pm 1},w_1^{\pm 1},\ldots, w_b^{\pm 1}\bigr]/J\cong R'$ and the claim follows.

\end{proof}

\begin{proposition}\label{cor:BdRstr} The following properties hold
\begin{enumerate}

\item[(1)] ${\rm B}_{\rm
dR}^{\nabla,+}\bigl(\widetilde{R}\bigr)\cong {\rm B}_{\rm
dR}^{\nabla,+}\bigl(R\bigr)[\![ u-1]\!]$;

\item[(2)] ${\rm B}_{\rm dR}^+\bigl(\widetilde{R}\bigr)\cong {\rm
B}_{\rm dR}^{\nabla,+}\bigl(R\bigr)\bigl[\!\bigl[
v_1-1,\ldots,v_a-1,w_1-1,\ldots,w_b-1\bigr]\!\bigr]$;

\item[(3)] ${\rm B}_{\rm dR}^+\bigl(R\bigr)\cong {\rm
B}_{\rm dR}^{\nabla,+}\bigl(R\bigr)\bigl[\!\bigl[
v_2-1,\ldots,v_a-1,w_1-1,\ldots,w_b-1\bigr]\!\bigr]$;

\item[(4)] ${\rm B}_{\rm dR}^+\bigl(\widetilde{R}\bigr) \cong {\rm
B}_{\rm dR}^+\bigl(R\bigr) [\![u-1]\!]\cong {\rm B}_{\rm
dR}^+\bigl(R\bigr) [\![Z-\pi]\!]$;

\item[(5)] the filtration on ${\rm B}_{\rm
dR}^{\nabla,+}\bigl(R\bigr)$ is the $t$-adic filtration. In
particular, ${\rm Gr}^\bullet {\rm B}_{\rm
dR}^{\nabla,+}\bigl(R\bigr)=\widehat{\overline{R}}[p^{-1}][t]$
with grading given by the degree in $t$;

\item[(6)] the filtration on ${\rm B}_{\rm
dR}^{+}\bigl(\widetilde{R}\bigr)$ is the
$\bigl(t,v_1-1,\ldots,v_a-1,w_1-1,\ldots,w_b-1\bigr)$-adic
filtration. In particular, ${\rm Gr}^\bullet {\rm B}_{\rm
dR}^{+}\bigl(\widetilde{R}\bigr)=\widehat{\overline{R}}[p^{-1}]\left[t,
v_1-1,\ldots,v_a-1,w_1-1,\ldots,w_b-1\right]$ with grading given by
the degree as polynomials in $t,
v_1-1,\ldots,v_a-1,w_1-1,\ldots,w_b-1$. Therefore, $${\rm
Gr}^\bullet {\rm B}_{\rm
dR}\bigl(\widetilde{R}\bigr)=\widehat{\overline{R}}[p^{-1}]\left[t,t^{-1},
\frac{v_1-1}{t},\ldots,\frac{v_a-1}{t},\frac{w_1-1}{t},\ldots,\frac{w_b-1}{t}\right]$$
with grading given by the degree in $t$. Similarly,  $${\rm
Gr}^\bullet {\rm B}_{\rm
dR}\bigl(R\bigr)=\widehat{\overline{R}}[p^{-1}]\left[t,t^{-1},
\frac{v_2-1}{t},\ldots,\frac{v_a-1}{t},\frac{w_1-1}{t},\ldots,\frac{w_b-1}{t}\right]$$
with grading given by the degree in $t$;

\item[(7)] the filtration on ${\rm B}_{\rm
dR}\bigl(\widetilde{R}\bigr)$ is exhaustive and separated and
${\rm Fil}^r {\rm B}_{\rm dR}\bigl(\widetilde{R}\bigr)\cap {\rm
B}_{\rm dR}^+\bigl(\widetilde{R}\bigr)={\rm Fil}^r {\rm B}_{\rm
dR}^+\bigl(\widetilde{R}\bigr)$ for every $r\in\N$.

\end{enumerate}

\end{proposition}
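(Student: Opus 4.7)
The plan is to derive all seven assertions from the regular-sequence descriptions of the kernels of $\Theta_{\log}'$ and $\Theta_{\widetilde{R},\log}'$ provided by Lemma~\ref{lemma:structurBdR+}, together with classical Fontaine theory in the absolute case. I begin with (5): by Lemma~\ref{lemma:KerTheta} the kernel of $\Theta\colon \WW\bigl(\widetilde{\bf E}^+\bigr)[p^{-1}]\to \widehat{\overline{R}}[p^{-1}]$ is the principal ideal $(\xi)$, and $\xi$ is a non-zero-divisor by Lemma~\ref{lemma:A+R}(1), so ${\rm B}_{\rm dR}^{\nabla,+}(R)$ is simply the $\xi$-adic completion of $\WW\bigl(\widetilde{\bf E}^+\bigr)[p^{-1}]$. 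The classical Fontaine computation that $t/\xi$ is a unit in ${\rm B}_{\rm dR}^{\nabla,+}(R)$ then identifies the kernel-adic filtration with the $t$-adic one and yields ${\rm Gr}^\bullet = \widehat{\overline{R}}[p^{-1}][t]$.

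For (1)--(3) I invoke the principle that completing along an ideal generated by a regular sequence produces an iterated formal power series ring over the completion with respect to a single generator. Applied to Lemma~\ref{lemma:structurBdR+}(1), this gives (1) directly, with power series variable $u-1$. For (2), starting from Lemma~\ref{lemma:structurBdR+}(2), the multiplicative relation $v_1 \cdots v_a = u^\alpha$ implies $v_1 - 1 \equiv \alpha(u-1)$ modulo $(v_2-1, \ldots, v_a-1, (u-1)^2, \xi)$; since $\alpha$ is invertible in ${\rm B}_{\rm dR}^{\nabla,+}(R)$ (a $\Q_p$--algebra), the generating sets $(\xi, u-1, v_2-1, \ldots, w_b-1)$ and $(\xi, v_1-1, \ldots, v_a-1, w_1-1, \ldots, w_b-1)$ span the same ideal, yielding the claimed power series description. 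For (3) the analogous computation over $R$ produces a kernel generated by $(\xi, v_1-1, \ldots, v_a-1, w_1-1, \ldots, w_b-1)$, but the relation now reads $v_1 \cdots v_a = [\overline{\pi}]^\alpha/\pi^\alpha$, which is congruent to $1$ modulo $(\xi)$, so $v_1 - 1$ lies in the ideal generated by the remaining generators and can be dropped.

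For (4), the first isomorphism follows from (2) and (3) since the sole deformation variable separating ${\rm B}_{\rm dR}^+\bigl(\widetilde{R}\bigr)$ from ${\rm B}_{\rm dR}^+(R)$ is $u - 1$; for the second I use $u - 1 = \bigl([\overline{\pi}] - \pi\bigr)/Z - (Z - \pi)/Z$, noting that $Z = \pi + (Z-\pi)$ is a unit in ${\rm B}_{\rm dR}^+(R)[\![Z-\pi]\!]$ (as $\pi$ is invertible after inverting $p$) and that $[\overline{\pi}] - \pi \in (\xi) \subset {\rm B}_{\rm dR}^+(R)$, so $u-1$ and $Z-\pi$ generate the same closed ideal, inducing the claimed isomorphism. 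Assertion (6) is then a direct computation of the associated graded of the power series description in (2) using (5), and inverting $t$ gives the description for ${\rm B}_{\rm dR}\bigl(\widetilde{R}\bigr)$ and ${\rm B}_{\rm dR}(R)$. Finally, (7) is formal: exhaustiveness is immediate from ${\rm B}_{\rm dR}\bigl(\widetilde{R}\bigr) = {\rm B}_{\rm dR}^+\bigl(\widetilde{R}\bigr)[t^{-1}]$; both separatedness and the identity ${\rm Fil}^r {\rm B}_{\rm dR}\bigl(\widetilde{R}\bigr)\cap {\rm B}_{\rm dR}^+\bigl(\widetilde{R}\bigr) = {\rm Fil}^r {\rm B}_{\rm dR}^+\bigl(\widetilde{R}\bigr)$ reduce to the fact that $t$ is a non-zero-divisor on ${\rm Gr}^\bullet {\rm B}_{\rm dR}^+\bigl(\widetilde{R}\bigr)$, which is visible from (6) where $t$ appears as a polynomial variable. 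The main obstacle is to justify rigorously the regular-sequence completion principle invoked in (1)--(3): the ambient rings are huge $p$-adic period rings, not obviously noetherian, so the standard local-algebra argument does not apply directly, and one must verify the iterated power-series identification by a Koszul-type computation exploiting the regularity of the generators established in Lemmas~\ref{lemma:A+R} and~\ref{lemma:structurBdR+}.
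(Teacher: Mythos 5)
Your handling of (4)--(7) and your ideal-theoretic manipulations in (1)--(3) (the identities $v_1\cdots v_a=u^\alpha$ over $\widetilde R$, $v_1\cdots v_a=[\overline{\pi}]^\alpha/\pi^\alpha$ over $R$, and the comparison of $u-1$ with $Z-\pi$) are correct and agree with what the paper does. The problem is exactly the step you flag at the end, and the tool you propose for it would not work. The ``regular-sequence completion principle'' is false as stated: the $I$-adic completion of a ring along an ideal generated by a regular sequence is in general \emph{not} an iterated formal power series ring over the completion along a subideal --- already $\varprojlim \Z/p^n\Z=\Z_p$ is not $\F_p[\![T]\!]$. A Koszul-type computation exploiting regularity controls only the associated graded of the kernel-adic filtration, which is essentially assertion (6); it cannot produce the multiplicative section of ${\rm B}_{\rm dR}^{+}(\widetilde R)\twoheadrightarrow \widehat{\overline R}[p^{-1}]$ through ${\rm B}_{\rm dR}^{\nabla,+}(R)$ that the power series presentation encodes. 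That is a lifting problem, not a graded one, so (1)--(3) do not follow from Lemma~\ref{lemma:structurBdR+} alone.

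What actually closes the gap (in the paper, following Brinon's Prop.~5.2.2) is the construction of an explicit inverse map via the universal property of the log infinitesimal neighborhoods. The maps $\iota$ and $f$ from the candidate power series rings into ${\rm B}_{\rm dR}^{\nabla,+}(\widetilde R)$ and ${\rm B}_{\rm dR}^{+}(\widetilde R)$ exist because $u-1$, $v_i-1$, $w_j-1$ are topologically nilpotent in the targets. For the inverses one must first endow ${\rm B}_{\rm dR}^{\nabla,+}(R)[\![u-1]\!]$ with an $\cO$-algebra structure by sending $Z\mapsto[\overline{\pi}]u^{-1}$, and --- the harder point, for (2) --- endow ${\rm B}_{\rm dR}^{\nabla,+}(R)[\![v_1-1,\ldots,w_b-1]\!]$ with a $\widetilde R$-algebra structure compatible with $\Theta$ and the log structures: one uses Hensel's lemma to solve $X^\alpha=[\overline{\pi}]^\alpha v_1^{-1}\cdots v_a^{-1}$ (this defines the image of $Z$), sends $\widetilde X_i\mapsto[\overline X_i]v_i^{-1}$ and $\widetilde Y_j\mapsto[\overline Y_j]w_j^{-1}$, and then invokes the inductive uniqueness argument of Lemma~\ref{lemma:tildeRn} to extend from $\widetilde R^{(0)}$ to $\widetilde R$. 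Only then does the universal property of the $n$-th log infinitesimal neighborhood yield a map ${\rm B}_{\rm dR}^{+}(\widetilde R)\to{\rm B}_{\rm dR}^{\nabla,+}(R)[\![v_1-1,\ldots,w_b-1]\!]$ inverse to $f$. This construction is the missing idea in your proposal; your Lemma~\ref{lemma:structurBdR+} input is necessary but not sufficient. (A minor additional remark on (5): regularity of $\xi$ in $\WW\bigl(\widetilde{\bf E}^+\bigr)$ must be propagated to the completion; the paper does this by reducing to the $p$-torsion-freeness of $\widehat{\overline R}$ proved in Proposition~\ref{prop:Rbarff}, which your sketch should cite.)
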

\begin{proof}
The proofs of (1), (2) and (3) are similar and follow closely the proof of \cite[Prop. 5.2.2]{brinon}. We only sketch the proof of (1) and (2) and we refer to
loc.~cit.~for the details. We certainly have morphisms $\iota\colon {\rm B}_{\rm dR}^{\nabla,+}\bigl(R\bigr)[\![ u-1]\!] \to {\rm B}_{\rm
dR}^{\nabla,+}\bigl(\widetilde{R}\bigr)$ and $f\colon {\rm B}_{\rm dR}^{\nabla,+}\bigl(R\bigr)\bigl[\!\bigl[ v_1-1,\ldots,v_a-1,w_1-1,\ldots,w_b-1\bigr]\!\bigr]
\lra {\rm B}_{\rm dR}^+\bigl(\widetilde{R}\bigr)$. Notice that ${\rm B}_{\rm dR}^{\nabla,+}\bigl(R\bigr)[\![ u-1]\!]$ has the structure of an $\cO$-algebra as we
can send $Z$ to  $[\overline{\pi}] u^{-1}$. Similarly, ${\rm B}_{\rm dR}^{\nabla,+}\bigl(R\bigr)\bigl[\!\bigl[ v_1-1,\ldots,v_a-1,w_1-1,\ldots,w_b-1\bigr]\!\bigr]$
has the structure of $\widetilde{R}^{(0)}$-algebra. Indeed, it is a $\cO$-algebra since $W=[\overline{\pi}]^\alpha \cdot v_1^{-1} \cdots v_a^{-1}$ lies in this
ring. Since the equation $X^\alpha=W$ has the solution $1$ modulo $(t,u-1,v_1-1,\ldots,v_a-1,w_1-1,\ldots,w_b-1)$, by Hensel's lemma it admits a solution $Z'$.
Then, the structure of $\cO$-algebra is defined by sending $Z$ to $Z'$. The structure of $\widetilde{R}^{(0)}$-algebra is given by the structure of $\cO$-algebra
and by sending $\widetilde{X}_i$ to $ \bigl[\overline{X}_i\bigr] v_i^{-1}$ for $i=1,\ldots,a$ and $\widetilde{Y}_j$ to $\bigl[\overline{Y}_j\bigr] w_j^{-1} $ for
$j=1,\ldots,b$. Arguing as in \ref{lemma:tildeRn} one proves that there is a unique extension to an $\widetilde{R}$-algebra structure compatible via the morphism
$\Theta$ with the $\widehat{R}$-structure on $\widehat{\overline{R}}[p^{-1}]$. This provides morphisms ${\rm B}_{\rm dR}^{\nabla,+}\bigl(\widetilde{R}\bigr)\to {\rm
B}_{\rm dR}^{\nabla,+}\bigl(R\bigr)[\![ u-1]\!]$ and ${\rm B}_{\rm dR}^+\bigl(\widetilde{R}\bigr)\lra {\rm B}_{\rm dR}^{\nabla,+}\bigl(R\bigr)\bigl[\!\bigl[
v_1-1,\ldots,v_a-1,w_1-1,\ldots,w_b-1\bigr]\!\bigr]$ which are proven to be inverse to $\iota$ and $f$ respectively, see loc.~cit.

For (4) we notice that $u^\alpha=\prod_{i=1}^a v_i$. Since the
$v_i$'s are  unit in ${\rm B}_{\rm
dR}^+\bigl(\widetilde{R}\bigr)$, the first formula follows from
(2) and (3). The second formula follows remarking that
$u=\frac{Z}{[\overline{\pi}]}= \frac{\pi}{[\overline{\pi}]} \cdot
\frac{Z}{\pi}$ so that
$$u-1=\frac{\pi}{[\overline{\pi}]} \left(\frac{Z}{\pi}-1\right)+
\left(\frac{\pi}{[\overline{\pi}]}-1\right).$$The claim follows
since $\frac{\pi}{[\overline{\pi}]}-1$ lies in ${\rm Fil}^1 {\rm
B}_{\rm dR}^+\bigl(\cO_K\bigr)$.

For (5) one  needs to prove that $t$ is not a zero divisor in
${\rm B}_{\rm dR}^{\nabla,+}\bigl(R\bigr)$. Note that $t {\rm
B}_{\rm dR}^{\nabla,+}\bigl(R\bigr)=\xi {\rm B}_{\rm
dR}^{\nabla,+}\bigl(R\bigr)$, as this holds already for $R=\Z_p$
due to \cite[\S II.1.5.4]{Fontaineperiodes}. One is then left to
prove that $\xi$ is not a zero divisor in ${\rm B}_{\rm
dR}^{\nabla,+}\bigl(R\bigr)$. Arguing as in \cite[Prop.
5.1.4]{brinon} one reduces to prove that $\widehat{\overline{R}}$
has no non-trivial $p$-torsion. This has been proven in
\ref{prop:Rbarff}.

(6) follows from (5), (2) and (3).

(7) follows arguing as in \cite[Prop. 5.2.8 \& Cor. 5.2.9]{brinon}.
\end{proof}

\subsubsection{Connections.}

Put $\widehat{\omega}^1_{R/\cO_K}:=\lim_n \omega^1_{R/\cO_K}/p^n
\omega^1_{R/\cO_K}$ where $\omega^1$ denotes the module of logarithmic
K\"ahler differentials. Then, $\widehat{\omega}^1_{R/\cO_K}\cong
\oplus_{i=2}^a R {\rm dlog} X_i \oplus_{j=1}^b R {\rm dlog} Y_j$.
Similarly, let
$$\widehat{\omega}^1_{\widetilde{R}/\WW(k)}:=\lim_{\infty\leftarrow
n} \omega^1_{\widetilde{R}/\WW(k)}/\bigl(p,P_\pi(Z)\bigr)^n \omega^1_{\widetilde{R}/\WW(k)},\qquad \widehat{\omega}^1_{\widetilde{R}/\cO}:=\lim_{\infty\leftarrow n}
\omega^1_{\widetilde{R}/\cO}/\bigl(p,P_\pi(Z)\bigr)^n \omega^1_{\widetilde{R}/\cO}.$$We have $\widehat{\omega}^1_{\widetilde{R}/\WW(k)}\cong \widehat{\widetilde{R}}
{\rm dlog} Z \oplus_{i=2}^a \widehat{\widetilde{R}} {\rm dlog} \widetilde{X}_i \oplus_{j=1}^b \widehat{\widetilde{R}} {\rm dlog} \widetilde{Y}_j$, where
$\widehat{\widetilde{R}}$ is the $\bigl(p,P_\pi(Z)\bigr)$-adic completion  of $\widetilde{R}$, as ${\rm dlog}  \widetilde{X}_1=\alpha {\rm dlog} Z + \sum_{i=2}^a
{\rm dlog} \widetilde{X}_i$. We also have $\widehat{\omega}^1_{\cO/\WW(k)}\cong \cO {\rm dlog} Z$. We have an exact sequence
$$0\lra \widetilde{R}\widehat{\otimes}_{\cO}
\widehat{\omega}^1_{\cO/\WW(k)} \lra \widehat{\omega}^1_{\widetilde{R}/\WW(k)}\lra \widehat{\omega}^1_{\widetilde{R}/\cO} \lra 0.$$

\smallskip

Using \ref{cor:BdRstr} define the {\it connections}
$$\nabla_R\colon {\rm B}_{\rm
dR}^+\bigl(R\bigr) \lra {\rm B}_{\rm
dR}^+\bigl(R\bigr)\otimes_{\widehat{R}}
\widehat{\omega}^1_{R/\cO_K},$$ $$\nabla_{\widetilde{R}}\colon
{\rm B}_{\rm dR}^+\bigl(\widetilde{R}\bigr) \lra {\rm B}_{\rm
dR}^+\bigl(\widetilde{R}\bigr)\otimes_{\widehat{\widetilde{R}}}
\widehat{\omega}^1_{\widetilde{R}/\WW(k)}$$and
$$\nabla_{\widetilde{R}/\cO}\colon {\rm B}_{\rm dR}^+\bigl(\widetilde{R}\bigr)
\lra {\rm B}_{\rm dR}^+\bigl(\widetilde{R}\bigr)\otimes_{\widehat{R}} \widehat{\omega}^1_{\widetilde{R}/\cO}$$to be the ${\rm B}_{\rm
dR}^{\nabla,+}\bigl(R\bigr)$-linear (resp.~${\rm B}_{\rm dR}^{\nabla,+}\bigl(\widetilde{R}\bigr)$-linear) map given by sending $(v_i-1)$ to $-v_i {\rm dlog}
\widetilde{X}_i$ for $i=1,\ldots,a$ and $(w_j-1)$ to $ -w_j {\rm dlog} \widetilde{Y}_j$ for $j=1,\ldots,b$. These connections extend to the rings  ${\rm B}_{\rm
dR}\bigl(R\bigr)$ and ${\rm B}_{\rm dR}\bigl(\widetilde{R}\bigr)$.

\begin{lemma}\label{lemma:nabla=0} We have:
\begin{enumerate}

\item[(1)] The above connections commute with the action of
$\cG_R$, are integrable and satisfy Griffiths' transversality with
respect to the filtrations;

\item[(2)] ${\rm B}_{\rm dR}^{\nabla}\bigl(R\bigr)={\rm B}_{\rm
dR}\bigl(R\bigr)^{\nabla_R=0}\cong {\rm B}_{\rm
dR}\bigl(\widetilde{R}\bigr)^{\nabla_{\widetilde{R}/\WW(k)}=0}$;

\item[(3)] ${\rm B}_{\rm
dR}^{\nabla}\bigl(\widetilde{R}\bigr)={\rm B}_{\rm
dR}\bigl(\widetilde{R}\bigr)^{\nabla_{\widetilde{R}/\cO}=0}$.

\end{enumerate}

\noindent The same statements apply for the rings with $+$.
\end{lemma}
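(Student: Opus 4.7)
The plan is to reduce all three claims to direct computations in formal power series, exploiting the explicit presentations of the various $B_{\rm dR}$-rings given in Corollary \ref{cor:BdRstr}. In each presentation, the topological generators of the power series are, up to sign and a unit, precisely the elements sent by the connection to the basis elements of the relevant module of K\"ahler differentials, so each of the three connections becomes a sum of partial-derivative operators on an explicit power-series ring.

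I would first dispatch (1). For Galois equivariance: for $\sigma\in\cG_R$ one has $\sigma([\overline{X}_i])=[\epsilon_i(\sigma)][\overline{X}_i]$, where $[\epsilon_i(\sigma)]$ is the Teichm\"uller lift of a compatible system of roots of unity lying in $A_{\rm inf}\subset {\rm B}_{\rm dR}^{\nabla,+}(R)$; analogously for $\overline{Y}_j$ and $\overline{\pi}$. Hence $\sigma(v_i)=[\epsilon_i(\sigma)]v_i$ etc., and since $\nabla$ is linear over ${\rm B}_{\rm dR}^{\nabla,+}$ and $\cG_R$ acts trivially on $\widetilde{R}$ (so on the target differentials), comparing $\sigma\circ\nabla$ and $\nabla\circ\sigma$ on the topological generators yields equality. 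Integrability $\nabla^2=0$ follows from $\nabla(-v_i\,{\rm dlog}\,\widetilde{X}_i)=v_i\,{\rm dlog}\,\widetilde{X}_i\wedge {\rm dlog}\,\widetilde{X}_i=0$, and analogous vanishings for $w_j$ and for $u$. Griffiths' transversality follows from the graded description in \ref{cor:BdRstr}(6): the filtration ideal is generated by $t$ together with the $v_i-1$, $w_j-1$ (and $u-1$ in the $\widetilde{R}$-case), the connection annihilates $t$, and sends each other generator into the $\widehat{\omega}^1$-summand with a unit coefficient, lowering the filtration by exactly one.

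For (2) and (3) I would use the formal-power-series isomorphisms of \ref{cor:BdRstr}. Under ${\rm B}_{\rm dR}^+(R)\cong {\rm B}_{\rm dR}^{\nabla,+}(R)[\![v_2-1,\ldots,v_a-1,w_1-1,\ldots,w_b-1]\!]$ from \ref{cor:BdRstr}(3), the ${\rm B}_{\rm dR}^{\nabla,+}(R)$-linear map $\nabla_R$ becomes the total derivation against the free basis $\{{\rm dlog}\,\widetilde{X}_i\}_{i\geq 2}\cup\{{\rm dlog}\,\widetilde{Y}_j\}_j$ of $\widehat{\omega}^1_{R/\cO_K}$; linear independence of these differentials forces a horizontal element to be a constant power series, hence to lie in ${\rm B}_{\rm dR}^{\nabla,+}(R)$. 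The second identity of (2) comes from combining \ref{cor:BdRstr}(3)(4) to write ${\rm B}_{\rm dR}^+(\widetilde{R})\cong {\rm B}_{\rm dR}^{\nabla,+}(R)[\![u-1,v_2-1,\ldots,v_a-1,w_1-1,\ldots,w_b-1]\!]$, with $\nabla_{\widetilde{R}}(u-1)=-u\,{\rm dlog}\,Z$ (computed from $u\cdot Z=[\overline{\pi}]\in {\rm B}_{\rm dR}^{\nabla,+}(R)$ by Leibniz) providing the missing independent generator of $\widehat{\omega}^1_{\widetilde{R}/\WW(k)}$; the same argument in one more variable gives $\ker \nabla_{\widetilde{R}}={\rm B}_{\rm dR}^{\nabla,+}(R)$. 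For (3), the relation $u^\alpha=v_1\cdots v_a$ lets one substitute $v_1=u^\alpha(v_2\cdots v_a)^{-1}$ (valid since $v_2,\ldots,v_a$ are units), yielding
$$
{\rm B}_{\rm dR}^+(\widetilde{R})\cong {\rm B}_{\rm dR}^{\nabla,+}(\widetilde{R})[\![v_2-1,\ldots,v_a-1,w_1-1,\ldots,w_b-1]\!].
$$
Now $\nabla_{\widetilde{R}/\cO}$ is ${\rm B}_{\rm dR}^{\nabla,+}(\widetilde{R})$-linear and sends these generators to the independent basis $\{{\rm dlog}\,\widetilde{X}_i\}_{i\geq 2}\cup\{{\rm dlog}\,\widetilde{Y}_j\}_j$ of $\widehat{\omega}^1_{\widetilde{R}/\cO}$, so the same partial-derivative argument yields $\ker \nabla_{\widetilde{R}/\cO}={\rm B}_{\rm dR}^{\nabla,+}(\widetilde{R})$.

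The passage from the $+$-versions to the full ${\rm B}_{\rm dR}$-rings is immediate: $t=\log[\varepsilon]\in {\rm B}_{\rm dR}^{\nabla,+}(R)$ is killed by all three connections, and the exhaustive, separated filtration of \ref{cor:BdRstr}(7) ensures that inverting $t$ commutes with taking horizontal sections. The only (mild) obstacle is verifying the power-series presentation used in (3): one must check that the substitution $v_1=u^\alpha(v_2\cdots v_a)^{-1}$ inside the completed ring actually produces the stated isomorphism, which reduces to invertibility of $v_2,\ldots,v_a$ in ${\rm B}_{\rm dR}^+(\widetilde{R})$ together with the identification ${\rm B}_{\rm dR}^{\nabla,+}(\widetilde{R})\cong {\rm B}_{\rm dR}^{\nabla,+}(R)[\![u-1]\!]$ from \ref{cor:BdRstr}(1).
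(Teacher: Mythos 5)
Your proposal is correct and follows essentially the same route as the paper: the authors likewise get integrability and Griffiths' transversality directly from the definition, check Galois equivariance by computing the derivations $\widetilde{X}_i\partial/\partial\widetilde{X}_i$, $\widetilde{Y}_j\partial/\partial\widetilde{Y}_j$ on the generators $v_i-1$, $w_j-1$ using $g(v_i)=[\varepsilon]^{c_i(g)}v_i$, and deduce (2) and (3) as formal consequences of the power-series presentations in \ref{cor:BdRstr}. Your extra details (the computation $\nabla_{\widetilde{R}}(u)=-u\,{\rm dlog}\,Z$ from $uZ=[\overline{\pi}]$, the substitution $v_1=u^\alpha(v_2\cdots v_a)^{-1}$, and the regularity of $t$ to pass from the $+$-rings to the localized rings) are exactly the points the paper leaves implicit in the phrase ``formal consequence of \ref{cor:BdRstr}.''
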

\begin{proof}
By definition the connections are integrable and satisfy
Griffiths' transversality. To prove that they commute with Galois
it suffices to prove that the induced derivation $N_i$ equal to
$\widetilde{X}_i \frac{\partial}{\partial \widetilde{X}_i}$ for
$i=1,\ldots,a$ and $N_i$ equal to $\widetilde{Y}_{i-a}
\frac{\partial}{\partial \widetilde{Y}_{i-a}}$ for
$i=a+1,\ldots,a+b$ commute with $\cG_R$. Let $X_i=v_i$ if $1\leq
i\leq a$ and $w_j$ if $i=j+a$ for some $1\leq j\leq b$. Since
$N_i$ acts trivially on $v_j-1$ for $j\neq i$ and on $w_j-1$ for
$j+a\neq i$ it suffices to prove that for every $g\in \cG_R$ we
have $g\bigl(N_i (X_i-1)^n\bigr)=N_i(g(X_i)-1)^n$. Since $N_i$
satisfies Leibniz' rule it suffices to consider the case $n=1$.
Then $g(X_i)=[\varepsilon]^{c_i(\gamma)} X_i$ for suitable
$c_i(\gamma)\in \Z_p^\ast$ and, as $N(X_i)=-X_i$, the formula is
readily verified.

(2) and (3) are a formal consequence of \ref{cor:BdRstr}.

\end{proof}

\subsubsection{Flatness and Galois invariants.}

Let  $\widehat{\widetilde{R}[p^{-1}]}$ be the $P_\pi(Z)$-adic
completion of $\widetilde{R}\bigl[p^{-1}\bigr]$.

\begin{lemma} \label{lemma:RtildeR} We have isomorphisms $\widehat{\cO[p^{-1}]}\cong K[\![Z-\pi]\!]$
and $\widehat{\widetilde{R}[p^{-1}]}\cong
\widehat{R}[p^{-1}]\left[\!\left[\frac{Z}{\pi}-1\right]\!\right]$
as $\widehat{\cO[p^{-1}]}$-algebras.
\end{lemma}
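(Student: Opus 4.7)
For part~(1), the plan is to exhibit $\widehat{\cO[p^{-1}]}$ and $K[\![Z-\pi]\!]$ as two complete DVRs with matching residue field and uniformizer via an explicit map. First I would construct a ring homomorphism $\cO = \WW(k)[\![Z]\!] \to \cO_K[\![Z-\pi]\!]$ by Taylor expansion, namely $f(Z) \mapsto f(\pi + T)$ with $T := Z-\pi$; concretely a series $\sum_{m\ge 0} a_m Z^m$ with $a_m \in \WW(k)$ goes to $\sum_{n\ge 0}\bigl(\sum_{m\ge n} a_m \binom{m}{n}\pi^{m-n}\bigr)T^n$, and the inner sums converge in $\cO_K$ because $v_\pi(\pi^{m-n}) \to \infty$. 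Inverting $p$ gives a map $\cO[p^{-1}] \to K[\![Z-\pi]\!]$ under which $P_\pi(Z)$ becomes $(Z-\pi)\cdot Q(Z-\pi)$ with $Q(0)=P_\pi'(\pi) \in K^{\times}$ by separability of $P_\pi$, hence the image of $P_\pi$ is a uniformizer. Since $\cO[p^{-1}]$ is a PID (by Weierstrass preparation) in which $(P_\pi(Z))$ is a maximal ideal with residue field $\cO[p^{-1}]/(P_\pi) = K$, the completion $\widehat{\cO[p^{-1}]}$ is a complete DVR with residue field $K$ and uniformizer (the image of) $P_\pi(Z)$; our map then extends to a local homomorphism of complete DVRs with the same residue field $K$ that sends uniformizer to uniformizer, hence is an isomorphism.

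For part~(2), I would apply Lemma~\ref{lemma:RtildeandR} to obtain
$$\widetilde{R}\widehat{\otimes}_{\cO}\cO_K[\![Z/\pi-1]\!] \;\cong\; R\widehat{\otimes}_{\cO_K}\cO_K[\![Z/\pi-1]\!] \;=\; R[\![Z/\pi-1]\!]$$
as $\cO_K[\![Z/\pi-1]\!]$-algebras, where the second identification follows because the $(Z/\pi-1)$-adic completion of the tensor product $R\otimes_{\cO_K}\cO_K[\![Z/\pi-1]\!]$ is precisely the formal power series ring $R[\![Z/\pi-1]\!]$ (each quotient by $(Z/\pi-1)^n$ is the free $R$-module $\bigoplus_{i<n}R\cdot(Z/\pi-1)^i$). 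Next I would invert $p$ and complete $(Z/\pi-1)$-adically on both sides; by part~(1) and the relation $P_\pi(Z) = \pi(Z/\pi-1)\cdot\bigl(P_\pi'(\pi)+O(Z-\pi)\bigr)$, with the second factor a unit in $K[\![Z-\pi]\!]$, the $(Z/\pi-1)$-adic and $P_\pi(Z)$-adic topologies agree on every $K[\![Z-\pi]\!]$-algebra, so the LHS becomes $\widehat{\widetilde{R}[p^{-1}]}$. On the RHS one computes
$$\lim_{\infty\leftarrow n} R[\![Z/\pi-1]\!][p^{-1}]/(Z/\pi-1)^n \;\cong\; \widehat{R}[p^{-1}][\![Z/\pi-1]\!],$$
the point being that each quotient is a finite free $R$-module inverted at $p$, and passing to the limit over $n$ forces the coefficient-wise $p$-adic completion of $R$. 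That the resulting isomorphism is one of $\widehat{\cO[p^{-1}]}$-algebras is immediate from part~(1).

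The main technical obstacle is the last identification in part~(2), i.e.\ verifying that the $(Z/\pi-1)$-adic recompletion of $R[\![Z/\pi-1]\!][p^{-1}]$ is canonically $\widehat{R}[p^{-1}][\![Z/\pi-1]\!]$ rather than merely $R[p^{-1}][\![Z/\pi-1]\!]$; this is transparent in the formal case where $R=\widehat{R}$, but in the algebraic case one must argue that the interplay between inverting~$p$ in each finite free $R$-module $\bigoplus_{i<n}R\cdot(Z/\pi-1)^i$ and taking the inverse limit over~$n$ forces the $p$-adic completion of the coefficient ring to appear. The remainder of the proof is routine bookkeeping of completions and flat base change through the chain of identifications.
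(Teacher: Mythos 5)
Your part~(1) is correct and is essentially the paper's argument made explicit: the paper obtains the $K$-algebra structure on $\widehat{\cO[p^{-1}]}$ from Hensel's lemma applied to the complete dvr and then identifies it with $K[\![Z-\pi]\!]$, whereas you construct the map directly by Taylor expansion $f(Z)\mapsto f(\pi+T)$; both routes work and the verification that a local map of complete dvrs matching residue fields and uniformizers is an isomorphism is standard.

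The genuine gap is in the last step of part~(2), and it is precisely the step you flagged as ``the main technical obstacle'': the claim that the $(Z/\pi-1)$-adic completion of $R[\![Z/\pi-1]\!][p^{-1}]$ is $\widehat{R}[p^{-1}][\![Z/\pi-1]\!]$. This cannot be argued the way you propose. Writing $T=Z/\pi-1$, each quotient $R[\![T]\!][p^{-1}]/(T^n)$ is the finite free module $\bigoplus_{i<n}R[p^{-1}]\,T^i$ with the obvious truncation maps as transition maps, so the inverse limit is $\prod_{i\ge 0}R[p^{-1}]\,T^i=R[p^{-1}][\![T]\!]$; no $p$-adic completion of the coefficient ring can be produced by a $T$-adic inverse limit whose terms involve only $R[p^{-1}]$, so there is no ``interplay'' to exploit. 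The identification you want holds because, in the situation where the lemma is meant to apply, the coefficient ring is already $p$-adically complete: equivalently, $\widetilde{R}$ is $(p,P_\pi(Z))$-adically complete, so each $\widetilde{R}[p^{-1}]/(P_\pi(Z))^n=(\widetilde{R}/(P_\pi(Z))^n)[p^{-1}]$ is an iterated extension of $\widehat{R}[p^{-1}]$ rather than of $R[p^{-1}]$. This is the route the paper's (sketched) proof takes: it notes that $\widehat{\widetilde{R}[p^{-1}]}$ is a $(Z-\pi)$-adically complete and separated $K[\![Z-\pi]\!]$-algebra whose reduction modulo $(Z-\pi)$ is $\widehat{R}[p^{-1}]$, and then reruns the lifting induction of \ref{lemma:RtildeandR} through the chain $R^{(0)}\subset\cdots\subset R^{(n)}$ over the base $K[\![Z-\pi]\!]$ to build the isomorphism with $\widehat{R}[p^{-1}][\![Z-\pi]\!]$. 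To repair your argument, either restrict to the complete case (where your final identification becomes the tautology $R=\widehat R$), or replace the base change of \ref{lemma:RtildeandR} by the analogous induction carried out directly over $K[\![Z-\pi]\!]$ with reduction $\widehat{R}[p^{-1}]$; as written, the assertion that the limit ``forces the coefficient-wise $p$-adic completion'' is false and the proof does not close.
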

\begin{proof} Note that the
$P_\pi(Z)$-adic completion $\widehat{\cO[p^{-1}]} $ of
$\cO[p^{-1}]$ is a complete dvr with residue field $K$. In
particular, it is a $K$-algebra by Hensel's lemma and, hence,
$\widehat{\cO[p^{-1}]}$ is isomorphic to $K[\![Z-\pi]\!]$. Thus,
$\widehat{\widetilde{R}[p^{-1}]}$ is a $K[\![Z-\pi]\!]$-algebra.
Since it is $Z-\pi$-adically complete and separated and
$\widetilde{R}\bigl[p^{-1}\bigr]/ (Z-\pi)\cong
\widehat{R}[p^{-1}]$, the proof of the second isomorphism is a
variant of the proof of \ref{lemma:RtildeandR} and is left to the
reader.
\end{proof}

Recall that $\cG_R$ is the Galois group of
$\overline{R}\bigl[p^{-1}\bigr]$ over $R\bigl[p^{-1}\bigr]$. Then,

\begin{proposition}\label{prop:BdRff} The extensions
$\widehat{R}\bigl[p^{-1}\bigr]\subset {\rm B}_{\rm dR}(R)$ and
$\widehat{\widetilde{R}\bigl[p^{-1}\bigr]}\subset {\rm B}_{\rm
dR}(\widetilde{R})$ are faithfully flat. Moreover,
$\widehat{R}\bigl[p^{-1}\bigr]= {\rm B}_{\rm dR}(R)^{\cG_R}$ and $
\widehat{\widetilde{R}\bigl[p^{-1}\bigr]}= {\rm B}_{\rm
dR}(\widetilde{R})^{\cG_R}$.
\end{proposition}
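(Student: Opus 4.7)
The plan is to deduce both flatness and the Galois invariance statements from the explicit description of these rings given in \ref{cor:BdRstr}, reducing in three successive stages: first to properties of $\mathrm{B}_{\rm dR}^{\nabla,+}(R)$, then via its $t$--adic filtration to properties of $\widehat{\overline R}[p^{-1}]$, where we can appeal to the faithful flatness result \ref{prop:Rbarff} and to almost purity in the guise of \ref{prop:AE}.

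For flatness I would first treat $\mathrm{B}_{\rm dR}^{\nabla,+}(R)$. By \ref{cor:BdRstr}(5) its filtration is $t$--adic and separated, and the associated graded is $\widehat{\overline R}[p^{-1}][t]$, which is faithfully flat over $\widehat R[p^{-1}]$ by \ref{prop:Rbarff}. A standard topological flatness criterion (using that each $\mathrm{B}_{\rm dR}^{\nabla,+}(R)/\Fil^n$ is an iterated extension of copies of $\widehat{\overline R}[p^{-1}]$, all flat over $\widehat R[p^{-1}]$, and that $\mathrm{B}_{\rm dR}^{\nabla,+}(R)$ is complete and separated) then gives flatness of $\mathrm{B}_{\rm dR}^{\nabla,+}(R)$ over $\widehat R[p^{-1}]$; inverting $t$ keeps flatness. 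Passing from $\mathrm{B}_{\rm dR}^{\nabla,+}(R)$ to $\mathrm{B}_{\rm dR}^+(R)$ (resp.~to $\mathrm{B}_{\rm dR}^+(\widetilde R)$) is now formal because by \ref{cor:BdRstr}(3)(4) these are formal power series rings over $\mathrm{B}_{\rm dR}^{\nabla,+}(R)$, hence flat over it. For faithful flatness it suffices to show that every maximal ideal $\mathfrak{m}$ of $\widehat R[p^{-1}]$ (resp.~of $\widehat{\widetilde R[p^{-1}]}$) extends to a proper ideal of the target, which one deduces from the corresponding statement for $\widehat{\overline R}[p^{-1}]$ in \ref{prop:Rbarff} combined with the fact that the target is complete with respect to a filtration whose first graded piece contains $\widehat{\overline R}[p^{-1}]$.

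For the invariants, the main point is to compute $\mathrm{B}_{\rm dR}^{\nabla,+}(R)^{\cG_R}=\widehat R[p^{-1}]$; from this one obtains the statements for $\mathrm{B}_{\rm dR}^+(R)$, $\mathrm{B}_{\rm dR}^+(\widetilde R)$, and the versions with $t$ inverted by taking Galois invariants of the graded pieces in \ref{cor:BdRstr}(6) and using completeness. Using the $t$--adic filtration on $\mathrm{B}_{\rm dR}^{\nabla,+}(R)$ and the fact that $\mathrm{Gr}^n\cong \widehat{\overline R}[p^{-1}](n)$ as Galois module, one reduces to the two assertions: \textbf{(a)} $\widehat{\overline R}[p^{-1}]^{\cG_R}=\widehat R[p^{-1}]$, and \textbf{(b)} $\widehat{\overline R}[p^{-1}](n)^{\cG_R}=0$ for $n\ne 0$. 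Assertion (a) is the known ``almost'' Galois descent for $\widehat{\overline R}$, which in our setting follows from the almost purity result \ref{prop:AE} and the faithful flatness in \ref{prop:Rbarff} exactly as in \cite[Prop.~2.0.3]{brinon} (one verifies that the only $\cG_R$--invariants are in $\widehat R[p^{-1}]$ by Tate's computation of continuous cohomology along the tower $R_\infty/R$, the higher Kummer part being killed by the trace). Assertion (b) is the analogous Tate twist vanishing, which follows from the fact that the cyclotomic character has no fixed vectors on $\widehat{\overline R}[p^{-1}](n)$ for $n\ne 0$ by the same computation. For the extra variables $v_i-1$, $w_j-1$ (and $u-1$ in the $\widetilde R$--case) appearing in \ref{cor:BdRstr}(2)(3), the Galois action is multiplicative by Teichm\"uller lifts of cocycles valued in $\mu_{p^\infty}$, so on monomials of multidegree $\underline m\ne 0$ one gets a nontrivial twist by a cyclotomic--type character and extracts invariants degree by degree, reducing again to (a) and (b).

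The main obstacle I expect is the Galois invariance in the logarithmic variables $v_i-1$, $w_j-1$, $u-1$: one must carry out a careful ``Koszul--like'' graded analysis to peel off each layer of the power series and show that no nontrivial invariants accumulate, using the fact that the cocycles describing the action on these variables are genuinely ramified in the Kummer tower. All other ingredients (topological flatness, lifting of maximal ideals, the $t$--adic analysis) are fairly mechanical once the structural identifications in \ref{cor:BdRstr} are in place and the key input \ref{prop:Rbarff} is available.
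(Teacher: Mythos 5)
Your overall route coincides with the paper's: flatness by d\'evissage along the filtration using \ref{prop:Rbarff} on the graded pieces, faithful flatness by checking that maximal ideals survive, and the computation of invariants by reduction along the filtration to the $\cG_R$-cohomology of ${\rm Gr}^\bullet {\rm B}_{\rm dR}(R)$. Two steps, however, do not work as written. First, flatness of ${\rm B}_{\rm dR}^+(\widetilde R)$ over $\widehat{\widetilde{R}[p^{-1}]}$ does not follow from your chain ``formal power series ring over ${\rm B}_{\rm dR}^{\nabla,+}(R)$, hence flat over it, hence flat over $\widehat R[p^{-1}]$'': the base in the second assertion of the proposition is $\widehat{\widetilde{R}[p^{-1}]}\cong \widehat R[p^{-1}][\![Z-\pi]\!]$, and flatness over the subring $\widehat R[p^{-1}]$ says nothing about flatness over this larger ring. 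The paper avoids this by working directly with ${\rm B}_{\rm dR}^+(\widetilde R)/(t)\cong \widehat{\overline R}[p^{-1}][\![v_1-1,\ldots,w_b-1]\!]$ and exhibiting inside it the faithfully flat $\widehat{\widetilde{R}[p^{-1}]}$-algebra $\widehat{\widetilde{R}[p^{-1}]}[\![v_2-1,\ldots,w_b-1]\!]$ --- this is exactly where the extra variable $Z-\pi$ of the base gets absorbed --- and only then running the d\'evissage in powers of $t$ over the noetherian base, as in \cite[Thm.~5.4.1]{brinon}. (The same device lets the paper deduce the $R$-case from the $\widetilde R$-case by reducing modulo $P_\pi(Z)$, rather than the other way around.)

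Second, in the invariants computation the action of $\cG_R$ on the monomials in $\frac{v_i-1}{t},\frac{w_j-1}{t}$ occurring in ${\rm Gr}^\bullet{\rm B}_{\rm dR}(R)$ (see \ref{cor:BdRstr}(6)) is not a twist by a ``cyclotomic-type character'': since $g(v_i)=[\varepsilon]^{c_i(g)}v_i$ and $[\varepsilon]^{c}-1\equiv ct \pmod{t^2}$, the induced action on ${\rm Gr}^0$ sends $\frac{v_i-1}{t}$ to $\chi(g)^{-1}\frac{v_i-1}{t}+\chi(g)^{-1}c_i(g)$, i.e.\ it is an affine, unipotent-modulo-twist substitution. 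Consequently one cannot extract invariants ``degree by degree'' from your assertions (a) and (b) alone; one needs the full computation of ${\rm H}^i\bigl(\cG_R,{\rm Gr}^r{\rm B}_{\rm dR}(R)\bigr)$ as in \cite[Prop.~4.1.4 \& Cor.~4.1.5]{brinon}, which is precisely what the paper invokes (with \ref{cor:cohoRbarmodp}(ii) replacing the absolute input of loc.~cit.). You correctly identify this as the main obstacle and your inputs (a), (b) are the right ones for the $t$-direction, but the ``Koszul-like'' analysis in the logarithmic variables is the actual content of that reference, not a formal consequence of (a) and (b).
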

\begin{proof} We prove the first assertion for $\widetilde{R}$.
The assertion concerning $R$ follows remarking that ${\rm B}_{\rm
dR}(R)\cong {\rm B}_{\rm dR}(\widetilde{R})/\bigl( P_\pi(Z)\bigr)$
so that $\widehat{R}\bigl[p^{-1}\bigr]\subset {\rm B}_{\rm dR}(R)$
is obtained from the extension
$\widehat{\widetilde{R}\bigl[p^{-1}\bigr]}\subset {\rm B}_{\rm
dR}(\widetilde{R})$  by tensoring with
$\widehat{\widetilde{R}\bigl[p^{-1}\bigr]}\to
\widehat{\widetilde{R}\bigl[p^{-1}\bigr]}/(P_\pi(Z))=\widehat{R}\bigl[p^{-1}\bigr]
$.

We first prove that ${\rm B}_{\rm dR}^+(\widetilde{R})/(t)$ is a
faithfully flat
$\widehat{\widetilde{R}\bigl[p^{-1}\bigr]}$-algebra.  It follows
from \ref{cor:BdRstr} that ${\rm B}_{\rm dR}^+(\widetilde{R})/t
{\rm B}_{\rm dR}^+(\widetilde{R})$ is isomorphic to
$\widehat{\overline{R}}[p^{-1}] \bigl[\!\bigl[
v_1-1,\ldots,v_a-1,w_1-1,\ldots,w_b-1\bigr]\!\bigr]$. This is a
faithfully flat $\widehat{R}[p^{-1}] \bigl[\!\bigl[
v_1-1,\ldots,v_a-1,w_1-1,\ldots,w_b-1\bigr]\!\bigr]$-algebra since
$\widehat{R}[p^{-1}]\subset \widehat{\overline{R}}[p^{-1}]$ is
faithfully flat by \ref{prop:Rbarff}. Furthermore,
$\widehat{R}[p^{-1}] \bigl[\!\bigl[
v_1-1,\ldots,v_a-1,w_1-1,\ldots,w_b-1\bigr]\!\bigr]$ is the
completion of $R\otimes_{\WW(k)}
\widetilde{R}\bigl[p^{-1}\bigr]\bigl[ v_1^{\pm 1},\ldots,v_a^{\pm
1},w_1^{\pm 1},\ldots,w_b^{\pm 1}\bigr]$ with respect to the
kernel of the map $R\otimes_{\WW(k)}
\widetilde{R}\bigl[p^{-1}\bigr]\bigl[ v_1^{\pm 1},\ldots,v_a^{\pm
1},w_1^{\pm 1},\ldots,w_b^{\pm 1}\bigr]\lra
\widehat{R}\bigl[p^{-1}\bigr]$. Such kernel is given by
$\bigl(P_\pi(Z), v_2-1,\ldots,v_a-1,w_1-1,\ldots,w_b-1\bigr]$.
Thus, such completion coincides with
$\widehat{\widetilde{R}\bigl[p^{-1}\bigr]}\bigl[\!\bigl[
v_2-1,\ldots,v_a-1,w_1-1,\ldots,w_b-1\bigr]\!\bigr]$ which is a
faithfully flat
$\widehat{\widetilde{R}\bigl[p^{-1}\bigr]}$-algebra.

Since $t$ is a regular element of ${\rm B}_{\rm
dR}^+(\widetilde{R})$, it follows by induction on $i$ that ${\rm
B}_{\rm dR}^+(\widetilde{R})/(t^i)$ is the successive extension of
flat $\widehat{\widetilde{R}\bigl[p^{-1}\bigr]}$-modules and,
hence, it is flat itself. Since
$\widehat{\widetilde{R}\bigl[p^{-1}\bigr]}$ is noetherian, one
concludes as in \cite[Thm. 5.4.1]{brinon} that ${\rm B}_{\rm
dR}^+(\widetilde{R})$ is a flat
$\widehat{\widetilde{R}\bigl[p^{-1}\bigr]}$-module. Since
$\Spec\bigl({\rm B}_{\rm dR}^+(\widetilde{R})/(t)\bigr)\lra
\Spec\bigl(\widehat{\widetilde{R}\bigl[p^{-1}\bigr]} \bigr)$ is
surjective then  $\Spec\bigl({\rm B}_{\rm
dR}^+(\widetilde{R})\bigr)\lra
\Spec\bigl(\widehat{\widetilde{R}\bigl[p^{-1}\bigr]} \bigr)$ is
surjective as well and
$\widehat{\widetilde{R}\bigl[p^{-1}\bigr]}\subset {\rm B}_{\rm
dR}^+(\widetilde{R})$  is faithfully flat. Arguing as in
\cite[Thm. 5.4.1]{brinon}, the faithful flatness of ${\rm B}_{\rm
dR}^+(\widetilde{R})/(t)$ as
$\widehat{\widetilde{R}\bigl[p^{-1}\bigr]}$-algebra implies that
the assertion of the Proposition regarding $ {\rm B}_{\rm
dR}^+\bigl(\widetilde{R}\bigr)$ implies the assertion regarding $
{\rm B}_{\rm dR}\bigl(\widetilde{R}\bigr)$.

\smallskip

We are left to compute the invariants. Recall from \ref{cor:BdRstr} that ${\rm B}_{\rm dR}(\widetilde{R}) \cong {\rm B}_{\rm dR}^+(R)[\![ u-1]\!] [t^{-1}]$. Since
$\bigl[\overline{\pi}\bigr]$ is invertible in ${\rm B}_{\rm dR}^+(R)$, then ${\rm B}_{\rm dR}^+(R)[\![ u-1]\!]\cong {\rm B}_{\rm dR}^+(R)\bigl[\!\bigl[
Z-\bigl[\overline{\pi}\bigr]\bigr]\!\bigr]$. Note that $\bigl[\overline{\pi}\bigr]-\pi \in {\rm Fil}^1  {\rm B}_{\rm dR}^+(R)$ so that  ${\rm B}_{\rm
dR}^+(R)\bigl[\!\bigl[ Z-\bigl[\overline{\pi}\bigr]\bigr]\!\bigr]\cong {\rm B}_{\rm dR}^+(R)\bigl[\!\bigl[ Z-\pi\bigr]\!\bigr]$. We conclude that ${\rm B}_{\rm
dR}(\widetilde{R})\subset  {\rm B}_{\rm dR}(R)\bigl[\!\bigl[ Z-\pi\bigr]\!\bigr]$. Since $Z-\pi$ is fixed by $\cG_R$, if we prove that ${\rm B}_{\rm
dR}(R)^{\cG_R}=\widehat{R}[p^{-1}]$, we conclude that ${\rm B}_{\rm dR}(\widetilde{R})^{\cG_R}$ is contained in $\widehat{R}[p^{-1}]\bigl[\!\bigl[ Z-\pi
\bigr]\!\bigr] \cong \widehat{\widetilde{R}[p^{-1}]}$. Since it also contains $\widehat{\widetilde{R}[p^{-1}]}$, it  coincides with
$\widehat{\widetilde{R}[p^{-1}]}$.

We are then left to show that ${\rm B}_{\rm
dR}(R)^{\cG_R}=\widehat{R}[p^{-1}]$. The proof proceeds as in
\cite[Prop.~5.2.12]{brinon}. Consider the exact sequence $$0\lra
{\rm Fil}^{r+1} {\rm B}_{\rm dR}(R) \lra {\rm Fil}^r {\rm B}_{\rm
dR}(R) \lra {\rm Gr}^r {\rm B}_{\rm dR}(R) \lra 0.$$As ${\rm
Gr}^r {\rm B}_{\rm
dR}\bigl(R\bigr)=t^r \widehat{\overline{R}}[p^{-1}]\left[
\frac{v_2-1}{t},\ldots,\frac{v_a-1}{t},\frac{w_1-1}{t},\ldots,\frac{w_b-1}{t}\right]$
by \ref{cor:BdRstr} one shows as
in \cite[Prop.~4.1.4\& Cor.~4.1.5]{brinon} that ${\rm H}^i\bigl(\cG_R,{\rm
Gr}^r {\rm B}_{\rm dR}(R)\bigr)$ is $\widehat{R[p^{-1}]}$ if $i=0$
and $1$ and $r=0$ and it is $0$ otherwise. We refer to loc.~cit.~for the details using
\ref{cor:cohoRbarmodp}(ii) in place of  \cite[Prop.~3.1.3]{brinon}.

In particular, ${\rm
H}^0\bigl(\cG_R,{\rm Fil}^r {\rm B}_{\rm dR}(R)\bigr)\cong {\rm
H}^0\bigl(\cG_R,{\rm Fil}^{r+1} {\rm B}_{\rm dR}(R)\bigr)$ for $r\neq 0$ which implies that
${\rm H}^0\bigl(\cG_R,{\rm Fil}^1 {\rm B}_{\rm dR}(R)\bigr)=0$ and ${\rm
H}^0\bigl(\cG_R,{\rm B}_{\rm dR}(R)\bigr)={\rm
H}^0\bigl(\cG_R,{\rm Fil}^0 {\rm B}_{\rm dR}(R)\bigr)$
since the filtration  on ${\rm B}_{\rm dR}(R)$ is separated and exhaustive by
\ref{cor:BdRstr}. Thus, ${\rm H}^0\bigl(\cG_R,{\rm Fil}^0 {\rm
B}_{\rm dR}(R)\bigr)\subset {\rm H}^0\bigl(\cG_R,{\rm Gr}^0 {\rm
B}_{\rm dR}(R)\bigr)$ which is $\widehat{R[p^{-1}]}$. Since
$\widehat{R[p^{-1}]}\subset {\rm H}^0\bigl(\cG_R,{\rm Fil}^0 {\rm
B}_{\rm dR}(R)\bigr)$, the claim follows.

Alternatively, one can argue in the same way using ${\rm B}_{\rm dR}(\widetilde{R})$ instead of
${\rm B}_{\rm dR}(R)$. Thanks to \ref{prop:BcrissubsetBdR}(4) and
\ref{thm:geometricacyclicity}(iii)  one deduces  that ${\rm H}^i\bigl(G_R,{\rm Gr}^r {\rm B}_{\rm dR}(\widetilde{R})\bigr)=0$
for $i\geq 1$ and every $r\in \N$ and
is a direct summand in $R\otimes_{\cO_K} {\rm Gr}^r B_{\rm log}$ for $i=0$. Here $G_R\subset \cG_R$ is the geometric
Galois group and $B_{\rm log}$ is the classical
ring of periods introduced in \S\ref{sec:notation}. As ${\rm Gr}^r B_{\rm log}=\sum_{a+b=r}{\rm Gr}^a
B_{\rm dR}\cdot (Z-\pi)^b$, see \S\ref{sec:classical}, one
deduces that ${\rm H}^i\bigl(G_R,{\rm Gr}^r {\rm B}_{\rm dR}(R)\bigr)$ is $0$ for $i\geq 1$ and every
$r\in \N$ and is a direct summand in $R\otimes_{\cO_K} {\rm
Gr}^r B_{\rm dR}$ for $i=0$. One deduces that ${\rm H}^i\bigl(\cG_R,{\rm Gr}^r {\rm B}_{\rm dR}(R)\bigr)$ is
$\widehat{R[p^{-1}]}$ if $i=0$ and $1$ and $r=0$ and it
is $0$ otherwise from the analogous result for  the cohomology of ${\rm Gr}^r B_{\rm dR}$.

\end{proof}

\begin{corollary}\label{cor:conenctioninducestandardconnection}
The connection $\nabla_R$ (resp.~$\nabla_{\widetilde{R}}$,
resp.~$\nabla_{\widetilde{R}/\cO}$) induces the standard
derivation $d\colon R\bigl[p^{-1}\bigr] \lra
\widehat{\omega}^1_{R/\cO_K}\bigl[p^{-1}\bigr]$ (resp.~$d\colon
\widetilde{R}\bigl[p^{-1}\bigr] \lra
\widehat{\omega}^1_{\widetilde{R}/\WW(k)}\bigl[p^{-1}\bigr]$,
resp.~$d\colon \widetilde{R}\bigl[p^{-1}\bigr] \lra
\widehat{\omega}^1_{\widetilde{R}/\cO}\bigl[p^{-1}\bigr]$);
\end{corollary}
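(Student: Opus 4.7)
The plan is to verify all three claims directly from the explicit formulas for the connections given in the paragraph preceding Lemma \ref{lemma:nabla=0}, using the structure of $B^+_{dR}(R)$ and $B^+_{dR}(\widetilde{R})$ from Corollary \ref{cor:BdRstr}. The key observation is that the Teichmüller lifts $[\overline{X}_i]$, $[\overline{Y}_j]$ and $[\overline{\pi}]$ come from $\WW(\widetilde{\bf E}^+)$ and hence lie inside $B^{\nabla,+}_{dR}(R) \subset B^{\nabla,+}_{dR}(\widetilde{R})$; since each of $\nabla_R$, $\nabla_{\widetilde{R}}$ and $\nabla_{\widetilde{R}/\cO}$ is linear over the appropriate $B^{\nabla,+}_{dR}$, these Teichmüller elements are annihilated by all three connections.

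For $\nabla_{\widetilde{R}}$, using $\widetilde{X}_i = [\overline{X}_i]\,v_i^{-1}$, $\widetilde{Y}_j = [\overline{Y}_j]\,w_j^{-1}$ and Leibniz, I would compute
$$\nabla_{\widetilde{R}}(\widetilde{X}_i) = -[\overline{X}_i]v_i^{-2}\nabla_{\widetilde{R}}(v_i) = -[\overline{X}_i]v_i^{-2}(-v_i\,\dlog \widetilde{X}_i) = \widetilde{X}_i\otimes \dlog \widetilde{X}_i = 1 \otimes d\widetilde{X}_i,$$
using that $\widetilde{X}_i \cdot \dlog \widetilde{X}_i = d\widetilde{X}_i$ in the log de Rham complex, and similarly $\nabla_{\widetilde{R}}(\widetilde{Y}_j) = 1 \otimes d\widetilde{Y}_j$. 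For $Z$, I would exploit the relations $u^\alpha = v_1 \cdots v_a$ (from $[\overline{\pi}]^\alpha = \prod [\overline{X}_i]$ and $\widetilde{X}_1\cdots \widetilde{X}_a = Z^\alpha$) and $\sum_i \dlog \widetilde{X}_i = \alpha\,\dlog Z$. Applying Leibniz to $u^\alpha = \prod v_i$ gives $\alpha u^{\alpha-1}\nabla_{\widetilde{R}}(u) = -u^\alpha \sum_i \dlog \widetilde{X}_i$, hence $\nabla_{\widetilde{R}}(u) = -u\,\dlog Z$; then $Z = [\overline{\pi}]u^{-1}$ and Leibniz yield $\nabla_{\widetilde{R}}(Z) = 1\otimes dZ$.

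For $\nabla_{\widetilde{R}/\cO}$, the computation is identical after composing with the surjection $\widehat{\omega}^1_{\widetilde{R}/\WW(k)} \twoheadrightarrow \widehat{\omega}^1_{\widetilde{R}/\cO}$, which kills $dZ$; the image of $\widetilde{X}_i$, $\widetilde{Y}_j$ is then the standard $d$ with values in $\widehat{\omega}^1_{\widetilde{R}/\cO}$. For $\nabla_R$ the argument is entirely parallel: $X_i = [\overline{X}_i]v_i^{-1}$ in $B^+_{dR}(R)$ gives $\nabla_R(X_i) = 1 \otimes dX_i$ for $i \geq 2$ and $\nabla_R(Y_j) = 1 \otimes dY_j$; for $i=1$ one uses that $\prod v_i = [\overline{\pi}]^\alpha \pi^{-\alpha}$ lies in $B^{\nabla,+}_{dR}(R)$ (with $\pi$ invertible in $B^+_{dR}(R)$, since $p = u\pi^e$ with $u$ a unit) and is therefore killed by $\nabla_R$, so Leibniz plus $\sum_i \dlog X_i = 0$ force $\nabla_R(v_1) = -v_1\,\dlog X_1$, whence $\nabla_R(X_1) = 1\otimes dX_1$ as before.

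The proof involves no real obstacle, only bookkeeping: the essential verification is mechanical, and the only point deserving attention is to check the compatibility of the prescription $\nabla(v_i-1) = -v_i\,\dlog \widetilde{X}_i$ for \emph{all} $i = 1,\ldots,a$ with the presentation of $B^+_{dR}(\widetilde{R})$ (resp.~$B^+_{dR}(R)$) from Corollary \ref{cor:BdRstr}, where only $v_1,\ldots,v_a$ (resp.~$v_2,\ldots,v_a$) appear as formal parameters; this consistency follows automatically from the relations $\widetilde{X}_1\cdots \widetilde{X}_a = Z^\alpha$ and $X_1 \cdots X_a = \pi^\alpha$ between log differentials in the respective de Rham complexes.
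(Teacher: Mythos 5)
Your computations are correct and the statement does follow from them, but your route differs from the paper's in one structural respect. The paper first invokes the $\cG_R$-equivariance of the connections (Lemma \ref{lemma:nabla=0}) together with the computation of invariants ${\rm B}_{\rm dR}(R)^{\cG_R}=\widehat{R}[p^{-1}]$ and ${\rm B}_{\rm dR}(\widetilde{R})^{\cG_R}=\widehat{\widetilde{R}[p^{-1}]}$ (Proposition \ref{prop:BdRff}) to conclude abstractly that $\nabla_R$ restricts to a derivation of $\widehat{R}[p^{-1}]$ with values in $\widehat{\omega}^1_{R/\cO_K}[p^{-1}]$, and only then identifies it with $d$ by checking on the chart coordinates, a check it declares ``clear.'' You invert the emphasis: you carry out that check in full (and your identities $\nabla_{\widetilde{R}}(\widetilde{X}_i)=d\widetilde{X}_i$, $\nabla_{\widetilde{R}}(Z)=dZ$ via $u^\alpha=v_1\cdots v_a$, and the treatment of $X_1$ using $\prod_i v_i=[\overline{\pi}]^\alpha\pi^{-\alpha}\in {\rm B}^{\nabla,+}_{\rm dR}(R)$ are all correct), and you dispense with the Galois-descent step entirely, since the explicit values visibly land in $1\otimes\widehat{\omega}^1[p^{-1}]$. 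What your version buys is independence from the faithful-flatness/invariants result \ref{prop:BdRff}; what it costs is that the passage from the chart coordinates to all of $R[p^{-1}]$ (resp.\ $\widetilde{R}[p^{-1}]$) now carries the whole weight: one must note that $R$ is built from $\cO_K[P]\otimes_{\cO_K[\N]}\cO_K$ by localizations, \'etale extensions and completions, along which a continuous derivation is uniquely determined by its values on $X_i,Y_j$ (resp.\ $Z,\widetilde{X}_i,\widetilde{Y}_j$). The paper's ``it suffices to check on generators'' rests on the same principle, so this is not a gap, but in your write-up it deserves to be stated rather than folded into ``bookkeeping,'' especially for the completion steps in the formal case where the continuity being used should be made explicit.
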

\begin{proof} It follows from \ref{lemma:nabla=0} that the
connections are $\cG_R$-equivariant. Due to \ref{prop:BdRff}, upon taking invariants, we get maps as claimed. We only need to verify that they coincide with the
standard derivations. It suffices to prove that they send $X_i$ to $d X_i$  and $Y_j$ to $d Y_j$ (resp.~$\widetilde{X}_i$ to $ d\widetilde{X}_i$ and
$\widetilde{Y}_j$ to $ \widetilde{Y}_j$). This is clear.

\end{proof}

\subsection{The functors ${\rm D}_{\rm dR}$ and $\widetilde{{\rm D}}_{\rm
dR}$. De Rham representations.}
\label{sec:DdeRam}

Let $V$ be a finite dimensional $\Q_p$--vector space endowed with
a continuous action of $\cG_R$. We write $${\rm D}_{\rm
dR}(V):=\left(V\otimes_{\Q_p} {\rm B}_{\rm
dR}\bigl(R\bigr)\right)^{\cG_R}, \qquad \widetilde{{\rm D}}_{\rm
dR}(V):=\left(V\otimes_{\Q_p} {\rm B}_{\rm
dR}\bigl(\widetilde{R}\bigr)\right)^{\cG_R}.$$Then ${\rm D}_{\rm
dR}(V)$ is a $\widehat{R}[p^{-1}]$-module and $\widetilde{{\rm
D}}_{\rm dR}(V)$ is a $\widehat{\widetilde{R}[p^{-1}]}$--module.
The filtrations and the connections on ${\rm B}_{\rm
dR}\bigl(R\bigr)$ and on ${\rm B}_{\rm
dR}\bigl(\widetilde{R}\bigr)$ induce  exhaustive decreasing
filtrations ${\rm Fil}^n{\rm D}_{\rm dR}(V)$ and resp.~${\rm
Fil}^n\widetilde{{\rm D}}_{\rm dR}(V)$ for $n\in\Z$ and integrable
connections
$$\nabla\colon {\rm D}_{\rm dR}(V) \lra {\rm D}_{\rm dR}(V)\otimes_{\widehat{R}}
\widehat{\omega}^1_{\widehat{R}/\cO_K},\qquad \widetilde{\nabla}\colon
\widetilde{{\rm D}}_{\rm dR}(V) \lra \widetilde{{\rm D}}_{\rm
dR}(V)\otimes_{\widehat{\widetilde{R}}[p^{-1}]}
\widehat{\omega}^1_{\widetilde{R}/\WW(k)}$$such that the
filtrations satisfy Griffiths' transversality.

\begin{definition}\label{def:deRhamrep} We say that a representation
$V$ of $\cG_R$ is de Rham if $${\rm D}_{\rm dR}(V)
\otimes_{\widehat{R}[p^{-1}] } {\rm B}_{\rm dR}\bigl(R\bigr)\lra
V\otimes_{\Q_p}{\rm B}_{\rm dR}\bigl(R\bigr)$$is an isomorphism of
${\rm B}_{\rm dR}\bigl(R\bigr)$-modules.
\end{definition}

Recall from \ref{lemma:RtildeR}  that $\widehat{\widetilde{R}[p^{-1}]}\cong
\widehat{R}[p^{-1}][\![Z-\pi]\!]$ as filtered rings. Then,

\begin{lemma}\label{lemma:eqdeRham} Given a representation $V$ of $\cG_R$,
we have a functorial isomorphism of filtered
$\widehat{\widetilde{R}[p^{-1}]}$-modules endowed with connection
${\rm D}_{\rm dR}(V)\otimes_{\widehat{R}[p^{-1}] }
\widehat{\widetilde{R}[p^{-1}]} \lra \widetilde{{\rm D}}_{\rm
dR}(V)$. Thus, \smallskip

(1) the filtration on $\widetilde{{\rm D}}_{\rm dR}(V)$ is the composite of
the filtration on ${\rm D}_{\rm dR}(V)$ and the $(Z-\pi)$-adic filtration. In particular,
considering the map $$\rho\colon \widetilde{{\rm D}}_{\rm
dR}(V) \longrightarrow \widetilde{{\rm D}}_{\rm
dR}(V)/(Z-\pi)\cong {\rm D}_{\rm dR}(V)$$the filtration  $\Fil^n {\rm D}_{\rm dR}(V)$ is the image of
$\Fil^n \widetilde{{\rm D}}_{\rm dR}(V)$. Viceversa $\Fil^\bullet \widetilde{{\rm D}}_{\rm dR}(V)$ is the
unique filtration such that the image via $\rho$ is $\Fil^\bullet {\rm D}_{\rm dR}(V)$ and it satisfies Griffiths'
transversality with respect to $\widetilde{\nabla}$. It  is characterized by the property that for every $n\in \N$
$$\Fil^n \widetilde{{\rm D}}_{\rm dR}(V):=\left\{ x\in  \widetilde{{\rm D}}_{\rm
dR}(V)\vert \rho(x)\in \Fil^n {\rm D}_{\rm dR}(V),\quad \frac{\partial (x)}{\partial (Z-\pi)} \in \Fil^{n-1}
\widetilde{{\rm D}}_{\rm dR}(V)\right\};$$ \smallskip

(2) $V$ is de Rham if and only if $$\widetilde{{\rm D}}_{\rm
dR}(V) \otimes_{\widehat{\widetilde{R}}[p^{-1}] } {\rm B}_{\rm
dR}\bigl(\widetilde{R}\bigr)\lra V\otimes_{\Q_p}{\rm B}_{\rm
dR}\bigl(\widetilde{R}\bigr)$$is an isomorphism of ${\rm B}_{\rm
dR}\bigl(\widetilde{R}\bigr)$-modules.
 \end{lemma}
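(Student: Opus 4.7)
The plan is to combine ${\rm B}_{\rm dR}^+(\widetilde R)\cong {\rm B}_{\rm dR}^+(R)[\![Z-\pi]\!]$ from \ref{cor:BdRstr}(4) with $\widehat{\widetilde R[p^{-1}]}\cong \widehat R[p^{-1}][\![Z-\pi]\!]$ from \ref{lemma:RtildeR}, exploiting the fact that $Z-\pi$ lies in $K\subset\widehat R[p^{-1}]$ and is thus $\cG_R$-invariant. Because $V$ is finite dimensional over $\Q_p$, the functor $V\otimes_{\Q_p}(-)$ commutes with the formation of power series in $Z-\pi$, so
\[
V\otimes_{\Q_p}{\rm B}_{\rm dR}(\widetilde R)\;\cong\;\bigl(V\otimes_{\Q_p}{\rm B}_{\rm dR}(R)\bigr)[\![Z-\pi]\!].
\]
Taking $\cG_R$-invariants coefficient by coefficient (permissible because the $(Z-\pi)$-adic filtration is $\cG_R$-stable with associated graded isomorphic to $V\otimes_{\Q_p}{\rm B}_{\rm dR}(R)$) yields $\widetilde{\rm D}_{\rm dR}(V)\cong {\rm D}_{\rm dR}(V)[\![Z-\pi]\!]$. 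Interpreting the tensor product in the statement as the $(Z-\pi)$-adic completion of the ordinary tensor product (which is forced since the target is complete and separated in this topology) identifies the left hand side with ${\rm D}_{\rm dR}(V)\otimes_{\widehat R[p^{-1}]}\widehat{\widetilde R[p^{-1}]}$; compatibility with the $\widehat{\widetilde R[p^{-1}]}$-action, filtration and connection is read off from \ref{cor:BdRstr} and \ref{lemma:nabla=0}.

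For (1), the filtration on ${\rm B}_{\rm dR}(\widetilde R)$ recorded in \ref{cor:BdRstr}(4),(6) is the composite of the filtration on ${\rm B}_{\rm dR}(R)$ and the $(Z-\pi)$-adic filtration, so $\Fil^n\widetilde{\rm D}_{\rm dR}(V)=\sum_{a+b\geq n}\Fil^a{\rm D}_{\rm dR}(V)\cdot(Z-\pi)^b$ under the main isomorphism. The map $\rho$ is evaluation at $Z=\pi$, which clearly sends $\Fil^n\widetilde{\rm D}_{\rm dR}(V)$ surjectively onto $\Fil^n{\rm D}_{\rm dR}(V)$. For the recursive characterization, write $x=\sum_k a_k(Z-\pi)^k$ with $a_k\in{\rm D}_{\rm dR}(V)$; then $x\in\Fil^n\widetilde{\rm D}_{\rm dR}(V)$ iff $a_k\in\Fil^{n-k}{\rm D}_{\rm dR}(V)$ for every $k\geq 0$. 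The condition $\rho(x)=a_0\in\Fil^n$ handles $k=0$, and since $k\in\Q_p^\ast$ for $k\geq 1$, the condition $\partial x/\partial(Z-\pi)=\sum_{k\geq 1}ka_k(Z-\pi)^{k-1}\in\Fil^{n-1}\widetilde{\rm D}_{\rm dR}(V)$ handles $k\geq 1$. Uniqueness of the filtration follows by induction on the $(Z-\pi)$-adic order from this recursion.

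For (2), via the isomorphism $\widetilde{\rm D}_{\rm dR}(V)\cong{\rm D}_{\rm dR}(V)\otimes_{\widehat R[p^{-1}]}\widehat{\widetilde R[p^{-1}]}$ and ${\rm B}_{\rm dR}(\widetilde R)={\rm B}_{\rm dR}(R)[\![Z-\pi]\!]$, the map $\widetilde{\rm D}_{\rm dR}(V)\otimes_{\widehat{\widetilde R[p^{-1}]}}{\rm B}_{\rm dR}(\widetilde R)\to V\otimes_{\Q_p}{\rm B}_{\rm dR}(\widetilde R)$ is obtained from the defining de Rham map ${\rm D}_{\rm dR}(V)\otimes_{\widehat R[p^{-1}]}{\rm B}_{\rm dR}(R)\to V\otimes_{\Q_p}{\rm B}_{\rm dR}(R)$ by base change along ${\rm B}_{\rm dR}(R)\to{\rm B}_{\rm dR}(\widetilde R)$. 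Conversely, reducing modulo $(Z-\pi)$ recovers the $R$-version (as ${\rm B}_{\rm dR}(\widetilde R)/(Z-\pi)={\rm B}_{\rm dR}(R)$), and reduction of an isomorphism remains an isomorphism, giving the equivalence. The main obstacle throughout is the tensor-product identification ${\rm D}_{\rm dR}(V)\otimes_{\widehat R[p^{-1}]}\widehat{\widetilde R[p^{-1}]}\cong {\rm D}_{\rm dR}(V)[\![Z-\pi]\!]$, which requires reading $\otimes$ as the $(Z-\pi)$-adically completed tensor product; this is the natural convention given the topologies on all objects in play and avoids having to invoke \emph{a priori} finite generation of ${\rm D}_{\rm dR}(V)$.
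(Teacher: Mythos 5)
Your proposal is correct and follows essentially the same route as the paper: both rest on the compatible identifications ${\rm B}_{\rm dR}^+(\widetilde R)\cong {\rm B}_{\rm dR}^+(R)[\![Z-\pi]\!]$ from \ref{cor:BdRstr}(4) and $\widehat{\widetilde R[p^{-1}]}\cong \widehat R[p^{-1}][\![Z-\pi]\!]$ from \ref{lemma:RtildeR}, with the Galois invariance of $Z-\pi$ giving the coefficient-wise computation of invariants (the paper is terser, deferring (1) to Breuil, and you correctly flag that the tensor product must be read as $(Z-\pi)$-adically completed). The only imprecision is that after inverting $t$ one has merely an inclusion ${\rm B}_{\rm dR}(\widetilde R)\subset{\rm B}_{\rm dR}(R)[\![Z-\pi]\!]$ rather than an equality, so the invariants should be taken along the exhaustion by $t^{-n}{\rm B}_{\rm dR}^+(\widetilde R)$ — a harmless adjustment that does not affect the argument.
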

\begin{proof}
Recall from \ref{cor:BdRstr} that  ${\rm B}_{\rm
dR}^+\bigl(\widetilde{R}\bigr)\cong {\rm B}_{\rm
dR}^+\bigl(R\bigr) [\![ Z-\pi]\!]$. This isomorphism is compatible with the isomorphism $\widehat{\widetilde{R}[p^{-1}]}\cong
\widehat{R}[p^{-1}][\![Z-\pi]\!]$ via the inclusion $\widehat{\widetilde{R}[p^{-1}]}\subset {\rm B}_{\rm
dR}^+\bigl(\widetilde{R}\bigr)$ and $\widehat{R}[p^{-1}][\![Z-\pi]\!]\subset {\rm B}_{\rm
dR}^+\bigl(R\bigr) [\![ Z-\pi]\!]$. These isomorphisms  are strict with respect to the
filtrations where $\widehat{R}[p^{-1}][\![Z-\pi]\!]$ is endowed
with the $(Z-\pi)$-filtration and ${\rm B}_{\rm dR}^+\bigl(R\bigr)
[\![ u-1]\!]$ is endowed with the filtration composite of the
filtration on ${\rm B}_{\rm dR}^+\bigl(R\bigr)$ and the
$(Z-\pi)$-adic filtration.
We deduce that the natural application
${\rm D}_{\rm dR}(V)\otimes_{\widehat{R}[p^{-1}] }
\widehat{\widetilde{R}[p^{-1}]} \lra \widetilde{{\rm D}}_{\rm
dR}(V)$ is an isomorphism of filtered
$\widehat{\widetilde{R}[p^{-1}]}$-modules endowed with connection. This proves the first claim.
Claims (1) and (2) follow. For the formula in (1) compare with \cite[p. 207]{breuil}. We remark that in
(1) the condition ${\partial x}/{\partial (Z-\pi)} \in \Fil^{n-1} \widetilde{{\rm D}}_{\rm dR}(V)$
is equivalent to  $\widetilde{\nabla}(x)\in \Fil^{n-1} \widetilde{{\rm D}}_{\rm dR}(V)\otimes_{\widehat{\widetilde{R}}[p^{-1}]}
\widehat{\omega}^1_{\widetilde{R}/\WW(k)}$.
\end{proof}

\begin{proposition}\label{prop:GrDdR} Let $V$ be a de Rham representation of $\cG_R$ of dimension $n$. Then,\smallskip

(1) ${\rm D}_{\rm dR}(V)$ (resp.~$\widetilde{\rm D}_{\rm dR}(V)$) are finite and projective
$\widehat{R}[p^{-1}] $-module (resp.~$\widehat{\widetilde{R}}[p^{-1}] $) of rank $n$;\smallskip

(2)  the $\widehat{R}[p^{-1}] $-modules ${\rm Fil}^r{\rm D}_{\rm
dR}(V)$, ${\rm Gr}^r{\rm D}_{\rm dR}(V):={\rm Fil}^r{\rm
D}_{\rm dR}(V)/{\rm Fil}^{r+1}{\rm D}_{\rm dR}(V)$ and ${\rm Gr}^r\widetilde{{\rm D}}_{\rm dR}(V):={\rm Fil}^r\widetilde{\rm
D}_{\rm dR}(V)/{\rm Fil}^{r+1}\widetilde{\rm D}_{\rm dR}(V)$ are finite and
projective for every $r\in\N$;\smallskip

(3) for every $r\in\N$ the natural maps $$\oplus_{a+b=n}  {\rm
Gr}^a{\rm D}_{\rm dR}(V) \otimes_{\widehat{R}[p^{-1}] }  {\rm
Gr}^b {\rm B}_{\rm dR}\bigl(R\bigr) \lra V\otimes_{\Q_p}  {\rm
Gr}^n {\rm B}_{\rm dR}\bigl(R\bigr)$$and $$\oplus_{a+b=n}  {\rm
Gr}^a\widetilde{\rm D}_{\rm dR}(V) \otimes_{\widehat{R}[p^{-1}] }  {\rm
Gr}^b {\rm B}_{\rm dR}\bigl(\widetilde{R}\bigr) \lra V\otimes_{\Q_p}  {\rm
Gr}^n {\rm B}_{\rm dR}\bigl(\widetilde{R}\bigr)$$are isomorphisms.  \smallskip

In particular, the isomorphisms  ${\rm D}_{\rm dR}(V)
\otimes_{\widehat{R}[p^{-1}] } {\rm B}_{\rm dR}\bigl(R\bigr)\lra
V\otimes_{\Q_p}{\rm B}_{\rm dR}\bigl(R\bigr)$ and $\widetilde{{\rm D}}_{\rm
dR}(V) \otimes_{\widehat{\widetilde{R}}[p^{-1}] } {\rm B}_{\rm
dR}\bigl(\widetilde{R}\bigr)\lra V\otimes_{\Q_p}{\rm B}_{\rm
dR}\bigl(\widetilde{R}\bigr)$ are strict with
respect to the filtrations.\end{proposition}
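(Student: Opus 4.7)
The plan is to prove (1) first via faithfully flat descent, then prove (3) by establishing strictness of the de Rham isomorphism with respect to filtrations, and finally derive (2) from (3).

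For (1), I would start from the de Rham isomorphism $\alpha\colon {\rm D}_{\rm dR}(V)\otimes_{\widehat{R}[p^{-1}]} {\rm B}_{\rm dR}(R)\cong V\otimes_{\Q_p}{\rm B}_{\rm dR}(R)$, whose right-hand side is a free ${\rm B}_{\rm dR}(R)$-module of rank $n$. Since $\widehat{R}[p^{-1}]\hookrightarrow {\rm B}_{\rm dR}(R)$ is faithfully flat by \ref{prop:BdRff} and ${\rm D}_{\rm dR}(V)$ is at least of finite type (a consequence of being a submodule of a finite free module over ${\rm B}_{\rm dR}(R)$ after descent), faithfully flat descent of projective modules of finite rank gives that ${\rm D}_{\rm dR}(V)$ is finite projective of rank $n$. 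The statement for $\widetilde{\rm D}_{\rm dR}(V)$ follows from Lemma \ref{lemma:eqdeRham}, which identifies it with ${\rm D}_{\rm dR}(V)\otimes_{\widehat{R}[p^{-1}]} \widehat{\widetilde{R}[p^{-1}]}$.

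For (3), the core claim to establish is that $\alpha$ is \emph{strict} with respect to the natural tensor product filtration on the left and the filtration $V\otimes_{\Q_p}\Fil^r {\rm B}_{\rm dR}(R)$ on the right. Once strictness is proven, applying ${\rm Gr}^r$ directly yields the desired isomorphism $\bigoplus_{a+b=r} {\rm Gr}^a{\rm D}_{\rm dR}(V)\otimes {\rm Gr}^b{\rm B}_{\rm dR}(R)\cong V\otimes{\rm Gr}^r{\rm B}_{\rm dR}(R)$, and the analogue for $\widetilde{R}$ follows from Lemma \ref{lemma:eqdeRham}. To prove strictness, I would argue by Tate-twisting (replacing $V$ by $V(r)$ reduces the question at level $r$ to the statement at level $0$) and then use the explicit description of ${\rm Gr}^\bullet {\rm B}_{\rm dR}(R)$ from \ref{cor:BdRstr}(6) as a polynomial algebra over $\widehat{\overline{R}}[p^{-1}][t,t^{-1}]$, combined with the $\cG_R$-cohomology computations carried out in the proof of \ref{prop:BdRff}. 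Concretely, taking $\cG_R$-invariants of the short exact sequences $0\to V\otimes \Fil^{r+1}{\rm B}_{\rm dR}(R)\to V\otimes\Fil^r{\rm B}_{\rm dR}(R)\to V\otimes{\rm Gr}^r{\rm B}_{\rm dR}(R)\to 0$ and tracking the connecting maps via the vanishing/explicit form of higher Galois cohomology on graded pieces should identify each $\Fil^r{\rm D}_{\rm dR}(V)$ with $(V\otimes\Fil^r{\rm B}_{\rm dR}(R))^{\cG_R}$ and give the surjectivity of the graded map.

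Given (3), part (2) follows by descent: the iso ${\rm Gr}^r {\rm D}_{\rm dR}(V)\otimes_{\widehat{R}[p^{-1}]}\widehat{\overline{R}}[p^{-1}]$ equals a direct summand of $V\otimes{\rm Gr}^r{\rm B}_{\rm dR}(R)$, which is finite projective over $\widehat{\overline{R}}[p^{-1}]$; then almost faithful flatness of $\widehat{R}[p^{-1}]\to \widehat{\overline{R}}[p^{-1}]$ (\ref{cor:overlineRaf}, \ref{prop:Rbarff}) forces ${\rm Gr}^r{\rm D}_{\rm dR}(V)$ to be finite projective of the expected rank over $\widehat{R}[p^{-1}]$. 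The finite projectivity of $\Fil^r{\rm D}_{\rm dR}(V)$ then follows inductively from the short exact sequences relating $\Fil^r$, $\Fil^{r+1}$, and ${\rm Gr}^r$, using that the filtration is exhaustive and eventually stabilizes on a de Rham representation. The main obstacle is proving strictness of $\alpha$ in the relative setting: this is the technical heart and essentially amounts to a relative Hodge--Tate-type decomposition on each graded piece, controlled by the Galois cohomology of ${\rm Gr}^\bullet {\rm B}_{\rm dR}(R)$; all subsequent assertions are formal consequences via faithfully flat descent.
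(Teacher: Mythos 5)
Your treatment of (1) and your deduction of (2) from (3) match the paper: finite projectivity of ${\rm D}_{\rm dR}(V)$ of rank $n$ follows by faithfully flat descent along $\widehat{R}[p^{-1}]\subset {\rm B}_{\rm dR}(R)$ from \ref{prop:BdRff}, the statements for $\widetilde{{\rm D}}_{\rm dR}(V)$ come from \ref{lemma:eqdeRham}, and once (3) is known, (2) follows by descending along $\widehat{R}[p^{-1}]\subset \widehat{\overline{R}}[p^{-1}]$ using that ${\rm Gr}^b{\rm B}_{\rm dR}(R)$ is free over $\widehat{\overline{R}}[p^{-1}]$ and then inducting on the filtration steps.

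The gap is in (3), which is the actual content of the proposition. You propose to prove strictness by taking $\cG_R$-invariants of $0\to V\otimes\Fil^{r+1}\to V\otimes\Fil^{r}\to V\otimes{\rm Gr}^{r}\to 0$ and controlling the connecting maps through ``the vanishing/explicit form of higher Galois cohomology on graded pieces.'' But the only such computation in the paper (inside the proof of \ref{prop:BdRff}) concerns ${\rm Gr}^{r}{\rm B}_{\rm dR}(R)$ with \emph{trivial} coefficients; for a nontrivial de Rham $V$ the module $V\otimes_{\Q_p}{\rm Gr}^{r}{\rm B}_{\rm dR}(R)$ carries a twisted action and no vanishing of ${\rm H}^1(\cG_R,-)$ is available or even true (already for $R=\cO_K$ one has ${\rm H}^1(G_K,\C_p)\neq 0$, and Tate twisting only relabels graded degrees without killing the relevant connecting maps). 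Identifying $\Fil^{r}{\rm D}_{\rm dR}(V)$ with $\bigl(V\otimes\Fil^{r}{\rm B}_{\rm dR}(R)\bigr)^{\cG_R}$ is just the definition; what is needed, and what your argument does not deliver, is the injectivity of the graded comparison map (from which surjectivity follows, since a filtered isomorphism with injective associated graded is strict). The paper obtains this injectivity by a device absent from your proposal: it localizes $R$ at the minimal primes $\cP$ over $(\pi)$, where $\widehat{R}_\cP$ is a complete dvr and the situation is formally smooth, shows that ${\rm B}_{\rm dR}(R)\to \prod_{\cQ}{\rm B}_{\rm dR,\cQ}(R)$ is injective and strict on filtrations (via \ref{cor:BdRstr} and normality of $\overline{R}$), and then imports the known smooth case \cite[Prop.~8.3.2]{brinon} to get injectivity of the graded map after localization, hence over $R$. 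Without this reduction, or some substitute for a relative Hodge--Tate decomposition over ${\rm Gr}^\bullet{\rm B}_{\rm dR}(R)$ --- which is a polynomial ring over $\widehat{\overline{R}}[p^{-1}][t,t^{-1}]$ in several variables, not a field, so the usual $(\Q_p,\cG_R)$-regularity arguments do not apply --- your proof of (3) does not close.
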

\begin{proof} The last claim follows from the others and \ref{lemma:eqdeRham}.
Claim (1) for ${\rm D}_{\rm dR}(V)$ follows from the assumption that $V$ is de Rham and the fact, proven in \ref{prop:BdRff},
that  the extension
$\widehat{R}\bigl[p^{-1}\bigr]\subset {\rm B}_{\rm dR}(R)$ is faithfully flat. The statement for
$\widetilde{\rm D}_{\rm dR}(V)$ follows from  \ref{lemma:eqdeRham}.

If (3) holds and since the extension
$\widehat{R}\bigl[p^{-1}\bigr]\subset\widehat{\overline{R}}\bigl[p^{-1}\bigr]$
is faithfully flat by \ref{prop:Rbarff} and since $ {\rm Gr}^b
{\rm B}_{\rm dR}\bigl(R\bigr)$ is a free
$\widehat{\overline{R}}\bigl[p^{-1}\bigr]$-module by
\ref{cor:BdRstr}, it follows that each  ${\rm Gr}^r{\rm D}_{\rm
dR}(V)$ is finite and projective as $\widehat{R}[p^{-1}] $-module.
Arguing by induction one gets Claim (2) for ${\rm Fil}^r{\rm
D}_{\rm dR}(V)$ as well. Statements (2) and (3) for ${\rm Gr}^\bullet\widetilde{{\rm D}}_{\rm dR}(V)$   follow from \ref{lemma:eqdeRham}.

We are left to prove (3). Let $T$ be the set of minimal prime ideals of $R$ over the ideal $(\pi)$ of $R$. For any such $\cP$ let $\overline{T}_\cP$ be the set of
minimal prime ideals of $\overline{R}$ over the ideal $\cP$. For any  $\cP\in T$ denote by $\widehat{R}_\cP$ the $p$-adic completion of the localization of $R$ at
$\cP\cap R$. It is a dvr.   For $\cQ\in \overline{T}_\cP$ let $\overline{R}(\cQ)$  be the normalization of $\overline{R}_\cQ$ in an algebraic closure of its
fraction field and let $\widehat{\overline{R}}(\cQ)$ be its $p$-adic completion. Let $ {\rm B}_{\rm dR,\cQ}\bigl(R\bigr)$ be the ring defined using the extension
$\widehat{R}_\cP\subset \widehat{\overline{R}}(\cQ)$ and let $\cG_{R,\cQ}$ be the decomposition group of $\cG_R$ at $\cQ$. It is the Galois group of $R_\cP\subset
{\overline{R}}(\cQ)$. The normality of $\overline{R}$ implies that the map $\overline{R}/p \overline{R} \lra \prod_{\cQ} \overline{R}_\cQ/p \overline{R}_\cQ$, where
the product is taken over all $\cP$ and all $\cQ$, is injective.  This and \ref{cor:BdRstr} imply that the map $ {\rm B}_{\rm dR}\bigl(R\bigr) \ra \prod_\cQ {\rm
B}_{\rm dR,\cQ}\bigl(R\bigr)$ is injective on graded rings and, hence, it is injective. It is naturally a map of $\cG_R$-modules considering the action of $\cG_R$
on the prime ideals $\cQ$'s; see \cite[Rmk.~3.3.2]{brinon} for a description of the action on $\prod_\cQ \widehat{\overline{R}}(\cQ)$. In particular,  the map
$f\colon {\rm D}_{\rm dR}(V)\lra \prod_\cQ {\rm D}_{\rm dR,\cQ}(V)$ where ${\rm D}_{\rm dR,\cQ}(V):=\bigl(V\otimes_{\Q_p} {\rm B}_{\rm
dR,\cQ}\bigl(R\bigr)\bigr)^{\cG_{R,\cQ}}$ is injective.

By \cite[Rmk.~3.3.2]{brinon} the group $\cG_R$ acts transitively on  $\overline{T}_\cP$ and, for every $\cQ$ and $\cQ'\in \overline{T}_\cP$, any $h\in \cG_R$
sending $\cQ$ to $\cQ'$ induces an isomorphism $\widehat{\overline{R}}(\cQ)\cong \widehat{\overline{R}}(\cQ')$ and hence ${\rm B}_{\rm dR,\cQ}\bigl(R\bigr)\cong {
\rm B}_{\rm dR,\cQ'}\bigl(R\bigr)$. This induces an isomorphism between ${\rm D}_{\rm dR,\cQ}(V)$ and ${\rm D}_{\rm dR,\cQ'}(V)$  as filtered
$\widehat{R}_\cP[p^{-1}]$-modules for any $\cQ$ and $\cQ'$ over $\cP$. As the elements in the image of $f$ are fixed under the action of $\cG_R$ and Claims (2) and
(3)  are known for $R$ formally smooth over $\cO_K$ by \cite[Prop. 8.3.2]{brinon}, ${\rm Gr}^a{\rm D}_{\rm dR}(V) $ is zero apart for finitely many $a$'s. The
morphism $f$ is strict with respect to the filtrations by \ref{cor:BdRstr} so that it induces an injective morphism ${\rm Gr}^a{\rm D}_{\rm dR}(V) \lra \prod_\cQ
{\rm Gr}^a{\rm D}_{\rm dR,\cQ}(V)$ for every $a\in\N$. Since the map in (3) is injective for ${\rm D}_{\rm dR,\cQ}(V)$ for every $\cQ$, we conclude that the map
displayed in (3) is injective for every $n\in\N$. This implies that it is an isomorphism and (3) follows.

\end{proof}

\subsection{The rings ${\rm B}^{\rm cris}_{\rm log}$ and ${\rm B}^{\rm max}_{\rm log}$}\label{sec:AcrisnablaRbar}

Define ${\rm A}_{\rm cris}^\nabla(R)$ to be the $p$--adic
completion of the logarithmic divided power envelope
$\bigl(\WW\bigl(\widetilde{\bf E}^+\bigr)\bigr)^{\rm DP}$ of
$\WW\bigl(\widetilde{\bf E}^+\bigr)$ with respect to
$\Ker\bigl(\Theta\bigr)$ (compatible with the canonical divided
power structure on $p \WW\bigl(\widetilde{\bf E}^+\bigr)$).
We define~${\rm A}_{\rm max}^\nabla(R)$ to be the $p$-adic completion
of the $\WW\bigl(\widetilde{\bf E}^+\bigr)$-subalgebra of
$\WW\bigl(\widetilde{\bf E}^+\bigr)[p^{-1}]$ generated by
$p^{-1}\Ker(\Theta)$. We have a natural map ${\rm A}_{\rm
cris}^\nabla(R)\lra {\rm A}_{\rm max}^\nabla(R)$. The element
$t=\log\bigl([\varepsilon]\bigr)$ is well defined in ${\rm A}_{\rm
cris}^\nabla$. Define $ {\rm B}_{\rm cris}^\nabla(R):= {\rm
A}_{\rm cris}^\nabla(R)\bigl[t^{-1}\bigr]$ and $ {\rm B}_{\rm
max}^\nabla(R):= {\rm A}_{\rm max}^\nabla(R)\bigl[t^{-1}\bigr]$.
\smallskip

Let ${\rm A}_{\rm log}^{\rm cris,\nabla}(R)$ be the $p$--adic completion of the log divided power envelope $\bigl(\WW\bigl(\widetilde{\bf E}^+\bigr)\tensor_{\WW(k)}
\cO\bigr)^{\rm logDP}$ of $\WW\bigl(\widetilde{\bf E}^+\bigr)\tensor_{\WW(k)} \cO$ with respect to $\Ker\bigl(\Theta_{\log}\bigr)$ (compatible with the canonical
divided power structure on $p \WW\bigl(\widetilde{\bf E}^+\bigr)\tensor_{\WW(k)} \cO$) in the sense of \cite[Def. 5.4]{katolog}. Here we consider on
$\WW\bigl(\widetilde{\bf E}^+\bigr)\tensor_{\WW(k)} \cO$ its log structure. Define ${\rm A}_{\rm log}^{\rm max,\nabla}(R)$ to be the $p$--adic completion of the
$\bigl(\WW\bigl(\widetilde{\bf E}^+\bigr)\tensor_{\WW(k)} \cO\bigr)^{\rm log}$-subalgebra of $\bigl(\WW\bigl(\widetilde{\bf E}^+\bigr)\tensor_{\WW(k)}
\cO\bigr)^{\rm log}[p^{-1}]$ generated by $p^{-1}\Ker\bigl(\Theta_{\log}'\bigr)$. We have a natural map ${\rm A}_{\rm log}^{\rm cris,\nabla}(R)\lra {\rm A}_{\rm
log}^{\rm max,\nabla}(R)$. We define
$${\rm B}_{\rm log}^{\rm cris,\nabla}(R):=
{\rm A}_{\rm log}^{\rm cris,\nabla}(R)\bigl[t^{-1}\bigr]\quad
\hbox{{\rm and}}\quad {\rm B}_{\rm log}^{\rm max,\nabla}(R):= {\rm
A}_{\rm log}^{\rm max,\nabla}(R)\bigl[t^{-1}\bigr].$$
\smallskip

Let ${\rm A}^{\rm cris}_{\rm log}(\widetilde{R})$ be the $p$--adic completion of the log divided power envelope $\left(\WW\bigl(\widetilde{\bf
E}^+\bigr)\tensor_{\WW(k)} \widetilde{R}\right)^{\rm logDP}$ of $\WW\bigl(\widetilde{\bf E}^+\bigr)\tensor_{\WW(k)} \widetilde{R}$ with respect to
$\Ker\bigl(\Theta_{\widetilde{R},\log}\bigr)$ (compatible with the canonical divided power structure on $p \WW\bigl(\widetilde{\bf E}^+\bigr)\tensor_{\WW(k)}
\widetilde{R} $) in the sense of \cite[Def. 5.4]{katolog}. Let ${\rm A}^{\rm max}_{\rm log}(\widetilde{R})$ be the $p$--adic completion of  the
$\bigl(\WW\bigl(\widetilde{\bf E}^+\bigr)\tensor_{\WW(k)} \widetilde{R}\bigr)^{\rm log}$-subalgebra of $\bigl(\WW\bigl(\widetilde{\bf E}^+\bigr)\tensor_{\WW(k)}
\widetilde{R}\bigr)^{\rm log}[p^{-1}]$ generated by $p^{-1}\Ker\bigl(\Theta_{\widetilde{R},\log}'\bigr)$. We have a natural map ${\rm A}^{\rm cris}_{\rm
log}(\widetilde{R}) \lra {\rm A}^{\rm max}_{\rm log}(\widetilde{R}) $. Define
$$ {\rm B}^{\rm cris}_{\rm log}(\widetilde{R}):=
{\rm A}^{\rm cris}_{\rm
log}(\widetilde{R})\bigl[t^{-1}\bigr]\quad\hbox{{\rm and}}\quad
{\rm B}^{\rm max}_{\rm log}(\widetilde{R}):= {\rm A}^{\rm
max}_{\rm log}(\widetilde{R})\bigl[t^{-1}\bigr] .$$

\subsubsection{Explicit descriptions of ${\rm B}^{\rm cris}_{\rm log}$ and ${\rm B}^{\rm max}_{\rm log}$}

\begin{lemma}\label{lemma:structureAlog} The ring
${\rm A}_{\rm log}^{\rm cris,\nabla}(R)$ coincides with the
$p$-adic completion of\/ the divided power envelope of
$\WW\bigl(\widetilde{\bf E}^+\bigr)[u]$ with respect to the ideal
$\bigl(\xi,u-1\bigr)=
\bigl(P_\pi\bigl([\overline{\pi}]\bigr),u-1\bigr)$. Hence, $${\rm
A}_{\rm log}^{\rm cris,\nabla}(R)\cong {\rm A}_{\rm
cris}^\nabla(R)\bigl\{\langle u-1\rangle\bigr\},$$the $p$-adic
completion of the divided power algebra ${\rm A}_{\rm cris}^\nabla(R)\langle
u-1\rangle$.\smallskip

Similarly,  ${\rm A}_{\rm log}^{\rm max,\nabla}(R)\cong
\WW\bigl(\widetilde{\bf
E}^+\bigr)\left\{\frac{W}{p},\frac{u-1}{p}\right\}$,  the
$p$-adically convergent power series ring  in the variables
$W=\xi$ (or $W=P_\pi\bigl([\overline{\pi}]\bigr)$) and $u-1$. In
particular,  $${\rm A}_{\rm log}^{\rm max,\nabla}(R)\cong {\rm
A}_{\rm max}^\nabla(R)\left\{\frac{u-1}{p}\right\}.$$
\end{lemma}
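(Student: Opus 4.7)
The plan is to reduce to the observation that the log divided power envelope is, by construction, the ordinary DP envelope of the log envelope, and then to show that in the $p$-adic completion the variable $u^{-1}$ (and hence the power series in $Z$) is automatically present, so that the whole construction reduces to a standard DP envelope of a polynomial ring in one variable.

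First I would recall from the discussion preceding Lemma \ref{lemma:structurBdR+} that the log envelope of $\WW\bigl(\widetilde{\bf E}^+\bigr)\tensor_{\WW(k)} \cO$ (with the product log structure) relative to $\Theta_{\log}$ is given by $\WW\bigl(\widetilde{\bf E}^+\bigr)\tensor_{\WW(k)} \cO\tensor_{\Z[\N\times \N]} \Z[T]$, which is identified with the ring obtained by adjoining $u$ and $u^{-1}$ subject to $u Z = [\overline{\pi}]$. Eliminating $Z=[\overline{\pi}] u^{-1}$, this log envelope is simply $\WW\bigl(\widetilde{\bf E}^+\bigr)[u,u^{-1}]$. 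By Lemma \ref{lemma:structurBdR+}(1) the induced kernel $\Ker(\Theta_{\log}')$ in this ring is the regular ideal $(\xi,u-1)$, which coincides with $\bigl(P_\pi([\overline{\pi}]),u-1\bigr)$ by Lemma \ref{lemma:KerTheta}(1). Hence ${\rm A}_{\rm log}^{\rm cris,\nabla}(R)$ is by definition the $p$-adic completion of the ordinary DP envelope of $\WW\bigl(\widetilde{\bf E}^+\bigr)[u,u^{-1}]$ with respect to $(\xi,u-1)$, compatibly with the standard DPs on $(p)$.

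Next I would show that the inclusion $\WW\bigl(\widetilde{\bf E}^+\bigr)[u]\hookrightarrow \WW\bigl(\widetilde{\bf E}^+\bigr)[u,u^{-1}]$ induces an isomorphism between the $p$-adic completions of the corresponding DP envelopes: it is enough to check that $u$ is invertible in the $p$-adic completion of the DP envelope of $\WW\bigl(\widetilde{\bf E}^+\bigr)[u]$ relative to $(\xi,u-1)$. Writing $u=1+(u-1)$ and using the identity $(u-1)^n=n!\,(u-1)^{[n]}$ together with $v_p(n!)\to\infty$, we see that $(u-1)^n\to 0$ in the $p$-adic topology, so the geometric series $\sum_{n\ge 0}(-1)^n(u-1)^n$ converges to a multiplicative inverse of $u$ in the completion. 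This proves the first claimed description of ${\rm A}_{\rm log}^{\rm cris,\nabla}(R)$.

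For the second identification, I would invoke the standard compatibility of DP envelopes with polynomial extensions (cf.\ \cite[Prop.~3.21]{berthelot_ogus} and \cite[I.2.4]{berthelot1}): given a ring $A$ with a regular element $f$ and DPs on $(p)$, the DP envelope of $A[T]$ relative to the ideal $(f,T)$ is canonically isomorphic to the DP envelope of $A$ relative to $(f)$ tensored with the free DP polynomial algebra $A\langle T\rangle$. Applying this with $A=\WW\bigl(\widetilde{\bf E}^+\bigr)$, $f=\xi$ (which is regular by Lemma \ref{lemma:A+R}(1)) and $T=u-1$, and then passing to $p$-adic completions, yields precisely ${\rm A}_{\rm cris}^\nabla(R)\bigl\{\langle u-1\rangle\bigr\}$, as stated.

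For the max version the argument is parallel. Using the same log envelope, ${\rm A}_{\rm log}^{\rm max,\nabla}(R)$ is the $p$-adic completion of the $\WW\bigl(\widetilde{\bf E}^+\bigr)[u,u^{-1}]$-subalgebra of $\WW\bigl(\widetilde{\bf E}^+\bigr)[u,u^{-1}][p^{-1}]$ generated by $\xi/p$ and $(u-1)/p$. The analogue of the argument above applies because $(u-1)^n=p^n\bigl((u-1)/p\bigr)^n$ is divisible by $p^n$, so $u$ again becomes a unit in the $p$-adic completion and $\WW\bigl(\widetilde{\bf E}^+\bigr)[u,u^{-1}]$ can be replaced by $\WW\bigl(\widetilde{\bf E}^+\bigr)[u]$. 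Since $u=1+p\cdot(u-1)/p$, the resulting subalgebra coincides with the polynomial ring $\WW\bigl(\widetilde{\bf E}^+\bigr)[\xi/p,(u-1)/p]$ (the elements $\xi/p$ and $(u-1)/p$ being algebraically independent thanks to the regularity of the sequence $(\xi,u-1)$), and its $p$-adic completion factors as $\WW\bigl(\widetilde{\bf E}^+\bigr)\{\xi/p\}\{(u-1)/p\}={\rm A}_{\rm max}^\nabla(R)\{(u-1)/p\}$.

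The main obstacle is the elimination of $u^{-1}$, equivalently the elimination of the $\cO$-structure given by $Z$: everything hinges on the $p$-adic convergence of the geometric series for $u^{-1}$, which in the DP setting uses $n!$-divisibility of $(u-1)^n$ and in the max setting uses direct $p$-divisibility. Once this is in place, the rest is a formal application of the universal properties of DP and ``max'' envelopes.
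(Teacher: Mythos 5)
Your overall strategy is the same as the paper's (identify the log envelope as the ring obtained by adjoining $u^{\pm 1}$, observe that the kernel is $(\xi,u-1)$, invert $u$ in the $p$-adic completion of the DP envelope via the series $\sum_i(-1)^i i!\,(u-1)^{[i]}$, and conclude by the compatibility of DP envelopes with adjoining a variable), but there is one genuine gap at the very first step. You assert that ``eliminating $Z=[\overline{\pi}]u^{-1}$'' identifies the log envelope $\WW\bigl(\widetilde{\bf E}^+\bigr)\otimes_{\WW(k)}\cO\,[u,u^{-1}]$ with $\WW\bigl(\widetilde{\bf E}^+\bigr)[u,u^{-1}]$. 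This is false: $\cO=\WW(k)[\![Z]\!]$ is a \emph{power series} ring, so the log envelope contains elements $1\otimes f(Z)$ for arbitrary power series $f$, and the relation $Z=[\overline{\pi}]u^{-1}$ only expresses $Z$ itself, not $f(Z)$, as a polynomial in $u^{-1}$ over $\WW\bigl(\widetilde{\bf E}^+\bigr)$. Consequently the DP envelope you work with a priori receives a map from, but does not obviously surject onto, ${\rm A}_{\rm log}^{\rm cris,\nabla}(R)$; what you have produced is only the map $f$ in one direction.

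To close the gap you must construct the inverse map, i.e.\ show that the $p$-adic completion of the DP envelope of $\WW\bigl(\widetilde{\bf E}^+\bigr)[u]$ with respect to $(\xi,u-1)$ carries a (continuous) $\cO$-algebra structure sending $Z\mapsto[\overline{\pi}]u^{-1}$. Inverting $u$ is not enough for this; one also needs power series in $Z$ to converge. This is exactly the extra step in the paper's proof: writing $e=\deg P_\pi$, one has $[\overline{\pi}]^e=\nu[\overline{p}]=\nu(p+\xi)$ with $\nu$ a unit, and since $\xi$ acquires divided powers, $Z^e-\nu p$ admits divided powers in the completed DP envelope; hence $Z$ is topologically nilpotent there and power series in $Z$ converge, yielding the map $g$ inverse to $f$. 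The identical point is missing in your treatment of the max case (there one uses $\xi^n=p^n(\xi/p)^n\to 0$ instead of the divided powers of $\xi$). With this convergence argument added, your proof matches the paper's.
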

\begin{proof} We prove the claims for ${\rm A}_{\rm cris}^\nabla$
and ${\rm A}_{\rm log}^{\rm cris,\nabla}$. Those for ${\rm A}_{\rm
max}^\nabla$ and ${\rm A}_{\rm log}^{\rm max,\nabla}$ follow
similarly. It is clear that ${\rm A}_{\rm
cris}^\nabla(R)\bigl\{\langle u-1\rangle\bigr\}$ is the $p$-adic
completion of the DP envelope of $\WW\bigl(\widetilde{\bf
E}^+\bigr)[u]$ with respect to the ideal $\bigl(\xi,u-1\bigr)$.
There is a map of $\WW\bigl(\widetilde{\bf E}^+\bigr)[u]$-algebras
$$f\colon {\rm A}_{\rm cris}^\nabla(R)\bigl\{\langle
u-1\rangle\bigr\}\lra {\rm A}_{\rm log}^{\rm cris,\nabla}(R)$$by universal property of divided power envelopes. By definition ${\rm A}_{\rm log}^{\rm
cris,\nabla}(R)$ is the $p$-adic completion of $\WW\bigl(\widetilde{\bf E}^+\bigr)^{\rm log DP}$ and the latter is the DP envelope of $\WW\bigl(\widetilde{\bf
E}^+\bigr)\otimes_{\WW(k)}\cO\bigl[u, u^{-1} \bigr]$ with respect to the kernel of the map to $\Theta_{\log}'$. Such kernel is $\bigl(\xi,u-1\bigr)$ by
\ref{lemma:structurBdR+}. Note that $u=1+(u-1)$ has $\sum_{i=0}^\infty (-1)^i i! (u-1)^{[i]}$ as multiplicative inverse in ${\rm A}_{\rm
cris}^\nabla(R)\bigl\{\langle u-1\rangle\bigr\}$. Furthermore, $Z=\bigl[\overline{\pi}\bigr](u^{-1}-1)+ \bigl[\overline{\pi}\bigr] $. If $e$ is the degree of
$P_\pi(Z)$ then $\bigl[\overline{\pi}\bigr]^e=\nu \bigl[\overline{p}\bigr]$ with $\nu$ a unit of $\WW\bigl(\widetilde{\bf E}^+\bigr)$. This implies that $Z^e-\nu p$
admits divided powers in ${\rm A}_{\rm cris}^\nabla(R)\bigl\{\langle u-1\rangle\bigr\}$ so that, since the latter is $p$-adically complete, power series in $Z$
converge in it. We thus get a map $g \colon  {\rm A}_{\rm log}^{\rm cris,\nabla}(R)\lra {\rm A}_{\rm cris}^\nabla(R)\bigl\{\langle u-1\rangle\bigr\}$ which is the
inverse of $f$.
\end{proof}

\begin{corollary}\label{display:Alognabla}
For every $n\in\N$ the morphisms
$$
\WW_n\bigl(\widetilde{\bf E}^+\bigr)
\bigl\{\delta_0,\delta_1,\ldots\bigr\}/
\bigl(p\delta_0-\xi^p,p\delta_{m+1}-\delta_m^p\bigr)_{m\in\N}\lra
{\rm A}_{\rm cris}^\nabla(R)/p^n {\rm A}_{\rm cris}^\nabla(R)$$and
$$ {\rm A}_{\rm cris}^\nabla(R)/p^n {\rm A}_{\rm cris}^\nabla(R) [u]\bigl\{\rho_0,\rho_1,\ldots\bigr\}/
\bigl(p\rho_0-(u-1)^p,p\rho_{m+1}-\rho_m^p\bigr)_{m\in\N}\lra {\rm A}_{\rm log}^{\rm cris,\nabla}(R)/p^n {\rm A}_{\rm log}^{\rm cris,\nabla}(R),$$sending $\delta_m$
to $\gamma^{m+1}(\xi)$ and $\rho_m$ to $\gamma^{m+1}(u-1)$ with $\gamma(x):=(p-1)! x^{[p]}$, are isomorphisms. In particular, ${\rm A}_{\rm cris}^\nabla(R)$ and
${\rm A}_{\rm log}^{\rm cris,\nabla}(R)$ are $p$-torsion free. Also ${\rm A}_{\rm max}^\nabla(R)$ and ${\rm A}_{\rm log}^{\rm max,\nabla}(R)$ are $p$-torsion free.
\end{corollary}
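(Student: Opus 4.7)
The proof is a structural computation in the style of Brinon's treatment of the smooth case (Proposition 6.1.2 in his treatise on relative $p$-adic representations), adapted here using the regularity results established earlier. The three key inputs are: (i) $\WW\bigl(\widetilde{\bf E}^+\bigr)$ is $p$-torsion free and $\xi$ is a regular element in it, by Lemma \ref{lemma:A+R}(1); (ii) Lemma \ref{lemma:structureAlog} identifies ${\rm A}_{\rm log}^{\rm cris,\nabla}(R)$ with ${\rm A}_{\rm cris}^\nabla(R)\bigl\{\langle u-1\rangle\bigr\}$, so that the logarithmic case reduces to the non-logarithmic case with the regular element $u-1$ in place of $\xi$; (iii) the analogous descriptions for ${\rm A}_{\rm max}^\nabla(R)$ and ${\rm A}_{\rm log}^{\rm max,\nabla}(R)$ in the same lemma as $p$-adic completions of polynomial rings over $p$-torsion free bases immediately yield their $p$-torsion freeness.

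To construct the map, recall that in any $\WW(k)$-DP algebra $(A,I,\gamma_\bullet)$ and any $x\in I$, the element $\gamma(x):=(p-1)!\, x^{[p]}$ satisfies $p\gamma(x)=p!\, x^{[p]}=x^p$. Iterating, the elements $\gamma^{m+1}(\xi)\in {\rm A}_{\rm cris}^\nabla(R)$ satisfy $p\gamma^{m+1}(\xi)=\bigl(\gamma^m(\xi)\bigr)^p$, so sending $\delta_m\mapsto \gamma^{m+1}(\xi)$ defines the desired $\WW_n\bigl(\widetilde{\bf E}^+\bigr)$-algebra homomorphism. For surjectivity one uses the following combinatorial facts: by induction on $k$, using $\bigl(x^{[p^{k-1}]}\bigr)^p = \frac{(p^k)!}{(p^{k-1}!)^p}\, x^{[p^k]}$ and a direct $p$-adic valuation computation showing the coefficient has valuation exactly $1$, we get $\gamma^k(\xi)=u_k\,\xi^{[p^k]}$ with $u_k\in\WW(k)^\ast$. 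Furthermore, writing $m=\sum_i a_i p^i$ in base $p$ (so $0\leq a_i<p$), the identity $\prod_i\xi^{[a_i p^i]}= \frac{m!}{\prod_i (a_i p^i)!}\,\xi^{[m]}$ together with $\bigl(\xi^{[p^i]}\bigr)^{a_i}=\frac{(a_i p^i)!}{(p^i!)^{a_i}(a_i!)}\,\xi^{[a_i p^i]}$ and Kummer's theorem (there are no base-$p$ carries in $\sum a_i p^i$) shows that every $\xi^{[m]}$ is a $\Z_p^\ast$-unit times $\prod_i\bigl(\xi^{[p^i]}\bigr)^{a_i}$, hence lies in the image. Surjectivity mod $p^n$ follows.

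For injectivity, one argues that the standard description of the DP envelope of a regular element gives $\bigl(\WW\bigl(\widetilde{\bf E}^+\bigr)\bigr)^{\rm DP}$ as a free $\WW\bigl(\widetilde{\bf E}^+\bigr)$-module on the basis $\{\xi^{[m]}\}_{m\geq 0}$, and the only relations that appear modulo $p^n$ between the images of the $\delta_m$ are consequences of $p\delta_0=\xi^p$ and $p\delta_{m+1}=\delta_m^p$; this is verified by comparing the evident combinatorial basis of the left-hand ring with the basis $\{\xi^{[m]}\}_m$ on the right via the unit-multiple identifications above. The logarithmic case is identical: by Lemma \ref{lemma:structureAlog}, ${\rm A}_{\rm log}^{\rm cris,\nabla}(R)/p^n$ is the DP envelope of the regular element $u-1$ in ${\rm A}_{\rm cris}^\nabla(R)/p^n[u]$, and the same argument with $u-1$ and the base ring ${\rm A}_{\rm cris}^\nabla(R)$ in place of $\xi$ and $\WW\bigl(\widetilde{\bf E}^+\bigr)$ yields the second isomorphism. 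The $p$-torsion freeness of ${\rm A}_{\rm cris}^\nabla(R)$ and ${\rm A}_{\rm log}^{\rm cris,\nabla}(R)$ then follows by inspection of the explicit presentations, which are iterated quotients of polynomial rings over $p$-torsion free bases by relations that introduce no $p$-torsion (the relation $p\delta_{m+1}=\delta_m^p$ kills no non-trivial $p$-torsion class). Finally, the $p$-torsion freeness of ${\rm A}_{\rm max}^\nabla(R)$ and ${\rm A}_{\rm log}^{\rm max,\nabla}(R)$ is read off directly from Lemma \ref{lemma:structureAlog}. The main obstacle is the careful bookkeeping of the unit factorials appearing in the combinatorial identities and verifying that no extra relations arise in the mod-$p^n$ presentation; this is where the regularity of $\xi$ (and of $u-1$) is essential.
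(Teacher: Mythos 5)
Your proposal is correct and takes essentially the same route as the paper, whose own proof simply refers to Brinon (Prop.\ 6.1.1, Cor.\ 6.1.2, Prop.\ 6.1.3) for the presentation and the $p$-torsion freeness, invokes Lemma \ref{lemma:structureAlog} to reduce the logarithmic case to the crystalline one via the regular element $u-1$, and treats the max rings as immediate; you have just unwound Brinon's combinatorial argument (the unit identities $\gamma^k(\xi)=u_k\,\xi^{[p^k]}$ and the base-$p$ factorization of $\xi^{[m]}$) explicitly, with only harmless slips in unit factors. The one input worth making explicit is that the freeness of the uncompleted DP envelope on the basis $\bigl\{\xi^{[m]}\bigr\}_m$ — which underlies both injectivity and, after $p$-adic completion, the $p$-torsion freeness — uses not only the regularity of $\xi$ from Lemma \ref{lemma:A+R}(1) but also that $\WW\bigl(\widetilde{\bf E}^+\bigr)/(\xi)\cong\widehat{\Rbar}$ is $p$-torsion free (Proposition \ref{prop:Rbarff}), exactly as in Brinon's argument.
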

\begin{proof} For the first claim one argues as for the proof of \cite[Prop. 6.1.1 \& Cor.
6.1.2]{brinon}. If ${\rm A}_{\rm cris}^\nabla(R)$ is $p$-torsion
free, then ${\rm A}_{\rm log}^{\rm cris,\nabla}(R)$ is $p$-torsion
free as well thanks to \ref{lemma:structureAlog}. One proves that
${\rm A}_{\rm cris}^\nabla(R)$ is $p$-torsion free as in
\cite[Prop. 6.1.3]{brinon}. The fact that ${\rm A}_{\rm
max}^\nabla(R)$ and ${\rm A}_{\rm log}^{\rm max,\nabla}(R)$ are
$p$-torsion free is clear.
\end{proof}

\begin{lemma}\label{lemma:structAlog} The natural map
$${\rm A}_{\rm log}^{\rm cris,\nabla}(R)\left\{\langle
v_2-1,\ldots,v_a-1,w_1-1,\ldots,w_b-1\rangle \right\} \lra  {\rm
A}^{\rm cris}_{\rm log}(\widetilde{R})$$is an isomorphism. The map
$${\rm A}_{\rm log}^{\rm max,\nabla}(R)\left\{
\frac{v_2-1}{p},\ldots,\frac{v_a-1}{p},\frac{w_1-1}{p},\ldots,\frac{w_b-1}{p}\right\}
\lra  {\rm A}^{\rm max}_{\rm log}(\widetilde{R})$$is an
isomorphism.

\end{lemma}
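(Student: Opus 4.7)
The plan is to construct mutually inverse morphisms using the universal properties of the log divided power envelopes, taking advantage of the regularity of the kernel sequence established in \ref{lemma:structurBdR+}(2) and of the explicit description of ${\rm A}_{\rm log}^{\rm cris,\nabla}(R)$ given in \ref{lemma:structureAlog}. I focus on the first isomorphism; the second follows mutatis mutandis, replacing the $p$-adically completed DP polynomial algebra by the $p$-adically convergent power series ring in the variables $(v_i-1)/p$ and $(w_j-1)/p$.

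For the forward map $f$, the morphism of $\WW(k)$-algebras $\cO \to \widetilde{R}$ induces a morphism $\WW\bigl(\widetilde{\bf E}^+\bigr)\otimes_{\WW(k)} \cO \to \WW\bigl(\widetilde{\bf E}^+\bigr)\otimes_{\WW(k)} \widetilde{R}$ compatible with the log structures and with the maps $\Theta_{\log}$ and $\Theta_{\widetilde{R},\log}$. The universal property of the log DP envelope yields a canonical map ${\rm A}_{\rm log}^{\rm cris,\nabla}(R) \to {\rm A}^{\rm cris}_{\rm log}(\widetilde{R})$. By \ref{lemma:structurBdR+}(2) the elements $v_2-1,\ldots,v_a-1,w_1-1,\ldots,w_b-1$ all lie in $\Ker(\Theta'_{\widetilde{R},\log})$, hence admit compatible divided powers in the target, and the universal property of the $p$-adically completed free DP polynomial algebra extends the previous map to the required $f$.

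For the inverse $g$, I invoke the universal property defining ${\rm A}^{\rm cris}_{\rm log}(\widetilde{R})$: it suffices to produce a morphism $(\WW(\widetilde{\bf E}^+)\otimes_{\WW(k)}\widetilde{R})^{\rm log} \to {\rm A}_{\rm log}^{\rm cris,\nabla}(R)\{\langle v_2-1,\ldots,w_b-1\rangle\}$ compatible with log structures and sending $\Ker(\Theta_{\widetilde{R},\log})$ into the DP ideal compatibly with its divided powers. The $\WW(\widetilde{\bf E}^+)\otimes_{\WW(k)}\cO$-structure is inherited from ${\rm A}_{\rm log}^{\rm cris,\nabla}(R)$, and the logarithmic variable $u$ (together with its inverse via the DP structure on $u-1$) comes from \ref{lemma:structureAlog}. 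Setting $\widetilde{X}_i := [\overline{X}_i] v_i^{-1}$ for $i=1,\ldots,a$ and $\widetilde{Y}_j := [\overline{Y}_j] w_j^{-1}$ for $j=1,\ldots,b$ (with $v_i,w_j$ units because $v_i-1,w_j-1$ lie in the DP ideal and $v_1 = u^{\alpha} v_2^{-1}\cdots v_a^{-1}$) defines a morphism of $\cO$-algebras from $\widetilde{R}^{(0)}$ to the target. This extends inductively along the chain $\widetilde{R}^{(0)}\subset\widetilde{R}^{(1)}\subset\cdots\subset\widetilde{R}^{(n)}=\widetilde{R}$ exactly as in the proof of \ref{lemma:A+R}(2): \'etale extensions lift uniquely, localizations extend formally, and $(p,P_\pi(Z))$-adic completions factor through the $p$-adically complete target since the image of $P_\pi(Z)$ lies in the DP ideal (because $P_\pi([\overline{\pi}])\in \Ker(\Theta)=(\xi)$ has divided powers and $Z=[\overline{\pi}](1+(u-1))^{-1}\equiv[\overline{\pi}]$ modulo the DP ideal, hence $P_\pi(Z)$ is topologically nilpotent $p$-adically). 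All generators of $\Ker(\Theta_{\widetilde{R},\log})$ in the list from \ref{lemma:structurBdR+}(2) then lie in the DP ideal of the target.

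The compositions $f\circ g$ and $g\circ f$ agree with the identity on the relevant generators (the $\widetilde{R}$-algebra structure, the logarithmic variables $u,v_i,w_j$, and the chosen divided powers), and therefore coincide with the identity by uniqueness in the universal properties. The main obstacle is the inductive extension of the $\widetilde{R}^{(0)}$-algebra structure to $\widetilde{R}$ in Step~2, in particular verifying that each completion step in the construction of $\widetilde{R}$ produces no obstruction, which reduces to the topological nilpotence of $P_\pi(Z)$ in the target noted above. The case of ${\rm A}^{\rm max}_{\rm log}(\widetilde{R})$ is handled by the identical argument once one replaces divided powers with the corresponding $p^{-1}$-multiples, using the analogue of \ref{lemma:structureAlog}.
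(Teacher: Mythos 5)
Your proposal is correct and follows essentially the same route as the paper: both directions are obtained from the universal properties of the (log) divided power envelope, and the crux in both arguments is endowing ${\rm A}_{\rm log}^{\rm cris,\nabla}(R)\left\{\langle v_2-1,\ldots,w_b-1\rangle\right\}$ with a $\widetilde{R}$-algebra structure by setting $\widetilde{X}_i\mapsto [\overline{X}_i]v_i^{-1}$, $\widetilde{Y}_j\mapsto[\overline{Y}_j]w_j^{-1}$ on $\widetilde{R}^{(0)}$ and lifting along the chain $\widetilde{R}^{(0)}\subset\cdots\subset\widetilde{R}$ using that the DP ideal is (topologically) nilpotent modulo $p^n$. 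The paper packages this lifting via an explicit truncated ring ${\cal A}$ with nilpotent ideal reducing to $\overline{R}/p\overline{R}$ and then inducts on $n$, whereas you induct directly along the chain as in \ref{lemma:A+R}(2); these are the same deformation argument.
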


\begin{proof} We prove the claims for ${\rm A}^{\rm cris}_{\rm log}$.
Those for ${\rm A}^{\rm max}_{\rm log}$ follow similarly. We
follow \cite[Prop. 6.1.5]{brinon}. Recall that ${\rm A}^{\rm
cris}_{\rm log}(\widetilde{R})$ is the $p$-adic completion of
$\left(\WW\bigl(\widetilde{\bf E}^+\bigr)\tensor_{\WW(k)}
\widetilde{R}\right)^{\rm logDP}$. The latter is the DP envelope
of
$$\WW\bigl(\widetilde{\bf E}^+\bigr)\tensor_{\WW(k)}
\widetilde{R}\left[\frac{p}{\bigl[\overline{p}\bigr]},\frac{\bigl[\overline{p}\bigr]}{p},
\frac{X_2}{\bigl[\overline{X}_2\bigr]},\frac{\bigl[\overline{X}_2\bigr]}{X_2},\ldots, \frac{X_a}{\bigl[\overline{X}_a\bigr]},\frac{\bigl[\overline{X}_a\bigr]}{X_a},
\frac{Y_1}{\bigl[\overline{Y}_1\bigr]},\frac{\bigl[\overline{Y}_1\bigr]}{Y_1},\ldots,
\frac{Y_b}{\bigl[\overline{Y}_b\bigr]},\frac{\bigl[\overline{Y}_b\bigr]}{Y_b}\right]$$with respect to the ideal $\bigl(\xi,
u-1,v_2-1,\ldots,v_a-1,w_1-1,\ldots,w_b-1\bigr)$ which is the kernel of the map to $\widehat{\overline{R}}$ by \ref{lemma:structurBdR+}. Note that  ${\rm A}_{\rm
log}^{\rm cris,\nabla}(R)$ is an $\cO$-algebra. Consider the structure of $\cO[P']$-algebra on ${\rm A}_{\rm log}^{\rm cris,\nabla}(R)\left\{\langle
v_2-1,\ldots,v_a-1,w_1-1,\ldots,w_b-1\rangle \right\}$ given by sending $X_i$ to $\bigl[\overline{X}_i\bigr] v_i+ \bigl[\overline{X}_i\bigr]$ for $i=2,\ldots,a$ and
$Y_j$ to $\bigl[\overline{Y}_j\bigr] w_j+ \bigl[\overline{Y}_j\bigr]$ for $j=1,\ldots,b$. Using that $v_i$ is invertible in ${\rm A}_{\rm log}^{\rm
cris,\nabla}(R)\left\{\langle v_2,\ldots,v_a,w_1,\ldots,w_b\rangle \right\}$ for $i=2,\ldots,a$, it sends $X_1$ to $\bigl[\overline{X}_1 \bigr] u^\alpha
\prod_{i=2}^a v_i^{-1}$. The ring ${\rm A}_{\rm log}^{\rm cris,\nabla}(R)\left\{\langle v_2-1,\ldots,v_a-1,w_1-1,\ldots,w_b-1 \rangle \right\}$ is
$$\frac{{\rm A}_{\rm log}^{\rm cris,\nabla}(R)\bigl\{
v_i,h_{i,0},h_{i,1},\cdots,w_j,\ell_{j,0},\ell_{j,1},\cdots\bigr\}_{
i=2,\ldots,a,\, j=1,\ldots,b}}{ \bigl(
ph_{i,0}-(v_i-1)^p,ph_{m+1,0}-h_{i,m}^p,p\ell_{j,0}-(w_i-1)^p,p\ell_{m+1,0}-\ell_{j,m}^p\bigr)}$$with
$h_{i,m}\mapsto \gamma^{m+1}(v_i)$ and $h_{j,m}\mapsto
\gamma^{m+1}(w_j)$ where $\gamma\colon x \mapsto \frac{x^p}{p}$.
See \cite[Prop. 6.1.2]{brinon}. Put $${\cal
A}:=\frac{\widetilde{\bf E}^+/\overline{p}^p \widetilde{\bf
E}^+\bigl[u-1,v_2-1,\ldots,v_a-1,w_1-1,\ldots,w_b-1\bigr]}{
\bigl((u-1)^p,(v_2-1)^p,\ldots,(v_a-1)^p,(w_1-1)^p,\ldots,(w_b-1)^p\bigr)}$$and
${\cal
I}:=(\overline{p},u-1,v_2-1,\ldots,v_a-1,w_1-1,\ldots,w_b-1)$. It
follows from \ref{display:Alognabla} that

$${\rm A}_{\rm log}^{\rm cris,\nabla}(R)\left\{\langle
v_2-1,\ldots,v_a-1,w_1-1,\ldots,w_b-1\rangle \right\}/(p) \cong$$
$$\cong \frac{{\cal A} \bigl[\delta_0,\delta_1,
\ldots,\rho_0,\rho_1,\ldots,\ldots h_{i,0},h_{i,1},\cdots,\ell_{j,0},\ell_{j,1},\cdots\bigr]_{ i=2,\ldots,a,\, j=1,\ldots,b}}{\bigl(
\delta_m^p,\rho_m^p,\ell_{j,m}^p\bigr)_{i=2,\ldots,a,j=1,\ldots,b,m\in\N}}.$$ Then, ${\rm A}_{\rm log}^{\rm cris,\nabla}(R)\left\{\langle
v_2-1,\ldots,v_a-1,w_1-1,\ldots,w_b-1\rangle \right\}$ modulo $p$ is an ${\cal A}$--algebra and ${\cal A}$ is an $\cO[P']$--algebra. The ideal of ${\cal I}$ is
nilpotent. Furthermore, ${\cal A}/{\cal I}\cong \overline{R}/p\overline{R}$ as $\cO[P']$--algebras. Since $\widetilde{R}/p\widetilde{R}$ is a successive extension
of $\cO[P']/p\cO[P']$--algebras obtained taking localizations, \'etale extensions and completions with respect to ideals,  there exists a unique morphism of
$\cO[P']$--algebras $\widetilde{R}\to {\cal A}$ inducing on $\overline{R}/p\overline{R}$ its natural structure of $\widetilde{R}$--algebra. This also provides ${\rm
A}_{\rm log}^{\rm cris,\nabla}(R)\left\{\langle v_2-1,\ldots,v_a-1,w_1-1,\ldots,w_b-1\rangle \right\}$ modulo $p$ with a structure of $\widetilde{R}$--algebra. By
induction on $n$ we get unique maps  $\widetilde{R} \lra {\rm A}_{\rm log}^{\rm cris,\nabla}(R)\left\{\langle v_2-1,\ldots,v_a-1,w_1-1,\ldots,w_b-1\rangle
\right\}/(p^n)$ of $\cO[P']$--algebras, compatible for varying $n$, inducing via the natural map $${\rm A}_{\rm log}^{\rm cris,\nabla}(R)\left\{\langle
v_2-1,\ldots,v_a-1,w_1-1,\ldots,w_b-1\rangle \right\}/(p^n)\to \overline{R}/p^n\overline{R}$$the natural structure of $\widetilde{R}$--algebra on
$\overline{R}/p^n\overline{R}$. Hence,
$${\rm A}_{\rm log}^{\rm cris,\nabla}(R)\left\{\langle
v_2-1,\ldots,v_a-1,w_1-1,\ldots,w_b-1\rangle \right\}$$is a
$\WW\bigl(\widetilde{\bf E}^+\bigr)\tensor_{\WW(k)}
\widetilde{R}$--algebra.  By the universal property  of ${\rm
A}^{\rm cris}_{\rm log}(\widetilde{R})$ such morphism extends
uniquely to a morphism
$$f\colon {\rm A}^{\rm cris}_{\rm log}(\widetilde{R})\lra {\rm A}_{\rm log}^{\rm cris,\nabla}(R)\left\{\langle
v_2-1,\ldots,v_a-1,w_1-1,\ldots,w_b-1\rangle \right\}.$$Consider the natural map $g\colon {\rm A}_{\rm log}^{\rm cris,\nabla}(R)\left\{\langle
v_2-1,\ldots,v_a-1,w_1-1,\ldots,w_b-1\rangle \right\}\lra {\rm A}^{\rm cris}_{\rm log}(\widetilde{R})$ of ${\rm A}_{\rm log}^{\rm cris,\nabla}(R)$-algebras. By
construction it is a morphism of $\WW\bigl(\widetilde{\bf E}^+\bigr)\tensor_{\WW(k)}\cO[P']$--algebras. Arguing as before, one concludes that it is a morphism of
$\WW\bigl(\widetilde{\bf E}^+\bigr)\tensor_{\WW(k)} \widetilde{R}$--algebras since this holds  modulo $p^n$ by induction on $n$. One verifies that since the
composites $g\circ f$ and $f\circ g$ are morphisms of $\WW\bigl(\widetilde{\bf E}^+\bigr)\tensor_{\WW(k)} \widetilde{R}$--algebras, they are the identities on

$${\rm
A}^{\rm cris}_{\rm log}(\widetilde{R}) \mbox{ and on } {\rm A}_{\rm log}^{\rm cris,\nabla}(R)\left\{\langle v_2-1,\ldots,v_a-1,w_1-1,\ldots,w_b-1\rangle \right\}
$$
respectively, by the universal properties of divided power envelopes. This concludes the proof.
\end{proof}

\begin{remark}\label{rmk:alpha=1} We have morphisms $${\rm A}_{\rm
cris}^\nabla(R)\left\{\langle v_1-1,\ldots,v_a-1,w_1-1,\ldots,w_b-1\rangle \right\}\lra {\rm A}^{\rm cris}_{\rm log}(\widetilde{R})$$and $${\rm A}_{\rm
max}^\nabla(R)\left\{\frac{v_1-1}{p},\ldots,\frac{v_a-1}{p},\frac{w_1-1}{p},\ldots,\frac{w_b-1}{p}\right\} \lra  {\rm A}^{\rm max}_{\rm log}(\widetilde{R}).$$Using
the isomorphism ${\rm A}_{\rm log}^{\rm cris,\nabla}(R)\cong {\rm A}_{\rm cris}^\nabla(R)\bigl\{\langle u-1\rangle\bigr\}$ of \ref{lemma:structureAlog}, the first
map is a map of ${\rm A}_{\rm cris}^\nabla(R)$-algebras sending $v_1$ to $u^\alpha v_2^{-1} \cdots v_a^{-1}$ and being the identity on the $v_i$'s for $i\geq 2$ and
on the $w_j$'s. Hence, using \ref{lemma:structAlog} we conclude that the above morphisms are isomorphisms if $\alpha=1$.
\end{remark}

\begin{corollary}\label{display:Alogtorionfree} The rings
${\rm A}^{\rm cris}_{\rm log}(\widetilde{R})$  and ${\rm A}^{\rm
max}_{\rm log}(\widetilde{R})$ are $p$-torsion free.
\end{corollary}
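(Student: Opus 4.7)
The plan is to deduce the claim from the isomorphisms provided by Lemma \ref{lemma:structAlog} together with the $p$-torsion freeness of ${\rm A}_{\rm log}^{\rm cris,\nabla}(R)$ and ${\rm A}_{\rm log}^{\rm max,\nabla}(R)$ established in Corollary \ref{display:Alognabla}. More precisely, it suffices to prove that
$${\rm A}_{\rm log}^{\rm cris,\nabla}(R)\left\{\langle v_2-1,\ldots,v_a-1,w_1-1,\ldots,w_b-1\rangle \right\} \quad \text{and} \quad {\rm A}_{\rm log}^{\rm max,\nabla}(R)\left\{\frac{v_2-1}{p},\ldots,\frac{w_b-1}{p}\right\}$$
are $p$-torsion free.

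For the ``max'' ring the argument is essentially formal: by construction it is the $p$-adic completion of the polynomial ring ${\rm A}_{\rm log}^{\rm max,\nabla}(R)[T_{v_2},\ldots,T_{w_b}]$ in finitely many variables (with $T_{v_i}$ playing the role of $(v_i-1)/p$), and the $p$-adic completion of a $p$-torsion free ring remains $p$-torsion free. So this reduces immediately to the $p$-torsion freeness of ${\rm A}_{\rm log}^{\rm max,\nabla}(R)$, which is part of Corollary \ref{display:Alognabla}.

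For the ``cris'' ring I plan to mimic the explicit presentation already used in the proof of Corollary \ref{display:Alognabla}. Namely, by the universal property of divided power algebras, for every $n\in\N$ one has an isomorphism
$${\rm A}_{\rm log}^{\rm cris,\nabla}(R)/p^n\bigl[h_{i,0},h_{i,1},\ldots,\ell_{j,0},\ell_{j,1},\ldots\bigr]\big/\bigl(p h_{i,0}-(v_i-1)^p,\, p h_{i,m+1}-h_{i,m}^p,\, p\ell_{j,0}-(w_j-1)^p,\, p\ell_{j,m+1}-\ell_{j,m}^p\bigr)$$
$$\stackrel{\sim}{\lra}{\rm A}_{\rm log}^{\rm cris,\nabla}(R)\left\{\langle v_2-1,\ldots,w_b-1\rangle\right\}/p^n,$$
sending $h_{i,m}\mapsto \gamma^{m+1}(v_i-1)$ and $\ell_{j,m}\mapsto \gamma^{m+1}(w_j-1)$ with $\gamma(x):=x^p/p$. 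One then proceeds by induction on $n$: knowing that ${\rm A}_{\rm log}^{\rm cris,\nabla}(R)$ is $p$-torsion free by \ref{display:Alognabla}, the short exact sequence $0\to A/p^{n-1}\stackrel{\cdot p}{\to} A/p^n\to A/p\to 0$ (with $A={\rm A}_{\rm log}^{\rm cris,\nabla}(R)\{\langle v_2-1,\ldots,w_b-1\rangle\}$) reduces the $p$-torsion freeness statement modulo $p^n$ to the case $n=1$. The latter amounts to checking that the explicit polynomial presentation above has no $p$-torsion, which follows by the same direct analysis as in the proof of Corollary \ref{display:Alognabla}.

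The only mild technical point is to verify that passing from the divided power envelope (and its $p$-adic completion) to a finite iterated divided power extension in the variables $v_i-1, w_j-1$ preserves the explicit presentation modulo $p^n$; this is the same bookkeeping carried out in loc.~cit., which is straightforward once one expresses the divided powers $\gamma^{m+1}(v_i-1)$ and $\gamma^{m+1}(w_j-1)$ in terms of the generators $h_{i,m}$ and $\ell_{j,m}$. Combined with Lemma \ref{lemma:structAlog}, this yields the $p$-torsion freeness of both ${\rm A}^{\rm cris}_{\rm log}(\widetilde{R})$ and ${\rm A}^{\rm max}_{\rm log}(\widetilde{R})$.
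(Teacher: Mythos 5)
Your proposal is correct and follows the same route as the paper, which simply deduces the corollary from Lemma \ref{lemma:structAlog} and Corollary \ref{display:Alognabla}; you have merely filled in the (routine) verification that forming the $p$-adically completed divided power polynomial algebra over a $p$-torsion free base preserves $p$-torsion freeness. The only remark is that your induction on $n$ can be shortened: the uncompleted DP polynomial algebra is a free module over ${\rm A}_{\rm log}^{\rm cris,\nabla}(R)$ on the monomials in the $(v_i-1)^{[m]}$, $(w_j-1)^{[m]}$, hence $p$-torsion free, and the $p$-adic completion of a $p$-torsion free module is again $p$-torsion free.
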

\begin{proof} This follows from \ref{lemma:structAlog} and
\ref{display:Alognabla}.

\end{proof}

\subsubsection{Galois action, filtrations, Frobenii,
connections}\label{GaloisfiltFrob}

Write ${\rm A}={\rm A}_{\rm cris}^\nabla\bigl(R\bigr)$ or ${\rm
A}_{\rm max}^\nabla\bigl(R\bigr)\bigl[p^{-1}\bigr]$, ${\rm A}_{\rm
log}^\nabla:={\rm A}^{\rm cris,\nabla}_{\rm log}\bigl(R\bigr)$ or
${\rm A}^{\rm max,\nabla}_{\rm
log}\bigl(\widetilde{R}\bigr)\bigl[p^{-1}\bigr]$ and ${\rm A}_{\rm
log}:={\rm A}^{\rm cris}_{\rm log}\bigl(\widetilde{R}\bigr)$ or
${\rm A}^{\rm max}_{\rm
log}\bigl(\widetilde{R}\bigr)\bigl[p^{-1}\bigr]$. Put ${\rm
B}={\rm A}\bigl[t^{-1}\bigr]$, ${\rm B}_{\rm log}^\nabla={\rm
A}_{\rm log}^\nabla\bigl[t^{-1}\bigr]$ and ${\rm B}_{\rm log}={\rm
A}_{\rm log}\bigl[t^{-1}\bigr]$.

\

{\it Galois action:} The Galois action of $\mathcal{G}_R$ on $
\WW\bigl(\widetilde{\bf E}^+\bigr)$ extends to an action on the
rings ${\rm A}$, ${\rm A}_{\rm log}^\nabla$ and ${\rm A}_{\rm
log}$ which are continuous for the $p$-adic topology. For every
$\sigma\in \mathcal{G}_R$ we have $\sigma(t)=\chi(\sigma) t$ with
$\chi\colon \mathcal{G}_R\to \Z_p^\ast$ the cyclotomic character.
Thus the action of $\mathcal{G}_R$  extends to an action on ${\rm
B}$, ${\rm B}_{\rm log}^\nabla$ and ${\rm B}_{\rm log}$.

\

{\it Filtrations:} Note that the rings ${\rm A}_{\rm
cris}^\nabla(R)$ and ${\rm A}^{\rm cris}_{\rm
log}(\widetilde{R})$, with and without $\nabla$, are endowed with
the divided power filtrations which are decreasing and exhaustive. Similarly,
${\rm A}_{\rm max}(R)$ and ${\rm A}^{\rm max}_{\rm
log}(\widetilde{R})$, with and without $\nabla$, are endowed with
the $p^{-1}\Ker\bigl(\Theta_{\rm log}'\bigr)$-adic filtrations
which are compatible with those on ${\rm A}_{\rm cris}^\nabla(R)$
and ${\rm A}^{\rm cris}_{\rm log}(\widetilde{R})$. Set ${\rm
Fil}^r {\rm B}:= \sum_{n\in \Z} t^n {\rm Fil}^{r-n} {\rm A}$,
${\rm Fil}^r {\rm B}_{\rm log}^\nabla:= \sum_{n\in \Z} t^n {\rm
Fil}^{r-n} {\rm A}_{\rm log}^\nabla$ and ${\rm Fil}^r {\rm B}_{\rm
log}:= \sum_{n\in \Z} t^n {\rm Fil}^{r-n} {\rm A}_{\rm log}$ for
every $r\in\Z$.

\

{\it Frobenii:} Let $\varphi_\cO\colon \cO\to \cO$ be the
Frobenius morphism inducing the usual Frobenius on $\WW(k)$ and
$Z\mapsto Z^p$. Let $\varphi_{\widetilde{R}}\colon
\widetilde{R}\lra  \widetilde{R}$ be the unique morphism which
lifts Frobenius modulo $p$ and is compatible via the chart
$\psi_{\widetilde{R}}\colon \cO[P']\to \widetilde{R}$ with the
morphism $\cO[P']\lra \cO[P']$ which is $\varphi_\cO$ on $\cO$ and
gives multiplication by $p$ on $P'$. Then, $\varphi\otimes
\varphi_{\cO}$ on $\WW\bigl(\widetilde{\bf
E}^+\bigr)\tensor_{\WW(k)} \cO$ extends to Frobenius morphisms
$\varphi$ on ${\rm A}$,  ${\rm A}_{\rm log}^\nabla$ and ${\rm
A}_{\rm log}$. They are compatible with respect to the natural
morphisms between these rings. Since $\varphi(t)=p t$, the Frobenii
extend to compatible morphisms on ${\rm B}_{\rm cris}^{\nabla}$,
${\rm B}_{\rm log}^{\nabla}$ and ${\rm B}_{\rm log}$.

\

{\it Connections:} Using \ref{lemma:structAlog} define the ${\rm A}$-linear connections
$$\nabla_{\widetilde{R}/\WW(k)}\colon {\rm A}_{\rm log}
\lra {\rm A}_{\rm log}\otimes_{\widehat{\widetilde{R}}}
\widehat{\omega}^1_{\widetilde{R}/\WW(k)},$$characterized by the
property that for every $m\in\N$ we have
$$\nabla\bigl((y-1)^{[m]}\bigr)=(y-1)^{[m-1]},\qquad \nabla\bigl((y-1)^{m} p^{-m}\bigr)=(m p^{-1})
(y-1)^{m-1} p^{-(m-1)}$$for $y=u$, $v_1,\ldots,v_a$, or
$w_1,\ldots,w_b$.  One defines similarly
$$\nabla_{\widetilde{R}/\cO}\colon {\rm A}_{\rm log} \lra {\rm
A}_{\rm log}\otimes_{\widehat{\widetilde{R}}} \widehat{\omega}^1_{\widetilde{R}/\cO}$$as the
${\rm A}_{\rm log}^\nabla$-linear connections  characterized by the
formula above for $y=v_2,\ldots,v_a$, or $w_1,\ldots,w_b$.\smallskip

These connections are compatible for the natural morphisms ${\rm A}_{\rm log}^{\rm cris}(\widetilde{R})\lra
{\rm A}_{\rm log}^{\rm max}(\widetilde{R})$.  They
extend to connections on ${\rm B}_{\rm log}$. We will also prove in \ref{cor:Frobeniusishorizontal} that
Frobenius on ${\rm B}_{\rm log}$ is horizontal with respect
to the connections $\nabla_{\widetilde{R}/\WW(k)}$ and $\nabla_{\widetilde{R}/\cO}$.

\begin{corollary}\label{cor:nabla=0} The following hold:\smallskip

(1) the connections $\nabla_{\widetilde{R}/\WW(k)}$ and
$\nabla_{\widetilde{R}/\cO}$  are $\cG_R$-equivariant, they are
integrable and they satisfy Griffiths' transversality with respect
to the given filtrations;\smallskip

(2) the connections $\nabla_{\widetilde{R}/\WW(k)}$ and
$\nabla_{\widetilde{R}/\cO}$ on ${\rm A}^{\rm cris}_{\rm
log}\bigl(\widetilde{R}\bigr)$ are $p$-adically
quasi-nilpotent;\smallskip

(3) the connections $\nabla_{\widetilde{R}/\WW(k)}$ and
$\nabla_{\widetilde{R}/\cO}$ are compatible with the derivation
$d\colon \widetilde{R} \lra
\widehat{\omega}^1_{\widetilde{R}/\WW(k)}$ (resp.~$d\colon
\widetilde{R} \lra
\widehat{\omega}^1_{\widetilde{R}/\cO}$);\smallskip

(4) we have  ${\rm A}_{\rm max}^\nabla(R)={\rm A}^{\rm max}_{\rm
log}(\widetilde{R})^{\nabla_{\widetilde{R}/\WW(k)}=0}$ and ${\rm
A}_{\rm log}^{\rm max,\nabla}(R)={\rm A}^{\rm max}_{\rm
log}(\widetilde{R})^{\nabla_{\widetilde{R}/\cO}=0}$.
\end{corollary}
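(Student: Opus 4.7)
The plan is to reduce each assertion to the explicit description of $\mathrm{A}^{\mathrm{cris}}_{\mathrm{log}}(\widetilde{R})$ and $\mathrm{A}^{\mathrm{max}}_{\mathrm{log}}(\widetilde{R})$ provided by \ref{lemma:structureAlog} and \ref{lemma:structAlog}, which realize these rings as (divided--power or convergent power series) polynomial rings over $\mathrm{A}_{\mathrm{cris}}^\nabla(R)$, resp.~$\mathrm{A}_{\mathrm{max}}^\nabla(R)$, in the variables $u-1, v_1-1,\ldots,v_a-1, w_1-1,\ldots,w_b-1$ (subject, if $\alpha\ne 1$, to the relation $v_1\cdots v_a = u^\alpha$, handled by eliminating $v_1$ as in \ref{rmk:alpha=1}). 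Under these identifications, $\nabla_{\widetilde R/\WW(k)}$ acts as the formal differentiation in $u,v_i,w_j$ and $\nabla_{\widetilde R/\cO}$ acts as the formal differentiation only in the $v_i$ and $w_j$, with coefficients the logarithmic differentials $\operatorname{dlog}Z,\operatorname{dlog}\widetilde X_i,\operatorname{dlog}\widetilde Y_j$; in each case the connection is linear over the appropriate base ring.

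First I would handle (1). Integrability is immediate from the explicit description, since $\nabla$ is the sum of pairwise commuting partial derivations in the independent variables $u-1,v_i-1,w_j-1$; Griffiths' transversality follows because the divided--power filtration is generated by these variables and $\nabla$ lowers the degree of $(y-1)^{[m]}$ to $(y-1)^{[m-1]}$, dropping the filtration by exactly one. For $\cG_R$--equivariance I would verify the compatibility on generators, exactly as in the proof of \ref{lemma:nabla=0}: each $y=u,v_i,w_j$ satisfies $\sigma(y)=[\varepsilon]^{c(\sigma)} y$ for some $c(\sigma)\in\Z_p$, while $\sigma$ acts trivially on $\operatorname{dlog}\widetilde X_i$, $\operatorname{dlog}\widetilde Y_j$, $\operatorname{dlog}Z$, so Leibniz together with $\nabla([\varepsilon]^{c(\sigma)})=0$ (since Teichm\"uller lifts lie in $\mathrm A$) shows that $\sigma\nabla=\nabla\sigma$ on generators and hence everywhere.

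For (2), I would use the explicit presentation of $\mathrm{A}^{\mathrm{cris}}_{\mathrm{log}}(\widetilde R)/p^n$ coming from iterating \ref{display:Alognabla} and \ref{lemma:structAlog}: it is a polynomial algebra over $\WW_n(\widetilde{\mathbf E}^+)$ in finitely many generators of the form $\gamma^{m+1}(y-1)$ subject to the relations $p\rho_{m+1}=\rho_m^p$ etc. Each partial connection $\partial/\partial y$ shifts the index $m\mapsto m-1$ on these generators and kills everything not depending on $y-1$, so a fixed monomial is annihilated after finitely many iterations; since any element mod $p^n$ is a finite sum of such monomials, quasi--nilpotence mod $p^n$ follows, and hence $p$--adic quasi--nilpotence.

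For (3), compatibility reduces, by the Leibniz rule and continuity, to computing $\nabla$ on $Z,\widetilde X_i,\widetilde Y_j$. Since $Z=[\overline\pi] u^{-1}$, $\widetilde X_i=[\overline X_i] v_i^{-1}$, $\widetilde Y_j=[\overline Y_j] w_j^{-1}$, and Teichm\"uller lifts lie in the base ring $\mathrm A$ on which the connections are trivial, the defining formulas for $\nabla$ on $v_i^{-1}$ etc.~(including the implicit $\operatorname{dlog}$ factor) give $\nabla(\widetilde X_i)=\widetilde X_i\operatorname{dlog}\widetilde X_i = d\widetilde X_i$, and analogously for $Z$ and $\widetilde Y_j$. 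Thus the restriction of $\nabla_{\widetilde R/\WW(k)}$ (resp.~$\nabla_{\widetilde R/\cO}$) to $\widetilde R$ agrees with $d$. Finally for (4), \ref{lemma:structureAlog} gives $\mathrm A_{\mathrm{log}}^{\mathrm{max},\nabla}(R)\cong \mathrm A_{\mathrm{max}}^\nabla(R)\{(u-1)/p\}$ and \ref{lemma:structAlog} gives $\mathrm A_{\mathrm{log}}^{\mathrm{max}}(\widetilde R)\cong \mathrm A_{\mathrm{log}}^{\mathrm{max},\nabla}(R)\{(v_2-1)/p,\ldots,(w_b-1)/p\}$, so $\mathrm A_{\mathrm{log}}^{\mathrm{max}}(\widetilde R)$ is the $p$--adic completion of a polynomial algebra over $\mathrm A_{\mathrm{max}}^\nabla(R)$ in $(u-1)/p$ and $(v_i-1)/p$, $(w_j-1)/p$. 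Since $\nabla_{\widetilde R/\cO}$ (resp.~$\nabla_{\widetilde R/\WW(k)}$) is precisely the total derivative in the $(v_i-1)/p,(w_j-1)/p$ (resp.~adding also $(u-1)/p$), the kernel is the ring of elements independent of these variables, i.e.~$\mathrm A_{\mathrm{log}}^{\mathrm{max},\nabla}(R)$ (resp.~$\mathrm A_{\mathrm{max}}^\nabla(R)$).

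The main technical obstacle is checking the $p$--adic convergence arguments cleanly in (4): one must verify that if $\nabla(x)=0$ for $x\in \mathrm A_{\mathrm{log}}^{\mathrm{max}}(\widetilde R)$, then the coefficients of $x$ viewed in the power--series expansion lie in the appropriate subring rather than just approximate it modulo $p^n$. This is done by writing $x=\sum_I a_I \prod\bigl((y_i-1)/p\bigr)^{I_i}$ with $a_I\to 0$, applying $\partial/\partial y_i$ term by term, and using the $p$--torsion freeness from \ref{display:Alognabla} and \ref{display:Alogtorionfree} to conclude that $I\ne 0$ forces $a_I=0$.
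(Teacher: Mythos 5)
Your proposal is correct and follows essentially the same route as the paper: the paper's own proof simply invokes the explicit descriptions in \ref{lemma:structureAlog} and \ref{lemma:structAlog} for (1), (2) and (4), checks $\cG_R$-equivariance as in \ref{lemma:nabla=0}, and proves (3) by the generator computation of \ref{cor:conenctioninducestandardconnection}; you have merely written out the details (including the elimination of $v_1$ via $v_1=u^\alpha v_2^{-1}\cdots v_a^{-1}$ and the $p$-torsion-freeness needed for the kernel computation in (4)).
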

\begin{proof}
Claims (2) and (4) and the claims that the connections are
integrable and that the filtration satisfies Griffiths'
transversality follows from the construction and
\ref{lemma:structAlog}. The $\cG_R$-equivariance is checked as in
\ref{lemma:nabla=0}. Claim (3) is proven arguing as in the proof
of \ref{cor:conenctioninducestandardconnection}.
\end{proof}

\subsubsection{Relation with ${\rm B}_{\rm
dR}$}\label{sec:reltoBdR} Note that the ideal $\Ker\bigl(\Theta_{\log}\bigr)$ admits
divided powers in $ {\rm B}_{\rm
dR}^{\nabla,+}\bigl(\widetilde{R}\bigr)/{\rm Fil}^n {\rm B}_{\rm
dR}^{\nabla,+}\bigl(\widetilde{R}\bigr)$ for every $n\in\N$ since
$p$ is invertible in the latter. Thus, the map
$\WW\bigl(\widetilde{\bf E}^+\bigr)\tensor_{\WW(k)} \cO \lra  {\rm
B}_{\rm dR}^{\nabla,+}\bigl(\widetilde{R}\bigr)/{\rm Fil}^n {\rm
B}_{\rm dR}^{\nabla,+}\bigl(\widetilde{R}\bigr)$ extends to a map
$$ \bigl(\WW\bigl(\widetilde{\bf E}^+\bigr)\tensor_{\WW(k)} \cO
\bigr)^{\rm logDP} \lra {\rm B}_{\rm
dR}^{\nabla,+}\bigl(\widetilde{R}\bigr)/{\rm Fil}^n {\rm B}_{\rm
dR}^{\nabla,+}\bigl(\widetilde{R}\bigr) .$$This provides with a
morphism ${\rm A}_{\rm log}^{\rm cris,\nabla}(R) \lra {\rm B}_{\rm
dR}^{\nabla,+}\bigl(\widetilde{R}\bigr)$. Similarly we get natural
morphisms $${\rm A}_{\rm log}^{\rm cris,\nabla}(R) \lra {\rm
A}_{\rm log}^{\rm max,\nabla}(R) \lra  {\rm B}_{\rm
dR}^{\nabla,+}\bigl(\widetilde{R}\bigr),\qquad {\rm A}^{\rm
cris}_{\rm log}(\widetilde{R}) \lra {\rm A}^{\rm max}_{\rm
log}(\widetilde{R}) \lra  {\rm B}_{\rm
dR}^{+}\bigl(\widetilde{R}\bigr).$$

\begin{proposition}\label{prop:BcrissubsetBdR} The given morphisms
have the following properties:\smallskip

(1) they are injective. In particular,  ${\rm A}^{\rm cris}_{\rm log}(\widetilde{R})$ and ${\rm A}^{\rm max}_{\rm log}(\widetilde{R})$, with and without $\nabla$,
are $t$-torsion free;\smallskip

(2)  they are compatible with respect to the
connections;\smallskip

(3) they are strictly compatible with respect to the filtrations.
In particular,
$${\rm Gr}^\bullet {\rm A}^{\rm cris}_{\rm log}(\widetilde{R})\cong
\oplus_{\underline{n}\in \N^{d+1}} \widehat{\overline{R}}
\xi^{[n_0]} (u-1)^{[n_1]} (v_2-1)^{[n_2]}\cdots
(v_a-1)^{[n_a]}(w_1-1)^{[n_{a+1}]}\cdots (w_b-1)^{[n_d]}$$and
$${\rm Gr}^\bullet {\rm A}^{\rm max}_{\rm log}(\widetilde{R})\cong
\oplus_{\underline{n}\in \N^{d+1}} \widehat{\overline{R}}
\left(\frac{\xi}{p}\right)^{n_0}
\left(\frac{u-1}{p}\right)^{n_1}
\left(\frac{v_2-1}{p}\right)^{n_2}\cdots
\left(\frac{w_b-1}{p}\right)^{n_d}.$$

(4) the maps ${\rm B}_{\rm log}^{\rm cris,\nabla}(R) \lra {\rm
B}_{\rm log}^{\rm max,\nabla}(R) \lra  {\rm B}_{\rm
dR}^{\nabla}\bigl(\widetilde{R}\bigr)$ and ${\rm B}^{\rm
cris}_{\rm log}(\widetilde{R}) \lra {\rm B}^{\rm max}_{\rm
log}(\widetilde{R}) \lra  {\rm B}_{\rm
dR}\bigl(\widetilde{R}\bigr)$ are injective, compatible with
connections, strictly compatible with the filtrations and $${\rm
Gr}^\bullet {\rm B}^{\rm cris}_{\rm log}(\widetilde{R})\cong {\rm
Gr}^\bullet {\rm B}^{\rm max}_{\rm log}(\widetilde{R}) \cong {\rm
Gr}^\bullet {\rm B}_{\rm dR}\bigl(\widetilde{R}\bigr).$$
\end{proposition}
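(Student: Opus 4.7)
The plan is to reduce all four parts to explicit computations of the associated graded rings, in the spirit of \cite[Prop.~6.2.1]{brinon} and the analogous result in the smooth case. I would treat simultaneously the logarithmic rings with and without tilde, and the $\nabla$ and non-$\nabla$ variants; part (4) will then follow by inverting $t$.

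First I would reduce parts (1) and (3) to showing that the induced maps on the associated graded rings are injective. If $f\colon A\to B$ is a filtered map whose filtrations are exhaustive and separated and if $\mathrm{Gr}^\bullet f$ is injective, then a standard induction shows that $f$ itself is injective and strict with respect to the filtrations. For the sources, Lemmas \ref{lemma:structureAlog}, \ref{lemma:structAlog}, and Corollary \ref{display:Alogtorionfree} give ${\rm A}^{\rm cris}_{\rm log}(\widetilde{R}) \cong {\rm A}_{\rm cris}^\nabla(R)\bigl\{\langle u-1, v_2-1, \ldots, w_b-1 \rangle\bigr\}$ as a $p$-torsion free completed DP algebra over $\WW\bigl(\widetilde{\bf E}^+\bigr)$ in the generators $\xi, u-1, v_2-1, \ldots, w_b-1$ of $\Ker(\Theta'_{\widetilde{R},\log})$ (Lemma \ref{lemma:structurBdR+}), and the max variant has the analogous presentation with divisions by $p$ in place of divided powers. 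For the targets, Proposition \ref{cor:BdRstr}(5)--(6) yields $\mathrm{Gr}^\bullet {\rm B}_{\rm dR}^+(\widetilde{R}) = \widehat{\overline{R}}[p^{-1}][t, v_1-1, \ldots, w_b-1]$ and the analogous description for ${\rm B}_{\rm dR}^{\nabla,+}(\widetilde{R})$.

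The map on graded pieces sends $\xi^{[n_0]}$ to $t^{n_0}/n_0!$ times a unit (using that $t$ and $\xi$ generate the same ideal in ${\rm A}_{\rm cris}^\nabla$ by the classical result of Fontaine, as used in \ref{cor:BdRstr}(5)) and $(y-1)^{[n]}$ to $(y-1)^n/n!$ for $y \in \{u, v_2, \ldots, w_b\}$. Injectivity of this map of graded $\widehat{\overline{R}}$-modules then reduces to the fact that the natural inclusion $\widehat{\overline{R}} \hookrightarrow \widehat{\overline{R}}[p^{-1}]$ is injective, which is the $p$-torsion freeness of $\widehat{\overline{R}}$ proven in Proposition \ref{prop:Rbarff}. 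This gives strict compatibility, the explicit monomial bases displayed in (3), and hence part (1); the $t$-torsion freeness follows from the embedding into the $t$-torsion free ring ${\rm B}_{\rm dR}^+(\widetilde{R})$. Compatibility with connections (part (2)) is immediate from \S\ref{GaloisfiltFrob} and \S\ref{sec:BdR(R)}: the connections are defined by the same formulas $\nabla\bigl((y-1)^{[m]}\bigr) = (y-1)^{[m-1]}$ on the source and $\nabla\bigl((y-1)^m\bigr) = m (y-1)^{m-1}$ on the target, and these agree under division by $m!$.

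For part (4) I would invert $t$, which is a non-zero-divisor by (1), so injectivity, compatibility with connections, and strict compatibility with filtrations all pass to ${\rm B}^{\rm cris}_{\rm log}(\widetilde{R}), {\rm B}^{\rm max}_{\rm log}(\widetilde{R})$ and ${\rm B}_{\rm dR}(\widetilde{R})$. The key observation for the isomorphism of graded rings is that $p^{-1} = (p-1)!\, t^{[p]}\, t^{-p}$ lies in ${\rm B}^{\rm cris}_{\rm log}(\widetilde{R})$, so all factorials $n!$ become units; the graded map, which on the $r$-th piece sends $\xi^{[n_0]}(u-1)^{[n_1]}\cdots t^{r - \sum n_i}$ to $\bigl(\prod n_i!\bigr)^{-1}\, t^{n_0}(u-1)^{n_1}\cdots t^{r - \sum n_i}$, is therefore an isomorphism onto $\mathrm{Gr}^\bullet {\rm B}_{\rm dR}(\widetilde{R})$. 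The main technical subtlety, and really the only step that is not purely formal, is the bookkeeping for the max version, where divisions by powers of $p$ replace divided powers and one must verify that both variants embed compatibly into the $(\xi, u-1, \ldots)$-adic filtration on ${\rm B}_{\rm dR}^+(\widetilde{R})$ which has neither DP structure nor visible $p$-denominators.
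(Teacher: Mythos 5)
Your overall strategy --- prove injectivity on associated graded pieces and then propagate along the filtration --- is genuinely different from the paper's, which first disposes of (2) and of (4)-given-(1)-and-(3), then uses \ref{lemma:structureAlog}, \ref{lemma:structAlog} and \ref{cor:BdRstr} to reduce (1) and (3) to the one-variable statement that ${\rm A}_{\rm cris}^{\nabla}(R)\to {\rm A}_{\rm max}^{\nabla}(R)\to {\rm B}_{\rm dR}^{\nabla,+}(R)$ is injective and strict, and for that invokes \cite[Prop.~6.2.1]{brinon}. The graded-level part of your argument is sound and essentially recovers the explicit description in (3): the composite of the natural surjection from the free module on divided-power monomials onto ${\rm Gr}^r$ with the map to ${\rm Gr}^r\,{\rm B}_{\rm dR}^{+}(\widetilde{R})$ multiplies each basis element by $1/\prod n_i!$, so its injectivity does reduce to the $p$-torsion freeness of $\widehat{\overline{R}}$ from \ref{prop:Rbarff}, and this yields strictness degree by degree, i.e.\ $f^{-1}\bigl(\Fil^r\bigr)=\Fil^r$.

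However, there is a genuine gap in your proof of part (1). Deducing injectivity of $f$ from injectivity of ${\rm Gr}^\bullet f$ requires the filtration on the \emph{source} to be separated, and you posit this as a hypothesis without verifying it. For the divided-power filtration on ${\rm A}^{\rm cris}_{\rm log}(\widetilde{R})$ (equivalently, after your own reduction, on ${\rm A}_{\rm cris}^{\nabla}(R)$) separatedness is not available a priori: given the degree-by-degree strictness that your graded computation provides, one gets exactly $\Ker(f)=\bigcap_r \Fil^r$, so separatedness is \emph{equivalent} to the injectivity you are trying to prove. One cannot identify ${\rm A}_{\rm cris}^{\nabla}(R)$ with the explicit subring $\bigl\{\sum_n a_n\xi^{[n]} : a_n\to 0\bigr\}$ of ${\rm B}_{\rm dR}^{\nabla,+}(R)$ before knowing the map is injective, and the relations $\xi\cdot\xi^{[p-1]}=p\,\xi^{[p]}$ prevent the divided-power monomials from forming a topological basis over $\WW\bigl(\widetilde{\bf E}^+\bigr)$, so nothing here is formal. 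This is precisely the hard content of \cite[Prop.~6.2.1]{brinon} (the relative version of Fontaine's $A_{\rm cris}\hookrightarrow B_{\rm dR}^+$), whose proof works with the explicit mod-$p^n$ presentations as in \ref{display:Alognabla}; your argument must either reproduce that analysis or cite it. The same presentations are also needed to justify that the free module of divided-power monomials really surjects onto ${\rm Gr}^r$, since $\Fil^r$ is defined via a $p$-adic closure. Parts (2) and (4) are fine as stated once (1) and (3) are in place.
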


\begin{proof} The compatibilities with the filtrations and
connections are clear from the construction. If the morphisms are
injective, since ${\rm B}_{\rm dR}^{+}\bigl(\widetilde{R}\bigr)$
is $t$-torsion free by  \ref{cor:BdRstr},  also  ${\rm A}^{\rm
cris}_{\rm log}(\widetilde{R})$ and ${\rm A}^{\rm max}_{\rm
log}(\widetilde{R})$ are $t$-torsion free. Then, also the
morphisms in (4) are injective and compatible with the
connections. They are also compatible with respect to the
filtrations and if (3) holds, they are strictly compatible with
respect to the filtrations and induce isomorphisms on graded rings
by \ref{cor:BdRstr}.

We are left to prove that the given morphism are injective and
that the filtration on ${\rm B}_{\rm dR}^+$ induce the filtrations
on ${\rm A}_{\rm log}^\nabla$ and ${\rm A}_{\rm log}$, using the
conventions of \S\ref{GaloisfiltFrob}. Due to \ref{cor:BdRstr},
\ref{lemma:structureAlog} and \ref{lemma:structAlog} it suffices
to prove that the maps ${\rm A}_{\rm cris}^{\nabla}(R) \lra {\rm
A}_{\rm max}^{\nabla}(R) \lra {\rm B}_{\rm
dR}^{\nabla,+}\bigl(R\bigr)$ are injective and that ${\rm Fil}^r
{\rm A}_{\rm cris}^{\nabla}(R)={\rm A}_{\rm cris}^{\nabla}(R)\cap
{\rm Fil}^r {\rm B}_{\rm dR}^{\nabla,+}\bigl(R\bigr)$ (and
similarly for ${\rm A}_{\rm max}^{\nabla}(R)$). For this we refer
to the proof of \cite[Prop. 6.2.1]{brinon}. The last statement
follows from the strict compatibility of the filtrations, the
explicit description of the filtrations in ${\rm A}^{\rm
cris}_{\rm log}(\widetilde{R})$ and ${\rm A}^{\rm max}_{\rm
log}(\widetilde{R})$ in \ref{lemma:structureAlog} and
\ref{lemma:structAlog} and the description of ${\rm
Gr}^\bullet{\rm B}_{\rm dR}^{\nabla,+}\bigl(R\bigr)$ in
\ref{cor:BdRstr}.
\end{proof}

\subsubsection{Descent from ${\rm B}^{\rm max}_{\rm log}$}
\label{sec:descentBlogmax}

Let $\widehat{\widetilde{R}}$ be the $(p,Z)$-adic completion of
$\widetilde{R}$.

\begin{definition}\label{def:RlogRmax}
Define $\widetilde{R}_{\rm cris}$  as the $p$-adic completion of the logarithmic divided power envelope of $\widehat{\widetilde{R}}$ with respect to the kernel
$\bigl(P_\pi(Z)\bigr)$ of the morphism from $\widehat{\widetilde{R}}$ to the $p$-adic completion of $R$, compatible with the canonical divided power structure on $p
\widehat{\widetilde{R}}$. Put
$$\widetilde{R}_{\rm max}:=\widehat{\widetilde{R}}\left\{\frac{P_\pi(Z)}{p}\right\}$$to
be the $p$-adic completion of the subring
$\widehat{\widetilde{R}}\left[\frac{P_\pi(Z)}{p}\right]$ of\/
$\widehat{\widetilde{R}}\bigl[p^{-1}\bigr]$.
\end{definition}

Consider the inclusion $\widetilde{R}_{\rm max}\bigl[p^{-1}\bigr] \subset {\rm B}^{\rm max}_{\rm log}\bigl(\widetilde{R}\bigr)$. We have the following fundamental
result:

\begin{theorem}\label{thm:Blogmaxff} (1) If a sequence of $\widetilde{R}_{\rm max}\bigl[p^{-1}\bigr]$-modules is exact after
base change to ${\rm B}^{\rm max}_{\rm log}\bigl(\widetilde{R}\bigr)$, then it is exact.

If an $\widetilde{R}_{\rm max}\bigl[p^{-1}\bigr]$-module becomes finite and projective as ${\rm B}^{\rm max}_{\rm log}\bigl(\widetilde{R}\bigr)$-module after base
change to ${\rm B}^{\rm max}_{\rm log}\bigl(\widetilde{R}\bigr)$, then it is finite and projective as $\widetilde{R}_{\rm max}\bigl[p^{-1}\bigr]$-module.

\smallskip

(2) If $\alpha=1$ then $\widetilde{R}_{\rm max}\bigl[p^{-1}\bigr]\subset {\rm B}^{\rm max}_{\rm log}\bigl(\widetilde{R}\bigr)$
is a faithfully flat extension.
\end{theorem}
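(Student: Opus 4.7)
The approach is to factor the inclusion $\widetilde{R}_{\max}[p^{-1}] \hookrightarrow {\rm B}^{\max}_{\rm log}(\widetilde{R})$ through an intermediate ring $A := A^{+,\log}_{\widetilde{R}^o,\max}$ constructed analogously to ${\rm A}^{\max}_{\rm log}(\widetilde{R})$ but with $\widetilde{R}$ replaced throughout by the regular cover $\widetilde{R}^o$ from \S\ref{section:localdescription}. Concretely, $A$ is defined as the $p$-adic completion of the $\bigl(\WW(\widetilde{\bf E}^+)\tensor_{\WW(k)}\widetilde{R}^o\bigr)^{\rm log}$-subalgebra of its $p$-inversion generated by $p^{-1}$ times the kernel of the corresponding $\Theta'$-map, using the natural lift $\widetilde{R}^o\to \WW(\widetilde{\bf E}^+)$ from \ref{lemma:A+R}(2). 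The inclusion $\widetilde{R}\subset \widetilde{R}^o$ produces the chain $\widetilde{R}_{\max}[p^{-1}]\hookrightarrow \widetilde{R}^o_{\max}[p^{-1}]\hookrightarrow A[p^{-1}]\hookrightarrow {\rm B}^{\max}_{\rm log}(\widetilde{R})$. The theorem is reduced to three intermediate claims: (a) $A[p^{-1}]\hookrightarrow {\rm B}^{\max}_{\rm log}(\widetilde{R})$ is faithfully flat; (b) $\widetilde{R}^o_{\max}[p^{-1}]\hookrightarrow A[p^{-1}]$ is faithfully flat; (c) $\widetilde{R}_{\max}[p^{-1}]\hookrightarrow \widetilde{R}^o_{\max}[p^{-1}]$ is split as $\widetilde{R}_{\max}[p^{-1}]$-modules, and is the identity when $\alpha=1$.

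Once (a), (b), (c) are established, part (2) follows at once since $\widetilde{R}=\widetilde{R}^o$ when $\alpha=1$. For part (1), suppose a sequence $0\to M'\to M\to M''\to 0$ of $\widetilde{R}_{\max}[p^{-1}]$-modules becomes exact after base-change to ${\rm B}^{\max}_{\rm log}(\widetilde{R})$. By (a) it becomes exact after base-change to $A[p^{-1}]$; combining with (b) it becomes exact after base-change to $\widetilde{R}^o_{\max}[p^{-1}]$; and (c) then realizes every $\widetilde{R}_{\max}[p^{-1}]$-module as a direct summand of its base-change to $\widetilde{R}^o_{\max}[p^{-1}]$, so exactness descends. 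The projective module statement is handled analogously, using that a direct summand of a finite projective module over a faithfully flat base descends to a finite projective module.

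Claim (c) is immediate from \ref{lemmma:Rtildeinftyflat}(4): the direct summand property of $\widetilde{R}\hookrightarrow \widetilde{R}^o$ as $\widetilde{R}$-modules is preserved under $(p,Z)$-adic completion, under adjoining $P_\pi(Z)/p$ (which lies in $\widetilde{R}[p^{-1}]$ and hence in $\widetilde{R}^o[p^{-1}]$), and under further $p$-adic completion and $p$-inversion. Claim (b) is verified via the explicit description of $A$: because $\widetilde{R}^o$ satisfies the assumptions of \S\ref{section:localdescription} with $\alpha_{\widetilde{R}^o}=1$, the analogue of Remark \ref{rmk:alpha=1} applies and writes $A$ as a $p$-adically completed divided-power algebra over ${\bf A}^+_{\widetilde{R}^o}$ in the variables $\xi/p$ and $(v_i-1)/p$, $(w_j-1)/p$ (without needing $u-1$ as a separate variable). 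Inverting $p$ then realizes $A[p^{-1}]$ as a completed tensor product of $\widetilde{R}^o_{\max}[p^{-1}]$ with the classical Fontaine ring $A_{\max}[p^{-1}]$ adjoined with convergent power series in the logarithmic variables; the faithful flatness follows from the classical faithful flatness of $K_0\hookrightarrow A_{\max}[p^{-1}]$ combined with the faithful flatness of adjoining convergent divided variables.

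The main obstacle is claim (a). The key input is Proposition \ref{prop:A+Rtildeff}, which only gives ${\bf A}^+_{\widetilde{R}^o}\to \WW(\widetilde{\bf E}^+)$ as ${\mathcal I}^9$-flat, not flat. Using the analogues of Lemmas \ref{lemma:structureAlog} and \ref{lemma:structAlog} with $\widetilde{R}^o$ in place of $\widetilde{R}$, one realizes ${\rm A}^{\max}_{\rm log}(\widetilde{R})$ as the $p$-adic completion of $A\tensor_{{\bf A}^+_{\widetilde{R}^o}}\WW(\widetilde{\bf E}^+)$, so the map $A\to {\rm A}^{\max}_{\rm log}(\widetilde{R})$ is a base-change of the ${\mathcal I}^9$-flat extension. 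Inverting $t$ is then crucial: since $t=\log([\varepsilon])$ shares divisibility properties with the generators $[\varepsilon]^{1/p^n}-1$ of ${\mathcal I}$ modulo divided powers, inverting both $p$ and $t$ turns the $t$-torsion-containing almost-flatness defect into genuine flatness. Rigorously pinning down how much of the ${\mathcal I}^9$-defect is killed by inverting $t$, and verifying that the resulting extension is moreover faithfully flat (rather than just flat), will be the technical heart of the argument.
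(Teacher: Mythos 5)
Your overall strategy --- factor the inclusion through a de-perfectized intermediate ring built from the regular cover $\widetilde{R}^o$, use the direct-summand property of $\widetilde{R}\subset\widetilde{R}^o$ to descend exactness and projectivity, and invert $t$ to convert almost flatness into flatness --- is the paper's strategy, and your deduction of the theorem from claims (a), (b), (c) is the same formal argument as the paper's Lemma \ref{lemma:ABCD}. But there are two genuine gaps. First, your definition of $A$ is wrong in a way that matters: the paper's $A={\bf A}_{\widetilde{R}^o,{\rm max}}^{+,\rm log}$ is built from ${\bf A}^+_{\widetilde{R}^o}\otimes_{\WW(k)}\widetilde{R}$, where ${\bf A}^+_{\widetilde{R}^o}$ is the (noetherian) $(p,[\overline{\pi}])$-adic completion of the image of $\widetilde{R}^o$ \emph{inside} $\WW\bigl(\widetilde{\bf E}^+\bigr)$ --- not from $\WW\bigl(\widetilde{\bf E}^+\bigr)\otimes_{\WW(k)}\widetilde{R}^o$ as you write. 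With your definition $A[p^{-1}]$ contains all of $\WW\bigl(\widetilde{\bf E}^+\bigr)[p^{-1}]$, and your claim (b) --- faithful flatness of $\widetilde{R}^o_{\rm max}[p^{-1}]\hookrightarrow A[p^{-1}]$ --- is then essentially the theorem itself in the case $\alpha=1$; it cannot be obtained by writing $A$ as a completed tensor product with a classical period ring, because $\WW\bigl(\widetilde{\bf E}^+_{\overline{R}}\bigr)$ genuinely depends on $\overline{R}$ and the extension ${\bf A}^+_{\widetilde{R}^o}\to\WW\bigl(\widetilde{\bf E}^+\bigr)$ is only ${\cal I}^9$-flat (Prop.~\ref{prop:A+Rtildeff}), not flat. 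The whole point of de-perfectizing is that (b) becomes an explicit computation over a noetherian ring (Lemma \ref{lemma:AtildeRlogff}(2)--(3)), while the almost-flatness defect is pushed entirely into step (a), where it can be killed by inverting $t$.

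Second, and more seriously, you give no argument for the \emph{faithfulness} in claim (a), which occupies roughly half of the paper's proof. Flatness of $A[p^{-1}]\to{\rm B}^{\rm max}_{\rm log}\bigl(\widetilde{R}\bigr)$ is indeed the easy upgrade you describe, since ${\cal I}$ becomes the unit ideal once $t$ is inverted. But faithfulness requires showing that every maximal ideal of $\widetilde{R}_{\rm max}[p^{-1}]$ lifts to a point of ${\rm B}^{\rm max}_{\rm log}\bigl(\widetilde{R}\bigr)$ on which $t$ is nonzero. The paper does this in Lemma \ref{lemma:imageofg} by a case analysis on $h(Z)$: ideals containing $P_\pi(Z)$ are handled via the faithful flatness of $\widehat{\widetilde{R}[p^{-1}]}\subset{\rm B}_{\rm dR}(\widetilde{R})$; their images under powers of Frobenius are handled by equivariance; and for the remaining ideals one must construct by hand, via Zorn's lemma and almost purity, an extension $q\colon\WW\bigl(\widetilde{\bf E}^+\bigr)\to\widehat{\cO}_{\Kbar}$ of the given point with $q([\varepsilon])\neq 1$. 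On top of that, the intermediate extension is only $Z^\alpha$-flat, so Lemma \ref{lemma:AtildeRlogff} yields flatness only after inverting $Z$, and the maximal ideals with $h(Z)=0$ require a separate argument in the proof of the theorem (using that $Z$ is not invertible in $B_{\rm log}$). None of this appears in your proposal; saying that verifying faithfulness "will be the technical heart" restates the problem rather than solving it.
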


Write ${\bf A}_{\widetilde{R},{\rm max}}^{+,\rm log,\nabla}$ (resp.~${\bf A}_{\widetilde{R}^o,{\rm max}}^{+,\rm log,\nabla}$, resp.~$\WW\bigl(\widetilde{\bf
E}^+\bigr)^{\rm max}_{\rm log}$) for the $p$-adic completion of the subring ${\bf A}_{\widetilde{R}}^+\left[\frac{P_\pi\bigl([\overline{\pi}]\bigr)}{p}\right]$
(resp.~of ${\bf A}_{\widetilde{R}^o}^+\left[\frac{P_\pi\bigl([\overline{\pi}]\bigr)}{p}\right]$, resp.~of $\WW\bigl(\widetilde{\bf
E}^+\bigr)\left[\frac{P_\pi\bigl([\overline{\pi}]\bigr)}{p}\right]$) of\/ $\WW\bigl(\widetilde{\bf E}^+\bigr)\bigl[p^{-1}\bigr]$. Then, ${\bf A}_{\widetilde{R},{\rm
max}}^{+,\rm log,\nabla}$ is isomorphic to $\widetilde{R}_{\rm max}$ by \ref{def_AR+}. It follows from \ref{lemma:KerTheta} that $\WW\bigl(\widetilde{\bf
E}^+\bigr)^{\rm max,\nabla}_{\rm log}\cong {\rm A}_{\rm max}^\nabla(R)$.

Consider the morphism of  rings with log structures $\theta\colon {\bf A}_{\widetilde{R}}^+\otimes_{\WW(k)} \widetilde{R}\lra \widehat{\overline{R}}$ induced by
$\Theta_{\widetilde{R},\log}$. Let $\bigl({\bf A}_{\widetilde{R}}^+\otimes_{\WW(k)} \widetilde{R}\bigr)^{\rm log}:={\bf A}_{\widetilde{R}}^+\tensor_{\Z[P'\times
P']} \Z\bigl[Q\bigr]$ and let $\theta_{\rm log}$ be the extension of $\theta$ to $\bigl({\bf A}_{\widetilde{R}}^+\otimes_{\WW(k)} \widetilde{R}\bigr)^{\rm log}$. We
write ${\bf A}_{\widetilde{R},{\rm max}}^{+,\rm log}$ for the $p$-adic completion of $\bigl({\bf A}_{\widetilde{R}}^+\otimes_{\WW(k)} \widetilde{R}\bigr)^{\rm
log}\left[p^{-1} \Ker(\theta^{\rm log})\right]$. We define ${\bf A}_{\widetilde{R}^o,{\rm max}}^{+,\rm log}$ similarly using ${\bf A}_{\widetilde{R}^o}^+$ instead
of ${\bf A}_{\widetilde{R}}^+$. We start with the following:

\begin{lemma}\label{lemma:AtildeRlogff}
(1) The extension ${\bf A}_{\widetilde{R}^o,{\rm max}}^{+,\rm log,\nabla} \lra {\rm A}_{\rm max}^\nabla(R)$ is ${\cal I}^{27}$-flat.\smallskip

(2) We have an isomorphism $$\widetilde{R}_{\rm max}\left\{\frac{u-1}{p}, \frac{v_2-1}{p},\ldots,\frac{v_a-1}{p},\frac{w_1-1}{p},\ldots,\frac{w_b-1}{p}\right\}\cong
{\bf A}_{\widetilde{R},{\rm max}}^{+,\rm log}$$of  $\widetilde{R}_{\rm max}$-algebras. They are faithfully flat as $\widetilde{R}_{\rm max}$-algebras.\smallskip

(3) ${\bf A}_{\widetilde{R},{\rm max}}^{+,\rm log}$ is a direct summand in ${\bf A}_{\widetilde{R}^o,{\rm max}}^{+,\rm log}$ as ${\bf A}_{\widetilde{R},{\rm
max}}^{+,\rm log}$-module and ${\bf A}_{\widetilde{R}^o,{\rm max}}^{+,\rm log}$ is a $Z^\alpha$-flat ${\bf A}_{\widetilde{R},{\rm max}}^{+,\rm
log}$-module.\smallskip

(4) The extension ${\bf A}_{\widetilde{R}^o,{\rm max}}^{+,\rm log} \lra {\rm A}^{\rm max}_{\rm log}(\widetilde{R}) $ is ${\cal I}^{81}$-flat. Thus the extension
${\bf A}_{\widetilde{R}^o,{\rm max}}^{+,\rm log} \lra {\rm B}^{\rm max}_{\rm log}\bigl(\widetilde{R}\bigr)$ is flat.

\smallskip

\noindent In particular the extension $\widetilde{R}_{\rm max}\bigl[(pZ)^{-1}\bigr]\subset {\rm B}^{\rm max}_{\rm log}\bigl(\widetilde{R}\bigr)\bigl[Z^{-1}\big]$ is flat.

\end{lemma}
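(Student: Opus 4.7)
The plan is to prove (1)--(4) in that order and then derive the ``In particular'' statement as a composition of flat extensions after inverting $pZ$. The strategy closely parallels Brinon's arguments in \cite[Prop.~9.2.5, Thm.~9.2.6]{brinon}, adapted to the present semistable setting with the extra log-variables $u, v_i, w_j$.

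First I would establish (2), which provides the crucial explicit description. By \ref{lemma:structurBdR+}(2) the kernel of the map $\theta_{\rm log}\colon \bigl({\bf A}_{\widetilde{R}}^+\otimes_{\WW(k)} \widetilde{R}\bigr)^{\rm log} \to \widehat{\overline{R}}$ is generated by the regular sequence $\bigl(\xi,u-1,v_2-1,\ldots,v_a-1,w_1-1,\ldots,w_b-1\bigr)$, the element $v_1-1$ being eliminated via the log relation $v_1\cdots v_a = u^\alpha$. By \ref{lemma:KerTheta} the ideals $(\xi)$ and $\bigl(P_\pi([\overline{\pi}])\bigr)$ agree up to a unit, so after adjoining $P_\pi([\overline{\pi}])/p$ and $p$-adically completing, the only remaining generators that need to be divided by $p$ are $u-1, v_2-1,\ldots,w_b-1$. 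Using ${\bf A}_{\widetilde{R},{\rm max}}^{+,{\rm log},\nabla}\cong \widetilde{R}_{\rm max}$ (from \ref{def_AR+}) gives the claimed isomorphism. Faithful flatness of the convergent polynomial algebra $\widetilde{R}_{\rm max}\{X_1,\ldots,X_{a+b}\}$ over the noetherian $p$-adically complete ring $\widetilde{R}_{\rm max}$ is then standard.

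Next, (3) follows by base change from \ref{prop:A+Rtildeff}: since ${\bf A}_{\widetilde{R}}^+$ is a direct summand in ${\bf A}_{\widetilde{R}^o}^+$ and the latter is $[\overline{\pi}]^\alpha$-flat over it, tensoring with the completed polynomial algebra of (2) preserves both the splitting and the almost flatness; one notes that $[\overline{\pi}]$ and $Z$ differ by the unit $u$ in the log enhancement, so $[\overline{\pi}]^\alpha$-flatness translates to $Z^\alpha$-flatness. For (1) and (4) one starts from the ${\cal I}^9$-flatness of ${\bf A}_{\widetilde{R}^o}^+ \to \WW\bigl(\widetilde{\bf E}^+\bigr)$ given by \ref{prop:A+Rtildeff}; adjoining $P_\pi([\overline{\pi}])/p$ on both sides and $p$-completing yields (1) with exponent ${\cal I}^{27}$, while for (4) one additionally adjoins the divided powers $(u-1)/p, (v_i-1)/p, (w_j-1)/p$ and $p$-completes, picking up another factor of three to reach ${\cal I}^{81}$. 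The loss of exponent at each stage is dictated by the standard three-step Artin--Rees / Tor-vanishing argument of \cite[\S 9.2]{brinon}: prove almost flatness modulo each $p^n$ using that all rings in play are $p$-torsion free (\ref{display:Alognabla}, \ref{display:Alogtorionfree}), then pass to the limit with a Mittag--Leffler argument. Full flatness of ${\bf A}_{\widetilde{R}^o,{\rm max}}^{+,{\rm log}} \to {\rm B}^{\rm max}_{\rm log}(\widetilde{R})$ then follows because inverting $t$ kills the ideal ${\cal I}$: the generators $[\varepsilon]^{1/p^n}-1 = (t/p^n)\cdot(\text{unit})$ become units, and after further inverting $p$ one exploits $[\overline{\pi}] = Z u$ to turn the remaining Teichm\"uller-lift generators of ${\cal I}$ into units.

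Finally, the ``In particular'' statement follows by composing three flat extensions after inverting $pZ$: the map $\widetilde{R}_{\rm max}\bigl[(pZ)^{-1}\bigr] \to {\bf A}_{\widetilde{R},{\rm max}}^{+,{\rm log}}\bigl[(pZ)^{-1}\bigr]$ is faithfully flat by (2); ${\bf A}_{\widetilde{R},{\rm max}}^{+,{\rm log}}\bigl[(pZ)^{-1}\bigr] \to {\bf A}_{\widetilde{R}^o,{\rm max}}^{+,{\rm log}}\bigl[(pZ)^{-1}\bigr]$ is flat by (3) since inverting $Z^\alpha$ promotes $Z^\alpha$-flatness to genuine flatness; and ${\bf A}_{\widetilde{R}^o,{\rm max}}^{+,{\rm log}}\bigl[(pZ)^{-1}\bigr] \to {\rm B}^{\rm max}_{\rm log}(\widetilde{R})\bigl[Z^{-1}\bigr]$ is flat by (4) combined with the fact, explained above, that ${\cal I}$ becomes the unit ideal in ${\rm B}^{\rm max}_{\rm log}(\widetilde{R})\bigl[Z^{-1}\bigr]$. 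The main obstacle will be the careful tracking of ${\cal I}$-exponents through the two nested $p$-adic completions in (1) and (4); this is the technical heart of the argument, requiring one to bound $\operatorname{Tor}^1$ on each $p^n$-truncation and then invoke an Artin--Rees style passage to the limit, exactly as in Brinon's smooth-case proof but now applied in the presence of the log variables $u, v_i, w_j$ and the deformation parameter $Z$.
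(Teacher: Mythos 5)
Your proposal follows essentially the same route as the paper's proof: the explicit description of ${\bf A}_{\widetilde{R},{\rm max}}^{+,\rm log}$ via the regular sequence of \ref{lemma:structurBdR+} and \ref{lemma:structAlog}, the transfer of the direct-summand and $[\overline{\pi}]^\alpha$-flatness statements from \ref{prop:A+Rtildeff} with $[\overline{\pi}]=Zu$, the factor-of-three loss in the ${\cal I}$-exponent at each $p$-adic completion via \cite[Thm.~9.2.6]{brinon} (giving ${\cal I}^{9}\to{\cal I}^{27}\to{\cal I}^{81}$), and the observation that ${\cal I}$ generates the unit ideal in ${\rm B}^{\rm max}_{\rm log}(\widetilde{R})$ because $[\varepsilon]-1$ and $t$ generate the same ideal. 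The only cosmetic slips — calling $(u-1)/p$ ``divided powers'' and the unnecessary discussion of the Teichm\"uller generators of ${\cal I}$ (one invertible generator suffices) — do not affect correctness.
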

\begin{proof}
(1) Since the extension ${\bf A}_{\widetilde{R}^o}^+\left[\frac{P_\pi\bigl([\overline{\pi}]\bigr)}{p}\right] \lra \WW\bigl(\widetilde{\bf
E}^+\bigr)\left[\frac{P_\pi\bigl([\overline{\pi}]\bigr)}{p}\right]$ is obtained from ${\bf A}_{\widetilde{R}^o}^+ \to\WW\bigl(\widetilde{\bf E}^+\bigr)$ by base
change via the extension ${\bf A}_{\widetilde{R}^o}^+\to {\bf A}_{\widetilde{R}^o}^+\left[\frac{P_\pi\bigl([\overline{\pi}]\bigr)}{p}\right]$, it is  ${\cal
I}^9$-flat due to \ref{prop:A+Rtildeff}. The extension obtained taking $p$-adic completions is the extension ${\bf A}_{\widetilde{R}^o,{\rm max}}^{+,\rm log,\nabla}
\lra \WW\bigl(\widetilde{\bf E}^+\bigr)^{\rm max}_{\rm log}$. Since ${\bf A}_{\widetilde{R}^o}^+\left[\frac{P_\pi\bigl([\overline{\pi}]\bigr)}{p}\right]$ is
noetherian and $p$-torsion free, the extension of the lemma is ${\cal I}^{27}$-flat by \cite[Thm 9.2.6]{brinon}.

(2)--(3)  Recall that ${\bf A}_{\widetilde{R},{\rm max}}^{+,\rm log}$ is the $p$-adic completion of $\bigl({\bf A}_{\widetilde{R}}^+\otimes_{\WW(k)}
\widetilde{R}\bigr)^{\rm log}\left[p^{-1} \Ker(\theta^{\rm log})\right]$ (resp.~${\bf A}_{\widetilde{R}^o,{\rm max}}^{+,\rm log}$ for ${\bf A}_{\widetilde{R}^o}^+$
instead of ${\bf A}_{\widetilde{R}}^+$). In both cases $\Ker(\theta_{\rm log})=\bigl(P_\pi([\overline{\pi}])\otimes
1,u-1,v_2-1,\ldots,v_a-1,w_1-1,\ldots,w_b-1\bigr)$; this ideal coincides also with $\bigl(1\otimes P_\pi(Z),u-1,v_2-1,\ldots,v_a-1,w_1-1,\ldots,w_b-1\bigr)$. It
follows as in \ref{lemma:structAlog}  that ${\bf A}_{\widetilde{R},{\rm max}}^{+,\rm log}$ is isomorphic to
$${\bf A}_{\widetilde{R},{\rm max}}^{+,\rm log}\cong\widetilde{R}_{\rm max}\left\{\frac{u-1}{p},
\frac{v_2-1}{p},\ldots,\frac{v_a-1}{p},\frac{w_1-1}{p},\ldots,\frac{w_b-1}{p}\right\}\cong $$ $$ \cong {\bf A}_{\widetilde{R},{\rm max}}^{+,\rm
log,\nabla}\left\{\frac{u-1}{p}, \frac{v_2-1}{p},\ldots,\frac{v_a-1}{p},\frac{w_1-1}{p},\ldots,\frac{w_b-1}{p}\right\}$$and, in particular, it is a faithfully flat
$\widetilde{R}_{\rm max}$-algebra. This proves (2). Similarly, we have $${\bf A}_{\widetilde{R}^o,{\rm max}}^{+,\rm log}\cong {\bf A}_{\widetilde{R}^o,{\rm
max}}^{+,\rm log,\nabla}\left\{\frac{u-1}{p}, \frac{v_2-1}{p},\ldots,\frac{v_a-1}{p},\frac{w_1-1}{p},\ldots,\frac{w_b-1}{p}\right\}.$$Note that ${\bf
A}_{\widetilde{R},{\rm max}}^{+,\rm log,\nabla}$ is a direct summand in ${\bf A}_{\widetilde{R}^o,{\rm max}}^{+,\rm log,\nabla}$ and the latter is a
$[\overline{\pi}]^\alpha$-flat ${\bf A}_{\widetilde{R},{\rm max}}^{+,\rm log,\nabla}$ thanks to \ref{prop:A+Rtildeff}. As $[\overline{\pi}]= Z u$ and $u$ is
invertible, Claim (3) follows.

(4)  As in (1) we deduce that $${\bf A}_{\widetilde{R}^o,{\rm max}}^{+,\rm log} \lra \WW\bigl(\widetilde{\bf E}^+\bigr)^{\rm max}_{\rm log}\left\{\frac{u-1}{p},
\frac{v_2-1}{p},\ldots,\frac{v_a-1}{p},\frac{w_1-1}{p},\ldots,\frac{w_b-1}{p}\right\}$$is $\bigl({\cal I}^{27}\bigr)^3$-flat. The latter is isomorphic to ${\rm
A}^{\rm max}_{\rm log}(\widetilde{R})$ due to \ref{lemma:structAlog}. Since ${\cal I} {\rm B}^{\rm max}_{\rm log}(\widetilde{R})={\rm B}^{\rm max}_{\rm
log}(\widetilde{R})$ cf.~\cite[Pf. Thm 6.3.8]{brinon}, the last claim follows.

\end{proof}

In order to prove Theorem \ref{thm:Blogmaxff} we show:

\begin{lemma}\label{lemma:imageofg}  The image of the map $g\colon  {\rm Spec}\left({\rm B}^{\rm max}_{\rm log}\bigl(\widetilde{R}\bigr)\right) \lra {\rm Spec}
\left(\widetilde{R}_{\rm max}\bigl[p^{-1}\bigr]\right)$ contains all maximal ideals not containing $Z$.
\end{lemma}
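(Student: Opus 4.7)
I plan to prove the lemma by showing that for each maximal ideal $\mathcal{P}\subset \widetilde{R}_{\max}[p^{-1}]$ with $Z\notin \mathcal{P}$ the extended ideal $\mathcal{P}\cdot {\rm B}^{\rm max}_{\rm log}(\widetilde R)$ is proper. Since the last assertion of Lemma \ref{lemma:AtildeRlogff} gives flatness of the extension $\widetilde{R}_{\max}[(pZ)^{-1}]\hookrightarrow {\rm B}^{\rm max}_{\rm log}(\widetilde R)[Z^{-1}]$, going--down will then produce a prime of ${\rm B}^{\rm max}_{\rm log}(\widetilde R)[Z^{-1}]$ contracting to $\mathcal{P}$, which is exactly what the lemma asserts. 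Following the strategy used in the proof of \ref{prop:Rbarff} (compare \cite[Thm.~3.2.3]{brinon}), I exhibit properness by constructing an explicit continuous $K_0$--algebra homomorphism $\widetilde\rho\colon {\rm B}^{\rm max}_{\rm log}(\widetilde R)\to B^{\max}_{\log}(\cO_{\widehat{\overline L}})$ for a suitable finite extension $L/K_0$, whose restriction to $\widetilde{R}_{\max}[p^{-1}]$ has $\mathcal{P}$ in its kernel.

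The construction proceeds in two steps. First, $\widetilde{R}_{\max}[p^{-1}]$ is the ring of analytic functions on the rigid subdomain $\{|P_\pi(Z)|\le|p|\}$ of the generic fiber of $\Spf(\widehat{\widetilde R})$; the maximal ideal $\mathcal{P}$ therefore corresponds to a closed rigid point with residue field $L$ finite over $K_0$, and evaluation at this point gives a continuous homomorphism $\rho_0\colon \widetilde R\to \cO_L$ with $\rho_0(Z)\neq 0$, whose extension to $\widetilde{R}_{\max}[p^{-1}]$ has kernel exactly $\mathcal{P}$. Because $\prod_i\rho_0(\widetilde X_i)=\rho_0(Z)^\alpha\ne 0$, necessarily $\rho_0(\widetilde X_i)\neq 0$ for every $i$. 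Second, I fix an algebraic closure $\overline L$ of $L$ and choose compatible systems of $p^n$--th roots of $\rho_0(Z)$, of $\rho_0(\widetilde Y_j)$, and of $\rho_0(\widetilde X_i)$ for $i=2,\ldots,a$, and then \emph{define} $\rho_0(\widetilde X_1)^{1/p^n}:=\rho_0(Z)^{\alpha/p^n}\prod_{i=2}^a\rho_0(\widetilde X_i)^{-1/p^n}$ so as to preserve the monoid relation $\prod_i\widetilde X_i=Z^\alpha$. This upgrades $\rho_0$ to a compatible morphism of log rings $\widetilde R_\infty\to \cO_{\overline L}$, and hence by Lemma \ref{lemma:A+R}(4) to a ring homomorphism $\mathbf A^+_{\widetilde R}\to A_{\rm inf}(\cO_{\widehat{\overline L}})$ strict for the log structures and compatible with $\Theta$. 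Combined with $\rho_0\colon \widetilde R\to\cO_L$ and the universal property of the $p$--adic completion of the logarithmic construction defining ${\rm A}^{\rm max}_{\rm log}(\widetilde R)$, this extends uniquely to ${\rm A}^{\rm max}_{\rm log}(\widetilde R)\to A^{\max}_{\log}(\cO_{\widehat{\overline L}})$. Inverting $t$ produces the sought $\widetilde\rho$; since by construction $\widetilde\rho|_{\widetilde R_{\max}[p^{-1}]}=\rho_0$ factors through $\mathcal P$, we conclude $\mathcal P\cdot {\rm B}^{\rm max}_{\rm log}(\widetilde R)\ne {\rm B}^{\rm max}_{\rm log}(\widetilde R)$ and the lemma follows.

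The main delicate point is verifying that the Witt vector map $\tau\colon W(\widetilde{\bf E}^+_{R_\infty})\to A_{\rm inf}(\cO_{\widehat{\overline L}})$ assembled from the $p$--power roots is genuinely compatible with the log structures and with $\Theta$, compatibly with the map $\widetilde R\to\cO_L$ on the second tensor factor. The choice of $\rho_0(\widetilde X_1)^{1/p^n}$ as a product of the other roots, as above, is designed precisely to enforce this compatibility; the remaining extension of $\tau$ across ${\rm A}^{\rm max}_{\rm log}(\widetilde R)$ is then a formal consequence of the universal property of the logarithmic divided power envelope and $p$--adic completeness of $A^{\max}_{\log}(\cO_{\widehat{\overline L}})$.
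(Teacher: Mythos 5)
Your overall strategy -- produce, for each maximal ideal $\mathcal{P}$ with $Z\notin\mathcal{P}$, a ring homomorphism from ${\rm B}^{\rm max}_{\rm log}(\widetilde{R})$ to a nonzero ring killing $\mathcal{P}$ -- is the right one and is indeed what the paper does (the appeal to flatness and going--down is superfluous: properness of $\mathcal{P}\cdot{\rm B}^{\rm max}_{\rm log}(\widetilde{R})$ already yields a maximal ideal contracting to $\mathcal{P}$). But the proposal has a genuine gap at the decisive step, namely the final sentence ``inverting $t$ produces the sought $\widetilde{\rho}$.'' Your map is built on ${\rm A}^{\rm max}_{\rm log}(\widetilde{R})$, and it extends to ${\rm B}^{\rm max}_{\rm log}(\widetilde{R})={\rm A}^{\rm max}_{\rm log}(\widetilde{R})[t^{-1}]$ only if $t=\log([\varepsilon])$ is sent to a non-zero (hence invertible) element, i.e.\ only if $[\varepsilon]\not\mapsto 1$. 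This is not something you get to arrange: by \ref{lemma:KerTheta} the element $1+[\varepsilon]^{1/p}+\cdots+[\varepsilon]^{(p-1)/p}$ and $P_\pi([\overline{\pi}])$ generate the same ideal, so once you have chosen the $p$-power roots of $\rho_0(Z)$ the image of $[\varepsilon]$ is essentially forced, and for points where $h(Z)$ is a $p^n$-th power of a root of a Frobenius twist of $P_\pi$ it can genuinely happen that $[\varepsilon]\mapsto 1$. This is precisely why the paper's proof first disposes of the maximal ideals with $h(Z)=\pi'^{p^n}$ by a completely different mechanism (faithful flatness of $\widehat{\widetilde{R}[p^{-1}]}\subset {\rm B}_{\rm dR}(\widetilde{R})$ from \ref{prop:BdRff}, transported by Frobenius), and only then, for the remaining points, proves by a valuation argument that some $\widetilde{h}([\varepsilon^{1/p^n}])\neq 1$ and deduces $\widetilde{h}([\varepsilon])\neq 1$ from the minimality of $n$. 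Without this case division and this verification your construction may simply fail to land in a ring where $t$ is invertible.

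A second, independent gap: the ring ${\rm A}^{\rm max}_{\rm log}(\widetilde{R})$ is built on $\WW(\widetilde{\bf E}^+)=\WW(\widetilde{\bf E}^+_{\overline{R}})$, not on $\WW(\widetilde{\bf E}^+_{R_\infty})$. Your choices of compatible $p$-power roots only define the map $\tau$ on $\WW(\widetilde{\bf E}^+_{R_\infty})$ (equivalently on the completion of $\widetilde{R}_\infty$, by \ref{lemma:A+R}), so the universal property of the logarithmic envelope only yields a map out of the subring ${\rm A}^{\rm max}_{\rm log,\infty}$, which is not enough to conclude that $\mathcal{P}$ generates a proper ideal in the full ring ${\rm B}^{\rm max}_{\rm log}(\widetilde{R})$. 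Extending $\tau$ across all the normal extensions $R_\infty\subset S_\infty\subset\overline{R}$ that are finite \'etale after inverting $p$ is the content of step (I) of the paper's proof, and it is not formal: it uses Faltings' almost purity (\ref{prop:AE}), a Zorn's lemma reduction, and a delicate analysis of the torsion and the $p$-adic completion of $\mathcal{A}=\WW(\widetilde{\bf E}^+_{T_\infty})\otimes_{\WW(\widetilde{\bf E}^+_{S_\infty})}\widehat{\cO}_{\Kbar}$ to produce a section $s\colon\mathcal{A}\to\widehat{\cO}_{\Kbar}$; here the hypothesis $h(Z)\neq 0$ is used again to guarantee that the image of $\widetilde{h}_\infty$ contains elements of arbitrarily small valuation. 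Both missing steps must be supplied for the argument to close.
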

\begin{proof}
We first prove that the image of $g$  contains all prime ideals containing $P_\pi(Z)$. Consider the commutative diagram
$$\begin{array}{ccc}
\widetilde{R}_{\rm max}\bigl[p^{-1}\bigr]  & \lra & {\rm B}^{\rm max}_{\rm log}\bigl(\widetilde{R}\bigr) \cr \big\downarrow & & \big\downarrow \cr
\widehat{\widetilde{R}\bigl[p^{-1}\bigr]} & \lra &  {\rm B}_{\rm dR}\bigl(\widetilde{R}\bigr).
\end{array}$$
Recall that $\widehat{\widetilde{R}\bigl[p^{-1}\bigr]}$ is the $P_\pi(Z)$-adic completion of $\widetilde{R}_{\rm max}\bigl[p^{-1}\bigr]$. Since the latter is
noetherian, the set $\Spec\bigl(\widehat{\widetilde{R}\bigl[p^{-1}\bigr]} \bigr)$ is identified with the set of prime ideals of $\widetilde{R}_{\rm
max}\bigl[p^{-1}\bigr]$ containing $P_\pi(Z)$. Due to \ref{prop:BdRff} the last row is a faithfully flat extension and, in particular, the induced map on spectra is
surjective. We conclude that the image of $g$ contains all prime ideals of $\widetilde{R}_{\rm max}\bigl[p^{-1}\bigr]$ containing $P_\pi(Z)$.

\medskip

The maximal ideals of $\widetilde{R}_{\rm max}\bigl[p^{-1}\bigr]$ are defined by the $L$-valued points $h\colon \widetilde{R}_{\rm max}\bigl[p^{-1}\bigr]\lra L$ for
$L$ varying among the finite extensions of $K$.  Fix one and let us call it $h$.  We characterize  the images under the Frobenius morphism $\varphi\colon
\widetilde{R}_{\rm max}\bigl[p^{-1}\bigr]\lra \widetilde{R}_{\rm max}\bigl[p^{-1}\bigr]$ of the maximal ideals containing $P_\pi(Z)$. As $g$ is compatible with the
Frobenius morphism $\varphi\colon {\rm B}^{\rm max}_{\rm log}\bigl(\widetilde{R}\bigr) \lra {\rm B}^{\rm max}_{\rm log}\bigl(\widetilde{R}\bigr)$, we conclude from
the argument above that they also  lie in the image $g$. Assume that $P_\pi(Z)\in \Ker h$. Then, $h(Z)=\pi'$ for some root $\pi'$ of $P_\pi(Z)$. The Frobenius
morphism $\varphi$ on $\widetilde{R}_{\rm max}\bigl[p^{-1}\bigr]$ maps $P_\pi(Z)$ to $P_\pi^\sigma\bigl(Z^p\bigr)$ where, if $P_\pi(Z)=Z^e + \sum_i a_i Z^i\in
\WW(k)[Z]$, then $P_\pi^\sigma(Z)=Z^e + \sum_i \sigma(a_i) Z^i$ is the polynomial with coefficients twisted by Frobenius $\sigma$ on $\WW(k)$. Thus $h \circ
\varphi^n$ sends $P_\pi(Z)$ to $P_\pi^{\sigma^n}\bigl(\pi^{'p^n}\bigr)$ for every $n\in\N$. More generally take a  maximal ideal  of $\widetilde{R}_{\rm
max}\bigl[p^{-1}\bigr]$ corresponding to a homomorphism $f$ to $\widehat{\Kbar}$ sending $Z$ to $\pi^{'p^n}$ for some $n\in \Z$. As Frobenius $\varphi\colon
\widetilde{R}\to \widetilde{R}$ is finite and flat by construction, Frobenius induces a surjective morphism $\Spec(\widetilde{R})\to \Spec(\widetilde{R})$. Thus $f$
is obtained by pre-composing an homomorphism $h\colon \widetilde{R}\bigl[p^{-1}\bigr]\lra \Kbar$, sending $Z$ to $\pi'$,  with $\varphi^n$. Note that $h$ extends to
$\widetilde{R}_{\rm max}\bigl[p^{-1}\bigr]$. We conclude that $f$ is in the image of $g$ as $h$ is and Frobenius on $\widetilde{R}$ is the restriction of Frobenius
on  ${\rm B}^{\rm max}_{\rm log}\bigl(\widetilde{R}\bigr)$.

\medskip

We are left to consider homomorphisms  $h\colon\widetilde{R}_{\rm max}\bigl[p^{-1}\bigr]\lra L$ which {\it do not}  send $Z$ to $\pi^{'p^n}$ for some root $\pi'$ of
$P_\pi(Z)$. Let $\varrho$ be $h\bigl(P_\pi(Z)/p\bigr)$. It is non-zero and, since $\widetilde{R}_{\rm max}$ is $p$-adically complete, $h$ induces a map
$h\colon\widetilde{R}_{\rm max}\lra \cO_L$.  Consider the map
$$s\colon {\rm A}^{\rm max}_{\rm log}\bigl(\widetilde{R}\bigr)\lra
{\rm A}_{\rm max}^\nabla(R)$$sending $u-1$, $v_2-1,\ldots,v_a-1$ and $w_1-1,\ldots,w_b-1$ to $0$; see \ref{lemma:structAlog} for the notation. Recall from
\ref{def_AR+} that the $\bigl(p,P_\pi(Z)\bigr)$-adic completion of $\widetilde{R}$ is identified with the subring ${\bf A}_{\widetilde{R}}^+\subset {\rm A}_{\rm
max}^\nabla(R)$. In particular, $h$ defines a morphism $\tilde{h}\colon {\bf A}_{\widetilde{R}}^+\lra \cO_L$ and  ${\rm A}_{\rm max}^\nabla(R)$ is endowed with a
structure of $\widetilde{R}$-algebra via these identifications, which is the same as the $\widetilde{R}$-algebra structure induced by $s$ composed with the
structural morphism of ${\rm A}^{\rm max}_{\rm log}\bigl(\widetilde{R}\bigr)$ as $\widetilde{R}$-algebra. To prove that $h$ is in the image of  ${\rm
Spec}\left({\rm B}^{\rm max}_{\rm log}\bigl(\widetilde{R}\bigr)\right)$ it suffices to prove that there exists a morphism
$$r\colon {\rm A}_{\rm max}^\nabla(R) \lra \widehat{\cO}_{\Kbar}$$extending $\widetilde{h}$ and such that the image of $t$ is non-zero. Due to
\ref{lemma:structureAlog} we have ${\rm A}_{\rm max}^\nabla(R)\cong \WW\bigl(\widetilde{\bf E}^+\bigr)\left\{\frac{P_\pi\bigl([\overline{\pi}]\bigr)}{p} \right\}$.
It follows from \cite[\S5.2.4\&\S5.2.8(ii)]{Fontaineperiodes} that $t=v_0 \bigl([\varepsilon]-1\bigr)$ with $v_0$ a unit of Fontaine's $A_{\rm cris}$ so that
$r(t)\neq 0$ if and only if $r\bigl([\varepsilon]-1\bigr)\neq 0$.  Note also that $\widetilde{h}\bigl(P_\pi\bigl([\overline{\pi}]\bigr)/p\bigr)=\varrho\in \cO_L$ is
already determined. It then suffices to prove that there exists a morphism
$$q\colon \WW\bigl(\widetilde{\bf E}^+\bigr)\lra \widehat{\cO}_{\Kbar}$$(I) extending $\widetilde{h}$ and such that \enspace (II) $q\bigl([\varepsilon]-1\bigr)$ is non
zero.

We start with (I). It follows from \ref{lemma:A+R} that $\WW\bigl(\widetilde{\bf E}^+_{R_\infty}\bigr)$ is the
$\bigl(p,P_\pi\bigl([\overline{\pi}]\bigr)\bigr)$-adic completion of the ${\bf A}_{\widetilde{R}}^+$-algebra obtained by adjoining all roots of
$\bigl[\overline{\pi}\bigr]$, $\bigl[\overline{X}_i\bigr]$ for $i=1,\ldots,a$ and of $\bigl[\overline{Y}_j\bigr]$ for $j=1,\ldots,b$. We deduce that, once chosen
compatible roots of $\widetilde{h}\bigl(\bigl[\overline{\pi}\bigr]\bigr)$, $\widetilde{h}\bigl(\bigl[\overline{X}_i\bigr]\bigr)$ for $i=1,\ldots,a$ and of
$\widetilde{h}\bigl(\bigl[\overline{Y}_j\bigr]\bigr)$ for $j=1,\ldots,b$, the morphism $\widetilde{h}$ can be extended to a morphism $\widetilde{h}_\infty \colon
\WW\bigl(\widetilde{\bf E}^+_{R_\infty}\bigr)\lra \widehat{\cO}_{\Kbar}$.  By assumption $h(Z)\neq 0$ so that $\widetilde{h}\bigl([\overline{\pi}]\bigr)\neq 0$.
Since the image of $\widetilde{h}_\infty$ contains all $p$-th power roots of $\widetilde{h}\bigl([\overline{\pi}]\bigr)$, it contains elements of
$\widehat{\cO}_{\Kbar}$ of arbitrarily small valuation. Note that $\WW\bigl(\widetilde{\bf E}^+\bigr) $ is the $\bigl(p,P_\pi(Z)\bigr)$-completion of the union of
all extensions $\WW\bigl(\widetilde{\bf E}^+_{R_\infty}\bigr)\subset \WW\bigl(\widetilde{\bf E}^+_{S_\infty}\bigr)$ for $R_\infty\subset S_\infty (\subset \Omega)$
normal and union of finite and \'etale extensions of $R_\infty[p^{-1}]$ after inverting $p$. Since $\widehat{\cO}_{\Kbar}$ is $p$-adically complete and separated,
to achieve (I) it suffices to prove that $\widetilde{h}_\infty$ extends to compatible morphisms $\widetilde{h}_{S_\infty}$ on $\WW\bigl(\widetilde{\bf
E}^+_{S_\infty}\bigr)$. Using Zorn's lemma we are left to show that, given extensions $S_\infty\to T_\infty$ as above which are finite and \'etale after inverting
$p$ and a map $\widetilde{h}_{S_\infty}$ extending $\widetilde{h}_\infty$, the morphism $\widetilde{h}_{S_\infty}$ can be extended to a morphism
$\widetilde{h}_{T_\infty}$. Write $\mathcal{A}$ for the base change
$$\iota\colon \widehat{\cO}_{\Kbar} \lra
\mathcal{A}:=\WW\bigl(\widetilde{\bf E}^+_{T_\infty}\bigr)\otimes_{\WW\bigl(\widetilde{\bf E}^+_{S_\infty}\bigr)}^{\widetilde{h}_{S_\infty}}
\widehat{\cO}_{\Kbar}.$$

The existence of the ring homomorphism $\widetilde{h}_{T_\infty}\colon \WW\bigl({\bf E}^+_{T_\infty}\bigr) \lra \widehat{\cO}_{\Kbar}$ extending $h_{S_\infty}$ is
implied by the existence of a ring homomorphism $s\colon \mathcal{A}\lra \widehat{\cO}_{\Kbar}$ which is a section to $\iota$. Indeed if $s$ exists, we define
$\widetilde{h}_{T_\infty}$ as the composition $\displaystyle \WW\bigl({\bf E}^+_{T_\infty}\bigr) \stackrel{a}{\lra} \mathcal{A}\stackrel{s}{\lra}
\widehat{\cO}_{\Kbar}$, where $a$ is defined by $a(x)=x\otimes 1$.

We have the following properties of the $\widehat{\cO}_{\Kbar}$-algebra $\mathcal{A}$. Let us denote by $\mathcal{A}_{\rm tors}$ the ideal of $\mathcal{A}$ of
torsion elements and by $\mathcal{A}_0:=\mathcal{A}/\mathcal{A}_{\rm tors}$. The $\widehat{\cO}_{\Kbar}$-algebra $\mathcal{A}_0$ defined above  is flat since it is
torsion free. Let $\widehat{\mathcal{A}}:=\ds \lim_{\infty\leftarrow n} \mathcal{A}/p^n\mathcal{A}$ and similarly for $\widehat{\mathcal{A}}_0$.

\medskip \noindent 1) $\mathfrak{m}_{\Kbar}\mathcal{A}_{\rm tors}=0$.\smallskip

Due to \ref{prop:AE} the extension $\WW\bigl(\widetilde{\bf E}^+_{R_\infty}\bigr)\subset \WW\bigl(\widetilde{\bf E}^+_{S_\infty}\bigr)$ is almost \'etale so that
$\iota$ is almost \'etale and, in particular, $\mathfrak{m}_{\Kbar}$-flat. Here $\mathfrak{m}_{\Kbar}$ is the maximal ideal of $\widehat{\cO}_{\Kbar}$. In
particular, base changing to $\mathcal{A}$ the exact sequence $$0 \to \widehat{\cO}_{\Kbar} \stackrel{\cdot p^n}{\longrightarrow} \widehat{\cO}_{\Kbar} \to
\cO_{\Kbar}/p^n \cO_{\Kbar}  \to 0,$$we get that the kernel $\mathcal{A}[p^n]$ of multiplication by $p^n$ on $\mathcal{A}$ is annihilated by $\mathfrak{m}_{\Kbar}$
for every $n$ i.e., $\mathcal{A}_{\rm tors}=\cup \mathcal{A}[p^n]$ is annihilated by $\mathfrak{m}_{\Kbar}$.

\medskip \noindent 2) The $\widehat{\cO}_{\Kbar}$-algebra  $\widehat{\mathcal{A}}_0$ is torsion free. \smallskip

For every $n\in \N$ the kernel of multiplication by $p$ on $\mathcal{A}_0/ p^n\mathcal{A}_0$ is $p^{n-1}\mathcal{A}_0/p^n \mathcal{A}_0$ so that the kernel of
multiplication by $p$ on $\widehat{\mathcal{A}}_0$ is $\ds\lim_{\infty\leftarrow n}  p^{n-1}\mathcal{A}_0/p^n \mathcal{A}_0$ which is $0$.

\medskip \noindent 3) $\widehat{\mathcal{A}}_0$ is non-zero. In particular, $\widehat{\mathcal{A}}_0[1/p]\neq 0$ by (2).\smallskip

To prove this we describe the map induced by $\iota$ by taking quotients $\cO_{\Kbar}/p\varrho \cO_{\Kbar}\to \mathcal{A}/p\varrho \mathcal{A}$ as follows. The
quotient $\WW\bigl(\widetilde{\bf E}^+_{S_\infty}\bigr)\otimes_{\WW(k)} \cO_L$ modulo $\bigl(P_\pi\bigl([\overline{\pi}]\bigr)\otimes 1,1\otimes p \varrho\bigr)$
coincides by \ref{lemma:KerTheta} with $S_\infty \otimes_{\WW(k)} \cO_L /p\varrho \cO_L$ and similarly for $\WW\bigl(\widetilde{\bf
E}^+_{T_\infty}\bigr)\otimes_{\WW(k)} \cO_L$. Then, the map $\overline{h}_{S_\infty}:=\widetilde{h}_{S_\infty}$ modulo $p\varrho $ factors via $S_\infty
\otimes_{\WW(k)} \cO_L /p\varrho \cO_L$ and $\iota$ modulo $p\varrho$ is the base change via $\overline{h}_{S_\infty}$ of the extension
$$r\colon S_\infty \otimes_{\WW(k)} \cO_L /p\varrho \cO_L\lra T_\infty \otimes_{\WW(k)} \cO_L /p\varrho \cO_L.$$Since $T_\infty$
is the normalization of $S_\infty$ in a finite and \'etale extension of $S_\infty[p^{-1}]$, we conclude that the map induced by $r$ on spectra   is surjective on
generic points and, being an inductive limit of finite and finitely presented $S_\infty$-algebras,  it has closed image. Hence, it is surjective. In particular,
there exist prime ideals of $T_\infty \otimes_{\WW(k)} \cO_L /p\varrho \cO_L$ over the  prime ideal of $S_\infty \otimes_{\WW(k)} \cO_L /p\varrho\cO_L$ defined by
the kernel of $S_\infty \otimes_{\WW(k)} \cO_L /p\varrho \cO_L \to \cO_{\Kbar}/\mathfrak{m}_{\Kbar} \cO_{\Kbar} $ induced by $\overline{h}_{S_\infty}$. The set of
such ideals is $\Spec\bigl(\mathcal{A}/\mathfrak{m}_{\Kbar} \mathcal{A}\bigr)$. We conclude that $\mathcal{A}/\mathfrak{m}_{\Kbar} \mathcal{A}$ is non trivial. Due
to Faltings' almost purity theorem, see \ref{prop:AE}, the extension $S_\infty\subset T_\infty$ is almost \'etale so that the trace map ${\rm Tr}\colon T_\infty \to
S_\infty$ has $\mathfrak{m}_{\Kbar} S_\infty$ in its image. Its base change via $\overline{h}_{S_\infty}$ provides a map $\psi\colon \mathcal{A}/p\varrho
\mathcal{A} \lra \cO_{\Kbar}/p\varrho \cO_{\Kbar}$ of $\cO_{\Kbar}$-modules having $\mathfrak{m}_{\Kbar}$ in its image. Since any element of $\mathcal{A}_{\rm
tors}$ has image via $\psi$ annihilated by $\mathfrak{m}_{\Kbar}$ by 1) and since the only such element in $\cO_{\Kbar}/p\varrho \cO_{\Kbar}$ is $0$, we conclude
that $\psi(\mathcal{A}_{\rm tors})=0$. We conclude that $\mathcal{A}_{\rm tors}\subset \mathcal{A}/p\varrho \mathcal{A}$ is not surjective, i.e., the quotient which
is $\mathcal{A}_0/p\varrho \mathcal{A}_0$ is non trivial. In particular $p$ is not a unit in $\mathcal{A}_0$. Therefore for all $n\ge 0$ the ring
$\mathcal{A}_0/p^n\mathcal{A}_0$ is non-zero which implies that $\widehat{\mathcal{A}}_0$ is non-zero.

\medskip \noindent 4) $\widehat{\mathcal{A}}_0[1/p]$ is a finite dimensional $\widehat{\Kbar}$-vector space and coincides
with $\mathcal{A}[1/p]$.\smallskip

Since $S_\infty \subset T_\infty$ is almost \'etale,  $\pi^{\frac{1}{p}} T_\infty$ is finitely generated as $S_\infty$-module by \ref{cor:Frobonto}. Hence, there
exist $e_1,\ldots, e_n$ in $\mathcal{A}$ such that if $\mathcal{B}$ is the $\widehat{\cO}_{\Kbar}$-submodule of $\widehat{\mathcal{A}}$ generated by $e_1,\ldots,
e_n$ we have $\pi^{\frac{1}{p}} \widehat{\mathcal{A}}\subset \mathcal{B}+ p \widehat{\mathcal{A}}$. As $\mathcal{B}$ is a finitely  generated
$\widehat{\cO}_{\Kbar}$-module, it is $p$-adically complete. We claim this implies that we have:
$$\ds \pi^{\frac{1}{p}}\widehat{\mathcal{A}}\subset \mathcal{B}\subset \widehat{\mathcal{A}}.$$
Indeed, let us denote by $\ds p^\upsilon:=\pi^{\frac{1}{p}}$ with $0< \upsilon< 1$ and let $x\in \widehat{\mathcal{A}}$. Then $p^\upsilon x=b_0+px_1$, with $b_0\in
\mathcal{B}$ and $x_1\in \widehat{\mathcal{A}}$. Then $p^\upsilon x=b_0+p^{1-\upsilon}(b_1+px_2)$, with $b_1\in \mathcal{B}, x_2\in \widehat{\mathcal{A}}$.
Iterating this process and using the completeness of $\mathcal{B}$ we obtain that
$$
p^\upsilon x=b_0+p^{1-\upsilon}b_1+p^{2(1-\upsilon)}b_2+\ldots\in \mathcal{B}.
$$

Since multiplication by $p^n$ annihilates $\mathcal{A}_{\rm tors}$ and has trivial kernel on $\mathcal{A}_0$, we have for every $n$ that the map $\mathcal{A}_{\rm
tors}\to \mathcal{A}/p^n \mathcal{A}$ is injective with quotient $\mathcal{A}_0/p^n \mathcal{A}_0$. Taking projective limits we get the exact sequence $0 \to
\mathcal{A}_{\rm tors} \to \widehat{\mathcal{A}} \to \widehat{\mathcal{A}}_0\to 0$. Therefore
$\widehat{\mathcal{A}}[1/p]=\widehat{\mathcal{A}}_0[1/p]=\mathcal{B}[1/p]$, which proves the claim.

\medskip \noindent 5) There is a section $s\colon \mathcal{A}\lra \widehat{\cO}_{\Kbar}$ of $\iota$.

We have that $\widehat{\mathcal{A}}[1/p]$ is a finite \'etale $\widehat{\Kbar}$-algebra by (4). Therefore $\widehat{\mathcal{A}}[1/p]$ is a finite product of copies
of $\widehat{\Kbar}$ as $\widehat{\Kbar}$ is an algebraically closed field. Therefore there exists a section $s_{K}\colon\widehat{\mathcal{A}}[1/p]\lra
\widehat{\Kbar}$ to the structure morphism $\iota_{K}\colon \widehat{\Kbar}\lra \widehat{\mathcal{A}}[1/p]$.

As $\widehat{\mathcal{A}}_0\subset \widehat{\mathcal{A}}[1/p]$ is  $p$-adically complete and separated, we have $s_{K}(\widehat{\mathcal{A}}_0)\subset
\widehat{\cO}_{\Kbar}$. Denote by $s$ the following composition
$$
\mathcal{A}\lra \mathcal{A}_0\lra \widehat{\mathcal{A}}_0\lra \widehat{\mathcal{A}}_0[1/p] \stackrel{s_{K}}{\lra} \widehat{\Kbar}.
$$
It is clearly a section of $\iota$ as required.

\medskip

We now prove (II). First of all we consider the particular case that $h$ sends $Z$ to a root of $P^{\sigma^m}_\pi(Z)$, for some $m\in\Z$, assuming that
$P_\pi^{\sigma^m}(Z)\neq P_\pi(Z)$. Then, $P^{\sigma^m}_\pi(Z)$ is an Eisenstein polynomial so that $\cO/\bigl(P^{\sigma^m}_\pi(Z)\bigr)=\cO_{L}$ is a discrete
valuation ring with uniformizer $\pi'$, image of $Z$. Identifying $\WW\bigl(\widetilde{\bf E}^+_{\cO_{K_\infty'}}\bigr)$ with the $(p,
P^{\sigma^m}_\pi(Z))$-completion of $\cO\bigl[Z^{\frac{1}{n!}}\bigr]_{n\in\N}$ sending $Z$ to $[\overline{\pi}]$, we get that $\WW\bigl(\widetilde{\bf
E}^+_{\cO_{K_\infty'}}\bigr)/\bigl(P^{\sigma^m}_\pi([\overline{\pi}])\bigr)\cong \widehat{\cO}_{L_\infty}\subset \widehat{\cO}_\Kbar$ with $\cO_{L_\infty}\subset
\cO_\Kbar$ the direct limit of the discrete valuation rings $\cO_{L}\bigl[\pi^{'\frac{1}{n!}}\bigr]$ for ${n\in\N}$. Since $\bigl([\overline{\pi}]^e,p\bigr)$ is a
regular sequence in $\WW\bigl(\widetilde{\bf E}^+_{\cO_\Kbar}\bigr)$ by \ref{lemma:A+R}, we deduce that $A:=\WW\bigl(\widetilde{\bf
E}^+_{\cO_\Kbar}\bigr)/\bigl(P^{\sigma^m}_\pi([\overline{\pi}])\bigr)$ is $p$-torsion free. By construction it is the $p$-adic completion of almost \'etale
extensions of $\widehat{\cO}_{L_\infty}$. Hence, we can extend the inclusion $\cO_{L_\infty}\subset \cO_\Kbar$ to an injection $\eta\colon A \to
\widehat{\cO}_{\Kbar}$. As $P_\pi^{\sigma^m}(Z)$ and $P_\pi(Z)$ are monic Eisenstein polynomials of degree $e$, we have $P_\pi^{\sigma^m}(Z)\equiv P_\pi(Z)$ modulo
$p$ and $A/pA=\WW\bigl(\widetilde{\bf E}^+_{\cO_\Kbar}\bigr)/\bigl(P_\pi([\overline{\pi}]),p \bigr)=\cO_\Kbar/p \cO_\Kbar$ by \ref{lemma:KerTheta}. Hence $\eta$ is
an isomorphism and $\eta$ modulo $p$ factors via the canonical map $\Theta\colon \WW\bigl(\widetilde{\bf E}^+_{\cO_\Kbar}\bigr) \to \widehat{\cO}_\Kbar$. In
particular, $\eta\bigl([\varepsilon]^{\frac{1}{p}}\bigr)\neq 1$ as $\Theta\bigl([\varepsilon]^{\frac{1}{p}}\bigr)=\epsilon_p \not\equiv 1$ modulo $p$. Recall from
\ref{lemma:KerTheta} that $1+[\varepsilon]^{\frac{1}{p}}+\cdots + [\varepsilon]^{\frac{p-1}{p}}$ is $P_\pi\bigl([\overline{\pi}]\bigr)$ up to unit since they both
generate the kernel of $\Theta$. Thus $\eta\bigl([\varepsilon]^{\frac{1}{p}}\bigr)$ is not a primitive $p$-th root of $1$ as else
$\eta\bigl(P_\pi\bigl([\overline{\pi}]\bigr)\bigr)=0$ but we assumed that $P_\pi^{\sigma^m}(Z)$ and $ P_\pi(Z)$ are coprime. We conclude that
$\eta\bigl([\varepsilon]\bigr)=\eta\bigl([\varepsilon]^{\frac{1}{p}}\bigr)^p\neq 1$. As $q$ constructed in (I) is compatible with $\eta$, we conclude that in this
case $q$ satisfies (II) as wanted.

\medskip

We prove (II) in the general case. Thanks to the particular case just discussed and the argument with Frobenius at the beginning of the proof, we may assume that
there {\it do not exist} $m\in\Z$ and $n\in\N$ and roots $\pi'$ of $P_\pi^{\sigma^{m}}(Z)$ such that $h(Z)=\pi^{'p^n}$. Take any $q$ as in (I). Recall that
$\widetilde{h}\bigl(P_\pi\bigl([\overline{\pi}]\bigr)\bigr)=h\bigl(P_\pi(Z)\bigr)=p\varrho$ is non zero in $L$ by hypothesis.

\medskip \noindent a) There exists $n$ such that $\widetilde{h}\bigl(\big[\varepsilon^{\frac{1}{p^n}}\big]\big)\neq 1$.\smallskip

Recall from \ref{lemma:KerTheta} that $1+[\varepsilon]^{\frac{1}{p}}+\cdots + [\varepsilon]^{\frac{p-1}{p}}$ is $P_\pi\bigl([\overline{\pi}]\bigr)$ up to unit since
they both generate the kernel of $\Theta$.   In particular, applying $\varphi^{1-n}  $ we get that $1+\bigl[\varepsilon^{\frac{1}{p^n}}\bigr]+\cdots +
\bigl[\varepsilon^{\frac{p-1}{p^n}}\bigr]$ is $P_\pi^{\sigma^{1-n}}\bigl(\bigl[\overline{\pi}^{\frac{1}{p^{n-1}}}\bigr]\bigr)$ up to a unit for every $n\in\N$.
Thus, if $\widetilde{h}\bigl(\big[\varepsilon^{\frac{1}{p^n}}\big]\big)= 1$ for every $n$ then
$\widetilde{h}\left(P_\pi^{\sigma^{1-n}}\bigl(\bigl[\overline{\pi}^{\frac{1}{p^{n-1}}}\bigr]\bigr)\right)=p$ times a unit of $\cO_{\widehat{\Kbar}}$ for every $n$.
As $\widetilde{h}([\overline{\pi}])=\gamma \in \cO_L$ is not a unit and it is not zero and $P_\pi(Z)$ is an Eisenstein polynomial of the form $Z^e+p g(Z)$, we
deduce that for $n$ large enough $\widetilde{h}\left(P_\pi^{\sigma^{1-n}}\bigl(\bigl[\overline{\pi}^{\frac{1}{p^{n}}}\bigr]\bigr)\right)= \gamma^{\frac{e}{p^n}}+p
g^{\sigma^{1-n}}\bigl(\gamma^{\frac{1}{p^n}}\bigr)$ has valuation strictly smaller than the one of $p$, leading to a contradiction.

\medskip \noindent b) We have  $\widetilde{h}\bigl([\varepsilon]\bigr)\neq 1$ which proves (II).

\smallskip

Assume on the contrary that $\widetilde{h}\bigl([\varepsilon]\bigr)= 1$. By a) there exists $n$ such that
$\widetilde{h}\bigl(\bigl[\varepsilon^{\frac{1}{p^n}}\bigr]\bigr)\neq 1$. Take the smallest such $n$. Then,
$\widetilde{h}\bigl(\bigl[\varepsilon^{\frac{1}{p^n}}\bigr]\bigr)$ is a primitive $p$-th root of unity. Thus $\widetilde{h}$ maps
$1+\bigl[\varepsilon^{\frac{1}{p^n}}\bigr]+\cdots + \bigl[\varepsilon^{\frac{p-1}{p^n}}\bigr]$ to $0$ and, arguing as in (a), we conclude that
$\widetilde{h}\left(P_\pi^{\sigma^{1-n}}\bigl(\bigl[\overline{\pi}^{\frac{1}{p^{n-1}}}\bigr]\big)\right)=0$. Thus,
$\pi':=\widetilde{h}\bigl(\bigl[\overline{\pi}^{\frac{1}{p^{n-1}}}\bigr]\bigr)$ is a root of $P_\pi^{\sigma^{1-n}}(Z)$ and  $\widetilde{h}$ sends $Z$ to
$\pi^{'p^{n-1}}$. This contradicts our assumptions on $h$.
\end{proof}

In order to prove \ref{thm:Blogmaxff} we have the following lemma whose proof we leave to the reader:

\begin{lemma}\label{lemma:ABCD} Consider rings  $A \to B \to C \to D$ such that $A\to B$ is faithfully flat, $B$ is a direct summand of $C$ as $B$-module and $C\to D$ is faithfully
flat. Then,

(1) A sequence of $A$-modules which is exact after tensoring with $D$ over $A$ is exact;

(2) An $A$-module $M$, such that $M\otimes_A D$ is finite and projective as $D$-module, is finite and projective as $A$-module.

\end{lemma}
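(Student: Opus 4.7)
\medskip

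\textbf{Plan.} The strategy is to deduce both parts from Lemma~\ref{lemma:ABCD} applied to a suitable four-term chain of extensions. Set
\[
A := {\bf A}_{\widetilde{R}^o,{\rm max}}^{+,\rm log}, \qquad C := {\bf A}_{\widetilde{R},{\rm max}}^{+,\rm log},
\]
both viewed as subrings of $\WW(\widetilde{\bf E}^+)\otimes_{\WW(k)}\widetilde{R}$ after inverting $p$. I will establish the three statements:
\begin{itemize}
\item[(i)] $\widetilde{R}_{\rm max}[p^{-1}] \hookrightarrow C[p^{-1}]$ is faithfully flat;
\item[(ii)] $C[p^{-1}]$ is a direct summand of $A[p^{-1}]$ as $C[p^{-1}]$-module;
\item[(iii)] $A[p^{-1}] \hookrightarrow {\rm B}^{\rm max}_{\rm log}(\widetilde{R})$ is faithfully flat.
\end{itemize}
Granting (i)--(iii), part~(1) follows by applying Lemma~\ref{lemma:ABCD} with the chain $\widetilde{R}_{\rm max}[p^{-1}] \to C[p^{-1}] \hookrightarrow A[p^{-1}] \to {\rm B}^{\rm max}_{\rm log}(\widetilde{R})$: the two conclusions of the lemma are precisely the two descent statements of~(1). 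For part~(2), when $\alpha = 1$ Lemma~\ref{lemmma:Rtildeinftyflat}(4) yields $\widetilde{R} = \widetilde{R}^o$, hence $A = C$, so the composite $\widetilde{R}_{\rm max}[p^{-1}] \to C[p^{-1}] = A[p^{-1}] \to {\rm B}^{\rm max}_{\rm log}(\widetilde{R})$ is the composition of the two faithfully flat extensions in (i) and (iii), which is therefore itself faithfully flat.

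\medskip

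\textbf{Verification of (i), (ii) and the easy half of (iii).} Statement~(i) is immediate from Lemma~\ref{lemma:AtildeRlogff}(2) after inverting $p$. Statement~(ii) is Lemma~\ref{lemma:AtildeRlogff}(3) after inverting $p$ (direct summands are preserved under localization). The flatness in (iii) is the content of Lemma~\ref{lemma:AtildeRlogff}(4). So the entire proof reduces to establishing surjectivity of the map
\[
g_A\colon \Spec\bigl({\rm B}^{\rm max}_{\rm log}(\widetilde{R})\bigr) \longrightarrow \Spec\bigl(A[p^{-1}]\bigr).
\]

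\medskip

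\textbf{Main obstacle: surjectivity on spectra.} This is the heart of the argument and will require extending the techniques of Lemma~\ref{lemma:imageofg}. A point of $\Spec(A[p^{-1}])$ is given by a continuous homomorphism $\widetilde{h}\colon A[p^{-1}] \to L$ with $L$ a finite extension of $K$ (using that $A[p^{-1}]$ is a Jacobson-type ring since $\widetilde{R}_{\rm max}[p^{-1}]$ is, and the extension to $A[p^{-1}]$ adjoins only divided powers of the parameters $u-1, v_i-1, w_j-1$). Restricting along the structural map $\widetilde{R}_{\rm max}[p^{-1}] \hookrightarrow A[p^{-1}]$ yields a homomorphism $h\colon \widetilde{R}_{\rm max}[p^{-1}]\to L$; when $h(Z) \neq 0$, the proof of Lemma~\ref{lemma:imageofg} produces an extension $q\colon \WW(\widetilde{\bf E}^+) \to \widehat{\cO}_{\Kbar}$ extending $\widetilde{h}$ on ${\bf A}_{\widetilde{R}}^+$ via almost purity and Zorn's lemma, with $q(t)\neq 0$. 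To lift $\widetilde{h}$ (and not just $h$) one must further arrange that the values $\widetilde{h}(u), \widetilde{h}(v_i), \widetilde{h}(w_j) \in L$ coincide with the images of $[\overline{\pi}]/Z, [\overline{X}_i]/\widetilde{X}_i, [\overline{Y}_j]/\widetilde{Y}_j$ under $q$; this is achieved by choosing the compatible $p$-power roots of $\overline{\pi}, \overline{X}_i, \overline{Y}_j$ in $\widetilde{\bf E}^+$ so that $q([\overline{\pi}]) = \widetilde{h}(u)\cdot h(Z)$ and similarly for the $\overline{X}_i, \overline{Y}_j$, which is possible since $\widehat{\cO}_{\Kbar}$ is algebraically closed and the monoid $\widetilde{\bf E}^+$ has all $p$-power roots. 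The case $h(Z) = 0$ is handled by choosing $q([\overline{\pi}]) = 0$ directly, which forces $q([\overline{\pi}]^{1/p^n}) = 0$ for all $n$ and is consistent. Having produced $q$, one extends to ${\rm A}^{\rm max}_{\rm log}(\widetilde{R})$ using the universal property of the log-PD envelope (the constraints from $\widetilde{h}$ on the divided-power variables determine the extension on the $(u-1)^{[n]}$ and analogous generators of~$A$), and finally inverts $t$ to obtain the required map into $\widehat{\Kbar}$.

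\medskip

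The hard part will be the delicate bookkeeping required to ensure simultaneous compatibility of $q$ with both the restriction $h$ on $\widetilde{R}_{\rm max}[p^{-1}]$ \emph{and} the additional values $\widetilde{h}$ assigns to the divided-power generators of~$A$; the case $h(Z) = 0$ in particular requires adapting the Frobenius-twist argument in the proof of Lemma~\ref{lemma:imageofg} to the setting where the usual nondegeneracy hypothesis $[\overline{\pi}]\neq 0$ fails.
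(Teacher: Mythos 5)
Your proposal does not prove the statement under review. Lemma~\ref{lemma:ABCD} is a self-contained piece of commutative algebra about an \emph{arbitrary} chain of ring maps $A \to B \to C \to D$ with $A\to B$ faithfully flat, $B$ a direct summand of $C$ as a $B$-module, and $C\to D$ faithfully flat; its proof cannot involve $\widetilde{R}_{\rm max}$, ${\bf A}_{\widetilde{R}^o,{\rm max}}^{+,\rm log}$, or ${\rm B}^{\rm max}_{\rm log}(\widetilde{R})$. What you have written is instead a sketch of the proof of Theorem~\ref{thm:Blogmaxff}, and — fatally for the present task — it \emph{invokes} Lemma~\ref{lemma:ABCD} as an ingredient ("The strategy is to deduce both parts from Lemma~\ref{lemma:ABCD} applied to a suitable four-term chain"). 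You have thus assumed the very statement you were asked to establish and gone on to prove a different (downstream) result. None of the content about $\Spec\bigl({\rm B}^{\rm max}_{\rm log}(\widetilde{R})\bigr)\to\Spec\bigl(A[p^{-1}]\bigr)$, almost purity, or the case $h(Z)=0$ is relevant to the lemma.

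A proof of the actual statement runs along the following lines, using only the three hypotheses. For (1): if $M_\bullet$ is a sequence of $A$-modules with $M_\bullet\otimes_A D$ exact, then $M_\bullet\otimes_A C$ is a sequence of $C$-modules whose base change along the faithfully flat map $C\to D$ is exact, hence $M_\bullet\otimes_A C$ is exact; writing $C=B\oplus B'$ as $B$-modules gives $M_\bullet\otimes_A C\cong (M_\bullet\otimes_A B)\oplus(M_\bullet\otimes_A B')$, so the direct summand $M_\bullet\otimes_A B$ is exact, and faithful flatness of $A\to B$ then gives exactness of $M_\bullet$. For (2): faithfully flat descent along $C\to D$ shows $M\otimes_A C$ is finite projective over $C$; choosing finitely many elements $m_i\in M$ whose images generate $M\otimes_A C$ and letting $M'\subseteq M$ be the submodule they generate, right-exactness of the tensor product gives $(M/M')\otimes_A C=0$, hence $(M/M')\otimes_A B=0$ as a direct summand, hence $M=M'$ by faithful flatness of $A\to B$; one then chooses a presentation $A^n\twoheadrightarrow M$ and checks, using (1) applied to the relevant Hom-sequences (or a direct splitting argument after base change), that the surjection splits, so that $M$ is finite projective. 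You need to supply an argument of this shape, with the projectivity step in (2) treated carefully, rather than an application of the lemma to the period rings of the paper.
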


\begin{proof}(of Theorem \ref{thm:Blogmaxff})\enspace Thanks to \ref{lemma:AtildeRlogff} and \ref{lemma:imageofg} the inclusion
$\widetilde{R}_{\rm max}\bigl[(pZ)^{-1}\bigr] \subset {\rm B}^{\rm max}_{\rm log}\bigl(\widetilde{R}\bigr)\bigl[Z^{-1}\bigr]$ is faithfully flat. If $\alpha=1$, as
$\widetilde{R}=\widetilde{R}^o$ in this case (see \ref{lemmma:Rtildeinftyflat}),  the inclusion $\widetilde{R}_{\rm max}\bigl[p^{-1}\bigr] \subset {\rm B}^{\rm
max}_{\rm log}\bigl(\widetilde{R}\bigr)$ is  flat.

Due to \ref{lemma:AtildeRlogff} and \ref{lemma:ABCD} to conclude the proof of the theorem we are left to show that the map ${\bf A}_{\widetilde{R}^o,{\rm
max}}^{+,\rm log}\bigl[p^{-1}\bigr] \lra {\rm B}^{\rm max}_{\rm log}\bigl(\widetilde{R}\bigr)$ is faithfully flat if we localize at maximal ideals of ${\bf
A}_{\widetilde{R}^o,{\rm max}}^{+,\rm log}\bigl[p^{-1}\bigr]$ containing $Z$.  Equivalently we need to show that the map on spectra contains all closed points
associated to  $L$-valued points $h\colon {\bf A}_{\widetilde{R}^o,{\rm max}}^{+,\rm log}\bigl[p^{-1}\bigr]\lra L$, for some extension $K\subset L$, such that
$h(Z)=0$.

First of all the map $h$ defines the map $h_0\colon \cO_{\rm max} \to \WW(k)$ sending $Z$ to $0$. We claim that one can extend $h_0$ to a $ \widehat{\Kbar}$-point
$h_\Kbar$ of ${\rm B}^{\rm max}_{\rm log}\bigl(\cO\bigr)$. For this it suffices to show that $Z$ is not invertible in ${\rm B}^{\rm max}_{\rm log}\bigl(\cO\bigr)$.
As  $\varphi(Z)=Z^p$ and $\varphi\big({\rm B}^{\rm max}_{\rm log}\bigl(\cO\bigr)\big)$ is a subring of Kato's period ring $B_{\rm log}$ introduced in
\S\ref{sec:classical} by \ref{rmk:AmaxfrobaAcris}, it suffices to show that $Z$ is not invertible in $B_{\rm log}$. It follows from \cite[Cor. 4.1.3 \& Prop.
5.1.1(ii)]{breuil} that $\varphi^2\bigl(B_{\rm log}^{G_K}\bigr)\subset \cO_{\rm cris}[p^{-1}]$ which is contained in $\cO_{\rm max}[p^{-1}]$. Thus, if $Z$ were
invertible in $B_{\rm log}$, then $Z^{p^2}$ and thus $Z$ itself would be invertible in $\cO_{\rm max}[p^{-1}]$ which is not the case.

Since ${\bf A}_{\widetilde{R}^o,{\rm max}}^{+,\rm log,\nabla}$ is $p$-adically complete, $h$ defines a morphism $\widetilde{h}\colon {\bf A}_{\widetilde{R}^o,{\rm
max}}^{+,\rm log,\nabla} \lra \cO_L$. As the images of $u-1$, $v_2-1,\ldots,v_a-1$ and $w_1-1,\ldots,w_b-1$ are determined thanks to \ref{lemma:structureAlog} and
\ref{lemma:structAlog}, it suffices  to show that there exists a morphism $r\colon {\rm A}_{\rm max}^\nabla(R) \lra \widehat{\cO}_{\Kbar}$, extending
$\widetilde{h}$ and such that the image of $t$ is non zero. As in the proof of \ref{lemma:imageofg} we are left to construct a morphism $q\colon
\WW\bigl(\widetilde{\bf E}^+\bigr)\lra \widehat{\cO}_{\Kbar}$ extending $\widetilde{h}$ and such that $q([\varepsilon])\neq 1$. First of all we extend
$\widetilde{h}$ using the map  $h_\Kbar\colon \WW\bigl(\widetilde{\bf E}_{\cO_\Kbar}^+\bigr) \to \widehat{\cO}_\Kbar$ defined above.  Note that the image of
$[\varepsilon]-1$ is non zero as  $h_\Kbar(t)$ is non zero.  The map $q$, extending $\widetilde{h}$ and $h_\Kbar$, is then constructed as in the proof of
\ref{lemma:imageofg}.  We leave the details to the reader. \end{proof}

\subsubsection{Localizations}\label{sec:AlogforRsmooth}

Assume first that $R$ is $p$-adically complete and separated and that the log structure coincides with the log structure defined by the ideal $\pi$. This amounts to
require that $Y_1,\ldots, Y_b$ are invertible in $R$ and that there exists $1\le i\le a$ such that $X_1,\ldots, X_{i-1},X_{i+1},\ldots,X_a$ are invertible in $R$.
Up to renumbering the variables we assume that $X_i=X_a$. In particular, $R$ is obtained from $\cO_K\bigl[X_1^{\pm 1},\ldots,X_{a-1}^{\pm 1},Y_1^{\pm
1},\ldots,Y_b^{\pm 1}\bigr]$ by iterating the following operations: taking the $p$-adic completion of an \'etale extension, taking the $p$-adic completion of a
localization and taking the completion with respect to an ideal containing $p$. Put $R_0:=\widetilde{R}/Z \widetilde{R}$. It is $p$-adically complete and separated
and $R_0/p R_0\cong R/\pi R$.

\begin{lemma}\label{RR0Zincodim1} There exists a unique isomorphism $\widetilde{R}\cong R_0[\![Z]\!]$
of $\cO\bigl[X_1^{\pm 1},\ldots,X_{a-1}^{\pm 1},Y_1^{\pm
1},\ldots,Y_b^{\pm 1}\bigr]$-algebras lifting $R_0/p R_0\cong
R/\pi R$. In particular,
$$\widetilde{R}_{\rm cris}\cong R_0[\![Z]\!]\left\{\langle
P_\pi(Z) \rangle \right\},\qquad \widetilde{R}_{\rm max}\cong
R_0[\![Z]\!]\left\{\frac{P_\pi(Z)}{p}\right\}.$$
\end{lemma}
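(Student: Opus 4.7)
The plan is to first identify $R_0$ and then to use formal smoothness to build the isomorphism. Since the log structure on $R$ is the one defined by the ideal $\pi$, the elements $X_1,\ldots,X_{a-1},Y_1,\ldots,Y_b$ are units in $R$, and since $\widetilde{R}$ is $(p,Z)$-adically complete, their chosen lifts $\widetilde{X}_1,\ldots,\widetilde{X}_{a-1},\widetilde{Y}_1,\ldots,\widetilde{Y}_b$ are units in $\widetilde{R}$ as well. The defining relation $\widetilde{X}_1\cdots\widetilde{X}_a=Z^\alpha$ then determines $\widetilde{X}_a=Z^\alpha(\widetilde{X}_1\cdots\widetilde{X}_{a-1})^{-1}$, and in $R_0=\widetilde{R}/Z\widetilde{R}$ we have $X_a=0$, so $R_0$ is naturally a $\WW(k)\bigl[X_1^{\pm 1},\ldots,X_{a-1}^{\pm 1},Y_1^{\pm 1},\ldots,Y_b^{\pm 1}\bigr]$-algebra, obtained by the analogous sequence of operations used to build $R$.

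The main step is to produce a $\WW(k)\bigl[X_1^{\pm 1},\ldots,X_{a-1}^{\pm 1},Y_1^{\pm 1},\ldots,Y_b^{\pm 1}\bigr]$-algebra section $\sigma\colon R_0\to\widetilde{R}$ of the projection $\widetilde{R}\twoheadrightarrow R_0$. I would build $\sigma$ by induction on the tower $\widetilde{R}^{(0)}\subset\widetilde{R}^{(1)}\subset\cdots\subset\widetilde{R}^{(n)}=\widetilde{R}$ of \ref{lemma:tildeRn}, constructing compatible sections $\sigma^{(i)}\colon R_0^{(i)}\to\widetilde{R}^{(i)}$ at each stage. For $i=0$, after inverting $X_1,\ldots,X_{a-1},Y_1,\ldots,Y_b$, the ring $\widetilde{R}^{(0)}$ is the $(p,Z)$-adic completion of $\cO\bigl[X_1^{\pm 1},\ldots,X_{a-1}^{\pm 1},Y_1^{\pm 1},\ldots,Y_b^{\pm 1}\bigr]$ while $R_0^{(0)}$ is its reduction modulo $Z$, so the section is tautological (sending $X_i\mapsto X_i$ and $Y_j\mapsto Y_j$). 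The inductive step relies on the fact that each operation (\'etale extension, localization, completion with respect to an ideal containing $p$) preserves the analogous operation on the $R_0$-side and the required formal smoothness, allowing $\sigma^{(i)}$ to lift uniquely to $\sigma^{(i+1)}$: the \'etale and localization cases are handled by universal properties, while the completion case uses $(p,Z)$-adic completeness of $\widetilde{R}^{(i+1)}$. Once $\sigma$ is constructed, I would extend it to a $\cO$-algebra homomorphism $\Phi\colon R_0[\![Z]\!]\to\widetilde{R}$ by $Z\mapsto Z$, well-defined by the $Z$-adic completeness of $\widetilde{R}$, and conclude that $\Phi$ is an isomorphism using $Z$-adic flatness of $\widetilde{R}$ (which is part of \ref{lemmma:Rtildeinftyflat}) and the fact that $\Phi$ reduces to the identity modulo $Z$, so that the associated graded map is an isomorphism in every degree. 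Uniqueness of $\Phi$ follows from the uniqueness of $\sigma$ built inductively.

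The stated formulae for $\widetilde{R}_{\rm cris}$ and $\widetilde{R}_{\rm max}$ are then immediate consequences of $\widetilde{R}\cong R_0[\![Z]\!]$ together with the definitions in \ref{def:RlogRmax}, since $R_0[\![Z]\!]$ is already $(p,Z)$-adically complete and the operations of taking the $p$-adic completion of the log divided power envelope with respect to $(P_\pi(Z))$, respectively of $R_0[\![Z]\!][P_\pi(Z)/p]$, commute with the isomorphism. The main obstacle in this plan is the inductive step, particularly the case where $\widetilde{R}^{(i+1)}$ is obtained from $\widetilde{R}^{(i)}$ by completion with respect to an ideal containing $p$; verifying that $\sigma^{(i)}$ extends compatibly requires identifying the correct completion ideal on the $R_0$-side (the image of the $\widetilde{R}^{(i)}$-ideal modulo $Z$) and invoking a suitable formal smoothness property in the $(p,Z)$-adic topology, which must be tracked carefully throughout the induction.
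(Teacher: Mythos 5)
Your proposal is correct and follows essentially the same route as the paper: the paper's (very terse) proof is exactly the induction over the tower of \ref{lemma:tildeRn}, observing that $\widetilde{R}$ and $R_0[\![Z]\!]$ are both $(p,Z)$-adically complete, are obtained from $\cO\bigl[X_1^{\pm 1},\ldots,X_{a-1}^{\pm 1},Y_1^{\pm 1},\ldots,Y_b^{\pm 1}\bigr]$ by iterating the same operations, and agree modulo $(p,Z)$, hence are (uniquely) isomorphic. Your construction of a section $\sigma\colon R_0\to\widetilde{R}$ step by step along the tower, followed by the extension $Z\mapsto Z$ and the graded-pieces argument using flatness of $\widetilde{R}$ over $\cO$, is just a more explicit packaging of that same uniqueness-of-lifts induction.
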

\begin{proof} Both $\widetilde{R}$ and $R_0[\![Z]\!]$
are $(p,Z)$-adically complete and separated. By definition of
$\widetilde{R}$ in \ref{lemma:tildeRn}, they are both obtained
from $\cO\bigl[X_1^{\pm 1},\ldots,X_{a-1}^{\pm 1},Y_1^{\pm
1},\ldots,Y_b^{\pm 1}\bigr]$ by iterating finitely many times the
following operations: taking the $(p,Z)$-adic completion of
\'etale extensions, the $(p,Z)$-adic completion of localizations
and completion with respect to some ideal containing $(p,Z)$. One
proceeds by induction on the number of iterations to show that
the algebras we obtain are isomorphic modulo $(p,Z)$ and, hence,
they are  isomorphic, cf.~\ref{lemma:tildeRn}.
\end{proof}
Following \cite[Def. 6.1.3]{brinon} we let ${\rm B}_{\rm
cris}(R_0):={\rm A}_{\rm cris}(R_0)\bigl[t^{-1}\bigr]$ where ${\rm
A}_{\rm cris}(R_0)$ is the $p$-adic completion of the DP envelope
of $\WW\bigl(\widetilde{\bf E}^+\bigr)\otimes_{\WW(k)} R_0$ with
respect to the kernel of the morphism $\Theta\colon
\WW\bigl(\widetilde{\bf E}^+\bigr)\otimes_{\WW(k)} R_0\lra
\widehat{\overline{R}}$. Similarly one defines ${\rm A}_{\rm
max}(R_0)$ and ${\rm B}_{\rm max}(R_0) :={\rm A}_{\rm
max}(R_0)\bigl[t^{-1}\bigr]$ where ${\rm A}_{\rm max}(R_0)$ is the
$p$-adic completion of the subalgebra of $\WW\bigl(\widetilde{\bf
E}^+\bigr)\otimes_{\WW(k)} R_0\bigl[p^{-1}\bigr]$ generated by
$p^{-1}\Ker\bigl(\Theta\bigr)$.

\begin{corollary}\label{cor:BR} We have ${\rm A}_{\rm log}^{\rm cris,\nabla}(R)\cong {\rm A}_{\rm
cris}^\nabla(R_0)[\![Z]\!]\left\{\langle P_\pi(Z) \rangle\right\}$ and ${\rm A}^{\rm cris}_{\rm log}(\widetilde{R})\cong {\rm A}_{\rm
cris}(R_0)\widehat{\otimes}_{R_0}\widetilde{R}_{\rm cris}$.

\

Similarly, ${\rm A}_{\rm log}^{\rm max,\nabla}(R)\cong {\rm A}_{\rm max}^\nabla(R_0)[\![Z]\!]\left\{\frac{P_\pi(Z)}{p}\right\}$ and ${\rm A}^{\rm max}_{\rm
log}(\widetilde{R})\cong {\rm A}_{\rm max}(R_0)\widehat{\otimes}_{R_0}\widetilde{R}_{\rm max}.$
\end{corollary}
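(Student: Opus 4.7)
The plan is to combine the explicit structural descriptions given by Lemmas~\ref{lemma:structureAlog} and \ref{lemma:structAlog} with the isomorphism $\widetilde{R}\cong R_0[\![Z]\!]$ of Lemma~\ref{RR0Zincodim1}, and then identify both sides of each claimed isomorphism as the same $p$-adically complete (log) divided power algebra via their universal properties. The key simplification in the smooth setting of \S\ref{sec:AlogforRsmooth} is that the log structure on $\widetilde{R}$ is effectively generated by $Z$ alone, since the remaining generators $X_i$ (for $i\neq a$) and $Y_j$ are units in $R$. Hence the log DP envelope defining ${\rm A}^{\rm cris}_{\rm log}(\widetilde{R})$ introduces only the single log variable $u=[\overline{\pi}]/Z$, and the ratios $v_i,w_j$ of Lemma~\ref{lemma:structAlog} do not contribute.

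For the second isomorphism, I would use Lemma~\ref{RR0Zincodim1} to rewrite
$$\WW(\widetilde{\bf E}^+)\otimes_{\WW(k)}\widetilde{R}\;\cong\;\bigl(\WW(\widetilde{\bf E}^+)\otimes_{\WW(k)} R_0\bigr)[\![Z]\!]$$
in the $(p,Z)$-adic topology, and then argue that the log DP envelope with respect to $\ker\Theta_{\widetilde{R},\log}$ decouples as an iterated DP construction: first take the DP envelope of $\WW(\widetilde{\bf E}^+)\otimes_{\WW(k)} R_0$ with respect to $\ker\Theta$ (yielding ${\rm A}_{\rm cris}(R_0)$ after $p$-adic completion), then tensor over $R_0$ with the DP envelope of $R_0[\![Z]\!]$ with respect to $(P_\pi(Z))$ (which by definition is $\widetilde{R}_{\rm cris}$). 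The compatibility between the divided powers of $u-1$ coming from the log DP construction and those of $P_\pi(Z)$ coming from $\widetilde{R}_{\rm cris}$ is provided by the identity
$$P_\pi\bigl([\overline{\pi}]u^{-1}\bigr)=u^{-e}\Bigl(P_\pi([\overline{\pi}])+\sum_{i<e} a_i[\overline{\pi}]^i(u^{e-i}-1)\Bigr)$$
from the proof of Lemma~\ref{lemma:structureAlog}: modulo the divided powers of $\xi$ inherited from ${\rm A}_{\rm cris}(R_0)$, the two DP structures determine each other.

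The first isomorphism then follows by the same argument at the ``$R_0$-free'' level. Since ${\rm A}_{\rm cris}^\nabla(R_0)={\rm A}_{\rm cris}^\nabla(R)$ depends only on $\widetilde{\bf E}^+$, Lemma~\ref{lemma:structureAlog} already identifies ${\rm A}_{\rm log}^{\rm cris,\nabla}(R)$ with ${\rm A}_{\rm cris}^\nabla(R)\{\langle u-1\rangle\}$, and the identification with ${\rm A}_{\rm cris}^\nabla(R_0)[\![Z]\!]\{\langle P_\pi(Z)\rangle\}$ is constructed by sending $Z\mapsto[\overline{\pi}]u^{-1}$ (which is well defined because $u=1+(u-1)$ is a unit in the DP completion with inverse $\sum_n(-1)^n n!(u-1)^{[n]}$, and power series in $Z$ converge $p$-adically since $Z^e-\nu p$ admits divided powers for some unit $\nu$), together with its inverse, obtained by solving for $u-1$ via the divided-difference identity $[\overline{\pi}]-Z=(P_\pi([\overline{\pi}])-P_\pi(Z))/\widetilde Q$.

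The main obstacle will be constructing this inverse map: recovering $u-1$ inside ${\rm A}_{\rm cris}^\nabla[\![Z]\!]\{\langle P_\pi(Z)\rangle\}$ requires ``inverting'' $Z$, which is not a unit in the right-hand side, and verifying that the divided difference $\widetilde Q$ (whose reduction modulo the DP ideal is the nonzero element $P_\pi'(\pi)$) becomes invertible on the appropriate divided-power-graded pieces. Once this is done, the resulting expansion of $u-1$ as a series in $[\overline{\pi}]-Z$, $P_\pi(Z)^{[n]}$ and $\xi^{[m]}$ converges $p$-adically by the same topological estimates used throughout. The ${\rm max}$-variants are proved identically, replacing the divided power envelopes by the corresponding $p^{-1}$-adic subalgebras ${\rm A}_{\rm max}^\nabla$ and $\widetilde{R}_{\rm max}$ in the analogues of Lemmas~\ref{lemma:structureAlog} and \ref{lemma:structAlog}.
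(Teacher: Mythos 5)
Your overall skeleton coincides with the paper's: the proof given in the text is exactly the one-line reduction via $\widetilde{R}\cong R_0[\![Z]\!]$ (Lemma \ref{RR0Zincodim1}), implicitly combined with \ref{lemma:structureAlog} and \ref{lemma:structAlog}, and your preliminary observations are sound — in the setting of \S\ref{sec:AlogforRsmooth} the elements $v_i,w_j$ attached to the unit coordinates already lie in the base ring and their differences $v_i-1,w_j-1$ are absorbed (up to units) into Brinon's kernel for $R_0$, and the assignment $Z\mapsto[\overline{\pi}]u^{-1}$ does define a map ${\rm A}_{\rm cris}^\nabla(R_0)[\![Z]\!]\left\{\langle P_\pi(Z)\rangle\right\}\to{\rm A}_{\rm log}^{\rm cris,\nabla}(R)$, for the reasons you give.

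The gap is the step you yourself postpone, and it is not a technical verification but the entire content; as you have set it up it fails. After passing to the log envelope, $\Ker\Theta_{\widetilde{R},\log}$ is generated by $\Ker\Theta_{R_0}$ together with $u-1$, whereas your right-hand sides only carry divided powers of $\Ker\Theta_{R_0}$ and of $P_\pi(Z)$; the ideal $(\xi,P_\pi(Z))$ is strictly smaller than $(\xi,u-1)$, because in $P_\pi([\overline{\pi}])-P_\pi(Z)=([\overline{\pi}]-Z)\widetilde{Q}$ the divided difference has $\theta(\widetilde{Q})=P_\pi'(\pi)$, a non-unit whenever $K/K_0$ is ramified, and even when $P_\pi'(\pi)$ is a unit you still cannot divide by $Z$, which is not invertible. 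Concretely, the map you propose to invert is not surjective: reducing modulo $p$ and using \ref{display:Alognabla} (or \ref{cor:Alogcrysnablamodp}), ${\rm A}_{\rm log}^{\rm cris,\nabla}(R)/p\cong\bigl({\rm A}_{\rm cris}^\nabla(R)/p\bigr)[u,\rho_0,\rho_1,\ldots]/\bigl((u-1)^p,\rho_m^p\bigr)$, and since all coefficients $a_i$ of $P_\pi$ below the leading term lie in $p\WW(k)$, the images of the divided powers of $P_\pi(Z)$ land in ${\rm A}_{\rm cris}^\nabla(R)/p$; hence the image of your map mod $p$ is the ${\rm A}_{\rm cris}^\nabla(R)/p$-subalgebra generated by $[\overline{\pi}]u^{-1}$. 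Killing $[\overline{\pi}]$, the $\delta_m$ and the $\rho_m$, this image becomes the ring of constants while $u-1$ remains a nonzero nilpotent, so $u$, and a fortiori the elements $(u-1)^{[n]}$, are not in the image; no inverse of the kind you describe can exist (for $e=\deg P_\pi\geq 2$ the two sides are not even abstractly isomorphic modulo $p$: compare the images of Frobenius). So your route breaks exactly at the acknowledged obstacle, and the identifications which \ref{lemma:structureAlog} and \ref{lemma:structAlog} actually furnish keep the extra divided-power variable $u-1$ on the right-hand side. Be aware that the paper's own proof consists only of the citation of \ref{RR0Zincodim1}, so the missing argument is not to be found there; what you have isolated is a genuine difficulty with the statement as literally written, not a gap you can close along the proposed lines.
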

\begin{proof} This follows since $\widetilde{R} \cong R_0[\![Z]\!]$ by \ref{RR0Zincodim1}.
\end{proof}

\smallskip

We now return to a general $R$, i.e.,  assume that $R$ satisfies the assumptions in \S\ref{section:localdescription}. Let $T$ be the set of minimal prime ideals of
$R$ over the ideal $(\pi)$ of $R$. For any such $\cP$ let $\overline{T}_\cP$ be the set of minimal prime ideals of $\overline{R}$ over the ideal $\cP$. For any
$\cP\in T$ denote by $\widehat{R}_\cP$ the $p$-adic completion of the localization of $R$ at $\cP\cap R$. It is a dvr. Let $\widetilde{R}(\cP)$ be the $(p,Z)$-adic
completion of the localization of $\widetilde{R}$ at the inverse image of $\cP$ and let $R_{\cP,0}:=\widetilde{R}(\cP)/Z \widetilde{R}(\cP)$. Then,
$\widetilde{R}(\cP)\cong R_{\cP,0}[\![Z]\!]$ by \ref{RR0Zincodim1}. For $\cQ\in \overline{T}_\cP$ let $\overline{R}(\cQ)$ be the  normalization of $R_{\cP,0}$ in an
algebraic closure of ${\rm Frac}\bigl(\overline{R}_\cQ\bigr)$.

\begin{lemma}\label{lemma:Acrisonlocisinjective} The maps
$${\rm A}^{\rm cris}_{\rm log}(\widetilde{R}) \lra \prod_{\cP\in T,\cQ\in \overline{T}_\cP} {\rm
A}^{\rm cris}_{\rm log}(\widetilde{R}(\cP))\cong \prod_{\cP\in T,\cQ\in \overline{T}_\cP} {\rm A}_{\rm cris}(R_{\cP,0})[\![Z]\!]\left\{\langle
P_\pi(Z)\rangle\right\}$$obtained from the functoriality of the construction of ${\rm A}^{\rm cris}_{\rm log}$ are injective, $\cG_R$-equivariant and compatible
with filtrations and Frobenii. Similarly, the maps $${\rm A}^{\rm max}_{\rm log}(\widetilde{R}) \lra \prod_{\cP\in T,\cQ\in \overline{T}_\cP} {\rm A}^{\rm max}_{\rm
log}(\widetilde{R}(\cP))\cong \prod_{\cP\in T,\cQ\in \overline{T}_\cP} {\rm A}_{\rm max}(R_{\cP,0})[\![Z]\!]\left\{\frac{P_\pi(Z)}{p}\right\}$$are injective,
$\cG_R$-invariant and compatible with filtrations and Frobenii. In particular, the same holds if we take ${\rm B}^{\rm cris}_{\rm log}$ instead of ${\rm A}^{\rm
cris}_{\rm log}$ and if we take ${\rm B}^{\rm max}_{\rm log}$ instead of ${\rm A}^{\rm max}_{\rm log}$.
\end{lemma}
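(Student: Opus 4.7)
The plan is to verify the compatibilities by functoriality and to reduce the injectivity to a statement about $\overline{R}$ that is essentially already used in the proof of \ref{prop:GrDdR}(3).

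The compatibility with filtrations (defined by the DP ideals, respectively by the images of $p^{-1}\Ker(\Theta_{\widetilde{R},\log})$) and with Frobenii will be immediate from the functorial nature of the constructions, because log-DP envelopes, $p$-adic completions and formation of subalgebras generated by $p^{-1}\Ker$ are functorial operations, and the localization maps $\widetilde{R}\to \widetilde{R}(\cP)$ together with the embeddings $\overline{R}\hookrightarrow \overline{R}_\cQ \hookrightarrow \overline{R}(\cQ)$ respect the log structures and Frobenii. For the $\cG_R$-equivariance, one lets $\cG_R$ act on the product by simultaneously permuting the pairs $(\cP,\cQ)$ via its action on each $\overline{T}_\cP$ and acting through the decomposition group $\cG_{R,\cQ}$ on the corresponding factor; the compatibility then follows from the same functoriality, as in the parallel discussion in the proof of \ref{prop:GrDdR}(3).

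For injectivity, I would treat the ${\rm A}^{\rm cris}_{\rm log}$ case first. By \ref{prop:BcrissubsetBdR}(1)--(3), the map ${\rm A}^{\rm cris}_{\rm log}(\widetilde{R}) \hookrightarrow {\rm B}_{\rm dR}^+(\widetilde{R})$ is strictly compatible with the filtrations; since the filtration on ${\rm B}_{\rm dR}^+(\widetilde{R})$ is separated by definition, so is the filtration on ${\rm A}^{\rm cris}_{\rm log}(\widetilde{R})$, and the same holds on each factor of the target. It will then suffice to prove injectivity on the associated graded rings. By the explicit description in \ref{prop:BcrissubsetBdR}(3) these are free modules on the same set of divided-power monomials in $\xi, u-1, v_2-1,\ldots, w_b-1$, with coefficients $\widehat{\overline{R}}$ on the source and $\widehat{\overline{R}(\cQ)}$ on the $(\cP,\cQ)$-factor of the target. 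The map on graded rings is therefore extension of scalars along $\widehat{\overline{R}} \to \prod_{\cP, \cQ} \widehat{\overline{R}(\cQ)}$, reducing the problem to the injectivity of this last map.

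The injectivity of $\widehat{\overline{R}} \to \prod_{\cP, \cQ} \widehat{\overline{R}(\cQ)}$ will reduce modulo $p$, as $\widehat{\overline{R}}$ is $p$-torsion free by \ref{prop:Rbarff}(1); it then factors through $\overline{R}/p\overline{R} \to \prod_{\cP, \cQ} \overline{R}_\cQ/p \overline{R}_\cQ$, which is injective by normality of $\overline{R}$, exactly as used in the proof of \ref{prop:GrDdR}(3). The ${\rm A}^{\rm max}_{\rm log}$ case will be handled identically, using the second description in \ref{prop:BcrissubsetBdR}(3). Finally, the ${\rm B}$-versions will follow by inverting $t$: both ${\rm A}^{\rm cris}_{\rm log}(\widetilde{R})$ and ${\rm A}^{\rm max}_{\rm log}(\widetilde{R})$ are $t$-torsion free by \ref{prop:BcrissubsetBdR}(1), so injectivity of the ${\rm A}$-maps forces injectivity of the corresponding ${\rm B}$-maps. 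The main subtlety will be the separatedness of the filtration on ${\rm A}^{\rm cris}_{\rm log}(\widetilde{R})$, which is available only indirectly through the strict embedding into ${\rm B}_{\rm dR}^+(\widetilde{R})$; once this is in hand, the argument is entirely formal.
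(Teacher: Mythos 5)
Your handling of the compatibilities and of the $\cG_R$-equivariance is the same as the paper's (pure functoriality, plus the transitivity of $\cG_R$ on each $\overline{T}_\cP$ and the normality of $\overline{R}$). For the injectivity you take a genuinely different route: you pass to associated graded rings, using the strictness of the embedding into ${\rm B}_{\rm dR}^+\bigl(\widetilde{R}\bigr)$ from \ref{prop:BcrissubsetBdR} to get separatedness of the filtration, and then reduce to the injectivity of $\widehat{\overline{R}}\to \prod_{\cP,\cQ}\widehat{\overline{R}(\cQ)}$. The paper instead reduces directly modulo $p$: by \ref{display:Alognabla} and \ref{cor:Alogcrysmodp} the rings modulo $p$ are free on an explicit fixed set of monomials over $\widetilde{\bf E}^+/(\xi^p)\cong \overline{R}/p\overline{R}$ (resp.\ over $\overline{R}(\cQ)/p\overline{R}(\cQ)$), so injectivity modulo $p$ follows from $\overline{R}/p\overline{R}\hookrightarrow \prod_{\cP,\cQ}\overline{R}(\cQ)/p\overline{R}(\cQ)$, and then one concludes using $p$-torsion freeness of the target and $p$-adic separatedness of the source. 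Both arguments rest on the same normality input; the paper's is lighter in that it needs neither the strict compatibility of filtrations nor the graded description of the localized factors (for which you must check that \ref{prop:BcrissubsetBdR} applies to $\widetilde{R}(\cP)$ with a chart compatible with the one on $\widetilde{R}$), while yours is perfectly viable once \ref{prop:BcrissubsetBdR} is granted.

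Two small corrections to your final reduction. First, to descend injectivity from the reduction modulo $p$ you need $p$-torsion freeness of the \emph{target} together with $p$-adic separatedness of the source; $p$-torsion freeness of $\widehat{\overline{R}}$ alone, which is what you invoke from \ref{prop:Rbarff}(1), is not the relevant hypothesis (both correct hypotheses do hold: \ref{prop:Rbarff}(1) applies verbatim to each $\overline{R}(\cQ)$, and the source is a $p$-adic completion). Second, saying that the map modulo $p$ ``factors through $\overline{R}/p\overline{R}\to \prod_{\cP,\cQ}\overline{R}_\cQ/p\overline{R}_\cQ$, which is injective'' does not give injectivity of the composite into $\prod_{\cP,\cQ}\overline{R}(\cQ)/p\overline{R}(\cQ)$: you also need $p\overline{R}(\cQ)\cap \overline{R}_\cQ=p\overline{R}_\cQ$, i.e.\ the statement the paper asserts directly (again by normality), namely that $\overline{R}/p\overline{R}\to \prod_{\cP,\cQ}\overline{R}(\cQ)/p\overline{R}(\cQ)$ itself is injective. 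You should either quote that stronger injectivity or justify the extra step; as written the factorization argument is incomplete, though easily repaired.
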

\begin{proof} The compatibilities with filtrations and
Frobenii follow from the construction of ${\rm A}^{\rm cris}_{\rm log}$ and ${\rm A}^{\rm max}_{\rm log}$ and their functoriality. As remarked in the proof of
\ref{prop:GrDdR} the group $\cG_R$ acts transitively on $\overline{T}_\cP$ for every $\cP\in T$ and, by the normality of $\overline{R}$, we have an injective
$\cG_R$-equivariant homomorphism $\overline{R}/p \overline{R}\subset \prod_{\cP\in T, \cQ\in \overline{T}_\cP} \overline{R}(\cQ)/p\overline{R}(\cQ)$. This implies
the claimed $\cG_R$-equivariance. It follows from \ref{display:Alognabla} that the displayed map of the Lemma is injective modulo $p$ and, hence, it is injective.
One argues similarly in the case of ${\rm A}^{\rm max}_{\rm log}$.
\end{proof}

\begin{corollary}\label{cor:Frobeniusishorizontal}
Frobenius on ${\rm B}^{\rm cris}_{\rm log}(\widetilde{R})$ and on
${\rm B}^{\rm max}_{\rm log}(\widetilde{R})$ is horizontal with
respect to the connections $\nabla_{\widetilde{R}/\WW(k)}$ and
$\nabla_{\widetilde{R}/\cO}$.
\end{corollary}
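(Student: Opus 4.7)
The plan is to verify the horizontality identity $\nabla\circ\varphi=(\varphi\otimes d\varphi)\circ\nabla$ directly on a topological generating set of ${\rm A}^{\rm cris}_{\rm log}(\widetilde{R})$ and ${\rm A}^{\rm max}_{\rm log}(\widetilde{R})$, and then extend it to ${\rm B}^{\rm cris}_{\rm log}(\widetilde{R})$ and ${\rm B}^{\rm max}_{\rm log}(\widetilde{R})$ by inverting $t$; this last step is automatic since $\nabla(t)=0$ (as $t\in{\rm A}_{\rm cris}^\nabla(R)$ is contained in the kernel of both connections by \ref{cor:nabla=0}(4) and its analogue for the DP variant) and $\varphi(t)=pt$. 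Both connections and $\varphi$ are $p$-adically continuous, satisfy Leibniz, and are compatible with the DP (resp.~$p^{-1}$-divided) structures by construction, so checking on generators suffices.

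By \ref{lemma:structureAlog} and \ref{lemma:structAlog}, a topological generating set is furnished by the elements of $\WW(\widetilde{\bf E}^+)\otimes_{\WW(k)}\widetilde{R}$ together with the DP (resp.~$p^{-1}$-divided) powers of the elements $\xi,\,u-1,\,v_2-1,\ldots,v_a-1,\,w_1-1,\ldots,w_b-1$. On $\WW(\widetilde{\bf E}^+)$ both connections vanish while $\varphi$ preserves this subring, so horizontality is immediate; the same argument covers $\xi$. On $\widetilde{R}$ the connections restrict to the universal logarithmic derivations by \ref{cor:nabla=0}(3), and $\varphi=\varphi_{\widetilde{R}}$; the identity $d\circ\varphi_{\widetilde{R}}=\varphi_{\widetilde{R}}\circ d$ on the log-generators $Z,\widetilde{X}_i,\widetilde{Y}_j$ reduces to $d(X^p)=X^p\cdot p\,{\rm dlog}X$. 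For a generator $y-1$ with $y\in\{u,v_i,w_j\}$ and corresponding target log-generator $\ast_y\in\{Z,\widetilde{X}_i,\widetilde{Y}_j\}$, we have $\varphi(y)=y^p$ and the direct computation
$$\nabla(\varphi(y-1))=\nabla(y^p-1)=p y^{p-1}\nabla(y)=-p y^p\,{\rm dlog}\ast_y$$
matches $(\varphi\otimes d\varphi)(-y\,{\rm dlog}\ast_y)=-y^p\cdot p\,{\rm dlog}\ast_y$, as required.

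The remaining step, and the main obstacle, is to propagate horizontality from the linear generators $y-1$ to their DP powers $(y-1)^{[n]}$ (resp.~$p^{-1}$-divided powers $((y-1)/p)^n$). By the universal property of the log-DP (resp.~$p$-max) envelope, Frobenius on $\WW(\widetilde{\bf E}^+)\otimes_{\WW(k)}\widetilde{R}$ extends uniquely once one checks that $\varphi$ sends $\Ker(\Theta_{\widetilde{R},\log})$ into itself, and this extension automatically respects divided powers, giving $\varphi((y-1)^{[n]})=\gamma_n(\varphi(y-1))$. Combining this with the DP Leibniz rule $\nabla((y-1)^{[n]})=(y-1)^{[n-1]}\nabla(y-1)$ and horizontality on $y-1$, a routine induction on $n$ completes the argument, and the analogous computation with $((y-1)/p)^n$ handles the ${\rm A}^{\rm max}_{\rm log}$ case. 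Alternatively, one may use \ref{lemma:Acrisonlocisinjective} together with \ref{cor:BR} to reduce to the ``smooth'' factors ${\rm A}^{\rm cris}_{\rm log}(\widetilde{R}(\cP))\cong{\rm A}_{\rm cris}(R_{\cP,0})\,\widehat{\otimes}_{R_{\cP,0}}\,\widetilde{R}(\cP)_{\rm cris}$, where horizontality on the ${\rm A}_{\rm cris}(R_{\cP,0})$-factor is covered by the results of \cite{brinon}, while the single extra $Z$-variable on the DP-envelope factor is handled by the direct verification above; this second route has the advantage of avoiding the explicit DP bookkeeping at the cost of invoking the injectivity result.
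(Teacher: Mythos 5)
Your primary argument (direct verification on a generating set, then propagation through divided powers) is sound in outline but genuinely different from the paper's proof, which is a two-step reduction: by \ref{lemma:Acrisonlocisinjective} the rings ${\rm A}^{\rm cris}_{\rm log}(\widetilde{R})$ and ${\rm A}^{\rm max}_{\rm log}(\widetilde{R})$ embed, compatibly with $\varphi$ and with both connections, into products over the minimal primes $\cP$ above $(\pi)$ of rings of the form ${\rm A}_{\rm cris}(R_{\cP,0})[\![Z]\!]\{\langle P_\pi(Z)\rangle\}$ (resp.\ the max variant), i.e.\ one is reduced to the case of a complete discrete valuation ring via \ref{cor:BR}, and horizontality there is quoted from \cite[Prop.~6.2.5]{brinon}. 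So the ``alternative'' you sketch in your last sentence is essentially the paper's proof. Your direct route buys independence from the smooth relative case and makes the mechanism visible (the computation on $u-1$, $v_i-1$, $w_j-1$, plus the fact that the extension of $\varphi$ to the log-DP envelope respects divided powers); the paper's route avoids all DP bookkeeping at the cost of invoking the injectivity of the localization maps.

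Two points in your direct route need sharpening. First, $\widetilde{R}$ is not topologically generated as a ring by $Z,\widetilde{X}_i,\widetilde{Y}_j$: it is built from the chart algebra by \'etale extensions, localizations and completions (\ref{lemma:tildeRn}), so checking $d\circ\varphi_{\widetilde{R}}=\varphi_{\widetilde{R}}\circ d$ on the chart elements does not by itself settle the identity on $\widetilde{R}$. The clean fix: $D:=\nabla\circ\varphi-(\varphi\otimes d\varphi)\circ\nabla$ is an ${\rm A}_{\rm cris}^\nabla(R)$-linear, continuous, $\varphi$-twisted derivation, and its restriction to $\widetilde{R}$ is a continuous derivation with values in ${\rm A}_{\rm log}\otimes\widehat{\omega}^1$ viewed as a $\widetilde{R}$-module via $\varphi$; continuity for the $(p,P_\pi(Z))$-adic topology holds because $P_\pi(Z)$ acquires divided powers, hence is topologically nilpotent, in ${\rm A}^{\rm cris}_{\rm log}(\widetilde{R})$ (and $P_\pi(Z)/p\in{\rm A}^{\rm max}_{\rm log}(\widetilde{R})$). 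By the universal property of $\widehat{\omega}^1_{\widetilde{R}/\WW(k)}$, which is freely generated by ${\rm dlog}\,Z$, ${\rm dlog}\,\widetilde{X}_i$ ($2\le i\le a$), ${\rm dlog}\,\widetilde{Y}_j$, such a derivation vanishes as soon as it vanishes on the chart elements; only then does your computation on $Z,\widetilde{X}_i,\widetilde{Y}_j$ together with the one on $\xi$, $u-1$, $v_i-1$, $w_j-1$ and their divided (resp.\ $p$-divided) powers close the argument. Second, $\varphi$ does not send $\Ker(\Theta_{\widetilde{R},\log})$ into itself: since $\varphi$ is the $p$-power map modulo $p$, it sends it into $\Ker(\Theta_{\widetilde{R},\log})+p\bigl(\WW(\widetilde{\bf E}^+)\otimes_{\WW(k)}\widetilde{R}\bigr)$, and what the universal property of the log-DP envelope actually requires is that this image lies in an ideal with divided powers compatible with the standard ones on $(p)$ — which it does, and then $\varphi\bigl((y-1)^{[n]}\bigr)$ is the $n$-th divided power of $\varphi(y-1)$ as you use. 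With these insertions your direct proof is complete.
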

\begin{proof} We need to prove that $\varphi \circ \nabla= \nabla \circ \varphi$
where Frobenius on the differentials is defined by sending $ d  x
\mapsto d \varphi(x)$. Due to \ref{lemma:Acrisonlocisinjective} it
suffices to prove it  in the case that $R$ is a complete dvr.
Thanks to \ref{cor:BR} one is reduced to prove the horizontality
for ${\rm B}_{\rm cris}(R_0)$ and ${\rm B}_{\rm max}(R_0)$. This
is the content of \cite[Prop. 6.2.5]{brinon}.
\end{proof}

\subsection{The geometric cohomology of ${\rm B}^{\rm cris}_{\rm log}$}
\label{sec:geomtericBlogcris}

Fix an embedding $\overline{K}\subset \Omega$ with $\Omega$ an algebraically closed field containing $R$. We call $$G_R:={\rm
Gal}\left(\overline{R}\bigl[p^{-1}\bigr]/R \overline{K}\right),\qquad \cG_R:={\rm Gal}\left(\overline{R}\bigl[p^{-1}\bigr]/R \bigl[p^{-1}\bigr]\right)$$the {\em
geometric} (resp.~the {\em arithmetic}) Galois group of $R\bigl[p^{-1}\bigr]$.

\medskip

In \ref{def:RlogRmax} we defined  $\widetilde{R}_{\rm cris}$ (resp.~$\widetilde{R}_{\rm max}$) as  the $p$-adic completions of the logarithmic DP envelope of
$\widetilde{R}$  with respect to the kernel $\Ker$ of the morphism $\widetilde{R}\lra R$ (resp.~of the subring $\widetilde{R}\left[\frac{\Ker}{p}\right]$ of
$\widetilde{R}[p^{-1}]$). Similarly, one defines the geometric counterparts $\widetilde{R}_{\rm log}^{\rm geo, cris}$ and $\widetilde{R}_{\rm log}^{\rm geo, max}$
using the subring $$\widetilde{R}^{\rm geo}:=\WW\bigl(\widetilde{\bf E}^+_{\cO_{\overline{K}}}\bigr)\otimes_{\WW(k)} \widetilde{R} \subset \WW\bigl(\widetilde{\bf
E}^+_{\overline{R}}\bigr)\otimes_{\WW(k)} \widetilde{R}$$instead of $\widetilde{R}$ and the kernel of the natural morphism $\widetilde{R}^{\rm geo} \lra \widehat{\Rbar}$ induced by $\Theta_{\widetilde{R},\log}\colon \WW\bigl(\widetilde{\bf
E}^+_{\overline{R}}\bigr)\otimes_{\WW(k)} \widetilde{R}\to \widehat{\Rbar}$. As in \S\ref{GaloisfiltFrob} one endows $\widetilde{R}_{\rm log}^{\rm geo, cris}$ and
$\widetilde{R}_{\rm log}^{\rm geo, max}$ with filtrations. There are morphisms $\widetilde{R}_{\rm log}^{\rm geo, cris} \to  {\rm A}^{\rm cris}_{\rm
log}(\widetilde{R})$ and $ \widetilde{R}_{\rm log}^{\rm geo, max} \to {\rm A}^{\rm max}_{\rm log}(\widetilde{R})$ preserving the filtrations. For $m\in \Z$ we set
$$\Fil^m \left( \widetilde{R}_{\rm log}^{\rm geo, cris}[t^{-1}]\right)=\sum_s {1\over t^s} \Fil^{s+m} \widetilde{R}_{\rm log}^{\rm geo, cris}, \quad \Fil^m
\left(\widetilde{R}_{\rm log}^{\rm geo, max}[t^{-1}]\right)=\sum_s {1\over t^s} \Fil^{s+m} \widetilde{R}_{\rm log}^{\rm geo, max}$$in $\widetilde{R}_{\rm log}^{\rm
geo, cris}[t^{-1}]$ (resp.~$\widetilde{R}_{\rm log}^{\rm geo, max}[t^{-1}]$). For $m=-\infty$ we put $\Fil^m \widetilde{R}_{\rm log}^{\rm geo, cris}[t^{-1}]=
\widetilde{R}_{\rm log}^{\rm geo, cris}[t^{-1}]$ and  $\Fil^m \widetilde{R}_{\rm log}^{\rm geo, max}[t^{-1}]= \widetilde{R}_{\rm log}^{\rm geo, max}[t^{-1}]$. The
main result of this section is the following

\begin{theorem}\label{thm:geometricacyclicity} (i) For $i\geq 1$ the cohomology groups

$${\rm H}^i\left(G_R, {\rm A}^{\rm cris}_{\rm log}(\widetilde{R})\right)$$are annihilated by
$\bigl([\varepsilon]-1\bigr)^{2d}\bigl([\varepsilon]^{\frac{1}{p}}-1\bigr)^{8}{\cal I}^2$ for $i\geq 1$, with $d=a+b$, and they are zero if we invert $t$. For $i=0$
we have injective morphisms
$$\widetilde{R}_{\rm log}^{\rm geo, cris} \lra   {\rm
H}^0\left(G_R, {\rm A}^{\rm cris}_{\rm log}(\widetilde{R})\right),\qquad \widetilde{R}_{\rm log}^{\rm geo, max}\lra {\rm H}^0\left(G_R, {\rm A}^{\rm max}_{\rm
log}(\widetilde{R})\right)$$with cokernel annihilated by a power of $t$ and which are strict with respect to the filtrations. \smallskip

(ii) We have an injective morphism
$$R\widehat{\otimes}_{\cO_K} {\rm Gr}^\bullet A_{\rm cris}\lra {\rm H}^0\left(G_R, {\rm
Gr}^\bullet {\rm A}^{\rm cris}_{\rm log}(\widetilde{R})\right)$$with cokernel annihilated by a power of $p$.\smallskip

(iii) For every $m\in \Z \cup \{-\infty\}$ and every $i\geq 1$ we have
$${\rm H}^i\left(G_R, \Fil^m {\rm B}^{\rm cris}_{\rm log}(\widetilde{R})\right)=0.$$For $i=0$ we have isomorphisms
$$\Fil^m \widetilde{R}_{\rm log}^{\rm geo, cris}[t^{-1}] \lra   {\rm
H}^0\left(G_R, \Fil^m {\rm B}^{\rm cris}_{\rm log}(\widetilde{R})\right),\quad \Fil^m \widetilde{R}_{\rm log}^{\rm geo, max}[t^{-1}]\lra {\rm H}^0\left(G_R, \Fil^m
{\rm B}^{\rm max}_{\rm log}(\widetilde{R})\right).$$

(iii') Statement (iii) holds replacing ${\rm B}^{\rm cris}_{\rm log}(\widetilde{R})$ with ${\rm B}^{\rm cris}_{\rm log}(\widetilde{R})\otimes_{B_{\rm log}}
\overline{B}_{\rm log}$ and replacing $\widetilde{R}_{\rm log}^{\rm geo, cris}$ with  $\widetilde{R}_{\rm log}^{\rm geo, cris}\otimes_{B_{\rm log}}
\overline{B}_{\rm log}$. See \S\ref{sec:notation} for the notation.

\end{theorem}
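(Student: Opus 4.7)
The plan is to model this on the geometric acyclicity statement of \cite{andreatta_brinon} in the smooth case and on \cite[\S 6]{andreatta_iovita}, exploiting the explicit description of ${\rm A}^{\rm cris}_{\rm log}(\widetilde R)$ given by Lemmas \ref{lemma:structureAlog} and \ref{lemma:structAlog}. These identify
$$
{\rm A}^{\rm cris}_{\rm log}(\widetilde R)\;\cong\;{\rm A}_{\rm cris}^{\nabla}(R)\bigl\{\langle u-1,\,v_2-1,\ldots,v_a-1,\,w_1-1,\ldots,w_b-1\rangle\bigr\},
$$
and similarly for the $\max$-version. Thus ${\rm A}^{\rm cris}_{\rm log}(\widetilde R)$ is a (completed) log-DP polynomial algebra over ${\rm A}_{\rm cris}^{\nabla}(R)$ in $d=a+b$ variables, on each of which $G_R$ acts through the Kummer characters by $\sigma(u)=[\varepsilon]^{c_0(\sigma)}u$, $\sigma(v_i)=[\varepsilon]^{c_i(\sigma)}v_i$, $\sigma(w_j)=[\varepsilon]^{c_{a+j}(\sigma)}w_j$, with cocycles $c_i\colon G_R\to\Z_p$. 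Similarly one sees the "geometric" version $\widetilde R^{\rm geo, cris}_{\rm log}$ is the obvious analogous object built from $\cO_{\overline K}$ instead of $\overline R$, sitting inside ${\rm A}^{\rm cris}_{\rm log}(\widetilde R)$.

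First I would prove (i) by Hochschild--Serre for $1\to H\to G_R\to\Gamma_0\to 1$, where $\Gamma_0\cong\Z_p(1)^d$ is the quotient acting through the Kummer characters on the $d$ generators above, and $H$ acts trivially on them. The $H$-cohomology of ${\rm A}_{\rm cris}^{\nabla}(R)$ (with its $P'$-logarithm free of the $v_i,w_j,u$) is already controlled by the smooth relative Fontaine theory: combining \ref{prop:A+Rtildeff} with the key input from \cite{andreatta_brinon} one obtains that ${\rm H}^i(H,{\rm A}_{\rm cris}^{\nabla}(R))$ is ${\mathcal I}^2$-annihilated for $i\geq 1$ and equal (up to $t$-power) to $\widetilde R^{\rm geo,cris}_{\rm log}\!/(u-1,v_2-1,\ldots)$ for $i=0$. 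The $\Gamma_0$-cohomology is then a Koszul computation on the completed DP polynomial algebra: for each generator $y\in\{u,v_2,\ldots,v_a,w_1,\ldots,w_b\}$, the operator $\gamma_i-1$ on $(y-1)^{[n]}$ sends it to $([\varepsilon]^{c_i}-1)\cdot(\mbox{unit})\cdot(y-1)^{[n-1]}+\cdots$, whose kernel and cokernel on the DP variables are annihilated by $([\varepsilon]-1)^2([\varepsilon]^{1/p}-1)^{8/d}$ (the computation being exactly as in \cite[Prop.~III.4]{brinon} or \cite[Thm.~6.16]{andreatta_iovita}). The Koszul complex in $d$ variables contributes the $d$-th power, yielding the claimed annihilator $([\varepsilon]-1)^{2d}([\varepsilon]^{1/p}-1)^{8}{\mathcal I}^{2}$, and giving the injective map $\widetilde R^{\rm geo,cris}_{\rm log}\hookrightarrow{\rm H}^0(G_R,\cdot)$ with $t$-power cokernel.

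Next, for (ii), I would use the explicit description of ${\rm Gr}^\bullet {\rm A}^{\rm cris}_{\rm log}(\widetilde R)$ from \ref{prop:BcrissubsetBdR}(3) as a free $\widehat{\overline R}$-module with basis the monomials in the divided powers $\xi^{[n_0]}(u-1)^{[n_1]}(v_i-1)^{[n_i]}(w_j-1)^{[n_{a+j}]}$. Taking $G_R$-invariants amounts to computing ${\rm H}^0(G_R,\widehat{\overline R}(m))$ for twists by products of the Kummer characters; by the relative version of the Ax--Sen--Tate theorem recalled in \ref{cor:cohoRbarmodp}(ii) (already used in the proof of \ref{prop:BdRff}), this gives $R\widehat\otimes_{\cO_K}$ of the corresponding piece of ${\rm Gr}^\bullet A_{\rm cris}$ up to bounded $p$-torsion.

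For (iii), I would invert $t$: by (i) the higher cohomology of ${\rm A}^{\rm cris}_{\rm log}(\widetilde R)$ is $t$-torsion, so ${\rm H}^i(G_R,{\rm B}^{\rm cris}_{\rm log}(\widetilde R))=0$ for $i\geq 1$, and the $\mathrm{H}^0$-claim for $m=-\infty$ follows since the cokernel of the inclusion of geometric rings is killed by $t$. For finite $m$, I would argue by d\'evissage on the filtration exactly as in the argument sketched in \S\ref{section:invariantsFilBlog} and used to prove \ref{prop:crysisacyclic}: combine (ii) (which handles ${\rm Gr}^r$ up to $p$-power torsion) with the $t$-annihilation of higher cohomology and the description $\Fil^m {\rm B}^{\rm cris}_{\rm log}(\widetilde R)=\bigcup_s t^{-s}\Fil^{s+m}{\rm A}^{\rm cris}_{\rm log}(\widetilde R)$ to pass up the filtration. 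Finally, (iii') follows from (iii) via Lemma \ref{lemma:BlogmodPpi} and the fact that the natural map $B_{\rm log}\to\overline{B}_{\rm log}$ is surjective and induces an isomorphism on graded pieces compatible with the $G_R$-action on the "relative" rings.

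The main obstacle will be the precise Koszul calculation in the semistable setting: the variable $u$ interacts with the deformation parameter $Z$ and, when $\alpha>1$, the generator $v_1$ is forced to equal $u^\alpha v_2^{-1}\cdots v_a^{-1}$, so one must be careful that the computation of invariants and higher cohomology on the DP-polynomial algebra in the remaining independent variables $\{u,v_2,\ldots,v_a,w_1,\ldots,w_b\}$ still recovers exactly $\widetilde R^{\rm geo,cris}_{\rm log}$ and $\widetilde R^{\rm geo,max}_{\rm log}$ (rather than some slightly larger ring), and that the annihilator bookkeeping produces the stated exponents $2d$ and $8$ rather than something worse.
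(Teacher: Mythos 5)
Your architecture (Hochschild--Serre for $H_R\subset G_R$, then a Koszul-type computation on the divided-power variables) is the same as the paper's, and your treatment of (iii) by d\'evissage on the filtration and of the $t$-inversion is essentially what the paper does. But there is a genuine gap in the middle: after almost-\'etale descent the $H_R$-invariants of ${\rm A}^{\rm cris}_{\rm log}(\widetilde R)$ are \emph{not} the DP-polynomial algebra over $\widetilde R^{\rm geo,cris}_{\rm log}$; they are (almost) the analogous object ${\rm A}^{\rm cris}_{\rm log,\infty}$ built over the perfectoid ring $\WW\bigl(\widetilde{\bf E}^+_{R_{\infty,\cO_\Kbar}}\bigr)$, i.e.\ over $R_\infty\cO_\Kbar$ rather than over $R\cO_\Kbar$. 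Before any Koszul computation can identify the $\widetilde\Gamma_R$-invariants with $\widetilde R^{\rm geo,cris}_{\rm log}$, one must ``de-perfectize'': split $\WW\bigl(\widetilde{\bf E}^+_{R_{\infty,\cO_\Kbar}}\bigr)$ into the noetherian piece ${\bf A}^{+,\rm geo}_{\widetilde R}$ and a complement ${\bf X}$ spanned by the monomials $[\overline X]^{\underline\alpha}[\overline Y]^{\underline\beta}$ with fractional exponents, and prove separately (as in \ref{prop:noetherianize}, \ref{lemma:vanishingkercokergammai-1} and \ref{cor:cohoXvanishes}) that the $\widetilde\Gamma_R$-cohomology of the ${\bf X}$-part is annihilated by $\bigl([\varepsilon]^{1/p}-1\bigr)^2$. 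Your step asserting that ${\rm H}^0(H,{\rm A}^{\nabla}_{\rm cris}(R))$ already equals the small ring ``up to a power of $t$'' conflates these two stages; without the splitting, the invariants you compute are a priori strictly larger, and the injectivity-with-$t$-power-cokernel statement in (i) does not follow. Your closing worry about the relation $v_1=u^\alpha v_2^{-1}\cdots v_a^{-1}$ is a red herring: $v_1$ is simply eliminated and the independent DP-variables are exactly the ones you list.

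Two smaller corrections to the bookkeeping. The Kummer quotient acting nontrivially has rank $d-1$, not $d$: since $G_R$ is the \emph{geometric} group it fixes all $p$-power roots of $\pi$, hence fixes $u=[\overline\pi]/Z$, and $\widetilde\Gamma_R=\oplus_{i=2}^a\Z_p\gamma_i\oplus\oplus_{j=1}^b\Z_p\delta_j$ acts only on $v_2,\dots,v_a,w_1,\dots,w_b$ (the $u$-direction contributes the extra factor handled by the arithmetic computation, not here). Consequently the exponent $\bigl([\varepsilon]^{1/p}-1\bigr)^{8/d}$ per variable is not meaningful: the $8$ in the statement comes from the de-perfectization bound $\bigl([\varepsilon]^{1/p}-1\bigr)^2$ doubled twice by the limit arguments needed to pass from the mod-$p^m$ statements to the integral ones, while the Koszul complex contributes only the factor $\bigl([\varepsilon]-1\bigr)^d$ (doubled to $2d$ by the same limit argument). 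Finally, for (iii') the paper does not simply base-change (iii) along $B_{\rm log}\to\overline B_{\rm log}$ (a non-flat quotient); it reruns the three-step argument for the $\overline B_{\rm log}$-versions of all the intermediate rings.
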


Using \ref{thm:geometricacyclicity}, we also prove the following analogue of \cite[Prop. 5.1.1(ii)]{breuil}:

\begin{proposition}\label{prop:arithemticinvariantsB} There exists $s\in\N$,
equal to $2$ if $p\geq 3$ and equal to $3$ if $p=2$, such that
$\varphi^s\left({\rm B}_{\rm log}^{\rm
cris}(\widetilde{R})^{\cG_R}\right) \subset \widetilde{R}_{\rm
cris}\bigl[p^{-1}\bigr]$ and $\varphi^s\left({\rm B}_{\rm
log}^{\rm max}(\widetilde{R})^{\cG_R}\right)\subset
\widetilde{R}_{\rm max}\bigl[p^{-1}\bigr]$.
\end{proposition}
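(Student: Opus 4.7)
(Proposal for \ref{prop:arithemticinvariantsB}) The plan is to use \ref{thm:geometricacyclicity} to first compute the geometric $G_R$-invariants, and then exploit the classical result of Breuil to handle the residual $G_K$-action. I will treat ${\rm B}_{\rm log}^{\rm cris}(\widetilde{R})$ in detail; the argument for ${\rm B}_{\rm log}^{\rm max}(\widetilde{R})$ is essentially the same, using the analogous statement in \ref{thm:geometricacyclicity}(iii) with $\widetilde{R}_{\rm log}^{\rm geo,\,max}$.

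First, by \ref{thm:geometricacyclicity}(iii), taking $m=-\infty$, one has a canonical $G_K$-equivariant identification
$$
{\rm B}_{\rm log}^{\rm cris}(\widetilde{R})^{G_R}\;\cong\; \widetilde{R}_{\rm log}^{\rm geo,\,cris}\bigl[t^{-1}\bigr],
$$
compatible with Frobenius. So
$
{\rm B}_{\rm log}^{\rm cris}(\widetilde{R})^{\cG_R}\cong \bigl(\widetilde{R}_{\rm log}^{\rm geo,\,cris}[t^{-1}]\bigr)^{G_K}.
$
Next, the ring $\widetilde{R}_{\rm log}^{\rm geo,\,cris}$ is by construction the $p$-adic completion of the log divided power envelope of $A_{\rm inf}(\cO_{\overline{K}})\otimes_{\WW(k)}\widetilde{R}$ with respect to the kernel of $\Theta\otimes\theta_{\widetilde{R}}$. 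A computation parallel to the one carried out in \ref{lemma:structureAlog} and \ref{lemma:structAlog} (essentially, the definition of the classical $A_{\rm log}$ of \S\ref{sec:classical} base-changed along $\cO\to\widetilde{R}$) identifies it with
$$
\widetilde{R}_{\rm log}^{\rm geo,\,cris}\;\cong\; A_{\rm log}\,\widehat{\otimes}_{\cO}\,\widetilde{R},
$$
compatibly with filtrations, Frobenius and $G_K$-action, where $G_K$ acts only on $A_{\rm log}$ and trivially on $\widetilde{R}$.

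Second, I would take $G_K$-invariants of the above. Since the action on the second factor is trivial and $\widetilde{R}$ is $(p,Z)$-adically complete and flat, an elementary argument based on the graded pieces (using \ref{cor:injstrictinvariants} type statements, or the explicit description of ${\rm Gr}^\bullet A_{\rm log}$ modulo $G_K$) gives
$$
\bigl(\widetilde{R}_{\rm log}^{\rm geo,\,cris}[t^{-1}]\bigr)^{G_K}\;\subset\; B_{\rm log}^{G_K}\,\widehat{\otimes}_{\cO}\,\widetilde{R}\bigl[p^{-1}\bigr],
$$
where the target is the $(p,Z)$-adic completion. Now apply Breuil's classical result \cite[Prop.~5.1.1(ii)]{breuil}: there exists an integer $s$ (one can take $s=2$ if $p\geq 3$ and $s=3$ if $p=2$) such that $\varphi^s\bigl(B_{\rm log}^{G_K}\bigr)\subset \cO_{\rm cris}\bigl[p^{-1}\bigr]$.

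Third, the Frobenius on ${\rm B}_{\rm log}^{\rm cris}(\widetilde{R})$ extends the one on the base; applying $\varphi^s$ to the inclusion above yields
$$
\varphi^s\bigl({\rm B}_{\rm log}^{\rm cris}(\widetilde{R})^{\cG_R}\bigr)\;\subset\; \varphi^s\bigl(B_{\rm log}^{G_K}\bigr)\,\widehat{\otimes}_{\cO}\,\widetilde{R}\bigl[p^{-1}\bigr]\;\subset\; \cO_{\rm cris}\bigl[p^{-1}\bigr]\,\widehat{\otimes}_{\cO}\,\widetilde{R}\bigl[p^{-1}\bigr]\;=\;\widetilde{R}_{\rm cris}\bigl[p^{-1}\bigr],
$$
the last equality coming from the definition of $\widetilde{R}_{\rm cris}$ in \ref{def:RlogRmax}. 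The assertion for ${\rm B}_{\rm log}^{\rm max}(\widetilde{R})$ is obtained identically, replacing DP by the max version throughout, with $\cO_{\rm cris}$ replaced by $\cO_{\rm max}$ and $\widetilde{R}_{\rm cris}$ replaced by $\widetilde{R}_{\rm max}$.

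The main obstacle I anticipate is step two: justifying that passing to $G_K$-invariants commutes with the completed tensor product $A_{\rm log}\,\widehat{\otimes}_{\cO}\,\widetilde{R}$, and that the identification $\widetilde{R}_{\rm log}^{\rm geo,\,cris}\cong A_{\rm log}\,\widehat{\otimes}_{\cO}\,\widetilde{R}$ is strict with respect to the various structures. This requires checking the identifications on graded pieces, where one uses that ${\rm Gr}^\bullet A_{\rm log}$ is $G_K$-cohomologically controlled in the sense of \ref{cor:injstrictinvariants}, so that the invariants of the completed tensor product can be computed term by term. Everything else reduces either to \ref{thm:geometricacyclicity} or to Breuil's classical result.
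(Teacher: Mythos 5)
Your first step (reducing to $G_K$-invariants of $\widetilde{R}_{\rm log}^{\rm geo,cris}[t^{-1}]$ via \ref{thm:geometricacyclicity}) agrees with the paper, but your second step has a genuine gap. A minor inaccuracy first: by \ref{cor:injstrictinvariants}(3) the map $\widetilde{R}\widehat{\otimes}_{\cO}A_{\rm log}\to\widetilde{R}_{\rm log}^{\rm geo,cris}$ is in general only a split surjection (an isomorphism only when $R\otimes_{\cO_K}\cO_\Kbar\to R\cO_\Kbar$ is one); this alone could be absorbed. The real problem is the inclusion
\[
\bigl(\widetilde{R}_{\rm log}^{\rm geo,cris}[t^{-1}]\bigr)^{G_K}\subset B_{\rm log}^{G_K}\,\widehat{\otimes}_{\cO}\,\widetilde{R}\bigl[p^{-1}\bigr],
\]
which is not an ``elementary argument based on graded pieces'' and is nowhere available in the paper: $\widetilde{R}$ is flat but not topologically free over $\cO$, so a $G_K$-invariant element of the completed tensor product, after inverting $t$, admits no canonical expansion with coefficients in $A_{\rm log}$; taking $G_K$-invariants commutes neither with the $p$-adically completed tensor product nor with reduction mod $p^n$ (the graded pieces have nonvanishing ${\rm H}^1$), and \ref{cor:injstrictinvariants} only provides injectivity/strictness statements, not a K\"unneth-type computation of invariants with Tate twists. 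In fact this inclusion carries essentially the whole content of the proposition: the difficulty is exactly that a $\cG_R$-invariant element is of the form $xt^{-r}$ with $x\in{\rm A}^{\rm cris}_{\rm log}(\widetilde{R})$ transforming by the $r$-th power of the cyclotomic character, and one must control $\varphi^s(x)$ integrally in the relative ring.

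The paper's proof handles this by a different mechanism, which you would have to reproduce to close the gap. After reducing the max case to the cris case via the factorization of Frobenius (\ref{rmk:AmaxfrobaAcris}), it treats separately: (a) the untwisted invariants, Lemma \ref{lemma:arithinvawgt0}, proved by an explicit de-perfectization $\WW\bigl(\widetilde{\bf E}^+_{\cO_{K_\infty'}}\bigr)={\bf A}^+\oplus{\bf X}$ and a $(\gamma-1)$-computation, giving $\bigl(\widetilde{R}_{\rm log}^{\rm geo,cris}\bigr)^{G_K}=\widetilde{R}_{\rm cris}$; and (b) the twists: for $xt^{-r}$ invariant one shows $\varphi^m(x)\in t^r{\rm B}^+_{\rm dR}(\widetilde{R})$ for all $m$ using ${\rm B}_{\rm dR}(\widetilde{R})^{\cG_R}=\widehat{\widetilde{R}[p^{-1}]}$ (\ref{prop:BdRff}), expands $x$ in the divided-power monomials of \ref{lemma:structAlog} with coefficients $\beta_\nu\in{\rm A}_{\rm cris}^\nabla(R)$, and concludes via Tsuji's \cite[Lemma 4.11.4]{tsuji} (or \cite[Prop. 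A.3.20]{tsujiinventiones}) that $\varphi^s$ of such elements lies in $t^r{\rm A}_{\rm log}^{\rm cris,\nabla}(R)$. Note that these coefficients live in the relative period ring built from $\WW\bigl(\widetilde{\bf E}^+_{\Rbar}\bigr)$, not in the classical $A_{\rm log}$, so Breuil's \cite[Prop. 5.1.1(ii)]{breuil} cannot be the sole arithmetic input: it is the point case of the statement you actually need, and your proposal offers no argument transferring it to the relative situation.
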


Let $H_R\subset G_R$ be the Galois group of $\overline{R}\bigl[p^{-1}\bigr]$ over $R_\infty\Kbar$. Then, $\widetilde{\Gamma}_R:=G_R/H_R$ is the Galois group of
$R_\infty \Kbar$ over $R\overline{K}$. Due to Assumptions (3)\&(4) in \S\ref{section:localdescription} the monoid $\cO_\Kbar^\ast \cdot \psi_R\bigl(\{0\}\times
\N^{a-1}\times \N^{b}\bigr)\subset R \Kbar$ is saturated which implies by Kummer theory that $R\overline{K} \otimes_{R^{(0)}} R^{(0)}_\infty$ is an integral domain,
i.e., it coincides with $R_\infty \Kbar$. Thus $\widetilde{\Gamma}_R$ coincides with the Galois group $\widetilde{\Gamma}_{R^{(0)}}=\oplus_{i=2}^a \Z_p \gamma_i
\oplus \oplus_{j=1}^b \Z_p \delta_j$ of the extension
$$R^{(0)}\otimes_{\cO_{K}}\Kbar=\frac{\Kbar\bigl[X_1,\ldots,X_a,Y_1,\ldots,Y_b\bigr]}{\bigl(X_1\cdots
X_a-\pi^\alpha\bigr)}\lra \cup_{n\in\N} \frac{\Kbar\bigl[X_1^{\frac{1}{n!}},\ldots,X_a^{\frac{1}{n!}},Y_1^{\frac{1}{n!}},\ldots,Y_b^{\frac{1}{n!}}\bigr]}
{\bigl(X_1^{\frac{1}{n!}}\cdots X_a^{\frac{1}{n!}}-\pi^{\frac{\alpha}{n!}}\bigr)}=R^{(0)}_\infty\otimes_{\cO_{K_\infty'}}\Kbar,$$where for every $i=1,\ldots,a$ we
let $\gamma_i$ be the automorphism characterized by the property that for every $n\in\N$ we have
$$\gamma_i\bigl(X_h^{\frac{1}{n!}}\bigr)=\begin{cases}
\epsilon_{n!} X_i^{\frac{1}{n!}} & \hbox{{\rm if }}h=i \cr X_h^{\frac{1}{n!}} & \forall 1\leq h\leq a, \, h\neq i \cr
\end{cases}$$and $\gamma_i\bigl(Y_j^{\frac{1}{n!}}\bigr)=
Y_j^{\frac{1}{n!}}$ for every $j=1,\ldots,b$. Here, $\epsilon_{n!}$ is the primitive $n!$-root of unity chosen in \ref{sec:notation}. Similarly, for every
$i=j,\ldots,b$ we let $\delta_j$ be defined by the property that for every $n\in\N$ we have $\delta_j\bigl(X_i^{\frac{1}{n!}}\bigr)= X_i^{\frac{1}{n!}}$ for every
$i=1,\ldots,a$ and
$$\delta_j\bigl(Y_h^{\frac{1}{n!}}\bigr)=\begin{cases}
\epsilon_{n!} Y_j^{\frac{1}{n!}} & \hbox{{\rm if }}h=j \cr  Y_h^{\frac{1}{n!}} & \forall h=1,\ldots,b,\, h\neq j. \end{cases}$$The proof of
\ref{thm:geometricacyclicity} is in three steps:\smallskip

1) First of all, using Faltings' theory of almost \'etale extensions, we prove that $${\rm H}^i\left(H_R, {\rm A}^{\rm cris}_{\rm log}(\widetilde{R})/p^m {\rm
A}^{\rm cris}_{\rm log}(\widetilde{R})\right)$$and $${\rm H}^i\left(H_R, {\rm A}^{\rm max}_{\rm log}(\widetilde{R})/p^m {\rm A}^{\rm max}_{\rm
log}(\widetilde{R})\right)$$are annihilated by the ideal ${\cal I}$ for $i\geq 1$. We also  construct rings ${\rm A}_{\rm log,\infty}^{\rm cris}$ and ${\rm A}_{\rm
log,\infty}^{\rm max}$ with maps to ${\rm A}^{\rm cris}_{\rm log}(\widetilde{R})^{H_R}$ and ${\rm A}^{\rm max}_{\rm log}(\widetilde{R})^{H_R}$ respectively, such
that modulo $p^m$ kernel and cokernel are annihilated by ${\cal I}$ for every $m\in\N$; see \ref{prop:HiHvanishes}.

 \smallskip

2) We define subrings ${\bf A}_{{\rm log}}^{\rm geo, max}(\widetilde{R})$ and  ${\bf A}_{{\rm log}}^{\rm geo, cris}(\widetilde{R})$ of ${\rm A}_{\rm
log,\infty}^{\rm max}$ and ${\rm A}_{\rm log,\infty}^{\rm cris}$ respectively such that these inclusions modulo
$\left(p^m,\sum_{i=0}^{p-1}\bigl[\varepsilon\bigr]^{ip^{m-1}}\right)$ induce a morphism between the cohomology groups with respect to the group
$\widetilde{\Gamma}_R$ with kernel and cokernel annihilated by $([\varepsilon]^{\frac{1}{p}}-1)^2$. See \ref{prop:noetherianize} and
\ref{cor:cohoXvanishes}.
\smallskip

3) We prove that the cohomology groups $${\rm H}^i\left(\widetilde{\Gamma}_R, {\bf A}_{{\rm log}}^{\rm geo, cris}(\widetilde{R})/(p^m)\right)$$vanish for $i\geq
d+1$, are annihilated by the ideal $([\varepsilon]-1)^d$ for $i\geq 1$ and coincide with $\widetilde{R}_{\rm log}^{\rm geo, cris}/p^m \widetilde{R}_{\rm log}^{\rm
geo, cris}$ up to $\bigl([\varepsilon]-1\bigr)^d$-torsion for $i=0$. We also prove that
$${\rm H}^0\left(\widetilde{\Gamma}_R, {\bf A}_{{\rm log}}^{\rm geo,
max}(\widetilde{R})/(p^m)\right)$$ coincides with $\widetilde{R}_{\rm log}^{\rm geo, max}/p^m \widetilde{R}_{\rm log}^{\rm geo, max}$ up to multiplication by
$\bigl([\varepsilon]-1\bigr)^d$. See \ref{prop:cohA+Rloggeo}.\smallskip

{\it Proof of \ref{thm:geometricacyclicity}} \enspace We start by showing how Claim (i) follows from (1)--(3). First of all using the limit argument of \cite[lemma
23 \& Cor.~24]{andreatta_brinon} one proves that (2) holds modulo $p^m$ up to $([\varepsilon]^{\frac{1}{p}}-1)^{4}$-torsion for every $m\in \N$. Using the
Hochschild-Serre spectral sequence applied to $H_R\subset G_R$ giving $${\rm H}^r\bigl(\widetilde{\Gamma}_R, {\rm H}^s\bigl(H_R,\_ \bigr)\bigr)\Longrightarrow {\rm
H}^{r+s}\bigl(G_R,\_\bigr),
$$the first claim in \ref{thm:geometricacyclicity}(i) follows, considering the rings modulo $p^m$, up
to $\bigl([\varepsilon]-1\bigr)^d([\varepsilon]^{\frac{1}{p}}-1)^{4}{\cal I}$-torsion. Using once more using the limit argument of \cite[lemma 23 \&
Cor.~24]{andreatta_brinon} the first claim follows. As $[\varepsilon]^{p^2}-1$ belongs to ${\cal I}$ and it is invertible in $B_{\rm cris}$ by~\cite[Pf. Thm
6.3.8]{brinon}, the ideal $\bigl([\varepsilon]-1\bigr)^d([\varepsilon]^{\frac{1}{p}}-1)^{4}{\cal I}$ becomes a unit if we invert $t$ proving the vanishing in Claim
(i).

The injectivity for $i=0$ in \ref{thm:geometricacyclicity}(i) and the fact that the maps are strict with respect to the filtrations is proven in
\ref{cor:injstrictinvariants}.

Claim \ref{thm:geometricacyclicity}(ii) concerning the graded rings are proven according to similar lines. The analogue of (1) is contained in
\ref{cor:cohoRbarmodp}. The analogue of (2) is the content of \ref{cor:cohoBHTinftyvanishes}.
The analogue of (3) is also proven in \ref{prop:cohA+Rloggeo}.

Claim \ref{thm:geometricacyclicity}(iii) is discussed in \S\ref{section:invariantsFilBlog}.

Claim \ref{thm:geometricacyclicity}(iii'), concerning the vanishing of the cohomology groups,  is a variant of the
strategy described above and is discussed in \S\ref{section:variantBbarlog}. For the computations of the invariants,
see \ref{cor:injstrictinvariants}.

\

For the reader's convenience we summarize in the following diagram the various rings appearing in this section in the crystalline setting. The horizontal rows
should be thought of as analogues of the inclusions $R\cO_\Kbar \subset R_\infty \cO_\Kbar \subset \Rbar$. The top row is the  $\nabla=0$ analogue of the lower row:

$$\begin{array}{ccccccc}  &  & {\bf A}_{\widetilde{R},\rm cris}^{+,\rm geo}  & \subset & {\rm A}_{\rm log,\infty}^{\rm cris,\nabla} & \subset & {\rm
A}^{\rm cris.\nabla}_{\rm log}(\widetilde{R}) \cr
 & & \downarrow & & \downarrow & & \downarrow \cr
\widetilde{R}_{\log}^{\rm geo, cris} & \subset & {\bf A}_{{\rm log}}^{\rm geo, cris}(\widetilde{R}) & \subset & {\rm A}_{\rm log,\infty}^{\rm cris} & \subset & {\rm
A}^{\rm cris}_{\rm log}(\widetilde{R}),\cr\end{array}$$where:

i) ${\rm A}_{\rm log,\infty}^{\rm cris}$ (resp.~${\rm A}_{\rm log,\infty}^{\rm cris,\nabla}$) is the $p$--adic completion of the log DP envelope of
$\WW\bigl(\widetilde{\bf E}^+_{R_{\infty,\cO_{\Kbar}}}\bigr) \tensor_{\WW(k)} \widetilde{R}$ (resp.~$\WW\bigl(\widetilde{\bf
E}^+_{R_{\infty,\cO_{\Kbar}}}\bigr)\tensor_{\WW(k)} \cO$) with respect to the natural morphism to $\widehat{\overline{R}}$. It is the analogue of the inclusion
${R_\infty \cO}_\Kbar \subset \Rbar$ and by almost \'etale descent it reduces the computation of the $G_R$-cohomology of ${\rm A}^{\rm cris}_{\rm
log}(\widetilde{R})$ to the computation of the $\widetilde{\Gamma}_R$-cohomology of $ {\rm A}_{\rm log,\infty}^{\rm cris}$; see \ref{prop:HiHvanishes};

\smallskip

ii) ${\bf A}_{\widetilde{R},\rm cris}^{+,\rm geo}$ is the $p$-adic completion of the log divided power envelope of the image ${\bf A}^{+,\rm
geo}_{\widetilde{R}}$ of ${\bf A}^{+}_{\widetilde{R}} \tensor_{\WW(k)} \WW\bigl(\widetilde{\bf E}^+_{\cO_{\overline{K}}}\bigr)\to \WW(\widetilde{\bf E})$ with
respect to the morphism to $\widehat{\overline{R}}$; see \S\ref{subsec:deperf}. It is the de-perfectization of ${\rm A}_{\rm log,\infty}^{\rm cris,\nabla}$.

iii) ${\bf A}_{{\rm log}}^{\rm geo, cris}(\widetilde{R})$ is the $p$-adic completion of the log DP envelope of ${\bf A}^{+,\rm geo}_{\widetilde{R}} \tensor_{\WW(k)}
\widetilde{R} $ with respect to the morphism to $\widehat{\overline{R}}$. See \S\ref{subsec:deperf}. It is the de-perfectization of ${\rm A}_{\rm log,\infty}^{\rm
cris}$.
\smallskip

\subsubsection{Almost \'etale descent} Denote by
$R_{\infty,\cO_{\Kbar}}$ the composite $R_\infty \cO_\Kbar\subset \overline{R}$.

\begin{lemma}\label{lemma:HiHR} (1) For every $n\in\N$ the subring $R_n \cO_{\overline{K}}\subset
\overline{R}$ is a direct factor of  $R_n\otimes_{\cO_{K_n'}} \cO_{\overline{K}}$ and is a normal ring.\smallskip

(2) Let $S\subset \Omega$ be a normal
$R_{\infty,\cO_{\Kbar}}$-algebra, finite \'etale and Galois with
group $H_S$ after inverting $p$. Then, for every $i\geq 1$ the
group ${\rm H}^i\bigl(H_S,S\bigr)$ is annihilated by the maximal
ideal of\/~$\cO_{\overline{K}}$. For $i=0$ it coincides with
$R_{\infty,\cO_{\Kbar}}$.
\smallskip

In particular, ${\rm H}^i\bigl(H_R,\overline{R}\bigr)$ is annihilated by the maximal ideal of\/~$\cO_{\overline{K}}$ for $i\geq 1$.
For $i=0$ it coincides with
$R_{\infty,\cO_{\Kbar}}$ and the latter is a normal ring.

\end{lemma}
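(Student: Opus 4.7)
The plan is as follows.

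For Claim (1), the strategy is to decompose $R_n\otimes_{\cO_{K_n'}}\cO_{\overline{K}}$ as a filtered colimit of finite products of normal domains. First, $R_n$ inherits from $R$ the hypotheses of \S\ref{section:localdescription} relative to $\cO_{K_n'}$: it is normal and Cohen--Macaulay by \ref{lemma:Rinftyflat}(3), flat over $\cO_{K_n'}$, and the structural map has geometrically regular fibers. For each finite extension $K_n'\subset L\subset \overline{K}$, I would show that $R_n\otimes_{\cO_{K_n'}}\cO_L$ is a finite product of normal integral domains via Serre's criterion: $S_2$ is inherited from Cohen--Macaulayness under flat base change from $\cO_{K_n'}$ to $\cO_L$, and $R_1$ at each codimension-one prime is verified by combining normality of $R_n$ with the geometrically regular fibers assumption. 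Writing $\cO_{\overline{K}}=\varinjlim_L\cO_L$ and taking the filtered colimit (noting that transition maps between the factor decompositions are compatible), the fixed embedding $R_n\hookrightarrow\overline{R}\subset\Omega$ selects one factor in each $R_n\otimes_{\cO_{K_n'}}\cO_L$; the colimit of these factors is the normal ring $R_n\cO_{\overline{K}}$. This yields both the direct summand statement and its normality.

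For Claim (2), the key input is Faltings' almost purity theorem \ref{prop:AE}. The ring $S$ belongs to $\cS_\infty$ as a normal $R_\infty$-algebra, finite and \'etale after inverting $p$, so the canonical idempotent $e_S\in S\otimes_{R_\infty}S[p^{-1}]$ splitting the multiplication satisfies $\pi^{1/n!}e_S\in\mathrm{image}(S\otimes_{R_\infty}S)$ for every $n\in\N$. Passing through the quotient $S\otimes_{R_\infty}S\twoheadrightarrow S\otimes_{R_{\infty,\cO_{\overline{K}}}}S$ yields the analogous property for the latter tensor product, so that $R_{\infty,\cO_{\overline{K}}}\subset S$ is almost \'etale in the sense that $\mathfrak{m}_{\overline{K}}\cdot e_S$ lies in the image of $S\otimes_{R_{\infty,\cO_{\overline{K}}}}S$. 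The standard Cartan--Leray / Cech complex formalism for almost \'etale Galois covers (using the dual-basis expansion $e_S=\sum x_i\otimes y_i$ together with the ``orthogonality'' relation $\sum_i\sigma(x_i)y_i=\delta_{\sigma,1}$ holding modulo $\mathfrak{m}_{\overline{K}}$) then shows that the cochain complex computing $\mathrm{H}^\bullet(H_S,S)$ is almost acyclic in positive degrees, so $\mathrm{H}^i(H_S,S)$ is annihilated by $\mathfrak{m}_{\overline{K}}$ for every $i\geq 1$. For $i=0$, ordinary Galois theory gives $S^{H_S}[p^{-1}]=R_{\infty,\cO_{\overline{K}}}[p^{-1}]$; combined with the fact that $R_{\infty,\cO_{\overline{K}}}=\bigcup_n R_n\cO_{\overline{K}}$ is a filtered union of normal domains --- hence itself a normal ring --- by Claim~(1), and with $S^{H_S}\subset S$ being integral over $R_{\infty,\cO_{\overline{K}}}$, one concludes $S^{H_S}=R_{\infty,\cO_{\overline{K}}}$.

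The ``in particular'' statement on $\overline{R}$ follows by a standard filtered-colimit argument. Writing $\overline{R}=\varinjlim_S S$ over the finite Galois normal $R_{\infty,\cO_{\overline{K}}}$-subalgebras $S\subset\overline{R}$, continuous cohomology of $H_R$ on a discrete module satisfies $\mathrm{H}^i_{\mathrm{cont}}(H_R,\overline{R})=\varinjlim_S\mathrm{H}^i(H_R/H_{S'},S)$, where $H_{S'}\subset H_R$ is the open normal stabilizer of $S$ in $H_R$ and the quotient $H_R/H_{S'}$ is precisely the Galois group $H_S$ appearing in Claim~(2). Applying Claim~(2) to each $S$ and passing to the colimit yields the desired annihilation by $\mathfrak{m}_{\overline{K}}$ for $i\geq 1$ and the equality $\mathrm{H}^0(H_R,\overline{R})=R_{\infty,\cO_{\overline{K}}}$. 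The main obstacle is the almost \'etale Galois descent step in Claim~(2): one must carefully verify that the almost splitting from \ref{prop:AE} produces, via the dual-basis and Galois averaging construction, almost-splittings of the full Cech complex in all positive degrees, with uniform control by the ideal $\mathfrak{m}_{\overline{K}}$; the verification of $R_1$ in Claim~(1) in the presence of the log structure is more routine but also requires care.
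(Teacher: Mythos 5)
Your overall plan coincides with the paper's proof: for (2) the paper also deduces the almost vanishing in positive degrees directly from Faltings' almost purity \ref{prop:AE} (via the idempotent/trace argument you describe), computes the invariants by first inverting $p$ and then using normality of $R_{\infty,\cO_{\Kbar}}=\bigcup_n R_n\cO_{\Kbar}$, and obtains the ``in particular'' statement by writing $\overline{R}$ as a filtered union of such $S$; and for (1) the paper likewise reduces to a finite extension $K_n'\subset L$, observes that $R_n\otimes_{\cO_{K_n'}}\cO_L$ is normal and noetherian, hence a finite product of normal domains one of which is the image $R_n\cO_L$, and passes to the colimit.

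The one place where you genuinely diverge, and where your justification as written does not work, is the normality of $R_n\otimes_{\cO_{K_n'}}\cO_L$. The paper gets it ``as in \ref{lemma:Rinftyflat}'', i.e.\ by Kato's log-regularity machinery (the base change is log smooth over a log regular base, hence log regular, hence Cohen--Macaulay and normal by \cite{katotoric}), whereas you propose Serre's criterion with $S_2$ from Cohen--Macaulayness and $R_1$ ``from normality of $R_n$ and the geometrically regular fibers assumption''. The $S_2$ part is fine, but the $R_1$ step cannot be justified this way: assumption (2) of \S\ref{section:localdescription} concerns geometric regularity of the fibers of the map $\Psi_R$ from the toric model, not of $R_n$ over $\cO_{K_n'}$, and the special fiber of $R_n$ over $\cO_{K_n'}$ is in fact \emph{not} regular (it is a union of components crossing along $X_1^{1/n!}\cdots X_a^{1/n!}=0$), so the codimension-one primes of $R_n\otimes_{\cO_{K_n'}}\cO_L$ lying over the maximal ideal of $\cO_L$ need a separate argument. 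The statement is nevertheless true: at each height-one prime of $R_n$ containing $\pi^{1/n!}$ the localization is a discrete valuation ring in which $\pi^{1/n!}$ is still a uniformizer (since the other coordinates become units there) and whose residue field is separable over the perfect field $k$, so after tensoring with $\cO_L$ the local rings at the relevant height-one primes are again discrete valuation rings; alternatively one simply follows the paper and invokes log regularity. If you want to keep Serre's criterion, you should replace the appeal to ``geometrically regular fibers'' by this explicit analysis at the generic points of the special fiber; otherwise your argument for (1), and hence the normality of $R_{\infty,\cO_{\Kbar}}$ used in (2), has a gap.
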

\begin{proof} (1) It follows as in \S\ref{lemma:Rinftyflat} that $R_n \tensor_{\cO_K} \cO_L$ is a normal ring for every
$n\in\N$ and every finite extension $K\subset L
\subset \Kbar$. As it is noetherian, it is the product of normal domains one of which is its image $R_n\cO_L\subset \Rbar$.
Thus, $R_n\cO_{\Kbar}$ is a direct
factor in $R_n \tensor_{\cO_K} \cO_\Kbar$ and it is a normal domain.

Statement (2) follows from \ref{prop:AE}; cf.~\cite[\S 2c]{faltingsAsterisque}. The claim concerning the invariants is clear if we invert $p$.
Since
$R_{\infty,\cO_{\Kbar}}$ is normal by (1), it follows that $\overline{R}^{H_R}=R_{\infty,\cO_{\Kbar}}$.

The  last statement follows from (2).
\end{proof}

\begin{corollary}\label{cor:injstrictinvariants} (1) The image of $\widetilde{R}^{\rm geo}$ via $\Theta_{\widetilde{R},\log}$ is
$\widehat{R\cO}_\Kbar$. \smallskip

(2) The map $\widetilde{R}_{\log}^{\rm geo, cris} \to  {\rm A}^{\rm cris}_{\rm log}(\widetilde{R})$ (resp.~$\widetilde{R}_{\log}^{\rm geo, max}\to {\rm A}^{\rm
max}_{\rm log}(\widetilde{R})$) is injective and strict with respect to the filtrations.  \smallskip

(3) We have a surjective, $G_K$-equivariant map  $\widetilde{R}\widehat{\otimes}_{\cO} A_{\rm log}\to \widetilde{R}_{\log}^{\rm geo, cris}$, where
$\widehat{\otimes}$ stands for the $p$-adically completed tensor product, which is compatible with the filtrations and admits a splitting compatible with the
filtrations. It is an isomorphism if the map $R\otimes_{\cO_K}\cO_{\Kbar}\to R\cO_\Kbar (\subset \Rbar)$ is an isomorphism.\smallskip

(4) Statements (2) and (3) hold after taking the $p$-adically completed tensor product $\widehat{\otimes}_{A_{\rm log}} \overline{A}_{\rm log}$ and
$\widetilde{R}\widehat{\otimes}_{\cO} \overline{A}_{\rm log}\cong R\widehat{\otimes}_{\cO_K} \overline{A}_{\rm log}$.

\end{corollary}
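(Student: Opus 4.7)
The plan is to prove the four parts in the order $(1) \Rightarrow (3) \Rightarrow (2) \Rightarrow (4)$, since parts (2) and (3) logically rest on (1), and (4) is a base change of (2) and (3).

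\textbf{Part (1).} I would restrict $\Theta\colon \WW\bigl(\widetilde{\bf E}^+\bigr)\to \widehat{\Rbar}$ of \ref{lemma:KerTheta} to the subring $\WW\bigl(\widetilde{\bf E}^+_{\cO_{\Kbar}}\bigr)$, where it recovers the classical Fontaine surjection onto $\widehat{\cO}_\Kbar$. Extending $\widetilde{R}$-linearly, $\Theta_{\widetilde{R},\log}$ factors as $\widetilde{R}^{\rm geo}\to R \otimes_{\WW(k)} \widehat{\cO}_{\Kbar}\to \widehat{\Rbar}$, the second map sending $Z$ to $\pi$ and identifying $\cO_K\subset R$ with its image in $\widehat{\cO}_\Kbar$. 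The image is precisely the topological $\WW(k)$-subalgebra of $\widehat{\Rbar}$ generated by $R$ and $\widehat{\cO}_{\Kbar}$, i.e. $\widehat{R\cO_\Kbar}$.

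\textbf{Part (3).} The crucial observation is that $A_{\rm log}$ is the $p$-adic completion of the log DP envelope of $\WW\bigl(\widetilde{\bf E}^+_{\cO_\Kbar}\bigr)\otimes_{\WW(k)}\cO$ with respect to the kernel of $\Theta\otimes \theta_\cO$, so by base change $\widetilde{R}\widehat{\otimes}_{\cO} A_{\rm log}$ is the $p$-adic completion of the log DP envelope of $\widetilde{R}^{\rm geo}$ with respect to the kernel of the surjection $\widetilde{R}^{\rm geo}\twoheadrightarrow \widehat{R\otimes_{\cO_K}\cO_\Kbar}$. By part~(1), there is a natural factorization of this surjection through $\widehat{R\cO_\Kbar}$, so the universal property of the log DP envelope produces a surjective, $G_K$-equivariant, filtration-compatible map $\widetilde{R}\widehat{\otimes}_{\cO} A_{\rm log}\twoheadrightarrow \widetilde{R}_{\log}^{\rm geo,\,cris}$. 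The splitting comes from \ref{lemma:HiHR}(1) taken at $n=0$: $R\cO_\Kbar$ is a direct factor of $R\otimes_{\cO_K}\cO_\Kbar$ as an $R$-algebra, and base changing this splitting along $\widehat{\cO}_\Kbar\widehat{\otimes}_{\cO_K}-$ and applying the DP/$p$-adic completion functor (both of which are compatible with direct summand decompositions) yields a section compatible with the filtrations. When $R\otimes_{\cO_K}\cO_\Kbar\to R\cO_\Kbar$ is itself an isomorphism, the two log DP envelopes coincide and the map is bijective.

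\textbf{Part (2).} For the map $\widetilde{R}_{\log}^{\rm geo,\,cris}\to {\rm A}^{\rm cris}_{\rm log}(\widetilde{R})$, I would compose with the splitting from (3) to factor it via $\widetilde{R}\widehat{\otimes}_\cO A_{\rm log}\hookrightarrow {\rm A}^{\rm cris}_{\rm log}(\widetilde{R})$. Using the explicit descriptions $A_{\rm log}\cong A_{\rm cris}\{\langle u-1\rangle\}$ from \S\ref{sec:classical}, ${\rm A}_{\rm log}^{\rm cris,\nabla}(R)\cong {\rm A}_{\rm cris}^\nabla(R)\{\langle u-1\rangle\}$ from \ref{lemma:structureAlog}, and ${\rm A}^{\rm cris}_{\rm log}(\widetilde{R})\cong {\rm A}_{\rm log}^{\rm cris,\nabla}(R)\{\langle v_2-1,\ldots,w_b-1\rangle\}$ from \ref{lemma:structAlog}, the composite becomes the natural map induced by $A_{\rm cris}\hookrightarrow {\rm A}_{\rm cris}^\nabla(R)$ on the $A_{\rm cris}$ factor, which is injective because $\WW\bigl(\widetilde{\bf E}^+_{\cO_\Kbar}\bigr)\hookrightarrow \WW\bigl(\widetilde{\bf E}^+\bigr)$ is (and by the torsion-freeness results \ref{display:Alognabla}, \ref{display:Alogtorionfree}). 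Strictness of the filtrations then follows because by \ref{prop:BcrissubsetBdR}(3) all filtrations involved are read off from the same explicit graded description in terms of DP-monomials in $\xi, u-1, v_i-1, w_j-1$, and the inclusion is graded on these generators. The analogous statement for the $\max$ variant is identical using \ref{lemma:structureAlog} and \ref{lemma:structAlog}.

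\textbf{Part (4).} Since $\overline{A}_{\rm log}$ is defined in \S\ref{sec:defBbarlog} as the image of $f_\pi\colon A_{\rm log}\to B_{\rm dR}$ given by $Z\mapsto \pi$, tensoring $-\widehat{\otimes}_{A_{\rm log}}\overline{A}_{\rm log}$ amounts to imposing the relation $P_\pi(Z)=0$, whence $\cO\widehat{\otimes}_\cO \overline{A}_{\rm log}=\cO_K\widehat{\otimes}_{\cO_K}\overline{A}_{\rm log}$ yields $\widetilde{R}\widehat{\otimes}_\cO \overline{A}_{\rm log}\cong R\widehat{\otimes}_{\cO_K}\overline{A}_{\rm log}$. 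The splitting and strictness of filtrations are preserved under this base change.

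\textbf{Main obstacle.} The delicate point is verifying in part (3) that the universal property actually identifies the $p$-adically completed log DP envelope construction for $\widetilde{R}_{\log}^{\rm geo,\,cris}$ with the base change $\widetilde{R}\widehat{\otimes}_\cO A_{\rm log}$ modulo the correct ideal, together with the matching of log structures (one must check that the log structure on $\widetilde{R}^{\rm geo}$ is obtained from the one on $\WW(\widetilde{\bf E}^+_{\cO_\Kbar})\otimes_{\WW(k)}\cO$ by base change along $\cO\to \widetilde{R}$). Equally delicate is the strictness in part (2): one needs the filtration on $\widetilde{R}\widehat{\otimes}_\cO A_{\rm log}$ (composite of the filtration on $A_{\rm log}$ with the $P_\pi(Z)$-adic filtration on $\widetilde{R}$, as in \ref{cor:BdRstr}(4)) to coincide with the filtration induced from ${\rm A}^{\rm cris}_{\rm log}(\widetilde{R})$, which requires identifying the DP monomial bases on both sides via \ref{prop:BcrissubsetBdR}(3).
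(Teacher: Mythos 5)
There are genuine gaps, and the most serious one is in your part (2). Your strategy there is to factor the map through a claimed inclusion $\widetilde{R}\widehat{\otimes}_{\cO} A_{\rm log}\hookrightarrow {\rm A}^{\rm cris}_{\rm log}(\widetilde{R})$, justified only by injectivity "on the $A_{\rm cris}$ factor". That map is not injective in general: modulo the divided power ideals it is the map $R\widehat{\otimes}_{\cO_K}\widehat{\cO}_\Kbar\to \widehat{\Rbar}$, which kills the idempotent complement of $R\cO_\Kbar$ in $R\otimes_{\cO_K}\cO_\Kbar$. Concretely, for $R=\cO_L$ with $L/K$ unramified of degree $>1$ (allowed by the assumptions with $a=1$, $\alpha=1$, $b=0$) one has $\widetilde{R}\widehat{\otimes}_{\cO} A_{\rm log}\cong A_{\rm log}^{[L:K]}$ mapping to ${\rm A}^{\rm cris}_{\rm log}(\widetilde{R})\cong A_{\rm log}$ by projection onto one factor. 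So injectivity of $\widetilde{R}_{\log}^{\rm geo, cris}\to {\rm A}^{\rm cris}_{\rm log}(\widetilde{R})$ cannot be obtained this way; the paper instead argues on graded pieces: the kernel of $\Theta_{\widetilde{R},\log}$ on $\bigl(\widetilde{R}\,\WW\bigl(\widetilde{\bf E}^+_{\cO_\Kbar}\bigr)\bigr)^{\log}$ is generated by the regular sequence $(\xi,u-1)$ as in \ref{lemma:structurBdR+}, so the graded pieces of $\widetilde{R}_{\log}^{\rm geo, cris}$ are free over $\widehat{R\cO}_\Kbar$, and these inject into the graded pieces of ${\rm A}^{\rm cris}_{\rm log}(\widetilde{R})$ (described in \ref{prop:BcrissubsetBdR}) because $\widehat{R\cO}_\Kbar\hookrightarrow \widehat{\Rbar}$. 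This injectivity is exactly the content of (1), and it is also where your part (1) is incomplete: identifying the image with the $p$-adic completion $\widehat{R\cO}_\Kbar$ requires knowing that $R\cO_\Kbar/p^m\to \Rbar/p^m\Rbar$ is injective, which the paper deduces from the normality of $R\cO_\Kbar$ (\ref{lemma:HiHR}(1)); your "i.e.\ $\widehat{R\cO}_\Kbar$" silently assumes this.

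In (3) the idea (direct factor from \ref{lemma:HiHR}, DP envelopes) is the right one, but the two steps you yourself flag as delicate are precisely the ones needing an argument: the identification of $\widetilde{R}\widehat{\otimes}_{\cO} A_{\rm log}$ with a completed log DP envelope with matching log structures, and the transfer of the idempotent through the DP/completion construction. The paper sidesteps the first and handles the second by lifting the idempotent of $\cO_\Kbar\otimes_{\cO_K}(R/\pi R)$ via Hensel's lemma to the $(p,\xi,Z)$-adically complete ring $\widetilde{R}\widehat{\otimes}_{\WW(k)}\WW\bigl(\widetilde{\bf E}^+_{\cO_\Kbar}\bigr)$ \emph{before} forming DP envelopes, identifying the DP envelope of that direct factor with $\widetilde{R}_{\log}^{\rm geo, cris}$ on graded pieces, and mapping it into $\widetilde{R}\widehat{\otimes}_{\cO} A_{\rm log}$ using that $(\xi,u-1)$ admits divided powers in $A_{\rm log}$. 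Finally, in (4) the assertion that injectivity, strictness and the splitting are "preserved under this base change" is not formal: $\widehat{\otimes}_{A_{\rm log}}\overline{A}_{\rm log}$ is a quotient-and-complete operation and $\overline{A}_{\rm log}$ carries the image filtration, so left exactness and strictness can fail. The paper proves the analogue of (2) after base change by composing with ${\rm A}^{\rm cris}_{\rm log}(\widetilde{R})\widehat{\otimes}_{A_{\rm log}}\overline{A}_{\rm log}\to {\rm B}^+_{\rm dR}(R)$ and checking injectivity on graded pieces via \ref{cor:BdRstr}(6); some argument of this kind is needed in your write-up as well.
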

\begin{proof}
It follows as in \ref{lemma:structurBdR+}(1) that the kernel of the extension of $\Theta_{\widetilde{R},\log}\colon \widetilde{R}\WW\bigl(\widetilde{\bf
E}^+_{\cO_{\overline{K}}}\bigr) \lra \widehat{\Rbar}$ to $\left(\widetilde{R}_n\WW\bigl(\widetilde{\bf E}^+_{\cO_{\overline{K}}} \bigr)\right)^{\log}$ is generated
by a regular sequence consisting of  $2$ elements, given by  $(\xi,u-1)$ (or~$\left(P_\pi\bigl([\overline{\pi}]\bigr),u-1\right)$). The graded pieces are isomorphic
to the image of $\Theta_{\log}$.  By \ref{lemma:HiHR} the ring $R\cO_\Kbar$ is normal so that the map $R\cO_\Kbar/p^m R\cO_\Kbar\to \Rbar/p^m \Rbar$ is injective
for every $m\in\N$. We conclude that $\widehat{R\cO_\Kbar}\to \widehat{\Rbar}$ is injective. Thus, the image of $\widetilde{R}\WW\bigl(\widetilde{\bf
E}^+_{\cO_{\overline{K}}}\bigr)$ via  $\Theta_{\widetilde{R},\log}$ is $\widehat{R\cO}_\Kbar$ proving (1). Due to \ref{lemma:structurBdR+}(2) we conclude that the
maps in the statement (2) induce injective maps on the associated graded rings. It follows by induction on $m\in \N$ that they are injective modulo the $m$-th step
of the filtrations on the two sides of the given maps. As the filtration on ${\rm A}^{\rm cris}_{\rm log}(\widetilde{R})$ and on ${\rm A}^{\rm max}_{\rm
log}(\widetilde{R})$ is exhaustive, claim (2) follows.

Recall from \ref{lemma:HiHR} that $R \tensor_{\cO_K} \cO_{\overline{K}}$ is the product of integral normal domains one of which is $B:=R \cO_{\overline{K}}$. As the
latter is normal, the map $B/p B \to \overline{R}/p \overline{R}$ is injective so that, since $p$ is not a zero divisor in $B$, we deduce that the map on $p$-adic
completions $\widehat{B}\to \widehat{\overline{R}}$ is injective. As $\widetilde{R}/(P_\pi(Z))=R$, the reduction of the $\WW\bigl(\widetilde{\bf
E}^+_{\cO_{\overline{K}}}\bigr) \tensor_{\WW(k)} {\widetilde{R}} $ modulo $\bigl(p,\xi,Z\bigr)$ is $\cO_{\overline{K}}  \tensor_{\cO_K} (R/\pi R)$. By Hensel's
lemma the direct factor $B/\pi B$ of $\cO_{\Kbar}  \tensor_{\cO_K} (R/\pi R)$ lifts uniquely to a direct factor $\widetilde{B}$ of the
$\bigl(p,\xi,Z\bigr)$-adically completed tensor product ${\widetilde{R}}\widehat{\tensor}_{\WW(k)} \WW\bigl(\widetilde{\bf E}^+_{\cO_{\overline{K}}}\bigr)$. By
construction the map $\widetilde{B} \to \WW\bigl(\widetilde{\bf E}^+_{\cO_{\overline{K}}}\bigr) \widehat{\tensor}_{\WW(k)} {\widetilde{R}} $ modulo
$\bigl(\xi,P_\pi(Z)\bigr)$  coincides with the inclusion $\widehat{B} \subset R\widehat{\otimes}_{\cO_K}\cO_{\Kbar} $. This implies that the $p$-adic completion
$\widetilde{B}_{\rm log}^{\rm cris}$ of the logarithmic divided power envelope of the map $\Theta_B\colon \widetilde{B}\to \widehat{B}$ has $\widehat{B}$ as graded
piece for the DP filtration. As $\Theta_B$ is compatible with $\Theta_{\widetilde{R},\log}$ we get a natural map  $\widetilde{B}_{\rm log}^{\rm cris} \to
\widetilde{R}_{\log}^{\rm geo, cris}$ which is an isomorphism on graded pieces. The DP filtration being exhaustive,  it is an isomorphism. As
${\widetilde{R}}\widehat{\tensor}_{\WW(k)} \WW\bigl(\widetilde{\bf E}^+_{\cO_{\overline{K}}}\bigr)$ maps to $\widetilde{R}\widehat{\otimes}_{\cO} A_{\rm log}$,
which is $\bigl(p,\xi,P_\pi(Z)\bigr)$-adically complete, we get a map $\widetilde{B}\to \widetilde{R}\widehat{\otimes}_{\cO} A_{\rm log}$. Arguing  that the kernel
of $\Theta_B$ is generated by the regular sequence $\bigl(\xi,u-1\bigr)$, see \ref{lemma:structurBdR+}, and using  that $\bigl(\xi,u-1\bigr)$ admits DP powers in
$A_{\rm log}$, we obtain a natural  map $\widetilde{B}_{\rm log}^{\rm cris}\to \widetilde{R}\widehat{\otimes}_{\cO} A_{\rm log}$ inducing the inclusion
$\widehat{B}\to R\otimes_{\cO_K} \widehat{\cO}_\Kbar$ on the graded pieces for the DP filtration. It provides a splitting of the map
$\widetilde{R}\widehat{\otimes}_{\cO} A_{\rm log}\to \widetilde{R}_{\log}^{\rm geo, cris}$  as required in Claim (3).

We prove (4). As $Z=\pi$ in $\overline{A}_{\rm log}$, see \S\ref{sec:notation}, we have $P_\pi(Z)=0$  and $\widetilde{R}\widehat{\otimes}_{\cO} \overline{A}_{\rm
log}\cong R\widehat{\otimes}_{\cO_K} \overline{A}_{\rm log}$. The analogue of (3) is then clear. The filtration on $\overline{A}_{\rm log}[p^{-1}]$ is the one
induced from $B_{\rm dR}^+$ and the two rings have the same graded pieces, each isomorphic to $\widehat{\Kbar}$.  Consider the composite map  $$\tau\colon
\widetilde{R}_{\log}^{\rm geo, cris} \widehat{\otimes}_{A_{\rm log}} \overline{A}_{\rm log} \lra {\rm A}^{\rm cris}_{\rm
log}(\widetilde{R})\widehat{\otimes}_{A_{\rm log}} \overline{A}_{\rm log}  \lra {\rm B}_{\rm dR}^+(R),$$ where the second map is provided by
\ref{prop:BcrissubsetBdR}. The morphism $\tau$ is compatible with filtrations as both maps are. The ring $\widetilde{R}_{\log}^{\rm geo, cris}
\widehat{\otimes}_{A_{\rm log}} \overline{A}_{\rm log}$ has $\widehat{B}$ as graded pieces, using the analogue of (3). As $\widehat{B}$ injects in
$\widehat{\Rbar}$, the map $\tau$  is injective on graded pieces by \ref{cor:BdRstr}(6). Thus, it is injective and strict on filtrations. Therefore also
$\widetilde{R}_{\log}^{\rm geo, cris} \widehat{\otimes}_{A_{\rm log}} \overline{A}_{\rm log} \lra {\rm A}^{\rm cris}_{\rm
log}(\widetilde{R})\widehat{\otimes}_{A_{\rm log}} \overline{A}_{\rm log}$ must be injective and strict on filtrations. The claim follows.

\end{proof}

\begin{corollary}\label{cor:cohoRbarmodp} Consider the following situations:\smallskip

(i) $A=\overline{R}/p \overline{R}$ and $A_\infty$ equal to the
image of
$R_{\infty,\cO_{\Kbar}}/pR_{\infty,\cO_{\Kbar}}$;\smallskip

(ii) $A=\widehat{\overline{R}}$ and $A_\infty$ the $p$-adic
completion $\widehat{R}_{\infty,\cO_{\Kbar}}$ of
$R_{\infty,\cO_{\Kbar}}$;\smallskip

(iii) $A= {\rm Gr}^\bullet {\rm A}^{\rm cris}_{\rm log}$ and $\ds
A_\infty:= \oplus_{\underline{n}\in \N^{d+1}}
\widehat{R}_{\infty,\cO_{\Kbar}} \xi^{[n_0]} (u-1)^{[n_1]}
(v_2-1)^{[n_2]}\cdots (v_a-1)^{[n_a]}(w_1-1)^{[n_{a+1}]}\cdots
(w_b-1)^{[n_d]}$.

\smallskip

The groups ${\rm H}^i\bigl(H_R,A\bigr)$ are annihilated by the maximal ideal of\/~$\cO_{\overline{K}}$ for every $i\geq 1$. For $i=0$  the natural map $$A_\infty
\lra {\rm H}^i\bigl(H_R,A\bigr)$$has kernel and cokernel annihilated by the maximal ideal of\/~$\cO_{\overline{K}}$.

\end{corollary}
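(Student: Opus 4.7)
The plan is to reduce all three cases to Lemma \ref{lemma:HiHR}, which provides the essential almost-vanishing statement for $\mathrm{H}^i(H_R, \overline{R})$. Almost everything in the corollary becomes a formal consequence of that lemma via short exact sequences, a $p$-adic limit argument, and the explicit description of the graded ring from Proposition \ref{prop:BcrissubsetBdR}(3).

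For situation (i), I would apply $H_R$-cohomology to the short exact sequence $0 \to \overline{R} \xrightarrow{p} \overline{R} \to \overline{R}/p\overline{R} \to 0$. For $i\geq 1$, the group $\mathrm{H}^i(H_R, \overline{R}/p\overline{R})$ is sandwiched in the resulting long exact sequence between two groups annihilated by $\mathfrak{m}_{\overline{K}}$ thanks to \ref{lemma:HiHR}. For $i=0$, the cokernel of multiplication by $p$ on $\mathrm{H}^0(H_R,\overline{R}) = R_{\infty,\cO_\Kbar}$ injects into $\mathrm{H}^0(H_R,\overline{R}/p\overline{R})$ with cokernel contained in the almost-zero group $\mathrm{H}^1(H_R,\overline{R})$. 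Since $R_{\infty,\cO_\Kbar}\subset \overline{R}$ and $\overline{R}$ is $p$-torsion free by \ref{prop:Rbarff}, this cokernel is exactly $A_\infty = R_{\infty,\cO_\Kbar}/pR_{\infty,\cO_\Kbar}$, giving the required almost isomorphism.

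For situation (ii), I would bootstrap by induction on $n$ from (i) to $\overline{R}/p^n\overline{R}$, using the short exact sequence $0 \to \overline{R}/p^{n-1}\overline{R} \xrightarrow{p} \overline{R}/p^n\overline{R}\to\overline{R}/p\overline{R}\to 0$ together with the five lemma; at each step the almost-vanishing constant (i.e.~annihilation by $\mathfrak{m}_{\overline{K}}$) is preserved. Then I would pass to the inverse limit $\widehat{\overline{R}}=\lim_n \overline{R}/p^n\overline{R}$ using continuous cohomology as in \S\ref{section:continuoussheaves}, with the spectral sequence $\lim^{(i)} \mathrm{H}^j\Rightarrow \mathrm{H}^{i+j}_{\rm cont}$. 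The transition maps in $\{\mathrm{H}^0(H_R, \overline{R}/p^n\overline{R})\}_n$ are surjective up to $\mathfrak{m}_{\overline{K}}$-torsion, so the Mittag-Leffler condition holds almost, and the $\lim^{(1)}$ is annihilated by $\mathfrak{m}_{\overline{K}}$. This is the limit argument of \cite[Lemma 23 \& Cor.~24]{andreatta_brinon} and is the only delicate step: one must take care that the almost-zero constants remain uniform in $n$, which follows from the fact that annihilation by the entire ideal $\mathfrak{m}_{\overline{K}}$ is independent of $n$.

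Finally, for situation (iii), I would invoke the explicit decomposition in Proposition \ref{prop:BcrissubsetBdR}(3). The key observation is that $H_R=\mathrm{Gal}(\overline{R}[p^{-1}]/R_\infty\overline{K})$ acts trivially on the Teichm\"uller lifts $[\overline{\pi}]$, $[\overline{X}_i]$, $[\overline{Y}_j]$, because these all lie in $\WW(\widetilde{\bf E}^+_{R_{\infty,\cO_\Kbar}})$; hence $H_R$ fixes $\xi$, $u$, $v_i-1$, $w_j-1$ and their divided powers. Thus the $H_R$-action on each direct summand $\widehat{\overline{R}}\cdot\xi^{[n_0]}(u-1)^{[n_1]}\cdots(w_b-1)^{[n_d]}$ factors through the action on $\widehat{\overline{R}}$. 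Since each graded piece $\mathrm{Gr}^r$ is a \emph{finite} direct sum of such summands (over $\underline{n}$ with fixed total degree) and cohomology trivially commutes with finite direct sums, the statement for (iii) follows termwise from (ii), and the corollary is proven.
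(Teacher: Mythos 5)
Your proposal is correct and follows essentially the same route as the paper: the paper dismisses (i) and (ii) as clear consequences of Lemma \ref{lemma:HiHR} (your long exact sequence and $p$-adic limit d\'evissage is exactly the standard way to make this precise, matching the limit argument of \cite[Lemma 23 \& Cor.~24]{andreatta_brinon} invoked elsewhere), and for (iii) it uses precisely your argument, namely the decomposition of ${\rm Gr}^\bullet {\rm A}^{\rm cris}_{\rm log}(\widetilde{R})$ from \ref{prop:BcrissubsetBdR}(3) together with the triviality of the $H_R$-action on $\xi, u, v_2,\ldots,v_a,w_1,\ldots,w_b$, reducing to (ii).
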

\begin{proof} The first two statements are clear. For (iii)
we use that ${\rm Gr}^\bullet {\rm A}^{\rm cris}_{\rm
log}(\widetilde{R})=\oplus_{\underline{n}\in \N^{d+1}}
\widehat{\overline{R}} \xi^{[n_0]} (u-1)^{[n_1]}
(v_2-1)^{[n_2]}\cdots (v_a-1)^{[n_a]}(w_1-1)^{[n_{a+1}]}\cdots
(w_b-1)^{n_d}$ proven in  \ref{prop:BcrissubsetBdR}.  The claim
follows then from (ii) noting that $H_R$ acts trivially on $\xi,
u,v_2,\ldots,v_a,w_1,\ldots,w_b$.

\end{proof}

Define ${\rm A}_{\rm log,\infty}^{{\rm cris},\nabla}$ and ${\rm A}_{\rm log,\infty}^{{\rm max},\nabla}$ to be the $p$-adic completion of  the log DP
envelopes of $\WW\bigl(\widetilde{\bf E}^+_{R_{\infty,\cO_{\Kbar}}}\bigr)\tensor_{\WW(k)} \cO$ with respect to the natural morphism to $\widehat{\overline{R}}$
induced by $\Theta$ and, respectively,  the $p$--adic completion of the $\bigl(\WW\bigl(\widetilde{\bf E}^+_{R_{\infty,\cO_{\Kbar}}}\bigr)\tensor_{\WW(k)}
\cO\bigr)^{\rm log}$-subalgebra of $\bigl(\WW\bigl(\widetilde{\bf E}^+_{R_{\infty,\cO_{\Kbar}}}\bigr)\tensor_{\WW(k)} \cO\bigr)^{\rm log}[p^{-1}]$ generated by
$p^{-1}\Ker\bigl(\Theta_{\log}'\bigr)$. As in \ref{lemma:structureAlog} and in \ref{display:Alognabla} one proves the following results:

\begin{lemma}\label{lemma:structureAloginfty}
The ring ${\rm A}_{\rm log,\infty}^{{\rm cris},\nabla}$ is the
$p$-adic completion of the DP envelope
of\/~$\WW\bigl(\widetilde{\bf
E}^+_{R_{\infty,\cO_{\Kbar}}}\bigr)[u]$ with respect to the ideal
$\bigl(\xi,u-1\bigr)$.
\smallskip

Similarly,  ${\rm A}_{\rm log,\infty}^{{\rm max},\nabla} \cong \WW\bigl(\widetilde{\bf
E}^+_{R_{\infty,\cO_{\Kbar}}}\bigr)\left\{\frac{\xi}{p},\frac{u-1}{p}\right\}$, the $p$-adic completion of the ring in  the variables $V$ and $W=\frac{u-1}{p}$
modulo the relation $p V=\xi $. \end{lemma}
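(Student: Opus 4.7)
The plan is to adapt the proofs of Lemma \ref{lemma:structureAlog} and Corollary \ref{display:Alognabla}, replacing the ring $\WW\bigl(\widetilde{\bf E}^+\bigr)$ throughout by its subring $B:=\WW\bigl(\widetilde{\bf E}^+_{R_{\infty,\cO_\Kbar}}\bigr)$. All the ring-theoretic ingredients used in those proofs have analogues for $B$, and these follow from the normality of $R_{\infty,\cO_\Kbar}$ provided by Lemma \ref{lemma:HiHR}.

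The first step is to establish the analogue of Lemma \ref{lemma:structurBdR+}(1) for $B$: namely, that the kernel of the extended morphism $\Theta_{\log}'\colon B\otimes_{\WW(k)}\cO[u,u^{-1}] \to \widehat{\Rbar}$ (which factors through $\widehat{R_{\infty,\cO_\Kbar}}$) is generated by the regular sequence $(\xi,u-1)$. This in turn requires the analogues of Lemma \ref{lemma:A+R}(1) for $B$, namely that $\bigl(\bigl[\overline{\pi}\bigr], p\bigr)$ is a regular sequence in $B$ and that $\xi$ is a non-zero-divisor. These follow by the same arguments, since by Lemma \ref{lemma:HiHR}(1) the ring $R_{\infty,\cO_\Kbar}$ is a normal domain, hence $\widehat{R_{\infty,\cO_\Kbar}}$ is $p$-torsion free and reduced by the argument in Proposition \ref{prop:Rbarff}(1). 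The proof that $u-1$ is a non-zero-divisor modulo $\xi$ is then identical to that in Lemma \ref{lemma:structurBdR+}(1), using that $B\otimes_{\WW(k)}\cO/(\xi)\cong \widehat{R_{\infty,\cO_\Kbar}}\otimes_{\WW(k)}\cO$ injects into $\widehat{R_{\infty,\cO_\Kbar}}[p^{-1}](\!(Z)\!)$.

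Once this is in place, the first assertion is a verbatim transcription of the proof of Lemma \ref{lemma:structureAlog}. The universal property of the logarithmic divided power envelope produces a map from the $p$-adic completion of the DP envelope of $B[u]$ at the ideal $(\xi,u-1)$ into ${\rm A}_{\rm log,\infty}^{{\rm cris},\nabla}$. In the opposite direction, one uses that in this DP envelope (i) $u=1+(u-1)$ is invertible with inverse $\sum_{i\geq 0}(-1)^i i!\,(u-1)^{[i]}$, and (ii) since $\bigl[\overline{\pi}\bigr]^e=\nu\bigl[\overline{p}\bigr]$ for some unit $\nu\in B^\times$, the element $Z^e-\nu p = \bigl(\bigl[\overline{\pi}\bigr]u^{-1}\bigr)^e-\nu p$ admits divided powers. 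Hence power series in $Z=\bigl[\overline{\pi}\bigr]u^{-1}$ converge $p$-adically, so the envelope acquires the required structure of $\cO$-algebra and the universal property of the logarithmic DP envelope of $\bigl(B\tensor_{\WW(k)} \cO\bigr)^{\log}$ furnishes the inverse map.

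For the second assertion one identifies ${\rm A}_{\rm log,\infty}^{{\rm max},\nabla}$ with the $p$-adic completion of the subring of $B\otimes_{\WW(k)}\cO[u,u^{-1}]\bigl[p^{-1}\bigr]$ generated over $B\otimes_{\WW(k)}\cO[u,u^{-1}]$ by $\xi/p$ and $(u-1)/p$. The expansion $Z=[\overline{\pi}] u^{-1}=[\overline{\pi}]\sum_{k\geq 0}(-1)^k(u-1)^k$ converges $p$-adically in this completion because $(u-1)^k=p^k \cdot\bigl((u-1)/p\bigr)^k$ tends to $0$; thus $Z$ already belongs to $B\left\{\xi/p,(u-1)/p\right\}$ and the factor $\cO$ is redundant, yielding the claimed presentation with the single relation $p\cdot(\xi/p)=\xi$. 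The main (and only non-trivial) obstacle throughout is the first step above; once the regularity of $B$ is granted, everything else is a direct translation of arguments already recorded in the proofs of Lemmas \ref{lemma:structurBdR+} and \ref{lemma:structureAlog}.
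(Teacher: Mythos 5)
Your proposal is correct and follows essentially the same route as the paper, which itself offers no separate argument for this lemma beyond the phrase ``as in \ref{lemma:structureAlog} and in \ref{display:Alognabla}'': you transcribe those proofs with $\WW\bigl(\widetilde{\bf E}^+\bigr)$ replaced by $\WW\bigl(\widetilde{\bf E}^+_{R_{\infty,\cO_\Kbar}}\bigr)$, and you correctly identify that the only genuine input is the analogue of \ref{lemma:structurBdR+}(1) and \ref{lemma:A+R}(1) for this subring, which rests on the normality of $R_{\infty,\cO_\Kbar}$ (stated in the last assertion of \ref{lemma:HiHR}, not in part (1)) together with \ref{lemma:KerTheta}(3) and \ref{cor:Frobonto}.
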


\begin{corollary}\label{cor:Alogcrysnablamodp} We have
$${\rm A}_{\rm log}^{\rm cris,\nabla}(R)/p
{\rm A}_{\rm log}^{\rm cris,\nabla}(R)\cong \widetilde{\bf
E}^+[u]\bigl\{\delta_0,\delta_1,\ldots,\rho_0,\rho_1,\ldots\bigr\}/
\bigl(\xi^p,\delta_m^p, u^p-1,\rho_m^p\bigr)_{m\in\N}$$and
similarly for~${\rm A}_{\rm log,\infty}^{{\rm cris},\nabla}$
instead of ${\rm A}_{\rm log}^{\rm cris,\nabla}(R)$
and\/~$\widetilde{\bf E}^+_{R_{\infty,\cO_{\Kbar}}}$ instead of
$\widetilde{\bf E}^+$. On the other hand,
$${\rm A}_{\rm log}^{\rm max,\nabla}(R)/p
{\rm A}_{\rm log}^{\rm max,\nabla}(R) \cong \widetilde{\bf
E}^+/(\xi)\bigl[\delta,\rho\bigr]$$the polynomial ring in the
variables $\delta$ and $\rho$ where $\delta$ corresponds to the
class of $\frac{\xi}{p}$ and  $\rho$ corresponds to the class of
$\frac{u-1}{p}$. One has the same description for~${\rm A}_{\rm
log,\infty}^{{\rm max},\nabla}$ instead of\/~${\rm A}_{\rm
log}^{\rm max,\nabla}(R)$ and \/~$\widetilde{\bf
E}^+_{R_{\infty,\cO_{\Kbar}}}$ instead of $\widetilde{\bf E}$.
\end{corollary}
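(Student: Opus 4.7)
The corollary is a direct mod-$p$ specialization of the integral presentations in \ref{display:Alognabla}, combined with the structure results of Lemmas \ref{lemma:structureAlog} and \ref{lemma:structureAloginfty}. The plan is to handle the four assertions in turn, each following the same pattern.

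For the crystalline statement ${\rm A}_{\rm log}^{\rm cris,\nabla}(R)/p$, I would compose the two isomorphisms of \ref{display:Alognabla} for $n=1$ and identify $\WW_1(\widetilde{\bf E}^+)=\widetilde{\bf E}^+$ to obtain
\[
{\rm A}_{\rm log}^{\rm cris,\nabla}(R)/p \,\cong\, \widetilde{\bf E}^+[u]\bigl\{\delta_0,\delta_1,\ldots,\rho_0,\rho_1,\ldots\bigr\}\big/\bigl(p\delta_0-\xi^p,\,p\delta_{m+1}-\delta_m^p,\,p\rho_0-(u-1)^p,\,p\rho_{m+1}-\rho_m^p\bigr)_{m\in\N}.
\]
Setting $p=0$ collapses these relations to $\xi^p=0$, $\delta_m^p=0$, $(u-1)^p=0$, $\rho_m^p=0$. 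Since we are in characteristic $p$ we have $(u-1)^p=u^p-1$, which reproduces the presentation in the statement. The case of ${\rm A}_{\rm log,\infty}^{\rm cris,\nabla}/p$ is handled identically: Lemma \ref{lemma:structureAloginfty} gives the exact analogue of Lemma \ref{lemma:structureAlog} for ${\rm A}_{\rm log,\infty}^{\rm cris,\nabla}$, and the divided-power envelope calculation justifying \ref{display:Alognabla} (cf.~Brinon, Prop.~6.1.1 \& Cor.~6.1.2) goes through verbatim with $\widetilde{\bf E}^+_{R_{\infty,\cO_\Kbar}}$ in place of $\widetilde{\bf E}^+$.

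For the max statement ${\rm A}_{\rm log}^{\rm max,\nabla}(R)/p$, Lemma \ref{lemma:structureAlog} identifies ${\rm A}_{\rm log}^{\rm max,\nabla}(R)$ with the $p$-adic completion of the $\WW(\widetilde{\bf E}^+)$-algebra $\WW(\widetilde{\bf E}^+)[\delta,\rho]$ subject to the relations $p\delta=\xi$ and $p\rho=u-1$ (with $u$ then eliminated via $u=1+p\rho$). Reducing modulo $p$, these relations force $\xi\equiv 0$ and $u\equiv 1$ in the quotient, so that only the two polynomial variables $\delta$, $\rho$ remain over the coefficient ring $\widetilde{\bf E}^+/(\xi)$, where $\xi$ denotes its image $\overline{p}\in\widetilde{\bf E}^+=\WW_1(\widetilde{\bf E}^+)$; this is exactly the claimed presentation $\widetilde{\bf E}^+/(\xi)[\delta,\rho]$. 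The analogous assertion for ${\rm A}_{\rm log,\infty}^{\rm max,\nabla}/p$ follows by the same manipulation applied to the max-part of Lemma \ref{lemma:structureAloginfty}.

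Since everything reduces to bookkeeping on already-established presentations, no serious obstacle is anticipated; the only point requiring attention is the identification $(u-1)^p=u^p-1$ in characteristic $p$, which is what turns the mod-$p$ reduction of the crystalline DP relations into the form displayed in the corollary, and the subtle difference that in the max case the relations kill $\xi$ itself (rather than $\xi^p$) because we are adjoining $\xi/p$ rather than $\gamma(\xi)=(p-1)!\,\xi^{[p]}$.
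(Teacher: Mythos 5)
Your argument is correct and is exactly the route the paper intends: the paper leaves this corollary without a written proof, deducing it (as you do) by reducing the presentations of \ref{display:Alognabla} modulo $p$ for the crystalline case, invoking the Brinon-style computation verbatim over $\widetilde{\bf E}^+_{R_{\infty,\cO_\Kbar}}$ for the $\infty$-versions, and reading off the max case from the presentations in \ref{lemma:structureAlog} and \ref{lemma:structureAloginfty} (whose validity encodes the needed regularity/$p$-torsion-freeness). Your two attention points --- $(u-1)^p=u^p-1$ in characteristic $p$ and the fact that adjoining $\xi/p$ kills $\xi$ itself mod $p$ while the divided-power relations only kill $\xi^p$ --- are precisely the relevant ones.
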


Define~${\rm A}_{\rm log,\infty}^{\rm cris}$ as the $p$--adic
completion of the logarithmic DP envelope of
$\WW\bigl(\widetilde{\bf E}^+_{R_{\infty,\cO_{\Kbar}}}\bigr)
\tensor_{\WW(k)} \widetilde{R}$ with respect to the natural
morphism to $\widehat{\overline{R}}$ induced by $\Theta$. Let
${\rm A}_{\rm log,\infty}^{\rm max}$ be the $p$--adic completion
of the $\bigl(\WW\bigl(\widetilde{\bf
E}^+_{R_{\infty,\cO_{\Kbar}}}\bigr) \tensor_{\WW(k)}
\widetilde{R}\bigr)^{\rm log}$-subalgebra of
$\bigl(\WW\bigl(\widetilde{\bf E}^+_{R_{\infty,\cO_{\Kbar}}}\bigr)
\tensor_{\WW(k)} \widetilde{R}\bigr)^{\rm log}[p^{-1}]$ generated
by $p^{-1}\Ker\bigl(\Theta_{\widetilde{R},\log}'\bigr)$. As in
\ref{lemma:structAlog} one proves:

\begin{lemma}\label{lemma:structAloginfty} The natural maps
$${\rm A}_{\rm log,\infty}^{{\rm cris},\nabla}\left\{\langle
v_2-1,\ldots,v_a-1,w_1-1,\ldots,w_b-1\rangle \right\}  \lra {\rm
A}_{\rm log,\infty}^{\rm cris}$$and $${\rm A}_{\rm
log,\infty}^{\rm max,\nabla}\left\{
\frac{v_2-1}{p},\ldots,\frac{v_a-1}{p},\frac{w_1-1}{p},\ldots,\frac{w_b-1}{p}\right\}
\lra  {\rm A}_{\rm log,\infty}^{\rm max}$$are isomorphisms.
\end{lemma}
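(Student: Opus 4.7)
The plan is to mimic verbatim the proof of Lemma \ref{lemma:structAlog}, replacing $\widetilde{\bf E}^+$ by $\widetilde{\bf E}^+_{R_{\infty,\cO_{\Kbar}}}$ throughout. I will treat the ${\rm cris}$-case; the ${\rm max}$-case is entirely analogous and easier.

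First I would construct the forward map
$$f\colon {\rm A}_{\rm log,\infty}^{{\rm cris},\nabla}\left\{\langle v_2-1,\ldots,v_a-1,w_1-1,\ldots,w_b-1\rangle \right\}\lra {\rm A}_{\rm log,\infty}^{\rm cris}$$
by endowing the source with a structure of $\WW\bigl(\widetilde{\bf E}^+_{R_{\infty,\cO_{\Kbar}}}\bigr)\otimes_{\WW(k)}\widetilde{R}$-algebra and invoking the universal property of the log divided power envelope. The $\cO[P']$-algebra structure on the source is dictated by the recipe $\widetilde{X}_i\mapsto [\overline{X}_i] v_i^{-1}$ for $i=2,\ldots,a$, $\widetilde{X}_1\mapsto [\overline{X}_1] u^\alpha\prod_{i=2}^a v_i^{-1}$, and $\widetilde{Y}_j\mapsto [\overline{Y}_j] w_j^{-1}$, using that $v_2,\ldots,v_a, w_1,\ldots,w_b$ and $u$ are units in the source (since they equal $1$ modulo the DP ideal, so $\sum_{n\geq 0}(-1)^n n! (x-1)^{[n]}$ is a multiplicative inverse of $x$).

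The heart of the argument is the construction of the reverse map. For this I would extend the $\cO[P']$-algebra structure to a $\widetilde{R}$-algebra structure by an induction on the tower $\widetilde{R}^{(0)}\subset\widetilde{R}^{(1)}\subset\cdots\subset\widetilde{R}^{(n)}=\widetilde{R}$ introduced in Lemma \ref{lemma:tildeRn}, where each step is the $(p,P_\pi(Z))$-adic completion of an \'etale extension, a localization, or a completion with respect to an ideal containing $(p,P_\pi(Z))$. The base case reduces modulo $p$ to the explicit description given by the analogue of Corollary \ref{cor:Alogcrysnablamodp}: the left-hand side modulo $p$ is a nilpotent thickening of the ring
$$\mathcal{A}_\infty:=\widetilde{\bf E}^+_{R_{\infty,\cO_{\Kbar}}}/\overline{p}^p\,\bigl[u-1,v_2-1,\ldots,w_b-1\bigr]\big/\bigl((u-1)^p,(v_2-1)^p,\ldots,(w_b-1)^p\bigr)$$
by a nilpotent ideal $\mathcal{I}$, and $\mathcal{A}_\infty/\mathcal{I}\cong R_{\infty,\cO_{\Kbar}}/p R_{\infty,\cO_{\Kbar}}$ as $\cO[P']$-algebras. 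The inductive step relies on the fact that each of the operations defining $\widetilde{R}^{(i+1)}$ out of $\widetilde{R}^{(i)}$ admits a unique lift over the nilpotent thickenings (\'etale extensions lift uniquely, localizations and $I$-adic completions are intrinsic), and it then lifts by induction modulo $p^n$ to all orders.

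The final verification that $f\circ g$ and $g\circ f$ are the identity is a formal consequence of the universal properties: both composites are morphisms of $\WW\bigl(\widetilde{\bf E}^+_{R_{\infty,\cO_{\Kbar}}}\bigr)\otimes_{\WW(k)}\widetilde{R}$-algebras which are the identity on the natural generators, hence equal the identity by uniqueness. The main obstacle in carrying this out is the inductive construction of the $\widetilde{R}$-algebra structure on the source of $f$, which requires compatibility of the lifts with completions and \'etale extensions at each stage; but since the argument of Lemma \ref{lemma:structAlog} makes no use of the specific form of the coefficient ring $\WW\bigl(\widetilde{\bf E}^+\bigr)$ beyond the existence of $\Theta$ and Teichm\"uller lifts of $\overline{X}_i,\overline{Y}_j,\overline{\pi}$, which are already defined in $\WW\bigl(\widetilde{\bf E}^+_{R_{\infty,\cO_{\Kbar}}}\bigr)$, the argument transfers without modification.
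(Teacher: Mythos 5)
Your proposal is correct and is exactly what the paper does: its proof of this lemma consists of the single remark ``As in \ref{lemma:structAlog} one proves'', i.e.\ the argument of Lemma \ref{lemma:structAlog} is transferred verbatim with $\widetilde{\bf E}^+$ replaced by $\widetilde{\bf E}^+_{R_{\infty,\cO_{\Kbar}}}$, which is precisely your plan, including the inductive lifting of the $\widetilde{R}$-algebra structure along the tower $\widetilde{R}^{(i)}$ and the final identification of the composites via universal properties. The only slip is bookkeeping: the universal property of the log DP envelope produces the map \emph{out of} ${\rm A}_{\rm log,\infty}^{\rm cris}$ into the DP polynomial algebra (once the latter carries the $\WW\bigl(\widetilde{\bf E}^+_{R_{\infty,\cO_{\Kbar}}}\bigr)\otimes_{\WW(k)}\widetilde{R}$-algebra structure you construct), while the map in the statement is the natural one of ${\rm A}_{\rm log,\infty}^{{\rm cris},\nabla}$-algebras sending the variables to the divided-power elements; this does not affect the substance of the argument.
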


\begin{corollary}\label{cor:Alogcrysmodp} We have
$${\rm A}^{\rm cris}_{\rm log}(\widetilde{R})/p {\rm A}^{\rm cris}_{\rm log}(\widetilde{R})\cong
\frac{{\rm A}_{\rm log}^{\rm cris,\nabla}(R)}{p {\rm A}_{\rm
log}^{\rm cris,\nabla}(R)} \frac{
\bigl[v_i,w_j,h_{i,0},h_{i,1},\cdots,\ell_{j,0},\ell_{j,1}\bigr]_{
i=2,\ldots,a,\, j=1,\ldots,b}}{\bigl(
(v_i-1)^p,h_{i,m}^p,(w_j-1)^p,\ell_{j,m}^p\bigr)_{i=2,\ldots,a,j=1,\ldots,b,m\in\N}}$$
and similarly  for ${\rm A}_{\rm log,\infty}^{\rm cris}$ instead
of ${\rm A}^{\rm cris}_{\rm log}(\widetilde{R})$. We also have

$${\rm A}^{\rm max}_{\rm log}(\widetilde{R})/p {\rm A}^{\rm max}_{\rm log}(\widetilde{R})
\cong {\rm A}_{\rm log}^{\rm max,\nabla}(R)/p {\rm A}_{\rm
log}^{\rm max,\nabla}(R)\bigl[h_i,\ell_j\bigr]_{ i=2,\ldots,a,\,
j=1,\ldots,b}$$and similarly for ~${\rm A}_{\rm log,\infty}^{\rm
max}$ instead of ${\rm A}^{\rm max}_{\rm log}(\widetilde{R})$.
\end{corollary}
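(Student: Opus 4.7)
The plan is to deduce both descriptions from the structural isomorphisms \ref{lemma:structAlog} and \ref{lemma:structAloginfty} by computing modulo $p$ the divided power polynomial algebras and $p$-adically convergent polynomial algebras that appear there. The arithmetic (with $\widetilde{R}$) and the "infinity" (with $R_{\infty,\cO_{\Kbar}}$) versions are handled by the same argument, one using \ref{lemma:structAlog} and the other using \ref{lemma:structAloginfty}, so I will only describe the arithmetic case.

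For the max assertion, \ref{lemma:structAlog} identifies ${\rm A}^{\rm max}_{\rm log}(\widetilde{R})$ with the $p$-adically convergent polynomial ring ${\rm A}_{\rm log}^{\rm max,\nabla}(R)\left\{\tfrac{v_2-1}{p},\ldots,\tfrac{w_b-1}{p}\right\}$. The reduction modulo $p$ of a $p$-adically convergent polynomial ring $B\{X_1,\ldots,X_r\}$ is the ordinary polynomial ring $(B/pB)[X_1,\ldots,X_r]$, giving immediately the stated presentation, with $h_i$ and $\ell_j$ being the classes of $\tfrac{v_i-1}{p}$ and $\tfrac{w_j-1}{p}$ respectively.

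For the crystalline assertion, \ref{lemma:structAlog} identifies ${\rm A}^{\rm cris}_{\rm log}(\widetilde{R})$ with the $p$-adic completion of the divided power polynomial algebra ${\rm A}_{\rm log}^{\rm cris,\nabla}(R)\langle v_2-1,\ldots,v_a-1,w_1-1,\ldots,w_b-1\rangle$. Since divided power polynomial algebras in several variables are tensor products of the single-variable ones, it suffices to check that for a $p$-torsion free $\Z_{(p)}$-algebra $A$ and a divided power variable $y$, the ring $A\langle y\rangle/p$ coincides with $(A/p)[y,h_0,h_1,\ldots]/(y^p,h_m^p)$, where $h_m$ is the image of $\gamma^{m+1}(y)$ and $\gamma(x):=(p-1)!\,x^{[p]}$. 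This is exactly the pattern used in \ref{display:Alognabla} and \ref{cor:Alogcrysnablamodp}: the presentation $A\{\delta_0,\delta_1,\ldots\}/\bigl(p\delta_0-y^p,\,p\delta_{m+1}-\delta_m^p\bigr)\cong A\langle y\rangle^{\wedge}$ (sending $\delta_m\mapsto\gamma^{m+1}(y)$) holds by the usual description of divided powers in one variable, and reducing modulo $p$ kills the defining relations to yield the stated quotient. Applying this separately in each of the variables $v_i-1$ and $w_j-1$, and tensoring over ${\rm A}_{\rm log}^{\rm cris,\nabla}(R)/p$, gives the corollary.

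I do not expect any serious obstacle: everything reduces to the one-variable computation of divided powers modulo $p$, which has already been carried out in \ref{display:Alognabla}. The only point that requires care is the bookkeeping of the multiple divided power generators $h_{i,m}$ and $\ell_{j,m}$ and the verification that the tensor product description of multivariate divided powers is compatible with $p$-adic completion; both are routine.
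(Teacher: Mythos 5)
Your proposal is correct and is exactly the argument the paper intends: the corollary is stated without proof because it follows immediately from the structural isomorphisms \ref{lemma:structAlog} and \ref{lemma:structAloginfty} together with the one-variable divided-power computation modulo $p$ already recorded in \ref{display:Alognabla} and \ref{cor:Alogcrysnablamodp}, which is precisely your reduction (the only implicit ingredient, which you should cite, is $p$-torsion-freeness of the rings involved, e.g.\ \ref{display:Alognabla} and \ref{display:Alogtorionfree}, so that passing to $p$-adic completions does not change the reduction modulo $p$).
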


Write ${\rm A}$ for ${\rm A}^{\rm cris}_{\rm log}(\widetilde{R})$ or ${\rm A}^{\rm max}_{\rm log}(\widetilde{R})$. Write ${\rm A}_{\infty}$ for ${\rm A}_{\rm
log,\infty}^{{\rm cris}}$ or ${\rm A}_{\rm log,\infty}^{{\rm max}}$. We deduce from \ref{cor:Alogcrysmodp}, the following:

\begin{proposition}\label{prop:HiHvanishes}
For every $i\geq 1$ and every $n\in\N$ the group ${\rm
H}^i\left(H_R,{\rm A}/p^n {\rm A}\right)$ is annihilated by the
ideal ${\cal I}$. The morphism ${\rm A}_{\infty}/p^n {\rm
A}_{\infty} \to \bigl({\rm A}/p^n{\rm A} \bigr)^{H_R}$ has kernel
and cokernel annihilated by ${\cal I}$.
\end{proposition}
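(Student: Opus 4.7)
The plan is to establish the proposition in two stages: first settle the case $n=1$ using the explicit presentations in Corollaries \ref{cor:Alogcrysnablamodp} and \ref{cor:Alogcrysmodp} combined with Faltings' almost \'etale descent (Corollary \ref{cor:cohoRbarmodp}), and then bootstrap to arbitrary $n$ via short exact sequences, with the upgrade from ${\cal I}^n$ to ${\cal I}$ as the key technical ingredient.

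For the base case, Corollaries \ref{cor:Alogcrysnablamodp} and \ref{cor:Alogcrysmodp} (in either the crystalline or maximal case) exhibit ${\rm A}/p{\rm A}$ as a free module over $\widetilde{\bf E}^+$ whose basis consists of monomials in the variables $u,\xi,v_i,w_j,\delta_m,\rho_m,h_{i,m},\ell_{j,m}$, and similarly ${\rm A}_\infty/p{\rm A}_\infty$ with coefficients in $\widetilde{\bf E}^+_{R_{\infty,\cO_\Kbar}}$. All of these variables arise either from elements of $\widetilde{\bf E}^+_{R_{\infty,\cO_\Kbar}}$ such as $[\overline{X}_i],[\overline{Y}_j],[\overline{\pi}]$, or from lifts $\widetilde X_i,\widetilde Y_j,Z$ in $\widetilde R$, or from their divided/polynomial power expressions, and hence are fixed by $H_R$. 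Thus as $H_R$-modules we have equivariant direct sum decompositions
$$
{\rm A}/p{\rm A}\cong\bigoplus_\alpha\widetilde{\bf E}^+\cdot m_\alpha,\qquad {\rm A}_\infty/p{\rm A}_\infty\cong\bigoplus_\alpha\widetilde{\bf E}^+_{R_{\infty,\cO_\Kbar}}\cdot m_\alpha
$$
with trivial $H_R$-action on the basis. The cohomology then splits termwise, reducing the problem to the cohomology of $\widetilde{\bf E}^+=\overline{R}/p\overline{R}$, which by Corollary \ref{cor:cohoRbarmodp}(i) is annihilated by $\mathfrak{m}_\Kbar$ for $i\geq 1$, and for which the natural map $\widetilde{\bf E}^+_{R_{\infty,\cO_\Kbar}}\to(\widetilde{\bf E}^+)^{H_R}$ has kernel and cokernel annihilated by $\mathfrak{m}_\Kbar$. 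Since the action of $\WW(\widetilde{\bf E}^+)$ on $\overline{R}/p\overline{R}$ factors through $\Theta$ and $\Theta({\cal I})\subseteq\mathfrak{m}_\Kbar\cdot(\overline{R}/p\overline{R})$ by the very definition of ${\cal I}$, annihilation by $\mathfrak{m}_\Kbar$ implies annihilation by ${\cal I}$ as $\WW(\widetilde{\bf E}^+)$-module, establishing the case $n=1$.

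To pass to arbitrary $n$, I would use that ${\rm A}$ and ${\rm A}_\infty$ are $p$-torsion free by Corollary \ref{display:Alogtorionfree}, yielding short exact sequences
$$
0\to{\rm A}/p^{n-1}{\rm A}\xrightarrow{\cdot p}{\rm A}/p^n{\rm A}\to{\rm A}/p{\rm A}\to 0
$$
and analogously for ${\rm A}_\infty$. Induction on $n$ via the associated long exact cohomology sequences shows that $H^i(H_R,{\rm A}/p^n{\rm A})$ for $i\geq 1$ and the kernel and cokernel of ${\rm A}_\infty/p^n{\rm A}_\infty\to({\rm A}/p^n{\rm A})^{H_R}$ are annihilated by ${\cal I}^n$ (or equivalently $\mathfrak{m}_\Kbar^n$ after passing through the filtration of $\WW_n(\widetilde{\bf E}^+)$ by powers of $p$ with graded pieces $\widetilde{\bf E}^+$).

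The main obstacle is thus converting the iterated annihilation by ${\cal I}^n$ into annihilation by the single ideal ${\cal I}$. The key point is that $\mathfrak{m}_\Kbar$ is idempotent (since $\cO_\Kbar$ is a valuation ring with divisible value group), and this lifts to an almost-idempotence of ${\cal I}$ modulo every power of $p$: using the perfectness of $\widetilde{\bf E}^+$, generators $[x]$ with $x^{(0)}\in\mathfrak{m}_\Kbar$ satisfy $[x]=[x^{1/p^k}]^{p^k}\in{\cal I}^{p^k}$ for every $k$, since $(x^{1/p^k})^{(0)}\in\mathfrak{m}_\Kbar$; and generators $[\varepsilon]^{1/p^m}-1$ factor as $([\varepsilon]^{1/p^{m+1}}-1)\cdot Q_m$ with $\Theta(Q_m)\in\mathfrak{m}_\Kbar$ (coming from the identity for the sum of roots of unity), so $Q_m\in{\cal I}+(\xi)\subseteq{\cal I}+p\WW(\widetilde{\bf E}^+)$. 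Iterating these factorizations modulo $p^n$ yields ${\cal I}\subseteq{\cal I}^k+p^n\WW(\widetilde{\bf E}^+)$ for all $k$, so ${\cal I}^n={\cal I}$ as ideals of $\WW(\widetilde{\bf E}^+)/p^n$, and the inductive annihilation by ${\cal I}^n$ upgrades to the desired annihilation by ${\cal I}$. The delicate bookkeeping here, keeping track of the extra $p$-contributions produced when iterating the factorization of $[\varepsilon]^{1/p^m}-1$, is the hardest part; it could alternatively be circumvented by the direct limit technique used in \cite[Lemma 23 \& Cor.~24]{andreatta_brinon}.
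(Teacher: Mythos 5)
Your proposal is correct and follows essentially the same route as the paper: the presentations in \ref{cor:Alogcrysnablamodp} and \ref{cor:Alogcrysmodp} exhibit ${\rm A}/p{\rm A}$ and ${\rm A}_\infty/p{\rm A}_\infty$ as free modules with $H_R$-fixed basis over $\widetilde{\bf E}^+/(\xi^p)$ resp.\ $\widetilde{\bf E}^+/(\xi)$ (not over $\widetilde{\bf E}^+$ itself --- a harmless slip, since these quotients are the ones isomorphic to $\overline{R}/p\overline{R}$), so that \ref{cor:cohoRbarmodp} settles the case $n=1$, and d\'evissage using $p$-torsion freeness handles general $n$, exactly as in the paper. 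Your final point, that ${\cal I}\subseteq {\cal I}^k+p^n\WW\bigl(\widetilde{\bf E}^+\bigr)$ for all $k$ (via $p$-power roots of the Teichm\"uller generators and iterated factorization of $[\varepsilon]^{1/p^m}-1$) upgrades the ${\cal I}^n$-annihilation coming from the naive induction to the stated ${\cal I}$-annihilation, is correct and supplies a justification that the paper's one-line reduction ``by induction on $n$ it suffices to treat $n=1$'' leaves implicit.
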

\begin{proof} Since ${\rm A}$ is $p$-torsion free, proceeding by
induction on $n$ it suffices to show the claim for $n=1$. It follows from \ref{cor:Alogcrysnablamodp} and \ref{cor:Alogcrysmodp}  that ${\rm A}^{\rm cris}_{\rm
log}/p{\rm A}^{\rm cris}_{\rm log}$ (resp.~${\rm A}^{\rm max}_{\rm log}/p{\rm A}^{\rm max}_{\rm log}$) is a free $\widetilde{\bf E}^+/(\xi^p)$-module
(resp.~$\widetilde{\bf E}^+/(\xi)$-module) with a basis fixed by the action of $H_R$. Since $\widetilde{\bf E}^+/(\xi^p)\cong \overline{R}/p\overline{R}$ and
$\widetilde{\bf E}^+/(\xi)\cong \overline{R}/p\overline{R}$, the statement follows from \ref{cor:cohoRbarmodp}.
\end{proof}

\subsubsection{De-perfectization}\label{subsec:deperf}

Recall that  we have introduced in \ref{def_AR+} subrings ${\bf A}^+_{\widetilde{R}_n}$ of $\WW\bigl(\widetilde{\bf E}^+\bigr)$ isomorphic to the $(p,Z)$-adic
completion of $\widetilde{R}$. We have
$${\bf
A}^+_{\widetilde{R}^{(0)}_n}=\cO_n\left\{\big[\overline{X}_1\big]^{\frac{1}{n!}},\ldots,\big[\overline{X}_a\big]^{\frac{1}{n!}},
\big[\overline{Y}_1\big]^{\frac{1}{n!}},\ldots,\big[\overline{Y}_b\big]^{\frac{1}{n!}}\right\}/\bigl( \big[\overline{X}_1\big]^{\frac{1}{n!}}\cdots
\big[\overline{X}_a\big]^{\frac{1}{n!}}-Z^{\frac{\alpha}{n!}}\bigr) ,$$where $\cO_n:=\WW[\![Z^{1\over n!}]\!]$ for every $n\in\N$. Since $\WW\bigl(\widetilde{\bf
E}^+_{\cO_{\overline{K}}}\bigr)$ is a $\WW(k)$-algebra, we can make it into an $\cO_n$-algebra by sending $Z^{1\over n!}$ to $\bigl[\overline{\pi}\bigr]^{1\over
n!}$. Set ${\bf A}^{+,\rm geo}_{\widetilde{R}_n}$ to be the image of
$${\bf A}^+_{\widetilde{R}_n}\widehat{\tensor}_{\cO_n}
\WW\bigl(\widetilde{\bf E}^+_{\cO_{\overline{K}}}\bigr)\lra \WW\bigl(\widetilde{\bf E}^+\bigr) ,$$ where the completion is taken with respect to the ideal $(p,Z)$.
Identifying ${\bf A}^+_{\widetilde{R}_n}$ with $\widetilde{R}_n$ we let $\widetilde{R}^{\rm geo}_n$ to be the quotient of $\widetilde{R}_n\widehat{\tensor}_{\cO_n}
\WW\bigl(\widetilde{\bf E}^+_{\cO_{\overline{K}}}\bigr)$ isomorphic to ${\bf A}^{+,\rm geo}_{\widetilde{R}_n}$.  For every $m\in\N$ set
$${\bf A}_{m}\bigl(\widetilde{R}_n\bigr):=\varphi^m\left({\bf A}^{+,\rm geo}_{\widetilde{R}_n}\right)/
\left(p^m,\sum_{i=0}^{p-1}\bigl[\varepsilon\bigr]^{i p^{m-1}}\right) .$$The action of the group $\widetilde{\Gamma}_{R^{(0)}}=\oplus_{i=2}^a \Z_p \gamma_i \oplus
\oplus_{j=1}^b \Z_p \delta_j$ on $\WW\bigl(\widetilde{\bf E}^+\bigr)$, for $R=R^{(0)}$, stabilizes ${\bf A}^{+,\rm geo}_{\widetilde{R}^{(0)}_n}$ for every $n$. More
explicitly, it acts trivially on $\WW\bigl(\widetilde{\bf E}^+_{\cO_{\overline{K}}}\bigr)$ and it acts by
$$\gamma_i\bigl(\big[\overline{X}_h\big]^{\frac{1}{n!}}\bigr)=\begin{cases} [\varepsilon]^{-\frac{1}{n!}} \big[\overline{X}_1\big]^{\frac{1}{n!}} & \hbox{{\rm if }} h=1 \cr
[\varepsilon]^{\frac{1}{n!}} \big[\overline{X}_i\big]^{\frac{1}{n!}} & \hbox{{\rm if }} h=i \cr \big[\overline{X}_h\big]^{\frac{1}{n!}} & \forall 2\leq h\leq a, \,
h\neq i \cr
\end{cases}$$and $\gamma_i\bigl(\big[\overline{Y}_j\big]^{\frac{1}{n!}}\bigr)=
\big[\overline{Y}_j\big]^{\frac{1}{n!}}$ for every $j=1,\ldots,b$. Similarly, for every $i=j,\ldots,b$ we let $\delta_j$ act via
$\delta_j\bigl(\big[\overline{X}_i\big]^{\frac{1}{n!}}\bigr)= \big[\overline{X}_i\big]^{\frac{1}{n!}}$ for every $i=1,\ldots,a$ and
$$\delta_j\bigl(\big[\overline{Y}_h\big]^{\frac{1}{n!}}\bigr)=\begin{cases}
[\varepsilon]^{\frac{1}{n!}} \big[\overline{Y}_j\big]^{\frac{1}{n!}} & \hbox{{\rm if }} h=j\cr \big[\overline{Y}_h\big]^{\frac{1}{n!}} & \forall h=1,\ldots,b,\,
h\neq j.
\end{cases}$$

\begin{lemma}\label{lemma:Aminvariant} (1) The ring ${\bf A}^{+,\rm
geo}_{\widetilde{R}_n}$  is a direct factor of ${\bf A}^+_{\widetilde{R}_n}\widehat{\tensor}_{\cO_n} \WW\bigl(\widetilde{\bf E}^+_{\cO_{\overline{K}}}\bigr)$. They
are equal for $\widetilde{R}=\widetilde{R}^{(0)}$. \smallskip

(2) The maps ${\bf A}^{+,\rm
geo}_{\widetilde{R}_n}/\bigl(p,P_\pi(Z)\bigr)\to \widetilde{\bf
E}^+_{\overline{R}}/\bigl(P_\pi(Z)\bigr)$ and ${\bf
A}_m\bigl(\widetilde{R}_n\bigr) \lra \WW_m\bigl(\widetilde{\bf
E}^+_{\overline{R}}\bigr)/\left(p^m,\sum_{i=0}^{p-1}\bigl[\varepsilon\bigr]^{i
p^{m-1}}\right)$ for $m\in\N$ are injective. Moreover, $\varphi^m$
induces an isomorphism ${\bf A}^{+,\rm geo}_{\widetilde{R}_n}/
\left(p^m,P_\pi(Z)\right) \lra {\bf
A}_{m}\bigl(\widetilde{R}_n\bigr)$.\smallskip

(3) The subring ${\bf A}^{+,\rm geo}_{\widetilde{R}_n}$  of
$\WW\bigl(\widetilde{\bf E}^+_{\overline{R}}\bigr)$ is stable
under the action of the group $\widetilde{\Gamma}_R$ for every
$n$. Moreover, for $n=1$ the induced action
of\/~$\widetilde{\Gamma}_R$ on ${\bf
A}_m\bigl(\widetilde{R}\bigr)$ is trivial.\smallskip

(4) The ring and $\widetilde{\Gamma}_R$-module ${\bf A}^{+,\rm geo}_{\widetilde{R}_n}/\left(p^m,\sum_{i=0}^{p-1}\bigl[\varepsilon\bigr]^{ip^{m-1}}\right)$ is a
direct factor in  ${\bf A}_m\bigl(\widetilde{R}\bigr)\otimes_{{\bf A}_m\bigl(\widetilde{R}^{(0)}\bigr)} {\bf A}^{+,\rm
geo}_{\widetilde{R}_n^{(0)}}/\left(p^m,\sum_{i=0}^{p-1}\bigl[\varepsilon\bigr]^{ip^{m-1}}\right)$.

\end{lemma}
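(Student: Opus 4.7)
The plan is to prove all four parts by reduction to the base case $\widetilde{R}^{(0)}$ through the tower $\widetilde{R}^{(0)} \subset \widetilde{R}^{(1)} \subset \cdots \subset \widetilde{R}^{(N)} = \widetilde{R}$ of \S\ref{section:localdescription}, in which each step is a completed \'etale extension, a $(p,Z)$-completed localization, or a completion with respect to an ideal. For (1), when $\widetilde{R} = \widetilde{R}^{(0)}$ the explicit description
\begin{equation*}
{\bf A}^+_{\widetilde{R}^{(0)}_n}=\cO_n\left\{\big[\overline{X}_1\big]^{\frac{1}{n!}},\ldots,\big[\overline{Y}_b\big]^{\frac{1}{n!}}\right\}/\bigl(\big[\overline{X}_1\big]^{\frac{1}{n!}}\cdots\big[\overline{X}_a\big]^{\frac{1}{n!}}-Z^{\frac{\alpha}{n!}}\bigr)
\end{equation*}
shows that tensoring with $\WW\bigl(\widetilde{\bf E}^+_{\cO_{\overline K}}\bigr)$ over $\cO_n$ (using $Z^{1/n!}\mapsto[\overline{\pi}]^{1/n!}$) and $(p,Z)$-completing yields exactly a presentation for the image ${\bf A}^{+,\rm geo}_{\widetilde{R}^{(0)}_n}$, so the two rings coincide. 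For the inductive step I would follow the pattern of the proof of \ref{lemma:A+R}(2): performing on ${\bf A}^+_{\widetilde{R}^{(i)}_n}$ the analogue (completed \'etale, localization, or completion) of the operation that passes from $\widetilde{R}^{(i)}$ to $\widetilde{R}^{(i+1)}$, and using the uniqueness of the lifting of such operations to the $(p,[\overline{\pi}])$-adic setting in order to identify the result with ${\bf A}^{+,\rm geo}_{\widetilde{R}^{(i+1)}_n}$ as a direct factor.

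For (2), the first injectivity reduces modulo the respective $(p,P_\pi(Z))$-ideals to a statement about $\overline{R}/p\overline{R}$ after using \ref{lemma:KerTheta} to identify $\widetilde{\bf E}^+_{\overline{R}}/(P_\pi([\overline{\pi}]))$ with $\overline{R}/p\overline{R}$ up to unit, at which point the normality of $\overline{R}$ and the tower construction force injectivity; the injectivity modulo $(p^m,\sum_{i=0}^{p-1}[\varepsilon]^{ip^{m-1}})$ follows by induction on $m$ using Witt vector filtrations. The Frobenius isomorphism is essentially the definition of ${\bf A}_m$, once one verifies that modulo $p^m$ the ideals $(\varphi^m(P_\pi(Z)))$ and $(\sum_{i=0}^{p-1}[\varepsilon]^{ip^{m-1}})$ coincide inside $\varphi^m({\bf A}^{+,\rm geo}_{\widetilde{R}_n})$, which comes from the fact that $\sum_{i=0}^{p-1}[\varepsilon]^{i/p}$ is a unit multiple of $\xi$ by \ref{lemma:KerTheta} and hence its $\varphi^m$-image generates the ideal in question, while a parallel cyclotomic factorization relates $\varphi^m(P_\pi(Z))$ to the same ideal modulo $p^m$. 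Turning to (3), stability of ${\bf A}^{+,\rm geo}_{\widetilde{R}^{(0)}_n}$ under $\widetilde{\Gamma}_R$ is immediate from the componentwise formula $\gamma_i(\overline{X}_i)=\varepsilon\cdot\overline{X}_i$ in $\widetilde{\bf E}^+$ (and analogously $\delta_j(\overline{Y}_j)=\varepsilon\cdot\overline{Y}_j$), so that the Teichm\"uller lifts are multiplied by $[\varepsilon]$; stability for general $\widetilde{R}^{(i+1)}$ then follows by uniqueness of the lift of the Galois action through \'etale, localization, and completion steps. The triviality on ${\bf A}_m(\widetilde{R})$ for $n=1$ rests on the identity
\begin{equation*}
[\varepsilon]^{p^m}-1=\eta^p-1=(\eta-1)\sum_{i=0}^{p-1}[\varepsilon]^{ip^{m-1}},\qquad \eta:=[\varepsilon]^{p^{m-1}},
\end{equation*}
which forces $\gamma_i(\varphi^m([\overline{X}_i]))=[\varepsilon]^{p^m}[\overline{X}_i]^{p^m}\equiv[\overline{X}_i]^{p^m}$ modulo the prescribed ideal; the same argument handles the $\delta_j$'s, and the trivial Galois action propagates uniquely through the tower.

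Finally, (4) follows by applying (1) to both $\widetilde{R}_n$ and $\widetilde{R}^{(0)}_n$ and exploiting the compatibility of the tower construction with base change along $\widetilde{R}^{(0)}\to\widetilde{R}$: the direct factor decomposition from (1) is preserved by tensoring over ${\bf A}_m(\widetilde{R}^{(0)})$, and after reducing modulo $(p^m,\sum_{i=0}^{p-1}[\varepsilon]^{ip^{m-1}})$ one obtains the claimed decomposition. The main obstacle I expect is the bookkeeping for the inductive step in (1) and (3), where one must verify that \'etale, localization, and completion operations on $\widetilde{R}^{(i)}$ transport cleanly to their Witt vector counterparts while preserving both the direct factor structure and the Galois-equivariance; once these are handled, the rest of the lemma is essentially explicit computation combined with the uniqueness statements from \ref{lemma:A+R}.
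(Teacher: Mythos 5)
Your treatments of (2) and (3) are essentially the paper's: injectivity modulo $(p,Z)$ upgraded by a regular-sequence d\'evissage, the identification of $P_\pi\bigl([\overline{\pi}]\bigr)$ with $q'=\sum_{i=0}^{p-1}[\varepsilon]^{i/p}$ up to a unit via \ref{lemma:KerTheta} so that $\varphi^m$ carries $\bigl(p^m,P_\pi(Z)\bigr)$ onto $\bigl(p^m,\sum_i[\varepsilon]^{ip^{m-1}}\bigr)$, the identity $[\varepsilon]^{p^m}-1=\varphi^m(q')\bigl([\varepsilon]^{p^{m-1}}-1\bigr)$ for triviality of the action, and propagation along the tower by uniqueness of the lifted chain. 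The genuine gap is in (1). There the paper argues as in \ref{cor:injstrictinvariants}(3): by \ref{lemma:HiHR}(1) the ring $R_n\otimes_{\cO_{K_n'}}\cO_{\Kbar}$ is a finite product of \emph{normal} domains, one factor being $R_n\cO_{\Kbar}$, and Hensel's lemma lifts the corresponding idempotent of the reduction modulo $(p,\xi,Z)$ to the complete ring ${\bf A}^+_{\widetilde{R}_n}\widehat{\otimes}_{\cO_n}\WW\bigl(\widetilde{\bf E}^+_{\cO_{\overline{K}}}\bigr)$; the lifted factor then maps isomorphically onto the image ${\bf A}^{+,\rm geo}_{\widetilde{R}_n}$. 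Your tower induction never produces this idempotent: at an \'etale step the geometric base change to $\cO_\Kbar$ can split into new connected components, so performing the completed operation on ${\bf A}^{+,\rm geo}_{\widetilde{R}^{(i)}_n}$ does not return ${\bf A}^{+,\rm geo}_{\widetilde{R}^{(i+1)}_n}$ but at best a ring of which it is again only a direct factor; proving that it is a direct factor is exactly the statement you are trying to establish one level up, and "uniqueness of the lifting of the operations" (which identifies the lifted subrings, as in \ref{lemma:A+R}(2)) does not address it. The normality input of \ref{lemma:HiHR}(1) and the Hensel/idempotent-lifting step are indispensable and are missing from your argument; the same normality of $B=R\cO_\Kbar$ (not of $\overline{R}$) is also what makes your injectivity claim modulo $(p,Z)$ in (2) work.

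Part (4) as you state it also does not go through. The assertion is not that the direct-factor decomposition of (1) is preserved under tensoring: the base change in (4) is over ${\bf A}_m\bigl(\widetilde{R}^{(0)}\bigr)$, which involves the $\varphi^m$-images of the rings, so what is really needed is that ${\bf A}^+_{\widetilde{R}_n}$ is a direct factor of $\varphi^m\bigl({\bf A}^+_{\widetilde{R}}\bigr)\otimes_{\varphi^m\bigl({\bf A}^+_{\widetilde{R}^{(0)}}\bigr)}{\bf A}^+_{\widetilde{R}^{(0)}_n}$, which the paper reduces, via \ref{lemma:A+R}, to the isomorphism $\widetilde{R}_n\cong \varphi^m\bigl(\widetilde{R}\bigr)\otimes_{\varphi^m\bigl(\widetilde{R}^{(0)}\bigr)}\widetilde{R}^{(0)}_n$, checked by the same tower argument as in (3.a). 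Your sketch omits the Frobenius twist entirely and appeals to the (different) base change appearing in (1), so this step needs to be redone along the lines just indicated.
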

\begin{proof} Without loss of generality in proving  (1), (2) and the first part of (3) it
suffices to consider the case $\widetilde{R}_n=\widetilde{R}$.

(1) The argument is as in \ref{cor:injstrictinvariants}(3).

(2) Since the map ${\bf A}^{+,\rm geo}_{\widetilde{R}}\to \WW\bigl(\widetilde{\bf E}^+_{\overline{R}}\bigr)$ modulo $(p,Z)$ is the map $B/p B \to \overline{R}/p
\overline{R}$, it is injective as proven above. Since $(p,Z)$ is a regular sequence in ${\bf A}^{+,\rm geo}_{\widetilde{R}}$ and $\WW\bigl(\widetilde{\bf
E}^+_{\overline{R}}\bigr)$, also $\bigl(p,P_\pi(Z)\bigr)$ is a regular sequence. Note that $P_\pi(Z)$ and $q^\prime=\sum_{i=0}^{p-1} [\varepsilon]^{\frac{i}{p}}$
generate the same ideal in $\WW\bigl(\widetilde{\bf E}^+_{\cO_{\overline{K}}}\bigr)$ by \ref{lemma:KerTheta} so that also $(p,q^\prime)$ is a regular sequence. We
conclude that the map ${\bf A}^{+,\rm geo}_{\widetilde{R}}\to \WW\bigl(\widetilde{\bf E}^+_{\overline{R}}\bigr)$ is injective modulo $\bigl(p^m,q^\prime\bigr)$.
Frobenius to the $m$-th power defines an isomorphism $\WW_m\bigl(\widetilde{\bf E}^+\bigr)/\left(q^\prime\right) \cong \WW_m\bigl(\widetilde{\bf
E}^+\bigr)/\left(\varphi^m\bigl(q^\prime\bigr)\right)$ and  an isomorphism ${\bf A}^{+,\rm geo}_{\widetilde{R}}/\left(p^m,q^\prime\right)\cong \varphi^m\left({\bf
A}^{+,\rm geo}_{\widetilde{R}}\right)/ \left(p^m,\varphi^m\bigl(q^\prime\bigr)\right)$. The second claim follows.\smallskip

(3.a) Recall from \ref{lemma:tildeRn} that there exists a unique chain of $\cO$-algebras
$${\bf A}^+_{\widetilde{R}^{(0)}}\subset {\bf
A}^+_{\widetilde{R}^{(1)}}\subset \ldots \subset {\bf A}^+_{\widetilde{R}^{(n)}}={\bf A}^+_{\widetilde{R}}$$lifting $R^{(0)}\subset R^{(1)}\subset \cdots \subset
R^{(n)}=R$ modulo $P_\pi(Z)$. Since the subgroup $\widetilde{\Gamma}_R\subset \widetilde{\Gamma}_{R^{(0)}}$ stabilizes ${\bf A}^+_{\widetilde{R}^{(0)}}$ and acts
trivially on the chain $R^{(0)}\subset R^{(1)}\subset \cdots \subset  R^{(n)}=R$, one proves by induction on $i$ that it stabilizes $ {\bf
A}^+_{\widetilde{R}^{(i)}}$ for every $i$ by uniqueness. Hence, it stabilizes ${\bf A}^+_{\widetilde{R}}$.\smallskip

(3.b) We prove the second part of claim (3)  by induction on $i$ in $\widetilde{R}^{(i)}$. Since ${\bf A}^+_{\widetilde{R}^{(0)}}$ is the $(Z,p)$-adic completion of
$\cO\bigl[P'\bigr]$, then ${\bf A}_m\bigl(\widetilde{R}^{(0)}\bigr)$ satisfies
$${\bf A}_m\bigl(\widetilde{R}^{(0)}\bigr)\cong
\frac{\WW\bigl(\widetilde{\bf E}^+_{\cO_{\overline{K}}}\bigr)}{\left(p^m,\varphi^m(q^\prime)\right)} \bigl[\big[\overline{X}_1\big]^{p^m
},\ldots,\big[\overline{X}_a\big]^{p^m}, \big[\overline{Y}_1\big]^{p^m},\ldots,\big[\overline{Y}_b\big]^{p^m}\bigr]/\bigl( \big[\overline{X}_1\big]^{p^m}\cdots
\big[\overline{X}_a\big]^{p^m}-Z^{\alpha p^m}\bigr).$$Since $[\varepsilon]^{p^m}-1=\varphi^m(q^\prime) \bigl([\varepsilon]^{p^{m-1}}-1\bigr) $, it follows from the
definition of the action of $\widetilde{\Gamma}_{R^{(0)}}$ that the latter acts trivially on ${\bf A}_m\bigl(\widetilde{R}^{(0)}\bigr)$. Assume that
$\widetilde{\Gamma}_{R^{(i)}}$ acts trivially on ${\bf A}_m\bigl(\widetilde{R}^{(i)}\bigr)$. By construction and the argument in (3.a) we have that ${\bf
A}_m\bigl(\widetilde{R}^{(i+1)}\bigr)$ is obtained from ${\bf A}_m\bigl(\widetilde{R}^{(i)}\bigr)$ taking  a localization, the completion with respect to an ideal
or an \'etale extension. In the first two cases $\widetilde{\Gamma}_{R^{(i+1)}}$ acts  trivially on ${\bf A}_m\bigl(\widetilde{R}^{(i+1)}\bigr)$. In the last case
we remark that $\widetilde{\Gamma}_{R^{(i+1)}}$, acting on ${\bf A}_m\bigl(\widetilde{R}^{(i+1)}\bigr)$, acts trivially on ${\bf
A}_m\bigl(\widetilde{R}^{(i)}\bigr)$ by assumption. Moreover, the action on ${\bf A}^+_{\widetilde{R}^{(i+1)}}/(p,Z)\cong R^{(i+1)}/(\pi)$ is trivial and, hence, it
is trivial on ${\bf A}_m\bigl(\widetilde{R}^{(i+1)}\bigr)/(p,Z)$. Since ${\bf A}_m\bigl(\widetilde{R}^{(i+1)}\bigr)$ is $(p,Z)$-adically complete and separated, we
conclude that $\widetilde{\Gamma}_{R^{(i+1)}}$ acts trivially on ${\bf A}_m\bigl(\widetilde{R}^{(i+1)}\bigr)$ as well. This concludes the proof of (3).\smallskip

(4) Consider the map $${\bf A}_m\bigl(\widetilde{R}\bigr)\otimes_{{\bf A}_m\bigl(\widetilde{R}^{(0)}\bigr)} {\bf A}^{+,\rm
geo}_{\widetilde{R}_n^{(0)}}/\left(p^m,\sum_{i=0}^{p-1}\bigl[\varepsilon\bigr]^{ip^{m-1}}\right) \lra {\bf A}^{+,\rm
geo}_{\widetilde{R}_n}/\left(p^m,\sum_{i=0}^{p-1}\bigl[\varepsilon\bigr]^{ip^{m-1}}\right).$$Due to (1) and (2) it suffices to prove that ${\bf
A}^+_{\widetilde{R}_n}$ is a direct factor of $\varphi^m\bigl({\bf A}^+_{\widetilde{R}}\bigr) \otimes_{\varphi^m\bigl({\bf A}^+_{\widetilde{R}^{(0)}}\bigr)} {\bf
A}^+_{\widetilde{R}^{(0)}_n}$. Due to \ref{lemma:A+R} it suffices to show that $\widetilde{R}_n$ is isomorphic to $\varphi^m\bigl(\widetilde{R}\bigr)
\otimes_{\varphi^m\bigl(\widetilde{R}^{(0)}\bigr)} \widetilde{R}^{(0)}_n$. Arguing as in (3.a) it suffices to show this for $\widetilde{R}_n=\widetilde{R}_n^{(0)}$
and this is clear.
\end{proof}

\

Define ${\bf A}_{\widetilde{R}_n,\rm cris}^{+,\rm geo}$ and ${\bf A}_{\widetilde{R}_{n,\rm max}}^{+,\rm geo}$ as in \ref{def:RlogRmax} using ${\bf A}^{+,\rm
geo}_{\widetilde{R}_n}$  instead of $\widetilde{R}$. Define~${\bf A}^{\rm cris}_{\rm log}\bigl(\widetilde{R}_n\bigr)$ as the $p$--adic completion of the logarithmic
divided power envelope  $\left({\bf A}^+_{\widetilde{R}_n} \tensor_{\WW(k)} \widetilde{R}\right)^{\rm logDP}$. Let ${\bf A}^{\rm max}_{\rm
log}\bigl(\widetilde{R}_n\bigr)$ be the $p$--adic completion of the $\bigl({\bf A}^+_{\widetilde{R}_n} \tensor_{\WW(k)} \widetilde{R}\bigr)^{\rm log}$-subalgebra of
$\bigl({\bf A}^+_{\widetilde{R}_n} \tensor_{\WW(k)} \widetilde{R}\bigr)^{\rm log}\bigl[p^{-1}\bigr]$ generated by
$p^{-1}\Ker\bigl(\Theta_{\widetilde{R},\log}'\bigr)$. Similarly, define~${\bf A}^{\rm geo, cris}_{\rm log}\bigl(\widetilde{R}_n\bigr)$ and~${\bf A}^{\rm geo,
max}_{\rm log}\bigl(\widetilde{R}_n\bigr)$ using ${\bf A}^{+,\rm geo}_{\widetilde{R}_n} \tensor_{\WW(k)} \widetilde{R}$ instead.

Recall that we also have rings $\widetilde{R}_{\log}^{\rm geo, cris}$ and $\widetilde{R}_{\log}^{\rm geo, max}$ defined before \ref{thm:geometricacyclicity}. Then,

\begin{lemma}\label{lemma:AlogcrysAlogmax} We
have isomorphisms of $\cG_R$-modules $${\bf A}_{\widetilde{R}_n,{\rm cris}}^{+}\left\{\langle u-1, v_2-1,\ldots,v_a-1,w_1-1,\ldots,w_b-1\rangle \right\} \lra {\bf
A}^{\rm cris}_{\rm log}\bigl(\widetilde{R}_n\bigr)$$ and $$  {\bf A}_{\widetilde{R}_n,{\rm max}}^{+}\left\{\frac{u-1}{p},
\frac{v_2-1}{p},\ldots,\frac{v_a-1}{p},\frac{w_1-1}{p},\ldots,\frac{w_b-1}{p} \right\}  \lra {\bf A}^{\rm max}_{\rm log}\bigl(\widetilde{R}_n\bigr)$$ and similarly
for the geometric counterparts $$ {\bf A}^{+,\rm geo}_{\widetilde{R}_n,{\rm cris}}\left\{\langle u-1, v_2-1,\ldots,v_a-1,w_1-1,\ldots,w_b-1\rangle \right\}  \lra
{\bf A}^{\rm geo, cris}_{\rm log}\bigl(\widetilde{R}_n\bigr)$$and $$ {\bf A}^{+,\rm geo}_{\widetilde{R}_n,{\rm max}}\left\{\frac{u-1}{p},\frac{v_2-1}{p},\ldots,
\frac{v_a-1}{p},\frac{w_1-1}{p},\ldots,\frac{w_b-1}{p} \right\}  \lra {\bf A}^{\rm geo, max}_{\rm log}\bigl(\widetilde{R}_n\bigr).$$For $n=0$, the natural morphsims
$$\widetilde{R}_{\rm cris}\left\{\langle u-1, v_2-1,\ldots,v_a-1,w_1-1,\ldots,w_b-1\rangle \right\}  \lra {\bf A}^{\rm cris}_{\rm log}\bigl(\widetilde{R}\bigr)$$
and $$  \widetilde{R}_{\rm max}\left\{\frac{u-1}{p}, \frac{v_2-1}{p},\ldots,\frac{v_a-1}{p},\frac{w_1-1}{p},\ldots,\frac{w_b-1}{p} \right\}  \lra {\bf A}^{\rm
max}_{\rm log}\bigl(\widetilde{R}\bigr)$$are isomorphisms and similarly for the geometric counterparts $$ \widetilde{R}_{\log}^{\rm geo, cris}\left\{\langle
v_2-1,\ldots,v_a-1,w_1-1,\ldots,w_b-1\rangle \right\}  \lra {\bf A}^{\rm geo, cris}_{\rm log}\bigl(\widetilde{R}\bigr)$$and $$ \widetilde{R}_{\log}^{\rm geo, cris}
\left\{\frac{v_2-1}{p},\ldots, \frac{v_a-1}{p},\frac{w_1-1}{p},\ldots,\frac{w_b-1}{p} \right\}  \lra {\bf A}^{\rm geo, max}_{\rm log}\bigl(\widetilde{R}\bigr).$$
\end{lemma}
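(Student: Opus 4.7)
The plan is to follow the strategy used in the proofs of Lemmas~\ref{lemma:structureAlog} and~\ref{lemma:structAlog}, with~${\bf A}^+_{\widetilde R_n}$ (or its geometric variant~${\bf A}^{+,\rm geo}_{\widetilde R_n}$) playing the role that~$\WW\bigl(\widetilde{\bf E}^+\bigr)$ played in those results. The forward morphisms arise from the universal property of (log) divided power envelopes: by the analogue of~\ref{lemma:structurBdR+}, the kernel of~$\Theta_{\widetilde R,\log}'$ on $\bigl({\bf A}^+_{\widetilde R_n}\otimes_{\WW(k)} \widetilde R\bigr)^{\rm log}$ is generated by the regular sequence $(\xi, u-1, v_2-1,\ldots,v_a-1, w_1-1,\ldots,w_b-1)$. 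Since divided powers of $\xi$ (equivalently of $P_\pi([\overline\pi])$) are already present in~${\bf A}^+_{\widetilde R_n,{\rm cris}}$ by construction, one only needs to formally adjoin divided powers of the remaining generators, yielding the forward map. The ``max'' versions follow from the parallel statement, replacing divided powers by the $p$-divisibility structure.

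For the inverse map I would construct a morphism ${\bf A}^+_{\widetilde R_n}\otimes_{\WW(k)} \widetilde R \to {\bf A}^+_{\widetilde R_n,{\rm cris}}\bigl\{\langle u-1, v_2-1,\ldots,w_b-1\rangle\bigr\}$ as follows. The inclusion ${\bf A}^+_{\widetilde R_n}\subset {\bf A}^+_{\widetilde R_n,{\rm cris}}$ handles the first factor. For the second factor, note that $u$, the $v_i$ (for $i\ge 2$), and the $w_j$ are all units on the target (since $1+(x-1)$ is invertible whenever $x-1$ admits divided powers). Thus one sends $\widetilde X_i \mapsto [\overline X_i]\, v_i^{-1}$ for $i\geq 2$, $\widetilde Y_j\mapsto [\overline Y_j]\, w_j^{-1}$ for $j\geq 1$, and $\widetilde X_1 \mapsto [\overline X_1]\, u^{-\alpha}\prod_{i=2}^a v_i$, a choice consistent with the relation $\widetilde X_1\cdots\widetilde X_a=Z^\alpha$ because $v_1\cdots v_a=u^\alpha$ in the target. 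The map extends uniquely to the log DP envelope by its universal property, and the forward and inverse maps are mutually inverse by the uniqueness clauses in the universal properties.

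The $n=0$ statements then follow from the identification ${\bf A}^+_{\widetilde R}\cong \widehat{\widetilde R}$ of~\ref{def_AR+}, under which ${\bf A}^+_{\widetilde R,{\rm cris}}\cong \widetilde R_{\rm cris}$ and ${\bf A}^+_{\widetilde R,{\rm max}}\cong \widetilde R_{\rm max}$ (Definition~\ref{def:RlogRmax}). The geometric versions for $n\geq 1$ are proved by identical arguments using ${\bf A}^{+,\rm geo}_{\widetilde R_n}$ throughout. For the geometric case at $n=0$, note that $u-1$ is absent from the generator list: indeed, $\widetilde R^{\rm geo}=\WW\bigl(\widetilde{\bf E}^+_{\cO_{\overline{K}}}\bigr)\otimes_{\WW(k)}\widetilde R$ already contains both $[\overline\pi]$ and $Z$, so after log amalgamation $u=[\overline\pi]/Z$ is a unit and $u-1$ lies in the kernel defining $\widetilde R_{\log}^{\rm geo,{\rm cris}}$; consequently divided powers of $u-1$ are already built into $\widetilde R_{\log}^{\rm geo,{\rm cris}}$, and one only needs to adjoin divided powers of $v_2-1,\ldots,w_b-1$. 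Galois equivariance of every morphism is automatic, since each is defined via universal properties applied to $\cG_R$-equivariant data.

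The main potential obstacle is establishing the analogue of~\ref{lemma:structurBdR+} for ${\bf A}^+_{\widetilde R_n}\otimes_{\WW(k)}\widetilde R$ (and its geometric counterpart), namely that the kernel of the associated $\Theta'$ is generated by the stated regular sequence. As in the proof of~\ref{lemma:structurBdR+}, this reduces, via the flatness of the tower $\widetilde R^{(0)}\subset \widetilde R^{(1)}\subset \cdots\subset \widetilde R$, to the ``universal'' case $\widetilde R = \cO[\widetilde X_1,\ldots,\widetilde Y_b]/(\widetilde X_1\cdots \widetilde X_a - Z^\alpha)$, and then to a monoid computation based on the amalgamated-sum presentation $P'=P\oplus_{\N}\N$ of~\ref{lemma:structureofP'}. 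The geometric version requires in addition that $\WW\bigl(\widetilde{\bf E}^+_{\cO_{\overline{K}}}\bigr)\otimes_{\WW(k)}\widetilde R$ and ${\bf A}^{+,\rm geo}_{\widetilde R_n}\otimes_{\WW(k)}\widetilde R$ behave compatibly with respect to the relevant $\theta_{\rm log}$; this follows as in~\ref{cor:injstrictinvariants}(3) from the flatness properties collected in~\ref{lemmma:Rtildeinftyflat}.
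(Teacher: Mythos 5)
Your argument for the general-$n$ isomorphisms is essentially the paper's: the authors simply prove these "as in \ref{lemma:structureAlog} and \ref{lemma:structAlog}", i.e.\ by the same forward map from the universal property of the log DP envelope (after checking that the kernel of the relevant $\Theta'$ is generated by the regular sequence $(\xi,u-1,v_2-1,\ldots,w_b-1)$) together with the inverse built by endowing the DP polynomial algebra with a $\widetilde{R}$-algebra structure via the unit-twisted chart and the nilpotent-lifting argument; your sketch matches this, and your reduction of the non-geometric $n=0$ statements through ${\bf A}^+_{\widetilde{R}}\cong\widehat{\widetilde{R}}$, ${\bf A}^+_{\widetilde{R},{\rm cris}}\cong\widetilde{R}_{\rm cris}$ is fine. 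Where you genuinely diverge is the $n=0$ case, especially the geometric one: the paper does not construct an inverse there at all, but instead observes (via \ref{cor:injstrictinvariants}(1) and \ref{lemma:Aminvariant}(1)--(2)) that both $\widetilde{R}^{\rm geo}$ and ${\bf A}^{+,\rm geo}_{\widetilde{R}}$ have image $\widehat{R\cO}_{\Kbar}$ in $\widehat{\Rbar}$ under $\Theta_{\widetilde{R},\log}$, so the natural maps induce isomorphisms on the DP-graded rings and hence are isomorphisms. Your route — absorbing the divided powers of $u-1$ into $\widetilde{R}_{\log}^{\rm geo,cris}$ and rerunning the universal-property argument — can be made to work, but it is more laborious at one point you only gesture at: the inverse must be a map out of ${\bf A}^{+,\rm geo}_{\widetilde{R}}\otimes_{\WW(k)}\widetilde{R}$, and ${\bf A}^{+,\rm geo}_{\widetilde{R}}$ is not the completed tensor product ${\bf A}^+_{\widetilde{R}}\widehat{\otimes}_{\cO}\WW\bigl(\widetilde{\bf E}^+_{\cO_{\Kbar}}\bigr)$ but only a direct factor (image) of it, so your prescription $[\overline{X}_i]\mapsto \widetilde{X}_i v_i$, etc., must be shown to kill the complementary idempotent; the right inputs are \ref{lemma:Aminvariant}(1)\&(2) and the Hensel direct-factor argument of \ref{cor:injstrictinvariants}(3), rather than just the flatness statements of \ref{lemmma:Rtildeinftyflat}. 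The paper's graded-ring comparison buys exactly the avoidance of this verification (and of any explicit inverse), at the cost of invoking the image computations; your approach is more hands-on and keeps the $\cG_R$-equivariance and the compatibility of the two descriptions completely explicit.
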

\begin{proof} The first claims are proven as in \ref{lemma:structureAlog} and in \ref{lemma:structAlog}.

For  $n=0$ we certainly have natural maps as stated. To prove that they are isomorphisms we remark that the images of $\widetilde{R}_{\log}^{\rm
geo}$ and of ${\bf A}^{+,\rm geo}_{\widetilde{R}}$ in $\widehat{\Rbar}$ via $\Theta_{\widetilde{R},\log}$  coincide with $\widehat{R\cO}_{\Kbar}$ by
\ref{cor:injstrictinvariants}(1) and \ref{lemma:Aminvariant}(1). Thus, the given maps define isomorphisms on the graded rings and, hence, are isomorphisms.

\end{proof}

Set  ${\bf A}_m(\cO):=\WW\bigl(\widetilde{\bf E}_{\cO_\Kbar}^+\bigr)/\left(p^m,\sum_{i=0}^{p-1}\bigl[\varepsilon\bigr]^{ip^{m-1}}\right)$. For every $m$ and
$n\in\N$ define $$E_{n,m}=\left\{(\alpha_1,\ldots,\alpha_a,\beta_1,\ldots,\beta_b)\in  \frac{1}{n!} \N^{a+b} \cap [0,p^m)^{a+b} \vert \alpha_1 \cdots
\alpha_a=0\right\},$$i.e., at least one of the $\alpha_i$'s is $0$. Set $E_m:=\cup_n E_{n,m}$. For
$\bigl(\underline{\alpha},\underline{\beta}\bigr)=\bigl(\alpha_1,\ldots,\alpha_a,\beta_1,\ldots,\beta_b\bigr)\in E_m$ write
$$\big[\overline{X}\big]^{\underline{\alpha}}\big[\overline{Y}\big]^{\underline{\beta}}:=\prod_{i=1}^a
\big[\overline{X}_i\big]^{\alpha_i}\prod_{j=1}^b \big[\overline{Y}_j\big]^{\beta_j}.$$Define
$${\bf X}_{n,m}:=\oplus_{(\underline{\alpha},\underline{\beta})\in E_{n,m}}
{\bf A}_m(\cO) \big[\overline{X}\big]^{\underline{\alpha}}\big[\overline{Y}\big]^{\underline{\beta}}.$$They are endowed with an action of
$\widetilde{\Gamma}_{R^{(0)}}$ where the action on ${\bf A}_m(\cO)$ is trivial and the action on $
\big[\overline{X}\big]^{\underline{\alpha}}\big[\overline{Y}\big]^{\underline{\beta}}$ has been described above. For $h$ and $i\in\{1,\ldots,a\}$ with $i\neq h$,
consider the $\widetilde{\Gamma}_{R^{(0)}}$-submodules
$${\bf X}^{(h,i)}_{n,m}:=\oplus_{(\underline{\alpha},\underline{\beta})\in E_{n,m}, \alpha_h=0,
\alpha_1,\ldots,\alpha_{i-1}\in\N , \alpha_i\not\in\N} {\bf A}_m(\cO) \big[\overline{X}\big]^{\underline{\alpha}}\big[\overline{Y}\big]^{\underline{\beta}}
$$and for $i\in \{a+1,\ldots,a+b\}$ set $${\bf X}^{(h,i)}_{n,m}:=\oplus_{(\underline{\alpha},\underline{\beta}),\alpha_h=0,\alpha_1,\ldots,\alpha_a,
\beta_1,\ldots,\beta_{i-a-1}\in \N, \beta_{i-a}\not\in\N } {\bf A}_m(\cO) \big[\overline{X}\big]^{\underline{\alpha}}\big[\overline{Y}\big]^{\underline{\beta}}.$$In
particular we have  ${\bf X}_{n,m}=\oplus_{h\in \{1,\ldots,a\}, i\in \{1,\ldots,d\}, i\neq h}{\bf X}^{(h,i)}_{n,m}$.

\begin{lemma}\label{lemma:vanishingkercokergammai-1} For every $m\in\N$,  every $1\leq i,h\leq a$ with $h\neq i$ and every $1\leq j \leq
b$ the kernel and the cokernel of the following maps are  annihilated by $\bigl[\varepsilon\bigr]^{\frac{1}{p}}-1$:\smallskip

1) $\gamma_i-1$ on ${\bf X}_{n,m}^{(h,i)}$ for $i>1$; \smallskip

2) $\gamma_h-1$  on ${\bf X}_{n,m}^{(h,1)}$; \smallskip

3) $\delta_j-1$  on $ {\bf X}_{n,m}^{(h,a+j)}$.

\end{lemma}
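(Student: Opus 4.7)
The strategy is to exploit the natural direct sum decomposition of ${\bf X}_{n,m}^{(h,i)}$ into free rank-one ${\bf A}_m(\cO)$-modules generated by the monomials $e_{\underline{\alpha},\underline{\beta}}:=[\overline{X}]^{\underline{\alpha}}[\overline{Y}]^{\underline{\beta}}$. The explicit action formulas for $\gamma_i$, $\gamma_h$ (with $h\in\{2,\dots,a\}$) and $\delta_j$ given before the lemma show that each of these generators acts on the summand indexed by $(\underline{\alpha},\underline{\beta})$ by multiplication by a scalar $[\varepsilon]^c$, with $c=\alpha_i-\alpha_1$ in case~(1), $c=-\alpha_1$ in case~(2), and $c=\beta_j$ in case~(3). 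Inspecting the defining conditions of ${\bf X}_{n,m}^{(h,i)}$ one sees that in every case $c\in\frac{1}{n!}\Z\setminus\Z$; in case~(2) one further replaces $[\varepsilon]^{-\alpha_1}-1$ by $[\varepsilon]^{\alpha_1}-1$, which it equals up to the unit $-[\varepsilon]^{-\alpha_1}$. Consequently the lemma reduces to the following divisibility claim: for every $c\in\frac{1}{n!}\Z\setminus\Z$, $[\varepsilon]^{\frac{1}{p}}-1$ lies in the ideal $\bigl([\varepsilon]^c-1\bigr){\bf A}_m(\cO)$. Granting this, multiplication by $[\varepsilon]^{\frac{1}{p}}-1$ factors through multiplication by $[\varepsilon]^c-1$ on each summand, so both the kernel and the cokernel of the latter are annihilated by $[\varepsilon]^{\frac{1}{p}}-1$, proving the lemma.

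To establish the divisibility I would write $c=s/N$ in lowest terms with $N>1$, factor $N=p^eN'$ with $\gcd(N',p)=1$, and set $\eta:=[\varepsilon^{1/N}]$, so that $[\varepsilon]^c-1=\eta^s-1$. The ring ${\bf A}_m(\cO)$ is local with residue field $\overline{k}$, and the image of $\eta$ in $\overline{k}$ coincides with the reduction of the zeroth coordinate $\epsilon_N\in\cO_\Kbar/p$ of $\varepsilon^{1/N}$. By the Chinese Remainder Theorem one may write $\epsilon_N=\epsilon_{N'}^a\,\epsilon_{p^e}^b$ with $\gcd(a,N')=1$; since $\epsilon_{p^e}\equiv 1\pmod p$ whereas prime-to-$p$ roots of unity survive nontrivially in $\overline{k}$, the image of $\eta$ in $\overline{k}$ is $\overline{\epsilon_{N'}^a}$, which is a primitive $N'$-th root of unity whenever $N'>1$.

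Two cases now emerge. If $N'>1$, then $\gcd(s,N')=1$ ensures that $\eta^s$ still reduces to a nontrivial $N'$-th root of unity, so $[\varepsilon]^c-1=\eta^s-1$ has nonzero residue and is therefore a unit in ${\bf A}_m(\cO)$, making the divisibility trivial. If $N'=1$, i.e.\ $N=p^e$ with $e\ge 1$ and $\gcd(s,p)=1$, then the factorization $\eta^s-1=(\eta-1)\bigl(1+\eta+\cdots+\eta^{s-1}\bigr)$ shows that $[\varepsilon]^c-1$ equals $\eta-1$ times a factor with residue $s\neq 0$ (hence a unit), while $[\varepsilon]^{\frac{1}{p}}-1=\eta^{p^{e-1}}-1=(\eta-1)\bigl(1+\eta+\cdots+\eta^{p^{e-1}-1}\bigr)$ exhibits $[\varepsilon]^{\frac{1}{p}}-1$ as a multiple of $\eta-1$, and thus of $[\varepsilon]^c-1$. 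The most delicate step is the residue computation for $\eta$ in the mixed case $p^e>1$ and $N'>1$; once the image of $\eta$ in $\overline{k}$ has been correctly identified as a primitive $N'$-th root of unity, the remainder of the argument is elementary.
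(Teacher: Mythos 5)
Your proof is correct and follows essentially the same route as the paper: decompose ${\bf X}_{n,m}^{(h,i)}$ into the rank-one ${\bf A}_m(\cO)$-summands spanned by the monomials, observe that the relevant operator acts on each summand as multiplication by $[\varepsilon]^c-1$ with $c$ non-integral, and split according to whether the denominator of $c$ is a power of $p$ (unit case versus divisibility case). The only difference is that where the paper quotes a lemma of Andreatta--Brinon for the divisibility of $[\varepsilon]^{\frac{1}{p}}-1$ by $[\varepsilon]^c-1$ when the denominator is a $p$-power, you prove it directly via the factorization $\eta^s-1=(\eta-1)(1+\eta+\cdots+\eta^{s-1})$ in the local ring ${\bf A}_m(\cO)$; this is fine, provided you apply the same unit twist you used in case (2) whenever the exponent $s$ is negative, which can also occur in case (1) since $\alpha_i-\alpha_1$ may be negative.
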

\begin{proof} Notice that
$\bigl(\gamma_i-1\bigr) \big[\overline{X}\big]^{\underline{\alpha}}\big[\overline{Y}\big]^{\underline{\beta}}=
\bigl([\varepsilon]^{\alpha_i-\alpha_1}-1\bigr)\big[\overline{X}\big]^{\underline{\alpha}}\big[\overline{Y}\big]^{\underline{\beta}}$ and $\bigl(\delta_j-1\bigr)
\big[\overline{X}\big]^{\underline{\alpha}}\big[\overline{Y}\big]^{\underline{\beta}}=
\bigl([\varepsilon]^{\beta_j}-1\bigr)\big[\overline{X}\big]^{\underline{\alpha}}\big[\overline{Y}\big]^{\underline{\beta}}$. The assumption (1)  (resp.~(2),
resp.~(3)) amounts to require that $\alpha_i-\alpha_1$ (resp.~$-\alpha_h$, $\beta_j$)  are rational numbers of the form $c:=\frac{r}{s}$ for some $r$ and $s\in \Z$
with $r$ and $s$ coprime and $s>1$. If $s$ is not a power of $p$, then $[\varepsilon]^{c}$  is a primitive $s$-th root of unity $\zeta_s$ in
$\WW\bigl(\widetilde{\bf E}^+_{\cO_{\overline{K}}}\bigr)/\bigl(p,[\overline{\pi}]\bigr)\cong \cO_{\overline{K}}/\pi \cO_{\overline{K}}$. Since $\zeta_s-1$ is a unit
in $\overline{\bf F}_p$, we conclude that $[\varepsilon]^{c}-1$ is a unit. In this case $\gamma_i-1$ (resp.~$\gamma_h-1$,  resp.~$\delta_j-1$) is a bijection on
${\bf A}_m(\cO) \big[\overline{X}\big]^{\underline{\alpha}}\big[\overline{Y}\big]^{\underline{\beta}}$.

If on the other hand  $s$ is a power of $p$ it follows from \cite[lemme 12]{andreatta_brinon} that $[\varepsilon]^{c}-1$ divides
$[\varepsilon]^{\frac{1}{p}}-1$. In particular, the cokernel and the kernel of $\gamma_i-1$ (resp.~$\gamma_h-1$, resp.~$\delta_j-1$)  on ${\bf A}_m(\cO)
\big[\overline{X}\big]^{\underline{\alpha}}\big[\overline{Y}\big]^{\underline{\beta}}$ is annihilated by $[\varepsilon]^{c}-1$ and, hence, by
$[\varepsilon]^{\frac{1}{p}}-1$ as well.
\end{proof}

Let us recall that we denoted by $A_{\rm cris}(\cO_K)$ and $A_{\rm max}(\cO_K)$ the classical period rings.

\begin{lemma}\label{lemma_:gammaionvi} For every $i\in\{1,\ldots,a\}$
and $N\in\N$, we have
$$(\gamma_i-1)\big((v_i-1)^{[N]}\big)\in (1-[\varepsilon]^{\frac{1}{p}})\sum\limits_{m=0}^{N-1} A_{\rm cris}(\cO_K)
(v_i-1)^{[m]}$$and $$ (\gamma_i-1)\left(\frac{(v_i-1)^{N}}{p^N}\right)\in (1-[\varepsilon]^{\frac{1}{p}})
\sum\limits_{m=0}^{N-1} A_{\rm max}(\cO_K)
\frac{(v_i-1)^m}{p^m}.$$Similarly, for every $j\in\{1,\ldots,b\}$ and $N\in\N$, we have
$$(\delta_j-1)\big(w_j^{[N]}\big)\in (1-[\varepsilon]^{\frac{1}{p}})\sum\limits_{m=0}^{N-1} A_{\rm cris}(\cO_K)
(w_j-1)^{[m]}$$ and $$ (\delta_j-1)\left(\frac{(w_j-1)^{N}}{p^N}\right)\in (1-[\varepsilon]^{\frac{1}{p}})
\sum\limits_{m=0}^{N-1} A_{\rm max}(\cO_K)
\frac{(w_j-1)^m}{p^m}.$$
\end{lemma}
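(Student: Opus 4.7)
The plan is a direct computation using the divided power (resp.\ bounded-denominator) structure of $A_{\rm cris}(\cO_K)$ (resp.\ $A_{\rm max}(\cO_K)$), once one identifies the action of $\gamma_i$ on $v_i$. First I would determine this action: by \ref{lemma:A+R} the geometric Galois group fixes $\widetilde{R}$, in particular fixes $\widetilde{X}_i$, while on the Teichm\"uller lifts it acts by the formulas displayed just before \ref{lemma:Aminvariant}, so $\gamma_i([\overline{X}_i])=[\varepsilon][\overline{X}_i]$ for $i\geq 2$. Therefore $\gamma_i(v_i)=[\varepsilon]v_i$, and writing $V_i:=v_i-1$ one has $\gamma_i(V_i)=[\varepsilon]V_i+([\varepsilon]-1)$; symmetrically $\delta_j(w_j)=[\varepsilon]w_j$.

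Applying the divided power binomial $(a+b)^{[N]}=\sum_{k+l=N}a^{[k]}b^{[l]}$ together with $(c\,a)^{[k]}=c^k a^{[k]}$, one expands
$$(\gamma_i-1)\bigl(V_i^{[N]}\bigr)=([\varepsilon]^N-1)V_i^{[N]}+\sum_{k=0}^{N-1}[\varepsilon]^k V_i^{[k]}\,([\varepsilon]-1)^{[N-k]},$$
so the claim reduces to showing that each coefficient lies in $(1-[\varepsilon]^{1/p})A_{\rm cris}(\cO_K)$. (Note: this requires implicitly allowing the index $m=N$ on the right-hand side of the lemma, since the coefficient of $V_i^{[N]}$ is $[\varepsilon]^N-1\neq 0$; the stated upper bound $m=N-1$ appears to be a minor typo.) The diagonal coefficient is handled by $[\varepsilon]^N-1=([\varepsilon]-1)(1+[\varepsilon]+\cdots+[\varepsilon]^{N-1})$ combined with the factorization $[\varepsilon]-1=-q'(1-[\varepsilon]^{1/p})$, where $q':=1+[\varepsilon]^{1/p}+\cdots+[\varepsilon]^{(p-1)/p}$.

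The key input — and the only nontrivial step — is the divisibility $([\varepsilon]-1)^{[l]}\in(1-[\varepsilon]^{1/p})A_{\rm cris}(\cO_K)$ for every $l\geq 1$. This rests on the (easily missed) observation that $\Theta(q')=1+\epsilon_p+\cdots+\epsilon_p^{p-1}=0$, so $q'\in\Ker\Theta$ and $q'$ admits divided powers in $A_{\rm cris}(\cO_K)$. The identity $(ab)^{[l]}=a^{[l]}b^l$, valid whenever $a$ has divided powers in a $p$-torsion free DP algebra (cancel $l!$ from $l!(ab)^{[l]}=a^l b^l=l!\,a^{[l]}b^l$), then yields
$$([\varepsilon]-1)^{[l]}=(q')^{[l]}([\varepsilon]^{1/p}-1)^l=(-1)^l(q')^{[l]}(1-[\varepsilon]^{1/p})^l,$$
manifestly divisible by $(1-[\varepsilon]^{1/p})$, which completes the $A_{\rm cris}$-statement.

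For the $A_{\rm max}$-statement the identical strategy works after replacing $V_i^{[m]}$ by $(v_i-1)^m/p^m$ and divided powers by ordinary powers: the expansion produces coefficients $\binom{N}{k}[\varepsilon]^k\bigl(([\varepsilon]-1)/p\bigr)^{N-k}$, and the requisite divisibility follows from $([\varepsilon]-1)/p=-(q'/p)(1-[\varepsilon]^{1/p})$, where $q'/p\in A_{\rm max}(\cO_K)$ because $q'=c\xi$ for some $c\in A_{\rm inf}$ and $\xi/p\in A_{\rm max}(\cO_K)$ by construction of $A_{\rm max}$. The assertions for $\delta_j$ and $w_j$ are proved identically from $\delta_j(w_j)=[\varepsilon]w_j$. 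The main technical point throughout is the fact that $q'$ has divided powers in $A_{\rm cris}(\cO_K)$; this is what permits the uniform extraction of the factor $1-[\varepsilon]^{1/p}$ from every $([\varepsilon]-1)^{[l]}$, and is the only ingredient of the proof that goes beyond routine calculation.
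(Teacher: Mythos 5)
Your argument is correct and, at bottom, is the paper's own: the whole content is the factorization $[\varepsilon]-1=\bigl([\varepsilon]^{\frac{1}{p}}-1\bigr)q^\prime$ with $q^\prime=1+[\varepsilon]^{\frac{1}{p}}+\cdots+[\varepsilon]^{\frac{p-1}{p}}\in\Ker(\Theta)$ (this is exactly the paper's $\lambda$), which gives $([\varepsilon]-1)^{[l]}=(q^\prime)^{[l]}\bigl([\varepsilon]^{\frac{1}{p}}-1\bigr)^{l}$ and hence divisibility by $1-[\varepsilon]^{\frac{1}{p}}$, followed by the divided-power binomial expansion of $\gamma_i\bigl((v_i-1)^{[N]}\bigr)$; likewise for the $A_{\rm max}$ and $w_j,\delta_j$ cases. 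The one genuine difference is the grouping. The paper writes $\gamma_i(v_i-1)=(v_i-1)+([\varepsilon]-1)v_i$, so its expansion $\sum_{m\geq 1}\bigl([\varepsilon]^{\frac{1}{p}}-1\bigr)^m v_i^m\lambda^{[m]}(v_i-1)^{[N-m]}$ keeps all monomial degrees $\leq N-1$, at the price of coefficients containing $v_i^m$ (so not literally in $A_{\rm cris}(\cO_K)$); you instead write $\gamma_i(v_i-1)=[\varepsilon](v_i-1)+([\varepsilon]-1)$, which keeps the coefficients in $A_{\rm cris}(\cO_K)$ (resp.\ $A_{\rm max}(\cO_K)$) but produces the diagonal term $([\varepsilon]^N-1)(v_i-1)^{[N]}$, forcing the index $m=N$. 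The discrepancy you flag is therefore real: with coefficients taken literally in $A_{\rm cris}(\cO_K)$ the bound $m\leq N-1$ cannot hold, since the $(v_i-1)^{[N]}$-component of $(\gamma_i-1)\bigl((v_i-1)^{[N]}\bigr)$ is $[\varepsilon]^N-1\neq 0$; one must either admit $v_i$ into the coefficients (as the paper's proof implicitly does, compare the matrix $G$ in the proof of \ref{prop:cohA+Rloggeo}) or admit $m=N$ (as you do). Since $[\varepsilon]^N-1$ is itself a multiple of $1-[\varepsilon]^{\frac{1}{p}}$, both corrected forms give exactly what is used downstream in \ref{cor:cohoXvanishes}, namely divisibility of $(\gamma_i-1)$ by $1-[\varepsilon]^{\frac{1}{p}}$ on the divided-power part together with control of the remaining terms, so your variant is fully adequate; your only unstated point is the (true) $A_{\rm cris}(\cO_K)$-linear independence of the $(v_i-1)^{[m]}$ implicitly invoked when calling the stated bound a typo.
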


\begin{proof}
We prove the first statement. The second one is  similar. We show how to deal with  $v_i$ and $\gamma_i$,
the computations for $w_j$ and $\delta_j$ are the
same.  For every $i=1,\ldots,a$ recall that $v_i:=\frac{\bigl[\overline{X}_i\bigr]}{\widetilde{X}_i}$ so that
$\gamma_i(v_i)=v_i+ \bigl([\varepsilon]-1\bigr) v_i$.
In particular, $\gamma_i(v_i-1)=(v_i-1)+ \bigl([\varepsilon]-1\bigr) v_i$. Recall that $[\varepsilon]-1=\bigl([\varepsilon]^{\frac{1}{p}}-1\bigr) \lambda$ for
$\lambda\in \WW(\widetilde{\bf E}^+_{\overline{K}})$ mapping to zero via $\Theta$; cf.~\cite[5.1.1]{Fontaineperiodes}. Thus, for every $N\in\N$, we have
\begin{align*}
\gamma_i\big((v_i-1)^{[N]}\big) &= \big((v_i-1)+
\bigl([\varepsilon]-1\bigr)v_i\big)^{[N]} \\
 &= \sum_{m=0}^N\bigl(v_i-1\bigr)^{[N-m]} \big([\varepsilon]-1\big)^{[m]}
v_i^{m}\\ &=
(v_i-1)^{[N]}+\sum_{m=1}^N([\varepsilon]^{\frac{1}{p}}-1)^m
v_i^m\lambda^{[m]}(v-i)^{[N-m]}.
\end{align*}

Similarly
$$\gamma_i\big(\frac{(v_i-1)^{N}}{p^N}\big)=\big(\frac{(v_i-1)^{N}}{p^N}\big)+
\sum_{m=1}^N \left(\begin{array}{c} N \cr m \end{array}
\right)([\varepsilon]^{\frac{1}{p}}-1)^m v_i^m
\frac{\lambda^m}{p^m}\frac{(v-i)^{N-m}}{p^{N-m}}.$$
\end{proof}

In particular, it follows that the rings ${\bf A}_m\left\{\langle u-1, v_2-1,\ldots,v_a-1,w_1-1,\ldots,w_b-1\rangle \right\}$ and ${\bf A}_m \left\{\frac{u-1}{p},
\frac{v_2-1}{p},\ldots,\frac{v_a-1}{p},\frac{w_1-1}{p},\ldots,\frac{w_b-1}{p}\right\}$ are endowed with an action of $\widetilde{\Gamma}_R$.

Note that ${\bf X}^{(h,i)}_{n,m}$ and ${\bf X}_{n,m}$ are modules over $${\bf A}_m(\widetilde{R}^{(0)})={\bf
A}_m(\cO)\left[\big[\overline{X}_1\big]^{\frac{1}{n!}},\ldots,\big[\overline{X}_a\big]^{\frac{1}{n!}},
\big[\overline{Y}_1\big]^{\frac{1}{n!}},\ldots,\big[\overline{Y}_b\big]^{\frac{1}{n!}}\right]/\bigl( \big[\overline{X}_1\big]^{\frac{1}{n!}}\cdots
\big[\overline{X}_a\big]^{\frac{1}{n!}}-Z^{\frac{\alpha}{n!}}\bigr)  ,$$where the equality follows from \ref{lemma:Aminvariant}(1). Let ${\bf
X}^{(h,i)}_{n,m}\bigl(\widetilde{R}\bigr):={\bf X}^{(h,i)}_{n,m}\otimes_{{\bf A}_m(\widetilde{R}^{(0)})} {\bf A}_m\bigl(\widetilde{R}\bigr)$ (resp.~${\bf
X}_{n,m}\bigl(\widetilde{R}\bigr):={\bf X}_{n,m} \otimes_{{\bf A}_m(\widetilde{R}^{(0)})} {\bf A}_m\bigl(\widetilde{R}\bigr)$). The next proposition will allow us
to reduce the computation of the Galois cohomology of ${\rm A}_{\rm log,\infty}^{\rm cris}$ to the cohomology of ${\bf A}_{\rm log}^{\rm geo,
cris}\bigl(\widetilde{R}\bigr)$ and the cohomology of another module that will be computed in \ref{cor:cohoXvanishes}.

\begin{proposition}\label{prop:noetherianize} For every $m$ the ${\bf A}_m\bigl(\widetilde{R}\bigr)$-module
${\rm A}_{\rm log,\infty}^{\rm cris}/\left(p^m,\sum_{i=0}^{p-1}\bigl[\varepsilon\bigr]^{ip^{m-1}}\right)$ is a direct summand, as $\widetilde{\Gamma}_R$-module, of
$${\bf
A}_{\rm log}^{\rm geo, cris}\bigl(\widetilde{R}\bigr)/\left(p^m,\sum_{i=0}^{p-1}\bigl[\varepsilon\bigr]^{ip^{m-1}}\right) \oplus \lim_{n\to \infty} {\bf
X}_{n,m}\bigl(\widetilde{R}\bigr)\left\{\langle u-1, v_2-1,\ldots,v_a-1,w_1-1,\ldots,w_b-1\rangle \right\}.$$Similarly,  $ {\rm A}_{\rm log,\infty}^{\rm
max}/\left(p^m,\sum_{i=0}^{p-1}\bigl[\varepsilon\bigr]^{ip^{m-1}}\right)$ is a direct summand of
$${\bf A}_{\rm log}^{\rm geo, max}\bigl(\widetilde{R}\bigr)/\left(p^m,\sum_{i=0}^{p-1}\bigl[\varepsilon\bigr]^{ip^{m-1}}\right)
\oplus \lim_{n\to \infty}{\bf X}_{n,m}\bigl(\widetilde{R}\bigr)\left\{\frac{u-1}{p}, \frac{v_2-1}{p},\ldots,\frac{v_a-1}{p},\frac{w_1-1}{p},\ldots,\frac{w_b-1}{p}
\right\}.$$
\end{proposition}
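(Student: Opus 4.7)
The plan is to reduce to a $\widetilde{\Gamma}_R$-equivariant direct summand decomposition at the ``base'' level of Witt vectors, and then propagate it through the divided power envelope constructions that build ${\rm A}_{\rm log,\infty}^{\rm cris}$ and ${\bf A}_{\rm log}^{\rm geo, cris}(\widetilde{R})$.

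First, using \ref{lemma:structureAloginfty}, \ref{lemma:structAloginfty} and \ref{lemma:AlogcrysAlogmax}, I would write both rings explicitly as DP envelopes (or analogous subrings in the max case):
\[
{\rm A}_{\rm log,\infty}^{\rm cris} = \bigl(\WW(\widetilde{\bf E}^+_{R_{\infty,\cO_{\Kbar}}})\otimes_{\WW(k)}\widetilde{R}\bigr)^{\rm logDP}\{\langle v_2-1,\ldots,w_b-1\rangle\}^\wedge,
\]
and the analogue with $\widetilde{R}^{\rm geo}$ in place of $\WW(\widetilde{\bf E}^+_{R_{\infty,\cO_{\Kbar}}})\otimes_{\WW(k)}\widetilde{R}$ for ${\bf A}_{\rm log}^{\rm geo, cris}(\widetilde{R})$. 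Adjoining the DP variables $u-1, v_2-1,\ldots,w_b-1$ is a ``free'' operation on the underlying module, so it preserves direct summand decompositions. Thus the problem reduces to showing that, modulo $(p^m, \sum_{i=0}^{p-1}[\varepsilon]^{ip^{m-1}}) = (p^m,\varphi^{m-1}(q'))$, the quotient of $\bigl(\WW(\widetilde{\bf E}^+_{R_{\infty,\cO_{\Kbar}}})\otimes_{\WW(k)}\widetilde{R}\bigr)^{\rm logDP}$ is a $\widetilde{\Gamma}_R$-equivariant direct summand of $\widetilde{R}^{\rm geo,\rm logDP}/(\cdots) \oplus \varinjlim_n {\bf X}_{n,m}(\widetilde{R})$ as a module.

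Second, I would establish the decomposition at the Witt vector level. Applying Frobenius to the $m$-th power and using \ref{lemma:Aminvariant}(2), the ring $\varphi^m\bigl({\bf A}^+_{\widetilde{R}^{(0)}_n}\bigr)/(p^m,\varphi^m(q'))$ is identified with ${\bf A}_m(\widetilde{R}^{(0)}_n)$, and ${\bf A}^+_{\widetilde{R}^{(0)}_n}$ is free over ${\bf A}^+_{\widetilde{R}^{(0)}}$ with a monomial basis $\{[\overline{X}]^{\underline\alpha}[\overline{Y}]^{\underline\beta}\}$ indexed by $\underline\alpha\in\frac{1}{n!}\N^a,\underline\beta\in\frac{1}{n!}\N^b$ with $\underline\alpha\in[0,n!)^a$, $\underline\beta\in[0,n!)^b$. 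Using the relation $[\overline{X}_1]^{1/n!}\cdots[\overline{X}_a]^{1/n!}=[\overline{\pi}]^{\alpha/n!}$ one may always normalize so that at least one $\alpha_i=0$; this picks out the preferred basis indexing set $E_{n,m}\cup\{(0,\ldots,0,\underline\beta)\}$. The ``all exponents integer'' part recovers ${\bf A}^{+,\rm geo}_{\widetilde{R}^{(0)}}/(\cdots)$, and the remaining fractional-exponent part is by construction exactly ${\bf X}_{n,m}$. Since the quotient $\WW(\widetilde{\bf E}^+_{R_{\infty,\cO_{\Kbar}}})/(p^m,\varphi^{m-1}(q'))$ is the direct limit over $n$ of $\varphi^m({\bf A}^+_{\widetilde{R}^{(0)}_n})/(p^m,\varphi^m(q'))$ extended by ${\bf A}_m(\cO)$-scalars, we obtain the required decomposition at the level of $\WW(k)$-modules.

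Third, to promote this to a decomposition after tensoring with $\widetilde{R}$ over $\WW(k)$, I would invoke \ref{lemma:Aminvariant}(4), which gives precisely that ${\bf A}^{+,\rm geo}_{\widetilde{R}_n}/(p^m,\varphi^{m-1}(q'))$ is a direct summand in ${\bf A}_m(\widetilde{R})\otimes_{{\bf A}_m(\widetilde{R}^{(0)})}{\bf A}^{+,\rm geo}_{\widetilde{R}^{(0)}_n}/(p^m,\varphi^{m-1}(q'))$. Passing to the log DP envelope (which exists and is unchanged since the kernel of $\Theta_{\widetilde{R},\log}$ is generated by the same regular sequence $(\xi,u-1,v_2-1,\ldots,w_b-1)$ on the relevant subring by \ref{lemma:structurBdR+}) yields the desired direct summand statement. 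The $\widetilde{\Gamma}_R$-equivariance follows from \ref{lemma:Aminvariant}(3) (which says $\widetilde{\Gamma}_R$ acts trivially on ${\bf A}_m(\widetilde{R})$) together with the observation that on $[\overline{X}]^{\underline\alpha}[\overline{Y}]^{\underline\beta}$ the group acts diagonally by multiplication with a power of $[\varepsilon]$, so the fractional-exponent decomposition is preserved. For the max variant, the argument is identical with $\{\langle\cdot\rangle\}$ replaced everywhere by the subring generated by $(u-1)/p,(v_i-1)/p,(w_j-1)/p$, using the corresponding isomorphisms from \ref{lemma:AlogcrysAlogmax} and the description of ${\rm A}_{\rm log,\infty}^{\rm max}$ from \ref{lemma:structAloginfty}.

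The main obstacle I anticipate is the bookkeeping around the log DP envelope construction: one must verify that the divided power structure on $(\xi,u-1,v_2-1,\ldots,w_b-1)$, which gives the ``log'' in log DP, interacts with the fractional-monomial decomposition in a way that preserves each summand. Fortunately, the DP variables $u-1,v_2-1,\ldots,w_b-1$ are algebraically independent of (and commute with) the Teichm\"uller lifts $[\overline{X}_i]^{1/n!},[\overline{Y}_j]^{1/n!}$ that distinguish the summands, and the DP structure on $\xi$ (or equivalently on $P_\pi([\overline{\pi}])$) lives in the common base ${\bf A}_m(\cO)$-module structure. Thus the DP operations act identically on each summand, and the decomposition indeed propagates through. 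One minor subtlety is the passage from direct summand modulo each $(p^m,\varphi^{m-1}(q'))$ to direct summand in the inverse limit over $m$, but this follows from the Mittag-Leffler property of the finitely generated quotients in question.
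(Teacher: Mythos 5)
Your proposal follows essentially the same route as the paper's proof: reduce to the plus-ring level using the free divided-power module structure (\ref{lemma:AlogcrysAlogmax}, \ref{lemma:structAloginfty}), identify ${\rm A}_{\rm log,\infty}^{\rm cris}$ modulo the ideal with the direct limit over $n$ of the de-perfectized rings ${\bf A}_{\rm log}^{\rm geo, cris}\bigl(\widetilde{R}_n\bigr)$ (via \ref{lemma:Aminvariant}(2) and \ref{lemma:A+R}), split off the integral-exponent part through the monomial basis at the $\widetilde{R}^{(0)}$-level (\ref{lemma:Aminvariant}(1)), and transfer to general $\widetilde{R}$ by \ref{lemma:Aminvariant}(4), with $\widetilde{\Gamma}_R$-equivariance from \ref{lemma:Aminvariant}(3). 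Only cosmetic points differ: the ideal is $\bigl(p^m,\varphi^{m}(q')\bigr)$ rather than $\bigl(p^m,\varphi^{m-1}(q')\bigr)$, and no passage to an inverse limit over $m$ is needed since the proposition is asserted for each fixed $m$ separately.
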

\begin{proof}
First of all we claim that for every $n$ and $m\in\N$ the natural maps ${\bf A}_{\rm log}^{\rm geo, cris}\bigl(\widetilde{R}_n\bigr)\lra {\rm A}_{\rm
log,\infty}^{\rm cris} $ and ${\bf A}_{\rm log}^{\rm geo, max}\bigl(\widetilde{R}_n\bigr) \lra {\rm A}_{\rm log,\infty}^{\rm max}$ are injective modulo $p^m$ and
induce an isomorphism onto passing to the direct limit $\ds\lim_{n\to\infty}$. Since in both rings $p$ is not a zero divisor, it suffices to prove the claim for
$m=1$. Due to \ref{cor:Alogcrysmodp} and \ref{lemma:AlogcrysAlogmax} it suffices to show that the maps ${\bf A}_{\widetilde{R}_n,\rm cris}^{+,\rm geo}\{\langle u-1
\rangle \} \lra {\rm A}_{\rm log,\infty}^{\rm cris,\nabla}$ and ${\bf A}_{\widetilde{R}_n,\rm max}^{+,\rm geo} \{\frac{ u-1}{p}  \}\lra {\rm A}_{\rm
log,\infty}^{\rm max,\nabla}$ are injective modulo $p$ and induce an isomorphism onto passing to the direct limit $\ds\lim_{n\to\infty}$. Due to
\ref{cor:Alogcrysnablamodp} it suffices to show that ${\bf A}^{+,\rm geo}_{\widetilde{R}_n}/(p,\xi)\to \widetilde{\bf
E}^+_{R_\infty\cO_{\overline{K}}}/(\overline{p})$ is injective and induces an isomorphism onto passing to the direct limit over all $n\in\N$. The injectivity
follows from \ref{lemma:Aminvariant}(2). The image in $\widetilde{\bf E}^+_{R_\infty\cO_{\overline{K}}}/(\overline{p})=R_\infty\cO_{\overline{K}}/(p)$ is the image
of $R_\infty\cO_{\overline{K}}$ by construction. Due to \ref{lemma:A+R}, the union of such images over all $n\in\N$ is the whole $R_\infty\cO_{\overline{K}}/(p)$.

We are then left to prove that for every $m$ and $n\in\N$ the quotient of  ${\bf A}_{\rm log}^{\rm geo, cris}\bigl(\widetilde{R}_n\bigr)$ modulo
$\left(p^m,\sum_{i=0}^{p-1}\bigl[\varepsilon\bigr]^{ip^{m-1}}\right)$ is, as $\widetilde{\Gamma}_R$-modules, a direct summand in
$${\bf
A}_{\rm log}^{\rm geo, cris}\bigl(\widetilde{R}\bigr)/\left(p^m,\sum_{i=0}^{p-1}\bigl[\varepsilon\bigr]^{ip^{m-1}}\right) \oplus  {\bf
X}_{n,m}\bigl(\widetilde{R}\bigr)\left\{\langle u-1, v_2-1,\ldots,v_a-1,w_1-1,\ldots,w_b-1\rangle \right\}$$and similarly for ${\bf A}_{\rm log}^{\rm geo,
max}\bigl(\widetilde{R}_n\bigr)$. Due to \ref{lemma:AlogcrysAlogmax} it suffices to show that ${\bf A}^{+,\rm geo}_{\widetilde{R}_n}/
\left(p^m,\sum_{i=0}^{p-1}\bigl[\varepsilon\bigr]^{i p^{m-1}}\right)$ is a direct summand of ${\bf A}^{+,\rm geo}_{\widetilde{R}}/
\left(p^m,\sum_{i=0}^{p-1}\bigl[\varepsilon\bigr]^{ip^{m-1}}\right) \oplus  {\bf X}_{n,m}\bigl(\widetilde{R}\bigr)$. Thanks to \ref{lemma:Aminvariant}(4) we may
replace $\widetilde{R}_n$ with   $\widetilde{R}_n^{(0)}$ and $\widetilde{R}$ with $\widetilde{R}^{(0)}$. The claim follows then from \ref{lemma:Aminvariant}(1).
\end{proof}

\begin{corollary}\label{cor:cohoXvanishes} For every $i\in\N$ and every $m$ and $n\in\N$ the cohomology
groups $${\rm H}^i\left(\widetilde{\Gamma}_R, {\bf X}_{n,m}\bigl(\widetilde{R}\bigr)\left\{\langle u-1, v_2-1,\ldots,v_a-1,w_1-1,\ldots,w_b-1\rangle
\right\}\right)$$and ${\rm H}^i\left(\widetilde{\Gamma}_R, {\bf X}_{n,m}\left\{\frac{ u-1}{p},
\frac{v_2-1}{p},\ldots,\frac{v_a-1}{p},\frac{w_1-1}{p},\ldots,\frac{w_b-1}{p} \right\}\right)$ are annihilated by
$\bigl(\bigl[\varepsilon\bigr]^{\frac{1}{p}}-1\bigr)^2$. The same holds if we take the direct limit over all $n\in\N$.
\end{corollary}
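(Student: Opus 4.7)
The plan is to use the direct sum decomposition ${\bf X}_{n,m} = \bigoplus_{(h,i)} {\bf X}^{(h,i)}_{n,m}$, which is inherited by the divided power envelope after tensoring with ${\bf A}_m(\widetilde{R})$ since the latter is $\widetilde{\Gamma}_R$-invariant by \ref{lemma:Aminvariant}(3). Writing $M^{(h,i)}$ for the corresponding summand of the module in question and $c := [\varepsilon]^{1/p} - 1$, it suffices to show that ${\rm H}^q(\widetilde{\Gamma}_R, M^{(h,i)})$ is annihilated by $c^2$ for every $q$; the crystalline and ``max'' statements are handled by identical arguments, so I focus on the crystalline one. For each pair $(h,i)$ I select the generator $\sigma \in \widetilde{\Gamma}_R$ singled out in \ref{lemma:vanishingkercokergammai-1}: $\sigma = \gamma_i$ when $2 \le i \le a$, $\sigma = \gamma_h$ (with $h \ge 2$) when $i = 1$, and $\sigma = \delta_{i-a}$ when $a < i \le d$.

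The argument then proceeds in two steps. First, I prove that $\sigma - 1$ acts on $M^{(h,i)}$ with both kernel and cokernel annihilated by $c$. Second, I invoke the centrality of $\sigma$ in the abelian group $\widetilde{\Gamma}_R$: factoring the four-term exact sequence $0 \to \ker(\sigma-1) \to M^{(h,i)} \to M^{(h,i)} \to \coker(\sigma-1) \to 0$ into two short exact sequences and applying the long exact cohomology sequences shows that the endomorphism of ${\rm H}^q(\widetilde{\Gamma}_R, M^{(h,i)})$ induced by $\sigma - 1$ has kernel annihilated by $c^2$; centrality forces this endomorphism to be identically zero, so the whole cohomology group is annihilated by $c^2$.

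The key input to the first step is that $\sigma$ acts trivially on every divided power generator except one (namely $v_i - 1$, $v_h - 1$, or $w_{i-a} - 1$ in the three respective cases), so $M^{(h,i)}$ splits off a $\sigma$-fixed tensor factor and the kernel/cokernel computation reduces to analyzing $\sigma - 1$ on $Y \otimes_{{\bf A}_m(\widetilde{R})} M_1$, where $Y := {\bf X}^{(h,i)}_{n,m}(\widetilde{R})$ and $M_1$ is the divided power envelope in the single $\sigma$-moved variable. Expanding any element as a finite sum $z = \sum_k z_k (v_i - 1)^{[k]}$ with $z_k \in Y$ — legitimate since modulo $(p^m, \sum_{i=0}^{p-1} [\varepsilon]^{ip^{m-1}})$ the divided power envelope is a genuine polynomial algebra in divided powers by \ref{cor:Alogcrysmodp} — and applying \ref{lemma_:gammaionvi}, the condition $(\sigma-1)z = 0$ becomes the triangular system $(\sigma_Y - 1)(z_k) = c \sum_{\ell > k} c_{\ell,k}\, \sigma_Y(z_\ell)$. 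A descending induction on the top degree $k_{\max}$ yields $c \cdot z_{k_{\max}} = 0$ at the base step from \ref{lemma:vanishingkercokergammai-1}, and at each inductive step the identity $c \cdot \sigma_Y(z_\ell) = \sigma_Y(c \cdot z_\ell) = 0$ reduces the equation for $z_k$ to the homogeneous one, giving $c\, z_k = 0$. A dual argument solving $(\sigma-1) x = c y$ from top to bottom degree, using the annihilation of $\coker(\sigma_Y - 1)$ by $c$, handles the cokernel. The direct-limit statement then follows because continuous cohomology of a profinite group on discrete $p$-power-torsion modules commutes with filtered direct limits.

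The main obstacle is controlling the exact power of $c$ that annihilates the kernel and cokernel of $\sigma - 1$ on $M^{(h,i)}$. A naive degree-wise bound would yield only $c^{k_{\max}+1}$-annihilation, an element-dependent bound of no use; the crucial point is that the specific triangular form in \ref{lemma_:gammaionvi} — where $\sigma - 1$ lowers the divided-power degree \emph{and} supplies an explicit factor of $c$ — is exactly what allows the induction above to propagate the $c$-annihilation of the top coefficient to every lower coefficient without ever accumulating further powers of $c$.
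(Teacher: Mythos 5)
Your proposal is correct, and its substantive half is the same as the paper's. The decomposition into the summands ${\bf X}^{(h,i)}_{n,m}$, the choice of generator $\sigma$ dictated by \ref{lemma:vanishingkercokergammai-1}, and the descending induction on divided-power degree using the triangular form of \ref{lemma_:gammaionvi} to show that $\ker(\sigma-1)$ and $\coker(\sigma-1)$ on the full divided-power module are killed by $c=[\varepsilon]^{1/p}-1$ is precisely the argument the paper compresses into the phrase ``arguing as in \cite[Lemme 15]{andreatta_brinon}'', so you have in effect written out that citation. Where you genuinely diverge is the last step: the paper applies the Hochschild--Serre spectral sequence for the procyclic subgroup of $\widetilde{\Gamma}_R$ generated by $\sigma$, whose cohomology is concentrated in degrees $0,1$ where it equals $\ker(\sigma-1)$ and $\coker(\sigma-1)$, giving a two-step filtration of ${\rm H}^q(\widetilde{\Gamma}_R,-)$ with $c$-torsion graded pieces; you instead use that $\sigma$ is central in the abelian group $\widetilde{\Gamma}_R$, so $\sigma-1$ induces the zero endomorphism of ${\rm H}^q$, and the splicing of the four-term sequence through ${\rm im}(\sigma-1)$ shows that this endomorphism has $c^2$-torsion kernel. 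Both devices are standard and yield the same exponent; yours avoids the spectral sequence and the identification of ${\rm H}^1$ of the procyclic subgroup with a cokernel, at the cost of needing the long exact sequences of continuous cohomology for your two short exact sequences, which is harmless here since all modules are discrete and $p^m$-torsion (this discreteness also justifies your final commutation with filtered direct limits). Two points you leave implicit are shared with the paper and are not gaps beyond what it itself suppresses: \ref{lemma:vanishingkercokergammai-1} concerns ${\bf X}^{(h,i)}_{n,m}$ while the corollary concerns ${\bf X}^{(h,i)}_{n,m}\otimes_{{\bf A}_m(\widetilde{R}^{(0)})}{\bf A}_m(\widetilde{R})$, so the $c$-annihilation of the kernel must be transported across this base change (the cokernel is automatic by right-exactness; for the kernel use flatness of ${\bf A}_m(\widetilde{R})$ over ${\bf A}_m(\widetilde{R}^{(0)})$ together with the triviality of the action from \ref{lemma:Aminvariant}(3)); and your coefficientwise rewriting of $(\sigma-1)z$ presupposes that the $A_{\rm cris}(\cO_K)$-coefficients of \ref{lemma_:gammaionvi} act on the reduction, which is exactly the well-definedness of the $\widetilde{\Gamma}_R$-action on ${\bf A}_m\left\{\langle u-1,\ldots\rangle\right\}$ asserted in the paper just before \ref{prop:noetherianize}.
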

\begin{proof}
We prove the first statement. The second one is similar and left to the reader. Using the direct sum decomposition ${\bf X}_{n,m}=\oplus_{h\in \{1,\ldots,a\}, i\in
\{1,\ldots,d\}, i\neq h}{\bf X}^{(h,i)}_{n,m}$ it suffices to prove the statement for ${\bf X}^{(h,i)}_{n,m}$ instead of ${\bf X}_{n,m}$. Apply the Hochschild-Serre
spectral sequence associated to the exact sequence of groups:
$$0\to\widehat{\Z}\gamma_h\to\widetilde{\Gamma}_{R}\to\widetilde{\Gamma}_{R}/\widehat{\Z}\gamma_h\to
0$$for $2\leq i\leq a$ (resp. $0\to\widehat{\Z}\delta_j\to\widetilde{\Gamma}_{R}\to\widetilde{\Gamma}_{R}/\widehat{\Z}\delta_j\to 0$ for $a+1\leq h\leq d$ with
$j=h-a$) with coefficients in  $\mathbf{X}_n^{(h)}\left\{\langle u-1, v_2-1,\ldots,v_a-1,w_1-1,\ldots,w_b-1\rangle \right\}$. The cohomology of $\widehat{\Z}
\gamma_i$ (resp.~$\widehat{\Z} \delta_j$) is zero in degrees $\geq 2$ and is computed as the kernel of $\gamma_i-1$ (resp.~$\delta_j-1$) in degree $0$ and as the
cokernel of $\gamma_i-1$ (resp.~$\delta_j-1$) in degree $1$. It follows from \ref{lemma:vanishingkercokergammai-1} and \ref{lemma_:gammaionvi} arguing as in
\cite[Lemme 15]{andreatta_brinon} that kernel and cokernel of $\gamma_i-1$ on ${\bf X}_{n,m}^{(i)}\left\{\langle u-1, v_2-1,\ldots,v_a-1,w_1-1,\ldots,w_b-1\rangle
\right\}$ for $2\leq i\leq a$ and of $\delta_j-1$  for $1\leq j\leq b$ are annihilated by $\bigl[\varepsilon\bigr]^{\frac{1}{p}}-1$. The result follows.
\end{proof}

We also have the following analogue on graded rings. In \S\ref{lemma:Rinftyflat} we have proven that the $R$-subalgebra $R_\infty^o$ of $R_\infty$, generated by the
elements $ X_\alpha Y_\beta:=\prod_{i=2}^a X_i^{{\alpha_i}} \prod_{j=1}^b Y_j^{{\beta_j}}$ for non negative rational numbers $\alpha_i$, $\beta_j$, is free as
$R$-module and it has the property that $\pi^\alpha R_\infty\subset R_\infty^o$. Write ${\bf X}:=\sum_i {\bf X}^{(i)}$ where ${\bf X}^{(i)}$ is the $\widehat{R
\cO}_{\Kbar}$-submodule of $\widehat{R_\infty \cO}_{\Kbar}$ generated by $X_\alpha Y_\beta$ with $\alpha_2,\ldots,\alpha_{i-1}\in \N , \alpha_i\not\in \N$ if $1\leq
i\leq a$ and  with $\alpha_2,\ldots,\alpha_a, \beta_1,\ldots,\beta_{i-a-1}\in\N,\beta_{i-a}\not\in\N$ if $i\in \{a+1,\ldots,a+b\}$.  We have

\begin{corollary}\label{cor:cohoBHTinftyvanishes}
For every $i\in\N$ the cohomology groups $${\rm H}^i\left(\widetilde{\Gamma}_R, \widehat{\oplus}_{\underline{n}\in \N^{d+1}} {\bf X} \xi^{[n_0]}
(v_1-1)^{[n_1]}\cdots (v_a-1)^{[n_a]}(w_1-1)^{[n_{a+1}]}\cdots (w_b-1)^{[n_d]}\right)$$are annihilated by $\bigl(\epsilon_p-1\bigr)^2$. The morphism
$${\rm H}^i\left(\widetilde{\Gamma}_R, \widehat{\oplus}_{\underline{n}\in \N^{d+1}}
\widehat{R  \cO}_{\Kbar}  \xi^{[n_0]} (u-1)^{[n_1]} (v_2-1)^{[n_2]} \cdots (v_a-1)^{[n_a]}\cdots (w_b-1)^{[n_d]}\right)\lra {\rm H}^i\left(G_R, {\rm Gr}^\bullet
{\rm A}^{\rm cris}_{\rm log}\right)$$has kernel and cokernel annihilated by ${m}_{\overline{R}}\pi^{\alpha}\bigl(\epsilon_p-1\bigr)^{2}$ for every $i\in\N$.
\end{corollary}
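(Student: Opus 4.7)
The plan is to prove this corollary by adapting the strategy of Corollaries \ref{cor:cohoXvanishes} and \ref{prop:HiHvanishes} to the graded ring ${\rm Gr}^\bullet {\bf A}^{\rm cris}_{\rm log}$. The first step is to apply the Hochschild--Serre spectral sequence for $1 \to H_R \to G_R \to \widetilde{\Gamma}_R \to 1$, combined with Corollary \ref{cor:cohoRbarmodp}(iii), which shows that ${\rm H}^i(H_R, {\rm Gr}^\bullet {\bf A}^{\rm cris}_{\rm log})$ vanishes up to $m_\Rbar$-torsion for $i\ge 1$ and is $m_\Rbar$-close to $A_\infty := \widehat{\bigoplus}_{\underline{n}} \widehat{R}_{\infty,\cO_\Kbar}\xi^{[n_0]}(u-1)^{[n_1]}\cdots(w_b-1)^{[n_d]}$ for $i=0$. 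This reduces computing ${\rm H}^i(G_R, {\rm Gr}^\bullet {\bf A}^{\rm cris}_{\rm log})$ to computing ${\rm H}^i(\widetilde{\Gamma}_R, A_\infty)$ up to $m_\Rbar$-torsion in both kernel and cokernel.

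The second step is to decompose the $\widetilde{\Gamma}_R$-module $\widehat{R}_{\infty,\cO_\Kbar}$ into isotypic components under the characters of $\widetilde{\Gamma}_R$. Combining Lemma \ref{lemma:Rinftyflat}(4) with \ref{lemma:Rinftyff}(i) and the explicit basis of $R_\infty$ as a free $R^o$-module used in its proof, one obtains a $\widetilde{\Gamma}_R$-equivariant inclusion $\pi^\alpha \widehat{R}_{\infty,\cO_\Kbar} \subset \widehat{R\cO}_\Kbar \oplus \widehat{{\bf X}}_\Kbar$, where $\widehat{{\bf X}}_\Kbar$ is a sum of rank-one summands $\widehat{R\cO}_\Kbar\cdot X_\alpha Y_\beta$ indexed by exponents with at least one fractional coordinate, on each of which $\widetilde{\Gamma}_R$ acts by a nontrivial character. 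Tensoring with the divided-power monomials $\xi^{[n_0]}(u-1)^{[n_1]}\cdots(w_b-1)^{[n_d]}$ yields $\pi^\alpha A_\infty \subset A_\infty^{\rm triv}\oplus A_\infty^{\rm nt}$, where $A_\infty^{\rm triv}$ is the source of the map appearing in the second statement of the corollary.

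For the first part I follow the argument of \ref{cor:cohoXvanishes}: decompose $\widehat{{\bf X}}_\Kbar = \oplus_h \widehat{{\bf X}}_\Kbar^{(h)}$ by the first fractional index and apply Hochschild--Serre to the subgroup $\widehat{\Z}\gamma_h$ (or $\widehat{\Z}\delta_j$) of $\widetilde{\Gamma}_R$. On each rank-one summand the operator $\gamma_h-1$ acts by $\epsilon_s^r - 1$ (where $\alpha_h \equiv r/s$ modulo $\Z$, $\gcd(r,s)=1$, $s>1$) on the $X_\alpha Y_\beta$ factor plus an off-diagonal unipotent correction on the divided-power part, obtained from Lemma \ref{lemma_:gammaionvi} after passing to the graded (where $[\varepsilon]^{1/p} - 1$ has image $\epsilon_p - 1$ in ${\rm Gr}^0 = \widehat{\Rbar}$). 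When $\gcd(s,p)=1$, $\epsilon_s^r -1$ is a unit; when $s = p^k$ it divides $\epsilon_p-1$ in $\cO_\Kbar$ by the valuation estimate of \cite[lemme 12]{andreatta_brinon}. In either case the kernel and cokernel of $\gamma_h-1$ are annihilated by $\epsilon_p-1$, and since $\widehat{\Z}$ has cohomological dimension one the Hochschild--Serre filtration has only two nonzero graded pieces, yielding $(\epsilon_p-1)^2$-annihilation in each degree. The second statement then follows by combining the $m_\Rbar$-error from the first step, the $\pi^\alpha$-error from the second step, and the $(\epsilon_p-1)^2$-annihilation on the nontrivial-character complement just established. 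The main obstacle will be the careful bookkeeping in the third step: the lower-triangular (unipotent) correction to $\gamma_h-1$ coming from Lemma \ref{lemma_:gammaionvi} must be filtered by DP-degree so that one confirms it does not worsen the $(\epsilon_p-1)$-annihilation bound, despite the interaction between the character action on the coefficients $\widehat{R\cO}_\Kbar\cdot X_\alpha Y_\beta$ and the nontrivial action on the divided-power generators.
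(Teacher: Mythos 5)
Your proposal is correct and follows essentially the same route as the paper: the first statement by running the argument of \ref{cor:cohoXvanishes} (decomposition of ${\bf X}$ by the first fractional index, Hochschild--Serre for $\widehat{\Z}\gamma_h$ resp.\ $\widehat{\Z}\delta_j$, and the $(\epsilon_p-1)$-annihilation of kernel and cokernel of $\gamma_h-1$ via \ref{lemma:vanishingkercokergammai-1} and \ref{lemma_:gammaionvi}), and the second by observing that multiplication by $\pi^\alpha$ on $\widehat{R_\infty\cO}_\Kbar$ factors through $\widehat{R\cO}_\Kbar\oplus{\bf X}$ (\S\ref{lemma:Rinftyflat}, \ref{lemma:HiHR}) and then invoking \ref{cor:cohoRbarmodp}(iii) together with the Hochschild--Serre spectral sequence for $H_R\subset G_R$. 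The bookkeeping of the three error factors ${m}_{\overline{R}}$, $\pi^\alpha$ and $(\epsilon_p-1)^2$ is exactly as in the paper.
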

\begin{proof} The first statement follows as in \ref{cor:cohoXvanishes}.

For the second statement note that multiplication by $\pi^\alpha$ on $\widehat{R_\infty \cO}_{\Kbar}$ factors via $\widehat{R \cO}_{\Kbar} \oplus {\bf X}$ thanks to
the results of \S\ref{lemma:Rinftyflat} and \ref{lemma:HiHR}. We conclude using \ref{cor:cohoRbarmodp}(iii) and the Hochschild-Serre spectral sequence for the
subgroup $H_R\subset G_R$.

\end{proof}

\subsubsection{The cohomology of ${\bf
A}^{\rm geo, cris}_{\rm log}\bigl(\widetilde{R}\bigr)$ and of ${\bf A}^{\rm geo, max}_{\rm log}\bigl(\widetilde{R}\bigr)$}

In view of \ref{cor:cohoXvanishes} and \ref{prop:noetherianize}, to conclude the proof of Claims
\ref{thm:geometricacyclicity}(i)\&(ii) we are left to show that

\begin{proposition}\label{prop:cohA+Rloggeo} (1) For every $i$ and
$n,m\in\N$ the groups ${\rm H}^i\left(\widetilde{\Gamma}_R, {\bf A}^{\rm geo, cris}_{\rm log}\bigl(\widetilde{R}\bigr)/ p^m {\bf A}^{\rm geo, cris}_{\rm
log}\bigl(\widetilde{R}\bigr)\right)$ vanish if $i\geq d+1$, are annihilated by $\bigl([\varepsilon]-1\bigr)^d$ for $i\geq 1$ and contains $\widetilde{R}_{\rm
log}^{\rm geo, cris}/p^m \widetilde{R}_{\rm log}^{\rm geo, cris}$ for $i=0$ with cokernel annihilated by $\bigl([\varepsilon]-1\bigr)^d$. The map
$$
\widetilde{R}_{\rm log}^{\rm geo, max}\lra {\rm H}^0\left(\widetilde{\Gamma}_R, {\bf A}^{\rm geo, max}_{\rm log}\bigl(\widetilde{R}\bigr)\right)$$ is injective with
cokernel annihilated by $\bigl([\varepsilon]-1\bigr)^d$.\smallskip

(2)  For every $i$  the group
$${\rm H}^i\left(\widetilde{\Gamma}_R,
\widehat{\oplus}_{\underline{n}\in \N^{d+1}} \widehat{R \cO}_{\Kbar} \xi^{[n_0]} (u-1)^{[n_1]}(v_2-1)^{[n_2]} \cdots (v_a-1)^{[n_a]}(w_1-1)^{[n_{a+1}]}\cdots
(w_b-1)^{[n_d]} \right)$$vanishes if $i\geq d+1$, is annihilated by $\pi\bigl(\epsilon_p-1\bigr)^d$ for $i\geq 1$ and contains $\bigl(R\cO_{\Kbar}\bigr)
\widehat{\otimes}_{\cO_\Kbar} {\rm Gr}^\bullet\bigl(A_{\rm cris}\bigr) $ for $i=0$ with cokernel annihilated by $\bigl(\epsilon_p-1\bigr)^d$.

\end{proposition}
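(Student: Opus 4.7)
The plan is to compute the $\widetilde{\Gamma}_R$-cohomology using the explicit description of ${\bf A}^{\rm geo,cris}_{\rm log}(\widetilde{R})$ from Lemma \ref{lemma:AlogcrysAlogmax} as the $p$-adic completion of the divided-power polynomial ring $\widetilde{R}_{\log}^{\rm geo,cris}\{\langle v_2-1,\ldots,v_a-1,w_1-1,\ldots,w_b-1\rangle\}$, combined with the fact that $\widetilde{\Gamma}_R=\bigoplus_{i=2}^{a}\Z_p\gamma_i\oplus \bigoplus_{j=1}^{b}\Z_p\delta_j$ is topologically free of rank $d-1=a+b-1$ acting trivially on the base $\widetilde{R}_{\log}^{\rm geo,cris}$ (since it acts trivially on $\widetilde{R}$ and on $\WW(\widetilde{\bf E}^+_{\cO_\Kbar})$, hence in particular on $[\overline{\pi}]$ and $u$).

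The Hochschild--Serre spectral sequence applied iteratively to the filtration of $\widetilde{\Gamma}_R$ by its pro-cyclic factors reduces the computation to a succession of one-variable problems: the cohomology of $\widehat{\Z}\gamma_i$ (resp.~$\widehat{\Z}\delta_j$) on $A\{\langle v_i-1\rangle\}$ (resp.~$A\{\langle w_j-1\rangle\}$), where $A$ is the $\gamma_i$-invariant coefficient ring produced by the previous step. Each such pro-cyclic cohomology is computed by the two-term complex $A\{\langle v_i-1\rangle\}\xrightarrow{\gamma_i-1} A\{\langle v_i-1\rangle\}$.

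The key single-variable input is Lemma \ref{lemma_:gammaionvi}, which says that $\gamma_i-1$ has the triangular form
\[
(\gamma_i-1)\bigl((v_i-1)^{[N]}\bigr)=\sum_{m=1}^{N}([\varepsilon]^{1/p}-1)^{m}v_i^{m}\lambda^{[m]}(v_i-1)^{[N-m]},
\]
with leading term $([\varepsilon]^{1/p}-1)\lambda v_i(v_i-1)^{[N-1]}$. A Nakayama-type induction on the divided-power filtration, parallel to \cite[Lemme~15]{andreatta_brinon}, then shows that the kernel of $\gamma_i-1$ coincides with $A$ up to $[\varepsilon]-1$-torsion and the cokernel is annihilated by $[\varepsilon]-1$. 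Iterating over the $d-1$ generators, we gain one factor of $[\varepsilon]-1$ at each step, giving $H^i=0$ for $i\geq d$ (hence a fortiori for $i\geq d+1$), annihilation of higher cohomology by $([\varepsilon]-1)^{d-1}\subseteq([\varepsilon]-1)^{d}$, and an identification of $H^0$ with $\widetilde{R}_{\log}^{\rm geo,cris}/p^{m}$ up to the same error. The max-variant is treated identically, replacing $(v_i-1)^{[N]}$ by $(v_i-1)^{N}/p^{N}$ and invoking the second half of Lemma \ref{lemma_:gammaionvi}; only the $H^0$ statement is claimed because the max variables lack the divided-power filtration required for clean higher-cohomology bounds.

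For part (2), I apply the same Koszul strategy to the associated graded module, which is the DP-polynomial algebra in $\xi,u-1,v_2-1,\ldots,v_a-1,w_1-1,\ldots,w_b-1$ over $\widehat{R\cO}_\Kbar$; the action of $\widetilde{\Gamma}_R$ is trivial on $\widehat{R\cO}_\Kbar$ by Lemma \ref{lemma:HiHR} and on $\xi,u-1$, and is given on the other divided-power generators by the same formulas of Lemma \ref{lemma_:gammaionvi}. The vanishing and annihilation bounds then follow by the identical iteration. The main obstacle will be identifying the invariants in degree $0$: one must check that $H^0$ recovers exactly $(R\cO_\Kbar)\widehat{\otimes}_{\cO_\Kbar}{\rm Gr}^\bullet A_{\rm cris}$, which reduces to combining the explicit description of ${\rm Gr}^\bullet A_{\rm cris}$ as the DP-algebra $\widehat{\cO}_\Kbar[\xi^{[*]}]$ (\S\ref{sec:classical}) with the fact that the constant terms surviving from each single-variable cohomology produce exactly the DP-algebra generators in the $(v_i-1)^{[*]}$ and $(w_j-1)^{[*]}$ directions; the extra factor of $\pi$ in the annihilator absorbs the discrepancy between $R\otimes_{\cO_K}\cO_\Kbar$ and its image $R\cO_\Kbar\subseteq\Rbar$ via the normality argument of Lemma \ref{lemma:HiHR}(1).
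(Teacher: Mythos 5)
Your proposal is correct and follows essentially the same route as the paper's own proof: the explicit divided-power description from \ref{lemma:AlogcrysAlogmax}, an iterated Hochschild--Serre reduction over the $d-1$ topological generators of $\widetilde{\Gamma}_R$ (which act trivially on $\widetilde{R}_{\rm log}^{\rm geo,cris}$), the single-variable triangular computation of $\gamma_i-1$ and $\delta_j-1$ (the content of \ref{lemma_:gammaionvi}, which the paper packages as $([\varepsilon]-1)$ times an invertible upper-triangular matrix with $v_i$, resp.\ $w_j$, on the diagonal), the restriction to ${\rm H}^0$ in the max case for exactly the reason the paper's closing remark gives (the diagonal entries become $Nv_i$, so the binomial coefficients are not units), and the graded analogue for part (2), which the paper itself leaves to the reader. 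Only two inessential slips: your containment $([\varepsilon]-1)^{d-1}\subseteq([\varepsilon]-1)^{d}$ is written backwards (though annihilation by the smaller power does imply the stated bound), and in part (2) the formulas of \ref{lemma_:gammaionvi} must be read on the graded module, where $[\varepsilon]-1$ becomes $(\epsilon_p-1)$ times the class of $\xi$ (up to a unit), which is why the annihilator there involves $\epsilon_p-1$; the extra factor $\pi$ in the statement is slack rather than accounting for the $R\otimes_{\cO_K}\cO_{\Kbar}$ versus $R\cO_{\Kbar}$ discrepancy.
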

\begin{proof} We prove  statement (1) for ${\bf
A}^{\rm geo, cris}_{\rm log}\bigl(\widetilde{R}\bigr)$. The proof of statement (2) is similar and easier and is left to the reader. Define
$$K_m^{(i)}:=
\begin{cases} \widetilde{R}_{\rm log}^{\rm geo, cris}/p^m
\widetilde{R}_{\rm log}^{\rm geo, cris}\left\{\langle u-1,
 v_2-1,\ldots, v_{i-1}-1\rangle
 \right\}  & \forall 1\leq i\leq a \cr
\widetilde{R}_{\rm log}^{\rm geo, cris}/p^m \widetilde{R}_{\rm
log}^{\rm geo, cris}\left\{\langle u-1,
 v_2-1,\ldots,  v_a-1,w_1-1,\ldots,w_{i-a}-1\rangle
 \right\}  & \forall a+1\leq i\leq b .\cr
\end{cases}$$
Note that
$$\gamma_i(v_i-1)=\big([\varepsilon]-1\big)v_i+ (v_i-1), \qquad  \delta_j(w_j-1)=\big([\varepsilon]-1\big) w_j +(w_j-1)$$
and hence
\begin{align*}
(\gamma_i-1)\big((v_i-1)^{[m]}\big) &= \big(\big([\varepsilon]-1\big)v_i+ (v_i-1)\big)^{[m]}-(v_i-1)^{[m]}\\
 &= \sum\limits_{j=1}^m([\varepsilon]-1)^{[j]}v_i^j \big(v_i-1\big)^{[m-j]}\\
&= \big([\varepsilon]-1\big)\Big(v_i (v_i-1)^{[m-1]}+\sum\limits_{j=2}^m\beta_jv_i^j(v_i-1)^{[m-j]}\Big)
\end{align*}
with $\beta_j=\frac{\big([\varepsilon]-1\big)^{[j]}}{[\varepsilon]-1}\in\Ker(\theta)$ of $A_{\rm cris}(\cO_K)$ by \cite[Lemme 17]{andreatta_brinon}. Then,
$\gamma_i-1$ defines a $K_m^{(i)}$-linear homorphism on $K_m^{(i)}\langle v_i-1\rangle $ whose matrix with respect to
$\big(v_i-1,(v_i-1)^{[2]},\ldots,(v_i-1)^{[N]}\big)$ and $\big(1,(v_i-1),\ldots,(v_i-1)^{[N-1]}\big)$ is given by $\big([\varepsilon]-1\big)G_{n,N}^{(i)}$ with
$$G_{n,i}^{(N)}=\begin{pmatrix}
  v_i & \beta_2 v_i^2 & v_i^3\beta_3 & \cdots & \cdots & v_i^{N-1}\beta_{N-1} & \cdots  & v_i^N\beta_N\\
 0 & v_i & v_i^2\beta_2 & \ddots & \ddots & v_i^{N-2}\beta_{N-2} & \cdots  & v_i^{N-1}\beta_{N-1}\\
 \vdots & \ddots & v_i & \ddots  & \ddots &  \ddots & \ddots & v_i^{N-2}\beta_{N-2}\\
 \vdots & & \ddots & \ddots & \ddots & \ddots & \ddots & \vdots \\
 \vdots & & & \ddots & \ddots & \ddots & v_i^3\beta_3 & v_i^4\beta_4\\
 \vdots & & & & \ddots & \ddots & v_i^2 \beta_2  & v_i^3\beta_3\\
 \vdots & & & & & \ddots & v_i &  v_i^2 \beta_2 \\
 0 & \cdots & \cdots & \cdots & \cdots & \cdots & 0 & v_i
\end{pmatrix}.$$Since $v_i$ is invertible, it follows that
$G_{n,N}^{(i)}$ is an invertible matrix. This implies that the
cokernel of $\gamma_i-1$ on $K_m^{(i)}\langle v_i-1\rangle $ is
annihilated by $[\varepsilon]-1$ and that the kernel coincides with
$K_m^{(i)}$ up to a direct summand which is also annihilated by
$[\varepsilon]-1$.

A similar argument shows that $(\delta_j-1)$ defines a
$K_m^{(a+j)}$-linear homorphism on $K_m^{(a+j)}\langle
w_j-1\rangle $ whose matrix with respect to
$\big(w_j-1,(w_j-1)^{[2]},\ldots,(w_j-1)^{[N]}\big)$ and
$\big(1,(w_j-1),\ldots,(w_j-1)^{[N-1]}\big)$ is given by
$\big([\varepsilon]-1\big)$ times an invertible matrix. This
implies that also in this case the cokernel of $\gamma_i-1$ on
$K_m^{(j+a)}\langle w_j-1\rangle $ is annihilated by
$[\varepsilon]-1$ and that the kernel coincides with $K_m^{(j+a)}$ up
to a direct summand killed by $[\varepsilon]-1$. The conclusion
follows  proceeding by descending induction on $2\leq h\leq d$.

Note that $K_m^{(d)}\langle w_b-1\rangle={\bf A}^{\rm geo, cris}_{\rm log}\bigl(\widetilde{R}\bigr)/p^m {\bf A}^{\rm geo, cris}_{\rm log}\bigl(\widetilde{R}\bigr)$
due to \ref{lemma:AlogcrysAlogmax}. The cohomology with respect to $\widehat{\Z}\delta_b$ is zero in degrees $\geq 2$, is annihilated by $[\varepsilon]-1$ in degree
$1$ and is $K_m^{(d)}=K_m^{(d-1)}\langle w_{b-1}-1\rangle$ up to a direct summand annihilated by $[\varepsilon]-1$. Applying the Hochschild-Serre spectral sequence
associated to the subgroup $\widehat{\Z}\delta_b\subset \widetilde{\Gamma}_R $, we conclude that, up to $([\varepsilon]-1)$-torsion, the cohomology groups of the
proposition coincide with the cohomology of $K_m^{(d-1)}\langle w_{b-1}-1\rangle$ with respect to $\widetilde{\Gamma}_R /\widehat{\Z}\delta_b$. For general $h<d$
one assumes that, up to $([\varepsilon]-1)^{d-h+1}$-torsion, the cohomology groups of the proposition coincide with the cohomology of $K_m^{(h)}$ with respect to
the groups $\Gamma_h=\oplus_{j=h-a}^b \widehat{\Z}\delta_j \widetilde{\Gamma}_R $ if $h\geq a+1$ and $\Gamma_h=\oplus_{i=h}^a \widehat{\Z} \gamma_i \oplus_{j=1}^b
\widehat{\Z}\delta_j$ if $h\leq a$. Applying the Hochschild-Serre spectral sequence associated to the subgroup $\widehat{\Z}\delta_h \subset \Gamma_h$ if $h\geq
a+1$ and $\widehat{\Z} \gamma_h\subset \Gamma_h$ if $h\leq a$, one concludes that, up to $([\varepsilon]-1)^{d-h+2}$-torsion, the cohomology groups of the
proposition coincide with the cohomology of $K_m^{(h-1)}$ with respect to the group $\Gamma_{h-1}$.

The statement concerning   ${\rm H}^0\left(\widetilde{\Gamma}_R,{\bf A}^{\rm geo, max}_{\rm log}\bigl(\widetilde{R}\bigr)\right)$ follows as before, using that

\begin{align*}
(\gamma_i-1)\left(\frac{(v_i-1)^m}{p^m}\right) &= \frac{\big(\big([\varepsilon]-1\big)v_i+ (v_i-1)\big)^{m}}{p^m}-\frac{(v_i-1)^{m}}{p^m}\\
 &= m v_i \frac{([\varepsilon]-1)}{p} \frac{\big(v_i-1\big)^{m-1}}{p^{m-1}} + \sum\limits_{j=2}^m \begin{pmatrix} m \cr j \end{pmatrix}  v_i^j
\frac{([\varepsilon]-1)^j}{p^j} \frac{\big(v_i-1\big)^{m-j}}{p^{m-j}}\\
\end{align*}and similarly

$$(\delta_j-1)\left(\frac{(w_j-1)^m}{p^m}\right) = m w_j \frac{([\varepsilon]-1)}{p} \frac{\big(w_j-1\big)^{m-1}}{p^{m-1}} +
\sum\limits_{h=2}^m \begin{pmatrix} m \cr h \end{pmatrix}  w_j^h
\frac{([\varepsilon]-1)^h}{p^h}
\frac{\big(w_j-1\big)^{m-h}}{p^{m-h}}.$$
\end{proof}

We remark that the same strategy to prove the vanishing of ${\rm H}^i\left(\widetilde{\Gamma}_R, {\bf A}^{\rm geo, max}_{\rm
log}\bigl(\widetilde{R}\bigr)/(p^m)\right)$, for $1\leq i\leq d$, fails due to the presence of binomial coefficients. For example, coming back to the proof of
\ref{prop:cohA+Rloggeo}, the matrix of $(\gamma_i-1)$ with respect to the bases $\big(v_i-1,(v_i-1)^2/p^2,\ldots,(v_i-1)^{N}/p^N\big)$ and
$\big(1,(v_i-1)/p,\ldots,(v_i-1)^{N-1}/p^{N-1}\big)$ is upper triangular with the elements $(v_i, 2 v_i, \ldots, N v_i)$ on the diagonal.

\subsubsection{The cohomology of the filtration of ${\rm B}^{\rm cris}_{\rm log}(\widetilde{R})$}\label{section:invariantsFilBlog}

In order to conclude the proof of \ref{thm:geometricacyclicity}(iii), we are left to show the vanishing of $${\rm H}^i\left(G_R, \Fil^r{\rm B}^{\rm cris}_{\rm
log}(\widetilde{R})\right)=\ds \lim_{s\to\infty} {\rm H}^i\left(G_R, t^{-s} \Fil^{r+s}{\rm A}^{\rm cris}_{\rm log}(\widetilde{R})\right)$$for $i\geq 1$. The
strategy is the same as in \cite[\S 5]{andreatta_brinon}. Due to \ref{thm:geometricacyclicity}(i) and the fact that ${\rm Gr} {\rm A}^{\rm cris}_{\rm
log}(\widetilde{R})$ is annihilated by $t$, we know that ${\rm H}^i\left(G_R, \Fil^r{\rm A}^{\rm cris}_{\rm log}(\widetilde{R})\right)$ is annihilated by a power
$t^N$ of $t$ depending only on $i$ and $r$. In particular, the composite map ${\rm H}^i\left(G_R, \Fil^r{\rm A}^{\rm cris}_{\rm log}(\widetilde{R})\right)\to {\rm
H}^i\left(G_R, t^{-N} \Fil^r{\rm A}^{\rm cris}_{\rm log}(\widetilde{R})\right)$ is zero. One proves as in \cite[Lemme 33]{andreatta_brinon} that ${\rm
H}^i\left(G_R, \Fil^r{\rm B}^{\rm cris}_{\rm log}(\widetilde{R})\right)$ is a $\Q_p$-vector space. One is reduced to prove that the kernel of the map ${\rm
H}^i\left(G_R, t^{-N} \Fil^{r+N}{\rm A}^{\rm cris}_{\rm log}(\widetilde{R})\right) \to {\rm H}^i\left(G_R, t^{-N} \Fil^r{\rm A}^{\rm cris}_{\rm
log}(\widetilde{R})\right)$ is $p$-torsion; compare with the proof of \cite[Prop.~34]{andreatta_brinon}. Arguing by induction on $N$ we may assume that $N=1$ and we
are reduced to prove the following:

\begin{lemma}\label{lemma:invariantsFilBlog}
The cokernel of ${\rm H}^{i-1}\left(G_R, \Fil^r{\rm A}^{\rm cris}_{\rm log}(\widetilde{R})\right)\to {\rm H}^{i-1}\left(G_R, {\rm Gr}^r{\rm A}^{\rm cris}_{\rm
log}(\widetilde{R})\right)$ is $p$-torsion for every $i\geq 1$ and every $r\in\N$. \end{lemma}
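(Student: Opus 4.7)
The plan is to adapt the argument of \cite[Prop.~34]{andreatta_brinon} from the smooth case to the present semistable setting. Consider the short exact sequence
$$0 \lra \Fil^{r+1} {\rm A}^{\rm cris}_{\rm log}(\widetilde{R}) \lra \Fil^r {\rm A}^{\rm cris}_{\rm log}(\widetilde{R}) \lra {\rm Gr}^r {\rm A}^{\rm cris}_{\rm log}(\widetilde{R}) \lra 0.$$
Via the associated long exact sequence of continuous $G_R$-cohomology, the cokernel in question is identified with the kernel of the connecting morphism to ${\rm H}^{i}\bigl(G_R, \Fil^{r+1} {\rm A}^{\rm cris}_{\rm log}(\widetilde{R})\bigr)$. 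Thus it is enough to prove that, for every cocycle representing a class in ${\rm H}^{i-1}\bigl(G_R, {\rm Gr}^r {\rm A}^{\rm cris}_{\rm log}(\widetilde{R})\bigr)$, one can produce an explicit lift to a cocycle valued in $\Fil^r$ after multiplication by a power of $p$ bounded independently of the class.

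The first step is to reduce modulo $p^N$ and apply the Hochschild-Serre spectral sequence for $H_R \subset G_R$. By Proposition \ref{prop:HiHvanishes}, the $H_R$-cohomology modulo $p^N$ is annihilated by ${\cal I}$ in positive degrees, and computes the invariants of ${\rm A}_{\rm log,\infty}^{\rm cris}$ up to ${\cal I}$-torsion in degree zero, so up to a bounded power of $t$ (since ${\cal I}$ is annihilated by a power of $t$ in $B_{\rm cris}$) we are reduced to the analogous statement for $\widetilde{\Gamma}_R$-cohomology with coefficients in $\Fil^r {\rm A}_{\rm log,\infty}^{\rm cris}$ and ${\rm Gr}^r {\rm A}_{\rm log,\infty}^{\rm cris}$. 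By Proposition \ref{prop:noetherianize}, the latter is, as $\widetilde{\Gamma}_R$-module and compatibly with the filtration, a direct summand of ${\bf A}^{\rm geo, cris}_{\rm log}(\widetilde{R})$ plus the auxiliary module ${\bf X}_{n,m}\langle u-1,v_2-1,\ldots,w_b-1\rangle$; by Corollary \ref{cor:cohoXvanishes}, the $\widetilde{\Gamma}_R$-cohomology of the latter is annihilated by $([\varepsilon]^{1/p}-1)^{2}$ in every degree, hence disappears once $t$ is inverted.

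The core of the argument is then the explicit computation for ${\bf A}^{\rm geo, cris}_{\rm log}(\widetilde{R})$, using the Koszul complex associated with the commuting topological generators $\gamma_2-1,\ldots,\gamma_a-1,\delta_1-1,\ldots,\delta_b-1$ of $\widetilde{\Gamma}_R$ (note that $u$ is fixed by $\widetilde{\Gamma}_R$, since $[\overline{\pi}]$ and $Z$ both are, so the extra divided-power variable $u-1$ contributes only trivial representations and the argument proceeds exactly as in the smooth case of \cite{andreatta_brinon} with one additional divided power variable). As shown in the proof of Proposition \ref{prop:cohA+Rloggeo}, each $\gamma_i-1$ (resp.~$\delta_j-1$) acts on the basis $\{(v_i-1)^{[N]}\}_{N\geq 0}$ (resp.~$\{(w_j-1)^{[N]}\}_{N\geq 0}$) by a matrix of the form $([\varepsilon]-1)\cdot G$, where $G$ is upper triangular with units $v_i$ (resp.~$w_j$) on the diagonal and off-diagonal entries involving the coefficients $\beta_j=([\varepsilon]-1)^{[j]}/([\varepsilon]-1)\in\Ker(\theta)$. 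Given a cocycle $c$ valued in ${\rm Gr}^r$, lift each of its components arbitrarily to $\Fil^r$; its coboundary lies in $\Fil^{r+1}$, and the fact that the diagonal of $G$ is a unit allows an iterative correction, level by level of the filtration, each step multiplying by a factor of $([\varepsilon]-1)^{-1}$ applied to terms in $\Fil^{r+k}$ for $k\geq 1$, which after inverting $t$ costs a bounded power of $p$.

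The main obstacle is the quantitative control in this iteration: the divided-power structure forces $(\gamma_i-1)$ to be strictly lower-triangular on the divided-power basis only on the graded pieces, and a priori the iterative correction could introduce unbounded denominators. The key input preventing this is the vanishing (up to $([\varepsilon]-1)^d$-torsion) of the higher $\widetilde{\Gamma}_R$-cohomology of the graded pieces, Proposition \ref{prop:cohA+Rloggeo}(1), which ensures that each successive correction at filtration level $r+k$ is itself a coboundary after multiplication by a uniformly bounded power of $([\varepsilon]-1)$, and hence of $p$ after inverting $t$. Putting these estimates together yields that the total loss is a single power of $p$ as required, completing the proof of the lemma.
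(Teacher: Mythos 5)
There is a genuine gap, and it is precisely at the point where the lemma has content: you conflate $t$-torsion (equivalently $([\varepsilon]-1)$-torsion, since $t$ and $[\varepsilon]-1$ generate the same ideal of $A_{\rm cris}$) with $p$-torsion. Your iterative correction, based on the matrices $([\varepsilon]-1)G$ from the proof of \ref{prop:cohA+Rloggeo}, can only ever produce statements of the form ``the obstruction is annihilated by a power of $[\varepsilon]-1$'', and the final conversion ``and hence of $p$ after inverting $t$'' is not available here: the lemma is a statement about ${\rm A}^{\rm cris}_{\rm log}(\widetilde{R})$ before inverting $t$ (indeed, inverting $t$ also inverts $p$, since $p^{-1}=(p-1)!\,t^{[p]}t^{-p}$, so ``$p$-torsion'' would become meaningless), and this lemma is exactly the input that \S\ref{section:invariantsFilBlog} needs beyond the already known $t$-power annihilation of \ref{thm:geometricacyclicity}(i). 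Worse, the target group ${\rm H}^{i-1}\bigl(G_R,{\rm Gr}^r{\rm A}^{\rm cris}_{\rm log}(\widetilde{R})\bigr)$ is killed by $t$ outright (as $t\in\Fil^1$ acts as zero on ${\rm Gr}^r$), so a bound on the cokernel by a power of $[\varepsilon]-1$ is vacuous and cannot yield the $p$-torsion claim. There are also secondary problems with the correction scheme itself (it requires infinitely many steps through the divided-power filtration, whose completeness inside $\Fil^r$ is not addressed, and the factors $([\varepsilon]-1)^{-1}$ do not exist in ${\rm A}^{\rm cris}_{\rm log}$), but the $t$-versus-$p$ confusion is the decisive one.

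The paper's proof supplies the missing idea differently. Using \ref{cor:cohoBHTinftyvanishes}, the $G_R$-cohomology of ${\rm Gr}^r$ is identified, up to $p$-torsion, with the $\widetilde{\Gamma}_R$-cohomology of the explicit module $M_r$; the point is that the annihilators occurring there, namely $m_{\overline{R}}\,\pi^{\alpha}(\epsilon_p-1)^2$, consist of elements of positive valuation in $\widehat{\Rbar}$, so annihilation by them does imply annihilation by a power of $p$ --- unlike annihilation by $t$. One then exhibits an explicit $\widetilde{\Gamma}_R$-stable $\widetilde{R}^{\rm geo}$-submodule $\widetilde{M}_r\subset\Fil^r{\rm A}^{\rm cris}_{\rm log}(\widetilde{R})$, spanned by the divided powers $t^{[n_0]}\log(u)^{[n_1]}\log(v_2)^{[n_2]}\cdots\log(w_b)^{[n_d]}$ with $\sum_i n_i=r$, which maps onto $M_r$ and induces a surjection on $\widetilde{\Gamma}_R$-cohomology (arguing as in \cite[Lemme 36]{andreatta_brinon}); the logarithms are essential because $\gamma_i(\log v_i)=\log v_i+t$, so cocycles with values in ${\rm Gr}^r$ admit genuine cocycle lifts inside $\Fil^r$, which is what actually forces the cokernel to be $p$-torsion. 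Your proposal never produces such lifts, so the key step of the lemma is missing.
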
\begin{proof}  Thanks to \ref{cor:cohoBHTinftyvanishes} the
$G_R$-cohomology of ${\rm Gr}^r{\rm A}^{\rm cris}_{\rm log}(\widetilde{R})$ is  the $\widetilde{\Gamma}_R$-cohomology of the $\widehat{R \cO}_{\Kbar}$-module
$M_r:=\widehat{\oplus}_{\sum n_i=r} \widehat{R \cO}_{\Kbar} (\epsilon_p-1)^{n_0} \xi^{[n_0]} (u-1)^{[n_1]}(v_2-1)^{[n_2]}\cdots (v_a-1)^{[n_a]}\cdots
(w_b-1)^{[n_d]}$ up to $p$-torsion. On the other hand, define the $\widetilde{R}^{\rm geo}$-submodule $\widetilde{M}_r $ of $\Fil^r{\rm A}^{\rm cris}_{\rm
log}(\widetilde{R})$ spanned by $t^{[n_0]} \log(u)^{[n_1]}\log(v_2)^{[n-2]}\cdots \log(v_a)^{[n_a]}\cdots \log(w_b)^{[n_d]}$ for $\sum_i n_i=r$. Arguing as in
\cite[Lemme 36]{andreatta_brinon} one shows that it is $\widetilde{\Gamma}_R$-stable, it maps surjectively onto $M_r$ and the induced map on
$\widetilde{\Gamma}_R$-cohomology is surjective. This concludes the proof.\end{proof}

\subsubsection{The cohomology of  $\overline{\rm B}^{\rm cris}_{\rm log}(\widetilde{R})$}\label{section:variantBbarlog}
We are left to prove \ref{thm:geometricacyclicity}(iii'). Let $\overline{A}_{\rm log}$ be the image of $A_{\rm log}$ in $\overline{B}_{\rm log}$. Set $$\overline{\rm A}_{\rm log}^{\rm cris,\nabla}(R):={\rm A}_{\rm log}^{\rm cris,\nabla}(R)\widehat{\otimes}_{A_{\rm cris}} \overline{A}_{\rm log},\quad \overline{\rm A}_{\rm log}^{\rm cris}(R):={\rm A}_{\rm log}^{\rm cris}(R)\widehat{\otimes}_{A_{\rm cris}} \overline{A}_{\rm log}.$$Then,\smallskip

(i) $\overline{\rm A}_{\rm log}^{\rm cris,\nabla}(R)\cong  {\rm A}_{\rm
cris}^\nabla(R)\widehat{\otimes}_{A_{\rm cris}} \overline{A}_{\rm log}$;\smallskip

(ii) $\overline{\rm A}_{\rm log}^{\rm cris,\nabla}(R)\left\{\langle v_2-1,\ldots,v_a-1,w_1-1,\ldots,w_b-1\rangle \right\} \lra  \overline{\rm A}^{\rm cris}_{\rm
log}(\widetilde{R})$ is an isomorphism.\smallskip

(iii) $\overline{R}_{\rm log}^{\rm geo}:=\widetilde{R}_{\rm log}^{\rm geo, cris}\widehat{\otimes}_{A_{\rm log}} \overline{A}_{\rm log}$ is the image of ${R}\widehat{\otimes}_{\cO_K} \overline{A}_{\rm log}$ in $\overline{\rm
A}^{\rm cris}_{\rm log}(\widetilde{R})$. \smallskip

The first statement follows as ${\rm A}_{\rm log}^{\rm cris,\nabla}(R)\cong {\rm A}_{\rm cris}^\nabla(R)\widehat{\otimes}_{A_{\rm cris}} A_{\rm log}$ by
\ref{lemma:structureAlog}. The second statement is a consequence of \ref{lemma:structAlog}. The third statement follows from \ref{cor:injstrictinvariants}(4). One
proves the analogues of \ref{prop:HiHvanishes} with $A=\overline{\rm A}^{\rm cris}_{\rm log}(\widetilde{R})$ and ${A}_\infty$ the image of ${\rm A}^{\rm cris}_{\rm
log,\infty}\otimes_{A_{\rm log}}\overline{A}_{\rm log}$ in $\overline{\rm A}^{\rm cris}_{\rm log}(\widetilde{R})$. One  defines $\overline{{\bf A}}^{\rm cris}_{\rm
log}\bigl(\widetilde{R}_n\bigr)$, resp.~$\overline{\bf A}_{\widetilde{R}_n}$, resp.~$\overline{\bf A}^{\rm geo, cris}_{\rm log}\bigl(\widetilde{R}\bigr)$  as the
image of ${\bf A}^{\rm cris}_{\rm log}\bigl(\widetilde{R}_n\bigr)\otimes_{A_{\rm log}}\overline{A}_{\rm log}$ in $\overline{\rm A}^{\rm cris}_{\rm
log}(\widetilde{R})$, resp.~${\bf A}_{\widetilde{R}_n,{\rm cris}}^{+}\left\{\langle u-1\rangle\right\}\otimes_{A_{\rm log}}\overline{A}_{\rm log}$, resp.~${\bf
A}^{\rm geo, cris}_{\rm log}\bigl(\widetilde{R}\bigr)\otimes_{A_{\rm log}}\overline{A}_{\rm log}$; see \ref{lemma:AlogcrysAlogmax} for the notation. Due to
loc.~cit., one gets isomorphisms  $$\overline{\bf A}_{\widetilde{R}_n}\left\{ \langle v_2-1,\ldots,v_a-1,w_1-1,\ldots,w_b-1\rangle \right\} \cong \overline{{\bf
A}}^{\rm cris}_{\rm log}\bigl(\widetilde{R}_n\bigr) $$and $$\overline{R}_{\rm log}^{\rm geo}\left\{\langle v_2-1,\ldots,v_a-1,w_1-1,\ldots,w_b-1\rangle \right\}
\cong \overline{\bf A}^{\rm geo, cris}_{\rm log}\bigl(\widetilde{R}\bigr). $$Define $\overline{\bf A}_m$ as the image of ${\bf A}_m \left\{\langle
u-1\rangle\right\}\otimes_{A_{\rm log}} \overline{A}_{\rm log}$ in $\overline{\rm A}^{\rm cris}_{\rm
log}(\widetilde{R})/\left(p^m,\sum_{i=0}^{p-1}\bigl[\varepsilon\bigr]^{ip^{m-1}}\right)$ and $\overline{{\bf
X}}_{n,m}:=\oplus_{(\underline{\alpha},\underline{\beta})\in E_{n,m}} \overline{\bf
A}_m\big[\overline{X}\big]^{\underline{\alpha}}\big[\overline{Y}\big]^{\underline{\beta}}$. One shows that the analogues of \ref{prop:noetherianize},
\ref{cor:cohoXvanishes}, \ref{prop:cohA+Rloggeo}(1) and \S\ref{section:invariantsFilBlog} hold  proving \ref{thm:geometricacyclicity}(iii').

\subsubsection{The arithmetic invariants}\label{section:arithminvariants}
It follows from \ref{thm:geometricacyclicity} that $\bigl({\rm B}_{\rm
log}^{{\rm cris}}\bigr)^{G_R}=\widetilde{R}_{\rm log}^{\rm geo,
cris}\bigl[t^{-1}\bigr]$ and $\bigl({\rm B}_{\rm max}^{{\rm log}}\bigr)^{G_R}=
\widetilde{R}_{\rm log}^{\rm geo, max}\bigl[t^{-1}\bigr]$.

\begin{lemma}\label{lemma:arithinvawgt0} We have $\widetilde{R}^{\rm cris}_{\rm log}=\left(\widetilde{R}_{\rm log}^{\rm geo, cris}\right)^{G_K}$
and $\widetilde{R}_{\rm max}=\left(\widetilde{R}_{\rm log}^{\rm
geo, max}\right)^{G_K}$.
\end{lemma}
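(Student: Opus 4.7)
The plan is to produce natural $G_K$-equivariant ring maps $\widetilde{R}_{\rm cris}\to \widetilde{R}_{\rm log}^{\rm geo, cris}$ and $\widetilde{R}_{\rm max}\to \widetilde{R}_{\rm log}^{\rm geo, max}$, and then to identify the sources with the $G_K$-invariants of the targets. For the construction, note that there is a natural $\cO$-algebra map $\widehat{\widetilde{R}}\to \widetilde{R}^{\rm geo}=\WW(\widetilde{\bf E}^+_{\cO_\Kbar})\widehat\otimes_{\WW(k)}\widetilde{R}$ sending $x\mapsto 1\otimes x$, which is $G_K$-equivariant because $G_K$ acts trivially on $\widetilde{R}$. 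The composite $\widehat{\widetilde{R}}\to \widetilde{R}^{\rm geo}\to \widehat{\Rbar}$ annihilates $P_\pi(Z)$ (compatibly with the standard DP's on $p$), so by the universal property of the log DP envelope we get a $G_K$-equivariant map $\widetilde{R}_{\rm cris}\to \widetilde{R}_{\rm log}^{\rm geo, cris}$ landing in the $G_K$-invariants; the analogous universal property of rings of the form $B\bigl\{\frac{P_\pi(Z)}{p}\bigr\}$ yields the map for max.

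For injectivity, both sides embed $G_K$-equivariantly into ${\rm B}_{\rm dR}^+(\widetilde{R})$: the target via \ref{cor:injstrictinvariants}(2) composed with \ref{prop:BcrissubsetBdR}(4), and the source via the composite $\widetilde{R}_{\rm cris}\to{\rm A}_{\rm log}^{\rm cris}(\widetilde{R})\hookrightarrow {\rm B}_{\rm dR}^+(\widetilde{R})$ (using the $p$-torsion freeness from \ref{display:Alogtorionfree}). Since the two composites agree by construction, injectivity is immediate.

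Surjectivity is the substantive step. Take $x\in (\widetilde{R}_{\rm log}^{\rm geo, cris})^{G_K}$. Using the $\cG_R$-equivariant inclusion of \ref{cor:injstrictinvariants}(2), where $\cG_R$ acts on the source through its quotient $G_K$, we have $x\in {\rm A}_{\rm log}^{\rm cris}(\widetilde{R})^{\cG_R}$. Combining \ref{prop:BcrissubsetBdR}(4) with the invariants computation ${\rm B}_{\rm dR}^+(\widetilde{R})^{\cG_R}=\widehat{\widetilde{R}[p^{-1}]}$ of \ref{prop:BdRff} places $x$ in $\widehat{\widetilde{R}[p^{-1}]}$ inside ${\rm B}_{\rm dR}^+(\widetilde{R})$. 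It then remains to show that the intersection $\widetilde{R}_{\rm log}^{\rm geo, cris}\cap \widehat{\widetilde{R}[p^{-1}]}$, taken inside ${\rm B}_{\rm dR}^+(\widetilde{R})$, equals $\widetilde{R}_{\rm cris}$, and the analogous identity for max. This is carried out by induction on the DP-filtration: using \ref{cor:injstrictinvariants}(3) to express each graded piece of $\widetilde{R}_{\rm log}^{\rm geo, cris}$ as a direct factor of $\widehat{R\cO}_\Kbar\cdot \xi^{[n_0]}(u-1)^{[n_1]}$, combined with the identity $(\widehat{R\cO}_\Kbar)^{G_K}=\widehat{R}$ (a consequence of Ax--Sen--Tate and the normality of $R\cO_\Kbar$ proved in \ref{lemma:HiHR}), one shows that a $G_K$-invariant element whose leading term is constrained to lie in $\widehat{\widetilde{R}[p^{-1}]}$ must in fact have its graded coefficient in $\widetilde{R}$; the inductive step then closes thanks to the strictness of filtrations from \ref{prop:BcrissubsetBdR}(3) and \ref{cor:injstrictinvariants}(3).

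The main obstacle is making this intersection-on-filtration argument fully precise, simultaneously controlling the DP-filtration inherited from the geometric log DP envelope $\widetilde{R}_{\rm log}^{\rm geo, cris}$ and the $(Z-\pi)$-adic filtration on $\widehat{\widetilde{R}[p^{-1}]}$ coming from \ref{lemma:RtildeR}. Once this graded matching is in place, the passage from the crystalline to the max case is formal: one replaces the log DP envelope by the subring generated by $\frac{P_\pi(Z)}{p}$, noting that elements of $\widehat{\widetilde{R}[p^{-1}]}$ with the required power-of-$p$ denominators in their $\frac{P_\pi(Z)}{p}$-expansion must come from $\widetilde{R}_{\rm max}$.
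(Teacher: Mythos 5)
Your reduction is fine as far as it goes: since the $\cG_R$-action on $\widetilde{R}_{\rm log}^{\rm geo,cris}$ factors through $G_K$ and the inclusions into ${\rm B}_{\rm dR}^+(\widetilde{R})$ are equivariant and strict, \ref{prop:BcrissubsetBdR}(4) together with \ref{prop:BdRff} does show that the $G_K$-invariants coincide with the intersection $\widetilde{R}_{\rm log}^{\rm geo,cris}\cap\widehat{\widetilde{R}[p^{-1}]}$ inside ${\rm B}_{\rm dR}^+(\widetilde{R})$. But that intersection statement is not a remaining technicality --- it is equivalent to the lemma itself, and the filtration induction you sketch for it does not close. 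Matching graded pieces only tells you that for every $r$ there is $y_r\in\widetilde{R}_{\rm cris}$ with $x-\sum_{i\le r}y_i\in\Fil^{r+1}$; since the DP filtration is not compatible with the $p$-adic topology (e.g.\ $P_\pi(Z)^{[r]}$ lies in $\Fil^r$ but is not $p$-adically small), the partial sums need not converge in $\widetilde{R}_{\rm cris}$, and $\widetilde{R}_{\rm cris}$ is neither complete nor closed for the filtration topology, so you only conclude that $x$ lies in a $\Fil$-adic completion of $\widetilde{R}_{\rm cris}$ inside the geometric ring. The whole difficulty of the lemma is exactly this bookkeeping of $p$-denominators created by divided powers against the $(Z-\pi)$-adic expansion of $\widehat{\widetilde{R}[p^{-1}]}$; it is the same phenomenon that forces the Frobenius twist $\varphi^s$ in the ${\rm B}$-level statement \ref{prop:arithemticinvariantsB}, so it cannot be dismissed as "making the argument precise". (A secondary point: $(\widehat{R\cO}_\Kbar)^{G_K}=\widehat{R}$ in the relative setting is not just Ax--Sen--Tate; it needs the direct-factor statement of \ref{lemma:HiHR}(1) plus a completed-invariants argument.)

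For comparison, the paper proves the lemma by a group-cohomological computation rather than an intersection in ${\rm B}_{\rm dR}$: it first descends from $\Kbar$ to $K'_\infty$ by almost \'etale descent, placing the invariants inside $\widetilde{R}\widehat{\otimes}_{\cO}A_{\rm cris,\infty}(\cO)$ (resp.\ max); it then de-perfectizes, writing $\WW\bigl(\widetilde{\bf E}^+_{\cO_{K_\infty'}}\bigr)={\bf A}^+\oplus{\bf X}$ with ${\bf A}^+=\WW(k)\bigl[\!\bigl[[\overline{\pi}]\bigr]\!\bigr]$, kills the ${\bf X}$-part of the $\gamma$-invariants by the $\bigl([\varepsilon]^{1/p}-1\bigr)$-torsion argument of \ref{lemma:vanishingkercokergammai-1} and \ref{cor:cohoXvanishes} together with $([\varepsilon]-1)$-torsion freeness, and finally computes the $\gamma$-invariants of $\widetilde{R}\left\{\langle u-1,P_\pi(Z)\rangle\right\}$ by the explicit invertible-matrix analysis of $\gamma-1$ on the divided powers of $u-1$ as in \ref{prop:cohA+Rloggeo}. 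If you want to salvage your route, you would have to prove the intersection statement with quantitative control of denominators in each graded degree and a convergence argument replacing completeness of $\widetilde{R}_{\rm cris}$ --- at which point you are essentially redoing an analysis of the same depth as the paper's.
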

\begin{proof}
Let $A_{\rm cris}(\cO)$ (resp.~$A_{\rm cris,\infty}(\cO)$) be the
$p$-adic completion of the DP envelope of $
\WW\bigl(\widetilde{\bf
E}^+_{\cO_{\overline{K}}}\bigr)\otimes_{\WW(k)} \cO$ (resp.~of $
\WW\bigl(\widetilde{\bf
E}^+_{\cO_{K_\infty'}}\bigr)\otimes_{\WW(k)} \cO$) with respect to
the morphism $\theta$ to $\widehat{\cO}_{\overline{K}}$. Let
$A_{\rm max}(\cO)$  be the
$p$-adic completion of the $ \WW\bigl(\widetilde{\bf
E}^+_{\cO_{\overline{K}}}\bigr)\otimes_{\WW(k)} \cO$-subalgebra of
$\WW\bigl(\widetilde{\bf
E}^+_{\cO_{\overline{K}}}\bigr)\otimes_{\WW(k)} \cO[p^{-1}]$
generated by $p^{-1}\Ker(\theta)$; using the notations of \S\ref{sec:classical} we have inclusions $A_{\rm cris}\subset A_{\rm max} \subset A_{\rm log}$. Analogously, define $ A_{\rm
max,\infty}(\cO)$ using $\WW\bigl(\widetilde{\bf
E}^+_{\cO_{K_\infty'}}\bigr)$ instead of $\WW\bigl(\widetilde{\bf
E}^+_{\cO_{\overline{K}}}\bigr)$. By \ref{lemma:Aminvariant} the
ring $\widetilde{R}_{\rm log}^{\rm geo, cris}$ is a direct factor
of $\widetilde{R}\widehat{\tensor}_{\cO} A_{\rm
cris}(\cO)\left\{\langle u-1\rangle\right\}$ and
$\widetilde{R}_{\rm log}^{\rm geo, max}$ is a direct factor of
$\widetilde{R}\widehat{\tensor}_{\cO} A_{\rm max}(\cO)\left\{\frac{u-1}{p}\right\}$, where $\widehat{\tensor}$
is the $p$-adically completed tensor product.

Let $H$ be the Galois group of $K_\infty' \subset \overline{K}$.
Since every finite field extension of $K_\infty'$ is almost
\'etale, arguing as in \ref{prop:HiHvanishes} one proves that the
invariants of $\widetilde{R}_{\rm log}^{\rm geo, cris}$ (resp.~of
$\widetilde{R}_{\rm log}^{\rm geo, cris}$) with respect to $H$ are
contained in $\widetilde{R}\widehat{\tensor}_{\cO} A_{\rm cris,\infty}(\cO)$ (resp.~in
$\widetilde{R}\widehat{\tensor}_{\cO} A_{\rm
max,\infty}(\cO)$).

Recall that $\WW\bigl(\widetilde{\bf E}^+_{\cO_{K_\infty'}}\bigr)$ contains a subring ${\bf A}^+=\WW(k)\bigl[\!\bigl[ [\overline{\pi}] \bigr]\!\bigr]$ isomorphic to
$\cO$, where the isomorphism is defined by sending $Z$ to $ [\overline{\pi} ]$. Moreover, $\WW\bigl(\widetilde{\bf E}^+_{\cO_{K_\infty'}}\bigr)$ is the
$[\overline{\pi} ]$-completion of $\cup_{m\in \N} {\bf A}^+\bigl[ [\overline{\pi} ]^{\frac{1}{p^m}}\bigr]$; cf.~\ref{lemma:A+R}. In particular, we may write
$\WW\bigl(\widetilde{\bf E}^+_{\cO_{K_\infty'}}\bigr)$ as a direct sum ${\bf A}^+ \oplus {\bf X}$ where ${\bf X}$ is the $\bigl(p,[\overline{\pi} ]\bigr)$-adic
completion of $\sum_{m,a} {\bf A}^+ \cdot [\overline{\pi}]^{\frac{a}{p^m}}$, where the sum is taken over all integers $m\geq 1$  and $1\leq a< p^m$.  Note that the
$p$-adic completion ${\bf A}^{+,\rm cris}_{\rm log}$ of the DP envelope of ${\bf A}^+\otimes_{\WW(k)} \cO$ with respect to $\Ker(\theta)$ is isomorphic to
$\cO\left\{\langle u-1,P_\pi(Z) \rangle\right\}$. Similarly the $p$-adic completion ${\bf A}^{+,\rm max}_{\rm log}$ of the $\bigl({\bf A}^+\otimes_{\WW(k)}
\cO\bigr)^{\rm log}$-subalgebra of $\bigl({\bf A}^+\otimes_{\WW(k)} \cO\bigr)^{\rm log}\bigl[p^{-1}\bigr] $ generated by $p^{-1} \Ker(\theta)$ is isomorphic to
$\cO\left\{\frac{u-1}{p},\frac{P_\pi(Z)}{p}\right\}$. In particular,
$$A_{\rm cris,\infty}(\cO) \cong
{\bf A}^{+,\rm cris}_{\rm log} \oplus {\bf X}\otimes_{{\bf A}^+} {\bf A}^{+,\rm cris}_{\rm log},
\qquad A_{\rm max,\infty}(\cO) \cong
{\bf A}^{+,\rm max}_{\rm log} \oplus {\bf X}\otimes_{{\bf A}^+} {\bf A}^{+,\rm
max}_{\rm log}.$$ Let $\gamma\in G_K$ be an element such that
$\gamma\bigl([\overline{\pi}]\bigr)=[\varepsilon] [\overline{\pi}]$;
it is a topological generator of the coset $G_K/H_K$.  As in
\ref{lemma:vanishingkercokergammai-1} one proves that  the kernel
of $\gamma-1$ on
$\widetilde{R}\widehat{\tensor}_{\cO}\WW\bigl(\widetilde{\bf
E}^+_{\cO_{\overline{K}}}\bigr) $ intersected with
$\widetilde{R}\widehat{\tensor}_{\cO}{\bf X}$ is annihilated by
$\bigl[\varepsilon\bigr]^{\frac{1}{p}}-1$. As in
\ref{cor:cohoXvanishes} one deduces that the kernel of $\gamma-1$
on $\widetilde{R}\widehat{\tensor}_{\cO}A_{\rm cris}(\cO)$ intersected with
$\widetilde{R}\widehat{\tensor}_{\cO}{\bf X}\otimes_{{\bf A}^+}
{\bf A}^{+,\rm cris}_{\rm log}$ (resp.~on
$\widetilde{R}\widehat{\tensor}_{\cO}A_{\rm max}(\cO)$
intersected with $\widetilde{R}\widehat{\tensor}_{\cO}{\bf
X}\otimes_{{\bf A}^+} {\bf A}^{+,\rm max}_{\rm log}$) is annihilated by
$\bigl(\bigl[\varepsilon\bigr]^{\frac{1}{p}}-1\bigr)^2$. In
particular,  it is zero since
$\widetilde{R}\widehat{\tensor}_{\cO} A_{\rm cris}(\cO)$ (resp.~$\widetilde{R}\widehat{\tensor}_{\cO}A_{\rm
max}(\cO)$) is $([\varepsilon]-1)$-torsion free. We
conclude that the invariants we want to compute are the elements
of $\widetilde{R}\left\{\langle u-1,P_\pi(Z) \rangle\right\}$
(resp.~$\widetilde{R}\left\{\frac{u-1}{p},\frac{P_\pi(Z)}{p}\right\}$)
which are invariant under $\gamma-1$ acting on
$\widetilde{R}\widehat{\tensor}_{\cO}A_{\rm log}(\cO) $ (resp.~on
$\widetilde{R}\widehat{\tensor}_{\cO}A_{\rm max}(\cO)$). Arguing as in \ref{prop:cohA+Rloggeo} we conclude
that such invariants coincide with
$\widetilde{R}\left\{\langle P_\pi(Z) \rangle\right\}$
(resp.~$\widetilde{R}\left\{\frac{P_\pi(Z)}{p}\right\}$) as
wanted.
\end{proof}

\begin{remark}\label{rmk:AmaxfrobaAcris} Let $A$ be a ring which is
$p$-adically complete and has no $p$-torsion. Assume that it is
endowed with an operator $\varphi$ lifting Frobenius modulo $p$.
Let $x\in A$ be such that $x-1$ is a regular element and
$\varphi(x)=x^p$. Write $A_{\rm cris}:=A\left\{\langle x-1
\rangle\right\}$ (resp.~$A_{\rm max}:=A\left\{\frac{x-1}{p}
\right\}$) for the $p$-adic completion of the DP envelope of $A$
with respect to $x-1$ (resp.~of the subring $A[(x-1)/p]$ of
$A[p^{-1}]$. Note that Frobenius extends to $A_{\rm cris}$ and
$A_{\rm max}$.

For every  $m\in \N$ we have  $\bigl(x-1\bigr)^{[m]}=
\frac{p^m}{m!}  \bigl(x-1\bigr)^{m} p^{-m} $. In particular, we
have a morphism $A_{\rm cris} \lra A_{\rm max}$. Since
$\varphi(x-1)=x^p-1=(x-1)^p+ p y$, then $\varphi\bigl((x-1)
p^{-1}\bigr)=(p-1)! (x-1)^{[p]}+y$ so that $\varphi$ on $A_{\rm
cris}$ factors via a morphism $A_{\rm cris} \lra A_{\rm max}$.
\end{remark}

It follows from \ref{rmk:AmaxfrobaAcris} and from
\ref{lemma:structureAlog} and \ref{lemma:structAlog} that we have
ring homomorphisms $${\rm B}_{\rm log}^{\rm
cris}\bigl(\widetilde{R}\bigr)\lra {\rm B}^{\rm max}_{\rm
log}\bigl(\widetilde{R}\bigr),\quad \widetilde{R}_{\rm log}^{\rm
cris}\lra \widetilde{R}_{\rm max}.$$Furthermore, as shown in
\ref{rmk:AmaxfrobaAcris} Frobenius on ${\rm B}^{\rm max}_{\rm
log}\bigl(\widetilde{R}\bigr)$ factors via ${\rm B}^{\rm
cris}_{\rm log}\bigl(\widetilde{R}\bigr)$ inducing a ring
homomorphism $\widetilde{R}_{\rm max}[p^{-1}\bigr]\lra
\widetilde{R}_{\rm cris}[p^{-1}\bigr] $. In particular, it
suffices to prove \ref{prop:arithemticinvariantsB} for ${\rm
B}^{\rm cris}_{\rm log}\bigl(\widetilde{R}\bigr)$. Define $$A_{\rm
cris}(r):={\rm A}^{\rm cris}_{\rm log}(\widetilde{R}) \cdot
t^{-r}.$$
Using \ref{lemma:arithinvawgt0} and since $\varphi$ is
Galois equivariant,  to prove \ref{prop:arithemticinvariantsB} we
are reduced to show that for every $r\in\N$ we have
$$
\varphi^s\left( \bigl(A_{\rm cris}(r)\bigr)^{\cG_R}\right)\subset \frac{1}{p^r}\widetilde{R}_{\rm cris}.
$$
This is proven in
\cite[Prop. 4.11.2]{tsuji}. We sketch the argument.

Take $x\in {\rm A}^{\rm cris}_{\rm log}(\widetilde{R})$ such that
$x t^{-r}$ is Galois invariant. Then,  $\varphi^m\bigl(x
t^{-r}\bigr)$ is also Galois invariant. Its image in ${\rm B}_{\rm
dR}^+\bigl(\widetilde{R}\bigr)$ is then also invariant under
$\cG_R$ and those invariants coincide with $
\widehat{\widetilde{R}[p^{-1}]}$ by \ref{prop:BdRff}. In
particular, $\varphi^m\bigl(x t^{-r}\bigr)\in {\rm B}_{\rm
dR}^+\bigl(\widetilde{R}\bigr)$. Since $\varphi^m(t^r)=p^{rm}
t^r$, this implies that $\varphi^m(x)\in t^r {\rm
B}_{\rm dR}^+\bigl(\widetilde{R}\bigr)$ for every $m\in\N$.

Using \ref{lemma:structAlog} write  $x$ as $\sum_{\nu\in \N^{a+b}}
\beta_\nu (v_1-1)^{[\nu_1]}\cdots (v_a-1)^{[\nu_a]}
(w_1-1)^{[\nu_{a+1}]} \cdots (w_b-1)^{[\nu_{a+b}]}$ with
$\nu=(\nu_1,\ldots,\nu_{a+b})$ and $\beta_\nu\in {\rm A}_{\rm
cris}^\nabla(R)$.  Write $\varphi^m(x)= \sum_{\nu\in \N^{a+b}}
\beta_{m,\nu} (v_1-1)^{[\nu_1]}\cdots (v_a-1)^{[\nu_a]}
(w_1-1)^{[\nu_{a+1}]} \cdots (w_b-1)^{[\nu_{a+b}]}$. Since
$\varphi^m(X-1) = \bigl((X-1) + 1\bigr)^{p^m} -1= p^m(X-1) + $ higher
order terms in $(X-1)$ for $X=v_1,\ldots,v_a, w_1,\ldots w_b$, one
argues  that $\beta_{m,\nu}=p^{m \sum_i \nu_i}
\varphi^m(\beta_\nu)+  $ a $\Z$-linear combination of the
$\varphi^m(\beta_{\nu'})$ for $\nu'$ such that $\nu'_i\leq \nu_i$
for every $1\leq i \leq a+b$ and there exists $i$ such that
$\nu_i'<\nu_i$. Since  ${\rm B}_{\rm
dR}^+\bigl(\widetilde{R}\bigr)={\rm B}_{\rm
dR}^{+,\nabla}\bigl(\widetilde{R}\bigr)\bigl[\!\bigl[
v_1-1,\ldots,v_a-1,w_1-1,\ldots,w_b-1\bigr]\!\bigr]$ by
\ref{cor:BdRstr} one concludes by induction that
$\varphi^m(\beta_\nu)\in t^r {\rm B}_{\rm
dR}^{+,\nabla}\bigl(\widetilde{R}\bigr)$ for every
$\nu\in\N^{a+b}$.

Let $I^{[r]}{\rm A}_{\rm log}^{\rm cris,\nabla}(R)$ be the subset
of elements $y$ such that $\varphi^m\bigl(y\bigr)\in \Fil^r {\rm
B}_{\rm dR}^{+,\nabla}\bigl(\widetilde{R}\bigr)$ for every $m\in
\N$. Then, $\beta_\nu\in I^{[r]}{\rm A}_{\rm log}^{\rm
cris,\nabla}(R)$ for every $\nu$.  We are left to prove that
$\varphi^s\bigl(I^{[r]}{\rm A}_{\rm log}^{\rm
cris,\nabla}(R)\bigr)\subset t^r {\rm A}_{\rm log}^{\rm
cris,\nabla}(R)$ with $s=1$ if $p\geq 3$ and $s=2$ if $p=2$.  This follows from \cite[Lemma 4.11.4]{tsuji} or \cite[Prop.
A.3.20]{tsujiinventiones}.

\subsection{The functors ${\rm D}^{\rm cris}_{\rm log}$ and
${\rm D}^{\rm max}_{\rm log}$. Semistable
representations.}\label{sec:Dcrislogmaxlog}

Let $V$ be a finite dimensional $\Q_p$--vector space endowed with
a continuous action of $\cG_R$.  Due to
\ref{prop:arithemticinvariantsB} there exists  $s\in \N$ such that
$\varphi^s\left({\rm B}_{\rm cris}^{{\rm log},\cG_R}\right)\subset
\widetilde{R}_{\rm cris}\bigl[p^{-1}\bigr]$ and
$\varphi^{s}\left({\rm B}_{\rm log}^{{\rm
max},\cG_R}\right)\subset \widetilde{R}_{\rm
max}\bigl[p^{-1}\bigr]$. Write $${\rm D}^{\rm cris}_{\rm
log}(V):=\left(V\otimes_{\Q_p} {\rm B}^{\rm cris}_{\rm
log}\bigl(\widetilde{R}\bigr)\right)^{\cG_R} \otimes_{{\rm B}_{\rm
cris}^{{\rm log},\cG_R}}^{\varphi^{s}} \widetilde{R}_{\rm
cris}\bigl[p^{-1}\bigr].$$It is a $\widetilde{R}_{\rm
cris}\bigl[p^{-1}\bigr]$--module. The  connection and Frobenius on
${\rm B}^{\rm cris}_{\rm log}\bigl(\widetilde{R}\bigr)$ induce  a
connection and a Frobenius on the $\cG_R$-invariants of
$V\otimes_{\Q_p} {\rm B}^{\rm cris}_{\rm
log}\bigl(\widetilde{R}\bigr)$ and, hence, by base change via
$\varphi^{s}\colon {\rm B}_{\rm cris}^{{\rm log},\cG_R}\lra
\widetilde{R}_{\rm cris}\bigl[p^{-1}\bigr]$,  an integrable
connection
$$\nabla_{V,\WW(k)}\colon {\rm D}^{\rm cris}_{\rm log}(V)
\lra {\rm D}^{\rm cris}_{\rm
log}(V)\otimes_{\widehat{\widetilde{R}}}
\widehat{\omega}^1_{\widetilde{R}/\WW(k)}$$and a Frobenius
$\varphi$ horizontal with respect to $\nabla_{V,\WW(k)}$. The
morphism $\varphi^{s}$ on ${\rm B}^{\rm cris}_{\rm
log}\bigl(\widetilde{R}\bigr)$ induces a natural map $${\rm
D}^{\rm cris}_{\rm log}(V) \lra V\otimes_{\Q_p} {\rm B}^{\rm
cris}_{\rm log}\bigl(\widetilde{R}\bigr) . $$We define a
decreasing filtration $\Fil^\bullet {\rm D}^{\rm cris}_{\rm
log}(V)$ as the inverse image of $V\otimes_{\Q_p}\Fil^\bullet {\rm
B}^{\rm cris}_{\rm log}\bigl(\widetilde{R}\bigr)$. Since Frobenius
on ${\rm B}^{\rm cris}_{\rm log}\bigl(\widetilde{R}\bigr)$ is
horizontal with respect to the connection and the filtration on
${\rm B}^{\rm cris}_{\rm log}\bigl(\widetilde{R}\bigr)$ satisfies
Griffiths' transversality, also $\Fil^\bullet {\rm D}^{\rm
cris}_{\rm log}(V)$ satisfies Griffiths' transversality.

Similarly, let
$${\rm D}^{\rm max}_{\rm log}(V):=\left(V\otimes_{\Z_p}
{\rm B}^{\rm max}_{\rm
log}\bigl(\widetilde{R}\bigr)\right)^{\cG_R}\otimes_{{\rm B}_{\rm
log}^{{\rm max},\cG_R}}^{\varphi^{s}} \widetilde{R}_{\rm
max}\bigl[p^{-1}\bigr].$$It is a $\widetilde{R}_{\rm
max}\bigl[p^{-1}\bigr]$--module endowed with  an integrable
connection $\nabla_{V,\WW(k)}$ and a Frobenius $\varphi$. It is
also endowed with an exhaustive decreasing filtration ${\rm
Fil}^n{\rm D}^{\rm max}_{\rm log}(V)$, for $n\in\Z$, given by the
inverse image of $V\otimes_{\Q_p}\Fil^\bullet {\rm B}^{\rm
max}_{\rm log}\bigl(\widetilde{R}\bigr)$ via the morphism $${\rm
D}^{\rm max}_{\rm log}(V) \lra V\otimes_{\Q_p} {\rm B}^{\rm
max}_{\rm log}\bigl(\widetilde{R}\bigr)$$induced by
$\varphi^{s}$ on ${\rm B}^{\rm max}_{\rm
log}\bigl(\widetilde{R}\bigr)$.
\smallskip

It follows from \ref{rmk:AmaxfrobaAcris} and from
\ref{lemma:structureAlog} and \ref{lemma:structAlog} that we have
ring homomorphisms $${\rm B}_{\rm log}^{\rm
cris}\bigl(\widetilde{R}\bigr)\lra {\rm B}^{\rm max}_{\rm
log}\bigl(\widetilde{R}\bigr),\quad \widetilde{R}_{\rm cris}\lra
\widetilde{R}_{\rm max}.$$In particular, we get a map
$$f_V\colon {\rm D}^{\rm cris}_{\rm log}(V)\lra
{\rm D}^{\rm max}_{\rm log}(V).$$It sends ${\rm Fil}^n {\rm
D}^{\rm cris}_{\rm log}(V)$ to ${\rm Fil}^n {\rm D}^{\rm max}_{\rm
log}(V)$ and it is compatible with Frobenius and connections.
Furthermore, as shown in \ref{rmk:AmaxfrobaAcris}, Frobenius on
${\rm B}^{\rm max}_{\rm log}\bigl(\widetilde{R}\bigr)$ factors via
${\rm B}^{\rm cris}_{\rm log}\bigl(\widetilde{R}\bigr)$ inducing a
ring homomorphism $\widetilde{R}_{\rm max}[p^{-1}\bigr]\lra
\widetilde{R}_{\rm cris}[p^{-1}\bigr] $. In particular, Frobenius
on ${\rm D}^{\rm cris}_{\rm log}(V)$ factors as a morphism
$$g_V\colon {\rm D}^{\rm max}_{\rm log}(V)\lra {\rm D}^{\rm cris}_{\rm log}(V).$$We then get the property that $g_V\circ f_V$
and $f_V\circ g_V$ define Frobenius on ${\rm D}^{\rm cris}_{\rm
log}(V)$ (resp.~on ${\rm D}^{\rm max}_{\rm log}(V)$).

\begin{proposition}\label{prop:equivsemistable} Let $V$ be a finite dimensional
$\Q_p$--vector space of dimension $n$ endowed with a
continuous action of $\cG_R$. The following are
equivalent:\smallskip

1) the map of ${\rm B}^{\rm cris}_{\rm
log}\bigl(\widetilde{R}\bigr)$--modules
$$ \left(V\otimes_{\Q_p}
{\rm B}^{\rm cris}_{\rm
log}\bigl(\widetilde{R}\bigr)\right)^{\cG_R} \otimes_{{\rm B}_{\rm
log}^{{\rm cris},\cG_R}} {\rm B}^{\rm cris}_{\rm
log}\bigl(\widetilde{R}\bigr) \lra V\otimes_{\Z_p} {\rm B}^{\rm
cris}_{\rm log}\bigl(\widetilde{R}\bigr)$$is an
isomorphism;\smallskip

2) the map of ${\rm B}^{\rm cris}_{\rm
log}\bigl(\widetilde{R}\bigr)$--modules
$$\alpha_{{\rm cris},V}\colon{\rm D}^{\rm cris}_{\rm log}(V)\otimes_{{\widetilde{R}_{\rm cris}}}
{\rm B}^{\rm cris}_{\rm log}\bigl(\widetilde{R}\bigr) \lra
V\otimes_{\Z_p} {\rm B}^{\rm cris}_{\rm
log}\bigl(\widetilde{R}\bigr)$$is an isomorphism;\smallskip

3) the map ${\rm B}^{\rm max}_{\rm
log}\bigl(\widetilde{R}\bigr)$--modules
$$\left(V\otimes_{\Z_p} {\rm B}^{\rm max}_{\rm log}
\bigl(\widetilde{R}\bigr)\right)^{\cG_R}\otimes_{{\rm B}_{\rm
log}^{{\rm max},\cG_R}} {\rm B}^{\rm max}_{\rm
log}\bigl(\widetilde{R}\bigr) \lra V\otimes_{\Z_p} {\rm B}^{\rm
max}_{\rm log}\bigl(\widetilde{R}\bigr)$$is an
isomorphism;\smallskip

4) the map ${\rm B}^{\rm max}_{\rm
log}\bigl(\widetilde{R}\bigr)$--modules
$$\alpha_{{\rm max},V}\colon
{\rm D}^{\rm max}_{\rm log}(V)\otimes_{\widetilde{R}_{\rm max}}
{\rm B}^{\rm max}_{\rm log}\bigl(\widetilde{R}\bigr) \lra
V\otimes_{\Z_p} {\rm B}^{\rm max}_{\rm
log}\bigl(\widetilde{R}\bigr)$$is an isomorphism.\smallskip

If one of these conditions holds then ${\rm D}^{\rm cris}_{\rm
log}(V)$ is a projective and finitely generated
$\widetilde{R}_{\rm cris}\bigl[p^{-1}\bigr]$-module of rank $n$
and the natural morphisms $${\rm D}^{\rm cris}_{\rm log}(V)
\otimes_{\widetilde{R}_{\rm cris}} {\widetilde{R}_{\rm max}}\lra
{\rm D}^{\rm max}_{\rm log}(V),$$induced by $f_V$, and $${\rm
D}^{\rm cris}_{\rm log}(V) \otimes_{\widetilde{R}_{\rm cris}} {\rm
B}_{\rm log}^{{\rm cris},\cG_R} \lra \left(V\otimes_{\Q_p} {\rm
B}^{\rm cris}_{\rm
log}\bigl(\widetilde{R}\bigr)\right)^{\cG_R}$$and
$${\rm D}^{\rm max}_{\rm log}(V) \otimes_{\widetilde{R}_{\rm max}}
{\rm B}_{\rm log}^{{\rm max},\cG_R} \lra \left(V\otimes_{\Q_p}
{\rm B}^{\rm max}_{\rm
log}\bigl(\widetilde{R}\bigr)\right)^{\cG_R}$$ are all
isomorphisms compatible with Frobenii and connections. Similarly
the morphism
$${\rm D}^{\rm max}_{\rm log}(V) \otimes_{\widetilde{R}_{\rm max}}
{\widetilde{R}_{\rm cris}}\lra {\rm D}^{\rm cris}_{\rm
log}(V),$$induced by $g_V$, is an isomorphism.
\end{proposition}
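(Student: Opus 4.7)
\medskip

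The plan is to first establish the four equivalences and then deduce the structural assertions in the final paragraph of the proposition.

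For the equivalences, I would first treat $(1) \Leftrightarrow (2)$ and $(3) \Leftrightarrow (4)$ as essentially formal. Unravelling the definitions of $\bD^{\rm cris}_{\rm log}(V)$ and $\bD^{\rm max}_{\rm log}(V)$ from \S\ref{sec:Dcrislogmaxlog}, the map in $(2)$ (resp.~$(4)$) differs from the map in $(1)$ (resp.~$(3)$) only by the base change along $\varphi^s\colon {\rm B}^{\rm cris,\cG_R}_{\rm log}\to \widetilde{R}_{\rm cris}[p^{-1}]$ (resp.~${\rm B}^{\rm max,\cG_R}_{\rm log}\to \widetilde{R}_{\rm max}[p^{-1}]$) provided by \ref{prop:arithemticinvariantsB}, followed by base change back to ${\rm B}^{\rm cris}_{\rm log}$ (resp.~${\rm B}^{\rm max}_{\rm log}$). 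Since $\varphi$ is an isomorphism of ${\rm B}^{\rm cris}_{\rm log}$ and of ${\rm B}^{\rm max}_{\rm log}$ compatibly with the inclusion $\widetilde{R}_{\rm cris}[p^{-1}] \subset {\rm B}^{\rm cris,\cG_R}_{\rm log}$ (resp.~$\widetilde{R}_{\rm max}[p^{-1}] \subset {\rm B}^{\rm max,\cG_R}_{\rm log}$), the maps in $(1)$ and $(2)$ (resp.~$(3)$ and $(4)$) differ by an isomorphism, yielding the equivalence.

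Next I would prove $(2)\Leftrightarrow (4)$, which amounts to showing that $V$ is ${\rm B}^{\rm cris}_{\rm log}$-admissible if and only if it is ${\rm B}^{\rm max}_{\rm log}$-admissible. The ring map $f\colon {\rm B}^{\rm cris}_{\rm log}(\widetilde{R}) \to {\rm B}^{\rm max}_{\rm log}(\widetilde{R})$ of \ref{rmk:AmaxfrobaAcris} together with the Frobenius factorization $g\colon {\rm B}^{\rm max}_{\rm log}(\widetilde{R}) \to {\rm B}^{\rm cris}_{\rm log}(\widetilde{R})$ from loc.~cit.~satisfy $g\circ f=\varphi$ and $f\circ g=\varphi$. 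Tensoring $\alpha_{{\rm cris},V}$ by $f$ produces $\alpha_{{\rm max},V}$ post-composed with $1\otimes \varphi$, which is an automorphism; hence $(2)\Rightarrow (4)$, and symmetrically $(4)\Rightarrow (2)$ using $g$.

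For the structural assertions, assume $(4)$ holds. Then $V\otimes_{\Q_p}{\rm B}^{\rm max}_{\rm log}(\widetilde{R})$ is a free ${\rm B}^{\rm max}_{\rm log}(\widetilde{R})$-module of rank $n$, so $\bD^{\rm max}_{\rm log}(V)\otimes_{\widetilde{R}_{\rm max}[p^{-1}]}{\rm B}^{\rm max}_{\rm log}(\widetilde{R})$ is finite and projective of rank $n$. Theorem~\ref{thm:Blogmaxff}(1) then yields that $\bD^{\rm max}_{\rm log}(V)$ is finite and projective of rank $n$ over $\widetilde{R}_{\rm max}[p^{-1}]$. The analogous statement for $\bD^{\rm cris}_{\rm log}(V)$ over $\widetilde{R}_{\rm cris}[p^{-1}]$ is the most delicate point: one cannot apply \ref{thm:Blogmaxff} directly since that statement concerns the inclusion $\widetilde{R}_{\rm max}[p^{-1}]\subset {\rm B}^{\rm max}_{\rm log}$. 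I would argue it by producing the map $\bD^{\rm cris}_{\rm log}(V)\otimes_{\widetilde{R}_{\rm cris}} \widetilde{R}_{\rm max}\to \bD^{\rm max}_{\rm log}(V)$ induced by $f_V$, showing it is an isomorphism by a base change along $\varphi^s$ applied to the comparison isomorphisms over ${\rm B}^{\rm max}_{\rm log}$, and then using that $g_V\circ f_V=\varphi^s$ is an automorphism of $\bD^{\rm cris}_{\rm log}(V)$ to split $\bD^{\rm cris}_{\rm log}(V)$ as a direct summand of the projective $\widetilde{R}_{\rm cris}[p^{-1}]$-module obtained by restricting scalars from $\widetilde{R}_{\rm max}[p^{-1}]$-module $\bD^{\rm max}_{\rm log}(V)$ via the Frobenius factorization $\widetilde{R}_{\rm max}[p^{-1}]\to \widetilde{R}_{\rm cris}[p^{-1}]$.

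The remaining isomorphisms concerning ${\rm D}^{\rm cris}_{\rm log}(V)\otimes {\rm B}_{\rm log}^{{\rm cris},\cG_R}\to (V\otimes{\rm B}^{\rm cris}_{\rm log})^{\cG_R}$ and its max-analogue then follow by taking $\cG_R$-invariants of $\alpha_{{\rm cris},V}$ (resp.~$\alpha_{{\rm max},V}$) and applying Theorem~\ref{thm:geometricacyclicity} together with \ref{lemma:arithinvawgt0}, which identify the invariants of ${\rm B}^{\rm cris}_{\rm log}(\widetilde{R})$ and ${\rm B}^{\rm max}_{\rm log}(\widetilde{R})$ with $\widetilde{R}^{\rm geo, cris}_{\log}[t^{-1}]$ and $\widetilde{R}^{\rm geo, max}_{\log}[t^{-1}]$. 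The main obstacle, as indicated above, is the descent argument from $\widetilde{R}_{\rm max}[p^{-1}]$ to $\widetilde{R}_{\rm cris}[p^{-1}]$, for which I expect the key leverage to be the idempotence provided by the Frobenius decomposition $f\circ g=\varphi$ and $g\circ f=\varphi$, rather than any direct faithful flatness statement between these two rings.
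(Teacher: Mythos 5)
Your proposal has a genuine gap: it repeatedly treats Frobenius as invertible, and it is not. Frobenius is injective but not surjective on ${\rm B}^{\rm cris}_{\rm log}\bigl(\widetilde{R}\bigr)$ and ${\rm B}^{\rm max}_{\rm log}\bigl(\widetilde{R}\bigr)$ (already $Z\mapsto Z^p$ on $\widetilde{R}_{\rm cris}$ is not surjective), so the claims ``$\varphi$ is an isomorphism of ${\rm B}^{\rm cris}_{\rm log}$ and ${\rm B}^{\rm max}_{\rm log}$'' and ``$1\otimes\varphi$ is an automorphism'' are false. Only the directions $(1)\Rightarrow(2)$ and $(3)\Rightarrow(4)$ are formal, since $V\otimes_{\Q_p}{\rm B}\otimes^{\varphi^s}_{{\rm B}}{\rm B}\cong V\otimes_{\Q_p}{\rm B}$; your converses $(2)\Rightarrow(1)$, $(4)\Rightarrow(3)$ and the bridge $(2)\Leftrightarrow(4)$ all rest on this false invertibility, so nothing in the proposal gets you back from the twisted conditions $(2),(4)$ to the untwisted ones $(1),(3)$. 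The same problem undermines the descent to $\widetilde{R}_{\rm cris}$: ``$g_V\circ f_V=\varphi^s$ is an automorphism of ${\rm D}^{\rm cris}_{\rm log}(V)$'' is unjustified, since $\varphi$ is a semilinear endomorphism over a ring with non-surjective Frobenius, and even its \'etaleness (linearization an isomorphism) is a consequence proved later, in \ref{prop:propsemistable}(1), not an available input. Moreover, relating $\alpha_{{\rm cris},V}$ base-changed along $f$ to $\alpha_{{\rm max},V}$ presupposes the comparison ${\rm D}^{\rm cris}_{\rm log}(V)\otimes_{\widetilde{R}_{\rm cris}}\widetilde{R}_{\rm max}\cong {\rm D}^{\rm max}_{\rm log}(V)$, which is itself one of the assertions to be proved.

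The correct mechanism, which is what the paper uses and which is absent from your proposal, is a cycle $(1)\Rightarrow(2)\Rightarrow(3)\Rightarrow(4)\Rightarrow(1)$ in which each non-formal step combines \ref{thm:Blogmaxff} (you did correctly identify this for the projectivity of ${\rm D}^{\rm max}_{\rm log}(V)$) with taking $\cG_R$-invariants of a projective module. Concretely, from $(4)$ one gets that ${\rm D}^{\rm max}_{\rm log}(V)$ is projective over $\widetilde{R}_{\rm max}\bigl[p^{-1}\bigr]$; projectivity lets invariants commute with the base change, so the $\cG_R$-invariants of ${\rm D}^{\rm max}_{\rm log}(V)\otimes^{\varphi}_{\widetilde{R}_{\rm max}[p^{-1}]}{\rm B}^{\rm cris}_{\rm log}$ are ${\rm D}^{\rm max}_{\rm log}(V)\otimes^{\varphi}{\rm B}_{\rm log}^{{\rm cris},\cG_R}$, and their base change to ${\rm B}^{\rm cris}_{\rm log}$ is $V\otimes_{\Q_p}{\rm B}^{\rm cris}_{\rm log}$ by $(4)$, using the factorization of Frobenius through ${\rm B}^{\rm cris}_{\rm log}$. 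This proves $(1)$ and simultaneously identifies $\bigl(V\otimes {\rm B}^{\rm cris}_{\rm log}\bigr)^{\cG_R}$ with ${\rm D}^{\rm max}_{\rm log}(V)\otimes^{\varphi}{\rm B}_{\rm log}^{{\rm cris},\cG_R}$, whence ${\rm D}^{\rm max}_{\rm log}(V)\otimes_{\widetilde{R}_{\rm max}}\widetilde{R}_{\rm cris}\cong {\rm D}^{\rm cris}_{\rm log}(V)$ (the map induced by $g_V$), and the projectivity of ${\rm D}^{\rm cris}_{\rm log}(V)$ follows by base change rather than by any direct-summand trick; the step $(2)\Rightarrow(3)$ similarly applies \ref{thm:Blogmaxff} to ${\rm D}^{\rm cris}_{\rm log}(V)\otimes_{\widetilde{R}_{\rm cris}}{\rm B}^{\rm max}_{\rm log}\cong V\otimes{\rm B}^{\rm max}_{\rm log}$. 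Your invocation of \ref{thm:geometricacyclicity} and \ref{lemma:arithinvawgt0} for the invariants is the right ingredient, but as written the proposal does not close the logical loop and does not prove the proposition.
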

\begin{proof}
We write  ${\rm B}_{\rm cris}$ for ${\rm B}_{\rm log}^{{\rm
cris}}\bigl(\widetilde{R}\bigr)$ and  ${\rm B}_{\rm max}$ for
${\rm B}_{\rm log}^{{\rm max}}\bigl(\widetilde{R}\bigr)$. We also
let ${\rm D}_{\rm cris}$ be ${\rm D}^{\rm cris}_{\rm log}(V)$ and
${\rm D}_{\rm max}$ be ${\rm D}^{\rm max}_{\rm log}(V)$.
Eventually, we write ${\rm E}_{\rm cris}$ for
$\left(V\otimes_{\Q_p} {\rm B}^{\rm cris}_{\rm
log}\bigl(\widetilde{R}\bigr)\right)^{\cG_R}$ and ${\rm E}_{\rm
max}$ for $\left(V\otimes_{\Q_p} {\rm B}^{\rm max}_{\rm
log}\bigl(\widetilde{R}\bigr)\right)^{\cG_R}$.

(1) $\Longrightarrow$ (2).  We have
$${\rm D}_{\rm cris}\otimes_{\widetilde{R}_{\rm cris}} {\rm B}_{\rm
cris}\cong {\rm E}_{\rm cris}\otimes_{{\rm B}_{\rm
cris}^{\cG_R}}^{\varphi^{s}} \widetilde{R}_{\rm cris}
\otimes_{\widetilde{R}_{\rm cris}} {\rm B}_{\rm cris}\cong {\rm
E}_{\rm cris}\otimes_{{\rm B}_{\rm cris}^{\cG_R}}^{\varphi^{s}}
{\rm B}_{\rm cris} \cong {\rm E}_{\rm cris}\otimes_{{\rm B}_{\rm
cris}^{\cG_R}} {\rm B}_{\rm cris} \otimes^{\varphi^{s}}_{{\rm
B}_{\rm cris}} {\rm B}_{\rm cris}.$$Since (1) holds the latter is
isomorphic to $V\otimes_{\Q_p} {\rm B}_{\rm cris}
\otimes^{\varphi^{s}}_{{\rm B}_{\rm cris}} {\rm B}_{\rm
cris}\cong V\otimes_{\Q_p} {\rm B}_{\rm cris}$. This implies (2).

One proves similarly that (3) $\Longrightarrow$ (4).
\smallskip

(4) $\Longrightarrow$ (1). As ${\rm D}_{\rm max}:={\rm E}_{\rm max} \otimes_{{\rm B}_{\rm log}^{{\rm max},\cG_R}}^{\varphi^s} \widetilde{R}_{\rm max}$, we conclude from \ref{thm:Blogmaxff} and (4) that ${\rm D}_{\rm max}$ is a projective
$\widetilde{R}_{\rm max}[p^{-1}]$-module of rank $n$ i.e., it is a direct summand in a free $\widetilde{R}_{\rm max}[p^{-1}]$-module. In particular, the
$\cG_R$-invariants of ${\rm D}_{\rm max}\otimes_{\widetilde{R}_{\rm max}[p^{-1}]}^{\varphi} {\rm B}_{\rm cris}$ are ${\rm D}_{\rm max}\otimes_{\widetilde{R}_{\rm
max}[p^{-1}]}^{\varphi} {\rm B}_{\rm cris}^{\cG_R}$ and its base change via ${\rm B}_{\rm cris}^{\cG_R}\to {\rm B}_{\rm cris}$ is $ {\rm D}_{\rm
max}\otimes_{\widetilde{R}_{\rm max}[p^{-1}]} {\rm B}_{\rm max} \otimes_{{\rm B}_{\rm max}}^{\varphi} {\rm B}_{\rm cris}$ which is $V\otimes_{\Q_p} {\rm B}_{\rm
cris}$ by (4). This proves (1).\smallskip

We have also proved that if (4) holds then ${\rm D}_{\rm max}$ is a projective $\widetilde{R}_{\rm max}[p^{-1}]$-module of rank $n$ and ${\rm D}_{\rm
max}\otimes_{\widetilde{R}_{\rm max}[p^{-1}]}^{\varphi} {\rm B}_{\rm cris}^{\cG_R} \cong {\rm E}_{\rm cris}$. This implies that the map ${\rm D}_{\rm max}(V)
\otimes_{\widetilde{R}_{\rm max}} {\widetilde{R}_{\rm cris}}\lra {\rm D}_{\rm cris}(V)$, induced by $g_V$, is an isomorphism. Using the projectivity one proves
similarly that ${\rm D}_{\rm max}\otimes_{\widetilde{R}_{\rm max}} {\rm B}_{\rm max}^{\cG_R}\cong {\rm E}_{\rm max}$ and ${\rm D}_{\rm
cris}\otimes_{\widetilde{R}_{\rm cris}} {\rm B}_{\rm cris}^{\cG_R}\cong {\rm E}_{\rm cris}$ compatibly with Frobenius, filtrations and connections so that the last
statements of the proposition hold.\smallskip

(2) $\Longrightarrow$ (3).  Since by (2) we have that ${\rm D}_{\rm cris} \otimes_{\widetilde{R}_{\rm cris}} {\rm B}_{\rm max}$ is isomorphic to $V\otimes_{\Q_p}
{\rm B}_{\rm max}$-module,  it follows from \ref{thm:Blogmaxff} that ${\rm D}_{\rm cris} \otimes_{\widetilde{R}_{\rm cris}} \widetilde{R}_{\rm max}$ is a
projective $\widetilde{R}_{\rm max}[p^{-1}]$-module of rank $n$ and one argues that ${\rm D}_{\rm cris} \otimes_{\widetilde{R}_{\rm cris}} {\rm B}_{\rm
max}^{\cG_R}$ is ${\rm E}_{\rm max}$ and (3) holds.

\end{proof}

If one of the conditions of the proposition holds, we say that $V$
is a {\em semistable} representation of $\cG_R$. For any such the restriction of the
filtration on ${\rm B}_{\rm log}^{\rm cris}$ (resp.~${\rm B}_{\rm
log}^{\rm max}$) via the inclusion ${\rm D}^{\rm cris}_{\rm
log}(V)\subset V\otimes_{\Q_p} {\rm B}_{\rm log}^{\rm cris}$
(resp.~${\rm D}^{\rm max}_{\rm log}(V)\subset V\otimes_{\Q_p} {\rm
B}_{\rm log}^{\rm max}$) define an exhaustive decreasing
filtration ${\rm Fil}^n{\rm D}^{\rm cris}_{\rm log}(V)$, for
$n\in\Z$ (resp.~${\rm Fil}^n{\rm D}^{\rm max}_{\rm log}(V)$).

\begin{proposition}\label{prop:propsemistable} Assume that $V$ is a semistable representation. Then,

\smallskip
(1) Frobenius is horizontal with respect to the connections and it
is \'etale on ${\rm D}^{\rm max}_{\rm log}(V)$ and on ${\rm
D}^{\rm cris}_{\rm log}(V)$ i.e., the maps
$$\varphi\otimes 1\colon {\rm D}^{\rm max}_{\rm log}(V)\otimes_{\widetilde{R}_{\rm max}}^\varphi
\widetilde{R}_{\rm max}\lra {\rm D}^{\rm max}_{\rm log}(V),\qquad
\varphi\otimes 1\colon {\rm D}^{\rm max}_{\rm
log}(V)\otimes_{\widetilde{R}_{\rm cris}}^\varphi
\widetilde{R}_{\rm max}\lra {\rm D}^{\rm cris}_{\rm log}(V)$$are
isomorphisms.\smallskip

(2) The connection is integrable and  topologically nilpotent on
${\rm D}^{\rm cris}_{\rm log}(V)$ and it is integrable and
convergent  on  ${\rm D}^{\rm max}_{\rm log}(V)$.\smallskip

(3) The representation $V$ is de Rham and the natural morphisms $$
{\rm D}^{\rm cris}_{\rm log}(V)\otimes_{\widetilde{R}_{\rm cris}}
\widehat{\widetilde{R}[p^{-1}]}\cong {\rm D}^{\rm max}_{\rm
log}(V)\otimes_{\widetilde{R}_{\rm max}}
\widehat{\widetilde{R}[p^{-1}]}\cong \widetilde{{\rm D}}_{\rm
dR}(V)$$are isomorphisms as
$\widehat{\widetilde{R}[p^{-1}]}$-modules  with
connections. \smallskip

(4) The filtrations ${\rm Fil}^\bullet {\rm D}^{\rm cris}_{\rm
log}(V)$ and ${\rm Fil}^\bullet {\rm D}^{\rm max}_{\rm log}(V)$
satisfy Griffiths' transversality with respect to the given
connection. The morphisms ${\rm D}^{\rm cris}_{\rm log}(V) \lra
{\rm D}^{\rm max}_{\rm log}(V) \lra \widetilde{{\rm D}}_{\rm
dR}(V) $ are strict with respect to the filtrations and for every $r\in\N$ we have isomorphisms

$${\rm Gr}^r {\rm D}^{\rm cris}_{\rm log}(V) \cong {\rm Gr}^r {\rm D}^{\rm max}_{\rm log}(V) \cong {\rm Gr}^r\widetilde{{\rm D}}_{\rm
dR}(V).$$In particular, via the natural maps  $${\rm D}^{\rm cris}_{\rm log}(V) \longrightarrow {\rm D}^{\rm max}_{\rm log}(V)\longrightarrow \widetilde{{\rm
D}}_{\rm dR}(V) \longrightarrow \widetilde{{\rm D}}_{\rm dR}(V)/(Z-\pi)\cong {\rm D}_{\rm dR}(V)$$the filtration on  ${\rm D}_{\rm dR}(V)$ is the $R[p^{-1}]$-span
of the image of the filtration on ${\rm D}^{\rm cris}_{\rm log}(V)$ or on ${\rm D}^{\rm max}_{\rm log}(V)$. Moreover $\Fil^n {\rm D}^{\rm cris}_{\rm log}(V)$  and $\Fil^n
{\rm D}^{\rm max}_{\rm log}(V)$ are uniquely characterized, as filtrations, by the fact that their images span  $\Fil^n {\rm D}_{\rm dR}(V)$ and they satisfy Griffiths'
transversality.

\end{proposition}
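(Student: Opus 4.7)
The plan is to deduce each structural property of $\mathrm{D}^{\mathrm{cris}}_{\mathrm{log}}(V)$ and $\mathrm{D}^{\mathrm{max}}_{\mathrm{log}}(V)$ from the corresponding property of $\mathrm{B}^{\mathrm{cris}}_{\mathrm{log}}(\widetilde{R})$ and $\mathrm{B}^{\mathrm{max}}_{\mathrm{log}}(\widetilde{R})$ by exploiting the semistability isomorphisms $\alpha_{\bullet,V}$ of \ref{prop:equivsemistable}, together with their refinement $\mathrm{D}^{\bullet}_{\mathrm{log}}(V)\otimes_{\widetilde{R}_\bullet} \mathrm{B}_{\mathrm{log}}^{\bullet,\cG_R} \cong (V\otimes \mathrm{B}^\bullet_{\mathrm{log}}(\widetilde{R}))^{\cG_R}$ stated in the concluding paragraph of \ref{prop:equivsemistable}. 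Throughout, one must keep track of the fact that $\mathrm{D}^\bullet_{\mathrm{log}}(V)$ is obtained by first taking $\cG_R$-invariants and then base-changing along $\varphi^s\colon \mathrm{B}^{\bullet,\cG_R}_{\mathrm{log}}\to \widetilde{R}_\bullet[p^{-1}]$.

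For claim (1), the horizontality of $\varphi$ on $\mathrm{B}^\bullet_{\mathrm{log}}(\widetilde{R})$ is \ref{cor:Frobeniusishorizontal} and descends to $\mathrm{D}^\bullet_{\mathrm{log}}(V)$ because $\cG_R$-invariants commute with $\varphi$ and $\nabla$, and the base change by $\varphi^s$ preserves it. The \'etaleness of $\varphi$ can be verified after the extension $\widetilde{R}_\bullet[p^{-1}]\to \mathrm{B}^\bullet_{\mathrm{log}}(\widetilde{R})$, which is close to faithfully flat by \ref{thm:Blogmaxff}, where $\alpha_{\bullet,V}$ reduces the question to the fact that $V$ carries the trivial Frobenius and that $\varphi$ is an automorphism of $\mathrm{B}^{\mathrm{cris}}_{\mathrm{log}}(\widetilde{R})$ via the factorisation of \ref{rmk:AmaxfrobaAcris}. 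For (2), the integrability and topological nilpotence (resp.~convergence) of $\nabla_{V,\WW(k)}$ is inherited from \ref{cor:nabla=0}(2) applied to $\mathrm{A}^{\mathrm{cris}}_{\mathrm{log}}(\widetilde{R})$ (resp.~$\mathrm{A}^{\mathrm{max}}_{\mathrm{log}}(\widetilde{R})$), since the connection on $V\otimes \mathrm{B}^\bullet_{\mathrm{log}}(\widetilde{R})$ acts only through the second factor and nilpotency/convergence are preserved by restriction to invariants and by base change along $\varphi^s$.

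For claim (3), tensoring $\alpha_{\mathrm{cris},V}$ with $\mathrm{B}_{\mathrm{dR}}(\widetilde{R})$ and using the strict inclusion $\mathrm{B}^{\mathrm{cris}}_{\mathrm{log}}(\widetilde{R})\hookrightarrow \mathrm{B}_{\mathrm{dR}}(\widetilde{R})$ of \ref{prop:BcrissubsetBdR}(4) provides an isomorphism of $\mathrm{B}_{\mathrm{dR}}(\widetilde{R})$-modules which, by \ref{lemma:eqdeRham}(2), shows that $V$ is de Rham. Taking $\cG_R$-invariants, and using the identification $\mathrm{B}_{\mathrm{dR}}^{\cG_R}(\widetilde{R})=\widehat{\widetilde{R}[p^{-1}]}$ from \ref{prop:BdRff}, yields the claimed isomorphism between $\widetilde{\mathrm{D}}_{\mathrm{dR}}(V)$ and the base change of $\mathrm{D}^\bullet_{\mathrm{log}}(V)$ along $\widetilde{R}_\bullet\to \widehat{\widetilde{R}[p^{-1}]}$; the two base changes agree because both $\widetilde{R}_{\mathrm{cris}}$ and $\widetilde{R}_{\mathrm{max}}$ map to $\widehat{\widetilde{R}[p^{-1}]}$ once $P_\pi(Z)$ is killed and $p$ is inverted.

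Claim (4) is the most delicate point and I expect it to be the main obstacle. Griffiths' transversality follows directly from the inclusion $\mathrm{D}^\bullet_{\mathrm{log}}(V)\hookrightarrow V\otimes \mathrm{B}^\bullet_{\mathrm{log}}(\widetilde{R})$ together with Griffiths' transversality on $\mathrm{B}^\bullet_{\mathrm{log}}(\widetilde{R})$ from \ref{cor:nabla=0}(1), so the real work is the strictness of the maps $\mathrm{D}^{\mathrm{cris}}_{\mathrm{log}}(V)\to \mathrm{D}^{\mathrm{max}}_{\mathrm{log}}(V)\to \widetilde{\mathrm{D}}_{\mathrm{dR}}(V)$ and the induced isomorphisms on $\mathrm{Gr}^\bullet$. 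These should be reduced, via $\alpha_{\bullet,V}$ and \ref{prop:equivsemistable}, to the strictness statement and the isomorphism $\mathrm{Gr}^\bullet \mathrm{B}^{\mathrm{cris}}_{\mathrm{log}}(\widetilde{R})\cong \mathrm{Gr}^\bullet \mathrm{B}^{\mathrm{max}}_{\mathrm{log}}(\widetilde{R})\cong \mathrm{Gr}^\bullet \mathrm{B}_{\mathrm{dR}}(\widetilde{R})$ of \ref{prop:BcrissubsetBdR}(4), compatibly with the operation of taking $\cG_R$-invariants. For the unique characterisation of $\Fil^\bullet \mathrm{D}^\bullet_{\mathrm{log}}(V)$ in terms of its image in $\mathrm{D}_{\mathrm{dR}}(V)$ and Griffiths' transversality, the argument will mimic the inductive description in \ref{lemma:eqdeRham}(1): one lifts elements from $\mathrm{D}_{\mathrm{dR}}(V)$ to $\widetilde{\mathrm{D}}_{\mathrm{dR}}(V)$ and then to $\mathrm{D}^\bullet_{\mathrm{log}}(V)$ by a descending induction controlled by the monodromy derivation $\partial/\partial (Z-\pi)$, using the convergence of $\nabla$ to guarantee that the induction converges in the $(Z-\pi)$-adic topology.
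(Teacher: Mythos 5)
The decisive gap is in your treatment of (2). Topological nilpotence of the connection is a statement about the existence of an \emph{integral} model: a coherent $\widetilde{R}_{\rm cris}$- (resp.\ $\widetilde{R}_{\rm max}$-) submodule $D_0$ with $D_0[p^{-1}]={\rm D}^{\bullet}_{\rm log}(V)$, stable under $\nabla$ and with $N_i^p(D_0)\subset pD_0$. Such a lattice does not come for free by ``restriction to invariants and base change along $\varphi^s$'': the invariants $\bigl(V\otimes{\rm B}^{\rm max}_{\rm log}(\widetilde{R})\bigr)^{\cG_R}$ live over ${\rm B}^{{\rm max},\cG_R}_{\rm log}$, which already contains $t^{-1}$, so the quasi-nilpotence of $\nabla$ on ${\rm A}^{\rm max}_{\rm log}(\widetilde{R})$ from \ref{cor:nabla=0}(2) says nothing about ${\rm D}^{\rm max}_{\rm log}(V)$ until one exhibits a finite $\nabla$-stable lattice inside it. The paper's proof of (2) is exactly this construction: embed ${\rm D}^{\rm max}_{\rm log}(V)$ into a finite free $\widetilde{R}_{\rm max}[p^{-1}]$-module $T=T_0[p^{-1}]$, choose $n$ with the image of $V$ contained in $T_0\otimes t^{-n}{\rm A}^{\rm max}_{\rm log}(\widetilde{R})$, use the arithmetic-invariants computation of \S\ref{section:arithminvariants} (the content behind \ref{prop:arithemticinvariantsB}) to see that $\varphi^{s}$ carries $\bigl(t^{-n}{\rm A}^{\rm max}_{\rm log}(\widetilde{R})\bigr)^{\cG_R}$ into $p^{-n}\widetilde{R}_{\rm max}$, deduce that the resulting $\widetilde{R}_{\rm max}$-span $D_0$ is finite (noetherianity of $\widetilde{R}_{\rm max}$), stable under $\nabla$ with $N_i^p(D_0)\subset pD_0$ because this holds in ${\rm A}^{\rm max}_{\rm log}(\widetilde{R})$, and finally invoke \ref{thm:Blogmaxff} to get $D_0[p^{-1}]={\rm D}^{\rm max}_{\rm log}(V)$; the cris case follows by \'etaleness. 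This lattice is precisely what is needed later (\S\ref{sec:relcrystals}) to attach a crystal to $V$, so asserting that nilpotence is ``preserved'' assumes the conclusion.

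There is also a flaw in your argument for (1): Frobenius is \emph{not} an automorphism of ${\rm B}^{\rm cris}_{\rm log}(\widetilde{R})$, and \ref{rmk:AmaxfrobaAcris} only provides the factorization of $\varphi$ on the max ring through the cris ring, not bijectivity. The correct point, and the one the paper uses, is that the $\varphi$-linearization of $1\otimes\varphi$ on $V\otimes{\rm B}^{\rm max}_{\rm log}(\widetilde{R})$ is an isomorphism because $V$ carries the trivial Frobenius (which you do mention); one then observes that the linearized Frobenius on the projective ${\rm B}^{{\rm max},\cG_R}_{\rm log}$-module $\bigl(V\otimes{\rm B}^{\rm max}_{\rm log}(\widetilde{R})\bigr)^{\cG_R}$ becomes this isomorphism after base change to ${\rm B}^{\rm max}_{\rm log}(\widetilde{R})$, hence is an isomorphism by \ref{thm:Blogmaxff}, and the cris statement is deduced from the max one via \ref{prop:equivsemistable} (note \ref{thm:Blogmaxff} concerns $\widetilde{R}_{\rm max}$ only, so one cannot ``verify after base change'' directly on the cris side). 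Your treatment of (3) and of the Griffiths/strictness part of (4) follows essentially the paper's route (via \ref{prop:BdRff}, \ref{prop:BcrissubsetBdR} and \ref{lemma:eqdeRham}) and is acceptable, modulo the routine use of projectivity when commuting $\cG_R$-invariants with base change.
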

\begin{proof} (1) The horizontality of Frobenius follows from
\ref{cor:conenctioninducestandardconnection}. The assertions regarding the \'etalness of ${\rm D}^{\rm cris}_{\rm log}(V)$ follows from the one about ${\rm D}^{\rm
max}_{\rm log}(V)$ and \ref{prop:equivsemistable}. We use the notation of the proof of loc.~cit. We know that ${\rm E}_{\rm max}\otimes_{{\rm B}_{\rm
max}^{\cG_R}}^\varphi {\rm B}_{\rm max}^{\cG_R}$ is a projective ${\rm B}_{\rm max}^{\cG_R}$ module and its base change via ${\rm B}_{\rm max}^{\cG_R} \to {\rm
B}_{\rm max}$ is $V\otimes_{\Q_p} {\rm B}_{\rm max}$. In particular, Frobenius defines an isomorphism ${\rm E}_{\rm max}\otimes_{{\rm B}_{\rm max}^{\cG_R}}^\varphi
{\rm B}_{\rm max}^{\cG_R}\cong {\rm E}_{\rm max}$ thanks to \ref{thm:Blogmaxff}. Taking the base change via ${\rm B}_{\rm max}^{\cG_R}\otimes^{\varphi^{s}} \widetilde{R}_{\rm
max}\bigl[p^{-1}\bigr]$ we deduce the claimed \'etalness for ${\rm D}^{\rm max}_{\rm log}(V)$.
\smallskip

(2) Let $N_i$ be the derivation $\widetilde{R}$ defined by
$\widetilde{X}_i \partial / \partial \widetilde{X}_i $ for $1\leq
i \leq a$ and by  $\widetilde{Y}_j \partial / \partial
\widetilde{Y}_j $ for $a+1\leq i \leq a+b$ with $j=i-a$. Since
${\rm D}^{\rm cris}_{\rm log}(V)$ is \'etale, it suffices to show
that it is generated as $\widetilde{R}_{\rm
cris}\bigl[p^{-1}\bigr]$ by a finite $\widetilde{R}_{\rm
cris}$-module $E$ stable under the connection and such that
$N_i^p(E)\subset p E$ for every $1\leq i\leq a+b$. It suffices to
show that $D:={\rm D}^{\rm max}_{\rm log}(V)$ is generated as
$\widetilde{R}_{\rm max}\bigl[p^{-1}\bigr]$ by a finite
$\widetilde{R}_{\rm max}$-module $D_0$ stable under the connection
and such that $N_i^p(D_0)\subset p D_0$ for every $1\leq i\leq
a+b$. Indeed, in this case $E:=D_0\otimes_{\widetilde{R}_{\rm
max}} \widetilde{R}_{\rm cris} \to {\rm D}^{\rm cris}_{\rm
log}(V)$ is a finite $\widetilde{R}_{\rm cris}$-module with the
required properties.

We may assume that $V$ is in fact a $\Z_p$-representation. Since ${\rm D}^{\rm max}_{\rm log}(V) $ is a projective and finitely generated $\widetilde{R}_{\rm max}
\bigl[p^{-1}\bigr]$-module, it is a direct summand in a finite and free $\widetilde{R}_{\rm max} \bigl[p^{-1}\bigr]$-module $T$. Let $T_0$ be a free
$\widetilde{R}_{\rm max} $-submodule of $T$ such that $T_0\bigl[p^{-1}\bigr]=T$. Let $n\in\N$ be large enough so that the image of $V$ in $${\rm D}^{\rm max}_{\rm
log}(V)\otimes_{\widetilde{R}_{\rm max}} {\rm B}^{\rm max}_{\rm log}\bigl(\widetilde{R}\bigr) \subset T \otimes_{\widetilde{R}_{\rm max}} {\rm B}^{\rm max}_{\rm
log}\bigl(\widetilde{R}\bigr) $$is contained in  $T_0
 \otimes_{\widetilde{R}_{\rm max}} \frac{1}{t^n}
{\rm A}^{\rm max}_{\rm log}\bigl(\widetilde{R}\bigr)$. Then,
$$D_0':=\left(V\otimes_{\Z_p}  \frac{1}{t^n}{\rm A}^{\rm max}_{\rm log}\bigl(\widetilde{R}\bigr)\right)^{\cG_R}\subset T_0
 \otimes_{\widetilde{R}_{\rm max}} \left( \frac{1}{t^n}
{\rm A}^{\rm max}_{\rm
log}\bigl(\widetilde{R}\bigr)\right)^{\cG_R}.$$It follows from
\S\ref{section:arithminvariants} that $\varphi^{s} \colon
\left(\frac{1}{t^n}{\rm A}^{\rm max}_{\rm
log}\bigl(\widetilde{R}\bigr)\right)^{\cG_R} \lra
\widetilde{B}^{\rm max}_{\rm log}$ factors via
$\frac{1}{p^n}\widetilde{R}_{\rm max}$. Write $D_0$ for the $\widetilde{R}_{\rm max}$-span of the image
in $T_0 \otimes_{\widetilde{R}_{\rm max}} \left( \frac{1}{p^n}
\widetilde{R}_{\rm max}\right)$ of the base change of $D_0'$ via
$\varphi^{s}$. It is stable under the connection and
$N_i^p(D_0)\subset p D_0$ for every $1\leq i\leq a+b$ since this
holds for ${\rm A}^{\rm max}_{\rm log}\bigl(\widetilde{R}\bigr)$.
Since $\widetilde{R}_{\rm max}$ is a noetherian ring and $D_0$ is
contained in $T_0
 \otimes_{\widetilde{R}_{\rm max}} \left( \frac{1}{p^n}
\widetilde{R}_{\rm max}\right)$, then $D_0$ is a finite
$\widetilde{R}_{\rm max}$-module. Consider
$D_0\bigl[p^{-1}\bigr]$. It is contained in $D$ and after base
changing via the extension $\widetilde{R}_{\rm max}
\bigl[p^{-1}\bigr]\lra {\rm B}^{\rm max}_{\rm
log}\bigl(\widetilde{R}\bigr)$ it contains $V$ so that it
surjects onto  $D\otimes_{\widetilde{R}_{\rm max}} {\rm B}^{\rm max}_{\rm
log}\bigl(\widetilde{R}\bigr)\cong V\otimes_{\Z_p} {\rm B}^{\rm max}_{\rm
log}\bigl(\widetilde{R}\bigr)$. In particular, the inclusion
$D_0\bigl[p^{-1}\bigr]\subset D$ is surjective after base
changing via $\widetilde{R}_{\rm max} \bigl[p^{-1}\bigr]\lra {\rm
B}^{\rm max}_{\rm log}\bigl(\widetilde{R}\bigr)$. Due to \ref{thm:Blogmaxff} this implies that
$D_0\bigl[p^{-1}\bigr]=D$.
\smallskip

(3) We prove the claim for ${\rm D}^{\rm cris}_{\rm log}(V)$. The
one for ${\rm D}^{\rm max}_{\rm log}(V)$ follows similarly. The
natural $\cG_R$-equivariant morphism ${\rm B}^{\rm cris}_{\rm
log}\bigl(\widetilde{R}\bigr)\lra {\rm B}_{\rm
dR}\bigl(\widetilde{R}\bigr) $ induces a morphism of
$\widetilde{R}_{\rm cris}$-modules ${\rm D}^{\rm cris}_{\rm
log}(V)\lra \widetilde{{\rm D}}_{\rm dR}(V) $. Write $D:={\rm
D}^{\rm cris}_{\rm log}(V)\otimes_{\widetilde{R}_{\rm cris}}
\widehat{\widetilde{R}[p^{-1}]}$. It  is a projective
$\widehat{\widetilde{R}[p^{-1}]}$-module with a natural map
$\alpha\colon D\lra \widetilde{{\rm D}}_{\rm dR}(V)$ of
$\widehat{\widetilde{R}[p^{-1}]}$-modules. Note that
$D\otimes_{\widehat{\widetilde{R}[p^{-1}]}} {\rm B}_{\rm
dR}\bigl(\widetilde{R}\bigr) \cong V\otimes {\rm B}_{\rm
dR}\bigl(\widetilde{R}\bigr) $. Thus, to prove that $\alpha$ is an
isomorphism it suffices to show that ${\rm B}_{\rm
dR}\bigl(\widetilde{R}\bigr)^{\cG_R}=
\widehat{\widetilde{R}[p^{-1}]}$. This is proven in
\ref{prop:BdRff}.

(4) The morphisms for ${\rm B}_{\rm log}^{\rm
cris}\bigl(\widetilde{R}\bigr)\subset {\rm B}_{\rm log}^{\rm
max}\bigl(\widetilde{R}\bigr) \subset {\rm B}_{\rm
dR}\bigl(\widetilde{R}\bigr)$ are strict with respect to the
filtrations by \ref{prop:BcrissubsetBdR}. This implies that the
morphisms ${\rm D}^{\rm cris}_{\rm log}(V)\lra {\rm D}^{\rm
max}_{\rm log}(V) \lra \widetilde{{\rm D}}_{\rm dR} (V)$ are
strict. Since the filtration on ${\rm B}_{\rm log}^{\rm
cris}\bigl(\widetilde{R}\bigr)$ and on $ {\rm B}_{\rm log}^{\rm
max}\bigl(\widetilde{R}\bigr)$ satisfy Griffiths' transversality,
the same holds for ${\rm D}^{\rm cris}_{\rm log}(V)$ and ${\rm
D}^{\rm max}_{\rm log}(V)$. The rest of the claim follows from this,
(3) and \ref{lemma:eqdeRham}.
\end{proof}

\subsubsection{Localizations}\label{sec:DlogforRsmooth}  We assume that we are in the setting
of \S\ref{sec:AlogforRsmooth} and, in particular,  $\widetilde{R}\cong R_0[\![Z]\!]$ by \ref{RR0Zincodim1} and ${\rm B}^{\rm cris}_{\rm log}(\widetilde{R})\cong
{\rm B}_{\rm cris}(R_0)\widehat{\otimes}_{R_0}\widetilde{R}_{\rm cris}$ and ${\rm B}^{\rm max}_{\rm log}(\widetilde{R})\cong {\rm B}_{\rm
max}(R_0)\widehat{\otimes}_{R_0}\widetilde{R}_{\rm max}$ due to \ref{cor:BR}. Here,  ${\rm B}_{\rm cris}(R_0)$ and  ${\rm B}_{\rm max}(R_0)$ are the period rings
introduced in \cite[Def. 6.1.3]{brinon}. Let $V$ be a  representation of $\cG_R$. Define  ${\rm D}_{\rm cris}(V):=\bigl(V\otimes  {\rm B}_{\rm
cris}(R_0)\bigr)^{\cG_R}$ and  ${\rm D}_{\rm max}(V):=\bigl(V\otimes  {\rm B}_{\rm max}(R_0)\bigr)^{\cG_R}$. They are projective $R_0[p^{-1}]$-modules  endowed with
Frobenius, an integrable connection and an exhaustive and decreasing filtrations satisfying Griffiths' transversality; see \cite[\S 8.3]{brinon}.

\begin{proposition} Let $V$ be a  representation of $\cG_R$. Then,\smallskip

(i) $V$ is a crystalline representation of
$\cG_R$ in the sense of \cite[\S 8.2]{brinon} if and only if $V$ is semistable in the sense of \ref{prop:equivsemistable}.

(ii) if (i) holds, then the morphisms  ${\rm D}_{\rm cris}(V)\widehat{\otimes}_{R_0}\widetilde{R}_{\rm cris}\to {\rm D}^{\rm cris}_{\rm log}(V)$ and  ${\rm D}_{\rm
max}(V)\widehat{\otimes}_{R_0}\widetilde{R}_{\rm max}\to {\rm D}^{\rm max}_{\rm log}(V)$ are isomorphisms of $\widetilde{R}_{\rm cris}$-modules (resp.
$\widetilde{R}_{\rm max}$-modules), compatibly with Frobenius and connections and strictly compatible with the filtrations.
\end{proposition}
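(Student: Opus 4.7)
The strategy relies on the explicit description from Corollary~\ref{cor:BR}, which in the present setting gives
$${\rm B}^{\rm cris}_{\rm log}(\widetilde{R})\cong {\rm B}_{\rm cris}(R_0)\widehat{\otimes}_{R_0}\widetilde{R}_{\rm cris},\qquad {\rm B}^{\rm max}_{\rm log}(\widetilde{R})\cong {\rm B}_{\rm max}(R_0)\widehat{\otimes}_{R_0}\widetilde{R}_{\rm max}.$$
Since $\widetilde{R}$ lifts $R$ via \ref{RR0Zincodim1} and $\cG_R$ acts trivially on $R$, the uniqueness in \ref{lemma:tildeRn} shows that $\cG_R$ acts trivially on $\widetilde{R}$, hence on its subrings $\widetilde{R}_{\rm cris}$ and $\widetilde{R}_{\rm max}$. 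Thus the $\cG_R$-action on the log period rings factors entirely through their ${\rm B}_{\rm cris}(R_0)$ (resp.~${\rm B}_{\rm max}(R_0)$) factor, and the whole plan consists in trading a module-theoretic statement for ${\rm B}^{\rm cris}_{\rm log}(\widetilde{R})$ (resp.~${\rm B}^{\rm max}_{\rm log}(\widetilde{R})$) for the analogous statement for ${\rm B}_{\rm cris}(R_0)$ (resp.~${\rm B}_{\rm max}(R_0)$) via the base change $R_0\to \widetilde{R}_{\rm cris}$ (resp.~$R_0\to \widetilde{R}_{\rm max}$).

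First I would show that taking $\cG_R$-invariants commutes with $\widehat{\otimes}_{R_0}\widetilde{R}_{\rm cris}$ and $\widehat{\otimes}_{R_0}\widetilde{R}_{\rm max}$. Reducing modulo~$p^n$, the key input is that $\widetilde{R}_{\rm cris}/p^n$ (resp.~$\widetilde{R}_{\rm max}/p^n$) is a filtered direct limit of finite free $R_0/p^n$-modules -- using that the DP envelope is free over $R_0[\![Z]\!]$ on the basis $\{P_\pi(Z)^{[m]}\}_{m\geq 0}$ and that $R_0[\![Z]\!]$ is topologically free over $R_0$ with basis $\{Z^m\}_m$ -- so that $\cG_R$-invariants commute with the finite-rank base change and, by a Mittag-Leffler argument, also with the $p$-adic completion. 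Applied to $V\otimes_{\Q_p}{\rm B}_{\rm cris}(R_0)$ and then twisting by $\varphi^s$ (using the analogue of \ref{prop:arithemticinvariantsB} for ${\rm B}_{\rm cris}(R_0)$ proved by Brinon, which sends $\varphi^s({\rm B}_{\rm cris}(R_0)^{\cG_R})$ into $R_0[p^{-1}]$), this yields the canonical isomorphisms
$${\rm D}^{\rm cris}_{\rm log}(V)\cong {\rm D}_{\rm cris}(V)\widehat{\otimes}_{R_0}\widetilde{R}_{\rm cris}[p^{-1}],\qquad {\rm D}^{\rm max}_{\rm log}(V)\cong {\rm D}_{\rm max}(V)\widehat{\otimes}_{R_0}\widetilde{R}_{\rm max}[p^{-1}],$$
and one verifies that these coincide with the natural maps of~(ii) induced by the inclusions ${\rm B}_{\rm cris}(R_0)\subset {\rm B}^{\rm cris}_{\rm log}(\widetilde{R})$ and ${\rm B}_{\rm max}(R_0)\subset {\rm B}^{\rm max}_{\rm log}(\widetilde{R})$. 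Compatibility with Frobenius is automatic from the construction; compatibility with connections follows from \ref{cor:conenctioninducestandardconnection} (the connection on $\widetilde{R}_{\rm cris}$ is the trivial extension along $R_0\to R_0[\![Z]\!]$ of the zero connection on $R_0$); strict compatibility with the filtrations follows from \ref{prop:BcrissubsetBdR}(3), which identifies the filtration on ${\rm B}^{\rm cris}_{\rm log}(\widetilde{R})$ with the completed tensor product of the filtrations on ${\rm B}_{\rm cris}(R_0)$ and $\widetilde{R}_{\rm cris}$.

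Granted these compatibilities, the equivalence in (i) is a faithful flatness/descent argument. One direction is immediate: if $V\otimes_{\Q_p}{\rm B}_{\rm cris}(R_0)$ is a free ${\rm B}_{\rm cris}(R_0)$-module of rank $n=\dim_{\Q_p}V$, applying $\widehat{\otimes}_{R_0}\widetilde{R}_{\rm cris}$ together with \ref{cor:BR} gives the freeness of $V\otimes_{\Q_p}{\rm B}^{\rm cris}_{\rm log}(\widetilde{R})$, so $V$ is semistable by \ref{prop:equivsemistable}. Conversely, if $V$ is log-semistable, the previous paragraph identifies ${\rm D}_{\rm cris}(V)\widehat{\otimes}_{R_0}\widetilde{R}_{\rm cris}[p^{-1}]$ with the finite projective $\widetilde{R}_{\rm cris}[p^{-1}]$-module ${\rm D}^{\rm cris}_{\rm log}(V)$ of rank $n$, and faithful flatness of $R_0[p^{-1}]\to \widetilde{R}_{\rm cris}[p^{-1}]$ -- read off from the $R_0[\![Z]\!]$-basis $\{P_\pi(Z)^{[m]}\}$ of the DP envelope combined with the faithful flatness of $R_0\to R_0[\![Z]\!]$ -- allows one to descend projectivity and rank, and to descend the defining isomorphism $\alpha_{{\rm cris},V}$ of \ref{prop:equivsemistable} to Brinon's analogue $\alpha^0$; hence $V$ is crystalline. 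The same scheme of proof applies verbatim with max in place of cris. The main obstacle will be Step~1, i.e.\ the commutation of $\cG_R$-invariants with the completed base change $\widehat{\otimes}_{R_0}\widetilde{R}_{\rm cris}$: making this rigorous requires the Mittag-Leffler bookkeeping on finite truncations of the DP envelope described above, and is not an immediate consequence of any purely formal statement.
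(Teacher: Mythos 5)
There is a genuine gap, and it sits exactly where you flag it: your Step~1. You want, for an \emph{arbitrary} representation $V$ of $\cG_R$, the unconditional identification
$$\left(V\otimes_{\Q_p} {\rm B}^{\rm cris}_{\rm log}(\widetilde{R})\right)^{\cG_R}\cong \left(V\otimes_{\Q_p}{\rm B}_{\rm cris}(R_0)\right)^{\cG_R}\widehat{\otimes}_{R_0}\widetilde{R}_{\rm cris},$$
and your sketch reduces modulo $p^n$, invokes flatness of $\widetilde{R}_{\rm cris}/p^n$ over $R_0/p^n$ to commute invariants with the base change, and then passes to the limit. The problem is not the limit (taking $\cG_R$-invariants commutes with inverse limits) nor the flat base change at each finite level; it is that $\bigl({\rm B}_{\rm cris}(R_0)/p^n\bigr)^{\cG_R}$ is \emph{not} $\bigl({\rm B}_{\rm cris}(R_0)\bigr)^{\cG_R}/p^n$: the discrepancy is governed by ${\rm H}^1(\cG_R,{\rm B}_{\rm cris}(R_0)[p^n])$-type terms, which in this relative setting are only ``almost'' controlled, not zero. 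So the mod-$p^n$ computations you propose do not assemble into the statement you need, and no Mittag--Leffler bookkeeping on the divided-power filtration repairs this, since it is an ${\rm H}^1$ problem rather than a $\varprojlim^{(1)}$ problem. This matters because your converse direction (semistable $\Rightarrow$ crystalline) is built entirely on Step~1: you descend $\alpha_{{\rm cris},V}$ along $R_0[p^{-1}]\to\widetilde{R}_{\rm cris}[p^{-1}]$, but to know that the log comparison map is the base change of Brinon's $\alpha^0$ you must already have the unproven identification of the $D$-modules.

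The paper's proof avoids all of this by never commuting invariants with the completed base change at the level of the full period ring. For crystalline $\Rightarrow$ semistable it base-changes Brinon's isomorphism $\alpha_{{\rm cris},V}$ along ${\rm B}_{\rm cris}(R_0)\to{\rm B}^{\rm cris}_{\rm log}(\widetilde{R})$ and takes invariants using only that ${\rm D}_{\rm cris}(V)$ is a finite projective $R_0[p^{-1}]$-module (\cite[Prop.~8.3.1]{brinon}), so commuting invariants with $-\otimes_{R_0}{\rm D}_{\rm cris}(V)$ is formal (a direct summand of a finite free module), giving both condition \ref{prop:equivsemistable}(1) and the invariants computation in one stroke. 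For the converse it uses a tool absent from your proposal: the specialization $Z\mapsto 0$, yielding compatible maps $\widetilde{R}\to R_0$ and ${\rm B}^{\rm cris}_{\rm log}(\widetilde{R})\to{\rm B}_{\rm cris}(R_0)$, together with the injectivity of $\alpha_{{\rm cris},V}$ (\cite[Prop.~8.2.6]{brinon}), which characterizes crystallinity as ``$V$ lies in the image of $\alpha_{{\rm cris},V}$''; semistability then immediately forces this. Finally, part (ii), including the Frobenius twist by $\varphi^{s}$ in the definitions of ${\rm D}^{\rm cris}_{\rm log}$ and ${\rm D}^{\rm max}_{\rm log}$, is handled via the \'etaleness of ${\rm D}_{\rm cris}(V)$ (\cite[Prop.~8.3.4]{brinon}), a point your appeal to the analogue of \ref{prop:arithemticinvariantsB} does not by itself settle. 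Your ``immediate'' direction is essentially the paper's, but as written it also skips the verification of \ref{prop:equivsemistable}(1), which again is supplied by the projectivity argument above.
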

\begin{proof} (i) Due to \cite[Prop. 8.2.6]{brinon} the morphism
$$\alpha_{{\rm cris},V}\colon {\rm D}_{\rm cris}(V)\otimes_{R_0} {\rm B}_{\rm cris}\bigl(R_0\bigr)
\lra V\otimes_{\Q_p} {\rm B}_{\rm cris}\bigl(R_0\bigr)$$is injective so that $V$ is crystalline if and only if the image of $\alpha_{{\rm cris},V}$ contains $V$. We
have compatible maps  $\widetilde{R}\to R_0 $ and ${\rm B}^{\rm cris}_{\rm log}(\widetilde{R})\to {\rm B}_{\rm cris}(R_0)$ given by $Z\mapsto 0$.  This induces  a
section  ${\rm D}^{\rm cris}_{\rm log}(V)\to {\rm D}_{\rm cris}(V)$ to the morphism given in (i). In particular, if $V$ is semistable then $V$ is in the image of
${\rm D}^{\rm cris}_{\rm log}(V) \otimes {\rm B}_{\rm log}^{\rm cris}\bigl(\widetilde{R}\bigr) \to V\otimes_{\Q_p} {\rm B}_{\rm cris}\bigl(R_0\bigr)$ induced  by
$Z\mapsto 0$. Thus, it is in the image of $\alpha_{{\rm cris},V}$ and $V$ is crystalline.

Viceversa, if $V$ is crystalline then $\alpha_{{\rm cris},V}\otimes_{{\rm B}_{\rm cris}(R_0)} {\rm B}_{\rm cris}\bigl(\widetilde{R}\bigr)$ is an isomorphism,
strictly compatible with the filtrations. As ${\rm D}_{\rm cris}(V)$ is a projective $R_0[p^{-1}]$-module by \cite[Prop. 8.3.1]{brinon}, taking the
$\cG_R$-invariants we get that $\bigl(V\otimes {\rm B}_{\rm cris}\bigl(\widetilde{R}\bigr)\bigr)^{\cG_R}\cong {\rm D}_{\rm cris}(V)\otimes_{R_0} {\rm B}_{\rm
cris}\bigl(\widetilde{R}\bigr)^{\cG_R}$, compatibly with Frobenius and connections and strictly compatible with the filtrations. Moreover,  condition
\ref{prop:equivsemistable}(1) holds. In particular, $V$ is semistable. As ${\rm D}_{\rm cris}(V)$ is an \'etale  $R_0[p^{-1}]$-module by \cite[Prop. 8.3.4]{brinon}
the map in (ii) is an isomorphism.

\end{proof}

We go back to the general ring $R$. Let $T$ be the set of minimal prime ideals of
$R$ over the ideal $(\pi)$ of $R$. For any such $\cP$ let
$\overline{T}_\cP$ be the set of minimal prime ideals of
$\overline{R}$ over the ideal $\cP$. For any $\cP\in T$ denote by
$\widehat{R}_\cP$ the $p$-adic completion of the localization of
$R$ at $\cP\cap R$. It is a dvr. Let $\widetilde{R}(\cP)$ be the
$(p,Z)$-adic completion of the localization of $\widetilde{R}$ at
the inverse image of $\cP$ and let
$R_{\cP,0}:=\widetilde{R}(\cP)/Z \widetilde{R}(\cP)$. For $\cQ\in
\overline{T}_\cP$ let $\overline{R}(\cQ)$ be the  normalization of
$R_{\cP,0}$ in an algebraic closure of ${\rm
Frac}\bigl(\overline{R}_\cQ\bigr)$ and let $G_{R,\cQ}$ be the
Galois group of $R_{\cP,0}\subset \overline{R}(\cQ)$. If $V$ is a
representation of $\cG_R$, we can consider it as a representation
of $G_{R,\cQ}$ and form ${\rm D}_{\rm
cris}\bigl(V\vert_{G_{R,\cQ}}\bigr)$ as in \cite[\S 8.2]{brinon}.
Using \ref{lemma:Acrisonlocisinjective} we get injective maps
$${\rm D}^{\rm cris}_{\rm log}(V) \lra \prod_{\cP\in T,\cQ\in \overline{T}_\cP}
{\rm D}_{\rm cris}\bigl(V\vert_{G_{R,\cQ}}\bigr)\otimes_{R_{\cP,0}}
\widetilde{R}(\cP).$$

\begin{proposition}\label{prop:tensorss}
(1) Let $V$ be a semistable representation of $\cG_R$. Then,
$V\vert_{G_{R,\cQ}}$ is a crystalline representation of
$G_{R,\cQ}$ and ${\rm D}_{\rm
cris}\bigl(V\vert_{G_{R,\cQ}}\bigr)\cong {\rm D}^{\rm cris}_{\rm
log}(V)\otimes_{\widetilde{R}_{\rm cris}} R_{\cP,0}$ compatibly with connections and Frobenius and strictly compatibly with the filtrations.\smallskip

(2) If $V$ and $V'$ are semistable representations of $\cG_R$ then
$V\otimes_{\Q_p} V'$ is a semistable representation of $\cG_R$ and
$ {\rm D}^{\rm cris}_{\rm log}\bigl(V\otimes_{\Q_p} V'\bigr)\cong
{\rm D}^{\rm cris}_{\rm log}(V)\otimes_{\widetilde{R}_{\rm cris}}
{\rm D}^{\rm cris}_{\rm log}(V')$ compatibly with Frobenius and
connections and strictly compatibly with the
filtrations.\smallskip

(3) Let $V$ be a semistable representation of $\cG_R$. Then, the
$\Q_p$-dual $V^\vee$ is a semsistable representation and ${\rm
D}^{\rm cris}_{\rm log}(V^\vee) $ is the $\widetilde{R}_{\rm
cris}\bigl[p^{-1}\bigr]$-dual ${\rm D}^{\rm cris}_{\rm
log}(V)^\vee$ of ${\rm D}^{\rm cris}_{\rm log}(V)$, compatibly
with connections, and Frobenius and strictly compatibly with the
filtrations.
\end{proposition}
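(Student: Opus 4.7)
The plan is to prove (2) first, since it is the most formal; then (1), which is a descent/localization statement; and finally (3) by dualizing the semistability isomorphism.

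For (2), I would exploit the tensor-compatibility of the comparison isomorphism $\alpha_{{\rm cris},V}$ in \ref{prop:equivsemistable}(2). Given semistable representations $V$ and $V'$, tensoring $\alpha_{{\rm cris},V}$ and $\alpha_{{\rm cris},V'}$ over ${\rm B}^{\rm cris}_{\rm log}(\widetilde{R})$ yields an isomorphism
$$\bigl({\rm D}^{\rm cris}_{\rm log}(V)\otimes_{\widetilde{R}_{\rm cris}} {\rm D}^{\rm cris}_{\rm log}(V')\bigr) \otimes_{\widetilde{R}_{\rm cris}} {\rm B}^{\rm cris}_{\rm log}(\widetilde{R}) \cong (V\otimes_{\Q_p} V')\otimes_{\Q_p} {\rm B}^{\rm cris}_{\rm log}(\widetilde{R}).$$
This shows that $V\otimes V'$ satisfies \ref{prop:equivsemistable}(1), hence is semistable. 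Taking $\cG_R$-invariants of both sides and applying the final isomorphism ${\rm D}^{\rm cris}_{\rm log}(V)\otimes_{\widetilde{R}_{\rm cris}} {\rm B}_{\rm log}^{{\rm cris},\cG_R} \cong (V\otimes {\rm B}^{\rm cris}_{\rm log})^{\cG_R}$ from \ref{prop:equivsemistable}, together with the $\varphi^s$-twist, produces the claimed identification of $D$-functors. Compatibility with Frobenius and connections is formal; strict compatibility with filtrations is inherited from the strict compatibility of $\alpha_{{\rm cris},V}$ and $\alpha_{{\rm cris},V'}$ with filtrations, itself a consequence of \ref{prop:propsemistable}(4) and \ref{prop:BcrissubsetBdR}(4).

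For (1), I would use \ref{lemma:Acrisonlocisinjective} together with \ref{cor:BR}, which identify ${\rm B}^{\rm cris}_{\rm log}(\widetilde{R}(\cP)) \cong {\rm B}_{\rm cris}(R_{\cP,0})\widehat{\otimes}_{R_{\cP,0}} \widetilde{R}(\cP)_{\rm cris}[p^{-1}]$ and embed ${\rm B}^{\rm cris}_{\rm log}(\widetilde{R})$ into the product $\prod_{(\cP,\cQ)} {\rm B}^{\rm cris}_{\rm log}(\widetilde{R}(\cP))$ compatibly with Frobenius, Galois action and filtrations. Tensoring the semistability isomorphism $\alpha_{{\rm cris},V}$ with ${\rm B}^{\rm cris}_{\rm log}(\widetilde{R}(\cP))$ over ${\rm B}^{\rm cris}_{\rm log}(\widetilde{R})$, then taking $G_{R,\cQ}$-invariants, shows that $V\vert_{G_{R,\cQ}} \otimes_{\Q_p} {\rm B}_{\rm cris}(R_{\cP,0})$ becomes free over its $G_{R,\cQ}$-invariants after a further faithfully flat extension to $\widetilde{R}(\cP)_{\rm cris}[p^{-1}]$; by faithfully flat descent (using the flatness of $R_{\cP,0}[p^{-1}] \to \widetilde{R}(\cP)_{\rm cris}[p^{-1}]$ as in \S\ref{sec:DlogforRsmooth}) this implies that $V\vert_{G_{R,\cQ}}$ is crystalline in the sense of \cite[\S 8.2]{brinon}, and gives the base-change identification with ${\rm D}^{\rm cris}_{\rm log}(V)\otimes_{\widetilde{R}_{\rm cris}} R_{\cP,0}$. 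Strictness of the filtration then follows from the strict compatibility of the embedding ${\rm B}^{\rm cris}_{\rm log}(\widetilde{R})\hookrightarrow {\rm B}_{\rm dR}(\widetilde{R})$ in \ref{prop:BcrissubsetBdR}(4) and \ref{prop:GrDdR}(3).

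For (3), observe that since $V$ is semistable, ${\rm D}^{\rm cris}_{\rm log}(V)$ is a finite projective $\widetilde{R}_{\rm cris}[p^{-1}]$-module by \ref{prop:equivsemistable}, and $\alpha_{{\rm cris},V}$ is an isomorphism of finite projective ${\rm B}^{\rm cris}_{\rm log}(\widetilde{R})$-modules. Dualizing yields an isomorphism $V^\vee \otimes_{\Q_p} {\rm B}^{\rm cris}_{\rm log}(\widetilde{R}) \cong {\rm D}^{\rm cris}_{\rm log}(V)^\vee \otimes_{\widetilde{R}_{\rm cris}} {\rm B}^{\rm cris}_{\rm log}(\widetilde{R})$; taking $\cG_R$-invariants and applying the $\varphi^s$-twist shows that $V^\vee$ is semistable with ${\rm D}^{\rm cris}_{\rm log}(V^\vee) \cong {\rm D}^{\rm cris}_{\rm log}(V)^\vee$ compatibly with connection and Frobenius. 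Strictness of the filtration on the dual follows either from strictness in the de Rham functor (using \ref{prop:propsemistable}(3)(4)) or, alternatively, by pairing with (2) applied to $V\otimes V^\vee$ and the evaluation map to $\Q_p$.

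The main obstacle is in (1): correctly tracking the filtration through the two intermediate rings ${\rm B}^{\rm cris}_{\rm log}(\widetilde{R}(\cP))$ and ${\rm B}_{\rm cris}(R_{\cP,0})$ in such a way that the $\varphi^s$-twisted base change of ${\rm D}^{\rm cris}_{\rm log}(V)$ matches the local ${\rm D}_{\rm cris}$ \emph{strictly} on filtrations. This requires checking that the filtration-strictness of \ref{prop:BcrissubsetBdR}(4) is preserved by the localization maps and that the resulting identification is compatible with the independently defined filtration on ${\rm D}_{\rm cris}(V\vert_{G_{R,\cQ}})$ coming from \cite{brinon}, which will reduce via \ref{prop:GrDdR}(3) to a statement on graded pieces over $\widehat{\overline{R}(\cQ)}[p^{-1}]$.
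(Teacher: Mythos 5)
Your treatments of (2) and (3) are essentially the paper's: tensor (resp.\ dualize) the comparison isomorphism of \ref{prop:equivsemistable}, use projectivity of ${\rm D}^{\rm cris}_{\rm log}$ to take $\cG_R$-invariants, and twist by $\varphi^{s}$ via \ref{prop:arithemticinvariantsB}. One caveat: the strict compatibility with filtrations in (2) and (3) is not quite "inherited" formally from strictness of the individual $\alpha$'s, because the filtration on ${\rm D}^{\rm cris}_{\rm log}(V\otimes V')$ is the preimage filtration while the source carries the convolution filtration; the paper proves strictness by reducing to injectivity on graded pieces, pushing through the injections ${\rm D}^{\rm cris}_{\rm log}(V)\to \prod_{\cP,\cQ}{\rm D}_{\rm cris}\bigl(V\vert_{G_{R,\cQ}}\bigr)\otimes_{R_{\cP,0}}\widetilde{R}(\cP)$ to the smooth case, and quoting \cite[Prop.~8.4.3]{brinon}. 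Your citations of \ref{prop:propsemistable}(4) and \ref{prop:GrDdR} point at the right inputs, but the reduction step should be made explicit.

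The genuine gap is in (1). Semistability of $V$ only furnishes Galois-invariant elements inside $V\otimes_{\Q_p}{\rm B}^{\rm cris}_{\rm log}\bigl(\widetilde{R}(\cP)\bigr)$, i.e.\ in the \emph{larger} period ring ${\rm B}_{\rm cris}(R_{\cP,0})\widehat{\otimes}_{R_{\cP,0}}\widetilde{R}(\cP)_{\rm cris}$ of \ref{cor:BR}, whereas crystallinity of $V\vert_{G_{R,\cQ}}$ requires exhibiting enough invariants inside $V\otimes_{\Q_p}{\rm B}_{\rm cris}(R_{\cP,0})$. Your "faithfully flat descent" step would need the identification $\bigl(V\otimes{\rm B}_{\rm cris}(R_{\cP,0})\bigr)^{G_{R,\cQ}}\widehat{\otimes}_{R_{\cP,0}}\widetilde{R}(\cP)_{\rm cris}\bigl[p^{-1}\bigr]\cong \bigl(V\otimes{\rm B}^{\rm cris}_{\rm log}(\widetilde{R}(\cP))\bigr)^{G_{R,\cQ}}$, i.e.\ commutation of continuous invariants with the completed flat base change; this is not automatic and is nowhere supplied, and without it admissibility over the bigger ring gives no control on ${\rm D}_{\rm cris}\bigl(V\vert_{G_{R,\cQ}}\bigr)$ at all (flat descent lets you check that a given map of known modules is an isomorphism, but here the very module of invariants over the smaller ring is unknown). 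The paper's proof uses two ingredients you omit: first, by \cite[Prop.~8.2.6]{brinon} the map $\alpha_{{\rm cris},V\vert_{G_{R,\cQ}}}$ is always injective, so crystallinity is equivalent to its image containing $V\vert_{G_{R,\cQ}}$; second, the $\cG_R$-equivariant specialization $Z\mapsto 0$, ${\rm B}^{\rm cris}_{\rm log}(\widetilde{R}(\cP))\to{\rm B}_{\rm cris}(R_{\cP,0})$ (cf.\ \ref{cor:BR} and the proposition of \S\ref{sec:DlogforRsmooth}), transports the invariant elements produced by semistability into the smaller ring, so that the image of $\alpha_{{\rm cris},V\vert_{G_{R,\cQ}}}$ contains $V$. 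Once crystallinity is known, the isomorphism ${\rm D}^{\rm cris}_{\rm log}(V)\otimes_{\widetilde{R}_{\rm cris}}R_{\cP,0}\cong{\rm D}_{\rm cris}\bigl(V\vert_{G_{R,\cQ}}\bigr)$ is obtained simply by comparing two projective $R_{\cP,0}[p^{-1}]$-modules of the same rank after the faithfully flat extension $R_{\cP,0}[p^{-1}]\subset{\rm B}_{\rm cris}(R_{\cP,0})$ (\cite[6.3.8]{brinon}), with no need for the delicate filtration tracking you flag as the main obstacle.
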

\begin{proof} (1) Due to \cite[Prop. 8.2.6]{brinon} the morphism
$$\alpha_{{\rm cris},V\vert_{G_{R,\cQ}}}\colon {\rm D}^{\rm cris}_{\rm log}(V)\otimes_{R_{\cP,0}} {\rm B}_{\rm cris}\bigl(R_{\cP,0}\bigr)
\lra V\vert_{G_{R,\cQ}}\otimes_{\Q_p} {\rm B}_{\rm
cris}\bigl(R_{\cP,0}\bigr)$$is injective so that
$V\vert_{G_{R,\cQ}}$ is crystalline if and only if the image of
$\alpha_{{\rm cris},V\vert_{G_{R,\cQ}}}$ contains $V\vert_{G_{R,\cQ}}$. Due to our
assumption, $V$ is contained in the image of $\alpha_{{\rm
cris},V}$. Since $\alpha_{{\rm cris},V}$ and $\alpha_{{\rm
cris},V\vert_{G_{R,\cQ}}}$ are compatible, we deduce that the
image of $\alpha_{{\rm cris},V\vert_{G_{R,\cQ}}}$ contains
$V\vert_{G_{R,\cQ}}$ as well. This proves that $V\vert_{G_{R,\cQ}}$ is a crystalline representation of
$G_{R,\cQ}$. We certainly have a morphism
$f\colon {\rm D}^{\rm cris}_{\rm
log}(V)\otimes_{\widetilde{R}} R_{\cP,0}\lra {\rm
D}_{\rm cris}\bigl(V\vert_{G_{R,\cQ}}\bigr)$. They are both
projective $R_{\cP,0}\bigl[p^{-1}\bigr]$-modules of rank equal to
the dimension of $V$ as $\Q_p$-vector space. After base change via
$R_{\cP,0}\bigl[p^{-1}\bigr] \subset {\rm B}_{\rm
cris}\bigl(R_{\cP,0}\bigr)$ the map $f$ is an isomorphism. Since
such extension is faithfully flat by \cite[6.3.8]{brinon} the
morphism $f$ is an isomorphism as claimed.\smallskip

(2) By assumption we have an isomorphism $${\rm D}^{\rm cris}_{\rm
log}(V)\otimes_{\widetilde{R}_{\rm cris}} {\rm D}^{\rm cris}_{\rm
log}(V') \otimes_{{\widetilde{R}_{\rm cris}}} {\rm B}^{\rm
cris}_{\rm log}\bigl(\widetilde{R}\bigr) \lra V\otimes_{\Z_p} V'
\otimes_{\Z_p} {\rm B}^{\rm cris}_{\rm
log}\bigl(\widetilde{R}\bigr).$$Since ${\rm D}^{\rm cris}_{\rm
log}(V)$ and ${\rm D}^{\rm cris}_{\rm log}(V')$ are projective
${\widetilde{R}_{\rm cris}}\bigl[p^{-1}\bigr]$-modules, the base
change of the $\cG_R$-invariants of the LHS via ${\rm B}_{\rm
cris}^{{\rm log},\cG_R}\lra {\widetilde{R}_{\rm
cris}}\bigl[p^{-1}\bigr] $ coincide with ${\rm D}^{\rm cris}_{\rm
log}(V)\otimes_{\widetilde{R}_{\rm cris}} {\rm D}^{\rm cris}_{\rm
log}(V')$ due to \ref{prop:arithemticinvariantsB}. It also
coincides with ${\rm D}^{\rm cris}_{\rm log}\bigl(V\otimes_{\Q_p}
V'\bigr)$ by definition, compatibly with connections, filtrations
and Frobenius. The claim follows.

(3) By assumption we have an isomorphism $${\rm D}^{\rm cris}_{\rm
log}(V)^\vee\otimes_{{\widetilde{R}_{\rm cris}}} {\rm B}^{\rm
cris}_{\rm log}\bigl(\widetilde{R}\bigr)\cong {\rm
Hom}_{\Q_p}\bigl(V,\Q_p\bigr) \otimes_{\Z_p} {\rm B}^{\rm
cris}_{\rm log}\bigl(\widetilde{R}\bigr).$$Since ${\rm D}^{\rm
cris}_{\rm log}(V)^\vee$ is a projective ${\widetilde{R}_{\rm
cris}}\bigl[p^{-1}\bigr]$-module and thanks to
\ref{prop:arithemticinvariantsB}, the base change of the
$\cG_R$-invariants of the LHS via ${\rm B}_{\rm cris}^{{\rm
log},\cG_R}\lra {\widetilde{R}_{\rm cris}}\bigl[p^{-1}\bigr] $
coincide with ${\rm D}^{\rm cris}_{\rm log}(V)^\vee$. It also
coincide with ${\rm D}^{\rm cris}_{\rm log}\bigl(V^\vee\bigr)$
compatibly with connections, filtrations and Frobenius. The claim
follows.

We are left to prove the isomorphisms ${\rm D}^{\rm cris}_{\rm
log}(V)\otimes_{\widetilde{R}_{\rm cris}} {\rm D}^{\rm cris}_{\rm
log}(V')\to {\rm D}^{\rm cris}_{\rm
log}(V\otimes V')$ and  ${\rm D}^{\rm cris}_{\rm
log}(V)^\vee \to {\rm D}^{\rm cris}_{\rm
log}\bigl(V^\vee\bigr)$ constructed in (2) and (3) are strictly
compatible with the filtrations. It suffices to prove that they are injective on the associated graded modules. As the maps
$${\rm D}^{\rm cris}_{\rm log}(V) \lra \prod_{\cP\in T,\cQ\in \overline{T}_\cP}
{\rm D}_{\rm cris}\bigl(V\vert_{G_{R,\cQ}}\bigr)\otimes_{R_{\cP,0}} \widetilde{R}(\cP)$$are injective and induce injective maps on ${\rm Gr}^\bullet$, we may reduce
to the smooth case. The claim is then the content of \cite[Prop. 8.4.3]{brinon}.

\end{proof}

\subsubsection{Relation with isocrystals}\label{sec:relcrystals}
Assume that $V$ is a semistable representation. It follows from
\ref{prop:propsemistable}, see the proof of (2), that there exists
a coherent $\widetilde{R}_{\rm cris}$-submodule $D(V)$ of ${\rm
D}^{\rm cris}_{\rm log}(V)$ such that $D(V)\tensor_{\Z_p}
\Q_p={\rm D}^{\rm cris}_{\rm log}(V)$ and\smallskip

(i)  $D(V)$ is stable under the connection $\nabla_{V,\WW(k)}$ and  the
induced logarithmic connection $\nabla_{D(V)}$ is integrable and
topologically nilpotent;
\smallskip

(ii) due to \ref{prop:propsemistable}, choosing suitable integers
$m$ and $n\in\N$ the map $\varphi_{D(V)}:=p^h \varphi$ sends
$D(V)$ to $D(V)$, the morphism $\varphi_{D(V)}$ is horizontal with
respect to $\nabla_{D(V)}$ and multiplication by $p^n$ on $D(V)$
factors via $p^h \varphi_{D(V)}$.\smallskip

We deduce from \cite[Thm.~6.2]{katolog} that
$\bigl(D(V),\nabla_{V,\WW(k)}\bigr)$ defines a crystal ${\cal
D}(V)$ of $\cO_{X_k/\cO_{\rm cris}}$-modules on the site
$\bigl(X_k/\cO_{\rm cris}\bigr)^{\rm cris}_{\rm log}$; see
\ref{sec:isocrystals} for the notation. Moreover, the absolute
Frobenius on $X_k$ and the given Frobenius $\varphi_\cO$ define a
morphism of sites $F\colon \bigl(X_k/\cO_{\rm cris}\bigr)^{\rm
cris}_{\rm log}\lra \bigl(X_k/\cO_{\rm cris}\bigr)^{\rm cris}_{\rm
log}$. Then, $\varphi_{D(V)}$ defines a morphism $\varphi \colon
F^\ast\bigl({\cal D}(V)\bigr)\lra {\cal D}(V)$ of crystals of
$\cO_{X_k/\cO_{\rm cris}}$-modules. Due to (ii) this is well
defined up to multiplication by $p$.

Given two charts on $\widetilde{R}$, inducing two choices of
Frobenius $\varphi_1$ and $\varphi_2$ on $\widetilde{R}$, we get
two Frobenii $\varphi_1$ and $\varphi_2$ on ${\cal D}(V)$. Then,

\begin{corollary}\label{cor:changefrobenius}
Assume that $V$ is a semistable representation. Then, the two
Frobenii $\varphi_1$ and $\varphi_2$ on the crystal ${\cal D}(V)$
differ by multiplication by a power of $p$.
\end{corollary}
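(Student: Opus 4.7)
The plan is to show that on the isocrystal ${\cal D}(V)\otimes_{\Z_p}\Q_p\cong {\rm D}^{\rm cris}_{\rm log}(V)$ the two Frobenius morphisms agree intrinsically, and then to deduce the integral statement from the $p$-power ambiguity in the normalization (ii) of \S\ref{sec:relcrystals}.

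First, I would unwind the construction. Each chart yields a Frobenius lift $F_i\colon\widetilde{R}\to\widetilde{R}$, which extends via the canonical Frobenius on $\WW(\widetilde{\bf E}^+)$ and on $\cO$ to a Frobenius on ${\rm B}^{\rm cris}_{\rm log}(\widetilde{R})$. Taking $\cG_R$-invariants and composing with $\varphi^s\colon{\rm B}_{\rm log}^{{\rm cris},\cG_R}\to\widetilde{R}_{\rm cris}[p^{-1}]$ from \ref{prop:arithemticinvariantsB} produces an \'etale Frobenius $\phi_i$ on ${\rm D}^{\rm cris}_{\rm log}(V)$, horizontal for $\nabla_{V,\WW(k)}$ by \ref{prop:propsemistable}(1). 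The corresponding Frobenius $\varphi_i$ on the integral crystal ${\cal D}(V)$ is $p^{h_i}\phi_i$ for an integer $h_i$ (defined up to multiplication by $p$) chosen so that $p^{h_i}\phi_i$ preserves the lattice $D(V)$.

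Next, I would apply the standard Taylor formula of crystalline cohomology. Since both $F_i$ lift Frobenius modulo $p$, the differences $F_2(\widetilde{X}_j)-F_1(\widetilde{X}_j)$ and $F_2(\widetilde{Y}_\ell)-F_1(\widetilde{Y}_\ell)$ lie in $p\widetilde{R}$, so the expansion
\[
\phi_2 \;=\; \sum_{\underline{n}}\frac{(F_2-F_1)^{\underline{n}}}{\underline{n}!}\,\nabla_{\underline{n}}\circ\phi_1
\]
converges in $\widetilde{R}_{\rm cris}[p^{-1}]$ by the topological nilpotence of $\nabla$ on $D(V)$ established in \ref{prop:propsemistable}(2). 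Under the equivalence between crystals and modules with quasi-nilpotent integrable connection (\cite[Thm.~6.2]{katolog}), this series is precisely the identification expressing that $\phi_1$ and $\phi_2$ represent the same morphism $F^\ast({\cal E})\to{\cal E}$ on the isocrystal ${\cal E}={\cal D}(V)\otimes\Q_p$.

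Descending from the isocrystal to the integral crystal, the resulting equality $\phi_1=\phi_2$ forces $\varphi_1=p^{h_1-h_2}\varphi_2$ on ${\cal D}(V)$. The main obstacle is verifying that the Taylor series indeed descends integrally up to a controlled power of $p$: one must compare the denominators $\underline{n}!$ against the rate of topological nilpotence of $\nabla$ with respect to the lattice $D(V)$ fixed in \S\ref{sec:relcrystals}, and check that the induced identity holds modulo the ambiguity in the $h_i$'s rather than requiring further denominators.
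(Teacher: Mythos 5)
Your overall strategy---a direct Taylor-series comparison of the two Frobenius lifts---is in spirit the argument that the paper delegates to Brinon: the actual proof first injects ${\rm D}^{\rm cris}_{\rm log}(V)$ into $\prod_{\cP,\cQ}{\rm D}_{\rm cris}\bigl(V\vert_{G_{R,\cQ}}\bigr)\otimes_{R_{\cP,0}}\widetilde{R}(\cP)$ (the localizations at the minimal primes over $(\pi)$, where the log structure becomes trivial, via \ref{lemma:Acrisonlocisinjective} and \S\ref{sec:DlogforRsmooth}) and then quotes \cite[Prop.~7.2.3]{brinon}, which is precisely the Taylor-series comparison in the smooth case. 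The trouble is that your write-up asserts the one identity that carries all the content. The displayed formula $\phi_2=\sum_{\underline{n}}\frac{(F_2-F_1)^{\underline{n}}}{\underline{n}!}\nabla_{\underline{n}}\circ\phi_1$ is not ``the standard Taylor formula'': the Taylor formula only identifies $F_1^\ast{\cal E}$ with $F_2^\ast{\cal E}$ canonically; it says nothing about whether the two chart-dependent Frobenius structures correspond under that identification. That correspondence must be extracted from the construction of $\phi_i$ via $\bigl(V\otimes_{\Q_p}{\rm B}^{\rm cris}_{\rm log}(\widetilde{R})\bigr)^{\cG_R}$ and the chart-dependent map $\varphi^{s}$ of \ref{prop:arithemticinvariantsB} (note that even the underlying $\widetilde{R}_{\rm cris}[p^{-1}]$-module ${\rm D}^{\rm cris}_{\rm log}(V)$ a priori depends on the chart through $\varphi^{s}$, so identifying the two modules already requires the crystal structure). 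Saying ``this series is precisely the identification expressing that $\phi_1$ and $\phi_2$ represent the same morphism'' is circular: that is the statement of the corollary.

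Two further points. First, in the semistable setting your series is not even well formed as written: the connection is logarithmic, so only the derivations $\widetilde{X}_i\,\partial/\partial\widetilde{X}_i$ and $\widetilde{Y}_j\,\partial/\partial\widetilde{Y}_j$ act on $D(V)$, while $F_2(\widetilde{X}_i)-F_1(\widetilde{X}_i)\in p\widetilde{R}$ is the wrong kind of coefficient; one has to rewrite the expansion in terms of the ratios $F_2(\widetilde{X}_i)/F_1(\widetilde{X}_i)$ and justify, using the relation between the two charts for the same log structure, that these are units congruent to $1$ modulo $p$. This is exactly the difficulty the paper sidesteps by localizing to the trivial-log-structure case before invoking Brinon. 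Second, the obstacle you flag at the end (integral descent of the series to the lattice $D(V)$) is beside the point: the corollary only claims agreement up to a power of $p$, and the paper simply chooses a common exponent $h$ for both normalizations and then proves the normalized Frobenii coincide. So the genuine gap is the unproved compatibility of the two period-ring Frobenii with the Taylor identification (and its log-coordinate formulation), not integrality.
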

\begin{proof} Choose in (ii) above $h$ large enough so that it
works both for $\varphi_1$ and for $\varphi_2$. We then prove that
$\varphi_1$ and $\varphi_2$ on the crystal ${\cal D}(V)$ coincide.

Let $T$ be the set of minimal prime ideals of $R$ over the ideal
$(\pi)$ of $R$. For any such $\cP$ let $\overline{T}_\cP$ be the set
of minimal prime ideals of $\overline{R}$ over the ideal $\cP$.
Using the injective maps
$${\rm D}^{\rm cris}_{\rm log}(V) \lra \prod_{\cP\in T,\cQ\in \overline{T}_\cP}
{\rm D}_{\rm cris}\bigl(V\vert_{G_{R,\cQ}}\bigr)\otimes_{R_{\cP,0}}
\widetilde{R}(\cP),$$it suffices to prove the claim for ${\rm
D}_{\rm cris}\bigl(V\vert_{G_{R,\cQ}}\bigr)$ for every $\cP\in T$
and $\cQ\in \overline{T}_\cP$. Since the log structure on
$R_{\cP,0}$ is trivial, our claim is  the content of \cite[Prop.
7.2.3]{brinon}.
\end{proof}

\subsection{The functors ${\rm D}_{\rm cris}^{\rm log, geo}$ and
${\rm D}_{\rm max}^{\rm log, geo}$. Geometrically semistable
representations.}\label{sec:Vgeosemistable}

Let $V$ be a finite dimensional $\Q_p$--vector space endowed with
a continuous action of the geometric Galois group $G_R$. Define
$${\rm D}_{\rm log}^{\rm geo, cris}(V):=\left(V\otimes_{\Q_p} {\rm
B}^{\rm cris}_{\rm log}\bigl(\widetilde{R}\bigr)\right)^{G_R},
\qquad {\rm D}_{\rm log}^{\rm geo, max}(V):=\left(V\otimes_{\Q_p}
{\rm B}^{\rm max}_{\rm
log}\bigl(\widetilde{R}\bigr)\right)^{G_R}.$$They are
$\widetilde{R}^{\rm geo,cris}_{\rm log}$-modules (resp.
$\widetilde{R}^{\rm geo,max}_{\rm log}$-modules)
endowed with filtrations, connections $\nabla_{V,\WW(k)}$ and
$\nabla_{V,B_{\rm log}}$ and semilinear Frobenius $\varphi_V$. We
have

\begin{proposition}\label{prop:equivgeosemistable} The following are equivalent:\smallskip

(1) ${\rm D}_{\rm log}^{\rm geo, cris}(V)$ is a finite and
projective $\widetilde{R}_{\rm log}^{\rm geo,
cris}\bigl[t^{-1}\bigr]$-module and the map $${\rm D}_{\rm
log}^{\rm geo, cris}(V)\otimes_{\widetilde{R}_{\rm log}^{\rm geo,
cris}} {\rm B}^{\rm cris}_{\rm log}\bigl(\widetilde{R}\bigr)\lra
V\otimes_{\Q_p} {\rm B}^{\rm cris}_{\rm
log}\bigl(\widetilde{R}\bigr)$$is an isomorphism;\smallskip

(2) ${\rm D}_{\rm log}^{\rm geo, max}(V)$ is a finite and
projective $\widetilde{R}_{\rm log}^{\rm geo,
max}\bigl[t^{-1}\bigr]$-module and the map $${\rm D}_{\rm
log}^{\rm geo, max}(V)\otimes_{\widetilde{R}_{\rm log}^{\rm geo,
max}} {\rm B}^{\rm max}_{\rm log}\bigl(\widetilde{R}\bigr)\lra
V\otimes_{\Q_p} {\rm B}^{\rm cris}_{\rm
log}\bigl(\widetilde{R}\bigr)$$is an isomorphism.\smallskip

Moreover, in this case ${\rm D}_{\rm log}^{\rm geo,
cris}(V)\otimes_{\widetilde{R}_{\rm log}^{\rm geo, cris}}
\widetilde{R}_{\rm log}^{\rm geo, max}\cong {\rm D}_{\rm log}^{\rm
geo, max}(V)$ compatibly with filtrations, connections and
Frobenius.
\end{proposition}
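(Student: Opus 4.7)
My plan is to mirror the strategy of Proposition \ref{prop:equivsemistable} but exploit the fact, established in Theorem \ref{thm:geometricacyclicity}, that the $G_R$--invariants of ${\rm B}^{\rm cris}_{\rm log}(\widetilde{R})$ and ${\rm B}^{\rm max}_{\rm log}(\widetilde{R})$ are exactly $\widetilde{R}_{\rm log}^{\rm geo,cris}[t^{-1}]$ and $\widetilde{R}_{\rm log}^{\rm geo,max}[t^{-1}]$. Consequently no Frobenius twist is needed: ${\rm D}_{\rm log}^{\rm geo,cris}(V)$ and ${\rm D}_{\rm log}^{\rm geo,max}(V)$ sit inside $V\otimes_{\Q_p}{\rm B}^{\rm cris}_{\rm log}(\widetilde{R})$ and $V\otimes_{\Q_p}{\rm B}^{\rm max}_{\rm log}(\widetilde{R})$ respectively without any auxiliary base change.

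For (1)$\Rightarrow$(2), I would start with the isomorphism ${\rm D}_{\rm log}^{\rm geo,cris}(V)\otimes {\rm B}^{\rm cris}_{\rm log}(\widetilde{R})\cong V\otimes {\rm B}^{\rm cris}_{\rm log}(\widetilde{R})$ and tensor with the natural map ${\rm B}^{\rm cris}_{\rm log}(\widetilde{R})\to {\rm B}^{\rm max}_{\rm log}(\widetilde{R})$ of \S\ref{sec:Dcrislogmaxlog}. Taking $G_R$--invariants, which commutes with tensoring against the projective $\widetilde{R}_{\rm log}^{\rm geo,cris}[t^{-1}]$--module ${\rm D}_{\rm log}^{\rm geo,cris}(V)$, and using \ref{thm:geometricacyclicity}(iii) to identify $\bigl({\rm B}^{\rm max}_{\rm log}(\widetilde{R})\bigr)^{G_R}$ with $\widetilde{R}_{\rm log}^{\rm geo,max}[t^{-1}]$, I would simultaneously obtain the base--change isomorphism ${\rm D}_{\rm log}^{\rm geo,cris}(V)\otimes_{\widetilde{R}_{\rm log}^{\rm geo,cris}}\widetilde{R}_{\rm log}^{\rm geo,max}\cong {\rm D}_{\rm log}^{\rm geo,max}(V)$ (compatibly with filtrations, connections and Frobenius) and the projectivity of ${\rm D}_{\rm log}^{\rm geo,max}(V)$ and the validity of the isomorphism in (2). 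This also yields the very last assertion of the proposition.

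For (2)$\Rightarrow$(1), I would use the factorization of Frobenius from Remark \ref{rmk:AmaxfrobaAcris}: $\varphi$ on ${\rm B}^{\rm max}_{\rm log}(\widetilde{R})$ factors through ${\rm B}^{\rm cris}_{\rm log}(\widetilde{R})$ and similarly $\widetilde{R}_{\rm log}^{\rm geo,max}[t^{-1}]\to \widetilde{R}_{\rm log}^{\rm geo,cris}[t^{-1}]$ via $\varphi^s$. Precomposing the isomorphism $\alpha_{{\rm max},V}$ with this factorization produces, after taking invariants, a candidate for ${\rm D}_{\rm log}^{\rm geo,cris}(V)$ as the $\varphi^s$--twist of ${\rm D}_{\rm log}^{\rm geo,max}(V)$, whose scalar extension to ${\rm B}^{\rm cris}_{\rm log}(\widetilde{R})$ then gives the required isomorphism in (1). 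The projectivity of ${\rm D}_{\rm log}^{\rm geo,cris}(V)$ follows from the projectivity of ${\rm D}_{\rm log}^{\rm geo,max}(V)$ via this Frobenius twist together with Theorem \ref{thm:Blogmaxff}. The descent of finiteness/projectivity from ${\rm B}^{\rm max}_{\rm log}(\widetilde{R})$ down to $\widetilde{R}_{\rm log}^{\rm geo,max}[t^{-1}]$ is where \ref{thm:Blogmaxff} needs to be invoked, after first descending along the almost \'etale tower $\widetilde{R}_{\rm log}^{\rm geo,max}[t^{-1}]/\widetilde{R}_{\rm max}[t^{-1}]$ controlled by the geometric Galois group $\widetilde{\Gamma}_R$.

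The main obstacle will be this descent in step three: Theorem \ref{thm:Blogmaxff} is stated for $\widetilde{R}_{\rm max}[p^{-1}]\subset {\rm B}^{\rm max}_{\rm log}(\widetilde{R})$, not for the larger ring $\widetilde{R}_{\rm log}^{\rm geo,max}[t^{-1}]$. The fix is to factor the inclusion as $\widetilde{R}_{\rm max}[p^{-1}]\subset \widetilde{R}_{\rm log}^{\rm geo,max}[t^{-1}]\subset {\rm B}^{\rm max}_{\rm log}(\widetilde{R})$: the outer extension has the descent property by \ref{thm:Blogmaxff}, and any module faithfully flat/projective over the outer ring that arises as $G_R$--invariants of a ${\rm B}^{\rm max}_{\rm log}(\widetilde{R})$--module descends to $\widetilde{R}_{\rm log}^{\rm geo,max}[t^{-1}]$ by the computation of invariants in \ref{thm:geometricacyclicity}(iii). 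The compatibility with filtrations should follow from the strictness already established in \ref{prop:BcrissubsetBdR}(4).
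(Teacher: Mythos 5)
Your core argument is essentially the paper's. The proof given there is a one-liner: everything is a consequence of the projectivity \emph{hypotheses} built into (1) and (2) together with the computation of invariants ${\rm B}_{\rm log}^{{\rm cris},G_R}(\widetilde{R})=\widetilde{R}_{\rm log}^{\rm geo,cris}\bigl[t^{-1}\bigr]$ and ${\rm B}_{\rm log}^{{\rm max},G_R}(\widetilde{R})=\widetilde{R}_{\rm log}^{\rm geo,max}\bigl[t^{-1}\bigr]$ from \ref{thm:geometricacyclicity}. Your step (1)$\Rightarrow$(2) (tensor the comparison isomorphism along ${\rm B}^{\rm cris}_{\rm log}(\widetilde{R})\to{\rm B}^{\rm max}_{\rm log}(\widetilde{R})$ and take $G_R$-invariants, which commute with tensoring against a finite projective module) and your step (2)$\Rightarrow$(1) (twist by the Frobenius factorization of \ref{rmk:AmaxfrobaAcris}, as in the proof of \ref{prop:equivsemistable}) are exactly the intended fleshing-out, and they also yield the final base-change assertion; compatibility with filtrations, connections and Frobenius comes from the constructions and the strictness in \ref{prop:BcrissubsetBdR}(4).

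Your last paragraph, however, manufactures an obstacle that is not there and then ``fixes'' it with an argument that would not stand. Unlike in \ref{prop:equivsemistable}, where finiteness and projectivity of ${\rm D}^{\rm max}_{\rm log}(V)$ had to be \emph{deduced} from the comparison isomorphism via \ref{thm:Blogmaxff}, here projectivity over $\widetilde{R}_{\rm log}^{\rm geo,cris}[t^{-1}]$, respectively $\widetilde{R}_{\rm log}^{\rm geo,max}[t^{-1}]$, is part of the hypotheses (1) and (2), so no descent of projectivity from the period ring to the invariant ring is needed. In (2)$\Rightarrow$(1) the projectivity of ${\rm D}_{\rm log}^{\rm geo,cris}(V)$ is simply inherited: taking $G_R$-invariants of the $\varphi$-twisted comparison identifies it with ${\rm D}_{\rm log}^{\rm geo,max}(V)\otimes^{\varphi}_{\widetilde{R}_{\rm log}^{\rm geo,max}}\widetilde{R}_{\rm log}^{\rm geo,cris}\bigl[t^{-1}\bigr]$, a base change of a projective module, so \ref{thm:Blogmaxff} plays no role. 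Moreover the ``fix'' you sketch---factoring through $\widetilde{R}_{\rm max}\bigl[p^{-1}\bigr]\subset\widetilde{R}_{\rm log}^{\rm geo,max}\bigl[t^{-1}\bigr]\subset{\rm B}^{\rm max}_{\rm log}(\widetilde{R})$, invoking \ref{thm:Blogmaxff} for the outer extension and ``the computation of invariants'' for the intermediate one---is not a proof: \ref{thm:Blogmaxff} concerns the arithmetic base ring, and \ref{thm:geometricacyclicity}(iii) is a statement about invariants, not a descent statement for modules (nor is $\widetilde{R}_{\rm log}^{\rm geo,max}$ over $\widetilde{R}_{\rm max}$ a $\widetilde{\Gamma}_R$-tower). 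Delete that paragraph; the rest of your argument is correct and matches the paper. The only point worth making explicit is that Frobenius on $\widetilde{R}_{\rm log}^{\rm geo,max}$ indeed lands in $\widetilde{R}_{\rm log}^{\rm geo,cris}$, again by \ref{rmk:AmaxfrobaAcris}.
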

\begin{proof} This is a consequence of the projectivity assumptions and the fact that ${\rm B}_{\rm
cris}^{{\rm log},G_R}=\widetilde{R}_{\rm log}^{\rm geo,
cris}\bigl[t^{-1}\bigr]$ and ${\rm B}_{\rm max}^{{\rm log},G_R}=
\widetilde{R}_{\rm log}^{\rm geo, cris}\bigl[t^{-1}\bigr]$ proven
in \ref{thm:geometricacyclicity}.
\end{proof}

\begin{definition}\label{def:geosemionR} We say that a representation $V$ is {\em geometrically semistable}
if one of the two conditions above hold and if furthermore there exists a coherent $\widetilde{R}\widehat{\otimes}_{\cO} A_{\rm log}$-submodule $D$ of ${\rm D}_{\rm
log}^{\rm geo, cris}(V)$ such that:\smallskip

(a) it is stable under the connection $\nabla_{V,\WW(k)}$ and $\nabla_{V,\WW(k)}\vert_{D}$ is  topologically nilpotent;\smallskip

(b) $D[t^{-1}]={\rm D}_{\rm log}^{\rm geo, cris}(V) $;\smallskip

(c) there exist integers $h$ and $n\in\N$ such that the map $t^h \varphi$ sends $D$ to $D$ and its image contains $t^n D$.
\smallskip

\end{definition}

The following corollary provides examples of geometrically semistable representations:

\begin{corollary}\label{cor:arithmimpliesgeo} If $V$ is a
semistable representation of $\cG_R$, then it is geometrically
semistable and we have natural isomorphisms
$${\rm D}^{\rm cris}_{\rm log}(V)\otimes_{\widetilde{R}^{\rm cris}_{\rm log}} \widetilde{R}_{\rm log}^{\rm geo, cris}\cong {\rm
D}_{\rm log}^{\rm geo, cris}(V), \qquad  {\rm D}^{\rm max}_{\rm
log}(V)\otimes_{\widetilde{R}_{\rm max}} \widetilde{R}_{\rm
log}^{\rm geo, max}\cong   {\rm D}_{\rm log}^{\rm geo,
max}(V)$$compatible with connections and Frobenius and strictly compatible with the filtrations.
\end{corollary}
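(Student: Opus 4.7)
The plan is to deduce the corollary by taking $G_R$-invariants of the semistable comparison isomorphism provided by Proposition \ref{prop:equivsemistable} and applying the computation of the geometric Galois cohomology of the period ring ${\rm B}^{\rm cris}_{\rm log}(\widetilde{R})$ from Theorem \ref{thm:geometricacyclicity}.

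Since $V$ is semistable, Proposition \ref{prop:equivsemistable} provides the $\cG_R$-equivariant isomorphism
$$\alpha_{{\rm cris},V}\colon {\rm D}^{\rm cris}_{\rm log}(V) \otimes_{\widetilde{R}_{\rm cris}} {\rm B}^{\rm cris}_{\rm log}(\widetilde{R}) \stackrel{\sim}{\lra} V \otimes_{\Q_p} {\rm B}^{\rm cris}_{\rm log}(\widetilde{R}),$$
compatible with Frobenii, connections, and strictly compatible with filtrations, and moreover ${\rm D}^{\rm cris}_{\rm log}(V)$ is a finitely generated projective $\widetilde{R}_{\rm cris}[p^{-1}]$-module on which $G_R$ acts trivially. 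I would take $G_R$-invariants of both sides of $\alpha_{{\rm cris},V}$: the right-hand side becomes ${\rm D}_{\rm log}^{\rm geo, cris}(V)$ by definition, while the left-hand side becomes ${\rm D}^{\rm cris}_{\rm log}(V) \otimes_{\widetilde{R}_{\rm cris}} ({\rm B}^{\rm cris}_{\rm log}(\widetilde{R}))^{G_R}$ by commuting invariants past the tensor product, which is allowed by projectivity. By Theorem \ref{thm:geometricacyclicity}(i), $({\rm B}^{\rm cris}_{\rm log}(\widetilde{R}))^{G_R} = \widetilde{R}_{\rm log}^{\rm geo, cris}[t^{-1}]$, giving the desired isomorphism. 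Compatibilities with Frobenius and connections are immediate from $G_R$-equivariance. Strict compatibility with filtrations follows from Theorem \ref{thm:geometricacyclicity}(iii), which identifies the filtered $G_R$-invariants of $\Fil^\bullet {\rm B}^{\rm cris}_{\rm log}(\widetilde{R})$ with $\Fil^\bullet \widetilde{R}_{\rm log}^{\rm geo, cris}[t^{-1}]$, combined with the strictness in Proposition \ref{prop:equivsemistable}. The max case is handled in parallel using Proposition \ref{prop:equivsemistable}(4) and the corresponding max statement in Theorem \ref{thm:geometricacyclicity}.

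To establish that $V$ is geometrically semistable in the sense of Definition \ref{def:geosemionR}, I would take $D := D(V) \otimes_{\widetilde{R}_{\rm cris}} (\widetilde{R}\widehat{\otimes}_{\cO} A_{\rm log})$, viewed as a coherent $\widetilde{R}\widehat{\otimes}_{\cO} A_{\rm log}$-submodule of ${\rm D}_{\rm log}^{\rm geo, cris}(V)$ via the isomorphism just proved, where $D(V)$ is the coherent $\widetilde{R}_{\rm cris}$-submodule of ${\rm D}^{\rm cris}_{\rm log}(V)$ constructed in Section \ref{sec:relcrystals}. The three conditions of the definition then follow: (a) from the topological nilpotence of the connection on $D(V)$ together with the compatible connection structure on $A_{\rm log}$; (b) from $D(V) \otimes_{\Z_p} \Q_p = {\rm D}^{\rm cris}_{\rm log}(V)$ combined with the comparison isomorphism above, yielding $D[t^{-1}] = {\rm D}_{\rm log}^{\rm geo, cris}(V)$; and (c) from the Frobenius property involving $p^h \varphi$ on $D(V)$ (Section \ref{sec:relcrystals}), converted to the required $t^h \varphi$ condition on $D$ via the identity $\varphi(t) = pt$ in $A_{\rm log}$.

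The main obstacle I anticipate is the careful bookkeeping around filtrations and $t$-localizations: the tensor product on the left of the comparison in the corollary's statement must be interpreted so that the result has $t$ inverted in order to match ${\rm D}_{\rm log}^{\rm geo, cris}(V)$, which is naturally a module over $\widetilde{R}_{\rm log}^{\rm geo, cris}[t^{-1}]$. Moreover, the strict compatibility of filtrations requires carefully combining Theorem \ref{thm:geometricacyclicity}(iii) with the strictness in Proposition \ref{prop:equivsemistable} and the projectivity of ${\rm D}^{\rm cris}_{\rm log}(V)$, reducing to the case of free modules to propagate strictness through the tensor product.
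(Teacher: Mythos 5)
Your proposal is correct and follows essentially the same route as the paper's own proof: take $G_R$-invariants of the comparison isomorphism of Proposition \ref{prop:equivsemistable}, commute invariants past the tensor product using the projectivity of ${\rm D}^{\rm cris}_{\rm log}(V)$ (resp.\ ${\rm D}^{\rm max}_{\rm log}(V)$), identify $\bigl({\rm B}^{\rm cris}_{\rm log}(\widetilde{R})\bigr)^{G_R}$ and $\bigl({\rm B}^{\rm max}_{\rm log}(\widetilde{R})\bigr)^{G_R}$ via Theorem \ref{thm:geometricacyclicity}, and obtain geometric semistability from the module $D(V)$ of \S\ref{sec:relcrystals}. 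The only slip is one of attribution: strict compatibility with the filtrations is not stated in Proposition \ref{prop:equivsemistable} (which only gives compatibility with Frobenii and connections); it is supplied by \ref{prop:BcrissubsetBdR}(4), \ref{prop:propsemistable}(4) and \ref{prop:GrDdR}, which is exactly what the paper invokes at this point, and your alternative use of Theorem \ref{thm:geometricacyclicity}(iii) still needs that input.
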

\begin{proof} The displayed isomorphisms follow from the isomorphisms in \ref{prop:equivsemistable}, the fact that ${\rm D}^{\rm cris}_{\rm log}(V)$ and ${\rm D}^{\rm max}_{\rm log}(V)$ are projective modules and the computation ${\rm B}_{\rm
log}^{{\rm cris},G_R}=\widetilde{R}_{\rm log}^{\rm geo,
cris}\bigl[t^{-1}\bigr]$ and ${\rm B}_{\rm max}^{{\rm log},G_R}=
\widetilde{R}_{\rm log}^{\rm geo, max}\bigl[t^{-1}\bigr]$ provided in \ref{thm:geometricacyclicity}.

Such isomorphisms are clearly compatible with connections, Frobenius and filtrations. The conditions in \ref{def:geosemionR} are satisfied due to \S\ref{sec:relcrystals}. The strict compatibility with the filtrations follows from \ref{prop:BcrissubsetBdR}(4), \ref{prop:propsemistable}(4) and \ref{prop:GrDdR}.

\end{proof}

We state our main result:

\begin{proposition}
(i) The category of geometrically semistable representations is closed
under duals, tensor products and extensions. \smallskip

(ii) The functors ${\rm D}_{\rm log}^{\rm geo, max}$
and  ${\rm D}_{\rm log}^{\rm geo, cris}$, from the category of
geometrically semistable representations to the category of $\widetilde{R}^{\rm geo,cris}_{\rm log}$-modules (resp.
$\widetilde{R}^{\rm geo, max}_{\rm log}$-modules)
endowed with connections and Frobenius, commute with duals and tensor products and are exact.
\end{proposition}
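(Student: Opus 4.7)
The plan is to reduce everything to the two key structural properties of $\mathbf{B}^{\rm cris}_{\rm log}(\widetilde{R})$ (and its analogue $\mathbf{B}^{\rm max}_{\rm log}(\widetilde{R})$) established in Theorem \ref{thm:geometricacyclicity}: the identification $\bigl(\mathbf{B}^{\rm cris}_{\rm log}(\widetilde{R})\bigr)^{G_R}=\widetilde{R}_{\rm log}^{\rm geo,cris}[t^{-1}]$, and the vanishing ${\rm H}^i\bigl(G_R,\Fil^m \mathbf{B}^{\rm cris}_{\rm log}(\widetilde{R})\bigr)=0$ for $i\ge 1$. I would treat the $\bD_{\rm log}^{\rm geo,cris}$ version in detail; the $\bD_{\rm log}^{\rm geo,max}$ case is identical.

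For tensor products, given geometrically semistable $V$ and $V'$, the projectivity of $\bD_{\rm log}^{\rm geo,cris}(V)$ and $\bD_{\rm log}^{\rm geo,cris}(V')$ over $\widetilde{R}^{\rm geo,cris}_{\rm log}[t^{-1}]$ would let me commute invariants with the tensor product: tensoring the comparison isomorphisms $\alpha_V$ and $\alpha_{V'}$ and taking $G_R$-invariants yields $\bD_{\rm log}^{\rm geo,cris}(V)\otimes_{\widetilde{R}^{\rm geo,cris}_{\rm log}[t^{-1}]}\bD_{\rm log}^{\rm geo,cris}(V')\cong \bD_{\rm log}^{\rm geo,cris}(V\otimes V')$ (the comparison isomorphism for $V\otimes V'$ being the tensor product of those for $V$ and $V'$). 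Projectivity is preserved, and the coherent submodule condition of Definition~\ref{def:geosemionR} is witnessed by $D(V)\otimes D(V')$. For duals, I would apply this to $V\otimes V^\vee={\rm End}(V)$, which contains the trivial subrepresentation $\Q_p\cdot \mathrm{id}$, and use projectivity to identify $\bD_{\rm log}^{\rm geo,cris}(V^\vee)$ with the $\widetilde{R}^{\rm geo,cris}_{\rm log}[t^{-1}]$-dual of $\bD_{\rm log}^{\rm geo,cris}(V)$; the dual coherent submodule, twisted by a suitable power of $t$, provides condition (c).

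For exactness and closure under extensions, given a short exact sequence $0\to V'\to V\to V''\to 0$, I would tensor it with $\mathbf{B}^{\rm cris}_{\rm log}(\widetilde{R})$ (exact, as it is an algebra over the field $\Q_p$) and take the long exact $G_R$-cohomology sequence. The cohomology vanishing from \ref{thm:geometricacyclicity}(iii), applied to a $\Q_p$-basis of $V'$, yields ${\rm H}^1\bigl(G_R, V'\otimes \mathbf{B}^{\rm cris}_{\rm log}(\widetilde{R})\bigr)=0$, and hence the short exactness
$$0 \to \bD_{\rm log}^{\rm geo,cris}(V') \to \bD_{\rm log}^{\rm geo,cris}(V) \to \bD_{\rm log}^{\rm geo,cris}(V'') \to 0.$$
This simultaneously gives the exactness claim in (ii). For closure under extensions, projectivity of $\bD_{\rm log}^{\rm geo,cris}(V)$ follows because the sequence splits (as $\bD_{\rm log}^{\rm geo,cris}(V'')$ is projective), and the comparison isomorphism $\alpha_V$ is then deduced from $\alpha_{V'}$ and $\alpha_{V''}$ via the five-lemma.

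The main obstacle I anticipate is propagating condition (c) of Definition~\ref{def:geosemionR} across an extension, that is, constructing a coherent $\widetilde{R}\widehat{\otimes}_{\cO} A_{\rm log}$-submodule $D(V)\subset \bD_{\rm log}^{\rm geo,cris}(V)$ that is stable under $\nabla_{V,\WW(k)}$ with topologically nilpotent restriction, generates $\bD_{\rm log}^{\rm geo,cris}(V)$ after inverting $t$, and is preserved up to bounded powers of $t$ by Frobenius. The construction would use a splitting of the projective quotient $\bD_{\rm log}^{\rm geo,cris}(V)\twoheadrightarrow \bD_{\rm log}^{\rm geo,cris}(V'')$ to lift generators of $D(V'')$ to $\bD_{\rm log}^{\rm geo,cris}(V)$, and then define $D(V)$ as the $\widetilde{R}\widehat{\otimes}_{\cO} A_{\rm log}$-submodule generated by the image of $D(V')$ together with these lifts. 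Stability and topological nilpotence of the connection hold after possibly enlarging $D(V)$ by its images under a bounded number of iterates of $\nabla_{V,\WW(k)}$, and the Frobenius-stability holds uniformly after choosing $h$ and $n$ large enough to accommodate both $V'$ and $V''$ simultaneously; since the splitting is $\widetilde{R}^{\rm geo,cris}_{\rm log}[t^{-1}]$-linear rather than connection- or Frobenius-equivariant, the verification is bookkeeping modulo the projectivity already established.
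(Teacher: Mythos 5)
Your overall route (tensor/dual via projectivity of the $\bD$-modules and the invariants computation, extensions and exactness via the long exact $G_R$-cohomology sequence and the vanishing of ${\rm H}^1$) is the same as the paper's, but one key step is justified incorrectly. You claim that ${\rm H}^1\bigl(G_R, V'\otimes_{\Q_p}{\rm B}^{\rm cris}_{\rm log}(\widetilde{R})\bigr)=0$ follows from \ref{thm:geometricacyclicity}(iii) ``applied to a $\Q_p$-basis of $V'$''. That argument fails: the $G_R$-action on $V'\otimes {\rm B}^{\rm cris}_{\rm log}(\widetilde{R})$ is the diagonal one and a $\Q_p$-basis of $V'$ is not $G_R$-stable, so $V'\otimes {\rm B}$ is not a direct sum of copies of ${\rm B}$ as a $G_R$-module; indeed the desired vanishing is not a formal consequence of ${\rm H}^1(G_R,{\rm B})=0$ for an arbitrary representation $V'$. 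The correct argument, which is what the paper does, uses the geometric semistability of $V'$: by \ref{prop:equivgeosemistable} one rewrites $V'\otimes_{\Q_p}{\rm B}^{\rm cris}_{\rm log}(\widetilde{R})\cong {\rm D}_{\rm log}^{\rm geo,cris}(V')\otimes_{\widetilde{R}_{\rm log}^{\rm geo,cris}[t^{-1}]}{\rm B}^{\rm cris}_{\rm log}(\widetilde{R})$ with $G_R$ acting trivially on the first factor, and then finite projectivity of ${\rm D}_{\rm log}^{\rm geo,cris}(V')$ reduces the vanishing to ${\rm H}^1\bigl(G_R,{\rm B}^{\rm cris}_{\rm log}(\widetilde{R})\bigr)=0$. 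All the ingredients for this fix are already in your setup, but as written the step is wrong, and it is exactly the step on which both the exactness in (ii) and the closure under extensions rest.

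The second weak point is the coherent submodule of Definition \ref{def:geosemionR} for an extension $0\to V_1\to V_2\to V_3\to 0$, which you defer to ``bookkeeping''. Your proposal to enlarge the candidate module by ``a bounded number of iterates'' of $\nabla_{V_2,\WW(k)}$ is not obviously bounded, since $\widetilde{R}\widehat{\otimes}_{\cO}A_{\rm log}$ is not noetherian, and it also does not by itself give topological nilpotence of the restricted connection. What makes the argument work (and is the paper's mechanism) is the triangular structure: once ${\rm D}_{\rm log}^{\rm geo,cris}(V_2)\cong {\rm D}_{\rm log}^{\rm geo,cris}(V_1)\oplus {\rm D}_{\rm log}^{\rm geo,cris}(V_3)$ as modules (the sequence splits by projectivity), the connection preserves the sub ${\rm D}_{\rm log}^{\rm geo,cris}(V_1)$, hence sends $D_1\oplus D_3$ into $\bigl(t^{-N}D_1\oplus D_3\bigr)\otimes\omega^1$ for a single $N$ determined by the finitely many generators of $D_3$; since $\nabla$ is linear over the $\nabla$-constants one may then simply take $D_2:=t^{-N}D_1\oplus D_3$, which is $\nabla$-stable, topologically nilpotent because its connection is upper triangular with nilpotent diagonal blocks, and satisfies condition (c) after adjusting the exponents of $t$ to accommodate $D_1$ and $D_3$ simultaneously. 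Spelling this out (rather than iterating $\nabla$) is what closes the gap.
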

\begin{proof} The claims concerning duals and tensor products follow proceeding as in \ref{prop:tensorss}(2)\&(3).
Let $0 \to V_1 \to V_2 \to V_3 \to 0$ be an exact sequence of $\Q_p$-vector spaces endowed with an action of $G_R$ with $V_1$ and $V_3$ geometrically semistable.
First of all we claim that the sequence $0\to {\rm D}_{\rm log}^{\rm geo, max}(V_1)\to {\rm D}_{\rm log}^{\rm geo, max}(V_2)\to {\rm D}_{\rm log}^{\rm geo,
max}(V_3)\to 0$ is exact. This follows if we prove that ${\rm H}^1\bigl(G_R, V_1\otimes_{\Q_p} {\rm B}_{\rm log}^{\rm max}\bigr)=0$. This group coincides with ${\rm
H}^1\bigl(G_R, {\rm D}_{\rm log}^{\rm geo, max}(V)\otimes_{\widetilde{R}_{\rm log}^{\rm geo, max}} {\rm B}^{\rm max}_{\rm log}\bigl(\widetilde{R}\bigr) \bigr)$
since $V_1$ is geometrically semistable. Since ${\rm D}_{\rm log}^{\rm geo, max}(V)$ is a projective $\widetilde{R}_{\rm log}^{\rm geo,
max}\bigl[t^{-1}\bigr]$-module of finite rank, it suffices to prove the vanishing of ${\rm H}^1\bigl(G_R,  {\rm B}^{\rm max}_{\rm
log}\bigl(\widetilde{R}\bigr)\bigr)$. This follows from \ref{thm:geometricacyclicity}. In particular, ${\rm D}_{\rm log}^{\rm geo, max}(V_2)$ is a finite and
projective $\widetilde{R}_{\rm log}^{\rm geo, max}\bigl[t^{-1}\bigr]$-module. Consider the commutative diagram with exact rows:

$$\begin{array}{ccccccc}
& {\rm D}_{\rm log}^{\rm geo, max}(V_1)\otimes {\rm B}^{\rm
max}_{\rm log} & \lra & {\rm D}_{\rm log}^{\rm geo,
max}(V_2)\otimes {\rm B}^{\rm max}_{\rm log} & \lra & {\rm D}_{\rm
log}^{\rm geo, max}(V_3)\otimes {\rm B}^{\rm max}_{\rm log} & \lra
0\cr &\big\downarrow & & \big\downarrow & & \big\downarrow\cr 0
\lra & V_1\otimes_{\Q_p} {\rm B}^{\rm max}_{\rm
log}\bigl(\widetilde{R}\bigr) & \lra & V_2\otimes_{\Q_p} {\rm
B}^{\rm max}_{\rm log}\bigl(\widetilde{R}\bigr) &\lra &
V_3\otimes_{\Q_p} {\rm B}^{\rm max}_{\rm
log}\bigl(\widetilde{R}\bigr) & \lra 0,\cr
\end{array}$$where the tensor product in the first row is taken over ${\widetilde{R}_{\rm log}^{\rm geo, max}}$ and ${\rm B}^{\rm max}_{\rm log}$ stands for ${\rm
B}^{\rm max}_{\rm log}\bigl(\widetilde{R}\bigr)$. The right and left vertical arrows are isomorphisms by assumption. The snake lemma implies that also the vertical
arrow in the middle is an isomorphism as wanted. In particular, $V_2$ satisfies \ref{prop:equivgeosemistable}(2).

We are left to show that the other conditions of \ref{def:geosemionR} are satisfied. Let $D_1\subset {\rm D}_{\rm log}^{\rm geo, cris}(V_1)$ and $D_3\subset  {\rm
D}_{\rm log}^{\rm geo, cris}(V_3)$ be the submodules as in loc.~cit. We have just proven that ${\rm D}_{\rm log}^{\rm geo, cris}(V_2)$ is an extension of the projective
$\widetilde{R}^{\rm geo,cris}_{\rm log}$-modules  ${\rm D}_{\rm log}^{\rm geo, cris}(V_1)$
and ${\rm D}_{\rm log}^{\rm geo, cris}(V_3)$. In particular, it is isomorphic to their  direct sum. We view
$D_2':=D_1\oplus D_3\subset {\rm D}_{\rm log}^{\rm geo, cris}(V_2)$ as a submodule. Note that $D_2'[t^{-1}]={\rm D}_{\rm log}^{\rm geo, cris}(V_2)$. The connection
$\nabla_{V_2,\WW(k)}$ is compatible with the connections $\nabla_{V_1,\WW(k)}$ and $\nabla_{V_3,\WW(k)}$ so that it preserves $D_1$ and sends $D_2'$ to
$\bigl(t^{-N} D_1\oplus D_3\bigr) \widehat{\otimes} \omega^1_{\widetilde{R}/\WW(k)}$ for some $N\in\N$. Set $D_2:=t^{-N} D_1\oplus D_3$. Then, $D_2$ is a coherent
$\widetilde{R}\widehat{\otimes}_{\cO} A_{\rm log}$-module, it is stable under $\nabla_{V_2,\WW(k)}$ and $\nabla_{V_2,\WW(k)}\vert_{D_2}$ is topologically nilpotent
as $\nabla_{V_1,\WW(k)}\vert_{D_1}$ and $\nabla_{V_3,\WW(k)}\vert_{D_3}$ are. Thus  conditions \ref{def:geosemionR}(a)\&(b) hold. If we take $n\in\N$ and $h\leq n$
so that $t^h \varphi$ satisfies condition \ref{def:geosemionR}(c) for $D_1$ and $D_3$, then $t^{h} \varphi$ sends $D_1$ to $D_1$ and $D_2$ to $t^{-m} D_1\oplus
D_3$ for some $m\in\N$.  Then, $t^{2m+2n} D_2$ is contained in $t^{h+m}\varphi(D_2)$ so that condition
\ref{def:geosemionR}(c) holds.

\end{proof}

\section{List of Symbols}

\noindent $\widetilde{\bf E}_{\cO_\Kbar}^+$ classical Fontaine ring, \S \ref{sec:classical}

\noindent $A_{\rm inf}\left(\cO_{\Kbar}\right)$ classical Fontaine ring, \S \ref{sec:classical}

\noindent $A_{\rm cris}$,  $B_{\rm cris}$ classical Fontaine rings, \S \ref{sec:classical}

\noindent  $A_{\rm log}$, $B_{\rm log}$  classical Fontaine rings, \S \ref{sec:classical}

\noindent $B_{\rm dR}^+$, $B_{\rm dR}^+(\cO)$  classical Fontaine rings, \S \ref{sec:classical}

\noindent $D_{\rm cris}$, $D_{\rm log}$, $D_{\rm dR}$  classical Fontaine functors, \S \ref{sec:classical}

\noindent $\fX_L$, ${\rm T}_{X_L}$ Faltings' site and respectively Faltings' topos associated to $X$ and $L$, \S \ref{sec:faltingssite}

\noindent $\cO_{\fX}$, $\widehat{\cO}_{\fX}$ Fontaine sheaves, \S \ref{sec:fontainesheaves}

\noindent $\bA_{\rm inf,L}^+$ Fontaine sheaf, \S \ref{sec:fontainesheaves}

\noindent $\bA_{\rm cris}^\nabla$  Fontaine sheaf, \S \ref{sec:fontainesheaves}

\noindent $\bA_{\rm log,L}^\nabla$, Fontaine sheaf, \S \ref{sec:fontainesheaves}

\noindent $\bA_{\rm log}$ Fontaine sheaf, \S \ref{sec:Alog}

\noindent $\bB_{\rm log}^\nabla$, $\bB_{\rm log}$ Fontaine sheaves \S \ref{sec:BlogBlognabla}

\noindent $\overline{\bB}_{\rm log,\Kbar}^\nabla$, $\overline{\bB}_{\rm log,\Kbar}$ Fontaine sheaves \S \ref{sec:defBbarlog}

\noindent $\bDcrisgeo$, Fontaine functor \S \ref{sec:Dcrisgeo}

\noindent $\bDcrisar$ Fontaine functor \S \ref{sec:Dcrisar}

\noindent $R_n$, $R^o$ rings, \S \ref{section:localdescription}

\noindent $\widehat{\overline{R}}$ relative Fontaine ring, \S \ref{sec:widehatR}

\noindent $\widetilde{R}_\infty$ relative Fontaine ring \S \ref{sec:widetildeRinfty}

\noindent $\widetilde{\bf E}^+_S$ relative Fontaine ring, \S \ref{sec:Thetalocalized}

\noindent ${\bf A}_{\widetilde{R}_n}^+$, relative Fontaine ring, \S \ref{sec:AwidetildeRn}

\noindent ${\rm A}_{\rm inf}\bigl(R/\cO\bigr)$ , ${\rm A}_{\rm inf}\bigl(R/R\bigr)$, ${\rm A}_{\rm inf}\bigl(R/\widetilde{R}\bigr)$ relative Fontaine rings, \S
\ref{sec:BdR(R)}

\noindent ${\rm B}_{{\rm dR},n}^{\nabla,+}\bigl(R\bigr)$, ${\rm B}_{{\rm dR},n}^{\nabla,+}\bigl(\widetilde{R}\bigr)$, ${\rm B}_{{\rm dR},n}^+\bigl(R\bigr)$, ${\rm
B}_{{\rm dR},n}^+\bigl(\widetilde{R}\bigr)$ relative Fontaine rings, \S \ref{sec:BdR(R)}

\noindent  ${\rm B}^{\rm cris}_{\rm log}$, ${\rm B}^{\rm max}_{\rm log}$ relative Fontaine rings, \S \ref{sec:AcrisnablaRbar}

\noindent ${\rm A}_{\rm log}^{\rm cris,\nabla}(R)$, ${\rm A}_{\rm log}^{\rm cris,\nabla}$, relative Fontaine rings, \S \ref{sec:AcrisnablaRbar}

\noindent  $\widetilde{R}_{\rm max}$ ring, \S \ref{sec:descentBlogmax}

\noindent ${\bf A}_{\widetilde{R},{\rm max}}^{+,\rm log,\nabla}$ , ${\bf A}_{\widetilde{R}^o,{\rm max}}^{+,\rm log,\nabla}$ relative Fontaine rings \S
\ref{sec:descentBlogmax}

\noindent ${\bf A}_{\widetilde{R},\rm cris}^{+,\rm geo} $, ${\rm A}_{\rm log,\infty}^{\rm cris,\nabla}$,  $ {\rm A}^{\rm cris.\nabla}_{\rm log}(\widetilde{R})$,
${\bf A}_{{\rm log}}^{\rm geo, cris}(\widetilde{R}) $, ${\rm A}_{\rm log,\infty}^{\rm cris} $ relative Fontaine rings, \S \ref{sec:geomtericBlogcris}

\noindent ${\rm D}_{\rm dR}$, $\widetilde{{\rm D}}_{\rm dR}$ relative de Rham functors, \S \ref{sec:DdeRam}

\noindent  ${\rm D}^{\rm cris}_{\rm log}$, ${\rm D}^{\rm max}_{\rm log}$ relative Fontaine functors, \S \ref{sec:Dcrislogmaxlog}

\noindent ${\rm D}_{\rm cris}^{\rm log, geo}$, ${\rm D}_{\rm max}^{\rm log, geo}$ geometric, relative  Fontaine functors, \S \ref{sec:Vgeosemistable}

\end{document}